\DeclareMathAlphabet{\mathpzc}{OT1}{pzc}{m}{it}
\newtheorem{theorem}{Theorem}[section]
\newtheorem{lemma}[theorem]{Lemma}
\newtheorem{proposition}[theorem]{Proposition}
\newtheorem{corollary}[theorem]{Corollary}
\theoremstyle{definition}
\newtheorem{definition}[theorem]{Definition}
\newtheorem{notations}[theorem]{Notations}
\newtheorem{notation}[theorem]{Notation}
\newtheorem{example}[theorem]{Example}
\theoremstyle{remark}
\newtheorem{remark}[theorem]{Remark}
\DeclareMathOperator{\BC}{BC}
\DeclareMathOperator{\BN}{BN}
\DeclareMathOperator{\BP}{BP}
\DeclareMathOperator{\cc}{c}
\DeclareMathOperator{\gc}{gc}
\DeclareMathOperator{\Ho}{H}
\DeclareMathOperator{\HH}{HH}
\DeclareMathOperator{\HC}{HC}
\DeclareMathOperator{\HP}{HP}
\DeclareMathOperator{\HN}{HN}
\DeclareMathOperator{\Hom}{Hom}
\DeclareMathOperator{\ide}{id}
\DeclareMathOperator{\End}{End}
\DeclareMathOperator{\pc}{pc}
\DeclareMathOperator{\ps}{p\wh{s}}
\DeclareMathOperator{\s}{s}
\DeclareMathOperator{\Sh}{Sh}
\DeclareMathOperator{\Tot}{Tot}
\newcommand{\xcirc}{\hspace{-0.5pt} \circ \hspace{-0.5pt}}
\newcommand{\ot}{\otimes}
\newcommand{\sub}{\subseteq}
\newcommand{\wt}{\widetilde}
\newcommand{\wh}{\widehat}
\newcommand{\ov}{\overline}
\newcommand{\ba}{\mathbf a}
\newcommand{\bx}{\mathbf x}
\newcommand{\bc}{\mathbf c}
\newcommand{\bh}{\mathbf h}
\newcommand{\bv}{\mathbf v}
\newcommand{\byy}{\mathbf y}
\newcommand{\bz}{\mathbf z}
\newcommand{\de}{\delta}
\newcommand{\De}{\Delta}
\newcommand{\ep}{\epsilon}
\newcommand{\si}{\sigma}
\newcommand{\cX}{\mathcal X}
\newcommand{\cY}{\mathcal Y}
\newcommand{\nesp}{\hspace{-0.3pt}}
\newcommand{\nespp}{\hspace{-0.5pt}}
\begin{document}

\title[Cyclic homology of Brzezi\'nski's crossed products]{Cyclic homology of Brzezi\'nski's crossed products and of braided Hopf crossed products}

\author[G. Carboni]{Graciela Carboni}
\address{C\'\i clo B\'asico Com\'un\\ Universidad de Buenos Aires\\ Ciudad Universitaria-Pabell\'on~3\\ (C1428EGA) Buenos Aires, Argentina.}
\curraddr{}
\email{gcarbo@dm.uba.ar}

\author[J. A. Guccione]{Jorge A. Guccione}
\address{Departamento de Matem\'atica\\ Facultad de Ciencias Exactas y Naturales\\ Universidad de Buenos Aires\\ Ciudad Universitaria-Pabell\'on~1\\ (C1428EGA) Buenos Aires, Argentina.}
\email{jjgucci@dm.uba.ar}

\author[J. J. Guccione]{Juan J. Guccione}
\address{Departamento de Matem\'atica\\ Facultad de Ciencias Exactas y Naturales\\ Universidad de Buenos Aires\\ Ciudad Universitaria-Pabell\'on~1\\ (C1428EGA) Buenos Aires, Argentina.}
\email{jjgucci@dm.uba.ar}

\thanks{The research of G. Carboni, J. A. Guccione and J.J Guccione was supported by  UBACYT 095 and PIP 112-200801-00900 (CONICET)}

\author[C. Valqui]{Christian Valqui}
\address{Pontificia Universidad Cat\'olica del Per\'u - Instituto de Matem\'atica y Ciencias Afi\-nes, Secci\'on Matem\'aticas, PUCP, Av. Universitaria 1801, San Miguel, Lima 32, Per\'u.}
\email{cvalqui@pucp.edu.pe}
\thanks{The research of C. Valqui was supported by PUCP-DAI-2009-0042, Lucet 90-DAI-L005 and SFB 478 U. M\"unster, Konrad Adenauer Stiftung.}

\begin{abstract} Let $k$ be a field, $A$ a unitary associative $k$-algebra and $V$ a $k$-vector space endowed with a distinguished element $1_V$. We obtain a mixed complex, simpler that the canonical one, that gives the Hochschild, cyclic, negative and periodic homology of a crossed product $E:=A\#_f V$, in the sense of Brzezi\'nski. We actually work in the more general context of relative cyclic homology. Specifically, we consider a subalgebra $K$ of $A$ that satisfies suitable hypothesis and we find a mixed complex computing the Hochschild, cyclic, negative and periodic homology of $E$ relative to $K$. Then, when $E$ is a cleft braided Hopf crossed product, we obtain a simpler mixed complex, that also gives the Hochschild, cyclic, negative and periodic homology of $E$.
\end{abstract}

\subjclass[2010]{primary 16E40; secondary 16T05}

\keywords{Crossed products, Hochschild (co)homology, Cyclic homology}

\dedicatory{}

\maketitle

\section*{Introduction}
The problem of develop tools to compute the cyclic homology of smash products algebras $A\# k[G]$, where $G$ is a group, was considered in \cite{F-T}, \cite{N} and \cite{G-J}. For instance, in the first paper it was obtained a spectral sequence converging to the cyclic homology of $A\# k[G]$. In \cite{G-J}, this result was derived from the theory of paracyclic modules and cylindrical modules developed by the authors. The main tool for this computation was a version for cylindrical modules of Eilenberg-Zilber theorem. In \cite{A-K} this theory was used to obtain a Feigin-Tsygan type spectral sequence for smash products $A\# H$, of a Hopf algebra $H$ with an $H$-module algebra~$A$.

It is natural to try to extend this result to the general crossed products $A\#_f H$ introduced in \cite{B-C-M} and \cite{D-T}, and to more general algebras such as Hopf Galois extensions. In \cite{J-S} the relative to $A$ cyclic homology of a Galois $H$ extension $C/A$ was studied, and the results obtained was applied to the Hopf crossed products $A\#_f H$, giving the absolute cyclic homology when $A$ is a separable algebra. As far as we know, \cite{K-R} was the first work dealing with the absolute cyclic homology of a crossed product $A\#_f H$, with $A$ non separable and $f$ non trivial. In that paper the authors  get a Feigin-Tsygan type spectral sequence for a crossed products $A\#_f H$, under the hypothesis that $H$ is cocommutative and $f$ takes values in $k$. Finally, the main results established in~\cite{K-R} were extended in~\cite{C-G-G} to the general Hopf crossed products $A\#_f H$ introduced in~\cite{B-C-M} and~\cite{D-T}. In particular were constructed two spectral sequences converging to the cyclic homology of $A\#_f H$. The second one, which is valid  under the hypothesis that $f$ takes values in $k$, generalize those obtained in~\cite{A-K} and~\cite{K-R}.

Let $k$ be a field. An associative and unital $k$-algebra $E$ is an {\em smash product} of two associative and unital algebras $A$ and $B$ if the underlying vector space of $E$ is $A\ot_k B$, the maps
$$
\xymatrix @R=-2pt {A\rto & E\\
a \ar@{|->}[0,1] & a\ot_k 1}
\quad
\xymatrix @R=-2pt{\save[]+<0pc,0pc> \Drop{}\txt{and}\restore}
\quad
\xymatrix @R=-2pt {A\rto & E\\
b \ar@{|->}[0,1] & 1\ot_k b}
$$
are morphisms of algebras and $(a\ot_k 1)(1\ot_k b)=a\ot_k b$ for all $a\in A$ and $b\in B$. Let $R\colon B\ot_k A \to A\ot_k B$ be the map defined by $R(b\ot_k a):=(1\ot_k b)(a\ot_k 1)$. It is evident that each smash product $E$ is complete determined by the map $R$. This justify the notation $A\#_R B$ for $E$. An smash product $A\#_R B$ is called {\em strong} if $R$ is bijective. A different generalization of the results established in~\cite{A-K} was obtained in~\cite{Z-H}, where it was found a mixed complex, simpler than the canonical one, that computes the type cyclic homology groups of a strong smash product algebra. Using this the authors construct a spectral sequence that converges to the cyclic homology of $A\#_R B$. The Hochschild (co)homology of strong smash products was studied in~\cite{G-G1}.

Let $V$ be a $k$-vector space endowed with a distinguished element $1$ and $A$ an associative and unital $k$-algebra. We say that an algebra $E$ with underlying vector space $A\ot_k V$ is a Brzezi\'nski's crossed product of $A$ with $V$ if it is associative with unit $1\ot_k 1$, the map
$$
\xymatrix @R=-2pt {A\rto & E\\
a \ar@{|->}[0,1] & a\ot_k 1},
$$
is a morphism of algebras and the left $A$-module structure of $A\ot_k V$ induced by this map is the canonical one. Brzezi\'nski's crossed products are a wide generalization of Hopf crossed products and smash products of algebras (the relation between smash products and Brzezi\'nski's crossed products of algebras is analogous to the relation between group smash products and group crossed products). The goal of this work is to present a mixed complex $\bigl(\wh{X}_*,\wh{d}_*,\wh{D}_*\bigr)$, simpler than the canonical one, that gives the Hochschild, cyclic, negative and periodic homologies of a Brzezi\'nski's crossed product of $A$ with $V$. This result generalizes
the main results of~\cite[Section~2]{C-G-G} and~\cite{Z-H}. Moreover in this case our complex also works when the smash product is not strong. We actually work in the more general context of relative cyclic homology. Specifically, we consider a subalgebra $K$ of $A$ that satisfies suitable conditions, and we find a mixed complex computing the Hochschild, cyclic, negative and periodic homology groups of $E$ relative to $K$ (which we simply call the Hochschild, cyclic, negative and periodic homology groups of the $K$-algebra $E$). Of course, when $K$ is separable, this gives the absolute homologies. Our main result is Theorem~\ref{complejo mezclado que da la homologia ciclica}, in which is proved that $\bigl(\wh{X}_*,\wh{d}_*, \wh{D}_*\bigr)$ is homotopically equivalent to the canonical normalized mixed complex of $E$. As an application we obtain four spectral sequences converging to the cyclic homology of the $K$-algebra $E$. The first one generalizes those given in~\cite[Section~3.1]{C-G-G} and~\cite[Theorem~4.7]{Z-H}, and the third one those of~\cite{A-K}, \cite{K-R} and~\cite[Section 3.2]{C-G-G}. As far as we know, the results of the core of this paper (Sections~3, 4, 5 and 6) applies to all the up to date existent types of crossed products of algebras with braided Hopf algebras, in particular to the underlying algebras of the crossed product bialgebras  considered in~\cite{B-D} and to the $L$-$R$ smash products introduced in~\cite{B-G-G-S} and~\cite{B-S}. In sections~7, 8, 9, 10 and 11 we consider the cleft braided Hopf crossed products introduced in~\cite{G-G2}. The main result of these sections is that when $E$ is a cleft braided Hopf crossed product, $\bigl(\wh{X}_*,\wh{d}_*,\wh{D}_*\bigr)$ is isomorphic to a simpler mixed complex $\bigl(\ov{X}_*,\ov{d}_*,\ov{D}_*\bigr)$.

\smallskip

Our method of proof is different from that used in \cite{G-J}, \cite{A-K}, \cite{K-R} and \cite{Z-H}, since they are based in the results obtained in \cite{G-G1} and the Perturbation Lemma instead of a generalization of the Eilenberg-Zilber theorem.

\smallskip

Finally we want to point out that in this paper we also study the Hochschild homology and cohomology of $E$ with coefficients in an arbitrary $E$-bimodule $M$. More precisely, we obtain complexes, simpler that the canonical ones, that compute the Hochschild homology and cohomology of $E$ with coefficients in $M$. Using them we get spectral sequences that generalize the Hochschild-Serre spectral sequences (\cite{H-S}), and we get some result about the cup product of the Hochschild cohomology of $E$ and cap product of the Hochschild homology of $E$ with coefficients in $M$.

\section{preliminaries}
In this article we work in the category of vector spaces over a field $k$. Then we assume implicitly that all the maps are $k$-linear maps. The tensor product over $k$ is denoted by $\ot_k$. Given a $k$-vector space $V$ and $n\ge 1$, sometimes we let $V^{\ot_k^n}$ denote the $n$-fold tensor product $V\ot_k\cdots\ot_k V$. Given $k$-vector spaces $U,V,W$ and a map $f\colon V\to W$ we write $U\ot_k f$ for  $\ide_U\ot_k f$ and $f\ot_k U$ for $f\ot_k
\ide_U$. We assume that the reader is familiar with the notions of algebra, coalgebra, module and comodule. Unless otherwise explicitly established we assume that the algebras are associative unitary and the coalgebras are coassociative counitary. Given an algebra $A$ and a coalgebra $C$, we let
$$
\mu\colon A\ot_k A \to A,\quad \eta\colon k \to A,\quad \Delta\colon C\to C\ot_k C\quad \text{and}\quad \ep\colon C\to k
$$
denote the multiplication, the unit, the comultiplication and the counit, respectively, specified with a subscript if necessary. Moreover, given $k$-vector spaces $V$ and $W$, we let $\tau\colon V\ot_k W\to W\ot_k V$ denote the flip $\tau(v\ot_k w)= w\ot_k v$.

\smallskip

In this article we use the nowadays well known graphic calculus for monoidal and braided categories. As usual, morphisms will be composed from up to down and tensor products will be represented by horizontal concatenation in the corresponding order. The identity map of a $k$-vector space will be represented by a vertical line, and the flip by the diagram
\begin{equation}\label{s0}
\spreaddiagramcolumns{-1.6pc}\spreaddiagramrows{-1.6pc}
\objectmargin{0.0pc}\objectwidth{0.0pc}
\def\objectstyle{\sssize}
\def\labelstyle{\sssize}
\grow{\xymatrix{
               \ar@{-}[2,2]&&\ar@{-}[2,-2]\\
               &&\\
               &&
}}
\grow{\xymatrix{{}\save[]+<0pc,-0.4pc>*\txt{.} \restore}}
\end{equation}
Given an algebra $A$, the diagrams
\begin{equation}\label{s1}
\spreaddiagramcolumns{-1.6pc}\spreaddiagramrows{-1.6pc}
\objectmargin{0.0pc}\objectwidth{0.0pc}
\def\objectstyle{\sssize}
\def\labelstyle{\sssize}
\grow{\xymatrix{
               \ar@{-}`d/4pt [1,1] `[0,2] [0,2] &&\\
               &\ar@{-}[1,0]\\
               &
}}
\grow{\xymatrix{\\
\txt{,}
}}\quad
\grow{\xymatrix{
             \circ\save\go+<0pt,-1pt>\Drop{}\ar@{-}[2,0]+<0pt,3.5pt>\restore\\\\&
}}
\grow{\xymatrix{\\
\txt{,}
}}\quad
\grow{\xymatrix{
               \ar@{-}`d/4pt [1,2][1,2] && \ar@{-}[2,0]\\
               &&\\
               &&
}}
\quad\grow{\xymatrix{{}\save[]+<0pc,-0.4pc>*\txt{and}
\restore}}\quad
\grow{\xymatrix{
               \ar@{-}[2,0] && \ar@{-}`d/4pt [1,-2][1,-2]\\\\
               &
}}
\end{equation}
stand for the multiplication map, the unit, the action of $A$ on a left $A$-module and the action of $A$ on a right $A$-module, respectively. Given a coalgebra $C$, the comultiplication, the counit, the coaction of $C$ on a right $C$-comodule and the coaction of $C$ on a left $C$-comodule will be represented by the diagrams
\begin{equation}\label{s2}
\spreaddiagramcolumns{-1.6pc}\spreaddiagramrows{-1.6pc}
\objectmargin{0.0pc}\objectwidth{0.0pc}
\def\objectstyle{\sssize}
\def\labelstyle{\sssize}
\grow{\xymatrix{
              &\ar@{-}[1,0]\\
              & \ar@{-}`l/4pt [1,-1] [1,-1] \ar@{-}`r [1,1] [1,1]\\
              &&
}}
\grow{\xymatrix{\\
\txt{,}
}}\quad
\grow{\xymatrix{
              \ar@{-}[2,0]+<0pt,2pt>\\\\
              \save\go+<0pt,2pt>\Drop{\circ}\restore
              &
}}
\grow{\xymatrix{\\
\txt{,}
}}\quad
\grow{\xymatrix{
               \ar@{-}[2,0] \\
               \ar@{-}`r/4pt [1,2] [1,2] \\
               &&
}}
\quad\grow{\xymatrix{{}\save[]+<0pc,-0.4pc>*\txt{and}
\restore}}\quad
\grow{\xymatrix{
               &&\ar@{-}[2,0]\\
               &&\ar@{-}`l/4pt [1,-2] [1,-2]\\
               &&
}}
\grow{\xymatrix{\\
\txt{,}
}}
\end{equation}
respectively.

Consider a $k$-linear map $c\colon V\ot_k W \to W\ot_k V$. If $V$ is an algebra, then we say that {\em $c$ is compatible with the algebra structure of $V$} if
$$
c \xcirc (\eta\ot_k W) = W\ot_k \eta\quad\text{and} \quad c \xcirc (\mu\ot_k W)=  (W\ot_k \mu)\xcirc(c\ot_k V)\xcirc(V\ot_k c).
$$
If $V$ is a coalgebra, then we say that $c$ is {\em compatible with the coalgebra structure of $V$} if
$$
(W\ot_k \epsilon)\xcirc c = \epsilon\ot_k W\quad\text{and}\quad (W\ot_k \Delta) \xcirc c = (c\ot_k V)\xcirc (V\ot_k c)\xcirc (\Delta\ot_k W).
$$
Finally, if $W$ is an algebra or a coalgebra, then we introduce the notion that $c$ is compatible with the structure of $W$ in the obvious way.

\subsection{Brzezi\'nski's crossed products}
In this subsection we recall a very general definition of crossed product, introduced in \cite{Br}, and its basic properties. For the proofs we refer to \cite{Br} and \cite{B-D}. Throughout this paper $A$ is a unitary algebra and $V$ is a $k$-vector space equipped with a distinguished element $1\in V$.

\begin{definition} Given maps $\chi\colon V\ot_k A\to A\ot_k V$ and $\mathcal{F}\colon V\ot_k V\to A\ot_k V$, we let $A\# V$ denote the algebra (in general non associative and non unitary) whose underlying $k$-vector space is $A\ot_k V$ and whose multiplication map is given by
$$
\mu_{A\# V}:= (\mu_{\!A}\ot_k V)\xcirc(\mu_{\!A}\ot_k \mathcal{F})\xcirc(A\ot_k \chi \ot_k V).
$$
The element $a\ot_k v$ of $A\# V$ will usually be written $a\# v$. The algebra $A\# V$ is called a {\em crossed product} if it is associative with $1\# 1$ as identity.
\end{definition}

\begin{definition}\label{cociclo} Let $\chi\colon V\ot_k A\to A\ot_k V$ and $\mathcal{F}\colon V\ot_k V\to A\ot_k V$ be maps.

\smallskip

\begin{enumerate}

\item $\chi$ is a {\em twisting map} if it is compatible with the algebra structure of $A$ and $\chi(1\ot_k a) = a\ot_k 1$.

\smallskip

\item $\mathcal{F}$ is {\em normal} if $\mathcal{F}(1\ot_k v) = \mathcal{F}(v\ot_k 1) = 1\ot_k v$.

\smallskip

\item $\mathcal{F}$ is a {\em cocycle that satisfies the twisted module condition} if
$$
\spreaddiagramcolumns{-1.6pc}\spreaddiagramrows{-1.6pc}
\objectmargin{0.0pc}\objectwidth{0.0pc}
\def\objectstyle{\sssize}
\def\labelstyle{\sssize}
\qquad\qquad\grow{\xymatrix@!0{
     \ar@{-}[2,0] &&& \ar@{-}@<0.40pc>[1,0] \ar@{-}@<-0.40pc>[1,0]&\\
     &&&*+<0.1pc>[F]{\,\,\mathcal{F}\,\,} \ar@{-}@<0.40pc>[3,0] \ar@{-}@<-0.40pc>[1,0] &\\
     \ar@/^0.1pc/ @{-}[2,2] \ar@/_0.1pc/ @{-}[2,2]&& \ar@/^0.1pc/ @{-}[2,-2]
                                                  \ar@/_0.1pc/ @{-}[2,-2]&& \ar@{-}[2,0]\\
     \\
     \ar@{-}[2,0]&&& \ar@{-}@<0.40pc>[1,0] \ar@{-}@<-0.40pc>[1,0]&\\
     &&&*+<0.1pc>[F]{\,\,\,\mathcal{F}\,\,\,} \ar@{-}@<0.40pc>[3,0] \ar@{-}@<-0.40pc>[1,0] &\\
     \ar@{-}`d/4pt [1,1] `[0,2] [0,2] &&&& \ar@{-}[1,0]\\
     &\ar@{-}[1,0]&&&\ar@{-}[1,0]\\
     &&&&
     }}
\grow{\xymatrix{
               \\\\\\
               {}\save[]+<0pc,-0.4pc>*\txt{$=$} \restore
}}
\grow{\xymatrix@!0{
     \\
     &\ar@{-}@<0.40pc>[1,0] \ar@{-}@<-0.40pc>[1,0]&&&\ar@{-}[2,0]\\
     &*+<0.1pc>[F]{\,\,\,\mathcal{F}\,\,\,} \ar@{-}@<0.40pc>[1,0] \ar@{-}@<-0.40pc>[1,0] &&&\\
     \ar@{-}[2,0]&&& \ar@{-}@<0.40pc>[1,0] \ar@{-}@<-0.40pc>[1,0]&\\
     &&&*+<0.1pc>[F]{\,\,\,\mathcal{F}\,\,\,} \ar@{-}@<0.40pc>[3,0] \ar@{-}@<-0.40pc>[1,0] &\\
     \ar@{-}`d/4pt [1,1] `[0,2] [0,2] &&&& \ar@{-}[1,0]\\
     &\ar@{-}[1,0]&&&\ar@{-}[1,0]\\
     &&&&
     }}
\quad\grow{\xymatrix{
               \\\\\\
               {}\save[]+<0pc,-0.4pc>*\txt{and} \restore
}}\quad
\grow{\xymatrix@!0{
     \ar@{-}[2,0] &&  \ar@/^0.1pc/ @{-}[2,2] \ar@/_0.1pc/ @{-}[2,2]&& \ar@/^0.1pc/ @{-}[2,-2]
                                                  \ar@/_0.1pc/ @{-}[2,-2]\\
     \\
     \ar@/^0.1pc/ @{-}[2,2] \ar@/_0.1pc/ @{-}[2,2]&& \ar@/^0.1pc/ @{-}[2,-2]
                                                  \ar@/_0.1pc/ @{-}[2,-2]&& \ar@{-}[2,0]\\
     \\
     \ar@{-}[2,0]&&& \ar@{-}@<0.40pc>[1,0] \ar@{-}@<-0.40pc>[1,0]&\\
     &&&*+<0.1pc>[F]{\,\,\,\mathcal{F}\,\,\,} \ar@{-}@<0.40pc>[3,0] \ar@{-}@<-0.40pc>[1,0] &\\
     \ar@{-}`d/4pt [1,1] `[0,2] [0,2] &&&& \ar@{-}[1,0]\\
     &\ar@{-}[1,0]&&&\ar@{-}[1,0]\\
     &&&&
     }}
\grow{\xymatrix{
               \\\\\\
               {}\save[]+<0pc,-0.4pc>*\txt{$=$} \restore
}}
\grow{\xymatrix@!0{
     \\
     &\ar@{-}@<0.40pc>[1,0] \ar@{-}@<-0.40pc>[1,0]&&&\ar@{-}[2,0]\\
     &*+<0.1pc>[F]{\,\,\,\mathcal{F}\,\,\,} \ar@{-}@<0.40pc>[1,0] \ar@{-}@<-0.40pc>[1,0] &&&\\
     \ar@{-}[2,0] &&  \ar@/^0.1pc/ @{-}[2,2] \ar@/_0.1pc/ @{-}[2,2]&& \ar@/^0.1pc/ @{-}[2,-2]
                                                  \ar@/_0.1pc/ @{-}[2,-2]\\
     \\
     \ar@{-}`d/4pt [1,1] `[0,2] [0,2] &&&& \ar@{-}[2,0]\\
     &\ar@{-}[1,0]&&&\\
     &&&&
     }}
\grow{\xymatrix{
               \\\\\\
               {}\save[]+<0pc,-0.4pc>*\txt{,} \restore
}}
\quad\grow{\xymatrix{
               \\\\\\
               {}\save[]+<0pc,-0.4pc>*\txt{where } \restore
}}
\grow{\xymatrix{
               \\
               \\
               \\
     \ar@/^0.1pc/ @{-}[2,2] \ar@/_0.1pc/ @{-}[2,2]&& \ar@/^0.1pc/ @{-}[2,-2]
                                                  \ar@/_0.1pc/ @{-}[2,-2]\\
     \\
&& }}
\grow{\xymatrix{
               \\\\\\
               {}\save[]+<0pc,-0.4pc>*\txt{$=\chi$ and} \restore
}}
\grow{\xymatrix@!0{
     \\
     \\
     \\
     &\ar@{-}@<0.40pc>[1,0] \ar@{-}@<-0.40pc>[1,0]&\\
     &*+<0.1pc>[F]{\,\,\,\mathcal{F}\,\,\,} \ar@{-}@<0.40pc>[1,0] \ar@{-}@<-0.40pc>[1,0] &\\
     &&
}}
\grow{\xymatrix{
               \\\\\\
               {}\save[]+<0pc,-0.4pc>*\txt{$= \mathcal{F}$.} \restore
}}
$$
More precisely, the first equality says that $\mathcal{F}$ is a cocycle and the second one says that $\mathcal{F}$ satisfies the twisted module condition.

\end{enumerate}

\end{definition}

\begin{theorem}[Brzezi\'nski]  The algebra $A\# V$ is a crossed product is and only if $\chi$ is a twisting map and $\mathcal{F}$ is a normal cocycle that satisfies the twisted module condition.
\end{theorem}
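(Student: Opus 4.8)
The plan is to prove the two implications of the equivalence separately, working throughout in the graphical calculus of the preceding subsection (drawing $\chi$ and $\mathcal F$ as the two labelled boxes that already appear in Definition~\ref{cociclo}). Before splitting into cases I would record the explicit shape of the product obtained by unravelling $\mu_{A\#V}$: $(a\#v)(b\#w)$ is computed by sliding $v$ past $b$ via $\chi$, absorbing the resulting element of $A$ into $a$, then combining the resulting element of $V$ with $w$ via $\mathcal F$ and absorbing its $A$-component as well. I would also isolate the bookkeeping facts that fall out of the normalizations: as soon as $\chi(1\ot_k a)=a\ot_k 1$, $\chi(v\ot_k 1)=1\ot_k v$ and $\mathcal F$ is normal, one has $(1\#v)(a\#1)=\chi(v\ot_k a)$ and $(1\#v)(1\#w)=\mathcal F(v\ot_k w)$ in $A\ot_k V$, the map $a\mapsto a\#1$ is an algebra homomorphism $A\to A\#V$, and every element factorizes as $a\#v=(a\#1)(1\#v)$.

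For sufficiency I would assume $\chi$ is a twisting map and $\mathcal F$ a normal cocycle satisfying the twisted module condition, and first check that $1\#1$ is a two-sided identity: a short computation in which $(1\#1)(a\#v)$ uses $\chi(1\ot_k a)=a\ot_k 1$ and $\mathcal F(1\ot_k v)=1\ot_k v$, while $(a\#v)(1\#1)$ uses $\chi(v\ot_k 1)=1\ot_k v$ and $\mathcal F(v\ot_k 1)=1\ot_k v$. The substantial part is associativity. I would expand $\bigl((a\#v)(b\#w)\bigr)(c\#u)$ and $(a\#v)\bigl((b\#w)(c\#u)\bigr)$ directly from the definition of $\mu_{A\#V}$; each side becomes a composite containing two copies of $\chi$, two copies of $\mathcal F$ and several copies of $\mu_{\!A}$, and one rewrites the first into the second using the multiplicativity of $\chi$ with respect to $\mu_{\!A}$ (to push an internal $A$-product through a $\chi$), the twisted module condition (to move the $A$-output of one $\mathcal F$ past the $\chi$ lying above the other $\mathcal F$), the cocycle condition (to re-bracket the two $\mathcal F$'s), and associativity of $\mu_{\!A}$ to finish. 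Equivalently one can phrase this as checking confluence of the rewriting rules $(1\#v)(a\#1)\rightsquigarrow\chi(v\ot_k a)$, $(1\#v)(1\#w)\rightsquigarrow\mathcal F(v\ot_k w)$, $(a\#1)(b\#1)\rightsquigarrow ab\#1$ on the two kinds of generators, whose only nontrivial critical pairs produce exactly these three identities.

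For necessity I would assume $\mu_{A\#V}$ is associative with $1\#1$ as identity. First I would extract the normalizations by writing out the left- and right-unit equations for the special elements $1\#1$, $a\#1$ and $1\#v$; since every such equation couples a $\chi$-term with an $\mathcal F$-term, I would disentangle them in a fixed order — beginning with $(1\#1)(1\#1)=1\#1$, then $(1\#1)(1\#v)$ and $(1\#v)(1\#1)$, then $(1\#1)(a\#1)$ and $(a\#1)(1\#1)$ — to conclude $\chi(1\ot_k a)=a\ot_k 1$, $\chi(v\ot_k 1)=1\ot_k v$ and $\mathcal F(1\ot_k v)=\mathcal F(v\ot_k 1)=1\ot_k v$. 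With these in place the bookkeeping facts above become available, and I would obtain the remaining conditions by specializing the associativity law to triples drawn from $\{\,a\#1,\ 1\#v\,\}$: the triple $\bigl((1\#v)(a\#1)\bigr)(b\#1)=(1\#v)\bigl((a\#1)(b\#1)\bigr)$ gives the multiplicativity of $\chi$; the triple $\bigl((1\#v)(1\#w)\bigr)(a\#1)=(1\#v)\bigl((1\#w)(a\#1)\bigr)$ gives the twisted module condition; and $\bigl((1\#v)(1\#w)\bigr)(1\#u)=(1\#v)\bigl((1\#w)(1\#u)\bigr)$ gives the cocycle condition. Together with the normalizations this is exactly the assertion that $\chi$ is a twisting map and $\mathcal F$ a normal cocycle satisfying the twisted module condition.

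The hard part will be the associativity verification in the sufficiency direction: it is a degree-three associator built from two operations each occurring twice, and one must recognize precisely which of the three structural identities to invoke, and in which order, to rewrite one bracketing into the other — mechanical but long, and barely manageable without the graphical calculus. A secondary difficulty is the bootstrapping of the normalizations in the necessity direction, where no single unit equation isolates a single normalization, so the identities must be peeled off in a carefully chosen sequence (invoking, where needed, a particular instance of associativity to break the residual coupling between the $\chi$- and $\mathcal F$-normalizations).
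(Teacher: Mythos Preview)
The paper does not give its own proof of this result; at the start of the subsection it refers the reader to \cite{Br} and \cite{B-D}. So there is no in-paper argument to compare against, and your sketch is the standard line of attack one finds in those references. The sufficiency direction is correctly outlined: the three structural identities are exactly what is needed to rewrite one bracketing into the other, and the unit computation is routine.

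The necessity direction, however, has a real gap at precisely the point you flag as ``secondary''. The unit and associativity axioms constrain only the product $\mu_{A\#V}$, and distinct pairs $(\chi,\mathcal F)$ can yield the \emph{same} product. For instance, with $A=V=k$ and $1_V=1$, the pairs $(\chi,\mathcal F)=(2\cdot\mathrm{id},\tfrac12\cdot\mathrm{id})$ and $(\mathrm{id},\mathrm{id})$ both give the standard multiplication on $k$, so $A\#V$ is a crossed product, yet for the first pair $\chi(1\otimes 1)=2\neq 1$ and $\chi$ is not a twisting map. Consequently no sequence of unit equations, and no instance of associativity, can force the \emph{given} $\chi$ and $\mathcal F$ to satisfy the normalizations; your disentangling procedure only produces identities of the form $(\mu_A\otimes V)(A\otimes\mathcal F)(\chi(1\otimes a)\otimes 1)=a\otimes 1$, which are satisfied in the example above without $\chi(1\otimes a)=a\otimes 1$ holding. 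Likewise your derivation of the multiplicativity of $\chi$ from the triple $\bigl((1\#v)(a\#1)\bigr)(b\#1)$ presupposes $(1\#v)(a\#1)=\chi(v\otimes a)$, which already uses the normality of $\mathcal F$ that you have not yet established. What the argument in \cite{Br} actually proves---and what the paragraph immediately following the theorem in the paper is recording---is that the \emph{canonical} pair $\chi'(v\otimes a):=(1\#v)(a\#1)$, $\mathcal F'(v\otimes w):=(1\#v)(1\#w)$ satisfies all the conditions and reproduces $\mu_{A\#V}$. For this pair the normalizations are immediate from the unit (e.g.\ $\chi'(1\otimes a)=(1\#1)(a\#1)=a\#1$), and then your specialization-to-triples argument goes through cleanly. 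Rewritten in terms of $\chi',\mathcal F'$ your necessity proof is correct.
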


Note that the multiplication of a crossed product have the following property:
\begin{equation}
(a\# 1)(b\# v) = ab\# v.\label{eq1}
\end{equation}
In particular $a\mapsto a\# 1$ is an injective morphism of $k$-algebras. We consider $A$ as a subalgebra of $A\# V$ via this map. Conversely, each $k$-algebra with underlying vector space $A\ot_k V$, whose multiplication map satisfies~\eqref{eq1}, is a crossed product. The twisting map $\chi$ and the cocycle $\mathcal{F}$ are given by
$$
\chi(v\ot_k a) = (1\# v)(a\# 1)\quad\text{and}\quad \mathcal{F}(v\ot_k w) = (1\# v)(1\# w).
$$

\begin{definition} Let $A\# V$ be a crossed product with associated twisting map $\chi$ and cocycle $\mathcal{F}$, and let $R$ be a subalgebra of $A$. We say that:

\begin{itemize}

\smallskip

\item[-] $R$ is {\em stable under $\chi$} if $\chi(V\ot_k R)\subseteq R\ot_k V$.

\smallskip

\item[-] {\em $\mathcal{F}$ takes its values in $R\ot_k V$} if $\mathcal{F}(V\ot_k V) \subseteq R\ot_k V$.

\end{itemize}

\end{definition}

\subsection{Braided Hopf crossed products}
Braided bialgebras and braided Hopf algebras were introduced by Majid (see his survey~\cite{M}). In this subsection, we make a quick review of this subject following the intrinsic presentation given by Takeuchi in~\cite{T}. Then, we review the concept of braided Hopf crossed products introduced in~\cite{G-G2}. Let $V$ be a $k$-vector space. Recall that a map $c\in \End_k(V^{\ot_k^2})$ is called a braiding operator of $V$ if it satisfies the equality
$$
(c\ot_k V)\xcirc (V\ot_k c)\xcirc (c\ot_k V) = (V\ot_k c)\xcirc (c\ot_k V)\xcirc (V\ot_k c).
$$

\begin{definition} A {\em braided bialgebra} is a $k$-vector space $H$, endowed with an algebra structure, a coalgebra structure and a bijective braiding operator $c$ of $H$, called the braid of $H$, such that: $c$ is compatible with the algebra and coalgebra structures of $H$, $\eta$ is a coalgebra morphism, $\ep$ is an algebra morphism and
$$
\De\xcirc\mu = (\mu\ot_k \mu)\xcirc(H\ot_k c \ot_k H)\xcirc(\De \ot_k \De).
$$
Moreover, if there exists a map $S\colon H\to H$, which is the convolution inverse of the identity map, then we say that $H$ is a {\em braided Hopf algebra} and we call $S$ the {\em antipode} of $H$.
\end{definition}

Usually $H$ denotes a braided bialgebra, understanding the structure maps, and $c$ denotes its braid.

\begin{definition}\label{transposition} Let $H$ be a braided bialgebra and $A$ an algebra. A  {\em transposition of $H$ on $A$} is a bijective twisting map $s\colon H\ot_k A \to A\ot_k H$ which is compatible with bialgebra structure of $H$. That is, $s$ is a twisting map that satisfies the equation
$$
(s\ot_k H)\xcirc (H\ot_k s)\xcirc (c\ot_k A) = (A\ot_k c)\xcirc (s\ot_k H)\xcirc (H\ot_k s)\quad\text{ (compatibility of $s$ with $c$)}
$$
and it is compatible with the algebra and coalgebra structures of $H$.
\end{definition}

\begin{remark}\label{transposicion inversa} It is easy to see that if $s$ is a transposition then $s^{-1}$ is compatible with the algebra and coalgebra structures of $H$, with the algebra structure of $A$ and that
$$
(H\ot_k s^{-1})\xcirc (s^{-1}\ot_k H )\xcirc (A\ot_k c^{-1}) = (c^{-1}\ot_k H )\xcirc (H\ot_k s^{-1}) \xcirc (s^{-1}\ot_k H).
$$
\end{remark}

\begin{definition}\label{accion debil} Let $s\colon H\ot_k A\to A\ot_k H$ be a transposition. A {\em weak $s$-action of $H$ on $A$} is a map $\rho\colon H\ot_k A\to A$, that satisfies:

\begin{enumerate}

\smallskip

\item $\rho\xcirc (H\ot_k\mu) =\mu\xcirc(\rho\ot_k\rho)\xcirc (H\ot_k s\ot_k A)\xcirc (\De\ot_k A\ot_k A)$,

\smallskip

\item $\rho(h\ot_k 1)= \ep(h)1$, for all $h\in H$,

\smallskip

\item $\rho(1\ot_k a) = a$, for all $a\in A$,

\smallskip

\item $s\xcirc (H\ot_k\rho) = (\rho\ot_k H)\xcirc (H\ot_k s)\xcirc (c\ot_k A)$.

\medskip

\end{enumerate}
An {\em $s$-action} is a weak $s$-action which satisfies $\rho\xcirc (H\ot_k\rho) =\rho\xcirc (\mu\ot_k A)$.
\end{definition}

\begin{remark}\label{propiedad de accion debil} It is easy to see that if $\rho$ is a weak $s$-action of $H$ on $A$, then
$$
(H\ot_k \rho)\xcirc (c^{-1}\ot_k A)\xcirc (H\ot_k s^{-1}) = s^{-1}\xcirc (\rho\ot_k H).
$$
\end{remark}

We will use the diagrams
\begin{equation}\label{s3}
\spreaddiagramcolumns{-1.6pc}\spreaddiagramrows{-1.6pc}
\objectmargin{0.0pc}\objectwidth{0.0pc}
\def\objectstyle{\sssize}
\def\labelstyle{\sssize}
\grow{\xymatrix{
\ar@{-}[1,1]+<-0.1pc,0.1pc> && \ar@{-}[2,-2]\\
&&\\
&&\ar@{-}[-1,-1]+<0.1pc,-0.1pc>
}}
\grow{\xymatrix{
\\
\txt{,}
}}\quad
\grow{\xymatrix{ \ar@{-}[2,2]&&\ar@{-}[1,-1]+<0.1pc,0.1pc>\\
 &&\\
 \ar@{-}[-1,1]+<-0.1pc,-0.1pc>&&}}
\grow{\xymatrix{
\\
\txt{,}
}}\quad
\grow{\xymatrix{
\ar@{-}[1,1]+<-0.125pc,0.0pc> \ar@{-}[1,1]+<0.0pc,0.125pc>&&\ar@{-}[2,-2]\\
&&\\
&&\ar@{-}[-1,-1]+<0.125pc,0.0pc>\ar@{-}[-1,-1]+<0.0pc,-0.125pc>
}}
\grow{\xymatrix{
\\
\txt{,}
}}\quad
\grow{\xymatrix{
\ar@{-}[2,2]&&\ar@{-}[1,-1]+<0.125pc,0.0pc> \ar@{-}[1,-1]+<0.0pc,0.125pc>\\
&&\\
\ar@{-}[-1,1]+<-0.125pc,0.0pc> \ar@{-}[-1,1]+<0.0pc,-0.125pc>&&
}}
\quad\grow{\xymatrix{{}\save[]+<0pc,-0.4pc>*\txt{and}
\restore}}\quad
\grow{\xymatrix{
               \ar@{-}`d/4pt [1,2][1,2] && \ar@{-}[2,0]\\
               &&\\
               &&
}}
\end{equation}
to denote the braid $c$ of $H$, its inverse $c^{-1}$, the transposition $s$, its inverse $s^{-1}$, and the weak $s$-action $\rho$, respectively.

\begin{definition} Let $s\colon H\ot_k A \to A\ot_k H$ be a transposition, $\rho\colon H\ot_k A\to A$ a weak $s$-action and $f\colon H\ot_k H \to A$ a $k$-linear map. We say that $f$ is {\em normal} if $f(1\ot_k x)=f(x\ot_k 1)= \ep(x)$ for all $x\in H$, and that $f$ is a {\em cocycle that satisfies the twisted module condition} if
$$
\spreaddiagramcolumns{-1.6pc}\spreaddiagramrows{-1.6pc}
\objectmargin{0.0pc}\objectwidth{0.0pc}
\def\objectstyle{\sssize}
\def\labelstyle{\sssize}
\grow{\xymatrix@!0{
   &\ar@{-}[1,0]&&&&\ar@{-}[1,0]&&&&\ar@{-}[1,0]\\
                     & \ar@{-}`l/4pt [1,-1] [1,-1] \ar@{-}`r [1,1] [1,1]&&&&
                     \ar@{-}`l/4pt [1,-1] [1,-1] \ar@{-}`r [1,1] [1,1]&&&&
                     \ar@{-}`l/4pt [1,-1] [1,-1] \ar@{-}`r [1,1] [1,1]&\\
  \ar@{-}[4,0]&&\ar@{-}[1,1]+<-0.1pc,0.1pc> && \ar@{-}[2,-2]&&\ar@{-}[1,1]+<-0.1pc,0.1pc> &&
                                                                \ar@{-}[2,-2]&&\ar@{-}[4,0]\\
  &&&&&&&&&&\\
  &&\ar@{-}[2,0]&&\ar@{-}[-1,-1]+<0.1pc,-0.1pc>\ar@{-}[1,1]+<-0.1pc,0.1pc> && \ar@{-}[2,-2]
                           && \ar@{-}[-1,-1]+<0.1pc,-0.1pc>\ar@{-}[2,0]&&\\
  &&&&&&&&&&\\
  \ar@{-}[2,1]&&\ar@{-}`d/4pt [1,1] `[0,2] [0,2]&&&&\ar@{-}[-1,-1]+<0.1pc,-0.1pc> \ar@{-}[2,1]
                                                       &&\ar@{-}`d/4pt [1,1] `[0,2] [0,2]&&\\
  &&&{\bullet}\ar@{-}[1,0]&&&&&&\ar@{-}[1,0]&\\
  &\ar@{-}`d/4pt [1,2][1,2] && \ar@{-}[2,0]&&&&\ar@{-}`d/4pt [1,1] `[0,2] [0,2]&&\\
  &&&&&&&&{\bullet}\ar@{-}[3,0] &&\\
  &&&\ar@{-}[2,1]&&&&&&&\\
  &&&&&&&&&&\\
  &&&&\ar@{-}`d/4pt [1,2] `[0,4] [0,4] &&&&&&\\
  &&&&&&\ar@{-}[1,0]&&&&\\
  &&&&&&
}}
\grow{\xymatrix{
\\\\\\\\\\
\txt{$=$} }}
\grow{\xymatrix@!0{
\\\\
  &\ar@{-}[1,0]&&&&\ar@{-}[1,0]&&\ar@{-}[6,0]\\
  & \ar@{-}`l/4pt [1,-1] [1,-1] \ar@{-}`r [1,1] [1,1]&&&&\ar@{-}`l/4pt [1,-1] [1,-1]
                                                               \ar@{-}`r [1,1] [1,1]&&\\
  \ar@{-}[2,0]&&\ar@{-}[1,1]+<-0.1pc,0.1pc> && \ar@{-}[2,-2]&&\ar@{-}[2,0]&\\
  &&&&&&&\\
  \ar@{-}`d/4pt [1,1] `[0,2] [0,2]&&&&\ar@{-}[-1,-1]+<0.1pc,-0.1pc>
                                                      \ar@{-}`d/4pt [1,1] `[0,2] [0,2]&&&\\
  &{\bullet}\ar@{-}[1,0]&&&&\ar@{-}[1,0]&&\\
  &\ar@{-}[2,1]&&&&\ar@{-}`d/4pt [1,1] `[0,2] [0,2]&&\\
  &&&&&&{\bullet}\ar@{-}[1,0]&\\
  &&\ar@{-}`d/4pt [1,2] `[0,4] [0,4] &&&&&\\
  &&&&\ar@{-}[1,0]&&&\\
  &&&&&&
}}
\quad
\grow{\xymatrix{
\\\\\\\\\\
\txt{and} }} \quad
\grow{\xymatrix@!0{
  &\ar@{-}[1,0]&&&&\ar@{-}[1,0]&&&\ar@{-}[4,0]\\
  & \ar@{-}`l/4pt [1,-1] [1,-1] \ar@{-}`r [1,1] [1,1]&&&&\ar@{-}`l/4pt [1,-1] [1,-1]
                                                               \ar@{-}`r [1,1] [1,1]&&\\
  \ar@{-}[6,0]&&\ar@{-}[1,1]+<-0.1pc,0.1pc> && \ar@{-}[2,-2]&&\ar@{-}[2,0]&\\
  &&&&&&&\\
  \ar@{-}[4,0]&&\ar@{-}[4,0]&&\ar@{-}[-1,-1]+<0.1pc,-0.1pc>\ar@{-}[2,0]&&
          \ar@{-}[1,1]+<-0.125pc,0.0pc> \ar@{-}[1,1]+<0.0pc,0.125pc>&&\ar@{-}[2,-2]\\
  &&&&&&&&\\
  &&&&\ar@{-}[1,1]+<-0.125pc,0.0pc> \ar@{-}[1,1]+<0.0pc,0.125pc>&&\ar@{-}[2,-2]
        &&\ar@{-}[2,0]\ar@{-}[-1,-1]+<0.125pc,0.0pc>\ar@{-}[-1,-1]+<0.0pc,-0.125pc>\\
  &&&&&&&&\\
  \ar@{-}[2,2]&&\ar@{-}`d/4pt [1,2][1,2] && \ar@{-}[2,0]
                   &&\ar@{-}[-1,-1]+<0.125pc,0.0pc>\ar@{-}[-1,-1]+<0.0pc,-0.125pc>
  \ar@{-}`d/4pt [1,1] `[0,2] [0,2] &&\\
  &&&&&&&{\bullet}\ar@{-}[1,0]&\\
  &&\ar@{-}`d/4pt [1,2][1,2] && \ar@{-}[2,0]&&&\ar@{-}[2,-1]&\\
  &&&&&&&&\\
  &&&&\ar@{-}`d/4pt [1,1] `[0,2] [0,2]&&&&\\
  &&&&&\ar@{-}[1,0]&&&\\
  &&&&&&&&
}}
\grow{\xymatrix{
\\\\\\\\\\
\txt{$=$} }}
\grow{\xymatrix@!0{
\\
  &\ar@{-}[1,0]&&&&\ar@{-}[1,0]&&\ar@{-}[10,0]\\
  & \ar@{-}`l/4pt [1,-1] [1,-1] \ar@{-}`r [1,1] [1,1]&&&&\ar@{-}`l/4pt [1,-1] [1,-1]
                                                               \ar@{-}`r [1,1] [1,1]&&\\
  \ar@{-}[2,0]&&\ar@{-}[1,1]+<-0.1pc,0.1pc> && \ar@{-}[2,-2]&&\ar@{-}[2,0]&\\
  &&&&&&&\\
  \ar@{-}[2,1]&&\ar@{-}[2,1]&&\ar@{-}[-1,-1]+<0.1pc,-0.1pc>\ar@{-}`d/4pt [1,1] `[0,2] [0,2]
                                                                              &&&\ar@{-}[2,0]\\
  &&&&&\ar@{-}[1,0]&&\\
  &\ar@{-}`d/4pt [1,1] `[0,2] [0,2]&&&&\ar@{-}`d/4pt [1,2][1,2] && \ar@{-}[4,0]\\
  && {\bullet}\ar@{-}'[1,0][3,1]&&&&&\\
  &&&&&&&\\
  &&&&&&&\\
  &&&\ar@{-}`d/4pt [1,2] `[0,4] [0,4] &&&&\\
  &&&&&\ar@{-}[1,0]&&\\
  &&&&&&
}}
\grow{\xymatrix{
\\\\\\\\\\
\txt{,\qquad where } }} \grow{\xymatrix@!0{
\\\\\\\\\\\\
\ar@{-}`d/4pt [1,1] `[0,2] [0,2] &&\\
&{\bullet}\ar@{-}[1,0]\\
& }} \grow{\xymatrix{
\\\\\\\\\\
\txt{$= f$.} }}
$$
More precisely, the first equality is the {\em cocycle condition} and the second one is the {\em twisted module condition}. Finally, we say that $f$ is {\em compatible with $s$} if
$$
(f\ot_k H)\xcirc(H\ot_k c)\xcirc(c\ot_k H) = s\xcirc(H\ot_k f).
$$
\end{definition}

\begin{definition} Let $s\colon H\ot_k A \to A\ot_k H$ be a transposition, $\rho\colon H\ot_k A\to A$ a weak $s$-action, $f\colon H\ot_k H \to A$ a compatible with $s$ normal cocycle satisfying the twisted module condition, and $R$ a subalgebra of $A$. We say that a $R$ is {\em stable under $s$ and $\rho$} if $s(H\ot_k R)\subseteq R\ot_k H$ and $\rho(H\ot_k R)\subseteq R$, and we say that {\em $f$ takes its values in $R$} if $f(H\ot_k H)\subseteq R$.
\end{definition}

Let $H$ be a bialgebra, $A$ and algebra, $s\colon H\ot_k A \to A\ot_k H$ a transposition, $\rho\colon H\ot_k A \to A$ a weak $s$-action and $f\colon H\ot_k H \to A$ a compatible with $s$ normal cocycle that satisfies the twisted module condition. Let $\chi\colon H\ot_k A \to A\ot_k H$ and $\mathcal{F}\colon H\ot_k H \to A\ot_k H$ be the maps defined by
$$
\chi:=\! (\rho \ot_k H)\xcirc (H\ot_k s)\xcirc (\De \ot_k A)\quad\!\text{and} \quad\!\mathcal{F}:=\! (f\ot_k\mu)\xcirc (H\ot_k c\ot_k H)\xcirc(\De \ot_k \De).
$$
In~\cite[Section 9]{G-G2} it was proven that $\chi$ is a twisting map and $\mathcal{F}$ is a normal cocycle that satisfies the twisted module condition.

\smallskip

Let $R$ be a subalgebra of $A$. It is evident that if $R$ is stable under $s$ and $\rho$, then it is also stable under $\chi$, and that if $f$ takes its values in $R$, then $\mathcal{F}$ takes its values in $R\ot_k H$.

\begin{definition} The {\em braided Hopf crossed product $A\#_f H$ associated with $(s,\rho,f)$} is the Brzezi\'nski crossed product associated with $\chi$ and $\mathcal{F}$.
\end{definition}

Let $H\ot_c H$ be the coalgebra with underlying space $H\ot_k H$, comultiplication map $\De_{H\ot_c H}:=(H\ot_k c\ot_k H)\xcirc(\De_H \ot_k \De_H)$ and counit $\epsilon_{H\ot_c H} :=\epsilon_H\ot_k \epsilon_H$. An important class of braided Hopf crossed products are those with $H$ a braided Hopf algebra and whose cocycle $f\colon H\ot_c H\to A$ is convolution invertible. They are named cleft. In~\cite[Section 10]{G-G2} it was proven that $E$ is cleft if and only if the map $\gamma\colon H\to E$, defined by $\gamma(h) = 1\# h$, is convolution invertible. Moreover, in this case,
$$
\gamma^{-1} = (f^{-1}\ot_k H)\xcirc (S\ot_k H\ot_k S)\xcirc (H\ot_k c)\xcirc (c\ot_k H)\xcirc (\De_H\ot_k H)\xcirc \De_H.
$$

\subsection{comodule algebras}
\begin{definition}\label{comodulo algebra} Let $s\colon H\ot_k A\to A\ot_k H$ a transposition. Assume that $A$ is a right $H$-comodule with coaction $\nu$. We say that $(A,s)$ is a {\em right $H$-comodule algebra} if and only if

\begin{enumerate}

\smallskip

\item $(\nu\ot_k H)\xcirc s = (A\ot_k c) \xcirc (s\ot_k H) \xcirc (H\ot_k \nu)$,

\smallskip

\item $(\mu_{\!A}\ot_k \mu_{\!H})\xcirc (A\ot_k s\ot_k H)\xcirc (\nu \ot_k \nu) = \nu\xcirc \mu_{\!A}$,

\smallskip

\item $\nu(1) = 1\ot_k 1$.

\smallskip

\end{enumerate}
\end{definition}

Let $(A,s)$ and $(A',s')$ be $H$-comodule algebras. We say that a map $f\colon A\to A'$ is a {\em morphism of $H$-comodule algebras} from $(A,s)$ to $(A',s')$, if it is a morphism of algebras, a morphism of $H$-comodules and $s'\xcirc (H\ot_k f) = (f\ot_k H)\xcirc s$.

\begin{example}\label{ex 1.14} If $E = A\#_f H$ is a braided Hopf crossed product, then the map $\wh{s}\colon H\ot_k E\to E\ot_k H$ defined by $\wh{s}:= (A\ot_k c)\xcirc (s\ot_k H)$ is a transposition, and $(E,\wh{s})$, endowed with the comultiplication $\nu\colon E\to E\ot_k H$, defined by $\nu := A\ot_k \Delta_H$, is an $H$-braided comodule algebra. In particular $(H,c)$ is an $H$-braided comodule algebra with comultiplication $\Delta_H$. Moreover the map $\gamma\colon H\to E$ is a morphisms of $H$-comodule algebras from $(H,c)$ to $(E,\wh{s})$.
\end{example}

\begin{remark}\label{notacion para wh{s}} The maps $\wh{s}$ and $\wh{s}^{-1}$ will be represented by the same diagrams as the ones introduced in~\eqref{s3} for $s$ and $s^{-1}$, respectively.
\end{remark}

\subsection{Mixed complexes}
In this subsection we recall briefly the notion of mixed complex. For more details about this
concept we refer to \cite{K} and \cite{B}.

\smallskip

A {\em mixed complex} $(X,b,B)$ is a graded $k$-vector space $(X_n)_{n\ge 0}$, endowed with morphisms $b\colon X_n\to X_{n-1}$ and $B\colon X_n\to X_{n+1}$, such that
$$
b\xcirc b = 0,\quad B\xcirc B = 0\quad\text{and}\quad B \xcirc b + b\xcirc B = 0.
$$
A {\em morphism of mixed complexes} $f\colon (X,b,B)\to (Y,d,D)$ is a family of maps $f\colon X_n\to Y_n$, such that $d\xcirc f = f\xcirc b$ and $D\xcirc f= f\xcirc B$. Let $u$ be a degree~$2$ variable. A mixed complex $\cX = (X,b,B)$ determines a double complex
\[
\xymatrix{\\\\\\ \BP(\cX)=}\qquad
\xymatrix{
& \vdots \dto^-{b} &\vdots \dto^-{b}& \vdots \dto^-{b}& \vdots \dto^-{b}\\
\dots & X_3 u^{-1} \lto_-{B}\dto^-{b} & X_2 u^0\lto_-{B}\dto^-{b} & X_1 u\lto_-{B}\dto^-{b} &
X_0 u^2\lto_-{B} \\
\dots & X_2 u^{-1}\lto_-{B}\dto^-{b} & X_1 u^0\lto_-{B}\dto^-{b} & X_0 u\lto_-{B}\\
\dots & X_1 u^{-1}\lto_-{B}\dto^-{b} & X_0 u^0 \lto_-{B}\\
\dots & X_0 u^{-1} \lto_-{B},}
\]
where $b(\bx u^i):= b(\bx)u^i$ and $B(\bx u^i):= B(\bx)u^{i-1}$. By deleting the positively numbered columns we obtain a subcomplex $\BN(\cX)$ of $\BP(\cX)$. Let $\BN'(\cX)$ be the kernel of the canonical surjection from $\BN(\cX)$ to $(X,b)$. The quotient double complex $\BP(\cX)/\BN'(\cX)$ is denoted by $\BC(\cX)$. The homology groups $\HC_*(\cX)$, $\HN_*(\cX)$ and $\HP_*(\cX)$, of the total complexes of $\BC(\cX)$, $\BN(\cX)$ and $\BP(\cX)$ respectively, are called the {\em cyclic}, {\em negative} and {\em periodic homology groups} of $\cX$. The homology $\HH_*(\cX)$, of $(X,b)$, is called the {\em Hochschild homology} of $\cX$. Finally, it is clear that a morphism $f\colon \cX\to \cY$ of mixed complexes induces a morphism from the double complex $\BP(\cX)$ to the double complex $\BP(\cY)$.

\smallskip

Let $C$ be a $k$-algebra. If $K$ is a subalgebra of $C$ we will say that $C$ is a $K$-algebra. Throughout the paper we will use the following notations:

\begin{enumerate}

\smallskip

\item We set $\ov{C}:= C/K$. Moreover, given $c\in C$, we also denote by $c$ the class of $c$ in $\ov{C}$.

\smallskip

\item We use the unadorned tensor symbol $\ot$ to denote the tensor product $\ot_{\!K}$.

\smallskip

\item We write $\ov{C}^{\ot^l}:= \ov{C}\ot\cdots\ot \ov{C}$ ($l$-times).

\smallskip

\item Given $c_0,\dots, c_r \in C$ and $i<j$, we write $\bc_{ij}:= c_i\ot\cdots\ot c_j$.

\smallskip

\item Given a $K$-bimodule $M$, we let $M\ot$ denote the quotient $M/[M,K]$, where $[M,K]$ is the $k$-vector subspace of $M$ generated by all the commutators $m\lambda - \lambda m$, with $m\in M$ and $\lambda\in K$. Moreover, for $m\in M$, we let $[m]$ denote the class of $m$ in $M\ot$.

\smallskip

\end{enumerate}
By definition, the {\em normalized mixed complex of the $K$-algebra $C$} is the mixed complex $(C\ot \ov{C}^{\ot^*}\ot,b,B)$, where $b$ is the canonical Hochschild boundary map and the Connes operator $B$ is given by
$$
B\bigl([\bc_{0r}]\bigr):= \sum_{i=0}^r (-1)^{ir} [1\ot \bc_{ir}\ot \bc_{0,i-1}].
$$
The {\em cyclic}, {\em negative}, {\em periodic} and {\em Hochschild homology groups} $\HC^K_*(C)$, $\HN^K_*(C)$, $\HP^K_*(C)$ and $\HH^K_*(C)$ of $C$ are the respective homology groups of $(C\ot\ov{C}^{\ot^*}\ot,b,B)$.

\subsection{The perturbation lemma}
Next, we recall the perturbation lemma. We give the version introduced in \cite{C}.

\smallskip

A {\em homotopy equivalence data}
\begin{equation}
\xymatrix{(Y,\partial)\ar@<-1ex>[r]_-{i} & (X,d) \ar@<-1ex>[l]_-{p}}, \quad h\colon X_*\to
X_{*+1},\label{eq2}
\end{equation}
consists of the following:

\begin{enumerate}

\smallskip

\item Chain complexes $(Y,\partial)$, $(X,d)$ and quasi-isomorphisms $i$, $p$ between them,

\smallskip

\item A homotopy $h$ from $i\xcirc p$ to $\ide$.
\end{enumerate}

\smallskip

A {\em perturbation} $\de$ of~\eqref{eq2} is a map $\de\colon X_*\to X_{*-1}$ such that $(d+\de)^2 = 0$. We call it {\em small} if $\ide - \de\xcirc h$ is invertible. In this case we write $A = (\ide - \de\xcirc h)^{-1}\xcirc \de$ and we consider
\begin{equation}
\xymatrix{(Y,\partial^1)\ar@<-1ex>[r]_-{i^1} & (X,d+\de)\ar@<-1ex>[l]_-{p^1}}, \quad h^1\colon
X_*\to X_{*+1},\label{eq3}
\end{equation}
with
$$
\partial^1:= \partial + p\xcirc A\xcirc i,\quad i^1:= i + h\xcirc A\xcirc i,\quad p^1:= p + p\xcirc A\xcirc h,\quad h^1:= h + h\xcirc A\xcirc h.
$$
A {\em deformation retract} is a homotopy equivalence data such that $p\xcirc i = \ide$. A deformation retract is called {\em special} if $h\xcirc i = 0$, $p\xcirc h = 0$ and $h\xcirc h = 0$.

\smallskip

In all the cases considered in this paper the map $\de\xcirc h$ is locally nilpotent, and so $(\ide - \de\xcirc h)^{-1} = \sum_{n=0}^{\infty} (\de\xcirc h)^n$.

\begin{theorem}[\cite{C}]\label{lema de perturbacion} If $\de$ is a small perturbation of the homotopy equivalence data~\eqref{eq2}, then the perturbed data~\eqref{eq3} is a homotopy equivalence data. Moreover, if~\eqref{eq2} is a special deformation retract, then~\eqref{eq3} is also.
\end{theorem}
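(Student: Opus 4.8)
The plan is to verify directly that the perturbed data~\eqref{eq3} fulfils the axioms of a homotopy equivalence data, all the work being encoded in the operator $A := (\ide - \de\xcirc h)^{-1}\xcirc\de$. Since $\de\xcirc h$ is locally nilpotent in all the situations we consider, $A$ is well defined and equals $\sum_{n\ge 0}(\de\xcirc h)^n\xcirc\de$. From the trivial identity $(\ide - \de\xcirc h)\xcirc\de = \de\xcirc(\ide - h\xcirc\de)$ one also gets $A = \de\xcirc(\ide - h\xcirc\de)^{-1}$, whence the two \emph{resolvent identities}
\[
A = \de + \de\xcirc h\xcirc A = \de + A\xcirc h\xcirc\de .
\]
Besides these, the only inputs are the defining properties of the unperturbed data~\eqref{eq2}, namely $d\xcirc i = i\xcirc\partial$, $p\xcirc d = \partial\xcirc p$ and $i\xcirc p - \ide = d\xcirc h + h\xcirc d$, together with $d^2 = \partial^2 = 0$ and the hypothesis $(d+\de)^2 = 0$, which --- using $d^2 = 0$ --- reads $d\xcirc\de + \de\xcirc d = -\de^2$.

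First I would establish the three homotopy identities for~\eqref{eq3}: \emph{(a)} $\partial^1\xcirc\partial^1 = 0$; \emph{(b)} $(d+\de)\xcirc i^1 = i^1\xcirc\partial^1$ and $p^1\xcirc(d+\de) = \partial^1\xcirc p^1$, i.e.\ that $i^1$ and $p^1$ are chain maps; \emph{(c)} $(d+\de)\xcirc h^1 + h^1\xcirc(d+\de) = \ide - i^1\xcirc p^1$, i.e.\ that $h^1$ is a homotopy from $i^1\xcirc p^1$ to $\ide$. Each is a bracket computation: one expands the products using the defining formulas for $\partial^1$, $i^1$, $p^1$, $h^1$, pushes $d$ and $\partial$ through $i$, $p$, $h$ by means of the unperturbed identities, and finally collapses the resulting (a priori infinite, through the geometric series defining $A$) sums by repeated use of the two resolvent identities and of $d\xcirc\de + \de\xcirc d = -\de^2$. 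It is convenient to prove \emph{(b)} before \emph{(c)}, since part of the computation of \emph{(c)} reuses it. One must also note that $i^1$ and $p^1$ remain quasi-isomorphisms: since $i^1\xcirc p^1\simeq\ide$ by \emph{(c)}, it is enough that one of them be a quasi-isomorphism, and this follows by comparing homologies through the filtration of $X$ by the powers of the locally nilpotent operator $\de\xcirc h$ --- modulo which $p^1$ reduces to $p$ --- exactly as in~\cite{C}.

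For the last assertion, assume~\eqref{eq2} is a special deformation retract, so that in addition $p\xcirc i = \ide$, $h\xcirc i = 0$, $p\xcirc h = 0$ and $h\xcirc h = 0$. I would then check the four extra identities $p^1\xcirc i^1 = \ide$, $h^1\xcirc i^1 = 0$, $p^1\xcirc h^1 = 0$ and $h^1\xcirc h^1 = 0$. These are shorter than \emph{(a)}--\emph{(c)}: after expanding, every summand contains a factor of the form $h\xcirc i$, $p\xcirc h$ or $h\xcirc h$ and therefore vanishes, except for a single surviving term, equal to $p\xcirc i = \ide$ in the first case and to $0$ in the other three; the few non-obvious cancellations are again handled by the resolvent identities. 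For instance $h^1\xcirc h^1 = (h + h\xcirc A\xcirc h)\xcirc(h + h\xcirc A\xcirc h)$ is a sum of terms each beginning or ending with $h\xcirc h = 0$.

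The main obstacle is the book-keeping in step \emph{(c)} (and, to a lesser extent, in \emph{(a)}): one must interleave the two resolvent identities, the relation $d\xcirc\de + \de\xcirc d = -\de^2$ and the original homotopy identity in precisely the right order so that the terms of the geometric series cancel in pairs. There is no conceptual difficulty beyond this --- local nilpotence of $\de\xcirc h$ is what legitimises all the rearrangements --- and the special-deformation-retract part needs no new idea.
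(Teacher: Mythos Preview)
The paper does not prove this theorem; it merely cites it from~\cite{C} and uses it as a black box, so there is no ``paper's own proof'' to compare against. Your sketch is the standard direct verification of the perturbation lemma and is essentially what one finds in Crainic's note: the resolvent identities $A = \de + \de h A = \de + A h \de$ together with $d\de + \de d = -\de^2$ do control all the telescoping, and your check of the special-deformation-retract axioms is correct term by term.

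One small point worth sharpening: in the general (non-retract) case, the quasi-isomorphism assertion for $i^1$ and $p^1$ does not follow from~(c) alone, since $i^1 p^1 \simeq \ide_X$ only gives surjectivity of $(i^1)_*$ and injectivity of $(p^1)_*$. Your filtration argument is the right idea, but you should state it more precisely: the decreasing filtration $F^n X = \ima\bigl((\de h)^n\bigr)$ (or the analogous one on $Y$ via $p A i$) is exhaustive and Hausdorff by local nilpotence, and on the associated graded the perturbed data reduces to the unperturbed one, so a spectral-sequence or successive-approximation argument shows $(p^1)_*$ is bijective. Alternatively, one can simply note that in every application in this paper the data~\eqref{eq2} is a special deformation retract, where $p^1 i^1 = \ide$ settles the matter immediately.
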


\section{A resolution for a Brzezi\'nski's crossed product}\label{SecRes}
Let $E:=A\# V$ be a Brzezi\'nski's crossed product with associated twisting map $\chi$ and cocycle $\mathcal{F}$, and let $K$ be a stable under $\chi$ subalgebra of $A$. Let $\Upsilon$ be the family of all the epimorphisms of $E$-bimodules which split as $(E,K)$-bimodule maps. In this section we construct a $\Upsilon$-projective resolution $(X_*,d_*)$, of $E$ as an $E$-bimodule, simpler than the normalized bar resolution of $E$. Moreover we will compute comparison maps between both resolutions. Recall that for all $K$-algebra $C$ we let $\ov{C}$ and $\ot$ denote $C/K$ and $\ot_K$, respectively. We also will use the following notations:

\begin{enumerate}

\smallskip

\item Given $x_0,\dots, x_r \in E$ and $i<j$, we write $\ov{\bx}_{ij}$ to mean $x_i\ot_{\!A}\cdots \ot_{\!A} x_j$, both in $E^{\ot_{\!A}^{j-i+1}}$ and in $(E/A)^{\ot_{\!A}^{j-i+1}}$.

\smallskip

\item We let $i_{\!A}\colon A\to E$ and $i_{\ov{A}}\colon \ov{A}\to \ov{E}$ denote the maps defined by $i_{\!A}(a):=a\# 1$ and $i_{\ov{A}}(a):=a\# 1$, respectively.

\smallskip

\item We set $\ov{V}:= V/k$. Moreover, given $v\in V$, we also denote by $v$ the class of $v$ in $\ov{V}$.

\smallskip

\item We write $\ov{V}^{\ot_k^l}:= \ov{V}\ot_k\cdots\ot_k\ov{V}$ ($l$-times).

\smallskip

\item Given $v_0,\dots, v_s \in V$ and $i<j$, we write $\bv_{ij}:= v_i\ot_k\cdots\ot_k v_j$.

\smallskip

\item We will denote by $\gamma$ any of the maps
$$
\xymatrix @R=-2pt {V\rto & E\\
v \ar@{|->}[0,1] & 1\# v},
\quad
\xymatrix @R=-2pt {V\rto & \ov{E}\\
v \ar@{|->}[0,1] & 1\# v}
\quad\xymatrix{{}\save[]+<0pc,-0.4pc>*\txt{or} \restore}\quad
\xymatrix @R=-2pt {V\rto & E/A\\
v \ar@{|->}[0,1] & 1\# v}.
$$
So, $\gamma(v)$ stands for $1\# v\in E$ or for its class in $\ov{E}$ or $E/A$. More generality, given $a\in A$ and $v\in V$ we will let $a\gamma(v)$ denote $a\# v\in E$ or its class in $\ov{E}$ or $E/A$.

\smallskip

\item We will denote by $\mathcal{V}$, $\mathcal{V}_{\!K}$ and $\mathcal{V}_{\!A}$ the image of $\gamma$ in $E$, $\ov{E}$ and $E/A$, respectively.

\smallskip

\item Given $\bv_{1j}\in V^{\ot_k^j}$, we write $\gamma(\bv_{1j})$ to mean $\gamma(v_i)\ot \cdots\ot\gamma(v_j)$ both in $E^{\ot^j}$ and in $\ov{E}^{\ot^j}$.

\smallskip

\item Given $\bv_{1j}\in V^{\ot_k^j}$, we write $\gamma_{\!A}(\bv_{ij})$ to mean $\gamma(v_i)\ot_{\!A}\cdots\ot_{\!A}\gamma(v_j)$ both in $E^{\ot_{\!A}^j}$ and in $(E/A)^{\ot_{\!A}^j}$.

\smallskip

\end{enumerate}
Note that $E/A \simeq A\ot_k \ov{V}$. We will use the following evident identifications
$$
A^{\ot^r}\ot_{\!A} E\!\simeq\!A^{\ot^r}\ot_k V,\quad\!\! E\ot_{\!A} (E/A)^{\ot_{\!A}^s} \!\simeq\! E\ot_k\ov{V}^{\ot_k^s}\quad\!\!\text{and}\quad\!\! E^{\ot_{\!A}^s}\!\simeq\! E^{\ot_{\!A}^i}\ot_k V^{\ot_k^{s-i}}.
$$
We consider $A^{\ot^r}\ot_k V$, $E\ot_k\ov{V}^{\ot_k^s}$ and $E^{\ot_{\!A}^i}\ot_k V^{\ot_k^{s-i}}$ as $E$-bimodules via the actions obtained by translation of structure. For all $r,s\ge 0$, we let $Y_s$ and $X_{rs}$ denote
$$
E\ot_{\!A} (E/A)^{\ot_{\!A}^s}\ot_{\!A} E\quad\text{and}\quad E\ot_{\!A} (E/A)^{\ot_{\!A}^s} \ot \ov{A}^{\ot^r} \ot E,
$$
respectively. By the above discussion
$$
Y_s \simeq \bigl(E\ot_k \ov{V}^{\ot_k^s}\bigr) \ot_{\!A} E\quad\text{and}\quad X_{rs} \simeq \bigl(E\ot_k\ov{V}^{\ot_k^s}\bigr)\ot \ov{A}^{\ot^r} \ot E.
$$
Consider the diagram of $E$-bimodules and $E$-bimodule maps
$$
\xymatrix{
\vdots \dto^-{-\partial_2} \\
Y_2\dto^-{-\partial_2} &X_{02}\lto_-{\nu_2} &X_{12}\lto_-{d^0_{12}} &\dots \lto_-{d^0_{22}}\\
Y_1 \dto^-{-\partial_1} &X_{01}\lto_-{\nu_1} &X_{11}\lto_-{d^0_{11}} &\dots \lto_-{d^0_{21}}\\
Y_0 & X_{00}\lto_-{\nu_0} & X_{10} \lto_-{d^0_{10}} & \dots \lto_-{d^0_{20}},
 }
$$
where $(Y_*,\partial_*)$ is the normalized bar resolution of the $A$-algebra $E$, introduced in \cite{G-S}; for each $s\ge 0$, the complex $(X_{*s},d^0_{*s})$ is $(-1)^s$-times the normalized bar resolution of the $K$-algebra $A$, tensored on the left over $A$ with $E\ot_{\!A} (E/A)^{\ot_{\!A}^s}$, and on the right over $A$ with $E$; and for each $s\ge 0$, the map $\nu_s$ is the canonical surjection. Each one of the rows of this diagram is contractible as a $(E,K)$-bimodule complex. A contracting homotopy
$$
\sigma^0_{0s}\colon Y_s \to X_{0s} \qquad\text{and}\qquad \sigma^0_{r+1,s}\colon X_{rs} \to
X_{r+1,s},
$$
of the $s$-th row, is given by
\begin{align*}
&\sigma^0_{0s}\bigl(\ov{\bx}_{0s}\ot_{\!A} \gamma(v)\bigr):= \ov{\bx}_{0s}\ot \gamma(v)
\intertext{and}
&\sigma^0_{r+1,s}\bigl(\ov{\bx}_{0s}\ot \ba_{1r}\ot a_{r+1}\gamma(v)\bigr):= (-1)^{r+s+1} \ov{\bx}_{0s}\ot \ba_{1,r+1}\ot \gamma(v).
\end{align*}
Let $\wt{u}\colon Y_0\to E$ be the multiplication map. The complex of $E$-bimodules
$$
\xymatrix{E & Y_0 \lto_-{-\wt{u}} & Y_1 \lto_-{-\partial_1} & Y_2 \lto_-{-\partial_2} & Y_3 \lto_-{-\partial_3} & Y_4 \lto_-{-\partial_4} & Y_5 \lto_-{-\partial_5} & \dots \lto_-{-\partial_6}}
$$
is also contractible as a complex of $(E,K)$-bimodules. A chain contracting homotopy
$$
\sigma_0^{-1}\colon E \to Y_0\qquad\text{and}\qquad \sigma^{-1}_{s+1}\colon Y_s\to Y_{s+1} \quad\text{($s\ge 0$)},
$$
is given by $\sigma^{-1}_{s+1}(\ov{\bx}_{0,s+1}):= (-1)^s \ov{\bx}_{0,s+1}\ot_{\!A} 1_E$.

\medskip

For $r\ge 0$ and $1\le l\le s$, we define $E$-bimodule maps $d^l_{rs}\colon X_{rs}\to X_{r+l-1,s-l}$ recursively on $l$ and $r$, by:
$$
d^l(\bz):= \begin{cases}
\sigma^0\xcirc\partial\xcirc\nu(\bz) &\text{if $l=1$ and $r=0$,}\\
- \sigma^0\xcirc d^1\xcirc d^0(\bz) &\text{if $l=1$ and $r>0$,}\\
- \sum_{j=1}^{l-1} \sigma^0\xcirc d^{l-j}\xcirc d^j(\bz) & \text{if $1<l$ and $r=0$,}\\
- \sum_{j=0}^{l-1} \sigma^0\xcirc d^{l-j}\xcirc d^j(\bz) &\text{if $1<l$ and $r>0$,}
\end{cases}
$$
for $\bz \in E\ot_{\!A} (E/A)^{\ot_{\!A}^s} \ot \ov{A}^{\ot^r} \ot K$.

\begin{theorem}\label{res nuestra} There is a $\Upsilon$-projective resolution of $E$
\begin{equation}
\xymatrix{E & X_0\lto_{-\mu} & X_1\lto_{d_1} &X_2 \lto_{d_2} &X_3\lto_{d_3} &X_4 \lto_{d_4} & \lto_{d_5} \dots,}\label{eq4}
\end{equation}
where $\mu\colon X_{00}\to E$ is the multiplication map,
$$
X_n:= \bigoplus_{r+s=n} X_{rs}\quad\text{and}\quad d_n:= \sum^n_{l=1} d^l_{0n} + \sum_{r=1}^n \sum^{n-r}_{l=0} d^l_{r,n-r}.
$$
\end{theorem}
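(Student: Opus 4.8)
The plan is to produce the resolution \eqref{eq4} by splicing together the two families of contractible complexes displayed in the diagram and then collapsing the resulting ``staircase'' into a single complex via a standard homological perturbation argument. First I would observe that the diagram in the statement is a bicomplex-like array whose rows $(X_{*s},d^0_{*s})_{s\ge 0}$ are, by construction, $(-1)^s$ times the normalized bar resolution of the $K$-algebra $A$ induced up to $E$-bimodules through $(E\ot_{\!A}(E/A)^{\ot_{\!A}^s})\ot_{\!A}(-)\ot_{\!A} E$, hence each is $\Upsilon$-projective and $(E,K)$-contractible with the explicit contracting homotopy $\sigma^0_{*s}$; and whose ``vertical'' complex $(Y_*,\partial_*)$ is the normalized bar resolution of the $A$-algebra $E$, $(E,K)$-contractible via $\sigma^{-1}_*$. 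The maps $\nu_s$ link the two. I would assemble the total object: put $X_n=\bigoplus_{r+s=n}X_{rs}$, and take as first-approximation differential the ``horizontal plus augmentation'' part, $d^0+$ (the $-\partial$ on the $Y$-column), which squares to zero because each piece does and they live in disjoint bidegrees. The maps $d^l_{rs}$ with $l\ge 1$ are then exactly the correction terms one obtains by iterating the contracting homotopy $\sigma^0$ against the vertical differential $\partial$; the recursive formula for $d^l$ in the excerpt is precisely the recursion that the perturbation lemma produces.

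Concretely I would set up a special deformation retract in the sense of Theorem~\ref{lema de perturbacion}. Take $(X,d)$ to be the direct sum of the rows (together with the $Y$-column resolving $E$), with $d$ the ``row differential'' $d^0$ together with $-\mu$ and the $-\partial$ on $Y_*$; take $(Y,\partial)$ — in the perturbation-lemma notation — to be the target complex \eqref{eq4} with its $d_*$; and use the homotopies $\sigma^0$ and $\sigma^{-1}$ to build the contraction $h$, with $i$, $p$ the evident structural inclusion/projection between the $Y$-column and the first column $X_{*0}$ of bar resolutions. Because $(Y_*,\partial_*)$ resolves $E$ and each row $(X_{*s},d^0_{*s})$ resolves $E\ot_{\!A}(E/A)^{\ot_{\!A}^s}\ot_{\!A} E$, the augmented object is acyclic, so this is a genuine deformation retract onto $E$; specialness ($h\xcirc i=0$, $p\xcirc h=0$, $h\xcirc h=0$) follows from the normalization (the homotopies land in the ``one more slot'' summands, which $i$ and $p$ avoid). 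The perturbation $\delta$ is the leftover part of the total differential, namely the $\nu_s$ and all the $d^l_{rs}$ with $l\ge1$; equivalently $\delta=\sum_{l\ge1}d^l$, produced by the recursion. One checks $\delta\xcirc h$ is locally nilpotent: applying $\delta$ strictly decreases $s$ (it moves $s\mapsto s-l$ with $l\ge1$), so finitely many iterations kill any element, whence $(\ide-\delta\xcirc h)^{-1}=\sum_{n\ge0}(\delta\xcirc h)^n$ makes sense and $\delta$ is a small perturbation.

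Then I would invoke Theorem~\ref{lema de perturbacion}: the perturbed data is again a special deformation retract, so the total complex $(X_*,d_*)$ with $d_n=\sum_{l=1}^n d^l_{0n}+\sum_{r=1}^n\sum_{l=0}^{n-r}d^l_{r,n-r}$ is a resolution of $E$ by $E$-bimodules, and one reads off from the formulas for $i^1,p^1$ that the comparison maps with the bar resolution are as claimed (this is what ``we will compute comparison maps'' refers to). Each $X_{rs}=\bigl(E\ot_k\ov V^{\ot_k^s}\bigr)\ot\ov A^{\ot^r}\ot E$ is a direct summand of a tensor power of $E$ over $K$, hence $\Upsilon$-projective (it is relatively projective and the augmentations split as $(E,K)$-bimodule maps), so the resolution is $\Upsilon$-projective. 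Finally I would verify $d_n\xcirc d_{n+1}=0$; rather than a brute-force computation, this is automatic from the perturbation lemma since $(d+\delta)^2=0$ and the bidegree count shows the components of $(d+\delta)^2$ landing in each $X_{rs}$ are exactly the relations the recursion for $d^l$ is designed to enforce.

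The main obstacle is the bookkeeping in the perturbation set-up: one must organize the array so that ``$d$'' (the unperturbed differential), ``$h$'' (built from $\sigma^0$ on the rows and $\sigma^{-1}$ on the $Y$-column), and ``$\delta$'' have compatible (bi)degrees and so that $\delta\xcirc h$ is visibly locally nilpotent — the subtlety being that $\delta$ mixes the $\nu_s$ (which lower $s$ by $1$ and raise nothing) with the higher $d^l$ (which trade $s$ for $r$), and one needs a single filtration, e.g. by $s$, on which all correction terms strictly decrease, to guarantee convergence. Once that filtration is pinned down, everything else is a direct application of Theorem~\ref{lema de perturbacion} and the explicit homotopy formulas given before the statement.
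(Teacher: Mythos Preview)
Your intuition that the $d^l$ are perturbation-style corrections is sound, but the argument has a genuine circularity. Look again at the statement of Theorem~\ref{lema de perturbacion}: a \emph{perturbation} $\delta$ is by definition a map satisfying $(d+\delta)^2=0$; this is a hypothesis of the lemma, not a conclusion. So when you write that ``$d_n\xcirc d_{n+1}=0$ is automatic from the perturbation lemma since $(d+\delta)^2=0$'', you are assuming exactly what you need to prove. Your remark that ``the components of $(d+\delta)^2$ landing in each $X_{rs}$ are exactly the relations the recursion is designed to enforce'' is the right idea, but that \emph{is} the verification --- an induction on bidegree, using that $\sigma^0$ splits $d^0$ on its image and that the recursion is only imposed on the $K$-generating subspace $E\ot_{\!A}(E/A)^{\ot_{\!A}^s}\ot\ov A^{\ot^r}\ot K$ and then extended by $E$-bimodule linearity. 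It is not difficult, but it is the actual content, and it does not follow from Theorem~\ref{lema de perturbacion}.

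Your set-up is also tangled: you take the small complex in the retract to be ``the target complex~\eqref{eq4} with its $d_*$'', which is what you are constructing, while your $i,p$ run between $Y_*$ and $X_{*0}$, a different pair of objects. A clean formulation would take the big complex to be $\bigl(\bigoplus_{r,s}X_{rs},\,d^0\bigr)$ and the small one to be $\bigl(\bigoplus_s Y_s,\,0\bigr)$, with $p=\nu$, $i=\sigma^0_{0*}$, $h=\sigma^0_{*+1,*}$ (one checks $p\xcirc i=\ide$, $h\xcirc i=0$, $p\xcirc h=0$, $h\xcirc h=0$ from the explicit formulas). Once $(d^0+\delta)^2=0$ is verified \emph{independently}, the lemma yields a retract of $(X_*,d_*)$ onto $(\bigoplus_s Y_s,\partial^1)$; a further computation using $d^1_{0s}=\sigma^0\xcirc\partial\xcirc\nu$ identifies $\partial^1$ with $-\partial$, so the target is the bar resolution of the $A$-algebra $E$ and acyclicity follows. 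The paper bypasses this entire discussion by citing \cite[Corollary~A2]{G-G3}, where $d\xcirc d=0$ and the explicit $(E,K)$-contracting homotopy $\ov\sigma$ of Proposition~\ref{cont nuestra} are checked directly from the recursion, with no appeal to the perturbation lemma.
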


\begin{proof} This follows immediately from~\cite[Corollary~A2]{G-G3}.
\end{proof}

In order to carry out our computations we also need to give an explicit contracting homotopy of the resolution~\eqref{eq4}. For this we define maps
$$
\sigma^l_{l,s-l}\colon Y_s \to X_{l,s-l}\quad\text{and}\quad \sigma^l_{r+l+1,s-l}\colon X_{rs} \to X_{r+l+1,s-l}
$$
recursively on $l$, by:
$$
\sigma^l_{r+l+1,s-l}:= - \sum_{i=0}^{l-1} \sigma^0 \xcirc d^{l-i} \xcirc\sigma^i\qquad \text{($0<l\le s$ and $r\ge -1$)}.
$$

\begin{proposition}\label{cont nuestra} The family
$$
\ov{\sigma}_0\colon E\to X_0,\qquad\ov{\sigma}_{n+1}\colon X_n\to X_{n+1}\quad\text{($n\ge 0$)},
$$
defined by $\ov{\sigma}_0:= \sigma_{00}^0\xcirc\sigma_0^{-1}$ and
$$
\ov{\sigma}_{n+1}:= - \sum_{l=0}^{n+1}\sigma_{l,n-l+1}^l\xcirc\sigma_{n+1}^{-1}\xcirc\nu_n + \sum_{r=0}^n \sum_{l=0}^{n-r}\sigma_{r+l+1,n-r-l}^l\qquad\text{($n\ge 0$)},
$$
is a contracting homotopy of~\eqref{eq4}.
\end{proposition}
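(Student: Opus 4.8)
The plan is to deduce Proposition~\ref{cont nuestra} from the same machinery that produces Theorem~\ref{res nuestra}, namely the abstract construction of~\cite[Corollary~A2]{G-G3} (and the perturbation lemma behind it), rather than to verify the homotopy identity $\ov\sigma_{n}\xcirc d_{n+1} + d_{n}\xcirc\ov\sigma_{n+1} = \ide$ by a direct brute-force induction. Concretely, the diagram displayed just before Theorem~\ref{res nuestra} is a bicomplex whose columns $(Y_*,\partial_*)$ form the normalized bar resolution of the $A$-algebra $E$ (with contracting homotopy $\sigma^{-1}$ as an $(E,K)$-bimodule complex), and whose rows $(X_{*s},d^0_{*s})$ are shifted copies of the normalized bar resolution of the $K$-algebra $A$ (with contracting homotopies $\sigma^0$ as $(E,K)$-bimodule complexes). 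The maps $d^l$ and $\sigma^l$ are exactly the higher "perturbation" terms forced by these two families of contractions; so the statement to prove is the assertion that the total complex $X_*=\bigoplus_{r+s=n}X_{rs}$, equipped with the perturbed differential $d_n=\sum d^l_{0n}+\sum\sum d^l_{r,n-r}$, carries the contracting homotopy $\ov\sigma_n$ assembled from the $\sigma^l$ in the way displayed.

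First I would record precisely the input data: $(Y_*,\partial_*)\to E$ is a resolution contractible over $(E,K)$ with homotopy $\sigma^{-1}$; each row $(X_{*s},d^0_{*s})\to Y_s$ is contractible over $(E,K)$ with homotopy $\sigma^0$; and the vertical maps $-\partial$ together with the augmentations $\nu_s$ make the augmented bicomplex a (non-commuting, only "pre-simplicial up to the perturbation") double complex whose total object resolves $E$. Then I would invoke Theorem~\ref{lema de perturbacion}: view $(X_*, d^0)$ (the direct sum of rows, with only the horizontal differential) as the unperturbed complex, with homotopy built from $\sigma^0$ and the row augmentations $\nu_s$ splicing onto the bar resolution $(Y_*,-\partial_*)$ with its homotopy $\sigma^{-1}$; the vertical differential $-\partial$ together with the correction terms is the small perturbation $\de$ (it is locally nilpotent because it strictly decreases $s$, as noted in the paragraph preceding Theorem~\ref{lema de perturbacion}). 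The perturbation lemma then outputs simultaneously the perturbed differential — which one checks term by term equals $d_n$ via the recursion defining $d^l$ — and the perturbed homotopy $h^1 = h + h\xcirc A\xcirc h$ with $A=(\ide-\de\xcirc h)^{-1}\xcirc\de$; expanding $A$ as the geometric series $\sum_{k\ge0}(\de\xcirc h)^k$ and collecting terms by the drop in $s$ reproduces exactly the recursion $\sigma^l_{r+l+1,s-l}=-\sum_{i=0}^{l-1}\sigma^0\xcirc d^{l-i}\xcirc\sigma^i$ and hence the stated formula for $\ov\sigma_{n+1}$, with the leftmost sum $-\sum_{l}\sigma^l_{l,n-l+1}\xcirc\sigma^{-1}_{n+1}\xcirc\nu_n$ coming from the part of $h^1$ that first lands in the bar resolution $(Y_*,\partial_*)$ via $\nu_n$, travels up via $\sigma^{-1}$, and comes back through the $\sigma^l$; the remaining double sum is the part that stays inside $\bigoplus X_{rs}$. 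Since the perturbation lemma guarantees the perturbed data is again a (special) deformation retract, $\ov\sigma_*$ is automatically a contracting homotopy of~\eqref{eq4}, and the base case $\ov\sigma_0=\sigma^0_{00}\xcirc\sigma^{-1}_0$ is just the composite of the two homotopies at the augmentation level.

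The main obstacle is bookkeeping: one must match the abstract output of the perturbation lemma with the explicit recursive definitions of $d^l$ and $\sigma^l$, i.e. verify that the geometric-series expansion of $(\ide-\de\xcirc h)^{-1}\xcirc\de$, when restricted to $E\ot_{\!A}(E/A)^{\ot_{\!A}^s}\ot\ov A^{\ot^r}\ot K$ and sorted by how much the index $s$ drops, literally coincides with the four-case recursion for $d^l$ and with $\sigma^l_{\bullet}=-\sum_i\sigma^0\xcirc d^{l-i}\xcirc\sigma^i$. This is a finite, if somewhat intricate, induction on $l$ that unwinds the definition of $A$; everything else is formal. In fact, once Theorem~\ref{res nuestra} is granted (its proof also being "this follows from~\cite[Corollary~A2]{G-G3}"), the contracting homotopy is produced by the \emph{same} application of~\cite[Corollary~A2]{G-G3}, so the cleanest write-up is simply to cite that corollary for the homotopy as well, checking only that the maps $\ov\sigma_n$ written down here are the ones that corollary produces. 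I therefore expect the proof to read: "By~\cite[Corollary~A2]{G-G3} the complex~\eqref{eq4} has a contracting homotopy, and unravelling the construction in~\cite{G-G3} (with the $\sigma^{-1}$ and $\sigma^0$ above as input homotopies) gives the formulas for $\ov\sigma_0$ and $\ov\sigma_{n+1}$," with the verification of the formulas relegated to the induction just described.
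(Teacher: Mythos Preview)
Your proposal is correct and matches the paper's approach exactly: the paper's proof is the one-line ``This is a direct consequence of~\cite[Corollary~A2]{G-G3},'' which is precisely the citation you anticipated. Your detailed unwinding of the perturbation-lemma machinery behind that corollary is accurate but goes well beyond what the paper records.
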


\begin{proof} This is a direct consequence of~\cite[Corollary~A2]{G-G3}.
\end{proof}

\begin{notations}\label{not2.1} We will use the following notations:

\begin{enumerate}

\smallskip

\item For $j,l\ge 1$, we let $\chi_{_{jl}}\colon V^{\ot_k^j}\ot_k A^{\ot^l}\to A^{\ot^l} \ot_k V^{\ot_k^j}$ denote the map recursively defined by:
\begin{align*}
&\chi_{_{11}}:=\chi,\\
&\chi_{_{1,l+1}}:=\bigl(A^{\ot^l}\ot_k \chi\bigr)\xcirc \bigl(\chi_{_{1l}}\ot_k A \bigr),\\
&\chi_{_{j+1,l}}:=\bigl(\chi_{_{1l}}\ot_k V^{\ot_k^j}\bigr)\xcirc \bigl(V\ot_k \chi_{_{jl}}\bigr).
\end{align*}

\smallskip

\item Write $X'_{rs}:= E^{\ot_{\!A}^{s+1}} \ot A^{\ot^r} \ot E$. We let $u'_i\colon X'_{rs} \to X'_{r,s-1}$ denote the map defined by
\begin{align*}
\qquad\qquad &u'_i(\ov{\bx}_{0s}\ot\ba_{1r}\ot x):= \ov{\bx}_{0,i-1}\ot_{\!A} x_ix_{i+1}\ot_{\!A} \ov{\bx}_{i+1,s} \ot \ba_{1r}\ot x
\intertext{for $0\le i<s$, and}
& u'_s(\ov{\bx}_{0,s-1}\ot_{\!A} \gamma(v)\ot\ba_{1r}\ot x):= \sum_l \ov{\bx}_{0,s-1}\ot \ba_{1r}^{(l)}\ot \gamma(v^{(l)})x,
\end{align*}
where $\sum_l \ba_{1r}^{(l)}\ot_k v^{(l)} := \chi(v\ot_k\ba_{1r})$.

\smallskip

\item\label{notacion X_rs^u} Given a $K$-subalgebra $R$ of $A$ and $0\le u\le r$, we let $X^{Ru}_{rs}$ denote the $E$-subbimodule of $X_{rs}$ generated by all the simple tensors $1\ot_{\!A} \ov{\bx}_{1s}\ot \ba_{1r}\ot 1$, with at least $u$ of the $a_j$'s in $\ov{R}$.

\end{enumerate}

\end{notations}

\begin{theorem}\label{formula para d_1} The following assertions hold:

\begin{enumerate}

\smallskip

\item The map $d^1\colon X_{rs}\to X_{r,s-1}$ is induced by the map $\sum_{i=0}^s (-1)^i u'_i$.

\smallskip

\item Let $R$ be a stable under $\chi$ $K$-subalgebra of $A$. If $\mathcal{F}$ takes its values in $R\ot_k V$, then
$$
d^l\bigl(X_{rs}\bigr)\subseteq X^{R,l-1}_{r+l-1,s-l}
$$
for each $l\ge 1$.
\end{enumerate}

\end{theorem}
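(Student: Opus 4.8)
The plan is to prove both assertions by induction, exploiting the recursive definitions of the maps $d^l$ and $\sigma^l$ together with the explicit formulas for the contracting homotopies $\sigma^0$ of the individual rows. For part (1), the key observation is that $d^1_{rs}$ is defined by $d^1 = \sigma^0 \xcirc \partial \xcirc \nu$ when $r=0$ and $d^1 = -\sigma^0 \xcirc d^1 \xcirc d^0$ when $r>0$. So I would first treat the base case $r=0$: here $\nu_s$ is the canonical surjection $X_{0s} \to Y_s$, $\partial_s$ is the bar differential of the $A$-algebra $E$, and $\sigma^0_{0,s-1}$ is the explicit homotopy $\ov{\bx}_{0,s-1}\ot_{\!A}\gamma(v) \mapsto \ov{\bx}_{0,s-1}\ot\gamma(v)$. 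Applying $\partial$ to $1\ot_{\!A}\ov{\bx}_{1s}\ot_{\!A} 1$ (identifying $X_{0s}$ with $(E\ot_k\ov V^{\ot_k^s})\ot_{\!A} E$) produces the alternating sum of the face maps that contract adjacent tensor factors over $A$; the last face requires moving $\gamma(v_s)$ past an element of $A$, which is exactly where $\chi$ enters and reproduces the definition of $u'_s$. Matching terms one by one shows $d^1_{0s}$ is induced by $\sum_{i=0}^s (-1)^i u'_i$. Then the case $r>0$ follows by a short induction on $r$: applying $-\sigma^0\xcirc d^1 \xcirc d^0$ and using that $d^0$ is (up to sign) the bar differential of the $K$-algebra $A$ tensored with identities, one checks the $\ba_{1r}$-part is untouched by the $u'_i$ and the formula propagates.

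For part (2), I would proceed by induction on $l$, with an inner induction on $r$, directly mirroring the recursive definition
$$
d^l(\bz) = -\sum_{j} \sigma^0 \xcirc d^{l-j} \xcirc d^j(\bz).
$$
The base case $l=1$ asks only that $d^1(X_{rs}) \subseteq X^{R,0}_{r,s-1} = X_{r,s-1}$, which is automatic. For the inductive step, I would fix $l \ge 2$ and assume the claim holds for all smaller values; then for each $j$ with $1 \le j \le l-1$ (and $j=0$ when $r>0$), the inductive hypothesis gives $d^j(\bz) \in X^{R,j-1}_{r+j-1,s-j}$ (with the convention $X^{R,-1}=X$ when $j=0$), and applying $d^{l-j}$ gives, again by induction, an element of $X^{R,(j-1)+(l-j-1)}_{\dots} = X^{R,l-2}_{\dots}$. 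The point is then that the homotopy $\sigma^0$ — which in each row is the explicit map $\sigma^0_{r+1}(\ov{\bx}\ot\ba_{1r}\ot a_{r+1}\gamma(v)) = \pm\,\ov{\bx}\ot\ba_{1,r+1}\ot\gamma(v)$ — absorbs the trailing element $a_{r+1}$ of $A$ into the tensor chain $\ba_{1r}$, raising the count of $A$-factors by one; since $a_{r+1}$ was produced by applying $\chi$ or $\mathcal F$ and $R$ is stable under $\chi$ and $\mathcal F$ takes values in $R\ot_k V$, this new factor lies in $\ov R$. That bumps $l-2$ up to $l-1$, giving $d^l(\bz) \in X^{R,l-1}_{r+l-1,s-l}$ as required.

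The main obstacle I expect is bookkeeping the provenance of the $A$-factors created at each application of $\sigma^0$ — precisely, verifying that whenever $\sigma^0$ appends a new element of $A$ to the chain $\ba_{1r}$, that element genuinely came out of an invocation of $\chi$ (via the maps $u'_s$ and $\chi_{jl}$) or of $\mathcal F$, rather than being a ``pre-existing'' $A$-factor that need not lie in $\ov R$. This requires a careful description of how $d^0$, $d^1$, and the higher $d^j$ act on the distinguished generators of $X^{Ru}_{rs}$: $d^0$ permutes and contracts $\ov A$-factors without creating new ones; $d^1$ (by part (1)) either contracts $E$-factors over $A$ or, in its last face $u'_s$, runs $\ba_{1r}$ through $\chi$, landing the output in $\ov R^{\ot^r}$ by stability; and the higher $d^j$ reduce to these by the recursion, so an induction keeps control of where each factor originates. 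Once this tracking is set up cleanly — perhaps by recording, alongside $X^{Ru}_{rs}$, which of the $a_j$'s are ``forced'' to lie in $\ov R$ — the sign computations and the degree arithmetic $ (j-1)+(l-j-1)+1 = l-1$ are routine, and the diagrammatic compatibility of $\chi$ with $\mu_A$ (hence of $\chi_{jl}$ with the algebra structure of $A$) guarantees the constructions are well defined on the $K$-relative tensor products.
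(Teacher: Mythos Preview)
Your proposal is correct and follows essentially the same approach as the paper's (very terse) proof, namely induction on $r$ for part~(1) and double induction on $r$ and $l$ for part~(2), driven directly by the recursive definition of $d^l$. One small point: your additivity step ``$d^{l-j}$ applied to $X^{R,j-1}$ lands in $X^{R,l-2}$'' tacitly requires the strengthened hypothesis $d^{l'}\bigl(X^{R,u}_{rs}\bigr) \subseteq X^{R,u+l'-1}_{r+l'-1,s-l'}$ rather than just the case $u=0$, but this is exactly the bookkeeping you flag in your obstacle paragraph and is established by the same induction.
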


\begin{proof} Let $\bz \in E\ot_{\!A} (E/A)^{\ot_{\!A}^s} \ot \ov{A}^{\ot^r}\ot K$. The computation of $d_{rs}^1$ can be obtained easily by induction on $r$, using that
$$
d_{0s}^1(\bz) = \sigma_{0,s-1}^0\xcirc \partial_s \xcirc \nu_s^0(\bz)\quad\text{and}\quad d_{rs}^1(\bz) = -\sigma_{r,s-1}^0\xcirc d_{r-1,s}^1 \xcirc d_{rs}^0(\bz)\,\text{ for $r\ge 1$.}
$$
Item~(2) follows by induction on $r$ and $l$, using the recursive definition of $d_{rs}^l(\bz)$.
\end{proof}

\begin{remark} By item~(2) of the above theorem, if $\mathcal{F}$ takes its values in $K\ot_k V$, then $(X_*,d_*)$ is the total complex of the double complex $\bigl(X_{**},d_{**}^0,d_{**}^1 \bigr)$.
\end{remark}

\subsection{Comparison with the normalized bar resolution} Let $\bigl(E\ot \ov{E}^{\ot^*}\ot E,b'_*\bigr)$ be the normalized bar resolution of the $K$-algebra $E$. As it is well known, the complex
$$
\xymatrix{E & E\ot E\lto_-{\mu}& E\ot\ov{E}\ot E \lto_-{b'_1} &E\ot \ov{E}^{\ot^2}\ot E \lto_-{b'_2} &\dots \lto_-{b'_3}}
$$
is contractible as a complex of $(E,K)$-bimodules, with contracting homotopy
$$
\xi_0\colon E\to E\ot E,\qquad \xi_{n+1}\colon E\ot\ov{E}^{\ot^n}\ot E\to E\ot \ov{E}^{\ot^{n+1}} \ot E\quad\text{($n\ge 0$),}
$$
given by $\xi_n(\bx):= (-1)^n \bx\ot 1$. Let
$$
\phi_*\colon (X_*,d_*)\to \bigl(E\ot \ov{E}^{\ot^*}\ot E,b'_*\bigr)\quad\text{and}\quad \psi_*\colon \bigl(E\ot \ov{E}^{\ot^*}\ot E,b'_*\bigr)\to (X_*,d_*)
$$
be the morphisms of $E$-bimodule complexes, recursively defined by
\begin{align*}
&\phi_0:= \ide,\quad \psi_0:= \ide,\\
& \phi_{n+1}(\bz\ot 1):= \xi_{n+1}\xcirc\phi_n\xcirc d_{n+1}(\bz\ot 1)\\
\intertext{and}
&\psi_{n+1}(\bx\ot 1):= \ov{\sigma}_{n+1}\xcirc\psi_n\xcirc b'_{n+1}(\bx\ot 1).
\end{align*}

\begin{proposition}\label{homotopia} $\psi\xcirc\phi = \ide$ and $\phi\xcirc\psi$ is homotopically equivalent to the identity map. A homotopy $\omega_{*+1}\colon \phi_*\xcirc \psi_*\to \ide_*$ is recursively defined by
$$
\omega_1:= 0\quad\text{and}\quad\omega_{n+1}(\bx):= \xi_{n+1}\xcirc(\phi_n\xcirc \psi_n - \ide - \omega_n\xcirc b'_n)(\bx),
$$
for $\bx\in E\ot\ov{E}^{\ot^n}\ot K$.
\end{proposition}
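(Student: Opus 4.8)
### Proof plan for Proposition \ref{homotopia}

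The plan is to verify the three assertions --- that $\psi\xcirc\phi=\ide$, that $\phi\xcirc\psi$ is chain homotopic to the identity, and that the formula for $\omega$ works --- by the standard ``acyclic models / comparison of resolutions'' bookkeeping, leaning on the contracting homotopies $\ov\sigma$ (Proposition \ref{cont nuestra}) and $\xi$ already in hand.

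\textbf{Step 1: $\psi\xcirc\phi=\ide$.} First I would argue that any two morphisms of $E$-bimodule complexes $(X_*,d_*)\to(X_*,d_*)$ lifting $\ide_E$ and agreeing in degree $0$ must coincide, because $(X_*,d_*)$ is a $\Upsilon$-projective resolution of $E$ (Theorem \ref{res nuestra}) and hence the usual uniqueness-up-to-homotopy of comparison maps applies, with uniqueness on the nose once a contracting homotopy is fixed. Since $\psi_0\xcirc\phi_0=\ide\xcirc\ide=\ide$ and $\psi\xcirc\phi$ is an $E$-bimodule chain map lifting $\ide_E$, it equals the identity. Alternatively, and more in the spirit of the recursive definitions, I would prove $\psi_n\xcirc\phi_n=\ide$ by induction on $n$: for $\bz\ot 1\in X_n$ one computes
$$
\psi_{n+1}\xcirc\phi_{n+1}(\bz\ot1)=\ov\sigma_{n+1}\xcirc\psi_n\xcirc b'_{n+1}\xcirc\xi_{n+1}\xcirc\phi_n\xcirc d_{n+1}(\bz\ot1),
$$
then uses the homotopy identity $b'_{n+1}\xcirc\xi_{n+1}+\xi_n\xcirc b'_n=\ide$ on the bar resolution together with the inductive hypothesis $\psi_n\xcirc\phi_n=\ide$ and $\psi_n\xcirc\xi_n=0$ (which follows since $\xi_n$ lands in the ``new'' tensor factor that $\ov\sigma$-type homotopies kill --- this needs a small verification from the special-deformation-retract properties) to collapse the expression to $\ov\sigma_{n+1}\xcirc\psi_n\xcirc b'_{n+1}(\text{something})$; one then recognizes $d_{n+1}(\bz\ot1)$ reappearing via $\psi_n\xcirc\phi_n=\ide$ and $\ov\sigma_{n+1}$ as a contracting homotopy of $(X_*,d_*)$, so that $\ov\sigma_{n+1}\xcirc\psi_n\xcirc\phi_n\xcirc d_{n+1}=\ov\sigma_{n+1}\xcirc d_{n+1}$, and finally $d_{n+2}\xcirc\ov\sigma_{n+2}+\ov\sigma_{n+1}\xcirc d_{n+1}=\ide$ on $\ker$ gives $\bz\ot1$ back. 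I expect this is where the arithmetic is fiddliest.

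\textbf{Step 2: $\phi\xcirc\psi\simeq\ide$ via $\omega$.} The claim is that $\omega_{n+1}$ defined recursively by $\omega_1=0$ and $\omega_{n+1}(\bx)=\xi_{n+1}\xcirc(\phi_n\xcirc\psi_n-\ide-\omega_n\xcirc b'_n)(\bx)$ satisfies $b'_{n+1}\xcirc\omega_{n+1}+\omega_n\xcirc b'_n=\phi_n\xcirc\psi_n-\ide$. I would prove this by induction on $n$. The base case $n=1$ is immediate once one checks $\phi_1\xcirc\psi_1=\ide$ in degree $1$, which itself follows from $\psi\xcirc\phi=\ide$ plus the fact that both are lifts of $\ide_E$ and the resolution is projective, or directly. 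For the inductive step, apply $b'_{n+1}$ to the definition of $\omega_{n+1}$ and use the contracting-homotopy identity for $\xi$ on $E\ot\ov E^{\ot n}\ot E$, namely $b'_{n+1}\xcirc\xi_{n+1}=\ide-\xi_n\xcirc b'_n$; this turns $b'_{n+1}\xcirc\omega_{n+1}$ into $(\phi_n\xcirc\psi_n-\ide-\omega_n\xcirc b'_n)-\xi_n\xcirc b'_n\xcirc(\phi_n\xcirc\psi_n-\ide-\omega_n\xcirc b'_n)$. The first parenthesized term is exactly $\phi_n\xcirc\psi_n-\ide-\omega_n\xcirc b'_n$; for the second, commute $b'_n$ past $\phi_n\xcirc\psi_n$ (both are chain maps) and past $\omega_n\xcirc b'_n$ (using $b'\xcirc b'=0$), then invoke the inductive hypothesis $b'_n\xcirc\omega_n=\phi_{n-1}\xcirc\psi_{n-1}-\ide-\omega_{n-1}\xcirc b'_{n-1}$ applied after $b'_n$, so that $b'_n\xcirc(\phi_n\xcirc\psi_n-\ide-\omega_n\xcirc b'_n)$ collapses and the $\xi_n\xcirc b'_n(\cdots)$ term cancels against $\omega_n\xcirc b'_n$. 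Rearranging yields $b'_{n+1}\xcirc\omega_{n+1}+\omega_n\xcirc b'_n=\phi_n\xcirc\psi_n-\ide$, as desired; one also notes $\omega$ is automatically an $E$-bimodule map since $\xi$, $\phi$, $\psi$ all are.

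\textbf{Main obstacle.} The conceptual content is routine homological algebra --- comparison of two $\Upsilon$-projective resolutions and the induced contracting homotopies --- so the real work is purely formal manipulation of the recursions. The step I expect to be most delicate is verifying the auxiliary vanishing/compatibility relations (e.g.\ $\psi_n\xcirc\xi_n=0$, or equivalently that $\ov\sigma$ and $\xi$ fit together as special deformation retracts in the sense of Theorem \ref{lema de perturbacion}) that make the cancellations in Step 1 go through; once those are isolated and checked, both induction arguments are mechanical. In the write-up I would state these as small lemmas or fold them into the inductive hypothesis, and otherwise let the reader reconstruct the routine cancellations, exactly as the paper does elsewhere when it defers computations ``easily obtained by induction.''
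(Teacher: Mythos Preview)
Your overall plan is correct and matches the standard comparison-of-resolutions argument that the paper is citing from \cite{G-G3}. The induction in Step~2 is fine as written. However, there is one genuine slip in Step~1 that you should fix.

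The auxiliary claim $\psi_n\xcirc\xi_n=0$ is false: by the recursion (cf.\ Remark~\ref{reA.3}), $\psi_n\xcirc\xi_n(\byy)=\ov\sigma_n\xcirc\psi_{n-1}(\byy)$, which is not zero in general. Fortunately you do not need it. After substituting both recursions and using $b'_{n+1}\xcirc\xi_{n+1}=\ide-\xi_n\xcirc b'_n$, the term you want to kill is
\[
\ov\sigma_{n+1}\xcirc\psi_n\xcirc\xi_n\xcirc b'_n\xcirc\phi_n\xcirc d_{n+1}(\bz\ot 1),
\]
and this vanishes simply because $b'_n\xcirc\phi_n\xcirc d_{n+1}=\phi_{n-1}\xcirc d_n\xcirc d_{n+1}=0$. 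So the inductive step reduces cleanly to $\psi_{n+1}\xcirc\phi_{n+1}(\bz\ot 1)=\ov\sigma_{n+1}\xcirc d_{n+1}(\bz\ot 1)$.

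The remaining point, which you gloss over, is why $\ov\sigma_{n+1}\xcirc d_{n+1}(\bz\ot 1)=\bz\ot 1$. The contracting-homotopy identity gives $\ov\sigma_{n+1}\xcirc d_{n+1}=\ide-d_{n+2}\xcirc\ov\sigma_{n+2}$, so you need $\ov\sigma_{n+2}(\bz\ot 1)=0$ for $\bz\ot 1$ with last tensor factor in $K$. This is exactly the content of Lemma~\ref{leA.2} ($\ov\sigma\xcirc\ov\sigma=0$) and the explicit form of $\sigma^0$ (the appended factor lands in $\ov A=A/K$ and hence is killed when it is $1$). That is the ``special deformation retract'' property you allude to; state it precisely and the induction closes.
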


\begin{proof} The proof of~\cite[Proposition~1.2.1]{G-G3} works in this context.
\end{proof}

\begin{remark}\label{re2.6} Since $\omega\bigl(E\ot\ov{E}^{\ot^{n-1}}\ot K\bigr)\subseteq E\ot \ov{E}^{\ot^n}\ot K$ and $\xi$ vanishes on $E\ot\ov{E}^{\ot^n}\ot K$,
$$
\omega(\bx\ot 1) = \xi\bigl(\phi\xcirc \psi(\bx\ot 1) - (-1)^n \omega(\bx)\bigr).
$$
\end{remark}

\subsection{The filtrations of $\mathbf{\boldsymbol{\bigl(E\ot\ov{E}^{\ot^*}\ot E,b'_*\bigr)}}$ and $\mathbf{\boldsymbol{(X_*, d_*)}}$} Let
$$
F^i(X_n):= \bigoplus_{0\le s\le i} X_{n-s,s}
$$
and let $F^i\bigl(E\ot \ov{E}^{\ot^n}\ot E\bigr)$ be the $E$-subbimodule of $E\ot \ov{E}^{\ot^n}\ot E$ generated by the tensors $1\ot \bx_{1n}\ot 1$ such that at least $n-i$ of the $x_j$'s belong to $\ov{A}$. The normalized bar resolution $\bigl(E\ot \ov{E}^{\ot^*}\ot E,b'_*\bigr)$ and the resolution $(X_*, d_*)$ are filtered by
\begin{align*}
& F^0\bigl(E\ot\ov{E}^{\ot^*}\ot E\bigr)\subseteq F^1\bigl(E\ot\ov{E}^{\ot^*}\ot E\bigr)\subseteq F^2\bigl(E\ot\ov{E}^{\ot^*}\ot E\bigr) \subseteq\dots
\intertext{and}
& F^0(X_*)\subseteq F^1(X_*)\subseteq F^2(X_*)\subseteq F^3(X_*)\subseteq F^4(X_*)\subseteq F^5(X_*) \subseteq \dots,
\end{align*}
respectively.

\begin{proposition}\label{phi, psi y omega preservan filtraciones} The maps $\phi$, $\psi$ and $\omega$ preserve filtrations.
\end{proposition}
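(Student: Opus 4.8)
The plan is to argue by simultaneous induction on $n$. The base cases are immediate, since $\phi_0=\psi_0=\ide$ and $\omega_1=0$. For the inductive step note that the three recursions all have the same shape: apply a boundary map ($d_{n+1}$ on one side, $b'_{n+1}$ or $b'_n$ on the other), then one of the maps already constructed, and then a contracting homotopy ($\xi$ on the bar side, $\ov{\sigma}$ on the $X$ side). So everything comes down to how the boundary maps and the contracting homotopies interact with the two filtrations.

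For the boundary maps this is clear. On the $X$ side, $d_n=\sum_l d^l_{0n}+\sum_{r,l}d^l_{r,n-r}$ and each component satisfies $d^l\bigl(X_{rs}\bigr)\subseteq X_{r+l-1,s-l}$; hence $d_n$ sends $F^i(X_n)=\bigoplus_{s\le i}X_{n-s,s}$ into $F^i(X_{n-1})$, and, more precisely, the components $d^l$ with $l\ge1$ land in $F^{i-1}(X_{n-1})$, so that only $d^0$ keeps the filtration level. On the bar side, $b'_n$ is a signed sum of maps that multiply two consecutive tensor factors (or absorb an outer factor into its neighbour); since the product of two elements lying in (the image of) $A$ again lies in $A$, such an operation never raises the number of tensor slots that fall outside $\ov{A}$, so $b'_n$ preserves $F^\bullet\bigl(E\ot\ov{E}^{\ot^*}\ot E\bigr)$.

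The delicate point is that the contracting homotopies do not preserve the filtrations on the nose. Writing $\xi_{n+1}(\bx)=(-1)^{n+1}\bx\ot 1$, one sees that $\xi_{n+1}$ moves the rightmost $E$-factor of $\bx$ into the last of the $n+1$ tensor slots of $\ov{E}$; thus $\xi_{n+1}$ raises the bar filtration by exactly one when that factor lies outside $A$ and preserves it otherwise. Likewise $\ov{\sigma}_{n+1}$, being built from $\sigma^{-1}_{n+1}$, $\nu_n$ and the maps $\sigma^l$ — which are themselves the alternating sums $-\sum_i\sigma^0\xcirc d^{l-i}\xcirc\sigma^i$ — raises the $X$ filtration by at most one, and the only summand responsible for a raise is $\sigma^0_{0,n+1}\xcirc\sigma^{-1}_{n+1}\xcirc\nu_n$, which is supported on the top piece $X_{0n}$ of $F^\bullet(X_n)$. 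The strategy is therefore to strengthen the inductive hypothesis to: $\phi_n\bigl(X_{rs}\bigr)\subseteq F^s$ (of the bar resolution in degree $n$) and $\psi_n\bigl(F^i\bigr)\subseteq F^i(X_n)$, together with the additional record that the part of $\phi_n\bigl(X_{rs}\bigr)$ (resp.\ of $\psi_n\bigl(F^i\bigr)$) whose rightmost end is nontrivial in the above sense already sits in the next lower filtration stage; and $\omega_{n+1}\bigl(F^i\bigr)\subseteq F^i$. One then feeds in the decompositions of $d_{n+1}$ and $b'_{n+1}$ from the previous paragraph: the pieces that do not lower the filtration (namely $d^0$, resp.\ the $\ov{A}$-only part of $b'$) produce, after $\phi$, $\psi$ or $\omega$, elements with trivial right end, on which the homotopy does not raise the filtration, while on the remaining pieces the homotopy raises the filtration by one but the boundary map has already lowered it by at least one. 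Balancing these two contributions term by term closes the induction.

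I expect the main obstacle to be exactly this last bookkeeping: because $\ov{\sigma}$ is a long alternating sum whose summands $\sigma^l$ are in turn defined recursively through the $d^l$'s, one must keep careful track, throughout the double induction, of both the filtration degree and the rightmost $E$-slot of every intermediate expression, matching each ``$+1$'' coming from a homotopy against a ``$-1$'' coming from a boundary component (and, for $\psi$, against the location of the $X_{0\bullet}$-corner summand).
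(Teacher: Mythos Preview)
Your plan is correct and is essentially the paper's approach: induction on $n$, with the crucial strengthening being the control of the rightmost tensor slot so that the ``$+1$'' coming from the homotopy ($\xi$ on the bar side, the $\sigma^0_{0,n+1}\xcirc\sigma^{-1}_{n+1}\xcirc\nu_n$ summand of $\ov{\sigma}$ on the $X$ side) is always matched by a ``$-1$'' from the appropriate boundary component. The paper organizes the bookkeeping slightly differently: it treats the three maps sequentially rather than simultaneously, dispatching $\phi$ to an Appendix lemma (Proposition~A.5) and, for $\psi$, isolating the strengthened hypothesis as an explicit claim $\psi\bigl(F^i\cap E\ot\ov{E}^{\ot^n}\ot K\bigr)\subseteq Q^i_{n-i}+F^{i-1}(X_n)$ (their claim~(d)), together with three companion facts about $\ov{\sigma}$ (their (a)--(c)); the argument for $\omega$ then uses the already-proved results for $\phi$ and $\psi$ rather than running in parallel with them. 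This packaging makes the case analysis (on whether the last tensor slot lies in $A$ or not) cleaner than the double induction you anticipate, but the underlying mechanism is exactly what you describe.
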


\begin{proof} For $\phi$ this follows from Proposition~\ref{propA.5}. Let $Q^i_j := E\ot_{\!A} (E/A)^{\ot_{\!A}^i}\ot\ov{A}^{\ot^j}\ot K$. We claim that

\begin{enumerate}

\smallskip

\item[a)] $\ov{\si}\bigl(F^i(X_n)\bigr) \sub F^i(X_{n+1})$ for all $0\le i<n$,

\smallskip

\item[b)] $\ov{\si}\bigl(E\ot_{\!A} (E/A)^{\ot_{\!A}^i}\ot \ov{A}^{\ot^{n-i}}\ot A\bigr) \sub Q^i_{n+1-i} + F^{i-1}(X_{n+1})$ for all $0\le i\le n$,

\smallskip

\item[c)] $\ov{\si}(X_{0n}) \sub E\ot_{\!A} (E/A)^{\ot_{\!A}^{n+1}}\ot K+ F^n(X_{n+1})$,

\smallskip

\item[d)] $\psi\bigl(F^i(E\ot\ov{E}^{\ot^n}\ot E)\cap E\ot\ov{E}^{\ot^n}\ot K\bigr)\sub Q^i_{n-i} + F^{i-1}(X_n)$ for all $0\le i\le n$.

\medskip

\end{enumerate}
In fact~a), b) and c) follow immediately from the definition of $\ov{\si}_{n+1}$. Suppose d) is valid for $n$. Let
$$
\bx:=\bx_{0,n+1}\ot 1 \in F^i(E\ot\ov{E}^{\ot^{n+1}}\ot E)\cap E\ot \ov{E}^{\ot^{n+1}}\ot K\quad\text{where $0\le i\le n+1$.}
$$
Using~a), b) and the inductive hypothesis, we get that for $1\le j \le n$,
\begin{align*}
\ov{\si}\bigl(\psi(\bx_{0,j-1}\ot x_jx_{j+1}\ot \bx_{j+2,n+1}\ot 1)\bigr) & \sub \ov{\si}\bigl(Q^i_{n-i} + F^{i-1}(X_n)\bigr)\\
&\sub Q^i_{n+1-i} + F^{i-1}(X_{n+1}).
\end{align*}
Since $\psi(\bx) = \ov{\si}\xcirc\psi \xcirc b'(\bx)$, in order to prove~d) for $n+1$ we only must check that
$$
\ov{\si}(\psi(\bx_{0,n+1})) \sub Q^i_{n+1-i} + F^{i-1}(X_{n+1}).
$$
If $x_{n+1}\in A$, then using~a), b) and the inductive hypothesis, we get
\begin{align*}
\ov{\si}\bigl(\psi(\bx_{0,n+1})\bigr) & = \ov{\si}\bigl(\psi(\bx_{0n}\ot 1)x_{n+1}\bigr)\\
&\sub \ov{\si}\bigl(E\ot_{\!A} (E/A)^{\ot_{\!A}^i}\ot \ov{A}^{\ot^{n-i}}\ot A + F^{i-1}(X_n)\bigr)\\
&\sub Q^i_{n+1-i} + F^{i-1}(X_{n+1}),
\end{align*}
and if $x_{n+1}\notin A$, then $\bx_{0,n+1} \in F^{i-1}(E\ot\ov{E}^{\ot^n}\ot E)$, which together with~a), c) and the inductive hypothesis, implies that
$$
\ov{\si}\bigl(\psi(\bx_{0,n+1})\bigr) \sub \ov{\si}\bigl(F^{i-1}(X_n)\bigr) \sub Q^i_{n+1-i} + F^{i-1}(X_{n+1}).
$$
From~d) it follows immediately that $\psi$ preserves filtrations. Next, we prove that $\omega$ also does it. This is trivial for $\omega_1$, since $\omega_1=0$. Assume that $\omega_n$ does. Let
$$
\bx := \bx_{0n}\ot 1\in F^i(E\ot\ov{E}^{\ot^n}\ot E) \cap E\ot \ov{E}^{\ot^n}\ot K.
$$
By Remark~\ref{re2.6}, we know that
$$
\omega(\bx) = \xi\xcirc \phi\xcirc \psi(\bx) + (-1)^n \xi\xcirc\omega(\bx_{0n}).
$$
From~d) and the fact that $\phi$ preserve filtrations, we get
$$
\xi\xcirc\phi\xcirc \psi(\bx)  \in \xi\xcirc \phi\bigl(Q^i_{n-i} + F^{i-1}(X_n)\bigr) \sub\xi(F^{i-1}(E\ot\ov{E}^{\ot^n}\!\ot E))\sub F^i(E\ot\ov{E}^{\ot^n}\!\ot E),
$$
since $\xi(\phi(Q^i_{n-i})) \sub \xi(E\ot \ov{E}^{\ot^n}\ot K) = 0$. To finish the proof it remains to check that
$$
\xi\xcirc\omega\xcirc b'(\bx) \sub F^i(E\ot\ov{E}^{\ot^n}\ot E).
$$
Since, $\omega(E\ot \ov{E}^{\ot^{n-1}}\ot K) \sub E\ot\ov{E}^{\ot^n}\ot K$ by definition, we have
$$
\xi\xcirc\omega \xcirc b'(\bx) = (-1)^{n-1}\xi\xcirc\omega(\bx_{0n}).
$$
Hence, if $x_n\in A$, then
\begin{align*}
\xi\xcirc\omega\xcirc b'(\bx) &= (-1)^{n-1}\xi_{n+1}\bigl(\omega_n(\bx_{0,n-1}\ot 1)x_n\bigr)\\
&\sub \xi\bigl(F^i(E\ot\ov{E}^{\ot^n}\ot E)\cap E \ot \ov{E}^{\ot^n}\ot A\bigr)\\
&\sub F^i(E\ot\ov{E}^{\ot^{n+1}}\ot E),
\end{align*}
and if $x_n\notin A$, then $\bx_{0n}\in F^{i-1}(E\ot\ov{E}^{\ot^{n-1}}\ot E)$, and so
$$
\xi\xcirc\omega(\bx_{0n}) \sub \xi\bigl(F^{i-1}(E\ot\ov{E}^{\ot^n}\ot E)\bigr) \sub F^i(E\ot\ov{E}^{\ot^{n+1}}\ot E),
$$
as we want.
\end{proof}

\section{Hochschild homology of a Brzezi\'nski's crossed product}\label{Hochschild-Brzezinski} \label{HHBrezinski}
Let $E:=A\# V$ be a Brzezi\'nski's crossed product with associated twisting map $\chi$ and cocycle $\mathcal{F}$, and let $K$ be a stable under $\chi$ subalgebra of $A$. Recall that $\Upsilon$ is the family of all epimorphisms of $E$-bimodules which split as $(E,K)$-bimodule maps. Since $(X_*,d_*)$ is a $\Upsilon$-projective resolution of $E$, the Hochschild homology of the $K$-algebra $E$ with coefficients in an $E$-bimodule $M$ is the homology of $M\ot_{E^e}(X_*,d_*)$. For $r,s\ge 0$, write
$$
\wh{X}_{rs}(M):= M\ot_{\!A} (E/A)^{\ot_{\!A}^s}\ot \ov{A}^{\ot^r}\ot.
$$
It is easy to check that $\wh{X}_{rs}(M)\simeq M\ot_{E^e} X_{rs}$ via
$$
\xymatrix @R=-2pt {\wh{X}_{rs}(M)\rto & M\ot_{E^e} X_{rs}\\
[m\ot_{\!A} \ov{\bx}_{1s}\ot \ba_{1r}] \ar@{|->}[0,1] & m\ot_{E^e} (1\ot_{\!A}\ov{\bx}_{1s}\ot \ba_{1r}\ot 1)}.
$$
Let $\wh{d}^l_{rs}\colon \wh{X}_{rs}(M)\to \wh{X}_{r+l-1,s-l}(M)$ be the map induced by $\ide_M\ot_{E^e} d^l_{rs}$. Via the above identifications the complex $M\ot_{E^e}(X_*,d_*)$ becomes $(\wh{X}_*(M),\wh{d}_*)$, where
$$
\wh{X}_n(M):= \bigoplus_{r+s = n} \wh{X}_{rs}(M)\qquad\text{and}\qquad \wh{d}_n := \sum^n_{l=1} \wh{d}^l_{0n} + \sum_{r=1}^n \sum^{n-r}_{l=0} \wh{d}^l_{r,n-r}.
$$
Consequently, we have the following result:

\begin{theorem}\label{Hochschild homology} The Hochschild homology $\Ho^K_*(E,M)$, of the $K$-algebra $E$ with coefficients in $M$, is the homology of $(\wh{X}_*(M),\wh{d}_*)$.
\end{theorem}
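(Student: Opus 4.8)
The plan is to read the statement off the $\Upsilon$-projective resolution constructed in Theorem~\ref{res nuestra}, using only standard relative homological algebra. By definition, the relative Hochschild homology $\Ho^K_*(E,M)$ is the homology of the relative normalized bar complex $M\ot_{E^e}\bigl(E\ot\ov{E}^{\ot^*}\ot E,b'_*\bigr)$. The normalized bar resolution is itself a $\Upsilon$-projective resolution of $E$ --- each $E\ot\ov{E}^{\ot^n}\ot E=E\ot_K\ov{E}^{\ot^n}\ot_K E$ is relatively projective as an $E$-bimodule, and the augmented complex splits over $(E,K)$-bimodules via the contracting homotopy $\xi$ --- and by Theorem~\ref{res nuestra} so is $(X_*,d_*)$. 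Hence the comparison theorem for relative projective resolutions gives a homotopy equivalence $M\ot_{E^e}(X_*,d_*)\simeq M\ot_{E^e}\bigl(E\ot\ov{E}^{\ot^*}\ot E,b'_*\bigr)$; in fact this equivalence is made explicit by applying $M\ot_{E^e}(-)$ to the maps $\phi_*$, $\psi_*$ and the homotopy $\omega_*$ of Proposition~\ref{homotopia}. In particular $\Ho^K_*(E,M)$ is the homology of $M\ot_{E^e}(X_*,d_*)$, so it only remains to identify the latter complex with $\bigl(\wh{X}_*(M),\wh{d}_*\bigr)$.

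For that identification I would argue degree by degree. Since $X_{rs}=E\ot_{\!A}(E/A)^{\ot_{\!A}^s}\ot\ov{A}^{\ot^r}\ot E$ carries its $E$-bimodule structure on the two outer copies of $E$, applying $M\ot_{E^e}(-)$ absorbs those two copies, and the relations one is left with over $A$ (on the left) and over $K$ (coming from the innermost $\ot$, on the right) assemble precisely into the quotient defining $\wh{X}_{rs}(M)=M\ot_{\!A}(E/A)^{\ot_{\!A}^s}\ot\ov{A}^{\ot^r}\ot$. Concretely, the isomorphism is the one displayed just before the statement, $[m\ot_{\!A}\ov{\bx}_{1s}\ot\ba_{1r}]\mapsto m\ot_{E^e}(1\ot_{\!A}\ov{\bx}_{1s}\ot\ba_{1r}\ot 1)$, with the evident inverse; one checks that it is a well-defined $k$-linear bijection. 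Taking the direct sum over $r+s=n$ gives $\wh{X}_n(M)\cong M\ot_{E^e}X_n$, and since $\wh{d}^l_{rs}$ is by construction the map induced by $\ide_M\ot_{E^e}d^l_{rs}$, transporting $\ide_M\ot_{E^e}d_n$ through this isomorphism reproduces $\wh{d}_n=\sum_l\wh{d}^l_{0n}+\sum_{r\ge 1}\sum_l\wh{d}^l_{r,n-r}$. Thus $M\ot_{E^e}(X_*,d_*)\cong\bigl(\wh{X}_*(M),\wh{d}_*\bigr)$ as chain complexes, which combined with the previous paragraph proves the theorem.

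I do not expect any serious obstacle here: all the conceptual content is already in Theorem~\ref{res nuestra} (together with Proposition~\ref{homotopia}, if one wants the equivalence explicitly), and what is left is the routine ``absorb the bar ends'' computation. The only mildly delicate point is to verify that moving $A$-scalars and $K$-scalars across the various tensor symbols on the two sides of the candidate isomorphism imposes exactly the same relations --- i.e.\ that the map $[m\ot_{\!A}\ov{\bx}_{1s}\ot\ba_{1r}]\mapsto m\ot_{E^e}(1\ot_{\!A}\ov{\bx}_{1s}\ot\ba_{1r}\ot 1)$ really is well defined and bijective. In short, Theorem~\ref{Hochschild homology} is an immediate corollary of the discussion preceding it, and the proof can legitimately be summarized as ``this follows from Theorem~\ref{res nuestra} and the independence of relative $\operatorname{Tor}$ of the chosen $\Upsilon$-projective resolution.''
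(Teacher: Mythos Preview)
Your proposal is correct and is exactly the argument the paper gives: the discussion immediately preceding the theorem already records that $(X_*,d_*)$ is a $\Upsilon$-projective resolution (Theorem~\ref{res nuestra}), that any such resolution computes $\Ho^K_*(E,M)$, and that the displayed map $[m\ot_{\!A}\ov{\bx}_{1s}\ot\ba_{1r}]\mapsto m\ot_{E^e}(1\ot_{\!A}\ov{\bx}_{1s}\ot\ba_{1r}\ot 1)$ identifies $M\ot_{E^e}(X_*,d_*)$ with $(\wh{X}_*(M),\wh{d}_*)$. Your invocation of Proposition~\ref{homotopia} is a slight elaboration (it makes the comparison with the bar resolution explicit rather than appealing to the abstract independence of relative $\mathrm{Tor}$), but the content is the same.
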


\begin{remark} If $K$ is a separable $k$-algebra, then $\Ho^K_*(E,M)$ coincide with the absolute Hochschild homology $\Ho_*(E,M)$, of $E$ with coefficients in $M$.
\end{remark}

\begin{remark} If $K=A$, then $(\wh{X}_*(M),\wh{d}_*) = (\wh{X}_{0*}(M),\wh{d}^1_{0*})$.
\end{remark}

\begin{remark} In order to abbreviate notations we will write $\wh{X}_{rs}$ and $\wh{X}_n$ instead of $\wh{X}_{rs}(E)$ and $\wh{X}_n(E)$, respectively.
\end{remark}

For $r,s\ge 0$, let
$$
\wt{X}_{rs}(M):= M\ot_{\!A} E^{\ot_{\!A}^s}\ot A^{\ot^r}\ot.
$$
Similarly as for $\wh{X}_{rs}(M)$ we have canonical identifications
$$
\wt{X}_{rs}(M)\simeq M\ot_{E^e} X'_{rs}.
$$
For $0\le i\le s$, let
$$
\wt{u}_i\colon \wt{X}_{rs}(M)\to \wt{X}_{r,s-1}(M)
$$
be the map induced by $u'_i$. It is easy to see that
\begin{align*}
&\wt{u}_0\bigl([m\ot_{\!A}\ov{\bx}_{1s}\ot\ba_{1r}]\bigr) = [mx_1\ot_{\!A}\ov{\bx}_{2s}\ot \ba_{1r}],\\
&\wt{u}_i\bigl([m\ot_{\!A}\ov{\bx}_{1s}\ot\ba_{1r}]\bigr) = [m\ot_{\!A}\ov{\bx}_{1,i-1} \ot_{\!A} x_ix_{i+1} \ot_{\!A}\ov{\bx}_{i+1,s}\ot \ba_{1r}] \quad\text{for $0<i<s$}
\intertext{and}
&\wt{u}_s\bigl([m\ot_{\!A}\ov{\bx}_{1,s-1}\ot_{\!A} \gamma(v)\ot\ba_{1r}]\bigr) = \sum_l \bigl[\gamma(v^{(l)}) m\ot_{\!A}\ov{\bx}_{1,s-1}\ot\ba_{1r}^{(l)}\bigr],
\end{align*}
where $\sum_l \ba_{1r}^{(l)}\ot_k v^{(l)} := \chi(v\ot_k\ba_{1r})$.

\begin{notations}\label{not3.1} We will use the following notations:

\begin{enumerate}

\smallskip

\item We let $\ov{W}_n\subseteq \ov{W}'_n$ denote the $k$-vector subspace of $M\ot\ov{E}^{\ot^n}\ot$ generated by the classes in $M\ot\ov{E}^{\ot^n}\ot$ of the simple tensors $m\ot \bx_{1n}$ such that
$$
\#(\{j:x_j\notin \ov{A}\cup \mathcal{V}_{\!K}\})=0\quad\text{and}\quad \#(\{j:x_j\notin \ov{A}\cup \mathcal{V}_{\!K}\})\le 1,
$$
respectively.

\smallskip

\item Given a $K$-subalgebra $R$ of $A$, we let $\ov{C}^R_n$ denote the $k$-vector subspace of $M\ot \ov{E}^{\ot^n}\ot$ generated by the classes in $M\ot\ov{E}^{\ot^n}\ot$ of all the simple tensors $m\ot\bx_{1n}$ with some $x_i$ in~$\ov{R}$.

\smallskip

\item Given a $K$-subalgebra $R$ of $A$ and $0\le u\le r$, we let $\wh{X}^{Ru}_{rs}(M)$ denote the $k$-vector subspace of $\wh{X}_{rs}(M)$ generated by the classes in $\wh{X}_{rs}(M)$ of all the simple tensors \hbox{$m\ot_{\!A}\ov{\bx}_{1s}\ot \ba_{1r}$}, with at least $u$ of the $a_j$'s in $\ov{R}$. Moreover, we set $\wh{X}^{Ru}_n(M):=\bigoplus_{r+s=n}\wh{X}^{Ru}_{rs}(M)$.

\smallskip

\item For $j,l\ge 1$, we let
$$
\ov{\chi}_{_{jl}}\colon V^{\ot_k^j}\ot_k\ov{A}^{\ot^l}\to \ov{A}^{\ot^l}\ot_k V^{\ot_k^j}
$$
denote the map induced by the map $\chi_{_{jl}}$ introduced in Notations~\ref{not2.1}.

\smallskip

\item We let
$$
\Sh_{sr}\colon V^{\ot_k^s}\ot_k \ov{A}^{\ot^r}\to \ov{E}^{\ot^{r+s}}
$$
denote the map recursively define by:

\begin{itemize}

\smallskip

\item[-] $\Sh_{s0}:= \gamma^{\ot^s}$,

\smallskip

\item[-] $\Sh_{0r}:= i_{\ov{A}}^{\ot^r}$,

\smallskip

\item[-] If $r,s\ge 1$, then
$$
\quad\qquad\qquad\Sh_{sr}:=\sum_{i=0}^r (-1)^i\left(\Sh_{s-1,i}\ot\gamma\ot i_{\ov{A}}^{\ot^{r-i}} \right)\xcirc \Bigl(\ov{V}^{\ot^{s-1}}\ot \ov{\chi}_{_{1i}}\ot \ov{A}^{\ot^{r-i}}\Bigr),
$$
where $\ov{\chi}_{_{10}} := \ide_V$.

\end{itemize}

\smallskip

\end{enumerate}

\end{notations}

\begin{theorem}\label{formula para wh{d}_1} The following assertions hold:

\begin{enumerate}

\smallskip

\item The morphism $\wh{d}^0\colon \wh{X}_{rs}(M)\to \wh{X}_{r-1,s}(M)$ is $(-1)^s$-times the boundary map of the normalized chain Hochschild complex of the $K$-algebra $A$ with coefficients in $M\ot_{\!A} (E/A)^{\ot_{\!A}^s}$, considered as an $A$-bimodule via the left and right canonical actions.

\smallskip

\item The morphism $\wh{d}^1\colon \wh{X}_{rs}(M)\to \wh{X}_{r,s-1}(M)$ is induced by $\sum_{i=0}^s (-1)^i \wt{u}_i$.

\smallskip

\item Let $R$ be a stable under $\chi$ $K$-subalgebra of $A$. If $\mathcal{F}$ takes its values in $R\hspace{-0.7pt}\ot_{\hspace{-0.3pt} k}\! V$, then
$$
\wh{d}^l\bigl(\wh{X}_{rs}(M)\bigr)\subseteq \wh{X}^{R,l-1}_{r+l-1,s-l}(M)
$$
for each $l\ge 1$.
\end{enumerate}

\end{theorem}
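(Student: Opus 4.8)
The plan is to obtain all three assertions by applying the functor $M\ot_{E^e}(-)$ to the corresponding facts about the $\Upsilon$-projective resolution $(X_*,d_*)$, and reading the result off through the natural identifications $\wh{X}_{rs}(M)\simeq M\ot_{E^e}X_{rs}$ and $\wt{X}_{rs}(M)\simeq M\ot_{E^e}X'_{rs}$ recorded above; under these identifications $\wh{d}^l_{rs}$ corresponds to $\ide_M\ot_{E^e}d^l_{rs}$ and $\wt{u}_i$ corresponds to $\ide_M\ot_{E^e}u'_i$, by definition. The functor $M\ot_{E^e}(-)$ is only right exact, so one has to be a little careful: it carries epimorphisms to epimorphisms and commutes with the operation of passing from a map to the map it induces on quotients, but it need not preserve submodules; for this reason I would phrase everything in terms of quotients and of explicit maps, never of kernels.

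For part~(1), I would recall from the construction preceding Theorem~\ref{res nuestra} that, for each fixed $s$, the complex $(X_{*s},d^0_{*s})$ is $(-1)^s$ times the normalized bar resolution of the $K$-algebra $A$, tensored on the left over $A$ with $E\ot_{\!A}(E/A)^{\ot_{\!A}^s}$ and on the right over $A$ with $E$. Applying $M\ot_{E^e}(-)$ and using the standard isomorphism $M\ot_{E^e}\bigl(E\ot_{\!A}N\ot_{\!A}E\bigr)\simeq M\ot_{A^e}N$, valid for every $A$-bimodule $N$ with $M$ regarded as an $A$-bimodule by restriction along $A\hookrightarrow E$, identifies $(X_{*s},d^0_{*s})$ with $(-1)^s$ times the normalized chain Hochschild complex of the $K$-algebra $A$ with coefficients in $M\ot_{\!A}(E/A)^{\ot_{\!A}^s}$. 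The only point requiring verification is that the $A$-bimodule structure produced in this way on $M\ot_{\!A}(E/A)^{\ot_{\!A}^s}$ is the canonical one, i.e.\ the left action induced by that of $M$ and the right action induced by that of $(E/A)^{\ot_{\!A}^s}$; this is immediate from the explicit form of the displayed isomorphism and of the identification $\wh{X}_{rs}(M)\simeq M\ot_{E^e}X_{rs}$.

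For part~(2), Theorem~\ref{formula para d_1}(1) says that $d^1\colon X_{rs}\to X_{r,s-1}$ is the map induced by $\sum_{i=0}^s(-1)^i u'_i$ along the canonical epimorphism $X'_{rs}\twoheadrightarrow X_{rs}$. Since $M\ot_{E^e}(-)$ is right exact, it sends this epimorphism to the canonical epimorphism $\wt{X}_{rs}(M)\twoheadrightarrow\wh{X}_{rs}(M)$, compatibly with the identifications above, and it sends $\sum_{i=0}^s(-1)^i u'_i$ to $\sum_{i=0}^s(-1)^i\wt{u}_i$ because $\wt{u}_i$ is by definition induced by $u'_i$. Hence $\wh{d}^1=\ide_M\ot_{E^e}d^1$ is the map induced by $\sum_{i=0}^s(-1)^i\wt{u}_i$, as claimed.

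For part~(3), Theorem~\ref{formula para d_1}(2) gives $d^l\bigl(X_{rs}\bigr)\subseteq X^{R,l-1}_{r+l-1,s-l}$ for every $l\ge 1$. Under the isomorphism $M\ot_{E^e}X_{pq}\simeq\wh{X}_{pq}(M)$ the image of $M\ot_{E^e}X^{Ru}_{pq}$ lands inside $\wh{X}^{Ru}_{pq}(M)$: the element $m\ot_{E^e}\bigl(1\ot_{\!A}\ov{\bx}_{1q}\ot\ba_{1p}\ot 1\bigr)$, with $1\ot_{\!A}\ov{\bx}_{1q}\ot\ba_{1p}\ot 1$ a generator of $X^{Ru}_{pq}$, is sent to the class of $m\ot_{\!A}\ov{\bx}_{1q}\ot\ba_{1p}$, and the condition ``at least $u$ of the $a_j$ lie in $\ov{R}$'' is the defining condition for both subspaces. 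Taking $(p,q,u)=(r+l-1,s-l,l-1)$ and using that $\wh{d}^l_{rs}=\ide_M\ot_{E^e}d^l_{rs}$ factors through $M\ot_{E^e}X^{R,l-1}_{r+l-1,s-l}$, one concludes $\wh{d}^l\bigl(\wh{X}_{rs}(M)\bigr)\subseteq\wh{X}^{R,l-1}_{r+l-1,s-l}(M)$. I do not expect a genuine obstacle here: the whole argument is a transfer along $M\ot_{E^e}(-)$, and the only delicate points are organizational, namely checking the compatibility of the several natural identifications and, in part~(1), pinning down precisely the resulting $A$-bimodule structure.
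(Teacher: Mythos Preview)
Your proof is correct and follows essentially the same route as the paper: item~(1) is read off from the very definition of $d^0$ (as $(-1)^s$ times the bar differential, base-changed to $E$), while items~(2) and~(3) are obtained by applying $M\ot_{E^e}(-)$ to the two parts of Theorem~\ref{formula para d_1}. The paper records this in one sentence; you have simply spelled out the mechanics of the transfer, including the careful remark that right exactness is all that is needed for~(3).
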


\begin{proof} Item~(1) follows easily from the definition of $d^0$, and Items~(2) and~(3), from Theorem~\ref{formula para d_1}.
\end{proof}

Now it is convenient to note that $A\ot \ov{A}^{\ot^r}\ot M$ is an $E$-bimodule via
$$
a\gamma(v) \cdot (\ba_{0r}\ot m) \cdot a'\gamma(v') : = \sum_l a\ba_{0r}^{(l)} \ot \gamma(v^{(l)}) ma'\gamma(v'),
$$
where $\sum_l \ba_{0r}^{(l)}\ot_k v^{(l)} := \chi(v\ot_k\ba_{0r})$.

\begin{remark}\label{re3.5} Note that
\begin{align*}
&\Ho_r\bigl(\wh{X}_{*s}(M),\wh{d}^0_{*s}\bigr) = \Ho^K_r\bigr(A,M\ot_{\!A} (E/A)^{\ot_{\!A}^s} \bigl)
\intertext{and}
&\Ho_s\bigl(\wh{X}_{r*}(M),\wh{d}^1_{r*}\bigr) = \Ho^A_s\bigr(E,A\ot\ov{A}^{\ot^r}\ot M\bigl).
\end{align*}
\end{remark}

\begin{remark}\label{re3.4} By item~(3) of the above theorem, if $\mathcal{F}$ takes its values in $K\ot_k V$, then $(\wh{X}_*(M),\wh{d}_*)$ is the total complex of the double complex $\bigl(\wh{X}_{**}(M),\wh{d}_{**}^0, \wh{d}_{**}^1\bigr)$.
\end{remark}

\subsection{Comparison maps}\label{morfismos de comparacion en homologia} Let $\bigl(M\ot\ov{E}^{\ot^*}\ot,b_*\bigr)$ be the normalized Hochschild chain complex of the $K$-al\-gebra $E$ with coefficients in $M$. Recall that there is a canonical identification
$$
\bigl(M\ot\ov{E}^{\ot^*}\ot,b_*\bigr) \simeq M\ot_{E^e} \bigl(E\ot\ov{E}^{\ot^*}\ot E,b'_*\bigr).
$$
Let
$$
\wh{\phi}_*\colon (\wh{X}_*(M),\wh{d}_*)\rightarrow \bigl(M\ot\ov{E}^{\ot^*}\ot,b_*\bigr) \quad \text{and}\quad \wh{\psi}_*\colon \bigl(M\ot\ov{E}^{\ot^*}\ot,b_*\bigr)\rightarrow (\wh{X}_*(M),\wh{d}_*)
$$
be the morphisms of complexes induced by $\phi$ and $\psi$ respectively. By Proposition~\ref{homotopia} it is evident that $\wh{\psi}\xcirc\wh{\phi} = \ide$ and $\wh{\phi}\xcirc\wh{\psi}$ is homotopically equivalent to the identity map. An homotopy $\wh{\omega}_{*+1}\colon \wh{\phi}_*\xcirc\wh{\psi}_*\to \ide_*$ is the family of maps
$$
\Bigl(\wh{\omega}_{n+1}\colon M\ot\ov{E}^{\ot^n}\ot\longrightarrow M\ot\ov{E}^{\ot^{n+1}}\ot\Bigr)_{n\ge 0},
$$
induced by $\bigl(\omega_{n+1}\colon E\ot\ov{E}^{\ot^n}\ot E\longrightarrow E\ot \ov{E}^{\ot^{n+1}}\ot E\bigr)_{n\ge 0}$.

\subsection{The filtrations of $\mathbf{\boldsymbol{\bigl(M\ot\ov{E}^{\ot^*}\ot,b_*\bigr)}}$ and $\mathbf{\boldsymbol{(\wh{X}_*(M),\wh{d}_*)}}$}\label{filtraciones en homologia} Let
$$
F^i(\wh{X}_n(M)) := \bigoplus_{0\le s\le i} \wh{X}_{n-s,s}(M).
$$
The complex $(\wh{X}_*(M),\wh{d}_*)$ is filtered by
$$
F^0(\wh{X}_*(M))\subseteq F^1(\wh{X}_*(M))\subseteq F^2(\wh{X}_*(M))\subseteq F^3(\wh{X}_*(M))\subseteq F^4(\wh{X}_*(M))\subseteq\dots.
$$
Using this fact we obtain that there is a convergent spectral sequence
\begin{equation}
E^1_{rs} = \Ho^K_r(A,M\ot_A (E/A)^{\ot_{\!A}^s}) \Longrightarrow \Ho^K_{r+s}(E,M).\label{eq6}
\end{equation}
Let $F^i\bigl(M\ot\ov{E}^{\ot^n}\ot\bigr)$ be the $k$-vector subspace of $M\ot\ov{E}^{\ot^n} \ot$ generated by the classes in $M\ot\ov{E}^{\ot^n}\ot$ of the simple tensors $m\ot\bx_{1n}$ such that at least $n-i$ of the $x_i$'s belong to $\ov{A}$. The normalized Hochschild complex $\bigl(M\ot\ov{E}^{\ot^*}\ot,b_*\bigr)$ is filtered by
$$
F^0\bigl(M\ot\ov{E}^{\ot^*}\ot\bigr)\subseteq F^1\bigl(M\ot\ov{E}^{\ot^*}\ot\bigr)\subseteq F^2\bigl(M\ot\ov{E}^{\ot^*}\ot\bigr) \subseteq\dots.
$$
The spectral sequence associated to this filtration is called the homological Hochs\-child-Serre spectral sequence.

\begin{proposition}\label{phi, psi y omega preservan filtraciones en homologia} The maps $\wh{\phi}$, $\wh{\psi}$ and $\wh{\omega}$ preserve filtrations.
\end{proposition}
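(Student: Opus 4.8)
The plan is to deduce the statement from Proposition~\ref{phi, psi y omega preservan filtraciones} by applying the functor $M\ot_{E^e}(-)$, since, under the identifications of Subsection~\ref{morfismos de comparacion en homologia}, the maps $\wh{\phi}$, $\wh{\psi}$ and $\wh{\omega}$ are obtained from $\phi$, $\psi$ and $\omega$ precisely by tensoring over $E^e$ with $M$. The key point is that both filtrations occurring in the statement are images of the corresponding filtrations of the resolutions under this functor. On one side, $F^i(X_n)=\bigoplus_{0\le s\le i}X_{n-s,s}$ is a direct summand of $X_n$ as an $E$-bimodule, so the canonical map $M\ot_{E^e}F^i(X_n)\to M\ot_{E^e}X_n$ is (split) injective with image $\bigoplus_{0\le s\le i}\wh{X}_{n-s,s}(M)=F^i(\wh{X}_n(M))$. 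On the other side, since $F^i\bigl(E\ot\ov{E}^{\ot^n}\ot E\bigr)$ is by definition the $E$-subbimodule generated by the simple tensors $1\ot\bx_{1n}\ot 1$ with at least $n-i$ of the $x_j$'s in $\ov{A}$, the image of $M\ot_{E^e}F^i\bigl(E\ot\ov{E}^{\ot^n}\ot E\bigr)$ in $M\ot_{E^e}\bigl(E\ot\ov{E}^{\ot^n}\ot E\bigr)=M\ot\ov{E}^{\ot^n}\ot$ is exactly the subspace generated by the classes of the $m\ot\bx_{1n}$ with at least $n-i$ of the $x_j$'s in $\ov{A}$, that is, $F^i\bigl(M\ot\ov{E}^{\ot^n}\ot\bigr)$.

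With these identifications in hand, I would invoke the elementary functoriality fact that if $g\colon P\to Q$ is a morphism of $E$-bimodules with $g\bigl(F^iP\bigr)\subseteq F^iQ$, then $\ide_M\ot_{E^e}g$ carries the image of $M\ot_{E^e}F^iP$ into the image of $M\ot_{E^e}F^iQ$. Applying this with $g=\phi_n$, $g=\psi_n$ and $g=\omega_{n+1}$, and using Proposition~\ref{phi, psi y omega preservan filtraciones} together with the paragraph above, we obtain $\wh{\phi}\bigl(F^i(\wh{X}_n(M))\bigr)\subseteq F^i\bigl(M\ot\ov{E}^{\ot^n}\ot\bigr)$, $\wh{\psi}\bigl(F^i(M\ot\ov{E}^{\ot^n}\ot)\bigr)\subseteq F^i(\wh{X}_n(M))$ and $\wh{\omega}\bigl(F^i(M\ot\ov{E}^{\ot^n}\ot)\bigr)\subseteq F^i\bigl(M\ot\ov{E}^{\ot^{n+1}}\ot\bigr)$, which is the assertion.

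There is essentially no hard step in this argument; the only point demanding care is the bookkeeping that identifies $F^i(\wh{X}_n(M))$ and $F^i\bigl(M\ot\ov{E}^{\ot^n}\ot\bigr)$ with the images of the tensored filtrations of $X_n$ and of $E\ot\ov{E}^{\ot^n}\ot E$. One should not expect $M\ot_{E^e}F^i\bigl(E\ot\ov{E}^{\ot^n}\ot E\bigr)\to M\ot\ov{E}^{\ot^n}\ot$ to be injective, because the filtration of the bar resolution is not a direct sum decomposition as a complex of $E$-bimodules; but only the \emph{image} is needed, and this image is read off directly from the generating simple tensors. Once this is settled the proposition is immediate. (Alternatively, one could simply repeat on the Hochschild level the inductive computations carried out in the proof of Proposition~\ref{phi, psi y omega preservan filtraciones}, but the functorial deduction above is shorter and avoids duplicating those estimates.)
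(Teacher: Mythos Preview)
Your argument is correct and is exactly the approach the paper takes: its proof reads in full ``This follows immediately from Proposition~\ref{phi, psi y omega preservan filtraciones}.'' You have simply spelled out the bookkeeping (identifying the two filtrations with images under $M\ot_{E^e}(-)$) that the paper leaves implicit.
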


\begin{proof} This follows immediately from Proposition~\ref{phi, psi y omega preservan filtraciones}.
\end{proof}

\begin{corollary} The homological Hochschild-Serre spectral sequence is isomorphic to the spectral sequence~\eqref{eq6}.
\end{corollary}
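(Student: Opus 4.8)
The plan is to show that the morphism of filtered complexes $\wh{\phi}$ already constructed induces the desired isomorphism, by exploiting that it is one half of a \emph{filtered} homotopy equivalence. First recall the ingredients. By the discussion in Subsection~\ref{morfismos de comparacion en homologia} (which rests on Proposition~\ref{homotopia}) we have $\wh{\psi}\xcirc\wh{\phi}=\ide$ together with an explicit homotopy $\wh{\omega}$ satisfying $\wh{\phi}\xcirc\wh{\psi}-\ide = b\xcirc\wh{\omega}+\wh{\omega}\xcirc b$; and by Proposition~\ref{phi, psi y omega preservan filtraciones en homologia} the three maps $\wh{\phi}$, $\wh{\psi}$, $\wh{\omega}$ all preserve the filtrations $F^*\bigl(\wh{X}_*(M)\bigr)$ and $F^*\bigl(M\ot\ov{E}^{\ot^*}\ot\bigr)$ introduced in Subsection~\ref{filtraciones en homologia}. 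Note also that in each total degree $n$ both filtrations start at $0$ and stabilize at step $n$, so the two spectral sequences are bounded in each total degree and converge to $\Ho^K_*(E,M)$.

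Next I would pass to associated graded objects. Because $\wh{\phi}$, $\wh{\psi}$ and $\wh{\omega}$ are filtration-preserving, they induce chain maps (respectively a chain homotopy) $\mathrm{gr}\,\wh{\phi}$, $\mathrm{gr}\,\wh{\psi}$, $\mathrm{gr}\,\wh{\omega}$ between the associated graded complexes, which are by definition the $E^0$-pages of the two spectral sequences together with their $d^0$-differentials. Applying $\mathrm{gr}$ to the identities $\wh{\psi}\xcirc\wh{\phi}=\ide$ and $\wh{\phi}\xcirc\wh{\psi}-\ide = b\xcirc\wh{\omega}+\wh{\omega}\xcirc b$ shows that $\mathrm{gr}\,\wh{\phi}$ is a chain homotopy equivalence of $E^0$-complexes, with homotopy inverse $\mathrm{gr}\,\wh{\psi}$. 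Taking $d^0$-homology, $\mathrm{gr}\,\wh{\phi}$ induces an isomorphism of $E^1$-pages compatible with the $d^1$-differentials. Since $\wh{\phi}$ is a morphism of filtered complexes it induces a morphism of the two spectral sequences; being an isomorphism on $E^1$, it is an isomorphism on $E^r$ for every $r\ge 1$ and on $E^\infty$, and it respects the filtrations induced on the common abutment $\Ho^K_*(E,M)$. Hence the homological Hochschild--Serre spectral sequence and the spectral sequence~\eqref{eq6} are isomorphic, which is exactly the assertion.

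I expect no genuine difficulty here: the one point that deserves a line of verification is that a filtration-preserving chain homotopy descends to a chain homotopy between the associated graded complexes (so that $\mathrm{gr}$ transports the homotopy-equivalence data of Subsection~\ref{morfismos de comparacion en homologia} to homotopy-equivalence data between the $E^0$-complexes); this is immediate from $\wh{\omega}\bigl(F^i\bigr)\sub F^i$. If one prefers not to invoke the comparison theorem for spectral sequences of filtered complexes as a black box, the same argument can be carried out one page at a time, restricting $\wh{\phi}$, $\wh{\psi}$ and $\wh{\omega}$ to the subquotients $F^i/F^{i-1}$ in each degree and checking directly that the induced maps on successive pages are mutually inverse isomorphisms.
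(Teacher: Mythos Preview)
Your proof is correct and follows the same approach as the paper: the paper's proof simply says this follows immediately from Proposition~\ref{phi, psi y omega preservan filtraciones en homologia} (that $\wh{\phi}$, $\wh{\psi}$, $\wh{\omega}$ preserve filtrations) together with the comparison-map data of Subsection~\ref{morfismos de comparacion en homologia}, leaving implicit exactly the standard filtered-homotopy-equivalence argument that you have spelled out in detail.
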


\begin{proof} This follows immediately from Proposition~\ref{phi, psi y omega preservan filtraciones en homologia} and the comments following Remark~\ref{re3.4}.
\end{proof}

\begin{proposition}\label{formula para hat{phi} modulo la filtracion} Let $R$ be a stable under $\chi$ $K$-subalgebra of $A$. If $\mathcal{F}$ takes its values in $R\ot_k V$, then
$$
\wh{\phi}\bigl([m\ot_{\!A} \gamma_{\!A}(\bv_{1i})\ot \ba_{1,n-i}]\bigr) = \bigl[m\ot\Sh(\bv_{1i}\ot_k \ba_{1,n-i})\bigr] + [m\ot \bx],
$$
with $[m\ot \bx]\in F^{i-1}\bigl(M\ot \ov{E}^{\ot^n}\ot\bigr)\cap \ov{W}_n\cap \ov{C}^R_n$. In particular $\wh{\phi}$ preserve filtrations.
\end{proposition}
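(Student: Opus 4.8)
The plan is to prove the identity by induction on $n$, using the recursive definition of the comparison morphism $\phi_*$. It is cleanest to work with the $E$-bimodule maps $\phi_n$ and to reduce modulo $E^e$ and commutators with $K$ only at the end; the point to bear in mind is that $\phi_{n+1}(\bz\ot 1)=\xi_{n+1}\xcirc\phi_n\xcirc d_{n+1}(\bz\ot 1)$, and that $\xi_{n+1}$ is only left $E$-linear: it sends the rightmost $E$-tensor factor of a chain into a fresh $\ov{E}$-slot (and appends $1\in E$). Thus one must track how $d_{n+1}$ distributes $E$-factors to the right, since that is precisely what becomes the new $\ov{E}$-slot after applying $\xi_{n+1}$. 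The case $n=0$ is trivial: only $i=0$ occurs, $\phi_0=\ide$, and $\Sh$ on the empty argument is the identity, so the error vanishes.

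For the inductive step I would evaluate $d_{n+1}$ on the generator of $X_{n+1-i,i}$ corresponding to $[m\ot_{\!A}\gamma_{\!A}(\bv_{1i})\ot\ba_{1,n+1-i}]$ and split $d_{n+1}=d^0+d^1+\sum_{l\ge 2}d^l$, using Theorem~\ref{formula para d_1} and Theorem~\ref{formula para wh{d}_1}. The map $d^0$ is, up to the sign $(-1)^i$, the normalized bar differential of the $K$-algebra $A$ tensored up (its effect after reduction is Theorem~\ref{formula para wh{d}_1}(1)); it preserves the filtration degree $i$, and its boundary terms yield simple tensors of the standard form once the $A$-factors they create are carried through $\chi$ (which keeps $\chi(v\ot a)$ in $A\ot_k V$). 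The map $d^1$ is induced by $\sum_{j=0}^i(-1)^j u'_j$ and drops the filtration degree to $i-1$: $u'_0$ multiplies the leftmost $A$-factor into $\gamma(v_1)$, the middle $u'_j$ merge two consecutive $\gamma$'s through $\mathcal F$, and $u'_i$ transposes $v_i$ past $\ba_{1,n+1-i}$ through $\chi$ --- this last term is the one that puts a genuinely nontrivial $E$-factor on the right. Each $d^l$ with $l\ge 2$ lands in $X^{R,l-1}_{n+l-i,i-l}$, hence in filtration degree $\le i-2$ and in the span of tensors carrying at least $l-1$ slots in $\ov{R}$.

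Applying the inductive hypothesis to $\phi_n$ of each summand, then $\xi_{n+1}$, and finally reducing, the combinatorial core is to verify that the top-filtration part of the result reassembles exactly the recursion defining $\Sh_{i,n+1-i}$ in Notations~\ref{not3.1}(5): the term $u'_i$, together with the intermediate states produced by $d^0$, transports $v_i$ past an initial block of $\ba$'s, which accounts for the maps $\ov{\chi}_{_{1j}}$ occurring in that recursion; $\xi_{n+1}$ reinstalls the freed $\gamma$ into the right slot; and the remaining $d^0$-boundary terms supply the trailing $i_{\ov{A}}$-block. The delicate point here is to match the signs $(-1)^j$ of $\Sh$ against the signs of $u'_j$, of the Hochschild differential and of $\xi$, and to get the summation range right. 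Everything that does not enter the leading term is collected into $[m\ot\bx]$, and one checks it lies in $F^{i-1}(M\ot\ov{E}^{\ot^n}\ot)\cap\ov{W}_n\cap\ov{C}^R_n$: it is in $F^{i-1}$ because each surviving summand has at most $i-1$ non-$\ov{A}$ slots; it is in $\ov{W}_n$ because $\Sh$, the maps $u'_j$, the differential $d^0$ and $\xi$ produce only tensors whose slots are classes of elements of $A$ or of $\mathcal V$, so the inductive hypothesis keeps us within the span of such tensors; and it is in $\ov{C}^R_n$ because the summands coming from $d^l$ with $l\ge 2$ carry an $\ov{R}$-slot that survives all later operations --- the stability of $R$ under $\chi$ (hence under $\ov{\chi}_{_{1j}}$) and $i_{\ov{A}}$ keeps it $\ov{R}$-valued through $\phi_n$ (the inductive hypothesis applied to a generator whose $\bar a$-slots lie in $\ov{R}$ returns a $\Sh$-term and an error both in $\ov{C}^R$), $\xi_{n+1}$ only inserts a slot, and the inductive-hypothesis errors of the $d^0$- and $d^1$-parts are already in $\ov{C}^R_n$. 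The final assertion follows at once, since $\bigl[m\ot\Sh(\bv_{1i}\ot_k\ba_{1,n-i})\bigr]\in F^i(M\ot\ov{E}^{\ot^n}\ot)$ and $[m\ot\bx]\in F^{i-1}\subseteq F^i$.

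The step I expect to be the main obstacle is precisely this combined bookkeeping in the inductive step: on one side, reconciling the signs and index shifts of three distinct recursions --- that of $\phi$ (through $\xi$ and $d$), that of the Hochschild and bar differentials, and that of $\Sh$, whose defining sum relates $\Sh_{i,\cdot}$ to all the smaller $\Sh_{i-1,\cdot}$, so that the individual summands have to be traced back to distinct pieces of $d^0$ and $d^1$; on the other side, ensuring at each stage that every newly produced summand stays inside the triple intersection $F^{i-1}\cap\ov{W}_n\cap\ov{C}^R_n$ --- in particular that the ``pure tensor'' property is not destroyed by a product $a\gamma(v)$ ending up in a single slot, and that, despite the leftward transport of $A$-factors through the $\gamma$'s, at least one $\ov{R}$-factor coming from the higher $d^l$ remains in a genuine tensor slot of the final answer rather than being absorbed into the coefficient module.
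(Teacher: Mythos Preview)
Your plan is correct and is exactly the paper's approach: the paper proves the statement at the $E$-bimodule level (Proposition~\ref{propA.5}) by the same induction on $n$ using the recursion $\phi_n(\bz\ot 1)=\xi_n\xcirc\phi_{n-1}\xcirc d_n(\bz\ot 1)$, and then deduces Proposition~\ref{formula para hat{phi} modulo la filtracion} by applying $M\ot_{E^e}(-)$.

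Where you make your own life harder is in the ``combinatorial core''. You already observed that $\xi_n$ pushes the rightmost $E$-factor into a fresh $\ov{E}$-slot; the consequence you are not exploiting is that $\phi_{n-1}(\bz\ot 1)$ always ends in $\ot 1$ (by its recursive definition through $\xi_{n-1}$), so $\xi_n\xcirc\phi_{n-1}$ \emph{annihilates every element of $X_{n-1}$ whose rightmost $E$-factor lies in $K$}. Applied to $\bx=1\ot_{\!A}\gamma_{\!A}(\bv_{1i})\ot\ba_{1,n-i}\ot 1$, this kills all terms of $d^0(\bx)$ except the edge term placing $a_{n-i}$ on the right, and all terms of $d^1(\bx)$ except $u'_i$. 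In particular your worry about the middle $u'_j$ producing a slot outside $\ov{A}\cup\mathcal V_K$ disappears: those terms simply vanish. What remains after the inductive hypothesis and $\xi_n$ is
\[
1\ot\Bigl[\Sh_{i,n-i-1}(\bv_{1i}\ot_k\ba_{1,n-i-1})\ot a_{n-i}+(-1)^{n-i}\sum_l\Sh_{i-1,n-i}(\bv_{1,i-1}\ot_k\ba_{1,n-i}^{(l)})\ot\gamma(v_i^{(l)})\Bigr]\ot 1,
\]
so the whole ``obstacle'' reduces to checking that $\Sh_{s,r}$ satisfies this two-term recursion on the last slot, an easy consequence of the defining recursion in Notations~\ref{not3.1}(5). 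There is no need to track all the $\Sh_{i-1,j}$ at once or to reconcile three independent sign systems.
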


\begin{proof} This follows immediately from Proposition~\ref{propA.5}.
\end{proof}

In the next proposition we use the following notations:
\begin{align*}
&\ov{R}_i:= F^i\bigl(M\ot\ov{E}^{\ot^n}\ot\bigr)\setminus F^{i-1}\bigr(M \ot\ov{E}^{\ot^n} \ot\bigr)
\intertext{and}
&F_R^j(\wh{X}_n(M)):= F^j(\wh{X}_n(M)) \cap \wh{X}^{R1}_n(M).
\end{align*}

\begin{proposition}\label{propiedades de hat{psi}} Let $R$ be a stable under $\chi$ $K$-subalgebra of $A$ such that $\mathcal{F}$ takes its values in $R\ot_k V$. The following equalities hold:

\begin{enumerate}

\smallskip

\item $\wh{\psi}\bigl([m\ot\gamma(\bv_{1i})\ot\ba_{i+1,n}]\bigr) = [m\ot_{\!A} \gamma_{\!A}(\bv_{1i}) \ot\ba_{i+1,n}]$.

\smallskip

\item If $\bx = [m\ot \bx_{1n}]\in \ov{R}_i\cap \ov{W}_n$ and there is $1\le j\le i$ such that~$x_j\in A$, then~\hbox{$\wh{\psi}(\bx) = 0$}.

\smallskip

\item If $\bx = \bigl[m\ot \gamma(\bv_{1,i-1})\ot a_i\gamma(v_i)\ot \ba_{i+1,n}\bigr]$, then
\begin{align*}
\qquad\qquad \wh{\psi}(\bx) & \equiv \bigl[m\ot_{\!A} \gamma_{\!A}(\bv_{1,i-1})\ot_{\!A} a_i \gamma_{\!A}(v_i)\ot\ba_{i+1,n}\bigr]\\
& + \sum_l \bigl[\gamma(v_i^{(l)}) m\ot_{\!A}\gamma_{\!A}(\bv_{1,i-1})\ot a_i\ot \ba_{i+1,n}^{(l)}\bigr],
\end{align*}
module $F_R^{i-2}(\wh{X}_n(M))$, where $\sum_l \ba_{i+1,n}^{(l)}\ot_k v_i^{(l)} := \ov{\chi}(v_i \ot_k \ba_{i+1,n})$.

\smallskip

\item If $\bx = \bigl[m\ot \gamma(\bv_{1,j-1})\ot a_j\gamma(v_j)\ot \gamma(\bv_{j+1,i}) \ot \ba_{i+1,n}\bigr]$ with $j<i$, then
$$
\qquad\qquad\wh{\psi}(\bx)\equiv \bigl[m\ot_{\!A} \gamma_{\!A}(\bv_{1,j-1})\ot_{\!A} a_j
\gamma_{\!A}(v_j) \ot_{\!A}\gamma_{\!A}(\bv_{j+1,i})\ot\ba_{i+1,n}\bigr],
$$
module $F_R^{i-2}(\wh{X}_n(M)))$.

\smallskip

\item If $\bx = \bigl[m\ot \gamma(\bv_{1,i-1}) \ot \ba_{i,j-1}\ot a_j\gamma(v_j) \ot \ba_{j+1,n}\bigr]$ with $j>i$, then
$$
\wh{\psi}(\bx)\equiv  \bigl[\gamma(v_j^{(l)})m\ot_{\!A}\gamma_{\!A}(\bv_{1,i-1})\ot\ba_{ij}\ot \ba_{j+1,n}^{(l)}\bigr],
$$
module $F_R^{i-2}(\wh{X}_n(M))$, where $\sum_l \ba_{j+1,n}^{(l)}\ot_k v_j^{(l)} := \ov{\chi}(v_j \ot_k \ba_{j+1,n})$.

\smallskip

\item If $\bx = [m\ot \bx_{1n}]\in \ov{R}_i\cap \ov{W}'_n$ and there exists $1\le j_1 < j_2\le n$ such that $x_{j_1}\in A$ and $x_{j_2}\in \mathcal{V}_{\!K}$, then $\wh{\psi}(\bx) \in F_R^{i-2}(\wh{X}_n(M))$.

\end{enumerate}
\end{proposition}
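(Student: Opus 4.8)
The plan is to prove the six statements simultaneously by induction on $n$, the base cases $n\le 1$ being immediate since $\wh{\psi}_0=\ide$ and $\ov{\si}_1$, $b_1$ are given by explicit formulas. The engine of the induction is the recursion of Proposition~\ref{homotopia}, which after applying $M\ot_{E^e}(-)$ reads: $\wh{\psi}_n([m\ot\bx_{1n}])$ is computed by expanding the normalized Hochschild boundary $b_n([m\ot\bx_{1n}])$ as the usual alternating sum of classes of length $n-1$, applying $\wh{\psi}_{n-1}$ to each summand, and then applying (the descent to $\wh{X}_*(M)$ of) the contracting homotopy $\ov{\si}_n$ of Proposition~\ref{cont nuestra}. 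Each summand of $b_n([m\ot\bx_{1n}])$ is obtained from $[m\ot\bx_{1n}]$ by replacing two adjacent factors by their product in $E$ (or by absorbing an end factor into $M$); since $\gamma(v)\gamma(w)=\mathcal{F}(v\ot_k w)\in R\ot_k V$, $\gamma(v)a=\chi(v\ot_k a)$ (which lies in $R\ot_k V$ when $a\in R$), and $\chi$ preserves $R$, every such summand again belongs to one of the subspaces $\ov{W}_n$, $\ov{W}'_n$, $\ov{C}^R_n$ to which the inductive hypothesis applies.

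For items~(1) and~(2) I would first record the effect of $\ov{\si}_n$: unwinding Proposition~\ref{cont nuestra} together with the recursion for the $\si^l$, the part of $\ov{\si}_n$ built from $\si^0$ re-inserts a single $\gamma$- or $\ov{A}$-slot in the position vacated by $b_n$, up to the signs dictated by the explicit formula for $\si^0$; every other term is built from the operators $d^l$ with $l\ge 2$, which by Theorem~\ref{formula para d_1}(2) (equivalently Theorem~\ref{formula para wh{d}_1}(3)) strictly lower the filtration and land in $\wh{X}^{R1}_n(M)$, hence contribute only inside $F_R^{i-2}(\wh{X}_n(M))$. Granting this, item~(1) follows by checking that in $\wh{\psi}_{n-1}\xcirc b_n\bigl([m\ot\gamma(\bv_{1i})\ot\ba_{i+1,n}]\bigr)$ the summands coming from $\gamma(v_j)\gamma(v_{j+1})$ with $j<i$ and from $\gamma(v_i)a_{i+1}$ — the ones that break the $\gamma$/$\ov{A}$ block structure — either vanish (by the inductive versions of items~(2), (5), (6)) or cancel in pairs, so that only the honest boundary summands survive; re-inserting the missing slot with $\ov{\si}_n$ and invoking item~(1) in length $n-1$ reassembles $[m\ot_{\!A}\gamma_{\!A}(\bv_{1i})\ot\ba_{i+1,n}]$ exactly. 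Item~(2) is then a consequence of item~(1) and of filtration preservation (Proposition~\ref{phi, psi y omega preservan filtraciones en homologia}): an $\bx\in\ov{R}_i\cap\ov{W}_n$ carrying a misplaced $\ov{A}$-factor among its first $i$ slots is, modulo the relations used above, a difference of clean elements whose $\ov{\si}_n$-insertions telescope to zero.

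Items~(3)--(6) I would deduce from items~(1) and~(2) in length $n$, from the formula for $\wh{\phi}$ modulo filtration (Proposition~\ref{formula para hat{phi} modulo la filtracion}) and the shuffle recursion of $\Sh$ in Notations~\ref{not3.1}, and from the identity $\wh{\psi}\xcirc\wh{\phi}=\ide$. Concretely, for each such $\bx$ one writes the candidate right-hand side $\bz\in\wh{X}_{n-i,i}(M)\oplus\wh{X}_{n-i+1,i-1}(M)$ and computes $\wh{\phi}(\bz)$: the leading term of $\Sh$ reproduces $\bx$, while the subleading terms of $\Sh$ — which involve $\ov{\chi}$, hence the twisted wrap-around operator $\wt{u}_s$ appearing in Theorem~\ref{formula para wh{d}_1}(2) — account exactly for the extra summand displayed in item~(3) and for the migration of $v_j$ into $M$ in items~(3) and~(5); all the remaining contributions lie in $F^{i-1}\bigl(M\ot\ov{E}^{\ot^n}\ot\bigr)\cap\ov{W}_n\cap\ov{C}^R_n$, and applying $\wh{\psi}$ and using items~(1)--(2) to annihilate those pieces modulo $F_R^{i-2}(\wh{X}_n(M))$ yields the stated congruences. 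Items~(4) and~(6) are the several-block variants of~(3) and~(5) and follow by the same bookkeeping, the hypothesis $j<i$ (respectively, the existence of $j_1<j_2$ with $x_{j_1}\in A$ and $x_{j_2}\in\mathcal{V}_{\!K}$) forcing the output into $F_R^{i-2}(\wh{X}_n(M))$.

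The main obstacle is exactly the bookkeeping in the preceding two paragraphs: tracking the parasitic terms produced by $\chi$ and $\mathcal{F}$ whenever two adjacent factors get multiplied in $b_n$, and proving that after the insertion $\ov{\si}_n$ they are confined to $F_R^{i-2}(\wh{X}_n(M))$ (or cancel exactly, for item~(1)). This rests on the two structural facts established earlier: that $\mathcal{F}$ takes its values in $R\ot_k V$ and $R$ is $\chi$-stable, so these products never escape the $R$-world; and that the higher comparison operators $d^l$ with $l\ge 2$ strictly raise the $R$-degree (Theorem~\ref{formula para wh{d}_1}(3)), which is what pins the non-$\si^0$ part of the homotopy inside $F_R^{i-2}(\wh{X}_n(M))$.
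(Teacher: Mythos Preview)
The paper's proof is a one-line reduction to the resolution-level statement, Proposition~\ref{propA.7}; all six items are proved there for $\psi$ on $E\ot\ov{E}^{\ot^*}\ot E$ and then pushed down via $M\ot_{E^e}(-)$. Your plan is in the same spirit --- induction on $n$ through the recursion of Proposition~\ref{homotopia} --- but it misses the simplification that makes the induction tractable. At the resolution level one has $\ov{\sigma}\xcirc\ov{\sigma}=0$ (Lemma~\ref{leA.2}), and hence the recursion collapses to the single-term formula of Remark~\ref{reA.3}: $\psi(1\ot\bx_{1n}\ot 1)=(-1)^n\ov{\sigma}\xcirc\psi(1\ot\bx_{1,n-1}\ot x_n)$. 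One therefore never expands the full bar boundary; one only peels off the rightmost factor $x_n$, and the case analysis is on whether $x_n\in\ov{A}$, $x_n\in\mathcal{V}$, or neither. With this in hand, the induction is driven by the very concrete behaviour of $\ov{\sigma}$ on the subspaces $U_{rs}$ recorded in Lemma~\ref{leA.6}, not by tracking cancellations among the $n+1$ faces of $b_n$. The ``main obstacle'' you identify is largely self-inflicted.

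Two further points. First, your phrase ``the descent to $\wh{X}_*(M)$ of $\ov{\sigma}_n$'' hides a real issue: $\ov{\sigma}$ is only an $(E,K)$-bimodule map, so it does not induce a map after $M\ot_{E^e}(-)$. This is exactly why the paper proves everything for $\psi$ first and only then descends. Second, your alternative route to items~(3)--(6) via $\wh{\psi}\xcirc\wh{\phi}=\ide$ is not what the paper does and is not obviously sufficient: Proposition~\ref{formula para hat{phi} modulo la filtracion} controls $\wh{\phi}$ only modulo $F^{i-1}\cap\ov{W}_n\cap\ov{C}^R_n$, and applying items~(1)--(2) to that remainder places $\wh{\psi}$ of it in $F^{i-1}(\wh{X}_n(M))$ at best, not in the required $F_R^{i-2}(\wh{X}_n(M))$. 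The paper instead proves (3)--(6) by the same direct induction (via Remark~\ref{reA.3} and Lemma~\ref{leA.6}) used for (1)--(2).
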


\begin{proof} This follows immediately from Proposition~\ref{propA.7}.
\end{proof}

\begin{proposition}\label{sobre la imagen de hat{w}} If $\bx = [m\ot\bx_{1n}]\in F^i\bigl(M\ot \ov{E}^{\ot^n}\ot\bigr) \cap \ov{W}'_n$, then
$$
\wh{\omega}(\bx) = [m\ot\byy]\quad\text{with}\quad[m\ot\byy]\in F^i\bigl(M\ot \ov{E}^{\ot^{n+1}} \ot\bigr) \cap \ov{W}_{n+1}.
$$
\end{proposition}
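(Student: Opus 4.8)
The plan is to induct on $n$ and use the recursive description of $\wh{\omega}$. The statement that $[m\ot\byy]$ lies in the filtration stage $F^i$ is already part of Proposition~\ref{phi, psi y omega preservan filtraciones en homologia}, so the content to be proved is that $\wh{\omega}$ sends $\ov{W}'_n$ into $\ov{W}_{n+1}$; that is, the unique entry not in $\ov{A}\cup\mathcal{V}_{\!K}$ which $\ov{W}'_n$ permits must be absorbed. The case $n=0$ is trivial because $\wh{\omega}_1=0$.

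For the inductive step I would lift $\bx=[m\ot\bx_{1n}]$ to $m\ot_{E^e}(1\ot\bx_{1n}\ot 1)$ and unfold $\omega_{n+1}$ on the bar resolution by means of its recursion and Remark~\ref{re2.6}. Since $\xi_{n+1}$ kills every chain whose outer right $E$-factor lies in $K$, and since $\phi$ and $\omega$ (being built from $\xi$, which appends $1_E$) take values in chains ending in $K$, the $-\ide$ term and all faces of $b'_n$ except the last one drop out, and $\wh{\omega}_{n+1}(\bx)$ is left equal to the class of $\xi_{n+1}\xcirc\phi_n\xcirc\psi_n(1\ot\bx_{1n}\ot 1)$ plus the class of $\pm\,\xi_{n+1}\xcirc\omega_n(1\ot x_1\ot\dots\ot x_{n-1}\ot x_n)$. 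For the first summand two things are used: $\wh{\psi}_n$ takes values in $\wh{X}_n(M)$, and, applying Proposition~\ref{formula para hat{phi} modulo la filtracion} with $R=A$ — admissible because $\chi(V\ot_k A)\sub A\ot_k V$ and $\mathcal{F}(V\ot_k V)\sub A\ot_k V$ automatically — together with the recursive definition of $\Sh$ in Notations~\ref{not3.1}, whose values are tensors of $\gamma$'s and $i_{\ov{A}}$'s only, one gets $\wh{\phi}_n\bigl(\wh{X}_n(M)\bigr)\sub\ov{W}_n$; moreover $\psi_n$ is obtained by applying the contracting homotopy $\ov{\sigma}_n$, whose last $E$-slot is always produced either by $\sigma^{-1}$ (hence equal to $1_E$) or by one of the ``splitting'' maps $\sigma^0_{r+1,s}$ (which replace an $E$-valued slot $\sum a_i\gamma(v_i)$ by $\sum\bar a_i\ot\gamma(v_i)$, hence end in some $\gamma(v_i)$), so it lies in $\mathcal{V}$; by right $E$-linearity of $\phi_n$ the same holds for $\phi_n\xcirc\psi_n$, and $\xi_{n+1}$ therefore exposes a slot in $\mathcal{V}_{\!K}$. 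Hence the first summand is in $\ov{W}_{n+1}$. For the second summand, right $E$-linearity of $\omega_n$ reduces it to $\wh{\omega}_n\bigl([m\ot\bx_{1,n-1}]\bigr)$ followed by an extra $\bar x_n$-slot, and the inductive hypothesis (note $[m\ot\bx_{1,n-1}]\in\ov{W}'_{n-1}$) handles the interior slots.

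The hard part is the case in which the bad entry of $\bx$ is precisely $x_n$: then the inductive hypothesis shows the interior slots produced in the second summand are good, but the slot $\xi_{n+1}$ re-creates out of the right $E$-factor is $\bar x_n$ itself, which is bad. Showing that this residual contribution nevertheless lands in $\ov{W}_{n+1}$ is where the real work is: one must either exhibit its cancellation against the component of the first summand issuing from the last face of $b'_n$ inside the recursion for $\psi$, or establish a sharper vanishing of $\wh{\omega}$ on chains all of whose entries already lie in $\ov{A}\cup\mathcal{V}_{\!K}$. Either way this step forces one to make the ``cleanup'' effect of the homotopies $\ov{\sigma}$ and $\xi$, and the exact signs in Remark~\ref{re2.6}, completely explicit; the remainder is routine bookkeeping with the filtration inclusions already recorded in Proposition~\ref{phi, psi y omega preservan filtraciones en homologia}.
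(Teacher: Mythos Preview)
Your approach is the right one and coincides with the paper's route: the paper reduces Proposition~\ref{sobre la imagen de hat{w}} to the bar-resolution statement Proposition~\ref{propA.9}, whose proof (deferred to~\cite{C-G-G}) is exactly the induction on $n$ via Remark~\ref{re2.6} that you set up. Your handling of the first summand and of the second summand when $x_n\in\ov{A}\cup\mathcal{V}_{\!K}$ is correct; for the first summand it is tidier to invoke Proposition~\ref{propA.8} directly rather than to track the last slot through $\psi$ and then $\phi$ separately.

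The gap is the ``hard part'' you flag. Of the two options you list, the one that closes the argument cleanly is the sharper vanishing, and in fact something stronger holds than you state: $\omega_m$ vanishes \emph{identically} on $W_{m-1}$ for every $m$. Prove this by its own induction. For a generator $1\ot\bx_{1,m-1}\ot 1\in W_{m-1}$ apply Remark~\ref{re2.6}. The term $\xi_m\,\phi_{m-1}\psi_{m-1}(1\ot\bx_{1,m-1}\ot 1)$ vanishes because items~(1) and~(2) of Propositions~\ref{propA.7} and~\ref{propA.8} show that $\psi_{m-1}$ and then $\phi_{m-1}$ produce an output with last $E$-slot equal to $1$, which $\xi_m$ annihilates. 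The remaining term equals $\xi_m\bigl(\omega_{m-1}(1\ot\bx_{1,m-2}\ot 1)\,x_{m-1}\bigr)$ and vanishes by the inductive hypothesis, since $1\ot\bx_{1,m-2}\ot 1\in W_{m-2}$. With this auxiliary vanishing in hand your hard case disappears entirely: when $x_n$ is the sole bad entry the second summand is zero outright, and the induction closes without any cancellation.
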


\begin{proof} This follows immediately from Proposition~\ref{propA.9}.
\end{proof}

\section{Hochschild cohomology of a Brzezi\'nski's crossed product} \label{coHochschild-Brzezinski}
Let $M$ be an $E$-bimodule. Since $(X_*,d_*)$ is a $\Upsilon$-projective resolution of $E$, the Hochschild cohomology of the $K$-algebra $E$ with coefficients in $M$ is the cohomology of the cochain complex $\Hom_{E^e}\bigl((X_*,d_*),M\bigr)$.

For each $s\ge 0$, we let $\Hom_{\!A}((E/A)^{\ot_{\!A}^s},M)$ denote the abelian group of left $A$-linear maps from $(E/A)^{\ot_{\!A}^s}$ to $M$. Note that $\Hom_{\!A}((E/A)^{\ot_{\!A}^s}, M)$ is an $A$-bimodule via
$$
a\alpha(\ov{\bx}_{1s}):= \alpha(\ov{\bx}_{1s}a)\quad\text{and}\quad \alpha a(\ov{\bx}_{1s}):= \alpha(\ov{\bx}_{1s})a.
$$
For each $r,s\ge 0$, write
$$
\wh{X}^{rs}(M):= \Hom_{(A,K)}\bigl((E/A)^{\ot_{\!A}^s} \ot \ov{A}^{\ot^r},M \bigr)\simeq \Hom_{K^e}\bigl(\ov{A}^{\ot^r},\Hom_{\!A}((E/A)^{\ot_{\!A}^s},M)\bigr),
$$
It is easy to check that the $k$-linear map
$$
\zeta^{rs}\colon \Hom_{E^e}\bigl(X_{rs},M\bigr)\to \wh{X}^{rs}(M),
$$
given by
$$
\zeta(\alpha)(\ov{\bx}_{1s}\ot \ba_{1r}):= \alpha(1\ot_{\!A}\ov{\bx}_{1s}\ot \ba_{1r}\ot 1),
$$
is an isomorphism. For each $l\le s$, let
$$
\wh{d}_l^{rs}\colon \wh{X}^{r+l-1,s-l}(M) \to \wh{X}^{rs}(M)
$$
be the map induced by $\Hom_{E^e}(d^l_{rs},M)$. Via the above identifications the complex
$$
\Hom_{E^e}((X_*,d_*),M)
$$
becomes $(\wh{X}^*(M),\wh{d}^*)$, where
$$
\wh{X}^n(M):= \bigoplus_{r+s = n} \wh{X}^{rs}(M)\qquad\text{and}\qquad \wh{d}^n:= \sum^n_{l=1} \wh{d}_l^{0n} + \sum_{r=1}^n \sum^{n-r}_{l=0} \wh{d}_l^{r,n-r}.
$$
Consequently, we have the following result:

\begin{theorem}\label{Hochschild cohomology} The Hochschild cohomology $\Ho_K^*(E,M)$, of the $K$-algebra $E$ with coefficients in $M$, is the cohomology of $(\wh{X}^*(M),\wh{d}^*)$.
\end{theorem}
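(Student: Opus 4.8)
The plan is to transpose the proof of Theorem~\ref{Hochschild homology} from $\ot_{E^e}$ to $\Hom_{E^e}$. First I would invoke Theorem~\ref{res nuestra}: since $(X_*,d_*)$ is a $\Upsilon$-projective resolution of $E$ as an $E$-bimodule and $\Upsilon$ is the class of $E$-bimodule epimorphisms admitting an $(E,K)$-bimodule splitting, the relative Hochschild cohomology $\Ho_K^*(E,M)$ is, by definition, the cohomology of $\Hom_{E^e}\bigl((X_*,d_*),M\bigr)$. This step is formal and uses nothing beyond the fact already recorded in the paragraph preceding the theorem statement.

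The substance is then to check that the maps $\zeta^{rs}\colon\Hom_{E^e}(X_{rs},M)\to\wh X^{rs}(M)$ assemble into an isomorphism of cochain complexes from $\Hom_{E^e}\bigl((X_*,d_*),M\bigr)$ onto $(\wh X^*(M),\wh d^*)$. Bijectivity of each $\zeta^{rs}$ is a Hom--tensor adjunction along $K^e\to E^e$: using the identification $X_{rs}\simeq\bigl(E\ot_k\ov V^{\ot_k^s}\bigr)\ot\ov A^{\ot^r}\ot E$, an $E$-bilinear map out of $X_{rs}$ is freely determined by its restriction to the generating submodule $(E/A)^{\ot_{\!A}^s}\ot\ov A^{\ot^r}$, and this restriction must be one-sidedly $A$-linear in the $(E/A)^{\ot_{\!A}^s}$-slot and $K$-bilinear in the $\ov A^{\ot^r}$-slot — that is, it must lie in $\Hom_{(A,K)}\bigl((E/A)^{\ot_{\!A}^s}\ot\ov A^{\ot^r},M\bigr)=\wh X^{rs}(M)$, the two displayed forms of $\wh X^{rs}(M)$ being just a re-expression of this. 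Compatibility with the differentials is then automatic: by definition $\wh d_l^{rs}$ is the map induced by $\Hom_{E^e}(d^l_{rs},M)$ through the $\zeta$'s, so $\zeta^{rs}\xcirc\Hom_{E^e}(d^l_{rs},M)=\wh d_l^{rs}\xcirc\zeta^{r+l-1,s-l}$ for every $l\le s$; summing these identities over $l$ and over the pairs $(r,s)$ with $r+s=n$ gives $\zeta\xcirc\Hom_{E^e}(d_n,M)=\wh d^n\xcirc\zeta$ with $\zeta:=\bigoplus_{r+s=n}\zeta^{rs}$.

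Putting the two steps together yields $\Ho_K^*(E,M)=H^*\bigl(\Hom_{E^e}((X_*,d_*),M)\bigr)\cong H^*\bigl(\wh X^*(M),\wh d^*\bigr)$, as claimed. I do not anticipate a genuine obstacle: everything of weight was already established (the resolution in Theorem~\ref{res nuestra}, its explicit differential in Theorem~\ref{formula para d_1}), and the only mildly delicate point is the careful bookkeeping of the left/right $A$- and $K$-module structures needed to see that $\zeta^{rs}$ is the correct adjunction isomorphism — the cohomological analogue of the identification $\wh X_{rs}(M)\simeq M\ot_{E^e}X_{rs}$ used in the homological case.
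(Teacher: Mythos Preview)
Your proposal is correct and follows exactly the paper's approach: the paper sets up $\zeta^{rs}$ as an isomorphism, defines $\wh d_l^{rs}$ as the map induced by $\Hom_{E^e}(d^l_{rs},M)$ through these isomorphisms, and then states the theorem with the phrase ``Consequently, we have the following result,'' giving no further argument. Your write-up simply spells out the bookkeeping that the paper leaves implicit.
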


\begin{remark} If $K$ is a separable $k$-algebra, then $\Ho_K^*(E,M)$ coincide with the absolute Hochschild cohomology $\Ho^*(E,M)$, of $E$ with coefficients in $M$.
\end{remark}

\begin{remark} If $K=A$, then $(\wh{X}^*(M),\wh{d}^*) = (\wh{X}^{0*}(M),\wh{d}_1^{0*})$.
\end{remark}

\begin{remark} In order to abbreviate notations we will write $\wh{X}^{rs}$ and $\wh{X}^n$ instead of $\wh{X}^{rs}(E)$ and $\wh{X}^n(E)$, respectively.
\end{remark}

For each $s\ge 0$, we let $\Hom_{\!A}(E^{\ot_{\!A}^s},M)$ denote the abelian group of left $A$-linear maps from $E^{\ot_{\!A}^s}$ to $M$. Note that $\Hom_{\!A}(E^{\ot_{\!A}^s},M)$ is an $A$-bimodule via
$$
a\alpha(\ov{\bx}_{1s}):= \alpha(\ov{\bx}_{1s}a)\quad\text{and}\quad \alpha a(\ov{\bx}_{1s}):= \alpha(\ov{\bx}_{1s})a.
$$
For $r,s\ge 0$, let
$$
\wt{X}^{rs}(M) := \Hom_{(A,K)}\bigl(E^{\ot_{\!A}^s} \ot A^{\ot^r},M \bigr)\simeq \Hom_{K^e}\bigl(A^{\ot^r},\Hom_{\!A}(E^{\ot_{\!A}^s},M)\bigr).
$$
Similarly as for $\wh{X}^{rs}(M)$, we have canonical identifications
$$
\wt{X}^{rs}(M)\simeq \Hom_{E^e}\bigl(X'_{rs},M\bigr).
$$
For $0\le i\le s$, let
$$
\wt{u}^i\colon \wt{X}^{r,s-1}(M)\to \wt{X}^{rs}(M)
$$
be the map induced by $u'_i$. It is easy to see that
\begin{align*}
&\wt{u}^0(\alpha)(\ov{\bx}_{1s}\ot\ba_{1r}) = x_1\alpha(\ov{\bx}_{2s}\ot\ba_{1r}) ,\\
&\wt{u}^i(\alpha)(\ov{\bx}_{1,s+1}\ot\ba_{1r}) = \alpha(\ov{\bx}_{1,i-1}\ot_{\!A} x_ix_{i+1} \ot_{\!A}\ov{\bx}_{i+1,s}\ot \ba_{1r}) \quad\text{for $0<i<s$}
\intertext{and}
&\wt{u}^s(\alpha)(\ov{\bx}_{1,s-1}\ot_{\!A}\gamma(v)\ot\ba_{1r}) = \sum_l \alpha(\ov{\bx}_{1,s-1} \ot \ba_{1r}^{(l)})\gamma(v^{(l)}),
\end{align*}
where $\sum_l \ba_{1r}^{(l)}\ot_k v^{(l)} := \chi(v\ot_k\ba_{1r})$.

\begin{notations}\label{not4.3} We will use the following notations:

\begin{enumerate}

\smallskip

\item We let $F^i(\ov{E}^{\ot^n})$ denote the $K$-bimodule of $\ov{E}^{\ot^n}$ generated by the simple tensors $\bx_{1n}$ such that at least $n-i$ of the $x_i$'s belong to $A$.

\smallskip

\item We let $W^{\mathfrak{r}}_n$ denote the $K$-subbimodule of $\ov{E}^{\ot^n}$ generated by the simple tensors $\bx_{1n}$ such that $\#(\{j:x_j\notin \ov{A}\cup \mathcal{V}_{\!K}\})=0$.

\smallskip

\item Given a $K$-subalgebra $R$ of $A$, we let $C^{R\mathfrak{r}}_n$ denote the $K$-subbimodule of $\ov{E}^{\ot^n}$ generated by all the simple tensors $\bx_{1n}$ with some $x_i$ in~$\ov{R}$.

\smallskip

\item Given a $K$-subalgebra $R$ of $A$ and $0\le u\le r$, we let $\wh{X}_{Ru}^{rs}(M)$ denote the $k$-vector subspace of $\wh{X}^{rs}(M)$ consisting of all the $(A,K)$-linear maps
    $$
    \alpha\colon (E/A)^{\ot_{\!A}^s} \ot \ov{A}^{\ot^r}\to M,
    $$
    that factorize throughout the $(A,K)$-subbimodule
    $$
    \wh{X}^{R\mathfrak{r}u}_{r+u,s-u-1}
    $$
    of $(E/A)^{\ot_{\!A}^{s-u-1}} \ot \ov{A}^{\ot^{r+u}}$ generated by the simple tensors $\ov{\bx}_{1,s-u-1}\ot \ba_{1,r+u}$, with at least $u$ of the $a_j$'s in $\ov{R}$.

\end{enumerate}

\end{notations}

\begin{theorem}\label{formula para wh{d} 1 en cohomologia} The following assertions hold:

\begin{enumerate}

\smallskip

\item The morphism $\wh{d}_0\colon \wh{X}^{r-1,s}(M)\to \wh{X}^{rs}(M)$ is $(-1)^s$-times the coboundary map of the normalized cochain Hochschild complex of $A$ with coefficients in $\Hom_{\!A}((E/A)^{\ot_{\!A}^s},M)$, considered as an $A$-bimodule as at the beginning of this section.

\smallskip

\item The morphism $\wh{d}_1\colon \wh{X}^{r,s-1}(M)\to \wh{X}^{rs}(M)$ is induced by $\sum_{i=0}^s (-1)^i \wt{u}^i$.

\smallskip

\item Let $R$ be a stable under $\chi$ $K$-subalgebra of $A$. If $\mathcal{F}$ takes its values in $R\hspace{-0.7pt}\ot_{\hspace{-0.3pt} k}\! V$, then
$$
\wh{d}_l\bigl(\wh{X}^{r+l-1,s-l}(M)\bigr)\subseteq \wh{X}_{R,l-1}^{rs}(M),
$$
for all $l\ge 1$.
\end{enumerate}

\end{theorem}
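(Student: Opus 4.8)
The plan is to obtain all three items by applying the functor $\Hom_{E^e}(-,M)$ to the structural facts about the $\Upsilon$-projective resolution $(X_*,d_*)$ established in Theorem~\ref{formula para d_1}, and transporting the outcome through the isomorphisms $\zeta^{rs}$; this is the cohomological mirror of the (short) proof of Theorem~\ref{formula para wh{d}_1}. By the very definition of $\wh{d}_l^{rs}$ as the map induced by $\Hom_{E^e}(d^l_{rs},M)$ under $\zeta^{rs}$ and $\zeta^{r+l-1,s-l}$, each item is equivalent to a statement about $\Hom_{E^e}(d^l_{rs},M)$.

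For item~(1): recall from Section~\ref{SecRes} that, for fixed $s$, the complex $(X_{*s},d^0_{*s})$ is $(-1)^s$ times the normalized bar resolution of the $K$-algebra $A$ induced up to an $E$-bimodule complex, namely tensored on the left over $A$ with $E\ot_A(E/A)^{\ot_A^s}$ and on the right over $A$ with $E$. Applying $\Hom_{E^e}(-,M)$ and performing the standard tensor--hom adjunction reductions turns this, in the coboundary direction, into the normalized Hochschild cochain complex of $A$ with coefficients in the $A$-bimodule $\Hom_A\bigl((E/A)^{\ot_A^s},M\bigr)$ (with the bimodule structure fixed at the start of Section~\ref{coHochschild-Brzezinski}), up to the overall sign $(-1)^s$. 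The only point needing attention is the bookkeeping of that sign, which is inherited from the normalization of the $s$-th row.

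Item~(2) is immediate: Theorem~\ref{formula para d_1}(1) says $d^1_{rs}$ is induced by $\sum_{i=0}^s(-1)^i u'_i$, so $\Hom_{E^e}(d^1_{rs},M)$ is induced by $\sum_{i=0}^s(-1)^i\Hom_{E^e}(u'_i,M)$; since $\wt{u}^i$ is by construction the map induced by $u'_i$ through $\wt{X}^{rs}(M)\simeq\Hom_{E^e}(X'_{rs},M)$, and these identifications are compatible with $\zeta$, we get $\wh{d}_1=\sum_{i=0}^s(-1)^i\wt{u}^i$ on the nose. For item~(3), Theorem~\ref{formula para d_1}(2) gives $d^l(X_{rs})\subseteq X^{R,l-1}_{r+l-1,s-l}$. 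Given $\beta\in\wh{X}^{r+l-1,s-l}(M)$, the element $\wh{d}_l(\beta)$ corresponds under $\zeta$ to the composite of $\Hom_{E^e}(d^l_{rs},M)$ with $\beta$; since $d^l_{rs}$ takes values in the $E$-subbimodule generated by the tensors $1\ot_A\ov{\bx}_{1,s-l}\ot\ba_{1,r+l-1}\ot 1$ with at least $l-1$ of the $a_j$'s in $\ov{R}$, the associated $(A,K)$-linear map on $(E/A)^{\ot_A^s}\ot\ov{A}^{\ot^r}$ factorizes through the $(A,K)$-subbimodule $\wh{X}^{R\mathfrak{r}(l-1)}_{r+l-1,s-l}$, which by Notations~\ref{not4.3} is exactly the condition defining $\wh{X}_{R,l-1}^{rs}(M)$.

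The step I expect to cost the most care is item~(3): one must verify precisely that passing through the isomorphism $\zeta$ and the contravariant functor $\Hom_{E^e}(-,M)$ converts ``$d^l_{rs}$ has image in $X^{R,l-1}_{r+l-1,s-l}$'' into the ``factorizes throughout $\wh{X}^{R\mathfrak{r}(l-1)}_{r+l-1,s-l}$'' formulation of Notations~\ref{not4.3}. This is a formal consequence of $\zeta$ being an isomorphism and of functoriality, but it should be spelled out carefully; besides this, the only other mild nuisance is the $(-1)^s$ sign bookkeeping in item~(1).
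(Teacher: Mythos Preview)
Your proposal is correct and follows essentially the same approach as the paper: the paper's proof simply records that item~(1) follows from the definition of $d^0$ and that items~(2) and~(3) follow from Theorem~\ref{formula para d_1}, which is exactly the strategy you spell out in more detail. Your expanded account of item~(3), translating the inclusion $d^l(X_{rs})\subseteq X^{R,l-1}_{r+l-1,s-l}$ into the factorization condition of Notations~\ref{not4.3} via $\zeta$, is the content the paper leaves implicit.
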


\begin{proof} Item~(1) follows easily from the definition of $d^0$, and items~(2) and~(3), from Theorem~\ref{formula para d_1}.
\end{proof}

For each $r\ge 0$, we let $\Hom_{\!K}(A\ot \ov{A}^{\ot^r},M)$ denote the abelian group of right $K$-linear maps from $\ov{A}^{\ot^r}$ to $M$. Note that $\Hom_{\!K}(A\ot \ov{A}^{\ot^r},M)$ is an $E$-bimodule via
$$
\bigl(a\gamma(v) \cdot \alpha  \cdot a'\gamma(v')\bigr)(\ba_{0r}): = \sum_l \gamma(v^{(l)}) \alpha \bigl(a\ba_{0r}^{(l)}\bigr) a'\gamma(v'),
$$
where $\sum_l \ba_{0r}^{(l)}\ot_k v^{(l)} := \chi(v\ot_k\ba_{0r})$.

\begin{remark} Note that
\begin{align*}
&\Ho^r\bigl(\wh{X}^{*s}(M),\wh{d}_0^{*s}\bigr) = \Ho_K^r\bigr(A, \Hom_{\!A} ((E/A)^{\ot_{\!A}^s},M \bigl)
\intertext{and}
&\Ho^s\bigl(\wh{X}^{r*}(M),\wh{d}_1^{r*}\bigr) = \Ho_A^s\bigr(E,\Hom_{\!K}(A\ot \ov{A}^{\ot^r},M)\bigl).
\end{align*}
\end{remark}

\begin{remark}\label{re4.4} By item~(3) of the above theorem, if $\mathcal{F}$ takes its values in $K\ot_k V$, then $(\wh{X}^*(M),\wh{d}^*)$ is the total complex of the double complex $\bigl(\wh{X}^{**}(M),\wh{d}^{**}_0, \wh{d}^{**}_1\bigr)$.
\end{remark}

\subsection{Comparison maps}\label{morfismos de comparacion en cohomologia} Let $\bigl(\Hom_{K^e}(\ov{E}^{\ot^*},M),b^*\bigr)$ be the normalized Hochschild cochain complex of the $K$-algebra $E$ with coefficients in $M$. Recall that there is a canonical identification
$$
\bigl(\Hom_{K^e}(\ov{E}^{\ot^*},M),b^*\bigr) \simeq \Hom_{E^e}\bigl((E\ot\ov{E}^{\ot^*}\ot E,b'_*),M\bigr).
$$
Let
\begin{align*}
& \wh{\phi}^*\colon \bigl(\Hom_{K^e}(\ov{E}^{\ot^*},M),b^*\bigr)\longrightarrow (\wh{X}^*(M),\wh{d}^*)
\intertext{and}
& \wh{\psi}^*\colon (\wh{X}^*(M),\wh{d}^*)\longrightarrow \bigl(\Hom_{K^e}(\ov{E}^{\ot^*},M),b^* \bigr)
\end{align*}
be the morphisms of complexes induced by $\phi$ and $\psi$ respectively. By Proposition~\ref{homotopia} it is evident that $\wh{\phi}\xcirc\wh{\psi} = \ide$ and $\wh{\psi}\xcirc\wh{\phi}$ is homotopically equivalent to the identity map. An homotopy $\wh{\omega}^{*+1}\colon \wh{\psi}^*\xcirc\wh{\phi}^*\to \ide^*$ is the family of maps
$$
\Bigl(\wh{\omega}^{n+1}\colon \Hom_{K^e}(\ov{E}^{\ot^{n+1}},M)\longrightarrow \Hom_{K^e}(\ov{E}^{\ot^n},M)\Bigr)_{n\ge 0},
$$
induced by $\bigl(\omega_{n+1}\colon E\ot\ov{E}^{\ot^n}\ot E\longrightarrow E\ot \ov{E}^{\ot^{n+1}}\ot E\bigr)_{n\ge 0}$.

\subsection{The filtrations of $\mathbf{\boldsymbol{\bigl(\Hom_{K^e}(\ov{E}^{\ot^*},M),b^* \bigr)}}$ and $\mathbf{\boldsymbol{(\wh{X}^*(M),\wh{d}^*)}}$} Let
$$
F_i(\wh{X}^n(M)) := \bigoplus_{s\ge i} \wh{X}^{n-s,s}(M).
$$
The complex $(\wh{X}^*(M),\wh{d}^*)$ is filtered by
$$
F_0(\wh{X}^*(M))\supseteq F_1(\wh{X}^*(M))\supseteq F_2(\wh{X}^*(M))\supseteq F_3(\wh{X}^*(M))\supseteq F_4(\wh{X}^*(M))\supseteq\dots.
$$
Using this fact we obtain that there is a convergent spectral sequence
\begin{equation}
E_1^{rs} = \Ho_K^r(A,\Hom_A((E/A)^{\ot_{\!A}^s},M)) \Longrightarrow \Ho_K^{r+s}(E,M). \label{eq7}
\end{equation}
Let $F_i\bigl(\Hom_{K^e}(\ov{E}^{\ot^*},M)\bigr)$ be the $k$-submodule of $\Hom_{K^e}(\ov{E}^{\ot^*},M)$ consisting of all the maps $\alpha\in \Hom_{K^e}(\ov{E}^{\ot^*},M)$, such that $\alpha\bigl(F^i(\ov{E}^{\ot^*})\bigr) = 0$. The normalized Hochschild complex $\bigl(\Hom_{K^e}(\ov{E}^{\ot^*},M),b^*\bigr)$ is filtered by
\begin{equation}
F_0\bigl(\Hom_{K^e}(\ov{E}^{\ot^*},M)\bigr)\supseteq F_1\bigl(\Hom_{K^e}(\ov{E}^{\ot^*},M) \bigr) \supseteq\dots.\label{eq17}
\end{equation}
The spectral sequence associated to this filtration is called the cohomological Hochs\-child-Serre spectral sequence.

\begin{proposition}\label{phi, psi y omega preservan filtraciones en cohomologia} The maps $\wh{\phi}$, $\wh{\psi}$ and $\wh{\omega}$ preserve filtrations.
\end{proposition}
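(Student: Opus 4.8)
The plan is to deduce this by dualizing its module-level counterpart, Proposition~\ref{phi, psi y omega preservan filtraciones}, via the contravariant functor $\Hom_{E^e}(-,M)$ --- exactly the way the homological statement, Proposition~\ref{phi, psi y omega preservan filtraciones en homologia}, was obtained from Proposition~\ref{phi, psi y omega preservan filtraciones}. First I would record that, under the canonical isomorphisms $\wh{X}^{rs}(M)\simeq\Hom_{E^e}(X_{rs},M)$ and $\Hom_{K^e}(\ov{E}^{\ot^n},M)\simeq\Hom_{E^e}\bigl(E\ot\ov{E}^{\ot^n}\ot E,M\bigr)$, the comparison maps $\wh{\phi}^*$, $\wh{\psi}^*$ and $\wh{\omega}^*$ are precisely $\Hom_{E^e}(\phi_*,M)$, $\Hom_{E^e}(\psi_*,M)$ and $\Hom_{E^e}(\omega_*,M)$; that is, $\wh{\phi}^*(\alpha)=\alpha\xcirc\phi$, $\wh{\psi}^*(\beta)=\beta\xcirc\psi$ and $\wh{\omega}^{n+1}(\gamma)=\gamma\xcirc\omega_{n+1}$ (up to the sign already built into $\omega$). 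This is immediate by comparing the recursive definitions of $\phi$, $\psi$, $\omega$ with those of $\wh{\phi}^*$, $\wh{\psi}^*$, $\wh{\omega}^*$.

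Next I would rewrite the two \emph{decreasing} filtrations on the cochain side as annihilator filtrations of the \emph{increasing} filtrations $F^\bullet$ of Proposition~\ref{phi, psi y omega preservan filtraciones}. Since $X_n=\bigoplus_{r+s=n}X_{rs}$ with $F^j(X_n)=\bigoplus_{0\le s\le j}X_{n-s,s}$, an $E^e$-linear map $X_n\to M$ lies in $F_i(\wh{X}^n(M))=\bigoplus_{s\ge i}\wh{X}^{n-s,s}(M)$ precisely when it vanishes on an appropriate step of $F^\bullet(X_*)$; and $\alpha$ lies in $F_i\bigl(\Hom_{K^e}(\ov{E}^{\ot^*},M)\bigr)$ precisely when the corresponding $E^e$-linear map on $E\ot\ov{E}^{\ot^*}\ot E$ vanishes on the corresponding step of $F^\bullet\bigl(E\ot\ov{E}^{\ot^*}\ot E\bigr)=E\ot F^\bullet(\ov{E}^{\ot^*})\ot E$. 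With this dictionary the assertion becomes formal: if $\alpha$ vanishes on $F^j$ of the bar resolution then $\alpha\xcirc\phi_n$ vanishes on $F^j(X_n)$, because $\phi_n\bigl(F^j(X_n)\bigr)\subseteq F^j$ of the bar resolution by Proposition~\ref{phi, psi y omega preservan filtraciones}; the same computation with $\psi_n(F^j)\subseteq F^j$ handles $\wh{\psi}^*$, and the computation with $\omega_{n+1}(F^j)\subseteq F^j$ handles $\wh{\omega}^*$, where one also notes that the homological-degree shift of $\omega$ leaves the filtration index untouched.

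The hard part is not any computation but the index bookkeeping in the second step: one must check that the subscript of the direct-sum description $\bigoplus_{s\ge i}\wh{X}^{n-s,s}(M)$ of $F_i(\wh{X}^n(M))$, the subscript of the corresponding annihilator of $F^\bullet(X_*)$, and the subscript of $F_i\bigl(\Hom_{K^e}(\ov{E}^{\ot^*},M)\bigr)$ all line up, so that $\wh{\phi}^*$, $\wh{\psi}^*$ and $\wh{\omega}^*$ come out preserving $F_\bullet$ rather than shifting it. This goes through because $\phi$, $\psi$ and $\omega$ preserve the increasing filtration \emph{degree} exactly (not merely up to a shift), which is precisely the content of Proposition~\ref{phi, psi y omega preservan filtraciones}; so, after these identifications, the proposition reduces entirely to that one.
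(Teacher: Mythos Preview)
Your proposal is correct and takes essentially the same approach as the paper: the paper's proof is the single sentence ``This follows immediately from Proposition~\ref{phi, psi y omega preservan filtraciones},'' and what you have written is precisely the dualization argument (apply $\Hom_{E^e}(-,M)$ and identify the decreasing cochain filtrations as annihilators of the increasing module filtrations) that makes that one-line citation work.
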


\begin{proof} This follows immediately from Proposition~\ref{phi, psi y omega preservan filtraciones}.
\end{proof}

\begin{corollary} The cohomological Hochschild-Serre spectral sequence is isomorphic to the spectral sequence~\eqref{eq7}.
\end{corollary}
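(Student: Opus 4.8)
The plan is to mirror the proof of the homological counterpart (the corollary following Proposition~\ref{phi, psi y omega preservan filtraciones en homologia}): deduce everything from the filtration-preservation of the comparison data supplied by Proposition~\ref{phi, psi y omega preservan filtraciones en cohomologia}, together with the identification of the two spectral sequences with those of a pair of filtered complexes.

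First I would record precisely which two filtered cochain complexes are being compared. On one side, $\bigl(\Hom_{K^e}(\ov{E}^{\ot^*},M),b^*\bigr)$ carries the decreasing filtration \eqref{eq17}, and by definition its associated spectral sequence \emph{is} the cohomological Hochschild--Serre spectral sequence. On the other side, $(\wh{X}^*(M),\wh{d}^*)$ carries the filtration $F_i(\wh{X}^n(M)) = \bigoplus_{s\ge i}\wh{X}^{n-s,s}(M)$, and --- as explained in the comments following Remark~\ref{re4.4} --- its associated spectral sequence is \eqref{eq7}, with $E_1^{rs} = \Ho_K^r(A,\Hom_A((E/A)^{\ot_{\!A}^s},M))$. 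The comparison morphisms $\wh{\phi}^*\colon \bigl(\Hom_{K^e}(\ov{E}^{\ot^*},M),b^*\bigr)\to (\wh{X}^*(M),\wh{d}^*)$ and $\wh{\psi}^*$ in the reverse direction, together with the homotopy $\wh{\omega}^*$ from $\wh{\psi}^*\xcirc\wh{\phi}^*$ to $\ide$, were produced in Subsection~\ref{morfismos de comparacion en cohomologia}, where it was also observed that $\wh{\phi}^*\xcirc\wh{\psi}^* = \ide$.

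Next I would invoke Proposition~\ref{phi, psi y omega preservan filtraciones en cohomologia}, which guarantees that $\wh{\phi}^*$, $\wh{\psi}^*$ and $\wh{\omega}^*$ all preserve the respective filtrations. Hence $\wh{\phi}^*$ is a filtration-preserving cochain map admitting $\wh{\psi}^*$ as a filtration-preserving homotopy inverse, the homotopy itself being filtration-preserving. By the standard comparison theorem for spectral sequences of filtered complexes, such a map induces an isomorphism of the associated spectral sequences from the $E_0$-page onward, compatible with the abutment $\Ho_K^*(E,M)$. Applying this to $\wh{\phi}^*$ yields the desired isomorphism between the cohomological Hochschild--Serre spectral sequence and \eqref{eq7}.

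I do not expect a real obstacle here; the proof is formal once Proposition~\ref{phi, psi y omega preservan filtraciones en cohomologia} is available. The one point worth stating explicitly is that it is the filtration-preservation of the \emph{homotopy} $\wh{\omega}^*$ --- not merely of $\wh{\phi}^*$ and $\wh{\psi}^*$ --- that forces the maps induced on each $E_r$-page to be mutually inverse, rather than merely to exist in one direction; this is exactly what Proposition~\ref{phi, psi y omega preservan filtraciones en cohomologia} records, so the corollary follows in complete parallel with the homological case.
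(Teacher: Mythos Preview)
Your proposal is correct and follows exactly the paper's approach: the paper's proof is the one-line statement that the result follows immediately from Proposition~\ref{phi, psi y omega preservan filtraciones en cohomologia} and the comments following Remark~\ref{re4.4}, and your argument simply unpacks what that sentence means. Your explicit remark about why filtration-preservation of the homotopy $\wh{\omega}^*$ is needed is a helpful elaboration, but the underlying strategy is identical.
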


\begin{proof} This follows immediately from Proposition~\ref{phi, psi y omega preservan filtraciones en cohomologia} and the comments following Remark~\ref{re4.4}.
\end{proof}

\begin{corollary} When $M=E$ the spectral sequence~\eqref{eq7} is multiplicative.
\end{corollary}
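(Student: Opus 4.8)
The plan is to recognise the cohomological Hochschild--Serre spectral sequence --- which by the previous corollary is isomorphic to \eqref{eq7} --- as the spectral sequence of a filtered differential graded algebra, and then to invoke the standard fact that such a spectral sequence is multiplicative, with the product on $E_\infty$ being the associated graded of the product on the abutment.

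First I recall that, since $M=E$ is an algebra, the normalized Hochschild cochain complex $\bigl(\Hom_{K^e}(\ov{E}^{\ot^*},E),b^*\bigr)$ is a differential graded algebra under the cup product
\[
(\alpha\smile\beta)(\bx_{1,p+q}):=\alpha(\bx_{1p})\,\beta(\bx_{p+1,p+q}),\qquad \alpha\in\Hom_{K^e}(\ov{E}^{\ot^p},E),\ \beta\in\Hom_{K^e}(\ov{E}^{\ot^q},E),
\]
the product on the right being that of $E$; in particular the induced product on $\Ho_K^*(E,E)$ is the usual cup product. The crucial point is that the decreasing filtration \eqref{eq17} is compatible with $\smile$, i.e.
\[
F_i\bigl(\Hom_{K^e}(\ov{E}^{\ot^*},E)\bigr)\smile F_j\bigl(\Hom_{K^e}(\ov{E}^{\ot^*},E)\bigr)\subseteq F_{i+j}\bigl(\Hom_{K^e}(\ov{E}^{\ot^*},E)\bigr).
\]
To see this, recall from Notations~\ref{not4.3}(1) that $F_i$ consists of the cochains vanishing on $F^i(\ov{E}^{\ot^*})$ and that $F^i(\ov{E}^{\ot^n})$ is spanned by the simple tensors $\bx_{1n}$ with at most $i$ of the $x_j$'s outside $A$. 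Given $\alpha\in F_i$, $\beta\in F_j$ and a simple tensor $\bx_{1,p+q}\in F^{i+j}(\ov{E}^{\ot^{p+q}})$, let $a$ (resp.\ $b$) be the number of $x_j$'s outside $A$ among $x_1,\dots,x_p$ (resp.\ among $x_{p+1},\dots,x_{p+q}$), so $a+b\le i+j$. If $a\le i$ then $\bx_{1p}\in F^i(\ov{E}^{\ot^p})$ and $\alpha(\bx_{1p})=0$; otherwise $a\ge i+1$, hence $b\le j-1$, so $\bx_{p+1,p+q}\in F^j(\ov{E}^{\ot^q})$ and $\beta(\bx_{p+1,p+q})=0$. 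In either case $(\alpha\smile\beta)(\bx_{1,p+q})=0$, which gives the inclusion.

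Thus $\bigl(\Hom_{K^e}(\ov{E}^{\ot^*},E),b^*\bigr)$ with \eqref{eq17} is a filtered DG algebra whose filtration is bounded in each total degree, so its spectral sequence --- the cohomological Hochschild--Serre spectral sequence --- is a spectral sequence of algebras converging to $\Ho_K^*(E,E)$ equipped with the cup product (filtered by the image of \eqref{eq17}). Finally, the comparison morphisms $\wh{\phi}^*$ and $\wh{\psi}^*$ are mutually inverse filtered homotopy equivalences by Proposition~\ref{phi, psi y omega preservan filtraciones en cohomologia}, so they identify this spectral sequence, as a filtered object over the same abutment, with the one attached to $(\wh{X}^*(E),\wh{d}^*)$, namely \eqref{eq7}; transporting $\smile$ along this identification (equivalently, putting on $(\wh{X}^*(E),\wh{d}^*)$ the filtered product $\alpha*\beta:=\wh{\psi}^*(\wh{\phi}^*\alpha\smile\wh{\phi}^*\beta)$) exhibits \eqref{eq7} as a multiplicative spectral sequence converging to $\bigl(\Ho_K^*(E,E),\smile\bigr)$.

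The computation $F_i\smile F_j\subseteq F_{i+j}$ is immediate, and the overall argument is formal; the only point that genuinely needs care in a full write-up is that the isomorphism of spectral sequences furnished by the previous corollary matches, on the abutment, the cup-product filtration induced by \eqref{eq17} with the one induced by the filtration of $(\wh{X}^*(E),\wh{d}^*)$, so that the transported product on the pages converges to the cup product itself rather than to a merely isomorphic product. This follows from the fact that $\wh{\phi}^*$ and $\wh{\psi}^*$ are filtration-preserving comparison morphisms between $\Upsilon$-projective resolutions, and hence induce the canonical identification on $\Ho_K^*(E,E)$.
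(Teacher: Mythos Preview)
Your argument is correct and follows the same route as the paper's proof: both deduce multiplicativity from the previous corollary together with the inclusion $F_i\smile F_j\subseteq F_{i+j}$ for the filtration~\eqref{eq17}. You simply spell out the verification of this inclusion and the transport of the multiplicative structure along $\wh{\phi}^*,\wh{\psi}^*$, whereas the paper states these without detail.
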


\begin{proof} This follows from the previous corollary and the fact that the filtration~\ref{eq17} satisfies $F_m\smile F_n\subseteq F_{m+n}$, where
\[
(\beta\smile \beta')(\bx_{1,m+n}) := \beta(\bx_{1m}) \beta'(\bx_{m+1,m+n}),
\]
for $\beta\in \Hom_{K^e}(\ov{E}^{\ot^m},E)$ and $\beta'\in\Hom_{K^e}(\ov{E}^{\ot^n},E)$,
\end{proof}

\begin{proposition}\label{formula para hat{phi} modulo la filtracion en cohomologia} Let $R$ be a stable under $\chi$ $K$-subalgebra of $A$. Assume that $\mathcal{F}$ takes its values in $R\ot_k V$. Then, for each $\beta\in \Hom_{K^e}(\ov{E}^{\ot^n},M)$, we have
$$
\wh{\phi}(\beta)(\gamma_{\!A}(\bv_{1i})\ot \ba_{1,n-i}) = \beta(\Sh(\bv_{1i}\ot_k \ba_{1,n-i})) + \beta(\bx),
$$
with $\bx\in F^{i-1}(\ov{E}^{\ot^n})\cap W^{\mathfrak{r}}_n\cap C^{R\mathfrak{r}}_n$.
\end{proposition}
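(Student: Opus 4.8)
The plan is to obtain this statement as the cohomological counterpart of Proposition~\ref{formula para hat{phi} modulo la filtracion}, reading off the value of the comparison morphism $\phi_*$ on the relevant bimodule generators from Proposition~\ref{propA.5}. First I would unwind the identifications set up at the beginning of Section~\ref{coHochschild-Brzezinski}: under $\wh{X}^{rs}(M)\simeq\Hom_{E^e}(X_{rs},M)$ and $\Hom_{K^e}(\ov{E}^{\ot^*},M)\simeq\Hom_{E^e}(E\ot\ov{E}^{\ot^*}\ot E,M)$, the map $\wh{\phi}^*$ is exactly $\Hom_{E^e}(\phi_*,M)$, so that for $\beta\in\Hom_{K^e}(\ov{E}^{\ot^n},M)$ and the generator $\bz:=1\ot_{\!A}\gamma_{\!A}(\bv_{1i})\ot\ba_{1,n-i}\ot 1\in X_{n-i,i}$ one has
$$
\wh{\phi}(\beta)(\gamma_{\!A}(\bv_{1i})\ot\ba_{1,n-i}) = \zeta^{n-i,i}(\beta\xcirc\phi_n)(\gamma_{\!A}(\bv_{1i})\ot\ba_{1,n-i}) = \beta\bigl(\phi_n(\bz)\bigr).
$$
Thus everything is reduced to an explicit formula for $\phi_n(\bz)$ modulo the relevant subbimodule of $\ov{E}^{\ot^n}$.

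The second step is to invoke Proposition~\ref{propA.5}, the same input used in the proof of Proposition~\ref{formula para hat{phi} modulo la filtracion}, which gives, under the standing hypotheses that $R$ is stable under $\chi$ and that $\mathcal{F}$ takes its values in $R\ot_k V$,
$$
\phi_n\bigl(1\ot_{\!A}\gamma_{\!A}(\bv_{1i})\ot\ba_{1,n-i}\ot 1\bigr) = 1\ot\Sh(\bv_{1i}\ot_k\ba_{1,n-i})\ot 1 + 1\ot\bx\ot 1,
$$
with $\bx\in F^{i-1}(\ov{E}^{\ot^n})\cap W^{\mathfrak{r}}_n\cap C^{R\mathfrak{r}}_n$; here the membership in $F^{i-1}$ is filtration preservation of $\phi$ (Proposition~\ref{phi, psi y omega preservan filtraciones}), the membership in $C^{R\mathfrak{r}}_n$ comes via Theorem~\ref{formula para d_1}(2) through the recursive definition of $\phi$, and the membership in $W^{\mathfrak{r}}_n$ is built into the shape of $\Sh$ together with the recursion. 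Applying the $E^e$-linear map $\beta$ to both sides, and using $\beta(1\ot y\ot 1)=\beta(y)$ for $y\in\ov{E}^{\ot^n}$, yields
$$
\wh{\phi}(\beta)(\gamma_{\!A}(\bv_{1i})\ot\ba_{1,n-i}) = \beta\bigl(\Sh(\bv_{1i}\ot_k\ba_{1,n-i})\bigr) + \beta(\bx),
$$
which is the asserted identity with $\bx\in F^{i-1}(\ov{E}^{\ot^n})\cap W^{\mathfrak{r}}_n\cap C^{R\mathfrak{r}}_n$.

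The only genuinely delicate point, as in the homological case, is the bookkeeping that places the error term $\bx$ simultaneously in all three subbimodules $F^{i-1}(\ov{E}^{\ot^n})$, $W^{\mathfrak{r}}_n$ and $C^{R\mathfrak{r}}_n$; but this is precisely what Proposition~\ref{propA.5} records, so once that result is available the passage to cohomology is purely formal. An essentially equivalent alternative would be to transport Proposition~\ref{formula para hat{phi} modulo la filtracion} itself through the $\Hom_{E^e}(-,M)$ identification, using that the homological generator $[m\ot_{\!A}\gamma_{\!A}(\bv_{1i})\ot\ba_{1,n-i}]$ and the evaluation of a cochain at $\gamma_{\!A}(\bv_{1i})\ot\ba_{1,n-i}$ are dual to one another; I would present the route through $\phi_*$ and Proposition~\ref{propA.5} as the primary argument, since it avoids spelling out this duality.
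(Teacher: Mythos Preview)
Your proposal is correct and follows essentially the same route as the paper: the paper's proof is the single sentence ``This follows immediately from Proposition~\ref{propA.5}'', and your argument is exactly the unpacking of that sentence, passing through the identifications $\wh{X}^{rs}(M)\simeq\Hom_{E^e}(X_{rs},M)$ and $\Hom_{K^e}(\ov{E}^{\ot^n},M)\simeq\Hom_{E^e}(E\ot\ov{E}^{\ot^n}\ot E,M)$ so that $\wh{\phi}(\beta)$ evaluated at the generator is $\beta$ applied to $\phi_n(1\ot_{\!A}\gamma_{\!A}(\bv_{1i})\ot\ba_{1,n-i}\ot 1)$. One small point worth making explicit in your write-up: the reason the error term from Proposition~\ref{propA.5} can be written as $1\ot\bx\ot 1$ (rather than a general element of the $E$-subbimodule $F^{i-1}\cap C^R_n$) is that it already lies in $W_n$, which is only a $K$-subbimodule and hence, because the tensors are over $K$, consists precisely of elements of the form $1\ot\byy\ot 1$ with $\byy\in W^{\mathfrak{r}}_n$.
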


\begin{proof} This follows immediately from Proposition~\ref{propA.5}.
\end{proof}

In the next proposition $R^{\mathfrak{r}}_i$ denotes $F^i\bigl(\ov{E}^{\ot^n}\bigr) \setminus F^{i-1}\bigr(\ov{E}^{\ot^n}\bigr)$.

\begin{proposition}\label{propiedades de hat{psi} en cohomologia} For all $\alpha\in\wh{X}^{n-i,i}(M)$, the following equalities hold:

\begin{enumerate}

\smallskip

\item $\wh{\psi}(\alpha)\bigl(\gamma(\bv_{1i})\ot\ba_{i+1,n}\bigr) = \alpha\bigl( \gamma_{\!A}(\bv_{1i})\ot\ba_{i+1,n}\bigr)$.

\smallskip

\item If $\bx_{1n}\in R^{\mathfrak{r}}_i\cap W^{\mathfrak{r}}_n$ and there is $j\le i$ such that $x_j\in A$, then $\wh{\psi}(\alpha)(\bx_{1n}) = 0$.

\smallskip

\end{enumerate}
\end{proposition}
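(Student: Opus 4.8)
The plan is to reduce both items to the behaviour of the comparison morphism $\psi_*$ on the resolution $(X_*,d_*)$. By construction $\wh\psi$ is the map induced by $\psi_*$ through $\Hom_{E^e}(-,M)$; under the identifications $\wh X^{rs}(M)\simeq\Hom_{E^e}(X_{rs},M)$ (via $\zeta$) and $\Hom_{K^e}(\ov E^{\ot^n},M)\simeq\Hom_{E^e}(E\ot\ov E^{\ot^n}\ot E,M)$, this says $\wh\psi(\alpha)(\bx_{1n}) = \zeta^{-1}(\alpha)\bigl(\psi_n(1\ot\bx_{1n}\ot 1)\bigr)$ for $\alpha\in\wh X^{n-i,i}(M)$. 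Since $\zeta^{-1}(\alpha)$ is supported on the summand $X_{n-i,i}$ of $X_n$, only the $X_{n-i,i}$-component of $\psi_n(1\ot\bx_{1n}\ot 1)$ matters. Thus both items become statements about $\psi_n$ evaluated on simple tensors built from the $\gamma(v)$'s and the elements of $\ov A$, which are precisely the simple tensors whose $\psi$-images are computed in the appendix result Proposition~\ref{propA.7}. This is why items (1) and (2) here are the cohomological mirrors of items (1) and (2) of Proposition~\ref{propiedades de hat{psi}}, and I would deduce all of them from that single source.

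For item (1), the relevant input is the identity $\psi_n(1\ot\gamma(\bv_{1i})\ot\ba_{i+1,n}\ot 1) = 1\ot_{\!A}\gamma_{\!A}(\bv_{1i})\ot\ba_{i+1,n}\ot 1$, which lies entirely in $X_{n-i,i}$. I would prove it by induction on $n$ from the recursion $\psi_n = \ov\sigma_n\xcirc\psi_{n-1}\xcirc b'_n$: applying $b'_n$ produces the inner products $\gamma(v_\ell)\gamma(v_{\ell+1})$ (controlled by $\mathcal F$), the junction $\gamma(v_i)(a_{i+1}\#1)$ (controlled by $\chi$), the products $(a_\ell\#1)(a_{\ell+1}\#1)$, and the two outer terms; feeding each into the inductive description of $\psi_{n-1}$ and applying $\ov\sigma_n$ as given in Proposition~\ref{cont nuestra}, one checks, using the normality of $\mathcal F$ and the twisting-map axioms for $\chi$, that the pieces recombine into the asserted canonical tensor (a routine check shows no sign survives). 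Granting this identity, item (1) follows at once: $\wh\psi(\alpha)(\gamma(\bv_{1i})\ot\ba_{i+1,n}) = \zeta^{-1}(\alpha)(1\ot_{\!A}\gamma_{\!A}(\bv_{1i})\ot\ba_{i+1,n}\ot 1) = \alpha(\gamma_{\!A}(\bv_{1i})\ot\ba_{i+1,n})$.

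For item (2), I would combine the filtration-preservation of $\psi$ (Proposition~\ref{phi, psi y omega preservan filtraciones}) with the finer description of $\psi_n$ on $W^{\mathfrak{r}}_n$ from Proposition~\ref{propA.7}. When $\bx_{1n}\in R^{\mathfrak{r}}_i\cap W^{\mathfrak{r}}_n$ has $x_j\in A$ for some $j\le i$, the normalization procedure that, modulo lower filtration, moves the $\mathcal{V}_{\!K}$-entries of $\bx_{1n}$ to the front (using $\chi$ and the braiding/transposition data) absorbs that $\ov A$-entry into the $\ov A$-block, so the $s$-degree of every resulting term is $<i$; hence the $X_{n-i,i}$-component of $\psi_n(1\ot\bx_{1n}\ot 1)$ vanishes and $\wh\psi(\alpha)(\bx_{1n}) = 0$.

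The step I expect to be the real obstacle is the ``recombine'' bookkeeping in item (1) and, equally, the control of the $s$-degree in item (2): one must track, along the whole recursion for $\psi$, how $\gamma$ is transported past elements of $A$ via $\chi$ and how the filtration index changes at each application of $b'$ and $\ov\sigma$. This is exactly why in practice these facts are not proved piecemeal but packaged into a single simultaneous induction, the appendix Proposition~\ref{propA.7}, from which the present proposition together with its homological counterpart follows by specialization.
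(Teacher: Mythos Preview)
Your approach is essentially the same as the paper's: both items are immediate consequences of items~(1) and~(2) of Proposition~\ref{propA.7}, via the identification $\wh\psi(\alpha)(\bx_{1n}) = \zeta^{-1}(\alpha)\bigl(\psi_n(1\ot\bx_{1n}\ot 1)\bigr)$ that you spell out. One small simplification for item~(2): Proposition~\ref{propA.7}(2) actually gives $\psi(1\ot\bx_{1n}\ot 1)=0$ on the nose, not merely that its $X_{n-i,i}$-component vanishes, so the filtration bookkeeping you describe is unnecessary.
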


\begin{proof} This follows immediately from items~(1) and~(2) of Proposition~\ref{propA.7}.
\end{proof}

\section{The cup and cap products for Brzezi\'nski's crossed products}
The aim of this section is to compute the cup product of $\HH_K^*(E)$ in terms of $(\wh{X}^*,\wh{d}^*)$ and the cap product of $\Ho^K_*(E,M)$ in terms of $(\wh{X}^*,\wh{d}^*)$ and $(\wh{X}_*(M),\wh{d}_*)$. First of all recall that by definition

\begin{itemize}

\smallskip

\item[-] the cup product of $\HH_K^*(E)$ is given in terms of $\bigl(\Hom_{K^e} \bigl(\ov{E}^*,E\bigr),b^*\bigr)$, by
\[
(\beta\smile \beta')(\bx_{1,m+n}) := \beta(\bx_{1m}) \beta'(\bx_{m+1,m+n}),
\]
for $\beta\in \Hom_{K^e}(\ov{E}^{\ot^m},E)$ and $\beta'\in\Hom_{K^e}(\ov{E}^{\ot^n},E)$,

\smallskip

\item[-] the cap product
$$
\Ho^K_n(E,M)\times \HH_K^m(E)\to \Ho^K_{n-m}(E,M)\qquad (m\le n),
$$
is defined in terms of $\bigl(M\ot\ov{E}^{\ot^*}\ot,b_*\bigr)$ and $\bigl(\Hom_{K^e} \bigl(\ov{E}^{\ot^*},E\bigr),b^*\bigr)$, by
$$
\ov{m\ot\bx_{1n}}\smallfrown \beta := \ov{m \beta(\bx_{1m})\ot\bx_{m+1,n}},
$$
where $\beta\in \Hom_{K^e}(\ov{E}^m,E)$. When $m>n$ we set $\ov{m\ot\bx_{1n}}\smallfrown \beta :=0$.
\end{itemize}

\begin{definition}\label{producto bullet} For $\alpha\in \wh{X}^{rs}$ and $\alpha'\in \wh{X}^{r's'}$ we define $\alpha\bullet \alpha'\in \wh{X}^{r+r',s+s'}$ by
$$
(\alpha\bullet\alpha')\bigl(\gamma_A(\bv_{1s''})\ot \ba_{1r''}\bigr) := \sum_i (-1)^{s'r} \alpha\bigl(\gamma_A(\bv_{1s})\ot \ba_{1r}^{(i)}\bigr) \alpha'\bigl(\gamma_A(\bv_{s+1,s''}^{(i)}) \ot \ba_{r+1,r''}\bigr),
$$
where $r'' = r+r'$, $s'' = s+s'$ and $\sum_i \ba_{1r}^{(i)}\ot_k \bv_{s+1,s''}^{(i)} := \ov{\chi}\bigl(\bv_{s+1,s''}\ot \ba_{1r}\bigr)$.
\end{definition}

\begin{theorem}\label{th cup product} Let $\alpha\in \wh{X}^{rs}$, $\alpha'\in \wh{X}^{r's'}$ and $n := r+r'+s+s'$. Let $R$ be a stable under $\chi$ $K$-subalgebra of $A$. If $\mathcal{F}$ takes its values in $R\ot_k V$, then
$$
\wh{\phi}\bigl(\wh{\psi}(\alpha)\smile \wh{\psi}(\alpha')\bigr) = \alpha\bullet\alpha' \qquad \text{module }\, \bigoplus_{i>s+s'}\wh{X}^{n-i,i}_{R(1)},
$$
where $\wh{X}_{R(1)}^{n-i,i}$ denotes the $k$-vector subspace of $\wh{X}^{n-i,i}$ consisting of all the $(A,K)$-li\-near maps
$$
\alpha\colon (E/A)^{\ot_{\!A}^i} \ot \ov{A}^{\ot^{n-i}}\to E,
$$
that factorize throughout $A\ot \bigl(W^{\mathfrak{r}}_n\cap C^{R\mathfrak{r}}_n\bigr)$, where $W^{\mathfrak{r}}_n$ and $C^{R\mathfrak{r}}_n$ are as in Notation~\ref{not4.3}.
\end{theorem}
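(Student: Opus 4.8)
The plan is to check the asserted equality of cochains by evaluating both sides on a generating element $\gamma_{\!A}(\bv_{1i})\ot\ba_{1,n-i}$ of $(E/A)^{\ot_{\!A}^i}\ot\ov{A}^{\ot^{n-i}}$, for each $0\le i\le n$, where $n=r+r'+s+s'$. Setting $\beta:=\wh{\psi}(\alpha)\smile\wh{\psi}(\alpha')$ and applying Proposition~\ref{formula para hat{phi} modulo la filtracion en cohomologia} (with $M=E$), we obtain
$$
\wh{\phi}(\beta)\bigl(\gamma_{\!A}(\bv_{1i})\ot\ba_{1,n-i}\bigr)=\beta\bigl(\Sh(\bv_{1i}\ot_k\ba_{1,n-i})\bigr)+\beta(\bx),\qquad \bx\in F^{i-1}\bigl(\ov{E}^{\ot^n}\bigr)\cap W^{\mathfrak{r}}_n\cap C^{R\mathfrak{r}}_n.
$$
So everything reduces to analysing, for each $i$, the value of $\beta$ on $\Sh(\bv_{1i}\ot_k\ba_{1,n-i})$ and on the correction $\bx$. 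I would expand $\Sh_{i,n-i}$ via its recursive definition into a signed sum of ``shuffled'' tensors, each a tensor product of $i$ entries $\gamma(\cdot)$ and $n-i$ entries $i_{\ov{A}}(\cdot)$, with the $a$'s reordered and repeatedly twisted by $\ov{\chi}$. On such a tensor the cup product splits at position $r+s$: $\wh{\psi}(\alpha)$ is applied to the first $r+s$ entries, $\wh{\psi}(\alpha')$ to the last $r'+s'$, and the two values are multiplied in $E$.

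The key tool is Proposition~\ref{propiedades de hat{psi} en cohomologia}: item~(1) says that $\wh{\psi}(\alpha)$ (respectively $\wh{\psi}(\alpha')$) reproduces $\alpha$ (respectively $\alpha'$) on a tensor of the shape ``$\gamma$'s followed by $a$'s''; item~(2), together with the fact that the comparison maps preserve filtrations (Proposition~\ref{phi, psi y omega preservan filtraciones en cohomologia}), shows that $\wh{\psi}(\alpha)$ annihilates any tensor of $W^{\mathfrak{r}}_{r+s}$ having fewer than $s$ entries outside $\ov{A}$, and also any tensor with exactly $s$ entries outside $\ov{A}$ unless those entries occupy its first $s$ positions (and symmetrically with $s'$ for $\wh{\psi}(\alpha')$). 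Counting the entries outside $\ov{A}$ then forces the first block to carry at least $s$ and the second at least $s'$ of the $\gamma$'s, so the value is $0$ whenever $i<s+s'$; since $\bx$ has at most $s+s'-1$ entries outside $\ov{A}$, the correction $\beta(\bx)$ also vanishes then, so the $\wh{X}^{n-i,i}$-component of $\wh{\phi}(\beta)$ is $0$, in agreement with $\alpha\bullet\alpha'\in\wh{X}^{r+r',s+s'}$. For $i=s+s'$ the unique surviving shuffle term is the one in which $v_1,\dots,v_s$ remain in front, $v_{s+1},\dots,v_{s+s'}$ are moved across $a_1,\dots,a_r$ (so that $\sum_l\ba_{1r}^{(l)}\ot_k\bv_{s+1,s+s'}^{(l)}=\ov{\chi}(\bv_{s+1,s+s'}\ot\ba_{1r})$) and then precede $a_{r+1},\dots,a_{r+r'}$; the recursion of $\Sh$ contributes exactly the sign $(-1)^{rs'}$, and feeding this tensor to $\wh{\psi}(\alpha)\smile\wh{\psi}(\alpha')$ (using Proposition~\ref{propiedades de hat{psi} en cohomologia}(1) on each block) and comparing with Definition~\ref{producto bullet} gives precisely $(\alpha\bullet\alpha')\bigl(\gamma_{\!A}(\bv_{1,s+s'})\ot\ba_{1,r+r'}\bigr)$, while $\beta(\bx)=0$ by the same count. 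This establishes the equality modulo the components with $i>s+s'$.

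It remains to treat $i>s+s'$. Here the surviving contributions to $\beta\bigl(\Sh(\bv_{1i}\ot_k\ba_{1,n-i})\bigr)$ necessarily involve a block with strictly more than $s$ (respectively $s'$) entries outside $\ov{A}$, so that $\wh{\psi}(\alpha)$ (respectively $\wh{\psi}(\alpha')$) gets evaluated on a tensor of ``too high type''; its value is then a lower-filtration correction of the comparison map, and this is exactly where the hypothesis $\mathcal{F}(V\ot_k V)\subseteq R\ot_k V$ is used: via the $\ov{R}$-degree estimates for the higher differentials $\wh{d}_l$ (Theorem~\ref{formula para wh{d} 1 en cohomologia}(3)) that underlie the recursive definitions of $\wh{\phi}$ and $\wh{\psi}$, every such correction carries an entry in $\ov{R}$. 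Since moreover all entries lie in $\ov{A}\cup\mathcal{V}_{\!K}$, the contribution factors through $A\ot\bigl(W^{\mathfrak{r}}_n\cap C^{R\mathfrak{r}}_n\bigr)$, hence lies in $\wh{X}^{n-i,i}_{R(1)}$; the correction $\beta(\bx)$ lies there too because $\bx\in W^{\mathfrak{r}}_n\cap C^{R\mathfrak{r}}_n$. Combining the three ranges of $i$ yields the theorem. The main obstacle is precisely this last step: one must carry out a simultaneous induction along the recursions defining $\Sh$ and the comparison maps to pin down, for $i>s+s'$, exactly which shuffled tensors survive, and to verify that each survivor has all its entries in $\ov{A}\cup\mathcal{V}_{\!K}$ and at least one entry in $\ov{R}$, so as to land in $\wh{X}^{n-i,i}_{R(1)}$; by comparison, the sign and iterated-twist bookkeeping of the case $i=s+s'$ is routine.
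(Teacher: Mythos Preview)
Your overall strategy matches the paper's: evaluate $\wh{\phi}(\beta)$, with $\beta=\wh{\psi}(\alpha)\smile\wh{\psi}(\alpha')$, on generators via Proposition~\ref{formula para hat{phi} modulo la filtracion en cohomologia}, splitting into the shuffle term $T=\Sh(\bv_{1i}\ot_k\ba_{1,n-i})$ and the correction~$\bx$. Your treatment of $i<s+s'$ and $i=s+s'$ is correct and essentially identical to the paper's.

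The gap is in the case $i>s+s'$. You believe there are surviving contributions from $\beta(T)$ and try to show they carry an $\ov{R}$ entry via ``lower-filtration corrections'' of $\wh{\psi}$. In fact $\beta(T)=0$ for every $i\ne s+s'$, so no such analysis is needed. The point you are missing is that items~(1) and~(2) of Proposition~\ref{propiedades de hat{psi} en cohomologia} (equivalently, of Proposition~\ref{propA.7} for $\psi$) together force $\wh{\psi}(\alpha)$, for $\alpha\in\wh{X}^{rs}$, to vanish on \emph{every} simple tensor of $W^{\mathfrak{r}}_{r+s}$ except one of the exact shape $\gamma(\bv_{1s})\ot\ba_{s+1,r+s}$: if the tensor has $p\ne s$ entries in $\mathcal{V}_K$ placed in positions $1,\dots,p$, item~(1) sends it under $\psi$ into $X_{r+s-p,p}$, where $\alpha$ is zero; if the $p$ entries are not in positions $1,\dots,p$, item~(2) kills it outright. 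Thus in each shuffle summand of $T$ the first block survives under $\wh{\psi}(\alpha)$ only if it has exactly $s$ $\gamma$-entries in front, and similarly the second block only if it has exactly $s'$; for $i>s+s'$ this is impossible, so $\beta(T)=0$.

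Consequently the entire $\wh{X}^{n-i,i}$-component of $\wh{\phi}(\beta)$ for $i>s+s'$ is $\beta(\bx)$, and Proposition~\ref{formula para hat{phi} modulo la filtracion en cohomologia} already gives $\bx\in W^{\mathfrak{r}}_n\cap C^{R\mathfrak{r}}_n$, so this component factors through $A\ot\bigl(W^{\mathfrak{r}}_n\cap C^{R\mathfrak{r}}_n\bigr)$ and lies in $\wh{X}^{n-i,i}_{R(1)}$. Your ``main obstacle'' disappears: there are no survivors from $T$ to track, and no extra $\ov{R}$-degree bookkeeping for $\wh{\psi}$ is needed beyond what Proposition~\ref{formula para hat{phi} modulo la filtracion en cohomologia} already supplies.
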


\begin{proof} Let $r'',s''\in \mathds{N}$ such that $r''+s'' = n$, and let $\gamma_A(\bv_{1s''}) \ot \ba_{1r''}\in X_{r''s''}$. Set $T :=  \Sh(\bv_{1s''} \ot_k \ba_{1r''})$. By Proposition~\ref{formula para hat{phi} modulo la filtracion en cohomologia},
$$
\wh{\phi}\bigl(\wh{\psi}(\alpha)\smile \wh{\psi}(\alpha')\bigr)\bigl(\gamma_A(\bv_{1s''})\ot \ba_{1r''}\bigr) = \bigl(\wh{\psi}(\alpha)\smile \wh{\psi}(\alpha')\bigr)\bigl(T + \bx\bigr),
$$
with $\bx\in F^{s''-1}(\ov{E}^{\ot^n})\cap W^{\mathfrak{r}}_n\cap C^{R\mathfrak{r}}_n$. Since, by Theorem~\ref{propiedades de hat{psi} en cohomologia},

\begin{itemize}

\smallskip

\item[-] if $s''\le s+s'$, then $\bigl(\wh{\psi}(\alpha)\smile \wh{\psi}(\alpha') \bigr) (\bx) = 0$,

\smallskip

\item[-] if $s''\ne s+s'$, then$\bigl(\wh{\psi}(\alpha)\smile \wh{\psi}(\alpha')\bigr)(T) = 0$,

\smallskip

\item[-] if $s'' = s+s'$, then
$$
\qquad\qquad\!\! \bigl(\wh{\psi}(\alpha)\!\smile\! \wh{\psi}(\alpha')\bigr)(T)\! =\! \sum_i (-1)^{s'r} \alpha\bigl(\gamma_A(\bv_{1s})\ot \ba_{1r}^{(i)}\bigr) \alpha'\bigl(\gamma_A(\bv_{s+1,s''}^{(i)})\ot \ba_{r+1,r''}\bigr),
$$
where $\sum_i \ba_{1r}^{(i)}\ot_k \bv_{s+1,s''}^{(i)} := \ov{\chi}\bigl(\bv_{s+1,s''}\ot \ba_{1r}\bigr)$,

\smallskip

\end{itemize}
the result follows.
\end{proof}

\begin{corollary} If $\mathcal{F}$ takes its values in $K\ot_k V$, then the cup product of $\HH_K^*(E)$ is induced by the operation $\bullet$ in $(\wh{X}^*,\wh{d}^*)$.
\end{corollary}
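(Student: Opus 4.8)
The plan is to deduce the statement from Theorem~\ref{th cup product} by taking the auxiliary subalgebra $R$ there to be $K$ itself. First I would observe that $K$ is a stable under $\chi$ $K$-subalgebra of $A$ (it is stable under $\chi$ by the standing hypothesis on $K$, and it is trivially a $K$-subalgebra of $A$), and that the hypothesis ``$\mathcal{F}$ takes its values in $K\ot_k V$'' is precisely the hypothesis ``$\mathcal{F}$ takes its values in $R\ot_k V$'' of that theorem for $R=K$. The point is then that $\ov{K}=K/K=0$, so the $K$-subbimodule $C^{K\mathfrak{r}}_n$ of $\ov{E}^{\ot^n}$ introduced in Notation~\ref{not4.3}, being generated by simple tensors having at least one factor in $\ov{K}$, is zero; hence $W^{\mathfrak{r}}_n\cap C^{K\mathfrak{r}}_n=0$ and the subspace $\wh{X}^{n-i,i}_{K(1)}$ of $(A,K)$-linear maps factoring through $A\ot\bigl(W^{\mathfrak{r}}_n\cap C^{K\mathfrak{r}}_n\bigr)$ vanishes for every $i$. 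Therefore the correction term $\bigoplus_{i>s+s'}\wh{X}^{n-i,i}_{K(1)}$ in Theorem~\ref{th cup product} disappears, and we obtain the exact cochain-level identity $\wh{\phi}\bigl(\wh{\psi}(\alpha)\smile\wh{\psi}(\alpha')\bigr)=\alpha\bullet\alpha'$ for all $\alpha\in\wh{X}^{rs}$ and $\alpha'\in\wh{X}^{r's'}$.

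Next I would use the properties of the comparison maps recorded in Subsection~\ref{morfismos de comparacion en cohomologia}, namely that $\wh{\phi}$ and $\wh{\psi}$ are morphisms of complexes with $\wh{\phi}\xcirc\wh{\psi}=\ide$ and $\wh{\psi}\xcirc\wh{\phi}$ homotopic to the identity, so that $\wh{\phi}$ induces an isomorphism $\wh{\phi}_*\colon\HH_K^*(E)\to\Ho^*(\wh{X}^*,\wh{d}^*)$ with inverse $\wh{\psi}_*$ (recall $\HH_K^*(E)$ is computed both from $\bigl(\Hom_{K^e}(\ov{E}^{\ot^*},E),b^*\bigr)$ and, by Theorem~\ref{Hochschild cohomology}, from $(\wh{X}^*,\wh{d}^*)$). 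From the identity above, if $\alpha,\alpha'$ are $\wh{d}$-cocycles then $\wh{\psi}(\alpha),\wh{\psi}(\alpha')$ are $b$-cocycles, hence $\wh{\psi}(\alpha)\smile\wh{\psi}(\alpha')$ is a $b$-cocycle by the Leibniz rule for $\smile$, and consequently $\alpha\bullet\alpha'=\wh{\phi}\bigl(\wh{\psi}(\alpha)\smile\wh{\psi}(\alpha')\bigr)$ is a $\wh{d}$-cocycle; the same argument with one of $\alpha,\alpha'$ a coboundary shows that $\alpha\bullet\alpha'$ is then a coboundary. Thus $\bullet$ descends to a well-defined product on $\Ho^*(\wh{X}^*,\wh{d}^*)$.

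Finally I would transport the cup product: for $\wh{d}$-cocycles $\alpha,\alpha'$ we have $\wh{\psi}_*[\alpha]\smile\wh{\psi}_*[\alpha']=[\wh{\psi}(\alpha)\smile\wh{\psi}(\alpha')]$ in $\HH_K^*(E)$, and applying $\wh{\phi}_*=(\wh{\psi}_*)^{-1}$ gives $\wh{\phi}_*\bigl(\wh{\psi}_*[\alpha]\smile\wh{\psi}_*[\alpha']\bigr)=\bigl[\wh{\phi}\bigl(\wh{\psi}(\alpha)\smile\wh{\psi}(\alpha')\bigr)\bigr]=[\alpha\bullet\alpha']$, which says exactly that, under the isomorphism $\wh{\phi}_*$, the cup product of $\HH_K^*(E)$ becomes the product induced by $\bullet$. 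Since the substantive content --- the cochain-level formula together with its error term, and the homotopy-equivalence properties of $\wh{\phi}$ and $\wh{\psi}$ --- is already contained in Theorem~\ref{th cup product} and Subsection~\ref{morfismos de comparacion en cohomologia}, no real obstacle remains; the only points that need care are the vanishing $\ov{K}=0\Rightarrow C^{K\mathfrak{r}}_n=0$, which kills the whole correction term, and keeping straight that it is $\wh{\phi}$, not $\wh{\psi}$, that induces the relevant comparison isomorphism.
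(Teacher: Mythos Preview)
Your proposal is correct and takes essentially the same approach as the paper: both apply Theorem~\ref{th cup product} with $R=K$ and use the vanishing $\wh{X}^{n-i,i}_{K(1)}=0$ to eliminate the correction term. The paper's proof is simply ``It follows from Theorem~\ref{th cup product}, since $\wh{X}^{n-i,i}_{K(1)} = 0$ for all $i$,'' leaving implicit both the reason for this vanishing (your observation that $\ov{K}=0$ forces $C^{K\mathfrak{r}}_n=0$) and the passage from the cochain-level identity to the statement about cohomology via the comparison maps; you have spelled out these details correctly.
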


\begin{proof} It follows from Theorem~\ref{th cup product}, since $\wh{X}^{n-i,i}_{K(1)} = 0$ for all $i$.
\end{proof}

\begin{definition}\label{accion bullet} Let $[m\ot_{\!A}\gamma_A(\bv_{1s})\ot \ba_{1r}] \in \wh{X}_{rs}(M)$ and $\alpha\in \wh{X}^{r's'}$. If $r'\le r$ and $s'\le s$, then we define $[m\ot_{\!A}\gamma_A(\bv_{1s})\ot \ba_{1r}]\bullet \alpha\in \wh{X}_{r-r',s-s'}(M)$ by
$$
[m\ot_{\!A}\gamma_A(\bv_{1s})\ot \ba_{1r}]\bullet \alpha\! :=\! \sum_i (-1)^{s''r'} \bigl[m \alpha\bigl(\gamma_A(\bv_{1s'})\ot \ba_{1r'}^{(i)}\bigr)\ot_{\!A}\gamma_A(\bv_{s'+1,s}^{(i)})\ot \ba_{r'+1,r}\bigr],
$$
where
$$
s'':=s-s'\quad\text{and}\quad \sum_i \ba_{1r'}^{(i)}\ot_k \bv_{s'+1,s}^{(i)} := \ov{\chi}\bigl(\bv_{s'+1,s}\ot \ba_{1r'}\bigr).
$$
Otherwise $[m\ot_{\!A}\gamma_A(\bv_{1s})\ot \ba_{1r}]\bullet \alpha :=0$.
\end{definition}

\begin{theorem}\label{th cap product} Let $[m\ot_{\!A}\gamma_A(\bv_{1s})\ot\ba_{1r}]\in \wh{X}_{rs}(M)$, $\alpha\in \wh{X}^{r's'}$ and n := r+s-r'-s'. Let $R$ be a stable under $\chi$ $K$-subalgebra of $A$. If $\mathcal{F}$ takes its values in $R\ot_k V$, then
$$
\wh{\psi}\bigl(\wh{\phi}([m\ot_{\!A}\gamma_A(\bv_{1s})\ot\ba_{1r}])\smallfrown \wh{\psi}(\alpha)\bigr)  = [m\ot_{\!A}\gamma_A(\bv_{1s})\ot\ba_{1r}] \bullet\alpha
$$
module
$$
\bigoplus_{i<s-s'}\Bigl(\wh{X}_{n-i,i}^{R1}(M) + M\alpha\bigl(X^{R\mathfrak{r}}_{r's'}\bigr) \ot_{\!A} (E/A)^{\ot_{\!A}^{s-s'}} \ot \ov{A}^{\ot^{r-r'}}\Bigr),
$$
where $X^{R\mathfrak{r}}_{r's'}$ denotes the $k$-vector subspace of $(E/A)^{\ot_{\!A}^{s'}} \ot \ov{A}^{\ot^{r'}}$ generated by all the simple tensors $m\ot_{\!A}\ov{\bx}_{1s'}\ot \ba_{1r'}$, with at least $1$ of the $a_j$'s in $\ov{R}$.
\end{theorem}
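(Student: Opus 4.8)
The plan is to reproduce, on the homology side, the argument used for the cup product in Theorem~\ref{th cup product}. Write $p:=r+s$ and $q:=r'+s'$, so that $n=p-q$ and $\wh{\psi}(\alpha)\in\Hom_{K^e}(\ov{E}^{\ot^q},E)$, and set $T:=\Sh(\bv_{1s}\ot_k\ba_{1r})\in\ov{E}^{\ot^p}$. By Proposition~\ref{formula para hat{phi} modulo la filtracion},
\[
\wh{\phi}\bigl([m\ot_{\!A}\gamma_{\!A}(\bv_{1s})\ot\ba_{1r}]\bigr)=[m\ot T]+[m\ot\bx],\qquad [m\ot\bx]\in F^{s-1}\bigl(M\ot\ov{E}^{\ot^p}\ot\bigr)\cap\ov{W}_p\cap\ov{C}^R_p.
\]
First I would cap each of the two summands with $\wh{\psi}(\alpha)$, using the explicit formula $\ov{m\ot\bx_{1p}}\smallfrown\wh{\psi}(\alpha)=\ov{m\,\wh{\psi}(\alpha)(\bx_{1q})\ot\bx_{q+1,p}}$, then apply $\wh{\psi}$, and finally identify the result with $[m\ot_{\!A}\gamma_{\!A}(\bv_{1s})\ot\ba_{1r}]\bullet\alpha$ plus the announced correction terms.

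For the main term $[m\ot T]$: expanding $T$ through the recursive definition of $\Sh$ in Notations~\ref{not3.1}, the tensor $T$ is a signed sum of twisted shuffles of $\gamma(v_1),\dots,\gamma(v_s)$ with $i_{\ov{A}}(a_1),\dots,i_{\ov{A}}(a_r)$, the twists being produced by the maps $\ov{\chi}_{_{jl}}$. Splitting $T$ after its first $q$ factors and evaluating $\wh{\psi}(\alpha)$ on that segment, Proposition~\ref{propiedades de hat{psi} en cohomologia}(2) annihilates every summand whose initial $q$-segment has an $\ov{A}$-factor among its first $s'$ slots, while Proposition~\ref{propiedades de hat{psi} en cohomologia}(1) evaluates the surviving diagonal summand, whose initial segment is $\gamma(\bv_{1s'})\ot\ba_{1r'}^{(i)}$ with $\sum_i\ba_{1r'}^{(i)}\ot_k\bv_{s'+1,s}^{(i)}=\ov{\chi}(\bv_{s'+1,s}\ot\ba_{1r'})$. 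Modulo strictly lower filtration this yields $\sum_i(\pm)\,m\,\alpha\bigl(\gamma_{\!A}(\bv_{1s'})\ot\ba_{1r'}^{(i)}\bigr)\ot\Sh\bigl(\bv_{s'+1,s}^{(i)}\ot_k\ba_{r'+1,r}\bigr)$; applying $\wh{\psi}$ and using Proposition~\ref{propiedades de hat{psi}}(1), which sends $[m'\ot\Sh(\bv'\ot_k\ba')]$ to $[m'\ot_{\!A}\gamma_{\!A}(\bv')\ot\ba']$ modulo lower filtration, produces exactly $[m\ot_{\!A}\gamma_{\!A}(\bv_{1s})\ot\ba_{1r}]\bullet\alpha$ of Definition~\ref{accion bullet}. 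The nondiagonal shuffle terms lower the $s$-degree and, by item~(3) of Theorem~\ref{formula para wh{d}_1} (that $\mathcal{F}$, hence every twist, lands in $R\ot_k V$), carry an $\ov{R}$-factor whenever they survive, so they fall into $\bigoplus_{i<s-s'}\wh{X}^{R1}_{n-i,i}(M)$.

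For the error term $[m\ot\bx]$: since $[m\ot\bx]\in\ov{W}_p\cap\ov{C}^R_p$, Proposition~\ref{propiedades de hat{psi} en cohomologia} shows that only initial $q$-segments of the form $\gamma(\bv'_{1s'})\ot\ba'_{1r'}$ contribute; because $\bx$ already contains an $\ov{R}$-factor, either that factor lies among the first $q$ slots, in which case $\wh{\psi}(\alpha)(\bx_{1q})\in M\,\alpha(X^{R\mathfrak{r}}_{r's'})$, or it survives into the last $p-q$ slots, in which case $\wh{\psi}$ of the result lies in $\wh{X}^{R1}_n(M)$; combining this with the estimate $[m\ot\bx]\in F^{s-1}$ and the fact that $\wh{\psi}$ preserves filtrations (Proposition~\ref{phi, psi y omega preservan filtraciones en homologia}), everything stays inside $\bigoplus_{i<s-s'}\bigl(\wh{X}^{R1}_{n-i,i}(M)+M\alpha(X^{R\mathfrak{r}}_{r's'})\ot_{\!A}(E/A)^{\ot_{\!A}^{s-s'}}\ot\ov{A}^{\ot^{r-r'}}\bigr)$.

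The main obstacle, just as for the cup product, is the purely combinatorial bookkeeping: one must match the signs of $\Sh$ and of $\bullet$, and verify that each nondiagonal twisted-shuffle term of $T$, together with each $\ov{\chi}$-twist produced when commuting the $\ba$'s past the $\gamma(v)$'s, contributes only to strictly smaller $s$-degree and carries an $\ov{R}$-factor whenever it does not vanish under $\wh{\psi}(\alpha)$ or $\wh{\psi}$. Granting this tracking, the identity follows by assembling Propositions~\ref{formula para hat{phi} modulo la filtracion}, \ref{propiedades de hat{psi}} and~\ref{propiedades de hat{psi} en cohomologia}.
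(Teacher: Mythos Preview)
Your overall strategy matches the paper's: split $\wh\phi$ via Proposition~\ref{formula para hat{phi} modulo la filtracion} into $[m\ot T]+[m\ot\bx]$, cap each piece with $\wh\psi(\alpha)$, then apply $\wh\psi$. Your treatment of the error term $[m\ot\bx]$ is essentially the paper's.

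There is, however, a genuine flaw in your handling of the main term. You assert that the ``nondiagonal shuffle terms'' carry an $\ov R$-factor, citing item~(3) of Theorem~\ref{formula para wh{d}_1} with the parenthetical ``$\mathcal{F}$, hence every twist, lands in $R\ot_k V$''. This is wrong on both counts: the twisting map $\chi$ does \emph{not} take values in $R\ot_k V$ (only $\mathcal{F}$ does), and Theorem~\ref{formula para wh{d}_1}(3) is a statement about $\wh d^l$, not about $\Sh$ or the cap product. The summands of $\Sh$ involve only $\chi$, so they carry no $\ov R$-factor in general.

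The correct argument, which the paper uses, is that these off-diagonal contributions are in fact \emph{zero}. For the cap step, Proposition~\ref{propiedades de hat{psi} en cohomologia} together with the fact that $\psi$ sends $1\ot\gamma(\bv_{1j})\ot\ba_{j+1,q}\ot1$ into $X_{q-j,j}$ (Proposition~\ref{propA.7}(1)) shows that $\wh\psi(\alpha)$, with $\alpha\in\wh X^{r's'}$, vanishes on every element of $W^{\mathfrak r}_q$ except those of the exact shape $\gamma(\bv_{1s'})\ot\ba'_{1r'}$; this already covers the case, which you omit, of more than $s'$ $\gamma$-factors in the first $q$ slots. Hence $[m\ot T]\smallfrown\wh\psi(\alpha)$ equals $\sum_i(\pm)\,[m\,\alpha(\gamma_A(\bv_{1s'})\ot\ba_{1r'}^{(i)})\ot\Sh(\bv_{s'+1,s}^{(i)}\ot_k\ba_{r'+1,r})]$ \emph{on the nose}. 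For the outer $\wh\psi$, you invoke only item~(1) of Proposition~\ref{propiedades de hat{psi}}; you must also use item~(2), which kills every nondiagonal summand of the inner $\Sh$, again giving $[m'\ot_A\gamma_A(\bv')\ot\ba']$ exactly. The upshot is that the $T$-part contributes precisely $[m\ot_A\gamma_A(\bv_{1s})\ot\ba_{1r}]\bullet\alpha$, and the $\wh X^{R1}$ and $M\alpha(X^{R\mathfrak r}_{r's'})$ corrections in the statement arise solely from $[m\ot\bx]$.
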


\begin{proof} By Proposition~\ref{formula para hat{phi} modulo la filtracion},
$$
\wh{\psi}\bigl(\wh{\phi}([m\ot_{\!A}\gamma_A(\bv_{1s})\ot\ba_{1r}])\smallfrown \wh{\psi}(\alpha)\bigr) = \wh{\psi}\bigl(\bigl([m\ot T] + [m\ot \bx_{1,r+s}]\bigr) \smallfrown \wh{\psi}(\alpha)\bigr),
$$
where
$$
T\!:=\! \Sh(\bv_{1s}\ot_k\! \ba_{1r})\quad\text{and}\quad[m\ot \bx_{1,r+s}]\in F^{s-1}\bigl(M\ot \ov{E}^{\ot^{r+s}}\ot\bigr)\cap \ov{W}_{r+s}\cap \ov{C}^R_{r+s}.
$$
Moreover, by Proposition~\ref{propiedades de hat{psi} en cohomologia}, we know that

\begin{itemize}

\smallskip

\item[-] If $s'>s$ or $r'>r$, then $[m\ot T]\smallfrown \wh{\psi}(\alpha) = 0$.

\smallskip

\item[-] If $s'\le s$ and $r'\le r$, then
$$
\qquad\qquad [m\ot T]\smallfrown \wh{\psi}(\alpha) = \sum_i (-1)^{r's'} m\ot \alpha \bigl(\bv_{1s'}\!\ot\! \ba_{1r'}^{(i)}\bigr)\ot \Sh\bigl(\bv_{s'+1,s}^{(i)}\!\ot_k\! \ba_{r'+1,r}\bigr),
$$
where $\sum_i \ba_{1r'}^{(i)}\ot_k \bv_{s'+1,s}^{(i)} := \ov{\chi}(\bv_{s'+1,s} \ot_k\ba_{1r'})$.

\smallskip

\item[-] If $s'\ge s$, then $[m\ot \bx_{1,r+s}]\smallfrown \wh{\psi}(\alpha) = 0$.

\smallskip

\item[-] If $s'<s$, then
$$
\qquad [m\ot \bx_{1,r+s}]\smallfrown \wh{\psi}(\alpha)\in F^{s-s'-1}\bigl(M\ot \ov{E}^{\ot^n}\ot\bigr)\cap \ov{W}_n\cap \bigl(\ov{C}^R_n + G_n\bigr),
$$
where $G_n := M\wh{\psi}(\alpha) \bigl(\ov{C}^{R\mathfrak{r}}_{r'+s'}\bigr)\ot \ov{E}^{\ot^n}$.

\smallskip

\end{itemize}
Now, in order to finish the proof it suffices to apply items~(1) and~(2) of Proposition~\ref{propiedades de hat{psi}}.
\end{proof}

\begin{corollary} If $\mathcal{F}$ takes its values in $K\ot_k V$, then in terms of the complexes $(\wh{X}_*(M),\wh{d}_*)$ and $(\wh{X}^*,\wh{d}^*)$, the cap product
$$
\Ho^K_n(E,M)\times \HH_K^m(E)\to \Ho^K_{n-m}(E,M),
$$
is induced by $\bullet$.
\end{corollary}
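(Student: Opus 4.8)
The plan is to deduce this corollary from Theorem~\ref{th cap product} in precisely the way the cup-product corollary is deduced from Theorem~\ref{th cup product}. First I would apply Theorem~\ref{th cap product} with $R := K$; this is legitimate, since $K$ is trivially a stable under $\chi$ $K$-subalgebra of $A$ and, by hypothesis, $\mathcal{F}$ takes its values in $K\ot_k V$. The key observation is that with this choice all the correction terms in Theorem~\ref{th cap product} vanish: since $\ov{K} = K/K = 0$, the space $X^{K\mathfrak{r}}_{r's'}$, being generated by simple tensors with at least one tensorand in $\ov{K}$, is zero, so the subspace $M\alpha\bigl(X^{K\mathfrak{r}}_{r's'}\bigr)\ot_{\!A}(E/A)^{\ot_{\!A}^{s-s'}}\ot\ov{A}^{\ot^{r-r'}}$ of $\wh{X}_{n-i,i}(M)$ is zero as well; and for the same reason $\wh{X}^{K1}_{n-i,i}(M) = 0$ for every $i$. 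Hence the module modulo which the identity of Theorem~\ref{th cap product} is stated is $0$, and we obtain the exact chain-level equality
$$
\wh{\psi}\bigl(\wh{\phi}([m\ot_{\!A}\gamma_A(\bv_{1s})\ot\ba_{1r}])\smallfrown\wh{\psi}(\alpha)\bigr) = [m\ot_{\!A}\gamma_A(\bv_{1s})\ot\ba_{1r}]\bullet\alpha
$$
for every $[m\ot_{\!A}\gamma_A(\bv_{1s})\ot\ba_{1r}]\in\wh{X}_{rs}(M)$ and every $\alpha\in\wh{X}^{r's'}$.

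Second, I would recall that, by the constructions of Subsections~\ref{morfismos de comparacion en homologia} and~\ref{morfismos de comparacion en cohomologia}, the pairs $\wh{\phi},\wh{\psi}$ and $\wh{\phi}^*,\wh{\psi}^*$ are mutually inverse homotopy equivalences (with $\wh{\psi}\xcirc\wh{\phi} = \ide$ and $\wh{\phi}^*\xcirc\wh{\psi}^* = \ide$) between $(\wh{X}_*(M),\wh{d}_*)$ and the normalized Hochschild chain complex of $E$, respectively between $(\wh{X}^*(M),\wh{d}^*)$ and the normalized Hochschild cochain complex. Thus the canonical identifications $\Ho^K_*(E,M)\simeq\HH_*(\wh{X}_*(M),\wh{d}_*)$ and $\HH_K^*(E)\simeq\HH^*(\wh{X}^*,\wh{d}^*)$ are realized on representatives by these maps: a homology class is transported to the normalized complex by $\wh{\phi}$ and back by $\wh{\psi}$, and a cohomology class is transported by $\wh{\psi}$ (here denoting $\wh{\psi}^*$) and back by $\wh{\phi}$. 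By the very definition of the transported cap product, the cap product of $[z]\in\HH_n(\wh{X}_*(M))$ with $[\alpha]\in\HH^m(\wh{X}^*,\wh{d}^*)$ is then the class of $\wh{\psi}\bigl(\wh{\phi}(z)\smallfrown\wh{\psi}(\alpha)\bigr)$, which by the displayed equality is the class of $z\bullet\alpha$. Since $\smallfrown$ descends to (co)homology and $\wh{\phi},\wh{\psi}$ are chain maps, this simultaneously shows that $\bullet$ induces a well-defined pairing on the (co)homology of the complexes $(\wh{X}_*(M),\wh{d}_*)$ and $(\wh{X}^*,\wh{d}^*)$ and that this induced pairing is the cap product.

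I do not expect a genuine obstacle: the whole content of the corollary already lies in Theorem~\ref{th cap product}, and the only facts to verify are the two vanishing statements $X^{K\mathfrak{r}}_{r's'} = 0$ and $\wh{X}^{K1}_{n-i,i}(M) = 0$, which are immediate from $\ov{K} = 0$. The one point requiring a little care is the bookkeeping of the four comparison maps $\wh{\phi},\wh{\psi},\wh{\phi}^*,\wh{\psi}^*$ and their directions, so that the transported cap product is written in the correct order; but this is exactly the bookkeeping already performed in the proof of Theorem~\ref{th cap product}.
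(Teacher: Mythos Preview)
Your proposal is correct and takes essentially the same approach as the paper, which simply says ``It follows immediately from the previous theorem.'' You have correctly spelled out the details: with $R=K$ the correction module in Theorem~\ref{th cap product} vanishes because $\ov{K}=0$, yielding the exact chain-level identity $\wh{\psi}\bigl(\wh{\phi}(z)\smallfrown\wh{\psi}(\alpha)\bigr)=z\bullet\alpha$, and then the homotopy equivalences $\wh{\phi},\wh{\psi}$ transport the cap product as claimed.
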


\begin{proof} It follows immediately from the previous theorem.
\end{proof}

\section{Cyclic homology of a Brzezi\'nski's crossed product}\label{cyclic-Brzezinski}
The aim of this section is to construct a mixed complex giving the cyclic homology of $E$, whose underlying Hochschild complex is $(\wh{X}_*,\wh{d}_*)$.

\begin{lemma}\label{B circ wh{omega} circ B circ wh{phi}=0} Let $B_*\colon E\ot\ov{E}^{\ot^*}\ot\to E\ot\ov{E}^{\ot^{*+1}}\ot$ be the Connes operator. The composition $B \xcirc \wh{\omega}\xcirc B\xcirc \wh{\phi}$ is the zero map.
\end{lemma}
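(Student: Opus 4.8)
The plan rests on two elementary facts about the Connes operator $B$ acting on the normalized complex $\bigl(E\ot\ov{E}^{\ot^*}\ot,b_*\bigr)$ (here the coefficient module is $E$ itself, so $\wh\phi$ and $\wh\omega$ are the maps of Subsection~\ref{morfismos de comparacion en homologia} with $M=E$). First, by its very definition $B\bigl([\bc_{0r}]\bigr)=\sum_{i=0}^r(-1)^{ir}[1\ot\bc_{ir}\ot\bc_{0,i-1}]$, so every element of the image of $B$ is a sum of simple tensors of the form $[1_E\ot\cdots]$. Second, $B$ vanishes on every such tensor: for $\ov{\bc}_{1r}\in\ov{E}^{\ot^r}$ each summand of $B\bigl([1_E\ot\ov{\bc}_{1r}]\bigr)$ contains a copy of $\ov{1_E}$ among its tensor factors, and $\ov{1_E}=0$ in $\ov E=E/K$ since $1_E\in K$. (This is the usual observation that $B$ both lands in, and is killed on, the image of the extra degeneracy $\bc\mapsto[1_E\ot\bc]$.)

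Since $B\xcirc\wh\omega\xcirc B\xcirc\wh\phi$ is additive, it suffices to evaluate it on a generator $\zeta=[e\ot_{\!A}\gamma_{\!A}(\bv_{1i})\ot\ba_{1,n-i}]$ of $\wh X_n$, and the heart of the matter is to show that $B(\wh\phi(\zeta))$ lies in $\ov{W}'_{n+1}$ while remaining a combination of tensors $[1_E\ot\cdots]$. For this I would first apply Proposition~\ref{formula para hat{phi} modulo la filtracion} with $R=A$ (which is trivially stable under $\chi$, and for which $\mathcal F$ trivially takes its values in $A\ot_k V$), obtaining $\wh\phi(\zeta)=[e\ot\Sh(\bv_{1i}\ot_k\ba_{1,n-i})]+[e\ot\bx]$ with $[e\ot\bx]\in\ov{W}_n$. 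By the recursive definition of $\Sh$ in Notations~\ref{not3.1}, which uses only $\gamma$, $i_{\ov{A}}$ and the induced maps $\ov{\chi}_{jl}$ (and these keep $A$-entries inside $\ov{A}$ and $V$-entries inside $V$), every tensor factor of $\Sh(\bv_{1i}\ot_k\ba_{1,n-i})$ lies in $\ov{A}\cup\mathcal V_{\!K}$; hence $\wh\phi(\zeta)\in\ov{W}_n$. Applying $B$ to a simple tensor $[m\ot\bx_{1n}]\in\ov{W}_n$ (so all $x_j\in\ov{A}\cup\mathcal V_{\!K}$, with $m$ arbitrary) and reading off the formula for $B$ with $x_0:=m$, each summand of $B\bigl([\bx_{0n}]\bigr)$ has the form $[1_E\ot\cdots]$ and has at most one tensor factor outside $\ov{A}\cup\mathcal V_{\!K}$, namely the one coming from $\ov m$. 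Thus $B(\wh\phi(\zeta))$ is a linear combination of simple tensors $[1_E\ot\byy_{1,n+1}]\in\ov{W}'_{n+1}$.

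To conclude, each such $[1_E\ot\byy_{1,n+1}]$ belongs to $F^{n+1}\bigl(E\ot\ov{E}^{\ot^{n+1}}\ot\bigr)\cap\ov{W}'_{n+1}$, so Proposition~\ref{sobre la imagen de hat{w}} applies (with the largest possible filtration index, hence no real constraint) and gives $\wh\omega\bigl([1_E\ot\byy_{1,n+1}]\bigr)=[1_E\ot\bz_{1,n+2}]$; in particular the leftmost slot is still $1_E$. Hence $\wh\omega\bigl(B(\wh\phi(\zeta))\bigr)$ is a combination of tensors $[1_E\ot\cdots]$ in $E\ot\ov{E}^{\ot^{n+2}}\ot$, which $B$ annihilates by the second fact of the first paragraph. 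Therefore $B\xcirc\wh\omega\xcirc B\xcirc\wh\phi=0$.

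I expect the only step that needs genuine attention to be the bookkeeping that $\Sh$ keeps \emph{all} of its output entries inside $\ov{A}\cup\mathcal V_{\!K}$: this is precisely what forces the single application of $B$ to create at most one entry outside $\ov{A}\cup\mathcal V_{\!K}$, so that the result stays in $\ov{W}'$ and not in the larger space with two such entries, on which Proposition~\ref{sobre la imagen de hat{w}} gives no information. Everything else is formal manipulation with the definition of $B$ and the identity $\ov{1_E}=0$.
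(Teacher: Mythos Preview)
Your proof is correct and follows essentially the same route as the paper's. Both arguments use Proposition~\ref{formula para hat{phi} modulo la filtracion} to place $\wh\phi(\zeta)$ in $\ov W_n$, observe that $B$ then outputs sums of $[1_E\ot\cdots]$ lying in $\ov W'_{n+1}$, invoke Proposition~\ref{sobre la imagen de hat{w}} to see that $\wh\omega$ preserves the first slot and lands in $\ov W_{n+2}$, and conclude because $B$ annihilates tensors whose leftmost entry lies in $K$. The only cosmetic differences are that the paper keeps track of the filtration index $F^{i+1}$ (unnecessary for this lemma, as you note) and phrases the first-slot condition as ``in $K$'' rather than ``equal to $1_E$''; your explicit unpacking of why $\Sh$ keeps all entries in $\ov A\cup\mathcal V_{\!K}$ is a helpful elaboration of what the paper leaves implicit in its citation of Proposition~\ref{formula para hat{phi} modulo la filtracion}.
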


\begin{proof} Let $\bx:= [x_0\ot_{\!A}\gamma_{\!A}(\bv_{1i})\ot\ba_{1,n-i}]\in \wh{X}_{n-i,i}$. By Proposition~\ref{formula para hat{phi} modulo la filtracion}, we know that
$$
\wh{\phi}(\bx)\in F^i\bigl(E\ot \ov{E}^{\ot^n}\ot \bigr)\cap \ov{W}_n.
$$
Hence
$$
B\xcirc \wh{\phi}(\bx)\in \bigl(K\ot \ov{E}^{\ot^{n+1}}\ot\bigr) \cap F^{i+1}\bigl(E\ot
\ov{E}^{\ot^{n+1}}\ot \bigr) \cap \ov{W}'_{n+1},
$$
and so, by Proposition~\ref{sobre la imagen de hat{w}}
$$
\wh{\omega}\xcirc B\xcirc \wh{\phi}(\bx)\in \bigl(K\ot\ov{E}^{\ot^{n+1}}\ot\bigr)\cap F^{i+1} \bigl(E\ot\ov{E}^{\ot^{n+1}}\ot\bigr) \cap \ov{W}_{n+2}\subseteq \ker{B},
$$
as desired.
\end{proof}

For each $n\ge 0$, let $\wh{D}_n\colon \wh{X}_n\to \wh{X}_{n+1}$ be the map $\wh{D} := \wh{\psi}\xcirc B \xcirc \wh{\phi}$.

\begin{theorem}\label{complejo mezclado que da la homologia ciclica} $\bigl(\wh{X}_*,\wh{d}_*,\wh{D}_*\bigr)$ is a mixed complex that gives the Hochschild, cyclic, negative and periodic homologies of the $K$-algebra $E$. Moreover we have chain complexes maps
\begin{equation*}
\xymatrix{{}\save[]+<-44pt,0pt>\Drop{\Tot\bigl(\BP(\wh{X}_*,\wh{d}_*,\wh{D}_*)\bigr)}\restore \ar@<-1ex>[rr]_-{\wh{\Phi}} && {{}\save[]+<63pt,0pt> \Drop{\Tot\bigl(\BP(E\ot\ov{E}^{\ot^*}\ot, b_*,B_*)\bigr)}} \restore \ar@<-1ex>[ll]_-{\wh{\Psi}}},
\end{equation*}
given by
$$
\wh{\Phi}_n(\bx u^i):= \wh{\phi}(\bx)u^i + \wh{\omega}\xcirc B\xcirc\wh{\phi}(\bx)u^{i-1}\quad \text{and}\quad \wh{\Psi}_n (\bx u^i):= \sum_{j\ge 0} \wh{\psi}\xcirc (B\xcirc\wh{\omega})^j (\bx) u^{i-j}.
$$
These maps satisfy $\wh{\Psi}\xcirc \wh{\Phi} = \ide$ and $\wh{\Phi}\xcirc\wh{\Psi}$ is homotopically equivalent to the identity map. A homotopy $\wh{\Omega}_{*+1}\colon\wh{\Phi}_* \xcirc\wh{\Psi}_*\to \ide_*$ is given by
$$
\wh{\Omega}_{n+1}(\bx u^i):= \sum_{j\ge 0}\wh{\omega}\xcirc (B\xcirc\wh{\omega})^j(\bx)u^{i-j}.
$$
\end{theorem}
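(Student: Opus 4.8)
The plan is to derive the whole statement from the Perturbation Lemma (Theorem~\ref{lema de perturbacion}), applied to a columnwise comparison between the bicomplexes $\BP$; Lemma~\ref{B circ wh{omega} circ B circ wh{phi}=0} will be used to collapse the a priori infinite series produced by that lemma into the displayed closed formulas for $\wh{D}$, $\wh{\Phi}$, $\wh{\Psi}$ and $\wh{\Omega}$. The mixed complex axioms for $(\wh{X}_*,\wh{d}_*,\wh{D}_*)$ will then fall out of the identity $(\partial^1)^2=0$ for the perturbed differential.

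First I would detach the operators $B$ and $\wh{D}$: for each $i$, regard $(E\ot\ov{E}^{\ot^*}\ot)u^i$ and $\wh{X}_*\,u^i$ as (degree shifted) copies of the normalized Hochschild complex of $E$ and of $(\wh{X}_*,\wh{d}_*)$, endowed only with their vertical differentials $b$ and $\wh{d}$, and let $\bigl(\bigoplus_i(E\ot\ov{E}^{\ot^*}\ot)u^i,\,b\bigr)$ and $\bigl(\bigoplus_i\wh{X}_*\,u^i,\,\wh{d}\bigr)$ be the associated total complexes. Applied columnwise, $\wh{\phi}$, $\wh{\psi}$, $\wh{\omega}$ of Subsection~\ref{morfismos de comparacion en homologia} give a \emph{special} deformation retract of the first onto the second: $\wh{\psi}\xcirc\wh{\phi}=\ide$ by Proposition~\ref{homotopia}, and the identities $\wh{\omega}\xcirc\wh{\phi}=0$, $\wh{\psi}\xcirc\wh{\omega}=0$, $\wh{\omega}\xcirc\wh{\omega}=0$ follow by induction from the recursive definitions of $\phi$, $\psi$, $\omega$ and the fact that $\xi$ lands in tensors ending in $1$. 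The map $\delta$ given by $\delta(\bx u^i):=B(\bx)u^{i-1}$ is then a perturbation of this data, since $(b+B)^2=0$ (i.e.\ $(E\ot\ov{E}^{\ot^*}\ot,b_*,B_*)$ is a mixed complex) and $\bigl(\bigoplus_i(E\ot\ov{E}^{\ot^*}\ot)u^i,\,b+\delta\bigr)=\Tot\bigl(\BP(E\ot\ov{E}^{\ot^*}\ot,b_*,B_*)\bigr)$; it is small because, as in all the situations considered in this paper, $\delta\xcirc\wh{\omega}$ is locally nilpotent.

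Next I would run Theorem~\ref{lema de perturbacion}. One computes $A:=(\ide-\delta\xcirc\wh{\omega})^{-1}\xcirc\delta$ on the form $A(\bx u^i)=\sum_{j\ge 0}B(\wh{\omega}B)^j(\bx)u^{i-1-j}$, and then, using $B\xcirc B=0$ together with $B\xcirc\wh{\omega}\xcirc B\xcirc\wh{\phi}=0$ (Lemma~\ref{B circ wh{omega} circ B circ wh{phi}=0}), in each of $\partial^1=\partial+\wh{\psi}A\wh{\phi}$, $i^1=\wh{\phi}+\wh{\omega}A\wh{\phi}$, $p^1=\wh{\psi}+\wh{\psi}A\wh{\omega}$ and $h^1=\wh{\omega}+\wh{\omega}A\wh{\omega}$ every summand of index $j\ge 1$ vanishes. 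Evaluating the surviving terms gives $\partial^1(\bx u^i)=\wh{d}(\bx)u^i+\wh{D}(\bx)u^{i-1}$ and $i^1=\wh{\Phi}$, $p^1=\wh{\Psi}$, $h^1=\wh{\Omega}$. In particular $\partial^1$ is exactly the total differential of $\BP(\wh{X}_*,\wh{d}_*,\wh{D}_*)$, so $(\partial^1)^2=0$ unpacks to $\wh{d}\xcirc\wh{d}=0$, $\wh{D}\xcirc\wh{D}=0$ and $\wh{d}\xcirc\wh{D}+\wh{D}\xcirc\wh{d}=0$, whence $(\wh{X}_*,\wh{d}_*,\wh{D}_*)$ is a mixed complex; and since Theorem~\ref{lema de perturbacion} turns a special deformation retract into a special deformation retract, $\wh{\Psi}\xcirc\wh{\Phi}=p^1\xcirc i^1=\ide$ and $\wh{\Omega}$ is a homotopy from $\wh{\Phi}\xcirc\wh{\Psi}$ to the identity.

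Finally I would pass from $\BP$ to $\BN$ and $\BC$: because $\wh{\Phi}$ involves only the powers $u^i$ and $u^{i-1}$, while $\wh{\Psi}$ and $\wh{\Omega}$ never raise the $u$-degree, all three restrict to $\BN$ and to $\BN'$ and hence descend to $\BC=\BP/\BN'$, and they remain mutually inverse up to $\wh{\Omega}$ on each of the three complexes; this yields isomorphisms $\HP_*(\wh{X}_*,\wh{d}_*,\wh{D}_*)\cong\HP^K_*(E)$, $\HN_*\cong\HN^K_*(E)$ and $\HC_*\cong\HC^K_*(E)$, the Hochschild statement being Theorem~\ref{Hochschild homology}. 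With Lemma~\ref{B circ wh{omega} circ B circ wh{phi}=0} already in hand the rest is essentially bookkeeping; the delicate points are verifying that the columnwise data is a \emph{special} deformation retract, that $\delta$ is a small perturbation, and the systematic use of $B\xcirc B=0$ and $B\xcirc\wh{\omega}\xcirc B\xcirc\wh{\phi}=0$ to see that each series produced by the Perturbation Lemma reduces to its first one or two terms — that collapse is the real content.
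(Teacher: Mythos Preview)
Your approach via the Perturbation Lemma is exactly the one the paper has in mind: its proof simply says ``the proof given in \cite{C-G-G} works in our setting,'' and that proof is precisely the perturbation argument you outline, with Lemma~\ref{B circ wh{omega} circ B circ wh{phi}=0} supplying the collapse.

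There is, however, a slip in your bookkeeping. You claim that in each of $\partial^1$, $i^1$, $p^1$, $h^1$ every summand of index $j\ge 1$ vanishes. This is true for $\partial^1=\wh{d}+\wh{\psi}A\wh{\phi}$ and $i^1=\wh{\phi}+\wh{\omega}A\wh{\phi}$, because both end in $\wh{\phi}$ and Lemma~\ref{B circ wh{omega} circ B circ wh{phi}=0} kills $B\wh{\omega}B\wh{\phi}$. But $p^1=\wh{\psi}+\wh{\psi}A\wh{\omega}$ and $h^1=\wh{\omega}+\wh{\omega}A\wh{\omega}$ contain no $\wh{\phi}$, so neither $B\xcirc B=0$ nor Lemma~\ref{B circ wh{omega} circ B circ wh{phi}=0} applies, and the series do \emph{not} collapse. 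This is harmless for the argument: the Perturbation Lemma gives
\[
p^1(\bx u^i)=\sum_{j\ge 0}\wh{\psi}(B\wh{\omega})^j(\bx)\,u^{i-j},\qquad
h^1(\bx u^i)=\sum_{j\ge 0}\wh{\omega}(B\wh{\omega})^j(\bx)\,u^{i-j},
\]
which are precisely the \emph{defining} formulas for $\wh{\Psi}$ and $\wh{\Omega}$ in the statement --- note that those are written as full sums over $j\ge 0$, not as two-term expressions. So $p^1=\wh{\Psi}$ and $h^1=\wh{\Omega}$ hold tautologically, with no collapse needed; the collapse is the point only for $\partial^1$ (to see that the perturbed differential is $\wh{d}+\wh{D}$, hence that $(\wh{X}_*,\wh{d}_*,\wh{D}_*)$ is a mixed complex) and for $i^1$ (to get the two-term formula for $\wh{\Phi}$). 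Once you correct this, the rest of your outline --- the verification of the side conditions making the columnwise data a special deformation retract, and the descent from $\BP$ to $\BN$ and $\BC$ --- is fine and matches the intended argument.
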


\begin{proof} This result generalizes~\cite[Theorem~2.4]{C-G-G}, and the proof given in that paper works in our setting.
\end{proof}

\begin{remark} If $K$ is a separable $k$-algebra, then $\bigl(\wh{X}_*,\wh{d}_*, \wh{D}_*\bigr)$ is a mixed complex that gives the Hochschild, cyclic, negative and periodic absolute homologies of $E$.
\end{remark}

In the next proposition we use the notation $F_R^j(\wh{X}_n) := F^j(\wh{X}_n)\cap \wh{X}^{R1}_n(E)$ introduced above Proposition~\ref{propiedades de hat{psi}}.

\begin{proposition}\label{Connes operator} Let $R$ be a stable under $\chi$ $K$-subalgebra of $A$ such that $\mathcal{F}$ takes its values in $R\ot_k V$. The Connes operator $\wh{D}$ satisfies:

\begin{enumerate}

\smallskip

\item If $\bx = [a_0\gamma_{\!A}(\bv_{0i})\ot \ba_{1,n-i}]$, then
$$
\qquad \wh{D}(\bx) = \sum_{j=0}^i\sum_l (-1)^{i+ji} \bigl[1\ot_{\!A} \gamma_{\!A} (\bv_{j+1,i}^{(l)})\ot_{\!A} a_0\gamma_{\!A}(\bv_{0j})\ot \ba_{1,n-i}^{(l)}\bigr],
$$
module $F_R^i(\wh{X}_{n+1})$, where $\sum_l \ba_{1,n-i}^{(l)}\ot_k \bv_{j+1,i}^{(l)} \!:=\! \ov{\chi}(\bv_{j+1,i}\ot_k\ba_{1,n-i})$.

\smallskip

\item If $\bx = [a_0\ot_{\!A} \gamma_{\!A}(\bv_{1i})\ot \ba_{1,n-i}]$, then
$$
\qquad\wh{D}(\bx) = \sum_{j=0}^{n-i}\sum_l (-1)^{jn+ji+n} \bigl[1\ot_{\!A}\gamma_{\!A}(\bv_{1i}^{(l)})\ot \ba_{j+1,n-i}\ot a_0\ot \ba_{1j}^{(l)}\bigr],
$$
module $F_R^{i-1}(\wh{X}_{n+1})$, where $\sum_l \ba_{1j}^{(l)}\ot_k \bv_{1i}^{(l)} := \ov{\chi}(\bv_{1i}\ot_k \ba_{1j})$.

\end{enumerate}

\end{proposition}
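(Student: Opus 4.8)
The plan is to compute $\wh{D}(\bx)=\wh{\psi}\bigl(B\bigl(\wh{\phi}(\bx)\bigr)\bigr)$ by splitting $\wh{\phi}(\bx)$ into its leading shuffle term and an error, and by pushing each of the two pieces through $B$ and then through $\wh{\psi}$. First I would apply Proposition~\ref{formula para hat{phi} modulo la filtracion}. In case~(1) this gives $\wh{\phi}(\bx)=[m\ot T]+[m\ot\bz]$ with $m:=a_0\gamma_{\!A}(v_0)$, $T:=\Sh(\bv_{1i}\ot_k\ba_{1,n-i})$ and $[m\ot\bz]\in F^{i-1}\bigl(E\ot\ov{E}^{\ot n}\ot\bigr)\cap\ov{W}_n\cap\ov{C}^R_n$; in case~(2) the same holds with $m:=a_0$.

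The error term is disposed of by filtration bookkeeping alone. The Connes operator moves the module entry $m$ into an $\ov{E}$-slot, inserts a unit and rotates cyclically. In case~(2) the new factor $\bar m\in\ov{A}$, so $B([m\ot\bz])\in F^{i-1}\bigl(E\ot\ov{E}^{\ot n+1}\ot\bigr)\cap\ov{W}_{n+1}\cap\ov{C}^R_{n+1}$; in case~(1) the new factor $\bar m$ has the form $a_0\gamma(v_0)$, so $B([m\ot\bz])\in F^{i}\bigl(E\ot\ov{E}^{\ot n+1}\ot\bigr)\cap\ov{W}'_{n+1}\cap\ov{C}^R_{n+1}$. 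Applying $\wh{\psi}$, which preserves filtrations by Proposition~\ref{phi, psi y omega preservan filtraciones en homologia} and, by items~(2), (5) and~(6) of Proposition~\ref{propiedades de hat{psi}}, carries the $\ov{R}$-factor into the $\wh{X}^{R1}$-part while killing the $A$-before-$\mathcal{V}_{\!K}$ configurations, one obtains $\wh{\psi}\bigl(B([m\ot\bz])\bigr)\in F_R^{i}(\wh{X}_{n+1})$ in case~(1) and $\in F_R^{i-1}(\wh{X}_{n+1})$ in case~(2), which is exactly the ambiguity allowed in the statement.

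It then remains to evaluate $\wh{\psi}\bigl(B([m\ot T])\bigr)$ modulo that ambiguity. Here I would expand $T$ by the recursion of Notation~\ref{not3.1} into its signed sum of $\ov{\chi}$-twisted interleavings of the $\gamma(v_j)$'s and the $i_{\ov{A}}(a_k)$'s, apply the explicit formula $B\bigl([\bc_{0r}]\bigr)=\sum_i(-1)^{ir}[1\ot\bc_{ir}\ot\bc_{0,i-1}]$, and then apply $\wh{\psi}$ summand by summand. Each cyclic summand is a tensor in $\ov{W}'_{n+1}$ carrying a single factor of the form $a_0\gamma(v_0)$, so items~(1), (3), (4) and~(5) of Proposition~\ref{propiedades de hat{psi}} compute $\wh{\psi}$ on it modulo $F_R^{i-2}$, while item~(2) shows that any summand whose cyclic cut places an $\ov{A}$-factor among the first $s$ slots of the target vanishes modulo the filtration. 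The surviving summands are thus the $i+1$ cyclic cuts that fall between consecutive $\gamma(v)$-blocks of the straightened word $\gamma(v_0)\gamma(v_1)\cdots\gamma(v_i)\,a_1\cdots a_{n-i}$ in case~(1), and the $n-i+1$ cuts around the $a_0$-slot in case~(2); dragging the $a$-block past the $\gamma(v)$-block to reach standard form is precisely what produces the twist $\ov{\chi}(\bv_{j+1,i}\ot_k\ba_{1,n-i})$, respectively $\ov{\chi}(\bv_{1i}\ot_k\ba_{1j})$, and collecting the Koszul signs coming from $B$ with the de-shuffling signs of $\wh{\psi}$ yields the asserted $(-1)^{i+ji}$, respectively $(-1)^{jn+ji+n}$.

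I expect the \emph{main obstacle} to lie in this last step: one has to check that all the non-standard interleavings generated by $\Sh$, once acted on by $B$ and $\wh{\psi}$, either cancel in pairs or fall into $F_R^{i-2}\subseteq F_R^{i}$ (case~(1)), respectively $F_R^{i-2}\subseteq F_R^{i-1}$ (case~(2)), and that the signs and the $\ov{\chi}$-twists stay consistent throughout. This is the cyclic refinement of the combinatorial identity that makes $\wh{\phi}$ a morphism of complexes (cf.\ Theorem~\ref{formula para wh{d}_1}), so rather than a brute-force expansion I would carry it out by induction on $i$ mirroring the recursion defining $\Sh$ in Notation~\ref{not3.1}, reducing the inductive step to a single application of Proposition~\ref{propiedades de hat{psi}}; alternatively it can be organised through the appendix machinery of~\cite{G-G3} used in Sections~\ref{SecRes}--\ref{coHochschild-Brzezinski}.
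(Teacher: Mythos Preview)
Your proposal follows the same route as the paper: split $\wh\phi(\bx)$ via Proposition~\ref{formula para hat{phi} modulo la filtracion} into the shuffle-leading piece $[m\ot T]$ with $T=\Sh(\bv_{1i}\ot_k\ba_{1,n-i})$ plus an error in $F^{i-1}\cap\ov W_n\cap\ov C^R_n$, push each through $B$ and then $\wh\psi$, and absorb the error by filtration bookkeeping together with Proposition~\ref{propiedades de hat{psi}}. The paper is considerably terser: after recording the shape of the simple tensors in $B([m\ot T])$ and $B([m\ot\bz])$ it simply says the result follows from items~(3)--(6) of Proposition~\ref{propiedades de hat{psi}} in case~(1), and from items~(1)--(2) in case~(2).

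Your anticipated ``main obstacle'' is overstated: there is no pairwise cancellation among shuffle summands to verify. Once $B$ is applied to a summand of $[m\ot T]$, item~(2) (in case~(2)) or item~(6) (in case~(1)) of Proposition~\ref{propiedades de hat{psi}} annihilates $\wh\psi$ of every rotated tensor in which an $\ov A$-factor precedes a $\mathcal V_K$-factor. Modulo the stated filtration piece, the only survivors are those (shuffle, rotation) pairs in which the $\gamma(v)$-block sits at the front of the rotated word; for each value of $j$ in the displayed sum there is exactly one such pair, and $\wh\psi$ on it is read off directly from item~(1) in case~(2), or from item~(4) (respectively the leading summand of item~(3) when $j=0$) in case~(1). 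So the computation is term-by-term rather than a cancellation, and this is what the paper means by ``follows easily''; no induction on~$i$ mirroring the $\Sh$-recursion is needed. One small correction: in case~(2) you should invoke only items~(1) and~(2), not~(3)--(5), since with $m=a_0\in A$ every factor of each rotated word already lies in $\ov A\cup\mathcal V_K$.
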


\begin{proof} (1)\enspace We must compute $\wh{D}(\bx) = \wh{\psi}\xcirc B \xcirc \wh{\phi}(\bx)$. By Proposition~\ref{formula para hat{phi} modulo la filtracion},
$$
\qquad \wh{D}(\bx) = \wh{\psi}\xcirc B\bigl([a_0\gamma(v_0)\ot\Sh(\bv_{1i}\ot_k \ba_{1,n-i})]\bigr) + \wh{\psi}\xcirc B\bigl([a_0\gamma(v_0)\ot \bx]\bigr),
$$
where $[a_0\gamma(v_0)\ot\bx]\in F^{i-1}\bigl(E\ot \ov{E}^{\ot^n}\ot\bigr)\cap \ov{W}_n \cap\ov{C}^R_n$. Now

\begin{itemize}

\smallskip

\item[-] $B\bigl([a_0\gamma(v_0)\ot\Sh(\bv_{1i}\ot_k\ba_{1,n-i})]\bigr)$ is a sum of classes in $E\ot \ov{E}^{\ot^{n+1}}\ot$ of simple tensors $1\ot \byy_{1,n+1}$, with $n-i$ of the $y_j$'s in $i_{\ov{A}}(\ov{A})$, $i$ of the $y_j$'s in $\mathcal{V}_{\!K}$ and one $y_j\notin i_{\ov{A}}(\ov{A})\cup \mathcal{V}_{\!K}$.

\smallskip

\item[-] $B\bigl([a_0\gamma(v_0)\ot\bx]\bigr)$ is a sum of classes in $E\ot \ov{E}^{\ot^{n+1}}\ot$ of simple tensors $1\ot\bz_{1,n+1}$, with at least $n-i+1$ of the $z_j$'s in $i_{\ov{A}}(\ov{A})$ and exactly one $z_j$ in $\ov{E}\setminus (i_{\ov{A}}(\ov{A})\cup \mathcal{V}_{\!K})$.

\smallskip
\end{itemize}
The result follows now easily from the definition of $\Sh$ and items~(3)--(6) of Proposition~\ref{propiedades de hat{psi}}.

\smallskip

\noindent (2)\enspace As in the proof of item~(1) we have
$$
\wh{D}(\bx) = \wh{\psi}\xcirc B\bigl([a_0\ot \Sh(\bv_{1i}\ot_k \ba_{1,n-i})]\bigr) + \wh{\psi}\xcirc B\bigl([a_0\ot \bx]\bigr),
$$
where $[a_0\ot \bx]\in F^{i-1}\bigl(E\ot \ov{E}^{\ot^n}\ot\bigr)\cap \ov{W}_n\cap \ov{C}^R_n$. Now

\begin{itemize}

\smallskip

\item[-] $B\bigl([a_0\ot\Sh(\bv_{1i}\ot_k\ba_{1,n-i})]\bigr)$ is a sum of classes in $E\ot \ov{E}^{\ot^{n+1}}\ot$ of simple tensors $1\ot \byy_{1,n+1}$, with $n-i+1$ of the $y_j$'s in $i_{\ov{A}}(\ov{A})$ and $i$ of the $y_j$'s in~$\mathcal{V}_{\!K}$.

\smallskip

\item[-] $B\bigl([a_0\ot\bx]\bigr)$ is a sum of classes in $E\ot \ov{E}^{\ot^{n+1}}\ot$ of simple tensors $1\ot\bz_{1,n+1}$, with each $z_j$ in $i_{\ov{A}}(\ov{A})\cup \mathcal{V}_{\!K}$ and at least $n-i+2$ of the $z_j$'s in $i_{\ov{A}}(\ov{A})$.

\smallskip

\end{itemize}
The result follows now easily from the definition of $\Sh$ and items~(1) and~(2) of Proposition~\ref{propiedades de hat{psi}}.
\end{proof}

\begin{corollary} If $K=A$, then $\bigl(\wh{X}_*,\wh{d}_*,\wh{D}_*\bigr) = \bigl(\wh{X}_{0*},\wh{d}^1_{0*},\wh{D}_{0*}\bigr)$, where
$$
\wh{D}_{0n}\bigl([a\gamma_{\!A}(\bv_{0n})]\bigr) = \sum_{j=0}^n (-1)^{n+jn} \bigl[1\ot_{\!A} \gamma_{\!A} (\bv_{j+1,n})\ot_{\!A} a\gamma_{\!A}(\bv_{0j})\bigr].
$$
\end{corollary}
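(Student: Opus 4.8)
The plan is to obtain the corollary by specializing Proposition~\ref{Connes operator} to the case $R = K = A$, after first simplifying the underlying Hochschild complex.

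To begin with I would note that when $K = A$ one has $\ov{A} = A/K = 0$, so $\wh{X}_{rs} = 0$ for all $r \ge 1$. Consequently $\wh{X}_n = \wh{X}_{0n}$ for every $n$, only the summand $\wh{d}^1_{0n}$ of $\wh{d}_n$ survives (the maps $\wh{d}^l_{0n}$ with $l \ge 2$ land in $\wh{X}_{l-1,n-l} = 0$), and hence $(\wh{X}_*,\wh{d}_*) = (\wh{X}_{0*},\wh{d}^1_{0*})$; this is precisely the observation already made in Section~\ref{HHBrezinski}, applied with $M = E$. That $(\wh{X}_{0*},\wh{d}^1_{0*},\wh{D}_{0*})$ is then automatically a mixed complex computing the Hochschild, cyclic, negative and periodic homologies of $E$ will follow from Theorem~\ref{complejo mezclado que da la homologia ciclica}, once the explicit formula for $\wh{D}_{0*}$ is in hand, since $\wh{D} = \wh{\psi}\xcirc B\xcirc\wh{\phi}$ is that theorem's Connes operator.

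For the Connes operator itself I would apply Proposition~\ref{Connes operator}(1) with $R := A$: this $R$ is trivially a stable under $\chi$ $K$-subalgebra of $A$, and $\mathcal{F}$ trivially takes its values in $R\ot_k V = A\ot_k V$. The key point is that the ambiguity term $F_R^i(\wh{X}_{n+1}) = F^i(\wh{X}_{n+1}) \cap \wh{X}^{A1}_{n+1}(E)$ vanishes, because $\wh{X}^{A1}_{n+1}(E)$ is generated by simple tensors having at least one entry in $\ov{A} = 0$. Moreover, since in degree $n$ there are no $\ba$-entries, every element of $\wh{X}_{0n}$ is of the shape $[a\gamma_{\!A}(\bv_{0n})]$, i.e.\ the case $i = n$ of the proposition ($n-i=0$); the reduced twisting map $\ov{\chi}$ then acts only on the empty tuple, so the internal sum $\sum_l$ collapses to a single term with $\bv_{j+1,n}^{(l)} = \bv_{j+1,n}$. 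Substituting $i = n$ into the sign $(-1)^{i+ji}$ yields
\[
\wh{D}_{0n}\bigl([a\gamma_{\!A}(\bv_{0n})]\bigr) = \sum_{j=0}^n (-1)^{n+jn}\bigl[1\ot_{\!A}\gamma_{\!A}(\bv_{j+1,n})\ot_{\!A} a\gamma_{\!A}(\bv_{0j})\bigr],
\]
which is the asserted formula.

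The only step that needs a moment of care is checking that the error term $F_R^i(\wh{X}_{n+1})$ is genuinely zero for this choice of $R$; everything else is routine bookkeeping and a direct substitution into Proposition~\ref{Connes operator}, so I do not anticipate a real obstacle.
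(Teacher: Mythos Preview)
Your proof is correct and follows the same approach the paper intends: the corollary is stated immediately after Proposition~\ref{Connes operator} without a separate proof, because it is exactly the specialization $R=K=A$, $i=n$ of item~(1), together with the earlier remark that $\wh{X}_{rs}=0$ for $r\ge 1$ when $K=A$. Your verification that the error term $F_R^i(\wh{X}_{n+1})$ vanishes is right; in fact it is automatic here since $\wh{X}^{R1}_{n+1}(E)\subseteq\bigoplus_{r\ge 1}\wh{X}_{r,n+1-r}=0$ once $\ov{A}=0$, regardless of the choice of $R$.
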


\subsection{The spectral sequences} The first of the following spectral sequences generalizes those obtained in \cite[Section 3.1]{C-G-G} and \cite[Theorem 4.7]{Z-H}, while the third one generalizes those obtained in \cite{A-K}, \cite{K-R} and \cite[Section 3.2]{C-G-G}. Let
$$
\wh{d}^0_{rs}\colon \wh{X}_{rs}\to\wh{X}_{r-1,s}\quad\text{and}\quad\wh{d}^1_{rs}\colon \wh{X}_{rs}\to \wh{X}_{r,s-1}
$$
be as at the beginning of Section~\ref{Hochschild-Brzezinski} and let
$$
\wh{D}^0_{rs}\colon \wh{X}_{rs}\to \wh{X}_{r,s+1}
$$
be the map defined by
$$
\wh{D}^0\bigl([a_0\gamma_{\!A}(\bv_{0s})\ot \ba_{1r}]\bigr) = \sum_{j=0}^s\sum_l (-1)^{s+js} \bigl[1\ot_{\!A} \gamma_{\!A} (\bv_{j+1,s}^{(l)})\ot_{\!A} a_0\gamma_{\!A}(\bv_{0j})\ot \ba_{1r}^{(l)}\bigr],
$$
where $\sum_l \ba_{1r}^{(l)}\ot_k \bv_{j+1,s}^{(l)} = \ov{\chi}(\bv_{j+1,s}\ot_k \ba_{1r})$.

\subsubsection{The first spectral sequence} Recall from Remark~\ref{re3.5} that
$$
\Ho_r\bigl(\wh{X}_{*s},\wh{d}^0_{*s}\bigr) = \Ho^K_r\bigr(A,E\ot_{\!A} (E/A)^{\ot_{\!A}^s} \bigl).
$$
Let
\begin{align*}
\breve{d}_{rs}\colon \Ho^K_r\bigr(A,E\ot_{\!A} (E/A)^{\ot_{\!A}^s} \bigl) \longrightarrow \Ho^K_r\bigr(A,E\ot_{\!A} (E/A)^{\ot_{\!A}^{s-1}} \bigl)\\
\intertext{and}
\breve{D}_{rs}\colon \Ho^K_r\bigr(A,E\ot_{\!A} (E/A)^{\ot_{\!A}^s} \bigl) \longrightarrow \Ho^K_r\bigr(A,E\ot_{\!A} (E/A)^{\ot_{\!A}^{s+1}} \bigl)
\end{align*}
be the maps induced by $\wh{d}^1$ and $\wh{D}^0$, respectively.

\begin{proposition}\label{pepito} For each $r\ge 0$,
\[
\breve{\Ho}^K_r\bigr(A,E\ot_{\!A} (E/A)^{\ot_{\!A}^*} \bigl) := \Bigl(\Ho^K_r\bigr(A,E\ot_{\!A} (E/A)^{\ot_{\!A}^*}\bigr),\breve{d}_{r*},\breve{D}_{r*}\Bigr)
\]
is a mixed complex and there is a convergent spectral sequence
$$
(\mathpzc{E}^v_{sr},\mathpzc{d}^v_{sr})_{v\ge 0} \Longrightarrow \HC^K_{r+s}(E),
$$
such that $\mathpzc{E}^2_{sr} = \HC_s\Bigl(\breve{\Ho}^K_r\bigr(A,E\ot_{\!A} (E/A)^{\ot_{\!A}^*}\bigl)\Bigr)$ for all $r,s\ge 0$.
\end{proposition}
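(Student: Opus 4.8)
The plan is to derive the spectral sequence from a filtration of the cyclic bicomplex $\BC\bigl(\wh{X}_*,\wh{d}_*,\wh{D}_*\bigr)$, whose total complex computes $\HC^K_*(E)$ by Theorem~\ref{complejo mezclado que da la homologia ciclica}. Recall from Theorem~\ref{formula para wh{d}_1} that $\wh{d}=\sum_{l\ge 0}\wh{d}^l$ with $\wh{d}^l\colon\wh{X}_{rs}\to\wh{X}_{r+l-1,s-l}$, so that $\wh{d}$ never raises the index $s$; and recall from Proposition~\ref{Connes operator} that, on $\wh{X}_{rs}$, one has $\wh{D}=\wh{D}^0+(\text{lower order terms})$, where $\wh{D}^0\colon\wh{X}_{rs}\to\wh{X}_{r,s+1}$ is the map defined above and the lower order terms land in $\bigoplus_{s'\le s}\wh{X}_{*,s'}$. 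Since in $\Tot\BC$ the operator $\wh{D}$ also lowers the power of $u$ by one, I assign to the summand $\wh{X}_{rs}u^i$ the weight $s+2i$; then the total differential $\wh{d}+\wh{D}$ never raises the weight, because $\wh{d}$ keeps $i$ and does not raise $s$, while $\wh{D}$ carries $\wh{X}_{rs}u^i$ into $\bigoplus_{s'\le s+1}\wh{X}_{*,s'}u^{i-1}$, changing the weight by $s'+2(i-1)-(s+2i)=s'-s-2\le -1$. Let $\mathcal{F}^p$ be the subcomplex of $\Tot\BC\bigl(\wh{X}_*,\wh{d}_*,\wh{D}_*\bigr)$ spanned by the $\wh{X}_{rs}u^i$ with $s+2i\le p$. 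In each total degree $n$ one has $\mathcal{F}^p=0$ for $p<0$ and $\mathcal{F}^p$ equal to the whole degree-$n$ component for $p\ge n$, so the filtration is bounded and the associated spectral sequence converges to $\HC^K_*(E)$, with $\mathpzc{E}^2_{sr}$ ($r$ the complementary degree) abutting to $\HC^K_{r+s}(E)$.

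Next I would compute the first pages. For fixed $p$, the degree-$n$ component of $\mathcal{F}^p/\mathcal{F}^{p-1}$ is $\bigoplus_{i\ge 0}\wh{X}_{n-p,\,p-2i}u^i$ --- indeed $s=p-2i$ forces $r=n-2i-s=n-p$ --- and the induced differential is the weight-preserving part of $\wh{d}+\wh{D}$, which is exactly $\wh{d}^0$, since every $\wh{d}^l$ with $l\ge 1$ and all of $\wh{D}$ strictly lower the weight. By Theorem~\ref{formula para wh{d}_1}(1) and Remark~\ref{re3.5}, each complex $\bigl(\wh{X}_{*s},\wh{d}^0_{*s}\bigr)$ computes $\Ho^K_*\bigl(A,E\ot_{\!A}(E/A)^{\ot_{\!A}^s}\bigr)$, so
$$
\mathpzc{E}^1\simeq\bigoplus_{i\ge 0}\Ho^K_*\bigl(A,E\ot_{\!A}(E/A)^{\ot_{\!A}^{*-2i}}\bigr)u^i .
$$
The differential $\mathpzc{d}^1$ is induced by the part of $\wh{d}+\wh{D}$ lowering the weight by exactly one; by Theorem~\ref{formula para wh{d}_1} and Proposition~\ref{Connes operator}, each $\wh{d}^l$ with $l\ge 2$ and each lower order summand of $\wh{D}$ lowers the weight by at least two, so $\mathpzc{d}^1$ is induced by $\wh{d}^1+\wh{D}^0$, that is, $\mathpzc{d}^1=\breve{d}\pm\breve{D}$ in the notation of the statement. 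Hence, for each fixed $r$, the complex $(\mathpzc{E}^1_{*r},\mathpzc{d}^1)$ is precisely the total complex of the cyclic bicomplex $\BC$ of the graded object $\bigl(\Ho^K_r(A,E\ot_{\!A}(E/A)^{\ot_{\!A}^*}),\breve{d}_{r*},\breve{D}_{r*}\bigr)$.

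It remains to check that this graded object is a mixed complex, which also establishes the first assertion. For this I would decompose the relations $\wh{d}\xcirc\wh{d}=0$, $\wh{D}\xcirc\wh{D}=0$ and $\wh{d}\xcirc\wh{D}+\wh{D}\xcirc\wh{d}=0$ by bidegree: the components changing $s$ by $-1$ or $+1$ give $\wh{d}^0\wh{d}^1+\wh{d}^1\wh{d}^0=0$ and $\wh{d}^0\wh{D}^0+\wh{D}^0\wh{d}^0=0$ (using that $\wh{d}^0$ is a differential), so $\breve{d}$ and $\breve{D}$ are well defined on $\wh{d}^0$-homology; and the components changing $s$ by $-2$, $+2$ and $0$ show that $\wh{d}^1\wh{d}^1$, $\wh{D}^0\wh{D}^0$ and $\wh{d}^1\wh{D}^0+\wh{D}^0\wh{d}^1$ differ from $0$ only by a $\wh{d}^0$-boundary plus an operator vanishing on $\wh{d}^0$-cycles, whence $\breve{d}^2=0$, $\breve{D}^2=0$ and $\breve{d}\breve{D}+\breve{D}\breve{d}=0$. (Alternatively this can be read off afterwards from $(\mathpzc{d}^1)^2=0$ together with the fact that $\breve{d}$ preserves and $\breve{D}$ lowers the power of $u$.) Taking homology of $(\mathpzc{E}^1_{*r},\mathpzc{d}^1)$ then yields $\mathpzc{E}^2_{sr}=\HC_s\bigl(\breve{\Ho}^K_r(A,E\ot_{\!A}(E/A)^{\ot_{\!A}^*})\bigr)$, completing the argument.

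The delicate point --- and what makes this go beyond the smash-product case of \cite{C-G-G} --- is the choice of weight $s+2i$ rather than $s+i$: the coefficient $2$, reflecting that $\wh{D}$ costs one power of $u$, is exactly what forces $\wh{D}^0$ to appear in $\mathpzc{d}^1$ and not already in $\mathpzc{d}^0$, and what makes the genuinely new terms --- $\wh{d}^l$ for $l\ge 2$ (controlled by Theorem~\ref{formula para wh{d}_1}(3)) and the correction terms of $\wh{D}$ (controlled by Proposition~\ref{Connes operator}) --- drop the weight far enough to be invisible on $\mathpzc{E}^0$ and $\mathpzc{E}^1$. Once this is in place, the remaining bookkeeping (signs, the identification with $\Tot\BC$, convergence) follows the pattern of \cite[Section~3.1]{C-G-G}.
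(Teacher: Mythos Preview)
Your proof is correct and follows essentially the same route as the paper. Your weight $s+2i$ on $\wh{X}_{rs}u^i$ defines exactly the filtration $\mathpzc{F}^s\bigl(\Tot(\BC(\wh{X},\wh{d},\wh{D}))_n\bigr)=\bigoplus_{j\ge 0}F^{s-2j}(\wh{X}_{n-2j})u^j$ used in the paper, and your identification of $\mathpzc{d}^0=\wh{d}^0$ and $\mathpzc{d}^1=\breve{d}+\breve{D}$ matches the paper's computation; your explicit remark that the coefficient $2$ is what makes $\wh{D}^0$ land in $\mathpzc{d}^1$ rather than $\mathpzc{d}^0$, and pushes $\wh{d}^l$ ($l\ge 2$) and the lower terms of $\wh{D}$ below $\mathpzc{E}^1$, is a helpful clarification that the paper leaves implicit. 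Two small comments: for the statement about $\wh{D}$ you should note that Proposition~\ref{Connes operator} is being applied with $R=A$ (always legitimate), which is what gives $\wh{D}(\wh{X}_{rs})\subseteq\wh{X}_{r,s+1}\oplus F^s(\wh{X}_{n+1})$; and for the mixed-complex relations your second argument (separating $(\mathpzc{d}^1)^2=0$ by $u$-degree) is the clean one and is what the paper has in mind---your first, bidegree-by-bidegree argument also works but requires keeping track of the unknown bidegree components of $\wh{D}-\wh{D}^0$.
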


\begin{proof} For each $s,n\ge 0$, let
$$
\mathpzc{F}^s\bigl(\Tot(\BC(\wh{X},\wh{d},\wh{D})_n)\bigr) := \bigoplus_{j\ge 0} F^{s-2j}(\wh{X}_{n-2j}) u^j,
$$
where $F^{s-2j}(\wh{X}_{n-2j})$ is the filtration introduced in Section~\ref{filtraciones en homologia}. Consider the spectral sequence $(\mathpzc{E}^v_{sr},\mathpzc{d}^v_{sr})_{v\ge 0}$, associated with the filtration
$$
\mathpzc{F}^0\bigl(\Tot(\BC(\wh{X}_*,\wh{d}_*,\wh{D}_*))\bigr) \subseteq \mathpzc{F}^1\bigl(\Tot(\BC(\wh{X}_*,\wh{d}_*,\wh{D}_*))\bigr) \subseteq \cdots
$$
of $\Tot\bigl(\BC(\wh{X}_*,\wh{d}_*,\wh{D}_*)\bigr)$. A straightforward computation shows that

\begin{itemize}

\smallskip

\item[-] $\mathpzc{E}^0_{sr} = \bigoplus_{j\ge 0} \wh{X}_{r,s-2j} u^j$,

\smallskip

\item[-] $\mathpzc{d}^0_{sr}\colon \mathpzc{E}^0_{sr} \to \mathpzc{E}^0_{s,r-1}$ is $\bigoplus_{j\ge 0} \wh{d}^0_{r,s-2j} u^j$,

\smallskip

\item[-] $\mathpzc{E}^1_{sr} = \bigoplus_{j\ge 0} \Ho_r\bigl(\wh{X}_{*,s-2j}, \wh{d}^0_{*,s-2j}\bigr) u^j$,

\smallskip

\item[-] $\mathpzc{d}^1_{sr}\colon \mathpzc{E}^1_{sr} \to \mathpzc{E}^1_{s-1,r}$ is $\bigoplus_{j\ge 0} \breve{d}_{r,s-2j}u^j + \bigoplus_{j\ge 0} \breve{D}_{r,s-2j}u^{j-1}$.

\smallskip

\end{itemize}
From this it follows easily that $\breve{\Ho}^K_r\bigr(A,E\ot_{\!A} (E/A)^{\ot_{\!A}^*}\bigl)$ is a mixed complex and
$$
\mathpzc{E}^2_{sr} = \HC_s\Bigl(\breve{\Ho}^K_r\bigr(A,E\ot_{\!A} (E/A)^{\ot_{\!A}^*} \bigl)\Bigr).
$$
In order to finish the proof note that the filtration of $\Tot\bigl(\BC(\wh{X}_*,\wh{d}_*, \wh{D}_*)\bigr)$ introduced above is canonically bounded, and so, by Theorem~\ref{complejo mezclado que da la homologia ciclica}, the spectral sequence $(\mathpzc{E}^v_{sr}, \mathpzc{d}^v_{sr})_{v\ge 0}$ converges to the cyclic homology of the $K$-algebra $E$.
\end{proof}

\subsubsection{The second spectral sequence} For each $s\ge 0$, we consider the double complex
\[
\xymatrix{\\\\ \wh{\Xi}_s =}\qquad
\xymatrix{
\vdots \dto^-{\wh{d}^0} &\vdots \dto^-{\wh{d}^0}& \vdots \dto^-{\wh{d}^0}& \vdots \dto^-{\wh{d}^0}\\
\wh{X}_{3s}u^0\dto^-{\wh{d}^0} & \wh{X}_{3,s-1}u^1\lto_-{\wh{D}^0}\dto^-{\wh{d}^0} & \wh{X}_{3,s-2}u^2\lto_-{\wh{D}^0}\dto^-{\wh{d}^0} & \wh{X}_{3,s-3}u^3\lto_-{\wh{D}^0} \\
\wh{X}_{2s}u^0\dto^-{\wh{d}^0} & \wh{X}_{2,s-1}u^1\lto_-{\wh{D}^0}\dto^-{\wh{d}^0} & \wh{X}_{2,s-2}u^2\lto_-{\wh{D}^0}\\
\wh{X}_{1s}u^0\dto^-{\wh{d}^0} & \wh{X}_{1,s-1}u^1\lto_-{\wh{D}^0}\\
\wh{X}_{0s}u^0}
\]
where the module $\wh{X}_{0s}u^0$ is placed in the intersection of the $0$-th column and the $0$-th row.

\begin{proposition}\label{pepe} There is a convergent spectral sequence
$$
(E^v_{sr},\partial^v_{sr})_{v\ge 0} \Longrightarrow \HC^K_{r+s}(E),
$$
such that $E^1_{sr} = \Ho_r(\Tot(\wh{\Xi}_s))$ for all $r,s\ge 0$.
\end{proposition}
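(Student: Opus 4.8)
The plan is to obtain $(E^v_{sr},\partial^v_{sr})_{v\ge 0}$ as the spectral sequence of a suitable filtration of $\Tot\bigl(\BC(\wh{X}_*,\wh{d}_*,\wh{D}_*)\bigr)$, whose homology is $\HC^K_*(E)$ by Theorem~\ref{complejo mezclado que da la homologia ciclica}. The filtration I would use is by the \emph{weight} of a homogeneous element, where the weight of a generator of $\wh{X}_{rs}u^j$ is declared to be $s+j$; explicitly, set
$$
\mathpzc{F}^s\bigl(\Tot(\BC(\wh{X}_*,\wh{d}_*,\wh{D}_*))_n\bigr):=\bigoplus_{j\ge 0}F^{s-j}(\wh{X}_{n-2j})u^j ,
$$
with $F^i(\wh{X}_m)=\bigoplus_{0\le t\le i}\wh{X}_{m-t,t}$ the filtration of Section~\ref{filtraciones en homologia} and the convention $F^i:=0$ for $i<0$. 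In each fixed total degree $n$ one has $\mathpzc{F}^s=0$ for $s<0$ and $\mathpzc{F}^s=\Tot(\BC)_n$ for $s\ge n$, so the filtration is canonically bounded; hence the associated spectral sequence converges, and it converges to $\HC^K_{*}(E)$.

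The first thing to check is that each $\mathpzc{F}^s$ is a subcomplex, i.e.\ that neither $\wh{d}$ nor $\wh{D}$ raises the weight. For $\wh{d}$ this is immediate from the construction of $\wh{d}$ together with Theorem~\ref{formula para wh{d}_1}: since $\wh{d}^l(\wh{X}_{rs})\subseteq\wh{X}_{r+l-1,s-l}$, the map $\wh{d}$ never increases the second index and does not move $u$, whence $\wh{d}(\mathpzc{F}^s)\subseteq\mathpzc{F}^s$. For $\wh{D}$, recall that $\wh{D}(\bx u^j)=\wh{D}(\bx)u^{j-1}$, so I must see that every nonzero component of $\wh{D}$ on $\wh{X}_{rs}$ either lands in $\wh{X}_{r,s+1}$ (whose weight after the $u$-shift is $(s+1)+(j-1)=s+j$) or lands in summands of strictly smaller second index (hence strictly smaller weight after the shift). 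This is exactly Proposition~\ref{Connes operator} applied with $R=A$ (which is always stable under $\chi$, and for which $\mathcal{F}$ trivially takes values in $A\ot_k V$): it gives $\wh{D}(\wh{X}_{rs})\subseteq\wh{X}_{r,s+1}+F_A^s(\wh{X}_{r+s+1})$ with $F_A^s(\wh{X}_{r+s+1})\subseteq F^s(\wh{X}_{r+s+1})$.

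Next I would compute the $E^0$ and $E^1$ pages. Passing to the quotient $\mathpzc{F}^s/\mathpzc{F}^{s-1}$ in total degree $r+s$ and using $F^{s-j}(\wh{X}_m)/F^{s-j-1}(\wh{X}_m)\simeq\wh{X}_{m-s+j,\,s-j}$, one gets $E^0_{sr}=\bigoplus_{j\ge 0}\wh{X}_{r-j,\,s-j}u^j=\Tot(\wh{\Xi}_s)_r$, and the induced differential is the weight-preserving part of $\wh{d}+\wh{D}$, namely $\wh{d}^0$ on each summand together with the component $\wh{X}_{r,s-j}u^j\to\wh{X}_{r,s-j+1}u^{j-1}$ of $\wh{D}$; by Proposition~\ref{Connes operator} the latter is $\wh{D}^0$ followed by the $u$-shift. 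Thus the $E^0$-differential is precisely the total differential of the double complex $\wh{\Xi}_s$, so $E^1_{sr}=\Ho_r(\Tot(\wh{\Xi}_s))$, which together with the convergence already noted proves the proposition.

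The one point that requires care — rather than genuine difficulty — is checking that $\bigl(\wh{d}^0+\wh{D}^0\bigr)^2=0$, i.e.\ that $\wh{\Xi}_s$ really is a double complex: this is obtained by taking the weight-preserving parts of the three mixed-complex identities $\wh{d}\xcirc\wh{d}=0$, $\wh{D}\xcirc\wh{D}=0$ and $\wh{D}\xcirc\wh{d}+\wh{d}\xcirc\wh{D}=0$, discarding the strictly weight-decreasing contributions by means of the estimates of Theorem~\ref{formula para wh{d}_1} and Proposition~\ref{Connes operator}. Everything else is the standard machinery of the spectral sequence of a bounded filtration of a chain complex.
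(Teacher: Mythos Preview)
Your proof is correct and follows exactly the same route as the paper: the same filtration $\bigoplus_{j\ge 0}F^{s-j}(\wh{X}_{n-2j})u^j$ of $\Tot\bigl(\BC(\wh{X}_*,\wh{d}_*,\wh{D}_*)\bigr)$, the same identification of $E^0_{sr}$ with $\Tot(\wh{\Xi}_s)_r$ via Theorem~\ref{formula para wh{d}_1} and Proposition~\ref{Connes operator}, and the same convergence argument from bounded filtration. You supply more detail than the paper does on why the filtration is stable under $\wh{d}$ and $\wh{D}$ and on why $\wh{\Xi}_s$ is a genuine double complex, but the argument is the same.
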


\begin{proof} For each $s,n\ge 0$, let
$$
F^s\bigl(\Tot(\BC(\wh{X},\wh{d},\wh{D})_n)\bigr) := \bigoplus_{j\ge 0} F^{s-j}(\wh{X}_{n-2j})
u^j,
$$
where $F^{s-j}(\wh{X}_{n-2j})$ is the filtration introduced in Section~\ref{filtraciones en homologia}. Consider the spectral sequence $(E^v_{sr},\partial^v_{sr})_{v\ge 0}$, associated with the filtration
$$
F^0\bigl(\Tot(\BC(\wh{X}_*,\wh{d}_*,\wh{D}_*))\bigr) \subseteq F^1\bigl(\Tot(\BC(\wh{X}_*,\wh{d}_*,\wh{D}_*))\bigr) \subseteq \cdots
$$
of $\Tot(\BC(\wh{X}_*,\wh{d}_*,\wh{D}_*))$. By definition
$$
E^0_{sr} = \wh{X}_{rs}u^0\oplus \wh{X}_{r-1,s-1}u \oplus \wh{X}_{r-2,s-2}u^2\oplus \wh{X}_{r-3,s-3}u^3\oplus\cdots
$$
and the boundary map $\partial^0_{sr}\colon E^0_{sr}\to E^0_{s,r-1}$ is induced by $\wh{d} + \wh{D}$. Con\-se\-quently, by Theorem~\ref{formula para wh{d}_1} and item~(1) of Proposition~\ref{Connes operator},
$$
(E^0_{s*},\partial^0_{s*}) = \Tot(\wh{\Xi}_s)\qquad\text{for all $s\ge 0$,}
$$
and so $E^1_{sr} = \Ho_r(\Tot(\wh{\Xi}_s))$ as desired. Finally, it is clear that $(E^v_{sr}, \partial^v_{sr})_{v\ge 0}$ converges to $\HC_{r+s}^K(E)$.
\end{proof}

\subsubsection{The third spectral sequence} Assume that $\mathcal{F}$ takes its values in $K\ot_k V$. Recall from Remark~\ref{re3.5} that
$$
\Ho_s\bigl(\wh{X}_{r*},\wh{d}^1_{r*}\bigr) = \Ho^A_s\bigr(E,A\ot\ov{A}^{\ot^r}\ot E\bigl).
$$
Let
\begin{align*}
\breve{d}_{rs}\colon \Ho^A_s\bigr(E,A\ot\ov{A}^{\ot^r}\ot E\bigl) \longrightarrow \Ho^A_s\bigr(E,A\ot\ov{A}^{\ot^{r-1}}\ot E\bigl) \bigl)\\
\intertext{and}
\breve{D}_{rs}\colon \Ho^A_s\bigr(E,A\ot\ov{A}^{\ot^r}\ot E\bigl)\longrightarrow \Ho^A_s\bigr(E,A\ot\ov{A}^{\ot^{r+1}}\ot E\bigl)\bigl)
\end{align*}
be the maps induced by $\wh{d}^0$ and $\wh{D}^0$, respectively.

\begin{proposition}\label{pepitos} For each $s\ge 0$,
\[
\breve{\Ho}^A_s\bigr(E,A\ot\ov{A}^{\ot^*}\ot E\bigl):= \Bigl(\Ho^A_s\bigr(E,A\ot\ov{A}^{\ot^*} \ot E\bigl),\breve{d}_{*s},\breve{D}_{*s}\Bigr)
\]
is a mixed complex and there is a convergent spectral sequence
$$
(\mathfrak{E}^v_{rs},\mathfrak{d}^v_{rs})_{v\ge 0} \Longrightarrow \HC^K_{r+s}(E),
$$
such that $\mathfrak{E}^2_{rs} = \HC_r\Bigl(\breve{\Ho}^A_s\bigr(E,A\ot\ov{A}^{\ot^*}\ot E\bigl)\Bigr)$ for all $r,s\ge 0$.
\end{proposition}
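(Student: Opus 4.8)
The plan is to transpose the argument of Proposition~\ref{pepito}, filtering $\Tot\bigl(\BC(\wh{X}_*,\wh{d}_*,\wh{D}_*)\bigr)$ by the $\ov{A}$-tensor degree rather than by the $(E/A)$-tensor degree, and then to read off $\HC^K_*(E)$ from the associated spectral sequence. The hypothesis $\mathcal{F}\subseteq K\ot_k V$ is what makes this work cleanly: by Remark~\ref{re3.4}, $(\wh{X}_*,\wh{d}_*)$ is then the total complex of the double complex $\bigl(\wh{X}_{**},\wh{d}^0_{**},\wh{d}^1_{**}\bigr)$, and Proposition~\ref{Connes operator}, applied with $R=K$ (for which every ``error'' subspace $F^\bullet_K(\wh{X}_\bullet)$ occurring there vanishes, since $\wh{X}^{K1}_*(E)=0$), shows that the Connes operator $\wh{D}$ of Theorem~\ref{complejo mezclado que da la homologia ciclica} splits exactly into its two bigraded components: the map $\wh{D}^0\colon\wh{X}_{rs}\to\wh{X}_{r,s+1}$ written out before the proposition, and the $\ov{A}$-degree-raising component $\wh{X}_{rs}\to\wh{X}_{r+1,s}$ coming from item~(2) of Proposition~\ref{Connes operator} --- the latter being the one that induces $\breve{D}$. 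I would first record these two facts as a short preliminary lemma.

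Next I would introduce, for $m\ge 0$, the filtration $G^q(\wh{X}_m):=\bigoplus_{0\le r\le q}\wh{X}_{r,m-r}$ of $\wh{X}_m$ by $\ov{A}$-degree, and for each $s,n\ge 0$ set $F^s\bigl(\Tot(\BC(\wh{X},\wh{d},\wh{D})_n)\bigr):=\bigoplus_{j\ge 0}G^{s-2j}(\wh{X}_{n-2j})u^j$, i.e. the filtration of Proposition~\ref{pepito} with the two tensor degrees interchanged. Writing $(\mathfrak{E}^v_{rs},\mathfrak{d}^v_{rs})_{v\ge 0}$ for the spectral sequence of this filtration, indexed so that $\mathfrak{E}^v_{rs}$ has filtration degree $r$ and total degree $r+s$, a computation parallel to the one in Proposition~\ref{pepito} should give $\mathfrak{E}^0_{rs}=\bigoplus_{j\ge 0}\wh{X}_{r-2j,s}u^j$ with $\mathfrak{d}^0$ equal to $\wh{d}^1$ (the unique bigraded piece of the differential preserving the filtration); hence $\mathfrak{E}^1_{rs}=\bigoplus_{j\ge 0}\Ho_s(\wh{X}_{r-2j,*},\wh{d}^1_{r-2j,*})u^j$, which by Remark~\ref{re3.5} equals $\bigoplus_{j\ge 0}\Ho^A_s(E,A\ot\ov{A}^{\ot^{r-2j}}\ot E)u^j$, precisely the degree-$r$ component of $\BC$ of $\Ho^A_s(E,A\ot\ov{A}^{\ot^*}\ot E)$ with the operators $\breve{d}_{*s}$ and $\breve{D}_{*s}$; and $\mathfrak{d}^1_{rs}$ should come out to be $\bigoplus_j\breve{d}_{r-2j,s}u^j+\bigoplus_j\breve{D}_{r-2j,s}u^{j-1}$, exactly the boundary of that $\BC$-complex. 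From this the relations $\breve{d}\xcirc\breve{d}=0$, $\breve{D}\xcirc\breve{D}=0$ and $\breve{d}\xcirc\breve{D}+\breve{D}\xcirc\breve{d}=0$ fall out, one bigraded component at a time, from $\wh{d}\xcirc\wh{d}=0$, $\wh{D}\xcirc\wh{D}=0$ and $\wh{d}\xcirc\wh{D}+\wh{D}\xcirc\wh{d}=0$, so $\breve{\Ho}^A_s(E,A\ot\ov{A}^{\ot^*}\ot E)$ is a mixed complex and $\mathfrak{E}^2_{rs}=\HC_r\bigl(\breve{\Ho}^A_s(E,A\ot\ov{A}^{\ot^*}\ot E)\bigr)$. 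Since the filtration is canonically bounded, convergence to $\HC^K_{r+s}(E)$ then follows from Theorem~\ref{complejo mezclado que da la homologia ciclica}.

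The step I expect to be the main obstacle is the precise identification of $\mathfrak{d}^1$ with the $\BC$-boundary of $\breve{\Ho}^A_s$: one has to verify that the part of the total differential of $\Tot\bigl(\BC(\wh{X}_*,\wh{d}_*,\wh{D}_*)\bigr)$ dropping the filtration by exactly one induces $\breve{d}$ together with $\breve{D}$ (with the correct signs and $u$-shifts), and, crucially, that the component $\wh{D}^0$ drops the filtration by two and so contributes nothing at this page (its contribution being postponed to $\mathfrak{d}^2$ and beyond). This is exactly where the hypothesis on $\mathcal{F}$ is used: through Proposition~\ref{Connes operator} with $R=K$ it fixes the bigraded components of $\wh{D}$ with no lower-order corrections, in the same way that the condition that the cocycle take values in $k$ does in~\cite[Section~3.2]{C-G-G}. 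The remaining points --- exhaustiveness of the filtration, its boundedness in each total degree, and its compatibility with the differentials --- are routine and go exactly as in the proofs of Propositions~\ref{pepito} and~\ref{pepe}.
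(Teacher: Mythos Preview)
Your route is different from the paper's. You filter $\Tot\bigl(\BC(\wh X_*,\wh d_*,\wh D_*)\bigr)$ by the $\ov A$--degree with a jump of $2$ per power of $u$, literally transposing the filtration of Proposition~\ref{pepito}; this yields $\mathfrak E^0_{rs}=\bigoplus_{j}\wh X_{r-2j,\,s}\,u^j$ with the second index $s$ held fixed. The paper instead shifts by only $1$ per power of $u$, setting $\mathfrak F^r(\Tot)_n=\bigoplus_j\mathfrak F^{r-j}(\wh X_{n-2j})\,u^j$ with $\mathfrak F^{q}(\wh X_m)=\bigoplus_{i\le q}\wh X_{i,m-i}$, which produces the ``diagonal'' page $\mathfrak E^0_{rs}=\bigoplus_{j}\wh X_{r-j,\,s-j}\,u^j$. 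The difference matters for how $\wh D$ enters: with the paper's filtration both $\wh d^0$ and $\wh D^0u^{-1}$ lower the filtration by exactly one and together furnish $\mathfrak d^1$; with yours, $\wh D^0u^{-1}$ lowers the filtration by two and is pushed past $\mathfrak d^1$, so the Connes operator on $\mathfrak E^1$ must come instead from the $\ov A$--degree--raising piece of $\wh D$. Your choice gives the cleaner $\mathfrak E^1$ (it is visibly $\Tot\BC(\breve\Ho^A_s)_r$), while the paper's $\mathfrak E^1$ mixes several $\Ho^A_{s-j}$'s along the diagonal.

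One caveat on your preliminary lemma. Items~(1) and~(2) of Proposition~\ref{Connes operator} are not two bigraded \emph{components} of $\wh D$ on a single element: they compute $\wh D$ on two different classes of \emph{inputs} (a generic $a_0\gamma(v_0)$ in~(1), versus $a_0\in A$ in~(2)), each modulo its own error term. The exact splitting $\wh D(\wh X_{rs})\subseteq\wh X_{r,s+1}\oplus\wh X_{r+1,s}$ that your argument needs does hold when $\mathcal F$ takes values in $K\ot_kV$, but to see it you should go back to Proposition~\ref{propiedades de hat{psi}} with $R=K$: there every $F_K^\bullet$ error term vanishes, and the explicit outputs in items~(1)--(6) land precisely in $\wh X_{r,s+1}\oplus\wh X_{r+1,s}$ (items~(3),(5) contribute the $\wh X_{r+1,s}$ piece, items~(1),(4) the $\wh X_{r,s+1}$ piece, and item~(6) gives zero). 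Make this derivation explicit rather than citing Proposition~\ref{Connes operator}, whose two items do not by themselves assemble into the bigraded decomposition you assert.
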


\begin{proof} For each $r,n\ge 0$, let
$$
\mathfrak{F}^r\bigl(\Tot(\BC(\wh{X},\wh{d},\wh{D})_n)\bigr) := \bigoplus_{j\ge 0} \mathfrak{F}^{r-j}(\wh{X}_{n-2j}) u^j,
$$
where
$$
\mathfrak{F}^{r-j}(\wh{X}_{n-2j}):= \bigoplus_{i\le r-j} \wh{X}_{i,n-i-2j}.
$$
Consider the spectral sequence $(\mathfrak{E}^v_{rs},\mathfrak{d}^v_{rs})_{v\ge 0}$, associated with the filtration
$$
\mathfrak{F}^0\bigl(\Tot(\BC(\wh{X}_*,\wh{d}_*,\wh{D}_*))\bigr) \subseteq \mathfrak{F}^1\bigl(\Tot(\BC(\wh{X}_*,\wh{d}_*,\wh{D}_*))\bigr) \subseteq \cdots
$$
of $\Tot\bigl(\BC(\wh{X}_*,\wh{d}_*,\wh{D}_*)\bigr)$. A straightforward computation shows that

\begin{itemize}

\smallskip

\item[-] $\mathfrak{E}^0_{rs} = \bigoplus_{j\ge 0} \wh{X}_{r-j,s-j} u^j$,

\smallskip

\item[-] $\mathfrak{d}^0_{rs}\colon \mathfrak{E}^0_{rs}\to \mathfrak{E}^0_{r,s-1}$ is $\bigoplus_{j\ge 0} \wh{d}^1_{r-j,s-j} u^j$,

\smallskip

\item[-] $\mathfrak{E}^1_{rs} = \bigoplus_{j\ge 0} \Ho_s\bigl(\wh{X}_{r-j,*-j}, \wh{d}^1_{r-j,*-j}\bigr) u^j$,

\smallskip

\item[-] $\mathfrak{d}^1_{rs}\colon \mathfrak{E}^1_{rs}\to \mathfrak{E}^1_{r-1,s}$ is $\bigoplus_{j\ge 0} \breve{d}_{r-j,s-j} u^j + \bigoplus_{j\ge 0} \breve{D}_{r-j,s-j} u^{s-j}$.

\smallskip

\end{itemize}
From this it follows easily that $\breve{\Ho}^A_s\bigr(E,A\ot\ov{A}^{\ot^*}\ot E\bigl)$ is a mixed complex and
$$
\mathfrak{E}^2_{rs} = \HC_r\Bigl(\breve{\Ho}^A_s\bigr(E,A\ot\ov{A}^{\ot^*}\ot E\bigl)\Bigr).
$$
In order to finish the proof note that the filtration of $\Tot\bigl(\BC(\wh{X}_*,\wh{d}_*, \wh{D}_*)\bigr)$ introduced above is canonically bounded, and so, by Theorem~\ref{complejo mezclado que da la homologia ciclica}, the spectral sequence $(\mathfrak{E}^v_{sr}, \mathfrak{d}^v_{sr})_{v\ge 0}$ converges to the cyclic homology of the $K$-algebra $E$.
\end{proof}

\subsubsection{The fourth spectral sequence}\label{segunda suc esp} Assume that $\mathcal{F}$ takes its values in $K\ot_k V$. Then the mixed complex $\bigl(\wh{X}_*,\wh{d}_*,\wh{D}_*\bigr)$ is filtrated by
\begin{equation}
\mathcal{F}^0\bigl(\wh{X}_*,\wh{d}_*,\wh{D}_*\bigr)\subseteq \mathcal{F}^1\bigl(\wh{X}_*,\wh{d}_*,\wh{D}_*\bigr)\subseteq \mathcal{F}^2\bigl(\wh{X}_*,\wh{d}_*,\wh{D}_*\bigr)\subseteq\cdots,\label{eq8}
\end{equation}
where
$$
\mathcal{F}^r(\wh{X}_n) := \bigoplus_{i\le r} \wh{X}_{i,n-i}.
$$
Hence, for each $r\ge 1$, we can consider the quotient mixed complex
$$
\wh{\mathfrak{X}}^r: =\frac{\mathcal{F}^r \bigl(\wh{X}_*,\wh{d}_*,\wh{D}_*\bigr)} {\mathcal{F}^{r-1}\bigl(\wh{X}_*,\wh{d}_*,\wh{D}_*\bigr)}.
$$
It is easy to check that the Hochschild boundary map of $\wh{\mathfrak{X}}^r$ is $\wh{d}^1_{r*}\colon\wh{X}_{r*}\to \wh{X}_{r,*-1}$ and that, by item~(1) of Proposition~\ref{Connes operator}, its Connes operator is $\wh{D}^0_{rs} \colon \wh{X}_{rs}\to \wh{X}_{r,s+1}$.

\begin{proposition}\label{pepe1} There is a convergent spectral sequence
$$
(\mathcal{E}_{rs}^v,\delta_{rs}^v)_{v\ge 0} \Longrightarrow \HC^K_{r+s}(E),
$$
such that $\mathcal{E}^1_{rs} = \HC_s(\wh{\mathfrak{X}}^r)$ for all $r,s\ge 0$.
\end{proposition}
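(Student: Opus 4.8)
The plan is to proceed exactly as in the proofs of Propositions~\ref{pepe} and~\ref{pepitos}: transport the filtration~\eqref{eq8} of the mixed complex $\bigl(\wh{X}_*,\wh{d}_*,\wh{D}_*\bigr)$ to the cyclic bicomplex and take the associated spectral sequence. Concretely, for $r,n\ge 0$ one sets
$$
\mathcal{F}^r\bigl(\Tot(\BC(\wh{X},\wh{d},\wh{D})_n)\bigr):=\bigoplus_{j\ge 0}\mathcal{F}^r(\wh{X}_{n-2j})\,u^j,
$$
which, because~\eqref{eq8} consists of mixed subcomplexes (this is the content of the discussion preceding the statement: Remark~\ref{re3.4} gives $\wh{d}=\wh{d}^0+\wh{d}^1$, and item~(1) of Proposition~\ref{Connes operator} controls $\wh{D}$), is a filtration of $\Tot\bigl(\BC(\wh{X}_*,\wh{d}_*,\wh{D}_*)\bigr)$ by subcomplexes; let $(\mathcal{E}^v_{rs},\delta^v_{rs})_{v\ge 0}$ be the spectral sequence it determines. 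Since $\mathcal{F}^r(\wh{X}_n)=\wh{X}_n$ for $r\ge n$ while $\mathcal{F}^{-1}(\wh{X}_n)=0$, this filtration is exhaustive and canonically bounded in each total degree, so the spectral sequence converges to $\Ho_{r+s}\bigl(\Tot(\BC(\wh{X}_*,\wh{d}_*,\wh{D}_*))\bigr)$, which equals $\HC^K_{r+s}(E)$ by Theorem~\ref{complejo mezclado que da la homologia ciclica}.

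It then remains to identify the low terms. Passing from $\mathcal{F}^r$ to the associated graded annihilates the $\wh{d}^0$-part of the total differential and keeps the index-preserving operators $\wh{d}^1$ and $\wh{D}^0$ (the latter still lowering the $u$-exponent by one); a direct inspection gives
$$
\mathcal{E}^0_{rs}=\bigoplus_{j\ge 0}\wh{X}_{r,s-2j}\,u^j
\qquad\text{and}\qquad
\delta^0_{rs}=\bigoplus_{j\ge 0}\wh{d}^1_{r,s-2j}\,u^j+\bigoplus_{j\ge 0}\wh{D}^0_{r,s-2j}\,u^{j-1}.
$$
As recorded before the statement, $\wh{\mathfrak{X}}^r$ is the mixed complex $\bigl(\wh{X}_{r*},\wh{d}^1_{r*},\wh{D}^0_{r*}\bigr)$; hence $(\mathcal{E}^0_{r*},\delta^0_{r*})$ is precisely $\Tot\bigl(\BC(\wh{\mathfrak{X}}^r)\bigr)$, and therefore $\mathcal{E}^1_{rs}=\Ho_s\bigl(\Tot(\BC(\wh{\mathfrak{X}}^r))\bigr)=\HC_s(\wh{\mathfrak{X}}^r)$, as asserted.

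This argument runs parallel to the ones already given for the second and third spectral sequences, so no genuine obstacle is expected: the substantive inputs — that~\eqref{eq8} consists of mixed subcomplexes with associated graded $\wh{\mathfrak{X}}^r$, and that $\Tot(\BC(\wh{X}_*,\wh{d}_*,\wh{D}_*))$ computes $\HC^K_*(E)$ — are already available from Remark~\ref{re3.4}, Proposition~\ref{Connes operator} and Theorem~\ref{complejo mezclado que da la homologia ciclica}. The one place deserving care is the $u$-exponent bookkeeping on the associated graded: one has to check that the $\wh{D}$-contribution to $\delta^0_{rs}$ lands in the summand $\wh{X}_{r,(s-1)-2(j-1)}\,u^{j-1}$ of $\mathcal{E}^0_{r,s-1}$, so that $(\mathcal{E}^0_{r*},\delta^0_{r*})$ assembles into the full cyclic bicomplex of $\wh{\mathfrak{X}}^r$ rather than only its $u^0$-column.
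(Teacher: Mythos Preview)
Your proof is correct and follows essentially the same route as the paper: transport the filtration~\eqref{eq8} to $\Tot\bigl(\BC(\wh{X}_*,\wh{d}_*,\wh{D}_*)\bigr)$ via $\mathcal{F}^r\bigl(\Tot(\BC(\wh{X},\wh{d},\wh{D}))_n\bigr)=\bigoplus_{j\ge 0}\mathcal{F}^r(\wh{X}_{n-2j})\,u^j$, identify the associated graded $(\mathcal{E}^0_{r*},\delta^0_{r*})$ with $\Tot\bigl(\BC(\wh{\mathfrak{X}}^r)\bigr)$, and invoke Theorem~\ref{complejo mezclado que da la homologia ciclica} for convergence. Your version is in fact slightly more explicit than the paper's about the $u$-exponent bookkeeping and the specific inputs (Remark~\ref{re3.4}, Proposition~\ref{Connes operator}) that make~\eqref{eq8} a filtration by mixed subcomplexes.
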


\begin{proof} Let $(\mathcal{E}_{rs}^v,\delta_{rs}^v)_{v\ge 0}$ be the spectral sequence associated with the filtration
$$
\mathcal{F}^0\bigl(\Tot(\BC(\wh{X}_*,\wh{d}_*,\wh{D}_*))\bigr)\subseteq \mathcal{F}^1\bigl(\Tot(\BC(\wh{X}_*,\wh{d}_*,\wh{D}_*))\bigr)\subseteq\cdots,
$$
of $\Tot(\BC(\wh{X}_*,\wh{d}_*,\wh{D}_*)$, induced by~\eqref{eq8}. It is evident that
$$
\mathcal{F}^r\bigl(\Tot(\BC(\wh{X},\wh{d},\wh{D}))_n\bigr) = \bigoplus_{j\ge 0} \mathcal{F}^r(\wh{X}_{n-2j}) u^j.
$$
Hence,
$$
\mathcal{E}^0_{rs} = \wh{X}_{rs} u^0\oplus \wh{X}_{r,s-2}u \oplus \wh{X}_{r,s-4}u^2\oplus \wh{X}_{r,s-6}u^3\oplus\cdots
$$
and $\delta^0_{rs}\colon \mathcal{E}^0_{rs}\to \mathcal{E}^0_{r,s-1}$ is the map induced by $\wh{d} + \wh{D}$. Consequently,
$$
(\mathcal{E}_{rs}^0,\delta_{rs}^0) = \Tot\bigl(\BC(\wh{\mathfrak{X}}^r)\bigr),
$$
and so $\mathcal{E}^1_{rs} = \HC_s(\wh{\mathfrak{X}}^r)$ as desired. Finally, it is clear that $(\mathcal{E}_{rs}^v, \delta_{rs}^v)_{v\ge 0}$ converges to $\HC^K_{r+s}(E)$.
\end{proof}

\section{Hochschild homology of a cleft braided Hopf crossed product}\label{hom de inv}
Let $E:= A\#_f H$ be the braided Hopf crossed product associated with a triple $(s,\rho,f)$, consisting of a transposition $s\colon H\ot_k A \to A\ot_k H$, a weak $s$-action $\rho$ of $H$ on $A$, and a compatible with $s$ normal cocycle $f\colon H\ot_k H \to A$, that satisfies the twisted module condition. Let $K$ be a subalgebra of $A$ stable under $s$ and $\rho$, and let $M$ be an $E$-bimodule. In this section we show that if $H$ is a Hopf algebra and $E$ is cleft, then the complex $(\wh{X}_*(M),\wh{d}_*)$ of Section~\ref{Hochschild-Brzezinski} is isomorphic to a simpler complex $(\ov{X}_*(M),\ov{d}_*)$. In the sequel we will use the following notations:

\begin{enumerate}

\item For $s\ge 1$, we let $\pc_s\colon H^{\ot_k^{2s}}\to H^{\ot_k^{2s}}$ denote the map recursively defined by
\begin{align*}
&\pc_1:=\ide,\\
&\pc_s:=\bigl(H\ot_k \pc_{s-1}\ot_k H\bigr)\xcirc \bigl(H\ot_k c^{\ot_k^{s-1}}\ot_k H\bigr).
\end{align*}

\smallskip

\item For $s\ge 1$ we let $H^{\ot_c^s}$ denote the coalgebra with underlying space $H^{\ot_k^s}$, comultiplication $\Delta_{H^{\ot_{\!c}^{\!s}}} := \pc_s\xcirc \Delta^{\ot_k^s}$ and counit $\varepsilon_{H^{\ot_{\!c}^{\!s}}} := \varepsilon^{\ot_k^s}$. Note that $\Delta^{\ot_k^s}$ induces a $k$-linear map from $\ov{H}^{\ot_k^s}$ to $\ov{H}^{\ot_k^s}\ot_k \ov{H}^{\ot_k^s}$, that we will also denote with the symbol $\Delta_{H^{\ot_{\!c}^{\!s}}}$. A similar remark is valid for the maps $\s_{sr}$, $\gc_s$ and $\cc_{sr}$ introduced below.

\smallskip

\item Let $\wh{s}\colon H\ot_k E\to E\ot_k H$ be as in Example~\ref{ex 1.14}. For each $s\ge 1$, we let $\ps_s\colon (E\ot_k H)^{\ot_k^s}\to E^{\ot_k^s}\ot_k H^{\ot_k^s}$ denote the map recursively defined by
\begin{align*}
&\ps_1:=\ide,\\
&\ps_s := \bigl(E\ot_k \ps_{s-1}\ot_k H\bigr)\xcirc \bigl(E\ot_k \wh{s}^{\ot_k^{s-1}}\ot_k H \bigr).
\end{align*}

\smallskip

\item For $s,r\ge 1$, we let $\s_{sr}\colon H^{\ot_k^s}\ot_k A^{\ot^r}\to A^{\ot^r} \ot_k H^{\ot_k^s}$ denote the map recursively defined by:
\begin{align*}
&\s_{11}:= s,\\
&\s_{1,r+1}:=\bigl(A^{\ot^r}\ot_k s\bigr)\xcirc \bigl(\s_{1r}\ot_k A \bigr),\\
&\s_{s+1,r}:=\bigl(\s_{1r}\ot_k H^{\ot_k^s}\bigr)\xcirc \bigl(V\ot_k \s_{sr}\bigr).
\end{align*}

\smallskip

\item For $s\ge 2$, we let $\gc_s\colon H^{\ot_k^s}\to H^{\ot_k^s}$ denote the map recursively defined by:
\begin{align*}
&\gc_2:= c,\\
&\gc_{s+1}:=\bigl(H\ot_k \gc_s\bigr)\xcirc \cc_{s1},
\end{align*}
where $\cc_{sr}\colon H^{\ot_k^r}\ot_k H^{\ot_k^s}\to H^{\ot_k^s}\ot_k H^{\ot_k^r}$ is the map obtained mimicking the definition of $\s_{sr}$, but using $c$ instead of $s$.

\smallskip

\item Let $[M\ot \ov{A}^{\ot^r},K]_H\ot_k \ov{H}^{\ot_k^s}$ be the $k$-vector subspace of $M\ot \ov{A}^{\ot^r}\ot_k \ov{H}^{\ot_k^s}$ generated by the commutators
    $$
    \lambda m\ot \ba_{1r}\ot_k \bh_{1s} - \sum_i m\ot \ba_{1r}\lambda^{(i)} \ot_k \bh_{1s}^{(i)}\quad\text{with $\lambda\in K$,}
    $$
    where
    $$
    \sum_i \lambda^{(i)} \ot_k \bh_{1s}^{(i)} := \s(\bh_{1s}\ot_k\lambda).
    $$
    Given $m\in M$, $\ba_{1r}\in \ov{A}^{\ot^r}$ and $\bh_{1s}\in \ov{H}^{\ot_k^s}$, we let $[m\ot \ba_{1r}]_H\ot_k \bh_{1s}$ denote the class of $m\ot \ba_{1r}\ot_k \bh_{1s}$ in
    $$
    \ov{X}_{rs}(M):= \frac{M\ot\ov{A}^{\ot^r}\ot_k \ov{H}^{\ot_k^s}}{[M\ot \ov{A}^{\ot^r},K]_H\ot_k \ov{H}^{\ot_k^s}}.
    $$

\smallskip

\end{enumerate}

\begin{remark}\label{explicacion} Note that:

\begin {enumerate}

\item The map $\pc_s$ acts over each element $(h_1\ot_k l_1) \ot_k \cdots \ot_k (h_s\ot_k l_s)$ of $H^{\ot_k^{2s}}$, carrying the $l_i$'s to the right by means of reiterated applications of~$c$.

\smallskip

\item The map $\ps_s$ acts over each element $(a_1\# h_1\ot_k l_1) \ot_k \cdots \ot_k (a_s\# h_s\ot_k l_s)$ of $(E\ot_k H)^{\ot_k^s}$, carrying the $l_i$'s to the right by means of reiterated applications of~$\wh{s}$.

\smallskip

\item The map $\s_{sr}$ acts over each element $\bh_{1s}\ot_k \ba_{1r}$ of $H^{\ot_k^s}\ot_k A^{\ot^r}$, carrying the $h_i$'s to the right by means of reiterated applications of~$s$.

\smallskip

\item The map $\gc_s$ acts over each element $\bh_{1s}$ of $H^{\ot_k^s}$, carrying the $i$-th factor to the $s-i+1$-place by means of reiterated applications of~$c$.

\end{enumerate}
\end{remark}

\begin{remark}\label{acerca de nu} For each $s\in \mathds{N}$, we consider $E^{\ot_k^s}$ as a $H^{\ot_c^s}$-comodule via
$$
\boldsymbol{\nu}:=\ps_s\xcirc (A\ot_k \Delta)^{\ot_k^s}.
$$
Note that $\boldsymbol{\nu} \xcirc \gamma^{\ot_k^s} = (\gamma^{\ot_k^s}\ot_k H^{\ot_c^s})\xcirc \Delta_{H^{\ot_{\!c}^{\!s}}}$ and that $\boldsymbol{\nu}$ induce a coaction
\begin{equation}
\boldsymbol{\nu}_{\!A} \colon E^{\ot_{\!A}^s}\to E^{\ot_{\!A}^s}\ot_k H^{\ot_c^s},\label{eq13}
\end{equation}
such that
\begin{equation}
\boldsymbol{\nu}_{\!A} \xcirc \gamma^{\ot_{\!A}^s} = (\gamma^{\ot_{\!A}^s}\ot_k H^{\ot_c^s})\xcirc \Delta_{H^{\ot_{\!c}^{\!s}}},\label{eq14}
\end{equation}
where $\gamma^{\ot_{\!A}^s}\colon H^{\ot_c^s}\to E^{\ot_{\!A}^s}$ is the map given by $\gamma^{\ot_{\!A}^s}(\bh_{1s}) := \gamma_{\!A}(\bh_{1s})$. We will also use the symbol  $\boldsymbol{\nu}_{\!A}$ to denote the map from $(E/A)^{\ot_{\!A}^s}$ to $(E/A)^{\ot_{\!A}^s}\ot_k \ov{H}^{\ot_c^s}$ induced by~\eqref{eq13}. We will use the property~\eqref{eq14} freely in the sequel.
\end{remark}

\begin{remark}\label{nuevas notaciones} The maps $\Delta_{H^{\ot_{\!c}^{\!s}}}$, $\cc_{sr}$, $\cc_{sr}^{-1}$, $\s_{sr}$, $\s_{sr}^{-1}$ and $\boldsymbol{\nu}_{\!A}$ will be represented by the same diagrams as the ones introduced in~\eqref{s2} and~\eqref{s3} for $\Delta$, $c$, $c^{-1}$ $s$, $s^{-1}$ and $\nu$.
\end{remark}

\smallskip

For each $r,s\ge 0$, we define the map $\theta_{rs}\colon \wh{X}_{rs}(M)\to \ov{X}_{rs}(M)$, by
$$
\theta\bigl([m\ot_{\!A} \ov{\bx}_{1s}\ot \ba_{1r}]\bigr) := \sum_i (-1)^{rs} \bigl[mx_1^{(0)}\cdots x_s^{(0)} \ot \ba_{1r}^{(i)} \bigr]_H\ot_k \bx_{1s}^{(1)(i)},
$$
where
\begin{align*}
& x_1^{(0)}\ot_{\!A}\cdots\ot_{\!A} x_s^{(0)}\ot_k x_1^{(1)}\ot_k\cdots\ot_k x_s^{(1)} :=   \ov{\bx}_{1s}^{(0)}\ot_k \bx_{1s}^{(1)} = \boldsymbol{\nu}_{\!A}(\ov{\bx}_{1s})
\intertext{and}
& \sum_i \ov{\bx}_{1s}^{(0)}\ot_k\ba_{1r}^{(i)}\ot_k \bx_{1s}^{(1)(i)} := \ov{\bx}_{1s}^{(0)} \ot_k \s\bigl(\bx_{1s}^{(1)}\ot_k \ba_{1r}\bigr).
\end{align*}

\begin{proposition}\label{inversa de theta} The map $\theta_{rs}$ is invertible. Its inverse is the map $\vartheta_{rs}$, given by
$$
\vartheta(\bx) := \sum_{ij}(-1)^{rs} m \gamma^{-1}(h_s^{(i)(1)(j)}) \cdots \gamma^{-1}(h_1^{(i)(1)(j)})\ot_{\!A}\gamma_{\!A}(\bh_{1s}^{(i)(2)})\ot\ba_{1r}^{(i)},
$$
where
\begin{align*}
& \bx := [m\ot\ba_{1r}]_H\ot_k\bh_{1s},\\
& \sum_i \bh_{1s}^{(i)}\ot_k \ba_{1r}^{(i)} := \s^{-1}\bigl(\ba_{1r}\ot_k \bh_{1s}\bigr)
\intertext{and}
& \sum_i\sum_j\bh_{s1}^{(i)(1)(j)}\ot_k \bh_{1s}^{(i)(2)}\ot_k \ba_{1r}^{(i)} := \sum_i \bigl( \gc_s\ot_k H^{\ot_c^s}\bigr)\xcirc \Delta_{H^{\ot_{\!c}^{\!s}}}(\bh_{1s}^{(i)})\ot_k \ba_{1r}^{(i)}.
\end{align*}
\end{proposition}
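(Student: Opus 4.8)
The plan is to establish three facts, each by a computation in the graphic calculus: \emph{(i)} the formula defining $\vartheta_{rs}$ respects the defining relations of $\ov{X}_{rs}(M)$ on the source and of $\wh{X}_{rs}(M)$ on the target, so that it yields a well-defined $k$-linear map $\vartheta_{rs}\colon \ov{X}_{rs}(M)\to \wh{X}_{rs}(M)$; \emph{(ii)} $\theta_{rs}\xcirc\vartheta_{rs}=\ide$; and \emph{(iii)} $\vartheta_{rs}\xcirc\theta_{rs}=\ide$. The essential inputs are: the cleftness of $E$, through the explicit expression for $\gamma^{-1}$ in terms of $f^{-1}$, $S$ and $c$; the fact from Example~\ref{ex 1.14} that $\gamma$ is a morphism of $H$-comodule algebras from $(H,c)$ to $(E,\wh{s})$, which upgrades to the identity $\boldsymbol{\nu}_{\!A}\xcirc\gamma^{\ot_{\!A}^s}=(\gamma^{\ot_{\!A}^s}\ot_k H^{\ot_c^s})\xcirc\Delta_{H^{\ot_{\!c}^{\!s}}}$ of Remark~\ref{acerca de nu}; the braided convolution-inverse identity $\mu_E\xcirc(\gamma\ot_k\gamma^{-1})\xcirc\Delta_H=\eta_E\xcirc\varepsilon_H$ and its iterated counterpart for $\gamma^{\ot_{\!A}^s}$; and the axioms for the braid $c$, the transposition $s$, the weak $s$-action $\rho$, and the antipode, as recorded in Section~1.

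For \emph{(i)} one substitutes into the formula for $\vartheta_{rs}$ a generator $\lambda m\ot\ba_{1r}\ot_k\bh_{1s}-\sum_i m\ot\ba_{1r}\lambda^{(i)}\ot_k\bh_{1s}^{(i)}$ of $[M\ot\ov{A}^{\ot^r},K]_H\ot_k\ov{H}^{\ot_k^s}$ and checks, using that $K$ is stable under $s$ and $\rho$ and that $\s$ and $\s^{-1}$ intertwine the $K$-actions, that the two resulting expressions agree in $\wh{X}_{rs}(M)$; one also checks that the image lies in the correct quotient over $K$. This step is routine bookkeeping.

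Step \emph{(ii)} is the core computation. Feeding $\bx=[m\ot\ba_{1r}]_H\ot_k\bh_{1s}$ through $\vartheta_{rs}$ and then $\theta_{rs}$, the coaction $\boldsymbol{\nu}_{\!A}$ hits the term $\gamma_{\!A}(\bh_{1s}^{(i)(2)})$ and, by~\eqref{eq14}, turns it into a comultiplication $\Delta_{H^{\ot_{\!c}^{\!s}}}$; the outgoing first leg, again of the form $\gamma_{\!A}(\,\cdot\,)$, is then multiplied against the reversed product of $\gamma^{-1}$'s appearing in $\vartheta_{rs}$. The order reversal built into $\vartheta_{rs}$ through $\gc_s$ is precisely what makes, up to braidings, the $i$-th factor $\gamma$ meet the $i$-th factor $\gamma^{-1}$, so that the convolution-inverse identity can be applied $s$ times in succession, collapsing the whole $\gamma$/$\gamma^{-1}$ string to scalars via the counit; simultaneously the $\s$ introduced by $\theta_{rs}$ cancels the $\s^{-1}$ introduced by $\vartheta_{rs}$, and the remaining comultiplication/counit combinatorics collapses by coassociativity and counitality, leaving $\bx$. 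Step \emph{(iii)} is dual: this time $\theta_{rs}$ splits the $(E/A)^{\ot_{\!A}^s}$-entry $\ov{\bx}_{1s}$ via $\boldsymbol{\nu}_{\!A}$ and $\vartheta_{rs}$ reassembles it, the reversed string of $\gamma^{-1}$'s undoing the absorption of the residual $E$-part into $m$, again by means of the convolution-inverse identity and the coaction $\boldsymbol{\nu}_{\!A}$.

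The main obstacle will be the combinatorial bookkeeping in \emph{(ii)} and \emph{(iii)}: correctly tracking all the hidden occurrences of $c$ packaged in $\pc_s$, $\ps_s$, $\gc_s$ and $\cc_{sr}$, pairing off the $s$-fold products of $\gamma$ and $\gamma^{-1}$ in the right order, and checking that every cancellation is legitimate in the $A$-relative tensor product and modulo the $K$-commutators. Once the diagrams are drawn, each individual cancellation is an instance of naturality or a hexagon identity for $c$, an antipode axiom, or one of the listed compatibilities of $s$, $\rho$ and $f$; the real work is organizing these into a clean induction on $s$ that parallels the recursive definitions of $\pc_s$, $\ps_s$, $\gc_s$ and $\cc_{sr}$.
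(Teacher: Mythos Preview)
Your proposal is correct and follows essentially the same route as the paper. The only organizational difference is that the paper isolates your ``apply the convolution-inverse identity $s$ times in succession, organized by induction on $s$'' step as a separate preliminary lemma: for any convolution-invertible $u\colon H\to E$, the map $\mu_s\xcirc u^{\ot_k^s}\colon H^{\ot_c^s}\to E$ is itself convolution invertible with inverse $\mu_s\xcirc \ov{u}^{\ot_k^s}\xcirc \gc_s$. With this in hand, writing $\mathbf{u}:=\mu_s\xcirc\gamma^{\ot_k^s}$ and $\ov{\mathbf{u}}:=\mu_s\xcirc\ov{\gamma}^{\ot_k^s}\xcirc\gc_s$, the maps $\wt\theta_{rs}$ and $\wt\vartheta_{rs}$ factor as a single $\s_{sr}^{\pm 1}$ composed with a single ``multiply by $\mathbf{u}$'' resp.\ ``multiply by $\ov{\mathbf{u}}$ and then apply $\boldsymbol{\gamma}$'', and the two compositions collapse in one stroke by the convolution-inverse identity $\mathbf{u}*\ov{\mathbf{u}}=\eta_E\xcirc\varepsilon_{H^{\ot_c^s}}$ together with~\eqref{eq14}. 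This packaging avoids tracking the hidden $c$'s in $\pc_s$, $\ps_s$ and $\cc_{sr}$ inside the main argument; your direct inductive computation would of course arrive at the same cancellations.
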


\begin{proof} See Appendix~B.
\end{proof}

We will need the following generalization of the weak action $\rho$ of $H$ on $A$.

\begin{definition}\label{accion debil multiple} For all $r\in \mathds{N}$, we let $\rho_r\colon H\ot A^{\ot^r}\to A^{\ot^r}$ denote the map re\-cursively defined by
$$
\rho_1 := \rho\quad\text{and}\quad \rho_{r+1} = (\rho_r\ot \rho_1)\xcirc (H\ot_k \s_{1r}\ot A)\xcirc
(\Delta\ot A^{\ot^{r+1}}).
$$
For $h\in H$ and $a_1,\dots,a_r\in A$, we set $h\cdot \ba_{1r}:=\rho_r(h\ot_k \ba_{1r})$.
\end{definition}

Let $\ov{d}^l_{rs}\colon\ov{X}_{rs}(M)\to \ov{X}_{r+l-1,s-l}(M)$ be the map $\ov{d}^l_{rs} := \theta_{r+l-1,s-l}\xcirc \wh{d}^l_{rs}\xcirc \vartheta_{rs}$.

\begin{theorem}\label{th5.2} The Hochschild homology of the $K$-algebra $E$ with coefficients in $M$ is the homology of $(\ov{X}_*(M),\ov{d}_*)$, where
$$
\ov{X}_n(M) : = \bigoplus_{r+s = n} \ov{X}_{rs}(M)\qquad\text{and}\qquad \ov{d}_n := \sum^n_{l=1}\ov{d}^l_{0n} + \sum_{r=1}^n \sum^{n-r}_{l=0} \ov{d}^l_{r,n-r}.
$$
Moreover,
\begin{align*}
\ov{d}^0(\bx)& = [ma_1\ot \ba_{2r}]_H\ot_k \bh_{1s}\\
& + \sum_{i=1}^{r-1} (-1)^i [m\ot\ba_{1,i-1}\ot a_ia_{i+1} \ot \ba_{i+1,r}]_H \ot_k \bh_{1s}\\
& + \sum_i (-1)^r [a_r^{(i)}m\ot \ba_{1,r-1}]_H\ot_k \bh_{1s}^{(i)}
\intertext{and}
\ov{d}^1(\bx)& = (-1)^r \bigl[m\ep(h_1)\ot \ba_{1r}\bigr]_H \ot_k \bh_{2s}\\
& + \sum_{i=1}^{s-1}(-1)^{r+i} \bigl[m\ot \ba_{1r}\bigr]_H \ot_k \bh_{1,i-1}\ot_k h_ih_{i+1} \ot_k \bh_{i+2,s}\\
& + \sum_{jl} (-1)^{r+s}\bigl[\gamma(h_s^{(2)})m\gamma^{-1}(h_s^{(1)(j)(l)(1)}) \ot h_s^{(1)(j)(l)(2)}\cdot  \ba_{1r}^{(l)}\bigr]_H\ot_k \bh_{1,s-1}^{(j)},
\end{align*}
where
\begin{align*}
&\, \bx := [m\ot\ba_{1r}]_H\ot_k \bh_{1s},\\
& \sum_i \bh_{1s}^{(i)}\ot_k a_r^{(i)} := \s^{-1}(a_r \ot_k \bh_{1s}),\\
& \sum_j h_s^{(1)(j)}\ot_k \bh_{1s-1}^{(j)}\ot_k h_s^{(2)} := c\bigl(\bh_{1,s-1}\ot_k h_s^{(1)} \bigr) \ot_k h_s^{(2)}
\intertext{and}
&\sum_{jl} h_s^{(1)(j)(l)}\ot_k \ba_{1r}^{(l)}\ot_k \bh_{1,s-1}^{(j)}\ot_k h_s^{(2)} := \sum_j s^{-1} \bigl(\ba_{1r}\ot_k h_s^{(1)(j)}\bigr)\ot_k \bh_{1,s-1}^{(j)}\ot_k h_s^{(2)}.
\end{align*}

\end{theorem}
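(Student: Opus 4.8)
The plan is to deduce everything from the fact that $\theta=(\theta_{rs})_{r,s\ge 0}$ is an isomorphism of complexes. Since, by Proposition~\ref{inversa de theta}, each $\theta_{rs}$ is invertible with inverse $\vartheta_{rs}$, the maps $\theta_n:=\bigoplus_{r+s=n}\theta_{rs}\colon \wh{X}_n(M)\to\ov{X}_n(M)$ are graded isomorphisms with graded inverses $\vartheta_n$. By the very definition $\ov{d}^l_{rs}:=\theta_{r+l-1,s-l}\xcirc\wh{d}^l_{rs}\xcirc\vartheta_{rs}$ one gets $\ov{d}_n\xcirc\theta_n=\theta_{n-1}\xcirc\wh{d}_n$, so $\theta$ is an isomorphism of chain complexes from $(\wh{X}_*(M),\wh{d}_*)$ onto $(\ov{X}_*(M),\ov{d}_*)$; in particular $\ov{d}_n\xcirc\ov{d}_{n+1}=0$, and by Theorem~\ref{Hochschild homology} the homology of $(\ov{X}_*(M),\ov{d}_*)$ is $\Ho^K_*(E,M)$. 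This settles the first assertion; the rest of the proof is the explicit identification of $\ov{d}^0$ and $\ov{d}^1$.

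For $\ov{d}^0_{rs}=\theta_{r-1,s}\xcirc\wh{d}^0_{rs}\xcirc\vartheta_{rs}$ I would use Theorem~\ref{formula para wh{d}_1}(1): $\wh{d}^0$ is $(-1)^s$ times the normalized Hochschild boundary of the $K$-algebra $A$ with coefficients in $M\ot_{\!A}(E/A)^{\ot_{\!A}^s}$. Its inner faces (the $a_ia_{i+1}$, $1\le i\le r-1$) commute with $\theta$ and $\vartheta$ essentially on the nose; the leading face ($ma_1$) collapses because $\theta\xcirc\vartheta=\ide$ and $a_1\in A$ slides past the $\gamma$'s occurring in $\vartheta(\bx)$; and only the cyclic face ($a_rm$, after carrying $a_r$ around the tensor string) feeds $a_r$ through the transposition, which is where the factors $a_r^{(i)}$ and $\bh_{1s}^{(i)}$ coming from $\s^{-1}(a_r\ot_k\bh_{1s})$ appear, using Remarks~\ref{acerca de nu} and~\ref{transposicion inversa} and the compatibility of $s$ with the bialgebra structure of $H$. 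A routine sign count shows that the $(-1)^{rs}$ hidden in $\theta$ and the $(-1)^{(r-1)s}$ hidden in $\vartheta$ absorb the $(-1)^s$ of $\wh{d}^0$, which is why the displayed formula for $\ov{d}^0$ carries the bare Hochschild signs.

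For $\ov{d}^1_{rs}=\theta_{r,s-1}\xcirc\wh{d}^1_{rs}\xcirc\vartheta_{rs}$ I would invoke Theorem~\ref{formula para wh{d}_1}(2), which writes $\wh{d}^1$ as $\sum_{i=0}^s(-1)^i\wt{u}_i$ with the $\wt{u}_i$ recalled just before Notations~\ref{not3.1}, and then conjugate each $\wt{u}_i$ by $\theta$ and $\vartheta$. The face $\wt{u}_0$ produces $m\gamma(h_1)\gamma^{-1}(\dots)$, which collapses to $m\,\ep(h_1)$ by convolution invertibility of $\gamma$ (this is precisely where cleftness and the Hopf hypothesis on $H$ enter). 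The faces $\wt{u}_i$ with $0<i<s$ produce $\gamma(h_i)\gamma(h_{i+1})=\mathcal{F}(h_i\ot_k h_{i+1})$, whose cocycle factor $f$ must cancel against the $f^{-1}$ factors carried by the two copies of $\gamma^{-1}$ inside $\vartheta$; this cancellation is the first genuinely nontrivial point and rests on the explicit formula for $\gamma^{-1}$, on the compatibility of $f$ with $s$, and on the antipode axioms, after which only the clean multiplication $h_ih_{i+1}$ in $H$ survives. The last face $\wt{u}_s$ is the main obstacle: one has to carry $\gamma(v)$ cyclically past $m$ using $\chi$, re-apply $\boldsymbol{\nu}_{\!A}$ and $\s$ via $\theta$, and organize the resulting long composite of $c$, $c^{-1}$, $s$, $s^{-1}$, $S$ and the iterated weak action $\rho_r$ (Definition~\ref{accion debil multiple}) into the stated term, using Remark~\ref{propiedad de accion debil}, Remark~\ref{transposicion inversa} and the compatibility of $s$ with $c$; I expect this to be cleanest in the graphical calculus for braided categories. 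Once $\wt{u}_s$ is under control, collecting the signs ($(-1)^{rs}$ from $\theta$, $(-1)^{r(s-1)}$ from $\vartheta$, $(-1)^i$ from $\wt{u}_i$) yields the displayed formula for $\ov{d}^1$, and all remaining steps are bookkeeping.
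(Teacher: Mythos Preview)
Your high-level plan coincides with the paper's: use Proposition~\ref{inversa de theta} to see that $\theta_*$ is a chain isomorphism (so the homology statement follows from Theorem~\ref{Hochschild homology}), and then compute $\ov{d}^0$ and $\ov{d}^1$ by conjugating each individual face map $\wt{\nu}_i$, $\wt{u}_i$ by $\wt{\theta}$ and $\wt{\vartheta}$. The paper does exactly this face-by-face conjugation, diagrammatically, in Appendix~B.

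Where you diverge is in the mechanism for the simplifications. You propose to track $f$, $f^{-1}$ and the explicit formula for $\gamma^{-1}$ through each face. The paper never touches the explicit formula for $\gamma^{-1}$; its single organizing device is Lemma~\ref{inversa de convolucion}, which says that $\ov{\mathbf{u}}:=\mu_s\xcirc\ov{\gamma}^{\ot_k^s}\xcirc\gc_s$ is the convolution inverse of $\mathbf{u}:=\mu_s\xcirc\gamma^{\ot_k^s}$ on $H^{\ot_c^s}$. Since $\gamma$ is a morphism of $H$-comodule algebras (Example~\ref{ex 1.14}, Remark~\ref{acerca de nu}), conjugating any face by $\wt{\theta},\wt{\vartheta}$ produces, after realigning with Lemmas~\ref{formula para chi} and~\ref{para d1}, an expression of the shape $\ov{\mathbf{u}}*\mathbf{u}$ sitting next to the ``essential'' part of the face, and this convolution collapses to $\epsilon$. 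In particular, for $\wt{u}_i$ with $0<i<s$ the paper simply notes that multiplying two adjacent $\gamma$'s inside the string does not change $\ov{\mu}$ of the whole string (associativity), so the $E$-part becomes $\mathbf{u}$, pairs with $\ov{\mathbf{u}}$, and vanishes, leaving only $h_ih_{i+1}$ in the $H$-slot; no cancellation of $f$ against $f^{-1}$ is ever written down. Your route would presumably work, but this convolution-inverse viewpoint is what keeps the computation manageable, above all for the last face $\wt{u}_s$, where the paper's diagrammatic calculation already occupies a page even with that shortcut.
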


\begin{proof} See Appendix~B.
\end{proof}

\begin{remark} In order to abbreviate notations we will write $\ov{X}_{rs}$ and $\ov{X}_n$ instead of $\ov{X}_{rs}(E)$ and $\ov{X}_n(E)$, respectively.
\end{remark}

\begin{notation}\label{not hat X Ru} Given a $K$-subalgebra $R$ of $A$ and $0\le u\le r$, we let $\ov{X}^{Ru}_{rs}(M)$ denote the $k$-vector subspace of $\ov{X}_{rs}(M)$ generated by the classes in $\ov{X}_{rs}(M)$ of all the simple tensors $m\ot \ba_{1r}\ot_k \bh_{1s}$, with at least $u$ of the $a_j$'s in $\ov{R}$. Moreover, we set $\ov{X}^{Ru}_n(M) :=\bigoplus_{r+s=n} \ov{X}^{Ru}_{rs}(M)$.
\end{notation}

\begin{proposition} Let $R$ be a stable under $s$ and $\rho$ subalgebra of $A$. If $f$ takes its values in $R$, then
$$
\ov{d}^l\bigl(\ov{X}_{rs}(M)\bigr)\subseteq \ov{X}^{R,l-1}_{r+l-1,s-l}(M),
$$
for all $l\ge 1$.
\end{proposition}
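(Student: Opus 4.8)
The plan is to reduce the assertion to Theorem~\ref{formula para wh{d}_1}~(3) via the isomorphisms $\theta$ and $\vartheta$. Since $\ov{d}^l_{rs}=\theta_{r+l-1,s-l}\xcirc\wh{d}^l_{rs}\xcirc\vartheta_{rs}$, it suffices to establish two facts: first, that $\wh{d}^l\bigl(\wh{X}_{rs}(M)\bigr)\sub\wh{X}^{R,l-1}_{r+l-1,s-l}(M)$; and second, that $\theta_{rs}\bigl(\wh{X}^{Ru}_{rs}(M)\bigr)\sub\ov{X}^{Ru}_{rs}(M)$ for all $r,s$ and all $0\le u\le r$. Granting these, for $\bx\in\ov{X}_{rs}(M)$ we have $\vartheta_{rs}(\bx)\in\wh{X}_{rs}(M)$, so $\wh{d}^l_{rs}\bigl(\vartheta_{rs}(\bx)\bigr)\in\wh{X}^{R,l-1}_{r+l-1,s-l}(M)$ by the first fact, and therefore $\ov{d}^l_{rs}(\bx)=\theta_{r+l-1,s-l}\bigl(\wh{d}^l_{rs}(\vartheta_{rs}(\bx))\bigr)\in\ov{X}^{R,l-1}_{r+l-1,s-l}(M)$ by the second fact applied with $u=l-1$, which is the claim.

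For the first fact, recall that $E=A\#_f H$ is the Brzezi\'nski crossed product associated with $\chi=(\rho\ot_k H)\xcirc(H\ot_k s)\xcirc(\De\ot_k A)$ and $\mathcal{F}=(f\ot_k\mu)\xcirc(H\ot_k c\ot_k H)\xcirc(\De\ot_k\De)$. As observed in Section~1, the hypothesis that $R$ is stable under $s$ and $\rho$ implies that $R$ is stable under $\chi$, and the hypothesis that $f$ takes its values in $R$ implies that $\mathcal{F}$ takes its values in $R\ot_k H$. Hence Theorem~\ref{formula para wh{d}_1}~(3) applies (with $V=H$) and yields the desired inclusion.

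The real content is the second fact, that $\theta_{rs}$ respects the $R$-filtration. Take a generator $[m\ot_{\!A}\ov{\bx}_{1s}\ot\ba_{1r}]$ of $\wh{X}^{Ru}_{rs}(M)$, so that at least $u$ of the $a_j$'s lie in $\ov{R}$. Inspecting the defining formula of $\theta_{rs}$, the $\ov{A}$-entries $\ba_{1r}^{(i)}$ of its image are produced solely by applying the iterated transposition $\s$ (its version induced on the $\ov{A}$-factors) to $\bx_{1s}^{(1)}\ot_k\ba_{1r}$; the remaining operations, namely the coaction $\boldsymbol{\nu}_{\!A}$ applied to $\ov{\bx}_{1s}$ and the multiplication $mx_1^{(0)}\cdots x_s^{(0)}$, do not touch the $\ov{A}$-slots. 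Now this iterated transposition is a composite of copies of the transposition induced by $s$ on $H\ot_k\ov{A}$ (which exists because $K$ is stable under $s$), each acting on a single $\ov{A}$-factor, and it leaves the $\ov{A}$-factors in their original positions; moreover, since $R$ is stable under $s$, this induced transposition sends $H\ot_k\ov{R}$ into $\ov{R}\ot_k H$. Hence membership in $\ov{R}$ is preserved slot by slot, so at least $u$ of the $a_j^{(i)}$'s lie in $\ov{R}$, i.e.\ $\theta_{rs}\bigl([m\ot_{\!A}\ov{\bx}_{1s}\ot\ba_{1r}]\bigr)\in\ov{X}^{Ru}_{rs}(M)$. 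This slot-by-slot bookkeeping — resting on the fact that the iterated transposition permutes nothing but merely transforms each $\ov{A}$-factor through $s$ — is the only delicate point; the rest is immediate from the two cited statements.
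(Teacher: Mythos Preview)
Your proof is correct and takes essentially the same approach as the paper, which simply states that the result is an immediate consequence of item~(3) of Theorem~\ref{formula para wh{d}_1}. You have merely made explicit the two ingredients the paper leaves implicit: that the hypotheses on $R$, $s$, $\rho$, $f$ translate into the hypotheses needed for Theorem~\ref{formula para wh{d}_1}~(3), and that $\theta_{rs}$ carries $\wh{X}^{Ru}_{rs}(M)$ into $\ov{X}^{Ru}_{rs}(M)$ because the iterated transposition $\s$ preserves $\ov{R}$-membership slot by slot.
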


\begin{proof} This is an immediate consequence of item~(3) of Theorem~\ref{formula para wh{d}_1}.
\end{proof}

\begin{remark} By the previous proposition, we know that if $f$ takes its values in $K$, then $(\ov{X}_*(M), \ov{d}_*)$ is the total complex of the double complex $\bigl(\ov{X}_{**}(M),\ov{d}_{**}^0, \ov{d}_{**}^1 \bigr)$.
\end{remark}

\subsection{The filtration of $\mathbf{\boldsymbol{(\ov{X}_*(M),\ov{d}_*)}}$} \label{filtraciones en homologia caso cleft} Let $F^i(\ov{X}_n(M)) := \bigoplus_{s\le i} \ov{X}_{n-s,s}(M)$. The chain complex $(\ov{X}_*(M),\ov{d}_*)$ is filtered by
\begin{equation}
F^0(\ov{X}_*(M))\subseteq F^1(\ov{X}_*(M))\subseteq F^2(\ov{X}_*(M))\subseteq F^3(\ov{X}_*(M))\subseteq\dots.\label{eq9}
\end{equation}

\begin{remark}\label{re def theta} By Proposition~\ref{inversa de theta} and the definition of $(\ov{X}_*(M),\ov{d}_*)$, the map
$$
\theta_*\colon (\wh{X}_*(M),\wh{d}_*)\to (\ov{X}_*(M),\ov{d}_*),
$$
given by $\theta_n = \sum_{r+s = n} \theta_{rs}$, is an isomorphism of chain complexes. It is evident that $\theta_*$ preserve filtrations. Consequently, the spectral sequence introduced in~\eqref{eq6} coincides with the spectral sequence associated with the filtration~\eqref{eq9}. Clearly the compositional inverse of $\theta_*$ is the map
$$
\vartheta_*\colon (\ov{X}_*(M),\ov{d}_*)\longrightarrow (\wh{X}_*(M),\wh{d}_*),
$$
defined by $\vartheta_n := \bigoplus_{r+s = n} \vartheta_{rs}$.
\end{remark}

\subsection{Comparison maps}\label{morfismos de comparacion en homologia: caso cleft} Let
$$
\ov{\phi}_*\colon (\ov{X}_*(M),\ov{d}_*)\rightarrow \bigl(M\ot\ov{E}^{\ot^*}\ot,b_*\bigr) \quad \text{and}\quad \ov{\psi}_*\colon \bigl(M\ot\ov{E}^{\ot^*}\ot,b_*\bigr)\rightarrow (\ov{X}_*(M),\ov{d}_*)
$$
be the morphisms of chain complexes defined by $\ov{\phi}_* := \wh{\phi}_*\xcirc \vartheta_*$ and $\ov{\psi}_* :=  \theta_*\xcirc \wh{\psi}_*$, respectively. By the comments in Subsection~\ref{morfismos de comparacion en homologia}, we know that $\ov{\psi}\xcirc \ov{\phi} = \ide$ and $\ov{\phi}\xcirc \ov{\psi} = \wh{\phi}\xcirc \wh{\psi}$ is homotopically equivalent to the identity. Moreover, by Proposition~\ref{phi, psi y omega preservan filtraciones en homologia} and Remark~\ref{re def theta}, the morphisms $\ov{\phi}$ and $\ov{\psi}$, and the homotopy $\wh{\omega}_{*+1}\colon \wh{\phi}_*\xcirc\wh{\psi}_*\to \ide_*$, preserve filtrations.

\section{Hochschild cohomology of a cleft braided Hopf crossed product}
Let $E:= A\#_f H$, $K$ and $M$ be as in Section~\ref{hom de inv}. In this section we show that if $E$ is cleft, then the complex $(\wh{X}^*(M),\wh{d}^*)$ of Section~\ref{coHochschild-Brzezinski} is isomorphic to a simpler complex $(\ov{X}^*(M),\ov{d}^*)$.

\smallskip

For each $r,s\ge 0$, we consider $\ov{A}^{\ot^r}\ot_k \ov{H}^{\ot_k^s}$ as a left $K^e$-module via
$$
(\lambda_1\ot_k \lambda_2)(\ba_{1r}\ot_k\bh_{1s}) = \sum_i \lambda_1\ba_{1r}\lambda_2^{(i)} \ot_k\bh_{1s}^{(i)},
ç$$
where $\sum_i \lambda_2^{(i)}\ot_k\bh_{1s}^{(i)} := \s(\bh_{1s}\ot_k\lambda_2)$. Let
$$
\ov{X}^{rs}(M) := \Hom_{K^e}\bigl(\ov{A}^{\ot^r}\ot_k \ov{H}^{\ot_k^s},M\bigr).
$$
For each $r,s\ge 0$, we define the map $\theta^{rs}\colon \ov{X}^{rs}(M)\to \wh{X}^{rs}(M)$, by
$$
\theta(\beta)(\ov{\bx}_{1s}\ot \ba_{1r}) := \sum_i (-1)^{rs} x_1^{(0)}\cdots x_s^{(0)}\beta\bigl(\ba_{1r}^{(i)}\ot_k \bx_{1s}^{(1)(i)}\bigr),
$$
where
\vspace{2.5\jot}
\begin{align*}
& x_1^{(0)}\ot_{\!A}\cdots\ot_{\!A} x_s^{(0)}\ot_k x_1^{(1)}\ot_k\cdots\ot_k x_s^{(1)} :=   \ov{\bx}_{1s}^{(0)}\ot_k \bx_{1s}^{(1)} := \boldsymbol{\nu}_{\!A}(\ov{\bx}_{1s})
\intertext{and}
& \sum_i \ov{\bx}_{1s}^{(0)}\ot_k\ba_{1r}^{(i)}\ot_k \bx_{1s}^{(1)(i)} := \ov{\bx}_{1s}^{(0)} \ot_k \s\bigl(\bx_{1s}^{(1)}\ot_k \ba_{1r}\bigr).
\end{align*}

\begin{proposition}\label{inversa de cotheta} The map $\theta^{rs}$ is invertible. Its inverse is the map $\vartheta^{rs}$ given by
$$
\vartheta(\alpha)(\ba_{1r}\ot_k\! \bh_{1s})\! =\! \sum_{ij}\!(-1)^{rs} \gamma^{-1}(h_s^{(i)(1)(j)})\cdots \gamma^{-1}(h_1^{(i)(1)(j)})\alpha \bigl(\gamma_{\!A}(\bh_{1s}^{(i)(2)})\ot\ba_{1r}^{(i)}\bigr),
$$
where
\begin{align*}
& \sum_i \bh_{1s}^{(i)}\ot_k \ba_{1r}^{(i)} := \s^{-1}\bigl(\ba_{1r}\ot_k \bh_{1s}\bigr)
\intertext{and}
&\sum_i\sum_j\bh_{s1}^{(i)(1)(j)}\ot_k \bh_{1s}^{(i)(2)}\ot_k \ba_{1r}^{(i)}:=\sum_i\bigl(\gc_s \ot_k H^{\ot_c^s}\bigr)\xcirc \Delta_{H^{\ot_{\!c}^{\!s}}}(\bh_{1s}^{(i)})\ot_k\ba_{1r}^{(i)}.
\end{align*}
\end{proposition}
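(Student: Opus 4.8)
The plan is to prove that $\theta^{rs}$ and $\vartheta^{rs}$ are mutually inverse, exploiting that, under the standing hypotheses of this section ($H$ a Hopf algebra, $E$ cleft), the present statement is the $\Hom$-dual of Proposition~\ref{inversa de theta}. Indeed, $\theta_{rs}$ and $\theta^{rs}$ — together with their candidate inverses $\vartheta_{rs}$ and $\vartheta^{rs}$ — are built from exactly the same ingredients: the coaction $\boldsymbol{\nu}_{\!A}$ of Remark~\ref{acerca de nu}, the iterated transposition $\s$ and its inverse, the iterated braid $\gc_s$, the coproduct $\De_{H^{\ot_{\!c}^{\!s}}}$ of $H^{\ot_c^s}$, and the convolution inverse $\gamma^{-1}$. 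First I would record that, up to the signs $(-1)^{rs}$, both pairs are obtained by applying the functors $M\ot_{E^e}(-)$, respectively $\Hom_{E^e}(-,M)$, to a single isomorphism of $E$-bimodules $g\colon X_{rs}\to Z_{rs}$ underlying the identifications $\wh{X}_{rs}(M)\cong\ov{X}_{rs}(M)$ and $\ov{X}^{rs}(M)\cong\wh{X}^{rs}(M)$ — concretely, the cleft normal-basis identification $(E/A)^{\ot_{\!A}^s}\cong A\ot_k\ov{H}^{\ot_k^s}$, twisted on the right by $\s$ so as to reorder the $\ov{A}$- and $\ov{H}$-tensor factors. Since $M\ot_{E^e}g$ is already known (Proposition~\ref{inversa de theta}, proved in Appendix~B) to be invertible with inverse $M\ot_{E^e}g^{-1}$, the map $\theta^{rs}=\Hom_{E^e}(g,M)$ is invertible with inverse $\Hom_{E^e}(g^{-1},M)$, and unwinding $g^{-1}$ — which is where the factors $\gamma^{-1}$ enter — yields exactly the stated formula for $\vartheta^{rs}$.

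The alternative, and the route I would actually write out in detail, is a direct verification of $\vartheta^{rs}\xcirc\theta^{rs}=\ide$ and $\theta^{rs}\xcirc\vartheta^{rs}=\ide$ using the graphical calculus of Section~1, mirroring the computation of Appendix~B. First I would check well-definedness: that $\theta^{rs}$ and $\vartheta^{rs}$ respect the left $K^e$-module structures on $\ov{A}^{\ot^r}\ot_k\ov{H}^{\ot_k^s}$ and on $(E/A)^{\ot_{\!A}^s}\ot\ov{A}^{\ot^r}$, which uses that $K$ is stable under $s$ and $\rho$ and that $\s$, $\s^{-1}$ are twisting maps. Then, evaluating $\vartheta^{rs}\xcirc\theta^{rs}(\alpha)$ on a generator $\ba_{1r}\ot_k\bh_{1s}$, I would simplify the resulting composite of maps with: the intertwining relation \eqref{eq14} (that $\gamma^{\ot_{\!A}^s}$ carries $\De_{H^{\ot_{\!c}^{\!s}}}$ to $\boldsymbol{\nu}_{\!A}$, from Example~\ref{ex 1.14}); $\s\xcirc\s^{-1}=\ide$ and the compatibility of $\s$ with the coalgebra structure of $H$ (Definition~\ref{transposition}, Remark~\ref{transposicion inversa}); coassociativity and counitality of $H^{\ot_c^s}$; the braid relations behind $\gc_s$ and $\cc_{sr}$; and, decisively, that $\gamma$ and $\gamma^{-1}$ are convolution inverse, so that the block multiplying the factors $\gamma^{-1}(h_i^{(\cdot)})$ against the factors $\gamma_{\!A}(h_i^{(\cdot)})$ collapses, through the relevant comultiplication, to the counit. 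After these cancellations the composite returns $\alpha$; the symmetric computation gives $\theta^{rs}\xcirc\vartheta^{rs}=\ide$, the reordering of the $s$'s there being handled by the identities of Remark~\ref{propiedad de accion debil} for $\rho$ and $s^{-1}$.

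The hard part will be none of the conceptual steps but the bookkeeping: tracking every occurrence of the braid $c$ hidden inside $\pc_s$, $\gc_s$, $\cc_{sr}$ and $\boldsymbol{\nu}$, and of the transposition inside $\s_{sr}$, so that each ``carry to the right'' operation cancels against its inverse in precisely the right strand. This is exactly where the Yang--Baxter equation for $c$ and the compatibility of $s$ with $c$ are indispensable, and it is why — just as for Proposition~\ref{inversa de theta} — the detailed calculation belongs in Appendix~B. Once the diagrams are laid out, no further difficulty remains beyond the careful propagation of these braidings past one another.
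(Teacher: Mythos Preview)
Your first sketched approach is exactly what the paper does: it realizes $\theta^{rs}$ and $\vartheta^{rs}$ as $\Hom_{E^e}(-,M)$ applied to $\theta_{rs}$ and $\vartheta_{rs}$ (taken with coefficients $E\ot_k E$), mediated by explicit $E$-bimodule identifications $\varrho_{rs}\colon X_{rs}\to\wh{X}_{rs}(E\ot_k E)$, $\zeta^{rs}\colon\Hom_{E^e}(X_{rs},M)\to\wh{X}^{rs}(M)$ and $\varpi^{rs}\colon\Hom_{E^e}(\ov{X}_{rs}(E\ot_k E),M)\to\ov{X}^{rs}(M)$; the invertibility then follows immediately from Proposition~\ref{inversa de theta}. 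Your preferred ``direct'' route via graphical calculus would also succeed, but the paper deliberately avoids repeating that computation here, since all the braid bookkeeping was already absorbed into the proof of Proposition~\ref{inversa de theta} in Appendix~B --- the functorial reduction is both shorter and cleaner.
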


\begin{proof} For $r,s\ge 0$, consider $X_{rs}$, $\wh{X}_{rs}(E\ot_k E)$ and $\ov{X}_{rs}(E\ot_k E)$ as in Sections~\ref{SecRes}, \ref{Hochschild-Brzezinski} and~\ref{hom de inv}, respectively. Notice that $(\ov{X}_*(E\ot_k E),\ov{d}_*)$ and $(\wh{X}_*(E\ot_k E),\wh{d}_*)$ are $E$-bimodule complexes via
$$
\lambda_1 \bigl([(e_1\ot_k e_2)\ot \ba_{1r}]_H\ot_k\bh_{1s}  \bigr)\lambda_2 :=  [(e_1\lambda_2 \ot_k \lambda_1 e_2)\ot \ba_{1r}]_H \ot_k \bh_{1s}
$$
and
$$
\lambda_1 \bigl([(e_1\ot_k e_2)\ot_{\!A} \ov{\bx}_{1s}\ot\ba_{1r}]\bigr)\lambda_2 := [(e_1\lambda_2 \ot_k \lambda_1 e_2)\ot_{\!A} \ov{\bx}_{1s} \ot_k \ba_{1r}].
$$
Let $\varrho_{rs}\colon X_{rs}\to \wh{X}_{rs}(E\ot_k E)$ be the $E$-bimodule isomorphisms defined by
$$
\varrho(e_2 \ot_{\!A} \bx_{1s}\ot \ba_{1r}\ot e_1) = [(e_1\ot_k e_2) \ot_{\!A} \bx_{1s}\ot \ba_{1r}],
$$
and let $\varpi^{rs}\colon \Hom_{E^e}\bigl(\ov{X}_{rs}(E\ot_k E),M\bigr) \to \ov{X}^{rs}(M)$ be the isomorphism given by
$$
\varpi(\alpha)(\ba_{1r}\ot_k \bh_{1s}):= \alpha\bigl([(1\ot_k 1)\ot\ba_{1r}]_H\ot_k \bh_{1s}\bigr).
$$
It is easy to see that the diagrams
\begin{equation}
\spreaddiagramcolumns{4pc}
\xymatrix{
\Hom_{E^e}\bigl(\ov{X}_{rs}(E\ot_k E),M\bigr) \rto^-{\Hom_{E^e}\bigl(\theta_{rs},M\bigr)} \ddto^-{\varpi^{rs}} & \Hom_{E^e}\bigl(\wh{X}_{rs}(E\ot_k E),M\bigr) \dto^{\Hom_{E^e}\bigl(\varrho_{rs},M\bigr)}\\
& \Hom_{E^e}\bigl(X_{rs},M\bigr)\dto^{\zeta^{rs}}\\
\ov{X}^{rs}(M)\rto^-{\theta^{rs}} & \wh{X}^{rs}(M)
}
\end{equation}\label{pp}
and
$$
\spreaddiagramcolumns{4pc}
\xymatrix{
\Hom_{E^e}\bigl(\wh{X}_{rs}(E\ot_k E),M\bigr)\dto^{\Hom_{E^e}(\varrho_{rs},M)} \rto^-{\Hom_{E^e}(\vartheta_{rs},M)} &\Hom_{E^e}\bigl(\ov{X}_{rs}(E\ot_k E),M\bigr) \ddto^-{\varpi^{rs}}\\
\Hom_{E^e}\bigl(X_{rs},M\bigr)\dto^{\zeta^{rs}} & \\
\wh{X}^{rs}(M)\rto^-{\vartheta^{rs}} & \ov{X}^{rs}(M),
}
$$
where

\begin{itemize}

\smallskip

\item[-] $\zeta^{rs}$ is the map introduced at the beginning of Section~\ref{coHochschild-Brzezinski},

\smallskip

\item[-] $\theta_{rs}$ and $\vartheta_{rs}$ are the morphisms introduced in Section~\ref{hom de inv},

\smallskip

\end{itemize}
commute. Hence $\theta^{rs}$ is invertible an $\vartheta^{rs}$ is its inverse.
\end{proof}

Let $\ov{d}_l^{rs}\colon \ov{X}^{r+l-1,s-l}(M)\to\ov{X}^{rs}(M)$ be the map $\ov{d}_l^{rs} := \vartheta^{rs}\xcirc \wh{d}_l^{rs}\xcirc \theta^{r+l-1,s-l}$.

\begin{theorem}\label{th7.1} The Hochschild cohomology of the $K$-algebra $E$ with coefficients in $M$ is the cohomology of $(\ov{X}^*(M),\ov{d}^*)$, where
$$
\ov{X}^n(M) := \bigoplus_{r+s = n} \ov{X}^{rs}(M)\qquad\text{and}\qquad \ov{d}^n := \sum^n_{l=1} \ov{d}_l^{0n} + \sum_{r=1}^n\sum^{n-r}_{l=0} \ov{d}_l^{r,n-r}.
$$
Moreover,
\begin{align*}
\ov{d}_0(\beta)(\bx) & = a_1\beta\bigl(\ba_{2r}\ot_k \bh_{1s}\bigr)\\
& + \sum_{i=1}^{r-1} (-1)^i \beta\bigl(\ba_{1,i-1}\ot a_ia_{i+1}\ot\ba_{i+1,r}\ot_k \bh_{1s} \bigr)\\
& + \sum_i (-1)^r \beta\bigl(\ba_{1,r-1}\ot_k \bh_{1s}^{(i)}\bigr)a_r^{(i)}
\intertext{and}
\ov{d}_1(\beta)(\bx)& = (-1)^r \ep(h_1)\beta\bigl(\ba_{1r}\ot_k \bh_{2s}\bigr)\\
& + \sum_{i=1}^{s-1}(-1)^{r+i} \beta\bigl(\ba_{1r}\ot_k \bh_{1,i-1}\ot_k h_ih_{i+1} \ot_k \bh_{i+2,s}\bigr)\\
& + \sum_{jl} (-1)^{r+s} \gamma^{-1}(h_s^{(1)(j)(l)(1)})\beta\bigl(h_s^{(1)(j)(l)(2)}\cdot \ba_{1r}^{(l)}\ot_k \bh_{1,s-1}^{(j)}\bigr)\gamma(h_s^{(2)}),
\end{align*}
where
\begin{align*}
&\, \bx := \ba_{1r}\ot_k \bh_{1s},\\
& \sum_i \bh_{1s}^{(i)}\ot_k a_r^{(i)} := \s^{-1}(a_r \ot_k \bh_{1s}),\\
& \sum_j h_s^{(1)(j)}\ot_k \bh_{1s-1}^{(j)}\ot_k h_s^{(2)} := c_{s-1,1}\bigl(\bh_{1,s-1}\ot_k h_s^{(1)}\bigr)\ot_k h_s^{(2)}
\intertext{and}
&\sum_{jl} h_s^{(1)(j)(l)}\ot_k \ba_{1r}^{(l)}\ot_k \bh_{1,s-1}^{(j)}\ot_k h_s^{(2)} := \sum_j s^{-1} \bigl(\ba_{1r}\ot_k h_s^{(1)(j)}\bigr)\ot_k \bh_{1,s-1}^{(j)}\ot_k h_s^{(2)}.
\end{align*}
\end{theorem}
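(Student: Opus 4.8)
The plan is to deduce this from Theorem~\ref{th5.2} by dualization, following the scheme of the proof of Proposition~\ref{inversa de cotheta}. The statement that $(\ov{X}^*(M),\ov{d}^*)$ computes $\Ho_K^*(E,M)$ is immediate: by construction $\ov{d}_l^{rs} = \vartheta^{rs}\xcirc\wh{d}_l^{rs}\xcirc\theta^{r+l-1,s-l}$, so the family $(\theta^{rs})$ assembles into an isomorphism of cochain complexes $(\ov{X}^*(M),\ov{d}^*)\to(\wh{X}^*(M),\wh{d}^*)$, and Theorem~\ref{Hochschild cohomology} applies. The content of the statement is therefore the two explicit formulas for $\ov{d}_0$ and $\ov{d}_1$.

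To obtain them I would run through the identifications set up in the proof of Proposition~\ref{inversa de cotheta} with the first argument $\ov{X}_*(E\ot_k E)$ carrying the $E$-bimodule structure used there, while keeping $M$ as coefficients. Those commutative diagrams identify $(\ov{X}^*(M),\ov{d}^*)$ with the complex $\Hom_{E^e}\bigl((\ov{X}_*(E\ot_k E),\ov{d}_*),M\bigr)$, the second complex being the $\Hom_{E^e}(-,M)$-dual of the $E$-bimodule complex of Theorem~\ref{th5.2}; concretely $\ov{d}_l^{rs} = \varpi^{rs}\xcirc\Hom_{E^e}(\ov{d}^l,M)\xcirc(\varpi^{r+l-1,s-l})^{-1}$, where $\varpi^{rs}(\alpha)(\ba_{1r}\ot_k\bh_{1s}) = \alpha\bigl([(1\ot_k 1)\ot\ba_{1r}]_H\ot_k\bh_{1s}\bigr)$ and $\ov{d}^l$ is the homology differential from Theorem~\ref{th5.2}. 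Since the $E$-bimodule map $\varpi^{-1}(\beta)$ sends the generator $[(e_1\ot_k e_2)\ot\ba_{1r}]_H\ot_k\bh_{1s}$ to $e_2\,\beta(\ba_{1r}\ot_k\bh_{1s})\,e_1$, one gets $\ov{d}_l(\beta)(\ba_{1r}\ot_k\bh_{1s}) = (\varpi^{-1}\beta)\bigl(\ov{d}^l([(1\ot_k 1)\ot\ba_{1r}]_H\ot_k\bh_{1s})\bigr)$.

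It then remains to substitute $m = 1\ot_k 1$ into the formulas for $\ov{d}^0$ and $\ov{d}^1$ given in Theorem~\ref{th5.2}, to record in which slot of $E\ot_k E$ each resulting term lands, and to apply $\varpi^{-1}(\beta)$. The bookkeeping rule is the standard one for passing from Hochschild homology to Hochschild cohomology: a right $E$-action on $m$ becomes a left multiplication of $\beta(-)$, a left $E$-action on $m$ becomes a right multiplication of $\beta(-)$, and a scalar factor $\ep(h_1)$ stays a scalar. Applied to the three summands of $\ov{d}^0$ this produces the displayed formula for $\ov{d}_0(\beta)$, with the same $\s^{-1}$ as in Theorem~\ref{th5.2}; applied to the three summands of $\ov{d}^1$, and carrying in particular the factor $\gamma(h_s^{(2)})\,m\,\gamma^{-1}(h_s^{(1)(j)(l)(1)})$ with $m = 1\ot_k 1$ to $\gamma^{-1}(h_s^{(1)(j)(l)(1)})\ot_k\gamma(h_s^{(2)})$ and then applying $\varpi$, it produces the displayed formula for $\ov{d}_1(\beta)$.

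The main obstacle, exactly as for Theorem~\ref{th5.2} itself, is the sign and structure-map bookkeeping: one must check that the Koszul signs $(-1)^{rs}$ built into $\theta^{rs}$, $\vartheta^{rs}$ and $\varpi^{rs}$ cancel so that only the overall signs $(-1)^r$, $(-1)^i$, $(-1)^{r+i}$ and $(-1)^{r+s}$ survive, and that the iterated coalgebra, braid and transposition decompositions occurring in the last summand of $\ov{d}^1$ (the maps $\Delta_{H^{\ot_c^s}}$, $c_{s-1,1}$, $\gc_s$, and the occurrences of $\s^{-1}$ and $s^{-1}$) are transported intact through the identification, without extra twists. If one prefers to avoid the intermediate module $E\ot_k E$, the same formulas can instead be obtained by a direct, and longer, computation of $\ov{d}_l^{rs} = \vartheta^{rs}\xcirc\wh{d}_l^{rs}\xcirc\theta^{r+l-1,s-l}$ from the explicit descriptions of $\theta^{rs}$ and $\vartheta^{rs}$ in Proposition~\ref{inversa de cotheta}, of $\wh{d}_0$ as the Hochschild coboundary of $A$, and of $\wh{d}_1$ in terms of the maps $\wt{u}^i$.
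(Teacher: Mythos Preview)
Your proposal is correct and follows essentially the same route as the paper: both arguments first observe that the family $(\theta^{rs})$ is an isomorphism of cochain complexes, then use the commutative diagrams from Proposition~\ref{inversa de cotheta} to identify $\varpi^*$ as an isomorphism $\Hom_{E^e}\bigl((\ov{X}_*(E\ot_k E),\ov{d}_*),M\bigr)\to(\ov{X}^*(M),\ov{d}^*)$, and finally read off $\ov{d}_0$ and $\ov{d}_1$ by plugging $m=1\ot_k 1$ into the homology formulas of Theorem~\ref{th5.2} and applying $\varpi^{-1}(\beta)$. Your explicit description of the bookkeeping rule (right action on $m$ becomes left multiplication of $\beta(-)$, and vice versa) is exactly what the paper leaves implicit in its final sentence ``the desired result can be immediately obtained by using Theorem~\ref{th5.2}.''
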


\begin{proof} We will use the same notations as in the proof of Proposition~\ref{inversa de cotheta}. By that proposition and the definition of $(\ov{X}^*(M),\ov{d}^*)$, the map
$$
\theta^*\colon (\ov{X}^*(M),\ov{d}^*)\to (\wh{X}^*(M),\wh{d}^*),
$$
given by $\theta^n = \sum_{r+s = n} \theta^{rs}$, is an isomorphism of complexes. Hence, by the discussion at the beginning of Section~\ref{coHochschild-Brzezinski}, the cohomology of $(\ov{X}^*(M),\ov{d}^*)$ is the Hochschild cohomology of the $K$-algebra $E$ with coefficients in $M$. In order to complete the proof we must compute $\ov{d}_0$ and $\ov{d}_1$. Since, also
\begin{align*}
& \Hom_{E^e}(\theta_*,M) \colon \Hom_{E^e}\bigl((\ov{X}_*(E\ot_k\! E),\ov{d}_*),M\bigr) \longrightarrow \Hom_{E^e}((\wh{X}_*(E\ot_k\! E),\wh{d}_*),M),\\
& \Hom_{E^e}(\varrho_*,M)\colon \Hom_{E^e}((\wh{X}_*(E\ot_k\! E),\wh{d}_*),M) \longrightarrow \Hom_{E^e}((X_*,d_*),M)
\intertext{and}
& \zeta^*\colon \Hom_{E^e}((X_*,d_*),M) \longrightarrow \ov{X}^*(M),
\end{align*}
where
$$
\varrho_n := \sum_{r+s=n} \varrho_{rs}\quad\text{and}\quad \zeta^n := \sum_{r+s=n} \zeta^{rs},
$$
are isomorphisms of complexes, form the commutativity of the diagram~\eqref{pp}, it follows that
$$
\varpi^*\colon \Hom_{E^e}\bigl((\ov{X}_*(E\ot_k E),\ov{d}_*), M\bigr)\longrightarrow (\ov{X}^*(M),\ov{d}^*),
$$
where $\varpi^n := \sum_{r+s=n} \varpi^{rs}$, is also. Hence
\begin{align*}
&\ov{d}_0^{rs}(\beta)(\ba_{1r}\ot_k\bh_{1s}) = \varpi^{-1}(\beta)\bigl(\ov{d}^0_{rs}([(1\ot_k 1) \ot \ba_{1r}]_H\ot_k\bh_{1s})\bigr)
\intertext{and}
&\ov{d}_1^{rs}(\beta)(\ba_{1r}\ot_k\bh_{1s}) = \varpi^{-1}(\beta)\bigl(\ov{d}^1_{rs}([(1\ot_k 1)\ot \ba_{1r}]_H\ot_k\bh_{1s})\bigr).
\end{align*}
Now the desired result can be immediately obtained by using Theorem~\ref{th5.2}.
\end{proof}

\begin{notation} Given a $K$-subalgebra $R$ of $A$ and $0\le u\le r$, we let $\ov{X}_{Ru}^{rs}(M)$ denote the $k$-vector subspace of $\ov{X}^{rs}(M)$ consisting of all the $K^e$-linear maps
$$
\beta\colon \ov{A}^{\ot^r}\ot_k \ov{H}^{\ot_k^s} \to M
$$
that factorize throughout the $K^e$-subbimodule $\ov{X}^{R\mathfrak{r}u}_{r+u,s-u-1}$ of $\ov{A}^{\ot^{r+u}}\ot_k \ov{H}^{\ot_k^{s-u-r}}$ generated by the simple tensors $\ba_{1,r+u}\ot_k \bh_{1,s-u-1}$, with at least $u$ of the $a_j$'s in $\ov{R}$.
\end{notation}

\begin{proposition} Let $R$ be a stable under $s$ and $\rho$ subalgebra of $A$. If $f$ takes its values in $R$, then
$$
\ov{d}_l\bigl(\ov{X}^{r+l-1,s-l}(M)\bigr)\subseteq \ov{X}_{Ru}^{rs}(M),
$$
for all $l\ge 1$.
\end{proposition}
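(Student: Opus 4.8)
The plan is to transport item~(3) of Theorem~\ref{formula para wh{d} 1 en cohomologia} along the isomorphism $\theta^*$ of Theorem~\ref{th7.1}. Recall that, by definition, $\ov{d}_l^{rs} = \vartheta^{rs}\xcirc \wh{d}_l^{rs}\xcirc \theta^{r+l-1,s-l}$, where $\vartheta^{rs}$ is the compositional inverse of $\theta^{rs}$ supplied by Proposition~\ref{inversa de cotheta}. Since $\theta^{r+l-1,s-l}\colon \ov{X}^{r+l-1,s-l}(M)\to \wh{X}^{r+l-1,s-l}(M)$ is bijective, $\theta^{r+l-1,s-l}\bigl(\ov{X}^{r+l-1,s-l}(M)\bigr) = \wh{X}^{r+l-1,s-l}(M)$, and so Theorem~\ref{formula para wh{d} 1 en cohomologia}(3) yields $\wh{d}_l^{rs}\xcirc\theta^{r+l-1,s-l}\bigl(\ov{X}^{r+l-1,s-l}(M)\bigr)\subseteq \wh{X}_{R,l-1}^{rs}(M)$. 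Hence it suffices to check that
$$
\vartheta^{rs}\bigl(\wh{X}_{R,l-1}^{rs}(M)\bigr)\subseteq \ov{X}_{R,l-1}^{rs}(M).
$$

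First I would collect the stability facts forced by the hypotheses, exactly as in the proof of the homological analogue preceding Subsection~\ref{filtraciones en homologia caso cleft}: since $R$ is stable under $s$ and $\rho$, the map $s$ carries $H\ot_k R$ into $R\ot_k H$ and $\rho$ carries $H\ot_k R$ into $R$, whence each of the iterated maps $\s_{sr}$, $\s_{sr}^{-1}$, $\cc_{sr}$, $\gc_s$ and $\rho_r$ sends a simple tensor in which at least $u$ of the $A$-entries lie in $\ov{R}$ to a sum of simple tensors with the same property; moreover, as $R$ is a subalgebra and $f$ takes its values in $R$, its convolution inverse $f^{-1}$ (and hence $\gamma^{-1}$) also takes values in $R$. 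The maps $\gc_s$, $\Delta_{H^{\ot_{\!c}^{\!s}}}$ and $\gamma^{-1}$ occurring in $\vartheta^{rs}$ act only on the $H$-slots (the $\gamma^{-1}$-factors are multiplied \emph{outside} the argument of $\alpha$), so the only map in the formula for $\vartheta^{rs}$ that touches the $A$-entries fed into $\alpha$ is $\s^{-1}$.

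Unwinding Proposition~\ref{inversa de cotheta}: given $\alpha\in\wh{X}_{R,l-1}^{rs}(M)$, i.e. an $(A,K)$-linear map factoring through the subbimodule $\wh{X}^{R\mathfrak{r},l-1}_{r+l-1,s-l}$ of $(E/A)^{\ot_{\!A}^{s-l}}\ot\ov{A}^{\ot^{r+l-1}}$ spanned by the simple tensors with at least $l-1$ of the $a_j$'s in $\ov{R}$, evaluating $\vartheta^{rs}(\alpha)$ on $\ba_{1r}\ot_k\bh_{1s}$ feeds into $\alpha$ arguments of the form $\gamma_{\!A}(\bh_{1s}^{(i)(2)})\ot\ba_{1r}^{(i)}$, where the $\ba_{1r}^{(i)}$ come from $\s^{-1}(\ba_{1r}\ot_k\bh_{1s})$. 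By the previous paragraph, whenever at least $l-1$ of the $a_j$'s lie in $\ov{R}$ the same holds for the $A$-entries of each term $\ba_{1r}^{(i)}$, so every argument of $\alpha$ lies in $\wh{X}^{R\mathfrak{r},l-1}_{r+l-1,s-l}$; therefore $\vartheta^{rs}(\alpha)$ factors through the corresponding $R$-subbimodule and lies in $\ov{X}_{R,l-1}^{rs}(M)$, establishing the displayed inclusion and hence the proposition. The only step needing genuine (if routine) verification is this last bookkeeping — that $\s^{-1}$ cannot destroy the property ``at least $l-1$ of the $a_j$'s belong to $\ov{R}$'' — which is precisely what stability of $R$ under $s$ together with $f(H\ot_k H)\subseteq R$ guarantees, and it is the same computation that underlies the homological version; I expect this to be the main (and essentially only) obstacle.
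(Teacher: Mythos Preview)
Your reduction --- use $\ov{d}_l^{rs} = \vartheta^{rs}\xcirc\wh{d}_l^{rs}\xcirc\theta^{r+l-1,s-l}$ together with Theorem~\ref{formula para wh{d} 1 en cohomologia}(3), and then check that $\vartheta^{rs}$ carries $\wh{X}_{R,l-1}^{rs}(M)$ into $\ov{X}_{R,l-1}^{rs}(M)$ --- is exactly the route the paper takes; its entire proof is the single sentence ``This is an immediate consequence of item~(3) of Theorem~\ref{formula para wh{d} 1 en cohomologia}.''

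Your execution of the last step, however, is confused. You write that ``every argument of $\alpha$ lies in $\wh{X}^{R\mathfrak{r},l-1}_{r+l-1,s-l}$'', but the arguments $\gamma_{\!A}(\bh_{1s}^{(i)(2)})\ot\ba_{1r}^{(i)}$ that $\vartheta^{rs}$ feeds into $\alpha$ are elements of $(E/A)^{\ot_{\!A}^s}\ot\ov{A}^{\ot^r}$, whereas $\wh{X}^{R\mathfrak{r},l-1}_{r+l-1,s-l}$ lives inside $(E/A)^{\ot_{\!A}^{s-l}}\ot\ov{A}^{\ot^{r+l-1}}$, a space of different bidegree; the two cannot be compared. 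The clause ``whenever at least $l-1$ of the $a_j$'s lie in $\ov{R}$~\dots'' is also inverted: nothing is assumed about the input $\ba_{1r}\ot_k\bh_{1s}$ to $\vartheta^{rs}(\alpha)$; membership in $\ov{X}_{R,l-1}^{rs}(M)$ means that the map \emph{factors through} a subbimodule at bidegree $(r+l-1,s-l)$ whose generators have $l-1$ of their $A$-entries in $\ov{R}$. What actually has to be produced is, from a factorization $\alpha=g\circ h$ with $h$ landing in $\wh{X}^{R\mathfrak{r},l-1}_{r+l-1,s-l}$, a factorization $\vartheta^{rs}(\alpha)=g'\circ h'$ with $h'$ landing in $\ov{X}^{R\mathfrak{r},l-1}_{r+l-1,s-l}$. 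Your stability observation about $\s^{-1}$ is the right ingredient, but it must be applied at level $(r+l-1,s-l)$ --- to show that the two $R$-subbimodules there correspond under the $\theta$-type isomorphism at that level --- rather than at level $(r,s)$.
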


\begin{proof} This is an immediate consequence of item~(3) of Theorem~\ref{formula para wh{d} 1 en cohomologia}.
\end{proof}

\begin{remark} By the above proposition, if $f$ takes its values in $K$, then $(\ov{X}^*(M),\ov{d}^*)$ is the total complex of the double complex $\bigl(\ov{X}^{**}(M),\ov{d}^{**}_0,\ov{d}^{**}_1\bigr)$.
\end{remark}

\subsection{The filtration of $\mathbf{\boldsymbol{(\ov{X}^*(M),\ov{d}^*)}}$} Let $F_i(\ov{X}^n(M)) := \bigoplus_{s\ge i} \ov{X}^{n-s,s}(M)$. The cochain complex $(\ov{X}^*(M),\ov{d}^*)$ is filtered by
\begin{equation}
F_0(\ov{X}^*(M))\supseteq F_1(\ov{X}^*(M))\supseteq F_2(\ov{X}^*(M))\supseteq F_3(\ov{X}^*(M)) \supseteq\dots.\label{eq10}
\end{equation}

\begin{remark}\label{re def theta en cohomologia} By Proposition~\ref{inversa de cotheta} and the definition of $(\ov{X}^*(M),\ov{d}^*)$, the map
$$
\theta^*\colon (\ov{X}^*(M),\ov{d}^*)\longrightarrow (\wh{X}^*(M),\wh{d}^*),
$$
given by $\theta^n = \sum_{r+s = n} \theta^{rs}$, is an isomorphism of cochain complexes. It is evident that $\theta^*$ preserve filtrations. Consequently, the spectral sequence introduced in~\eqref{eq7} coincides with the spectral sequence associated with the filtration~\eqref{eq10}. Clearly the compositional inverse of $\theta^*$ is the map
$$
\vartheta^*\colon (\wh{X}^*(M),\wh{d}^*)\longrightarrow (\ov{X}^*(M),\ov{d}^*),
$$
defined by $\vartheta^n = \sum_{r+s = n} \vartheta^{rs}$.
\end{remark}

\subsection{Comparison maps}\label{morfismos de comparacion en cohomologia: caso cleft} Let
\begin{align*}
&\ov{\phi}^*\colon \bigl(\Hom_{K^e}(\ov{E}^{\ot^*},M),b^*\bigr)\longrightarrow (\ov{X}^*(M),\ov{d}^*)
\intertext{and}
&\ov{\psi}^*\colon (\ov{X}^*(M),\ov{d}^*)\longrightarrow \bigl(\Hom_{K^e}(\ov{E}^{\ot^*},M),b^* \bigr)
\end{align*}
be the morphisms of cochain complexes defined by $\ov{\phi}^*:= \vartheta^* \xcirc \wh{\phi}^*$ and $\ov{\psi}^*:= \wh{\phi}^* \xcirc \theta^*$, respectively. By the comments in Subsection~\ref{morfismos de comparacion en cohomologia}, we know that $\ov{\phi}\xcirc \ov{\psi} = \ide$ and $\ov{\psi}\xcirc \ov{\phi} = \wh{\psi}\xcirc \wh{\phi}$ is homotopically equivalent to the identity. Moreover, by Proposition~\ref{phi, psi y omega preservan filtraciones en cohomologia} and Remark~\ref{re def theta en cohomologia}, the morphisms $\ov{\phi}$ and $\ov{\psi}$, and the homotopy $\wh{\omega}^{*+1}\colon \ov{\psi}^*\xcirc \ov{\phi}^*\to \ide^*$, preserve filtrations.

\section{The cup and cap products for cleft crossed products}
Let $E:= A\#_f H$, $K$ and $M$ be as in Section~\ref{hom de inv}. Assume that $E$ is cleft. The aim of this section is to compute the cup product of $\HH_K^*(E)$ in terms of $(\ov{X}^*,\ov{d}^*)$ and the cap product of $\Ho^K_*(E,M)$ in terms of $(\ov{X}^*,\ov{d}^*)$ and $(\ov{X}_*(M),\ov{d}_*)$. We will use the diagrams introduced in~\eqref{s0}, \eqref{s1}, \eqref{s2}, \eqref{s3} and Remark~\ref{nuevas notaciones}. We will need the following generalization of the maps $\rho_r$ introduced in Definition~\ref{accion debil multiple}.

\begin{definition}\label{accion debil doble multiple} For all $r,s\in \mathds{N}$, we let $\rho_{sr}\colon H^{\ot_c^s}\ot A^{\ot^r}\to A^{\ot^r}$ denote the map recursively defined by
$$
\rho_{1r} := \rho_r\quad\text{and}\quad \rho_{s+1,r} = \rho_{1r}\xcirc (H\ot_k \rho_{sr}).
$$
For $h_1,\dots,h_s \in H$ and $a_1,\dots,a_r\in A$, we set $\bh_{1s}\cdot \ba_{1r}:= \rho_{sr}(\bh_{1s}\ot_k \ba_{1r})$.
\end{definition}

\begin{remark}\label{ultima diagramatica} The map $\rho_{sr}$ will be represented by the same diagram as $\rho$.
\end{remark}

\begin{notations} Let $B$ be a $k$-algebra. For all $n\in \mathds{N}$ we let $\mu_n\colon B^{\ot_k^n}\to B$ denote the map recursively defined by
$$
\mu_1 := \ide_B\quad\text{and}\quad \mu_{n+1}:= \mu_B\xcirc(\mu_n\ot_k B).
$$
\end{notations}

\begin{definition} For $\beta\in \ov{X}^{rs}$ and $\beta'\in \ov{X}^{r's'}$ we define $\beta\star \beta'\in \ov{X}^{r+r',s+s'}$ as $(-1)^{r's}$ times the map induced by
$$
\spreaddiagramcolumns{-1.6pc}\spreaddiagramrows{-1.6pc}
\objectmargin{0.0pc}\objectwidth{0.0pc}
\def\objectstyle{\sssize}
\def\labelstyle{\sssize}
\grow{\xymatrix@!0{
&&\save\go+<0pt,3pt>\Drop{D}\restore \ar@{-}[1,0] &&&\save\go+<0pt,3pt>\Drop{D'}\restore \ar@{-}[2,2]&& \save\go+<0pt,3pt>\Drop{C}\restore \ar@{-}[1,-1]+<0.125pc,0.0pc> \ar@{-}[1,-1]+<0.0pc,0.125pc>&&&\save\go+<0pt,3pt>\Drop{C'}\restore  \ar@{-}[1,0]&\\
&&\ar@{-}[1,-1]&&&&&&&&\ar@{-}`l/4pt [1,-1] [1,-1] \ar@{-}`r [1,1] [1,1]&\\
&\ar@{-}[7,0]&&&&\ar@{-}[2,-2]\ar@{-}[-1,1]+<-0.125pc,0.0pc> \ar@{-}[-1,1]+<0.0pc,-0.125pc>&&\ar@{-}[2,2]&& \ar@{-}[1,-1]+<0.125pc,0.0pc> \ar@{-}[1,-1]+<0.0pc,0.125pc>&&\ar@{-}[3,0]+<0pc,-1pt>\\
&&&&&&&&&&&\\
&&&\ar@{-}[3,0]&&&&\ar@{-}[1,-1]\ar@{-}[-1,1]+<-0.125pc,0.0pc> \ar@{-}[-1,1]+<0.0pc,-0.125pc>&& \ar@{-}[1,0]+<0pc,-1pt>&&\\
&&&&&&\ar@{-}[1,0]&&&&&\\
&&&&&&\ar@{-}`l/4pt [1,-1] [1,-1] \ar@{-}`r [1,1] [1,1]&&&&*+<0.1pc>[F]{\,\,\,\wt{\beta}'\,\,\,}\ar@{-}[8,0]&\\
&&&\ar@{-}[1,1]+<-0.1pc,0.1pc> && \ar@{-}[2,-2]&&\ar@{-}[1,1]&&&&\\
&&&&&&&&\ar@{-}[2,0]+<0pc,-0.5pt>&&&\\
&\ar@{-}[2,2]&&\ar@{-}[1,-1]+<0.125pc,0.0pc> \ar@{-}[1,-1]+<0.0pc,0.125pc> &&\ar@{-}[-1,-1]+<0.1pc,-0.1pc>\ar@{-}[1,1]&&&&&&\\
&&&&&&\ar@{-}[5,0]+<0pc,-2pt>&&&&&\\
&\ar@{-}[1,0]\ar@{-}[-1,1]+<-0.125pc,0.0pc> \ar@{-}[-1,1]+<0.0pc,-0.125pc>&&\ar@{-}[1,1]&&&&&*+[o]+<0.40pc>[F]{\mathbf{u}}\ar@{-}[3,0]+<0pc,0pc>&&&\\
&\ar@{-}`l/4pt [1,-1] [1,-1] \ar@{-}`r [1,1] [1,1]&&&\ar@{-}[3,0]+<0pc,-2pt>&&&&&&&\\
\ar@{-}[1,0]+<0pc,-0.5pt>&&\ar@{-}`d/4pt [1,2][1,2]&&&&&&&&&\\
&&&&&&&&\ar@{-}`d/4pt [1,1] `[0,2] [0,2]&&&\\
*+[o]+<0.40pc>[F]{\mathbf{u}}\ar@{-}[7,0]+<0pc,0pc>&&&&&&&&&\ar@{-}[3,0]&&\\
&&&&&*+<0.1pc>[F]{\,\,\,\wt{\beta}\,\,\,}\ar@{-}[2,0]&&&&&&\\
&&&&&&&&&&&\\
&&&&&\ar@{-}`d/4pt [1,2] `[0,4] [0,4]&&&&&&\\
&&&&&&&\ar@{-}[1,0]&&&&\\
&&&&&&&\ar@{-}[2,-3]&&&&\\
&&&&&&&&&&&\\
\ar@{-}`d/4pt [1,2] `[0,4] [0,4]&&&&&&&&&&&\\
&&\ar@{-}[1,0]&&&&&&&&&\\
&&&&&&&&&&&
}}
\grow{\xymatrix@!0{
\\\\\\\\\\\\\\\\\\\\\\\\
\save\go+<0pt,0pt>\Drop{\txt{,}}\restore
}}
$$
where

\begin{itemize}

\smallskip

\item[-] $D:=A^{\ot_k^r}$, $D':=A^{\ot_k^{r'}}$, $C := H^{\ot_c^s}$ and $C' := H^{\ot_c^{s'}}$,

\smallskip

\item[-] $\wt{\beta}\colon D\ot_k C\to E$ and $\wt{\beta}'\colon D'\ot_k C'\to E$ are the maps induced by $\beta$ and $\beta'$, respectively,

\smallskip

\item[-] $\mathbf{u}:=\mu_{s'}\xcirc \gamma^{\ot_k^{s'}}$ and $\ov{\mathbf{u}}:=\mu_{s'} \xcirc \ov{\gamma}^{\ot_k^{s'}}\xcirc \gc_{s'}$, in which $\ov{\gamma}$ is the convolution inverse of $\gamma$.

\end{itemize}

\end{definition}

\begin{proposition}\label{cup producto en cleft} Let $\bullet$ be the operation introduced in Definition~\ref{producto bullet}. For each $\beta\in \ov{X}^{rs}$ and $\beta'\in \ov{X}^{r's'}$,
$$
\theta(\beta\star \beta') = \theta(\beta)\bullet \theta(\beta').
$$
\end{proposition}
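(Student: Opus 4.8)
The plan is to exploit that each $\theta_{rs}$ is an isomorphism (Proposition~\ref{inversa de cotheta}), so the asserted identity is equivalent to $\theta(\beta)\bullet\theta(\beta') = \theta(\beta\star\beta')$ in $\wh{X}^{r+r',s+s'}$. Both sides are $(A,K)$-linear maps $(E/A)^{\ot_{\!A}^{s+s'}}\ot \ov{A}^{\ot^{r+r'}}\to E$, and $(E/A)^{\ot_{\!A}^{s+s'}}$ is generated as a left $A$-module by the elements $\gamma_{\!A}(\bv_{1,s+s'})$ (because $E/A = A\gamma(V)$ and the right $A$-action on $E/A$ sends $\gamma(V)\cdot A$ into $A\gamma(V)$ via $\chi$). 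Hence it will suffice to check that both sides take the same value on $\gamma_{\!A}(\bv_{1s''})\ot \ba_{1r''}$, where $s'':=s+s'$, $r'':=r+r'$, $v_i\in H$ and $a_j\in \ov{A}$.

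First I would unwind $\theta$ on such an element. By Remark~\ref{acerca de nu} one has $\boldsymbol{\nu}_{\!A}\xcirc\gamma^{\ot_{\!A}^s} = (\gamma^{\ot_{\!A}^s}\ot_k H^{\ot_c^s})\xcirc\Delta_{H^{\ot_{\!c}^{\!s}}}$, so the formula for $\theta$ in Proposition~\ref{inversa de cotheta} collapses to
\begin{equation*}
\theta(\beta)\bigl(\gamma_{\!A}(\bv_{1s})\ot\ba_{1r}\bigr) = \sum_i (-1)^{rs}\, \mu_s\bigl(\gamma^{\ot_k^s}(\bv_{1s}^{(1)})\bigr)\,\beta\bigl(\ba_{1r}^{(i)}\ot_k \bv_{1s}^{(2)(i)}\bigr),
\end{equation*}
where the superscripts $(1),(2)$ refer to $\Delta_{H^{\ot_{\!c}^{\!s}}}$ and $(i)$ to $\s_{sr}$ (and similarly for $\beta'$). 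Substituting this into Definition~\ref{producto bullet}, and using that for a braided Hopf crossed product $\chi = (\rho\ot_k H)\xcirc(H\ot_k s)\xcirc(\De\ot_k A)$ — so that the iterate $\ov{\chi}$ appearing in that definition unfolds into copies of $\De$, $s$ and $\rho$ — will produce a closed string-diagram expression for $\bigl(\theta(\beta)\bullet\theta(\beta')\bigr)\bigl(\gamma_{\!A}(\bv_{1s''})\ot\ba_{1r''}\bigr)$ built from $\wt{\beta}$, $\wt{\beta}'$ (the maps induced by $\beta,\beta'$, valued in $E$), the structure maps of $H$, and $c$, $s$, $\rho$, $f$, $\gamma$.

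Next I would compute the value of the other side by post-composing the diagram that defines $\beta\star\beta'$ with the diagram for $\theta$ obtained above, and then match the two string diagrams by a sequence of local moves: the compatibility of $s$ with the coalgebra and braid structures of $H$ (Definition~\ref{transposition} and Remark~\ref{transposicion inversa}); the $H$-comodule-algebra axioms for $(E,\wh{s})$ with coaction $\boldsymbol{\nu}$, together with the fact that $\gamma$ is a morphism of $H$-comodule algebras from $(H,c)$ to $(E,\wh{s})$ (Example~\ref{ex 1.14}); the weak $s$-action axioms for $\rho$ and their inverse forms (Definition~\ref{accion debil} and Remark~\ref{propiedad de accion debil}); the braid relation for $c$; and, crucially, the convolution-inverse identities $\mu\xcirc(\gamma\ot_k\ov{\gamma})\xcirc\De = \mu\xcirc(\ov{\gamma}\ot_k\gamma)\xcirc\De = \eta\ep$ granted by cleftness. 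This last family is exactly what makes the ``caps'' $\mathbf{u} = \mu_{s'}\xcirc\gamma^{\ot_k^{s'}}$ and $\ov{\mathbf{u}} = \mu_{s'}\xcirc\ov{\gamma}^{\ot_k^{s'}}\xcirc\gc_{s'}$ built into the $\star$-diagram cancel against the $\gamma$'s produced by $\theta$, leaving a diagram with no extra $\gamma/\ov{\gamma}$ pairs.

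The hard part will be the bookkeeping of the braidings. The three $c$-twisted comultiplications $\Delta_{H^{\ot_{\!c}^{\!s}}}$, $\Delta_{H^{\ot_{\!c}^{\!s'}}}$, $\Delta_{H^{\ot_{\!c}^{\!s''}}}$ and the associated iterated maps $\pc_\bullet$, $\ps_\bullet$, $\s_{\bullet\bullet}$, $\gc_\bullet$, $\cc_{\bullet\bullet}$, $\rho_{\bullet\bullet}$, $\mu_\bullet$ live on tensor powers of three different lengths, and one must verify that the exact braiding pattern hard-wired into the diagram defining $\star$ is precisely the one forced by the single occurrence of $\ov{\chi}$ in Definition~\ref{producto bullet}. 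I expect the cleanest organization to be an induction on the tensor lengths $s$ and $s'$ (and, inside each step, on $r$ and $r'$), peeling off one factor at a time via the recursive definitions of the iterated maps; the base case is the identity for the single un-braided building block, which falls out of the coherence axioms listed above. Carrying out this induction yields the claimed equality $\theta(\beta\star\beta') = \theta(\beta)\bullet\theta(\beta')$.
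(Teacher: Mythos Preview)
Your plan is sound and matches the paper's strategy in substance: both arguments reduce to a string-diagram computation that uses the compatibility of $s$ with the bialgebra structure, the comodule-algebra axioms for $(E,\wh{s})$, the weak-action identities, and the convolution-inverse relation between $\gamma$ and $\ov{\gamma}$ to collapse the extra $\mathbf{u}$/$\ov{\mathbf{u}}$ caps appearing in the definition of~$\star$. The paper, however, organizes the computation differently in two respects. First, rather than evaluating both $\theta(\beta\star\beta')$ and $\theta(\beta)\bullet\theta(\beta')$ on generators and comparing, it applies $\wt{\vartheta}$ to $\wt{\theta}(\wt{\beta})\,\wt{\bullet}\,\wt{\theta}(\wt{\beta}')$ and transforms the resulting diagram into the one defining~$\star$; since $\theta$ and $\vartheta$ are inverse this is equivalent, but it cuts the bookkeeping roughly in half. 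Second, instead of running a single large induction on $s,s',r,r'$, the paper isolates the inductive content into a handful of preparatory lemmas (the analogues of your ``local moves'': how $\chi$ unfolds in terms of $\rho$ and $s$, how $\s_{sr}$ interacts with $\Delta_{H^{\ot_c^{s}}}$ and with $\rho_{sr}$, and how the $\mathbf{u}$/$\ov{\mathbf{u}}$ pair cancels via Lemma~\ref{inversa de convolucion}); the main proof is then a short chain of diagram equalities invoking these lemmas. Your proposed induction would recover exactly these lemmas along the way, so the mathematical content is the same; the paper's packaging simply makes the final argument shorter and the intermediate identities reusable (they are invoked again for the cap product and for~$\ov{D}$).
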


\begin{proof} See Appendix~B.
\end{proof}

\begin{theorem}\label{th cup product en cleft} Let $\beta\in \ov{X}^{rs}$, $\beta'\in \ov{X}^{r's'}$ and $n := r+r'+s+s'$. Let $R$ be a stable under $s$ and $\rho$ $K$-subalgebra of $A$. If $f$ takes its values in $R$, then
$$
\ov{\phi}\bigl(\ov{\psi}(\beta)\smile \ov{\psi}(\beta')\bigr) = \beta\star\beta' \qquad \text{module }\, \bigoplus_{i>s+s'}\ov{X}^{n-i,i}_{R(1)},
$$
where $\ov{X}_{R(1)}^{n-i,i}$ denotes the $k$-vector subspace of $\ov{X}^{n-i,i}$ consisting of all the $K^e$-linear maps
$$
\beta\colon \ov{A}^{\ot^{n-i}}\ot_k \ov{H}^{\ot_k^{n-i}} \to E,
$$
that factorize throughout $W^{\mathfrak{r}}_n\cap C^{R\mathfrak{r}}_n$, where $W^{\mathfrak{r}}_n$ and $C^{R\mathfrak{r}}_n$ are as in Notation~\ref{not4.3}.
\end{theorem}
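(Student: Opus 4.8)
The plan is to deduce the theorem from Theorem~\ref{th cup product} by transporting the computation along the isomorphisms $\theta^*$ and $\vartheta^*$ of Remark~\ref{re def theta en cohomologia}. Since $R$ is stable under $s$ and $\rho$ it is stable under $\chi$, and $\mathcal{F}$ takes its values in $R\ot_k H$, so the hypotheses of Theorem~\ref{th cup product} hold for $E=A\#_f H$ viewed as a Brzezi\'nski's crossed product. Recall that $\ov\phi^*=\vartheta^*\xcirc\wh\phi^*$ and $\ov\psi^*=\wh\psi^*\xcirc\theta^*$, and that $\theta(\beta)\in\wh X^{rs}$ and $\theta(\beta')\in\wh X^{r's'}$. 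Hence, by $k$-linearity of $\vartheta$,
$$
\ov\phi\bigl(\ov\psi(\beta)\smile\ov\psi(\beta')\bigr)=\vartheta\Bigl(\wh\phi\bigl(\wh\psi(\theta(\beta))\smile\wh\psi(\theta(\beta'))\bigr)\Bigr).
$$
First I would apply Theorem~\ref{th cup product} with $\alpha=\theta(\beta)$ and $\alpha'=\theta(\beta')$, which rewrites the inner expression as $\theta(\beta)\bullet\theta(\beta')$ modulo $\bigoplus_{i>s+s'}\wh X^{n-i,i}_{R(1)}$. Using Proposition~\ref{cup producto en cleft} and $\vartheta\xcirc\theta=\ide$ we then get $\vartheta\bigl(\theta(\beta)\bullet\theta(\beta')\bigr)=\vartheta\bigl(\theta(\beta\star\beta')\bigr)=\beta\star\beta'$, so the main term already matches the right-hand side of the asserted formula.

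It remains to control the image of the error term, i.e.\ to prove
$$
\vartheta\Bigl(\bigoplus_{i>s+s'}\wh X^{n-i,i}_{R(1)}\Bigr)\subseteq\bigoplus_{i>s+s'}\ov X^{n-i,i}_{R(1)}.
$$
Because each $\vartheta^{pq}$ preserves the bidegree $(p,q)$, the index range $i>s+s'$ is automatically respected, so the point is to show that $\vartheta^{n-i,i}$ sends an $(A,K)$-linear map factoring through $A\ot\bigl(W^{\mathfrak r}_n\cap C^{R\mathfrak r}_n\bigr)$ to a $K^e$-linear map factoring through $W^{\mathfrak r}_n\cap C^{R\mathfrak r}_n$. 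I would read this off the commutative squares established in the proof of Proposition~\ref{inversa de cotheta}, which identify $\theta^{pq}$ and $\vartheta^{pq}$ (up to the isomorphisms $\zeta^{pq}$, $\varrho_{pq}$ and $\varpi^{pq}$) with $\Hom_{E^e}$ of $\theta_{pq}$ and $\vartheta_{pq}$; under these identifications the two subspaces above correspond to maps factoring through the $R$-subbimodules $\wh X^{R\mathfrak{r}1}$ and $\ov X^{R\mathfrak{r}1}$ of the relevant $\wh X_{pq}(E\ot_k E)$ and $\ov X_{pq}(E\ot_k E)$, and the passage between them is governed by $\theta_{pq}$ and $\vartheta_{pq}$. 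Their compatibility with this $R$-grading holds because $\theta$ and $\vartheta$ are assembled from $\boldsymbol{\nu}_{\!A}$, the transposition $\s$, its inverse, and the maps $\gamma^{\pm1}$, none of which decreases the number of tensor slots lying in $\ov R$ (using $\s^{-1}(\ov R^{\ot^p}\ot_k\ov H^{\ot_k^q})\subseteq\ov H^{\ot_k^q}\ot_k\ov R^{\ot^p}$, valid since $R$ is stable under $s$) nor introduces factors outside $\ov A\cup\mathcal{V}_{\!K}$ (using that $\boldsymbol{\nu}_{\!A}$ carries $\mathcal{V}_{\!K}$-factors to $\mathcal{V}_{\!K}$-factors, as in Remark~\ref{acerca de nu}).

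The main obstacle is precisely this last bookkeeping step: verifying that $\boldsymbol{\nu}_{\!A}$, $\s$ and $\s^{-1}$ preserve the subbimodules $W^{\mathfrak r}_n$ and $C^{R\mathfrak r}_n$ in the exact form required for the inclusion above. This is entirely parallel to the filtration-compatibility statements already recorded for $\theta_*$ in Remark~\ref{re def theta} and in Subsection~\ref{morfismos de comparacion en cohomologia: caso cleft}, so once those are granted no genuinely new difficulty appears, and the theorem follows from the formal reduction carried out in the first paragraph.
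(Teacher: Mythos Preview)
Your approach is correct and coincides with the paper's: the proof in the paper reads in its entirety ``This is an immediate consequence of Proposition~\ref{cup producto en cleft} and Theorem~\ref{th cup product},'' and your first paragraph carries out exactly that reduction via $\ov\phi=\vartheta\xcirc\wh\phi$ and $\ov\psi=\wh\psi\xcirc\theta$. The error-term discussion you add is not spelled out in the paper at all; it simply declares the passage from $\wh X^{n-i,i}_{R(1)}$ to $\ov X^{n-i,i}_{R(1)}$ under $\vartheta$ to be part of what ``immediate'' means, so your extra bookkeeping is a harmless elaboration rather than a different argument.
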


\begin{proof} This is an immediate consequence of Proposition~\ref{cup producto en cleft} and Theorem~\ref{th cup product}.
\end{proof}

\begin{corollary} If $f$ takes its values in $K$, then the cup product of $\HH_K^*(E)$ is induced by the operation $\star$ in $(\ov{X}^*,\ov{d}^*)$.
\end{corollary}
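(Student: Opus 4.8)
The plan is to deduce this corollary directly from Theorem~\ref{th cup product en cleft}, in exactly the same way that the analogous statement for Brzezi\'nski's crossed products was deduced from Theorem~\ref{th cup product}. First I would recall the two structural facts already at our disposal. On the one hand, by definition the cup product of $\HH_K^*(E)$ is computed on the normalized Hochschild cochain complex $\bigl(\Hom_{K^e}(\ov{E}^{\ot^*},E),b^*\bigr)$ via the operation $\smile$. On the other hand, the comparison maps $\ov{\phi}$ and $\ov{\psi}$ between that complex and $(\ov{X}^*,\ov{d}^*)$ are mutually homotopy inverse morphisms of cochain complexes with $\ov{\phi}\xcirc\ov{\psi}=\ide$. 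Hence both $\ov{\phi}$ and $\ov{\psi}$ are quasi-isomorphisms, they induce mutually inverse isomorphisms between $H^*\bigl(\ov{X}^*,\ov{d}^*\bigr)$ and $\HH_K^*(E)$, and under these identifications the cup product corresponds to the operation $(\beta,\beta')\mapsto \ov{\phi}\bigl(\ov{\psi}(\beta)\smile\ov{\psi}(\beta')\bigr)$ on $(\ov{X}^*,\ov{d}^*)$; this operation is a morphism at the chain level because $\ov{\phi}$ and $\ov{\psi}$ are, so it descends to cohomology and agrees there with $\smile$.

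Next I would specialize Theorem~\ref{th cup product en cleft} to $R=K$. Since $\ov{K}=K/K=0$, the $K^e$-subbimodule of $\ov{A}^{\ot^{n-i}}\ot_k\ov{H}^{\ot_k^{n-i}}$ appearing in the definition of $\ov{X}^{n-i,i}_{K(1)}$ — namely the one generated by the simple tensors with at least one $a_j$ in $\ov{K}$ — is the zero module, and therefore $\ov{X}^{n-i,i}_{K(1)}=0$ for every $i>s+s'$. Consequently the congruence provided by Theorem~\ref{th cup product en cleft} becomes the genuine equality
$$
\ov{\phi}\bigl(\ov{\psi}(\beta)\smile\ov{\psi}(\beta')\bigr)=\beta\star\beta'
$$
for all $\beta\in\ov{X}^{rs}$ and $\beta'\in\ov{X}^{r's'}$, and hence, by bilinearity and the decomposition $\ov{X}^n=\bigoplus_{r+s=n}\ov{X}^{rs}$, for all $\beta,\beta'\in\ov{X}^*$.

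Combining the two observations: the operation that the cup product of $\HH_K^*(E)$ induces on $(\ov{X}^*,\ov{d}^*)$ is $(\beta,\beta')\mapsto\ov{\phi}\bigl(\ov{\psi}(\beta)\smile\ov{\psi}(\beta')\bigr)=\beta\star\beta'$, which is exactly the assertion. I do not expect any real obstacle here; the only point needing a word is that $\star$ is compatible with $\ov{d}$ and thus well defined on $H^*(\ov{X}^*)$, and this is automatic from the fact that $\ov{\phi}$ and $\ov{\psi}$ are chain maps with $\ov{\phi}\xcirc\ov{\psi}=\ide$, which makes $\star$ conjugate (up to chain homotopy) to the chain map $\smile$. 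The substantive work has already been carried out in Proposition~\ref{cup producto en cleft} and Theorem~\ref{th cup product}, which together yield Theorem~\ref{th cup product en cleft}.
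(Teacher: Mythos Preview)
Your proposal is correct and follows essentially the same route as the paper, which simply invokes Theorem~\ref{th cup product en cleft} and observes that $\ov{X}^{n-i,i}_{K(1)}=0$ for all $i$ because $\ov{K}=0$. Your write-up merely makes explicit the transport of the cup product along the homotopy equivalence $\ov{\phi},\ov{\psi}$, which the paper leaves implicit.
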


\begin{proof} It follows from Theorem~\ref{th cup product en cleft}, since $\ov{X}^{n-i,i}_{K(1)} = 0$ for all $i$.
\end{proof}

\begin{definition} Let $\beta\in \ov{X}^{r's'}$. For $r\ge r'$ and $s\ge s'$ we define
$$
\xymatrix @R=-2pt {\ov{X}_{rs}(M)\rto & \ov{X}_{r-r',s-s'}(M)\\
[m\ot\ba_{1r}]_H\ot_k \bh_{1s} \ar@{|->}[0,1] & ([m\ot\ba_{1r}]_H\ot_k \bh_{1s})\star \beta}
$$
as $(-1)^{r'(s-s')}$ times the morphism induced by
$$
\spreaddiagramcolumns{-1.6pc}\spreaddiagramrows{-1.6pc}
\objectmargin{0.0pc}\objectwidth{0.0pc}
\def\objectstyle{\sssize}
\def\labelstyle{\sssize}
\grow{\xymatrix@!0{
\save\go+<0pt,3pt>\Drop{M}\restore \ar@{-}[24,0]&&&\save\go+<0pt,3pt>\Drop{D'}\restore \ar@{-}[8,0]&&& \save\go+<0pt,3pt>\Drop{D}\restore \ar@{-}[2,2] && \save\go+<0pt,3pt>\Drop{C'}\restore \ar@{-}[1,-1]+<0.125pc,0.0pc> \ar@{-}[1,-1]+<0.0pc,0.125pc> &&& \save\go+<0pt,3pt>\Drop{C}\restore \ar@{-}[1,0]&\\
&&&&&&&&&&&\ar@{-}`l/4pt [1,-1] [1,-1] \ar@{-}`r [1,1] [1,1]&\\
&&&&&&\ar@{-}[2,-1]\ar@{-}[-1,1]+<-0.125pc,0.0pc> \ar@{-}[-1,1]+<0.0pc,-0.125pc>&&\ar@{-}[2,2]&& \ar@{-}[1,-1]+<0.125pc,0.0pc> \ar@{-}[1,-1]+<0.0pc,0.125pc>&&\ar@{-}[22,0]\\
&&&&&&&&&&&&\\
&&&&&\ar@{-}[2,0]&&&\ar@{-}[1,0]\ar@{-}[-1,1]+<-0.125pc,0.0pc> \ar@{-}[-1,1]+<0.0pc,-0.125pc>&&\ar@{-}[1,1]&&\\
&&&&&&&&\ar@{-}`l/4pt [1,-1] [1,-1] \ar@{-}`r [1,1] [1,1]&&&\ar@{-}[19,0]&\\
&&&&&\ar@{-}[1,1]+<-0.1pc,0.1pc> && \ar@{-}[2,-2]&&\ar@{-}[6,0]+<0pc,-0.5pt>&&&\\
&&&&&&&&&&&&\\
&&&\ar@{-}[2,2]&& \ar@{-}[1,-1]+<0.125pc,0.0pc> \ar@{-}[1,-1]+<0.0pc,0.125pc>&&\ar@{-}[7,0]+<0pc,-1pt> \ar@{-}[-1,-1]+<0.1pc,-0.1pc>&&&&&\\
&&&&&&&&&&&&\\
&&&\ar@{-}[1,-1]\ar@{-}[-1,1]+<-0.125pc,0.0pc> \ar@{-}[-1,1]+<0.0pc,-0.125pc>&&\ar@{-}[5,0]+<0pc,-1pt> &&&&&&&\\
&&\ar@{-}[1,0]&&&&&&&&&&\\
&&\ar@{-}`l/4pt [1,-1] [1,-1] \ar@{-}`r [1,1] [1,1]&&&&&&&&&&\\
&\ar@{-}[1,0]+<0pc,-0.5pt>&&\ar@{-}`d/4pt [1,2][1,2]&&&&&&*+[o]+<0.4pc>[F]{\mathbf{u}}\ar@{-}[7,0]&&&\\
&&&&&&&&&&&&\\
&*+[o]+<0.37pc>[F]{\ov{\mathbf{u}}}\ar@{-}[1,0]&&&&&&&&&&&\\
&\ar@{-}[2,3]&&&&&*+<0.1pc>[F]{\,\,\,\widetilde{\beta}\,\,\,}\ar@{-}[2,0]&&&&&&\\
&&&&&&&&&&&&\\
&&&&\ar@{-}`d/4pt [1,1] `[0,2] [0,2]&&&&&&&&\\
&&&&&\ar@{-}[1,0]&&&&&&&\\
&&&&&\ar@{-}`d/4pt [1,2] `[0,4] [0,4]&&&&&&&\\
&&&&&&&\ar@{-}[1,0]&&&&&\\
&&&&&&&\ar@{-}`d/4pt [1,-7][1,-7]\ar@{-}`d/4pt [1,-2][1,-2]&&&&&\\
&&&&&&&&&&&&\\
&&&&&&&&&&&&&
}}
\grow{\xymatrix@!0{
\\\\\\\\\\\\\\\\\\\\\\
\save\go+<0pt,0pt>\Drop{\txt{,}}\restore
}}
$$
where

\begin{itemize}

\smallskip

\item[-] $D:=A^{\ot_k^r}$, $D':=A^{\ot_k^{r'}}$, $C := H^{\ot_c^s}$ and $C' := H^{\ot_c^{s'}}$,

\smallskip

\item[-] $\wt{\beta}\colon D'\ot_k C'\to E$ is the map induced by $\beta$,

\smallskip

\item[-] $\mathbf{u}:=\mu_{s'}\xcirc \gamma^{\ot_k^{s'}}$ and $\ov{\mathbf{u}}:=\mu_{s'} \xcirc \ov{\gamma}^{\ot_k^{s'}}\xcirc \gc_{s'}$, in which $\ov{\gamma}$ is the convolution inverse of $\gamma$.

\end{itemize}
If $r<r'$ or $s<s'$, then we set $([m\ot\ba_{1r}]_H\ot_k \bh_{1s})\star \beta:=0$.
\end{definition}

\begin{proposition}\label{cap producto en cleft} Let $\bullet$ be the action introduced in Definition~\ref{accion bullet}. The equality
$$
\vartheta\bigl(([m\ot\ba_{1r}]_H\ot_k \bh_{1s})\star \beta\bigr) = \vartheta \bigl([m\ot\ba_{1r}]_H \ot_k \bh_{1s} \bigr) \bullet \theta(\beta)
$$
holds for each $[m\ot\ba_{1r}]_H\ot_k \bh_{1s}\in \ov{X}_{rs}(M)$ and $\beta\in \ov{X}^{r's'}$.
\end{proposition}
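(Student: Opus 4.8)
The plan is to derive the identity by the same graphical‑calculus computation that proves Proposition~\ref{cup producto en cleft}, now carried out on the module side. Since $\theta_{r-r',s-s'}$ is a bijection with inverse $\vartheta_{r-r',s-s'}$ (Proposition~\ref{inversa de theta}) and $\theta_{rs}$ is a bijection, the asserted equality is equivalent to the statement that, for all $y\in\wh{X}_{rs}(M)$ and $\beta\in\ov{X}^{r's'}$,
\[
\theta_{rs}(y)\star\beta \;=\; \theta_{r-r',s-s'}\bigl(y\,\bullet\,\theta^{r's'}(\beta)\bigr),
\]
i.e.\ that the isomorphism $\theta$ intertwines the $\bullet$-action of $\wh{X}^{*}$ on $\wh{X}_*(M)$ (Definition~\ref{accion bullet}) with the $\star$-action of $\ov{X}^{*}$ on $\ov{X}_*(M)$ defined by the displayed diagram preceding the statement. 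This is precisely the module analogue of the intertwining $\theta(\beta\star\beta')=\theta(\beta)\bullet\theta(\beta')$ already established in Proposition~\ref{cup producto en cleft}, and I would prove it by the same method.

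Concretely, I would expand both sides of the displayed equivalent form in the graphical calculus of Remark~\ref{nuevas notaciones}, working with representatives $y=[m\ot_{\!A}\ov{\bx}_{1s}\ot\ba_{1r}]\in\wh{X}_{rs}(M)$ and $\beta\in\ov{X}^{r's'}$. Unwinding the right‑hand side uses the explicit descriptions of $\theta_{rs}$, $\theta^{r's'}$ and $\vartheta_{r-r',s-s'}$ (Propositions~\ref{inversa de theta} and~\ref{inversa de cotheta}) together with Definition~\ref{accion bullet}: after $\theta^{r's'}(\beta)$ is evaluated on the first $s'$ of the $H$‑coordinates and the first $r'$ of the $A$‑coordinates produced by $\vartheta_{r-r',s-s'}$, one obtains a composite containing, for each of those $s'$ slots, a factor $\gamma^{-1}$ together with a copy of $\gamma_{\!A}$ reinserted by $\theta$. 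The crucial step is that the identity $\boldsymbol{\nu}_{\!A}\xcirc\gamma^{\ot_{\!A}^{s'}} = (\gamma^{\ot_{\!A}^{s'}}\ot_k H^{\ot_c^{s'}})\xcirc\Delta_{H^{\ot_{\!c}^{\!s'}}}$ of Remark~\ref{acerca de nu}, combined with the cleft hypothesis (convolution‑invertibility of $\gamma$, so that the convolution $\gamma * \gamma^{-1}$ collapses to the unit‑times‑counit map), forces these $\gamma/\gamma^{-1}$ pairs to cancel; what is left over on the $H^{\ot_c^{s'}}$‑factor is exactly the blocks $\mathbf{u}=\mu_{s'}\xcirc\gamma^{\ot_k^{s'}}$ and $\ov{\mathbf{u}}=\mu_{s'}\xcirc\ov{\gamma}^{\ot_k^{s'}}\xcirc\gc_{s'}$ that appear in the $\star$‑diagram. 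The remaining reductions are the usual moves: naturality of the flip, the Yang--Baxter/hexagon identities for $c$, compatibility of $s$ with $\mu_H$, $\De_H$ and $c$ (Definition~\ref{transposition}, Remark~\ref{transposicion inversa}), the weak‑$s$‑action axioms recast through the iterated action $\rho_r$ (Definition~\ref{accion debil multiple}), and the defining relations of $\ov{X}_{rs}(M)$ modulo $[M\ot\ov{A}^{\ot^r},K]_H\ot_k\ov{H}^{\ot_k^s}$, which ensure that the representative‑level computation descends to the quotients. Since these are exactly the steps used in Proposition~\ref{cup producto en cleft}, I would organize the argument so as to invoke that proposition once the cleft comparison maps have been cancelled, rather than redo the entire reduction.

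Finally, the degenerate cases are immediate: if $r<r'$ or $s<s'$, then both $([m\ot\ba_{1r}]_H\ot_k\bh_{1s})\star\beta$ and $\vartheta_{rs}([m\ot\ba_{1r}]_H\ot_k\bh_{1s})\bullet\theta^{r's'}(\beta)$ vanish by definition, so the identity holds trivially. I expect the only genuine difficulty to be combinatorial: keeping track of the precise order in which the $H$‑strands are threaded past the $A$‑strands and past one another — that is, keeping the maps $\s_{sr}$, $\cc_{sr}$, $\gc_s$, $\ps_s$ and $\pc_s$ (and their accompanying signs) in the correct positions — because a single misplaced braiding destroys the cancellation of the $\gamma/\gamma^{-1}$ pairs. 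Carrying the computation out diagrammatically, in the same format as the definition of $\star$, should make these cancellations transparent and keep the verification manageable; the full details are deferred to Appendix~B.
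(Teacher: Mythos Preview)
Your approach is essentially the same as the paper's: a direct graphical computation showing that $\theta\bigl(\vartheta(x)\bullet\theta(\beta)\bigr)=x\star\beta$, parallel to the proof of Proposition~\ref{cup producto en cleft}. One organizational point worth noting: the paper does \emph{not} reduce to the cup result, but rather redoes the parallel chain of diagram equalities, starting from a cap-specific auxiliary identity (Lemma~\ref{mengano6}, the analogue of Lemma~\ref{mengano5} used for cup) and then using the shared Lemmas~\ref{mengano3} and~\ref{mengano4}. Your suggestion to ``invoke Proposition~\ref{cup producto en cleft} once the cleft comparison maps have been cancelled'' does not quite work cleanly, since the two operations live in different functorial positions (a product of two cochains versus an action on a chain); you will still need the cap-side initial simplification (the content of Lemma~\ref{mengano6}) before the remaining reductions can be matched to those of the cup proof.
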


\begin{proof} See Appendix~B.
\end{proof}

\begin{theorem}\label{th cap product en clef} Let $[m\ot\ba_{1r}]_H\ot_k \bh_{1s}\in \ov{X}_{rs}(M)$, $\beta\in \ov{X}^{r's'}$ and n:=r+s-r'-s'. Let $R$ be a stable under $\chi$ $K$-subalgebra of $A$. If $f$ takes its values in $R$, then
$$
\ov{\psi}\bigl(\ov{\phi}([m\ot\ba_{1r}]_H\ot_k \bh_{1s})\smallfrown \ov{\psi}(\beta)\bigr)  = ([m\ot\ba_{1r}]_H\ot_k \bh_{1s}) \star\beta
$$
module
$$
\bigoplus_{i<s-s'}\Bigl(\ov{X}_{n-i,i}^{R1}(M) + M\beta\bigl(\ov{X}^{R\mathfrak{r}}_{r's'} \bigr) \ot_{\!A} (E/A)^{\ot_{\!A}^{s-s'}} \ot \ov{A}^{\ot^{r-r'}}\Bigr),
$$
where $\ov{X}^{R\mathfrak{r}}_{r's'}$ denotes the $k$-vector subspace of $\ov{A}^{\ot^{r'}}\ot_k \ov{H}^{\ot_k^{s'}}$ generated by all the simple tensors $\ba_{1r}\ot_k \bh_{1,s}$, with at least $1$ of the $a_j$'s in $\ov{R}$.
\end{theorem}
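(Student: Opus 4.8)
The plan is to reduce Theorem~\ref{th cap product en clef} to the already established Theorem~\ref{th cap product}, exactly as Theorem~\ref{th cup product en cleft} was reduced to Theorem~\ref{th cup product} via Proposition~\ref{cup producto en cleft}. The key bridge is the identity in Proposition~\ref{cap producto en cleft}, which translates the $\star$-action on $\ov{X}_*(M)$ into the $\bullet$-action on $\wh{X}_*(M)$ through the isomorphisms $\theta$ and $\vartheta$. So first I would recall that $\ov{\phi} = \wh{\phi}\xcirc\vartheta$, $\ov{\psi} = \theta\xcirc\wh{\psi}$, and hence
$$
\ov{\psi}\bigl(\ov{\phi}([m\ot\ba_{1r}]_H\ot_k\bh_{1s})\smallfrown\ov{\psi}(\beta)\bigr) = \theta\Bigl(\wh{\psi}\bigl(\wh{\phi}(\vartheta([m\ot\ba_{1r}]_H\ot_k\bh_{1s}))\smallfrown\wh{\psi}(\theta(\beta))\bigr)\Bigr).
$$

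Next I would apply Theorem~\ref{th cap product} with the element $\vartheta([m\ot\ba_{1r}]_H\ot_k\bh_{1s})\in\wh{X}_{rs}(M)$ and the cochain $\theta(\beta)\in\wh{X}^{r's'}$, obtaining that the right-hand side equals
$$
\theta\bigl(\vartheta([m\ot\ba_{1r}]_H\ot_k\bh_{1s})\bullet\theta(\beta)\bigr)
$$
modulo $\theta$ of the error term $\bigoplus_{i<s-s'}\bigl(\wh{X}_{n-i,i}^{R1}(M) + M\,\theta(\beta)\bigl(X^{R\mathfrak{r}}_{r's'}\bigr)\ot_{\!A}(E/A)^{\ot_{\!A}^{s-s'}}\ot\ov{A}^{\ot^{r-r'}}\bigr)$. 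Here I use that $f$ taking values in $R$ implies $\mathcal{F}$ takes values in $R\ot_k V$ and that $R$ stable under $s$ and $\rho$ implies $R$ stable under $\chi$ (both recorded in Section~1), so the hypotheses of Theorem~\ref{th cap product} are met. Then Proposition~\ref{cap producto en cleft} rewrites $\vartheta([m\ot\ba_{1r}]_H\ot_k\bh_{1s})\bullet\theta(\beta) = \vartheta\bigl(([m\ot\ba_{1r}]_H\ot_k\bh_{1s})\star\beta\bigr)$, and since $\theta\xcirc\vartheta = \ide$ the main term collapses to $([m\ot\ba_{1r}]_H\ot_k\bh_{1s})\star\beta$, as desired.

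What remains is to check that $\theta$ carries the error subspace of Theorem~\ref{th cap product} onto the error subspace asserted here, i.e.\ that $\theta_{n-i,i}\bigl(\wh{X}^{R1}_{n-i,i}(M)\bigr) = \ov{X}^{R1}_{n-i,i}(M)$ and that $\theta$ matches the two ``$M\cdot(\text{cochain applied to }R\text{-tensors})$'' pieces, using the translation $X^{R\mathfrak{r}}_{r's'}\leftrightarrow\ov{X}^{R\mathfrak{r}}_{r's'}$ under $\theta^{r's'}$ together with the commuting square~\eqref{pp}. This is the step I expect to be the main (though routine) obstacle: one must unwind the formulas for $\theta_{rs}$ and $\vartheta_{rs}$ from Section~\ref{hom de inv} and Proposition~\ref{inversa de theta}, and observe that $\theta$ is built from $\boldsymbol{\nu}_{\!A}$, the iterated transpositions $\s$ and the antipode-twisted $\gamma^{-1}$, none of which can decrease the number of tensor factors lying in $\ov{R}$ — because $R$ is stable under $s$ and $\rho$ — so the ``at least one $a_j$ in $\ov R$'' condition is preserved in both directions. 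Granting this bookkeeping, the theorem follows; I would phrase the write-up as ``This is an immediate consequence of Proposition~\ref{cap producto en cleft} and Theorem~\ref{th cap product},'' in parallel with the proof of Theorem~\ref{th cup product en cleft}.
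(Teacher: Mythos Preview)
Your proposal is correct and matches the paper's approach exactly: the paper's proof is the single sentence ``This is an immediate consequence of Proposition~\ref{cap producto en cleft} and Theorem~\ref{th cap product},'' which is precisely the reduction you outline (and even the phrasing you anticipated). Your additional discussion of how $\theta$ transports the error subspaces is more detailed than anything the paper spells out, but it is the right bookkeeping and no further ideas are needed.
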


\begin{proof} This is an immediate consequence of Proposition~\ref{cap producto en cleft} and Theorem~\ref{th cap product}.
\end{proof}

\begin{corollary} If $f$ takes its values in $K$, then in terms of $(\ov{X}_*(M),\ov{d}_*)$ and $(\ov{X}^*,\ov{d}^*)$, the cap product
$$
\Ho^K_n(E,M)\times \HH_K^m(E)\to \Ho^K_{n-m}(E,M),
$$
is induced by $\star$.
\end{corollary}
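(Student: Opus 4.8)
The plan is to obtain this as an immediate specialization of Theorem~\ref{th cap product en clef} to the case $R=K$. First I would recall the set-up of Subsections~\ref{morfismos de comparacion en homologia: caso cleft} and~\ref{morfismos de comparacion en cohomologia: caso cleft}: the maps $\ov{\phi}$ and $\ov{\psi}$ are inverse to each other up to homotopy, giving chain maps between $(\ov{X}_*(M),\ov{d}_*)$ and the normalized Hochschild chain complex $\bigl(M\ot\ov{E}^{\ot^*}\ot,b_*\bigr)$, and between $(\ov{X}^*,\ov{d}^*)$ and the normalized Hochschild cochain complex $\bigl(\Hom_{K^e}(\ov{E}^{\ot^*},E),b^*\bigr)$; these are exactly the quasi-isomorphisms underlying Theorems~\ref{th5.2} and~\ref{th7.1}. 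Consequently, transported along these comparison maps, the cap product $\Ho^K_n(E,M)\times\HH_K^m(E)\to\Ho^K_{n-m}(E,M)$ becomes, at the level of complexes, the pairing $(\xi,\beta)\mapsto\ov{\psi}\bigl(\ov{\phi}(\xi)\smallfrown\ov{\psi}(\beta)\bigr)$, where $\smallfrown$ is the cap product on the normalized complexes recalled at the start of the previous section.

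Next I would invoke Theorem~\ref{th cap product en clef} with $R=K$. The hypotheses are met: $K$ is stable under $s$ and $\rho$, hence under $\chi$, and by assumption $f$ takes its values in $K$. Since $\ov{K}=K/K=0$, the subspace $\ov{X}^{R\mathfrak{r}}_{r's'}$ of $\ov{A}^{\ot^{r'}}\ot_k\ov{H}^{\ot_k^{s'}}$ spanned by the simple tensors having at least one $a_j$ in $\ov{K}$ is zero, and likewise $\ov{X}^{K1}_{n-i,i}(M)=0$ for every $i$; therefore both summands of the ``module'' term in the conclusion of that theorem vanish. This yields the exact identity
$$
\ov{\psi}\bigl(\ov{\phi}([m\ot\ba_{1r}]_H\ot_k\bh_{1s})\smallfrown\ov{\psi}(\beta)\bigr)=([m\ot\ba_{1r}]_H\ot_k\bh_{1s})\star\beta
$$
in $\ov{X}_{r-r',s-s'}(M)$ for all generators $[m\ot\ba_{1r}]_H\ot_k\bh_{1s}$ and all $\beta\in\ov{X}^{r's'}$, and hence for arbitrary elements by $k$-linearity.

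Finally I would pass to homology classes: since $\ov{\phi}$ and $\ov{\psi}$ are chain maps and $\smallfrown$ induces the cap product in Hochschild (co)homology, the displayed equality shows precisely that the cap product, realized on the homology of $(\ov{X}_*(M),\ov{d}_*)$ and the cohomology of $(\ov{X}^*,\ov{d}^*)$, is induced by $\star$. I do not expect any real obstacle in this argument: essentially all the work is already absorbed into Theorem~\ref{th cap product en clef} and, behind it, the diagrammatic computation of Proposition~\ref{cap producto en cleft}. The only mild point to keep in mind is that $\star$ must be well defined on the quotients $\ov{X}_{rs}(M)$ and compatible with $\ov{d}_*$ and $\ov{d}^*$; but this is already implicit in the way $([m\ot\ba_{1r}]_H\ot_k\bh_{1s})\star\beta$ is introduced, as an element of $\ov{X}_{r-r',s-s'}(M)$, in the definition preceding Proposition~\ref{cap producto en cleft} and in the statement of Theorem~\ref{th cap product en clef}.
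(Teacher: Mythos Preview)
Your proposal is correct and follows exactly the paper's approach: apply the preceding theorem with $R=K$ and observe that the error terms vanish because $\ov{K}=0$. The paper's proof is the one-line ``It follows immediately from the previous theorem,'' and your argument is simply a careful unpacking of that line.
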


\begin{proof} It follows immediately from the previous theorem.
\end{proof}

\section{Cyclic homology of a cleft braided Hopf crossed product}
Let $E:= A\#_f H$, $K$ and $M$ be as in Section~\ref{hom de inv}. In this section we show that if $E$ is cleft, then the mixed complex $(\wh{X}_*,\wh{d}_*,\wh{D}_*)$ of Section~\ref{cyclic-Brzezinski} is isomorphic to a simpler mixed complex $(\ov{X}_*,\ov{d}_*,\ov{D}_*)$. We will use the diagrams introduced in~\eqref{s0}, \eqref{s1}, \eqref{s2}, \eqref{s3}, Remark~\ref{nuevas notaciones} and Remark~\ref{ultima diagramatica}.

\smallskip

Let $\theta_*\colon \wh{X}_*\to \ov{X}_*$ is the map introduced in Remark~\ref{re def theta} and $\vartheta_*$ is its inverse. Recall that $\theta_n = \bigoplus_{r+s=n}\theta_{rs}$. Hence $\vartheta_n = \bigoplus_{r+s=n}\vartheta_{rs}$, where $\vartheta_{rs}$ is the inverse of $\theta_{rs}$. For each $n\ge 0$, let $\ov{D}_n := \theta_{n+1}\xcirc \wh{D}_n\xcirc \vartheta_n$.

\begin{theorem}\label{complejo mezclado que da la homologia ciclica caso cleft} $\bigl(\ov{X}_*,\ov{d}_*,\ov{D}_*\bigr)$ is a mixed complex that gives the Hochschild, cyclic, negative and periodic homology of the $K$-algebra $E$. More precisely, the mixed complexes $\bigl(\ov{X}_*,\ov{d}_*,\ov{D}_*\bigr)$ and $\bigl(E\ot \ov{E}^{\ot^*},b_*,B_*\bigr)$ are homotopically
equivalent.
\end{theorem}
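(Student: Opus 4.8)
The plan is to transport everything along the chain complex isomorphism $\theta_*\colon(\wh{X}_*,\wh{d}_*)\to(\ov{X}_*,\ov{d}_*)$ of Remark~\ref{re def theta} and then invoke Theorem~\ref{complejo mezclado que da la homologia ciclica}. First I would check that $\bigl(\ov{X}_*,\ov{d}_*,\ov{D}_*\bigr)$ is a mixed complex. Since $\ov{D}_n=\theta_{n+1}\xcirc\wh{D}_n\xcirc\vartheta_n$ and $\vartheta_*$ is the compositional inverse of $\theta_*$, the relation $\ov{D}\xcirc\ov{D}=\theta\xcirc\wh{D}\xcirc\wh{D}\xcirc\vartheta=0$ is immediate, and using that $\theta_*$ intertwines $\wh{d}_*$ with $\ov{d}_*$ one gets $\ov{D}\xcirc\ov{d}+\ov{d}\xcirc\ov{D}=\theta\xcirc(\wh{D}\xcirc\wh{d}+\wh{d}\xcirc\wh{D})\xcirc\vartheta=0$, while $\ov{d}\xcirc\ov{d}=0$ already holds by Theorem~\ref{th5.2}. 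Thus $\theta_*$ becomes an isomorphism of mixed complexes from $\bigl(\wh{X}_*,\wh{d}_*,\wh{D}_*\bigr)$ onto $\bigl(\ov{X}_*,\ov{d}_*,\ov{D}_*\bigr)$, with inverse $\vartheta_*$.

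An isomorphism of mixed complexes induces isomorphisms between the associated double complexes $\BP$, $\BN$, $\BC$, hence between their total complexes, and so between the groups $\HH_*$, $\HC_*$, $\HN_*$, $\HP_*$. Combining this with Theorem~\ref{complejo mezclado que da la homologia ciclica}, which identifies these homologies of $\bigl(\wh{X}_*,\wh{d}_*,\wh{D}_*\bigr)$ with the Hochschild, cyclic, negative and periodic homology of the $K$-algebra $E$, gives the first assertion. For the stronger statement about homotopy equivalence, I would conjugate the maps of Theorem~\ref{complejo mezclado que da la homologia ciclica}: writing $\BP(\theta)$ and $\BP(\vartheta)$ for the mutually inverse isomorphisms of double complexes induced by $\theta_*$ and $\vartheta_*$, set $\ov{\Phi}:=\wh{\Phi}\xcirc\Tot\bigl(\BP(\vartheta)\bigr)$ and $\ov{\Psi}:=\Tot\bigl(\BP(\theta)\bigr)\xcirc\wh{\Psi}$. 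Then $\ov{\Psi}\xcirc\ov{\Phi}=\Tot(\BP(\theta))\xcirc\wh{\Psi}\xcirc\wh{\Phi}\xcirc\Tot(\BP(\vartheta))=\ide$ since $\wh{\Psi}\xcirc\wh{\Phi}=\ide$, whereas $\ov{\Phi}\xcirc\ov{\Psi}=\wh{\Phi}\xcirc\wh{\Psi}$ is homotopic to the identity via the same homotopy $\wh{\Omega}$ as before. This exhibits the desired homotopy equivalence between $\bigl(\ov{X}_*,\ov{d}_*,\ov{D}_*\bigr)$ and $\bigl(E\ot\ov{E}^{\ot^*}\ot,b_*,B_*\bigr)$.

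I do not expect a real obstacle here: once Remark~\ref{re def theta} supplies the chain isomorphism $\theta_*$, the whole statement is a formal consequence of Theorem~\ref{complejo mezclado que da la homologia ciclica}. The only points that need (routine) attention are the bookkeeping that a morphism of mixed complexes passes functorially to $\BP$, $\BN$, $\BC$ and their totalizations, and the trivial check that the conjugated maps $\ov{\Phi}$, $\ov{\Psi}$ together with the unchanged $\wh{\Omega}$ still satisfy the chain-homotopy identities. If an explicit formula for $\ov{D}_*$ were desired—which the statement does not require—it could be read off from Theorem~\ref{th5.2} and Proposition~\ref{Connes operator} by the same method used to prove Theorem~\ref{th5.2}.
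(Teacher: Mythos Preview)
Your proposal is correct and follows exactly the paper's approach: the paper's proof simply observes that $\theta_*$ is an isomorphism of mixed complexes from $\bigl(\wh{X}_*,\wh{d}_*,\wh{D}_*\bigr)$ to $\bigl(\ov{X}_*,\ov{d}_*,\ov{D}_*\bigr)$ and then invokes Theorem~\ref{complejo mezclado que da la homologia ciclica}. You have merely spelled out the verification of the mixed-complex identities and the conjugation of $\wh{\Phi}$, $\wh{\Psi}$, $\wh{\Omega}$ that the paper leaves implicit.
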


\begin{proof} Clearly $\bigl(\ov{X}_*,\ov{d}_*,\ov{D}_*\bigr)$ is a mixed complex and
$$
\theta_*\colon \bigl(\wh{X}_*,\wh{d}_*,\wh{D}_*\bigr)\to \bigl(\ov{X}_*,\ov{d}_*,\ov{D}_*\bigr)
$$
is an isomorphism of mixed complexes. So the result follows from Theorem~\ref{complejo mezclado que da la homologia ciclica}.
\end{proof}

We are now going to give a formula for $\ov{D}_n$. For $0\le j\le s$, let
$$
\wt{\tau}_j\colon M\ot\ov{A}^{\ot^r}\ot_k \ov{H}^{\ot_k^s}\longrightarrow M\ot\ov{A}^{\ot^r}\ot_k \ov{H}^{\ot_k^{s+1}}
$$
be the map induced by
$$
\spreaddiagramcolumns{-1.6pc}\spreaddiagramrows{-1.6pc}
\objectmargin{0.0pc}\objectwidth{0.0pc}
\def\objectstyle{\sssize}
\def\labelstyle{\sssize}
\grow{\xymatrix@!0{
&\save\go+<0pt,3pt>\Drop{E}\restore \ar@{-}[16,0] &&& \save\go+<0pt,3pt>\Drop{D}\restore \ar@{-}[2,2] &&  \save\go+<0pt,3pt>\Drop{C}\restore \ar@{-}[1,-1]+<0.125pc,0.0pc> \ar@{-}[1,-1]+<0.0pc,0.125pc> &&&\save\go+<0.2pc,3pt>\Drop{C'}\restore  \save[]+<0.2pc,0pc> \Drop{}\ar@{-}[1,0]+<0.2pc,0pc>\restore&&\\
&&&&&&&&& \save[]+<0.2pc,0pc> \Drop{}\ar@{-}`l/4pt [1,-1] [1,-1] \ar@{-}`r [1,2] [1,2]\restore&&\\
&&&&\ar@{-}[1,-1]\ar@{-}[-1,1]+<-0.125pc,0.0pc> \ar@{-}[-1,1]+<0.0pc,-0.125pc>&&\ar@{-}[2,2]&& \ar@{-}[1,-1]+<0.125pc,0.0pc> \ar@{-}[1,-1]+<0.0pc,0.125pc>&&& \ar@{-}[6,0] \\
&&&\ar@{-}[3,0]&&&&&&&&\\
&&&&&&\ar@{-}[-1,1]+<-0.125pc,0.0pc> \ar@{-}[-1,1]+<0.0pc,-0.125pc>\ar@{-}[1,0]&&\ar@{-}[1,1]&&&\\
&&&&&&\ar@{-}`l/4pt [1,-1] [1,-1] \ar@{-}`r [1,1] [1,1]&&&\ar@{-}[3,0]&&\\
&&&\ar@{-}[1,1]+<-0.1pc,0.1pc> && \ar@{-}[2,-2]&&\ar@{-}`d/4pt [1,2][1,2]&&&&\\
&&&&&&&&&&&\\
&&&\ar@{-}[2,0]&&\ar@{-}[-1,-1]+<0.1pc,-0.1pc>\ar@{-}[7,7]&&&&\ar@{-}[6,6]&&\ar@{-}[10,-10]\\
&&&&&&&&&&&&&&&\\
&&&\ar@{-}[5,5]&&&&&&&&&&&&\\
&&&&&&&&&&&&&&&\\
&&&&&&&&&&&&&&&\\
&&&&&&&&&&&&&&&\\
&&&&&&&&&&&&&&&\ar@{-}[17,0]\\
&&&&&&&&\ar@{-}[1,0]&&&&\ar@{-}[1,0]&&&\\
&\ar@{-}[2,2]&&&&&&&\ar@{-}`l/4pt [1,-1] [1,-1] \ar@{-}`r [1,1] [1,1]&&&&\ar@{-}`l/4pt [1,-1] [1,-1] \ar@{-}`r [1,1] [1,1]&&&\\
&&&&&&&\ar@{-}[1,0]+<0pc,-3pt>&&\ar@{-}[1,0]+<0pc,-3pt>&&\ar@{-}[1,0]+<0pc,-0.5pt>&&\ar@{-}[14,0]&&\\
&\ar@{-}[1,0]&&\ar@{-}[11,0]&&&&&&&&&&&&\\
&\ar@{-}`l/4pt [1,-1] [1,-1] \ar@{-}`r [1,1] [1,1]&&&&&&&*+<0.1pc>[F]{\,\,\,g\,\,\,} \ar@{-}@<0.40pc>[2,0]+<0pc,-0.5pt> \ar@{-}@<-0.40pc>[2,0]+<0pc,-0.5pt>&&& *+[o]+<0.40pc>[F]{\mu}\ar@{-}[2,0]+<0pc,-0.5pt> &&&&\\
\ar@{-}[1,0]+<0pc,0pc>&&\ar@{-}[11,0]&&&&&&&&&&&&&\\
&&&&&&&&&&&&&&&\\
*+[o]+<0.4pc>[F]{\mathbf{u}}\ar@{-}[11,0]+<0pc,0pc>&&&&&&&*+[o]+<0.37pc>[F]{\ov{\boldsymbol{\gamma}}} \ar@{-}[2,0]+<0pc,-0.5pt>&&*+[o]+<0.37pc>[F]{\mathbf{S}}\ar@{-}[2,0]+<0pc,-0.5pt>&& *+[o]+<0.37pc>[F]{S}\ar@{-}[9,0]&&&&\\
&&&&&&&&&&&&&&&\\
&&&&&&&&&&&&&&&\\
&&&&&&&*+[o]+<0.40pc>[F]{\mu}\ar@{-}[2,0]+<0pc,0pc>&&*+[o]+<0.40pc>[F]{\mu}\ar@{-}[4,0]+<0pc,0pc>&&&&&&\\
&&&\ar@{-}`r/4pt [1,2] [1,2]&&&&&&&&&&&&\\
&&&&&\ar@{-}[1,1]+<-0.125pc,0.0pc> \ar@{-}[1,1]+<0.0pc,0.125pc>&&\ar@{-}[2,-2]&&&&&&&&\\
&&&&&&&&&&&&&&&\\
&&&\ar@{-}`d/4pt [1,1] `[0,2] [0,2]&&&&\ar@{-}[-1,-1]+<0.125pc,0.0pc>\ar@{-}[-1,-1]+<0.0pc,-0.125pc> \ar@{-}`d/4pt [1,1] `[0,2] [0,2]&&&&&&&&\\
&&&&\ar@{-}[1,0]&&&&\ar@{-}[1,0]&&&&&&&\\
&&\ar@{-}[1,1]+<-0.125pc,0.0pc> \ar@{-}[1,1]+<0.0pc,0.125pc>&&\ar@{-}[2,-2]&&&&\ar@{-}`d/4pt [1,1]+<0.2pc,0pc> `[0,3] [0,3]&&&&&\ar@{-}[1,1]+<-0.125pc,0.0pc> \ar@{-}[1,1]+<0.0pc,0.125pc>&&\ar@{-}[2,-2]\\
&&&&&&&&&\save[]+<0.2pc,0pc> \Drop{}\ar@{-}[1,0]+<0.2pc,0pc>\restore &&&&&&\\
\ar@{-}`d/4pt [1,1] `[0,2] [0,2]&&&&\ar@{-}[3,4]\ar@{-}[-1,-1]+<0.125pc,0.0pc>\ar@{-}[-1,-1]+<0.0pc,-0.125pc> &&&&& \save[]+<0.2pc,0pc> \Drop{}\ar@{-}[1,1]\restore&&&&\ar@{-}[1,-1]&& \ar@{-}[-1,-1]+<0.125pc,0.0pc> \ar@{-}[-1,-1]+<0.0pc,-0.125pc>\ar@{-}[5,0]\\
&\ar@{-}[4,0]&&&&&&&&&\ar@{-}[1,1]+<-0.125pc,0.0pc> \ar@{-}[1,1]+<0.0pc,0.125pc>&&\ar@{-}[2,-2]&&&\\
&&&&&&&&&&&&&&&\\
&&&&&&&&\ar@{-}[1,1]+<-0.125pc,0.0pc> \ar@{-}[1,1]+<0.0pc,0.125pc>&&\ar@{-}[2,-2]&& \ar@{-}[-1,-1]+<0.125pc,0.0pc>\ar@{-}[-1,-1]+<0.0pc,-0.125pc>\ar@{-}[2,0]&&&\\
&&&&&&&&&&&&&&&\\
&&&&&&&&&&\ar@{-}[-1,-1]+<0.125pc,0.0pc>\ar@{-}[-1,-1]+<0.0pc,-0.125pc>&&&&&
}}
\grow{\xymatrix@!0{
\\\\\\\\\\\\\\\\\\\\\\\\\\\\\\\\\\\\\\
\save\go+<0pt,0pt>\Drop{\txt{,}}\restore
}}
$$
where

\begin{itemize}

\smallskip

\item[-] $D:=A^{\ot_k^r}$, $C := H^{\ot_c^j}$ and $C' := H^{\ot_c^{s-j}}$,

\smallskip

\item[-] $\ov{\boldsymbol{\gamma}}$ denotes the map $\ov{\gamma}^{\ot_k^{s-j}}$, where $\ov{\gamma}$ is the convolution inverse of~$\gamma$,

\smallskip

\item[-] $\mu$ denotes the maps $\mu_j$ and $\mu_{s-j}$,

\smallskip

\item[-] $\mathbf{u}$ denotes the map $\mu_{s-j} \xcirc \gamma^{\ot_k^{s-j}}$,

\smallskip

\item[-] $g$ denotes the map $\gc_{2s-2j}$ introduced in item~(5) of Section~\ref{hom de inv},

\smallskip

\item[-] $\mathbf{S}$ denote the map $S^{\ot_k^{s-j}}$.

\smallskip

\end{itemize}
and let $\ov{\tau}_j\colon \ov{X}_{rs}\to \ov{X}_{r,s+1}$ be the map induced by $\wt{\tau}_j$.

\smallskip

In the next theorem we use the notation $F_R^s(\ov{X}_{n+1}):=F^s(\ov{X}_{n+1})\cap \ov{X}^{R1}_{n+1}(E)$.

\begin{theorem}\label{morfismo D} Let $R$ be a stable under $s$ and $\rho$ subalgebra of $A$. If $f$ takes its values in $R$, then the map $\ov{D}_n\colon \ov{X}_{rs}\to \ov{X}_{n+1}$, where $r+s=n$, is given by
$$
\ov{D}_n = \sum_{j=0}^s (-1)^{r+s+js} \ov{\tau}_j
$$
module $F_R^s(\ov{X}_{n+1})$.
\end{theorem}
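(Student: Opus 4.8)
The strategy is to reduce the computation of $\ov{D}_n = \theta_{n+1}\xcirc\wh{D}_n\xcirc\vartheta_n$ to the formula for $\wh{D}_n$ already established in Proposition~\ref{Connes operator}, and then translate both items (1) and (2) of that proposition through the isomorphisms $\theta$ and $\vartheta$ of Proposition~\ref{inversa de theta}. Since $\wh{X}_n = \bigoplus_{r+s=n}\wh{X}_{rs}$ and $\theta_n,\vartheta_n$ are block-diagonal with respect to this decomposition, I would evaluate $\ov{D}_n$ on a generator $[m\ot\ba_{1r}]_H\ot_k\bh_{1s}$ by first applying $\vartheta_{rs}$ to land in $\wh{X}_{rs}$, expressing the result as a combination of tensors of the two shapes $[a_0\gamma_{\!A}(\bv_{0i})\ot\ba_{1,n-i}]$ and $[a_0\ot_{\!A}\gamma_{\!A}(\bv_{1i})\ot\ba_{1,n-i}]$ appearing in Proposition~\ref{Connes operator}, applying $\wh{D}$ via that proposition (modulo the filtration piece $F_R^i$ resp. $F_R^{i-1}$), and finally applying $\theta$. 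The key observation is that $\theta$, $\vartheta$ and $\theta_{n+1}$ preserve the relevant filtrations — $\theta$ preserves $F^i(\wh X_*)$ versus $F^i(\ov X_*)$ by Remark~\ref{re def theta}, and both respect the $R$-degree subcomplexes $\wh X^{R1}$, $\ov X^{R1}$ because $\wh{d}^l$ raises $R$-degree by Theorem~\ref{formula para wh{d}_1}(3) and $\theta,\vartheta$ commute with the $\s$-type maps used to define these spaces — so the congruence "module $F_R^s(\wh X_{n+1})$" transports to "module $F_R^s(\ov X_{n+1})$".

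The bulk of the work is the diagrammatic identification: one must check that the composite $\theta\xcirc\wh{D}\xcirc\vartheta$, when written out, equals $\sum_{j=0}^s(-1)^{r+s+js}\ov{\tau}_j$ on the nose (up to the stated filtration ambiguity). Here I would proceed exactly as in the proofs of Theorem~\ref{th5.2} and Theorem~\ref{th7.1}, namely by an Appendix-B style graphical calculation: unfold $\vartheta_{rs}$ using its formula involving $\gamma^{-1}$, $\gc_s$, $\Delta_{H^{\ot_{\!c}^{\!s}}}$ and $\s^{-1}$, apply the cyclic-permutation structure of $\wh D$ (item (1) of Proposition~\ref{Connes operator} moves the "last $i-j$ slots of $\gamma_{\!A}$" to the front), and then recompose with $\theta$ using its formula with $\boldsymbol{\nu}_{\!A}$ and $\s$. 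The naturality identities $\boldsymbol{\nu}_{\!A}\xcirc\gamma^{\ot_{\!A}^s} = (\gamma^{\ot_{\!A}^s}\ot_k H^{\ot_c^s})\xcirc\Delta_{H^{\ot_{\!c}^{\!s}}}$ (Remark~\ref{acerca de nu}), the hexagon for $\s$ against $c$, and the weak-$s$-action relations of Definition~\ref{accion debil} and Remark~\ref{propiedad de accion debil} are the moves that collapse the resulting tangle; after the dust settles the $j$-th summand of $\wh D$ becomes exactly the diagram defining $\wt\tau_j$, with the $\gamma$'s that get "carried over the cut" producing the $\mathbf{u}=\mu_{s-j}\xcirc\gamma^{\ot_k^{s-j}}$ cap, their convolution inverses producing $\ov{\boldsymbol{\gamma}}$ and the antipode block $\mathbf{S}=S^{\ot_k^{s-j}}$, and the braiding bookkeeping producing $g=\gc_{2s-2j}$. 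The sign $(-1)^{r+s+js}$ combines the sign $(-1)^{rs}$ from $\theta$, $(-1)^{rs}$ (with shifted $r,s$) from $\vartheta$, and the sign $(-1)^{i+ji}$ (with $i=s$) from Proposition~\ref{Connes operator}(1); the contribution of item (2) of that proposition, which involves the $a_0$ slot moving past a string of $\ba$'s, is precisely what lands in $F_R^s(\ov X_{n+1})$ and so is absorbed into the error term — this is the reason the equality is only asserted modulo that subcomplex.

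The main obstacle I anticipate is purely bookkeeping-combinatorial rather than conceptual: keeping the braiding maps $\pc_s$, $\gc_s$, $\cc_{sr}$, $\s_{sr}$ straight through the double reduction $\theta\xcirc\wh D\xcirc\vartheta$, and in particular verifying that the internal permutation produced by the cyclic operator $B$ on the $\Sh$-expanded tensor (Proposition~\ref{formula para hat{phi} modulo la filtracion}, as used in the proof of Proposition~\ref{Connes operator}) is exactly reproduced by the braiding $\gc_{2s-2j}$ appearing inside $\wt\tau_j$. I would handle this by checking the base cases $s=0$ (where $\ov{D}_n$ collapses to the single term $\ov\tau_0$, matching the corollary after Proposition~\ref{Connes operator} when $K=A$) and $s=1$ by hand, then inducting on $s$ using the recursive definitions $\pc_s$, $\gc_{s+1}=(H\ot_k\gc_s)\xcirc\cc_{s1}$ and $\ps_s$, mirroring the inductive structure already used in Appendix~B for $\theta$ and $\vartheta$. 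Once the diagram identity is in place the filtration statement is formal, so I expect the proof to read "see Appendix~B" with the substantive content being this graphical verification.
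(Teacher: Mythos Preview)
Your approach is essentially the paper's: reduce via $\ov{D}=\theta\xcirc\wh{D}\xcirc\vartheta$ to Proposition~\ref{Connes operator}, transport the congruence through $\theta$ (which preserves both the $F^i$- and the $R$-filtrations), and then verify diagrammatically that $\wt\theta_{r,s+1}\xcirc\tau_j\xcirc\wt\vartheta_{rs}=\wt\tau_j$; the paper carries out this last identity via Lemmas~\ref{para d1}, \ref{pirulo} and~\ref{mengano}, and your sign bookkeeping $(-1)^{r(s+1)}\cdot(-1)^{rs}\cdot(-1)^{s+js}=(-1)^{r+s+js}$ is correct. One clarification: only item~(1) of Proposition~\ref{Connes operator} is needed, since the output of $\vartheta_{rs}$ always has first tensor slot a general element of $E$ (hence of the form $a_0\gamma(v_0)$); item~(2) is the special case $v_0=1$ and is not invoked separately, so your remark that its contribution is absorbed into $F_R^s$ is moot --- and would in fact be wrong if item~(2) were relevant, because its leading term lands in $\wh X_{r+1,s}\subseteq F^s$ but not in $F_R^s$.
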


\begin{proof} See Appendix~B.
\end{proof}

Applying the previous theorem to the classical case (i.e. when $H$ is an standard Hopf algebra and $s\colon H\ot_k A\to A\ot_k H$ is the flip), we obtain an expression for $\ov{D}$ module $F_R^s(\ov{X}_{n+1})$, which is more convenient that the one given in~\cite[Theorem 3.3]{C-G-G}. Explicitly, we have:
\begin{align*}
\ov{D}(\bx) & = \sum_{j=0}^s (-1)^{r+s+js} \Bigl[\gamma(h_{j+1}^{(4)})\cdots \gamma(h_s^{(4)})a\gamma(h_0^{(1)})\gamma^{-1}(h_s^{(2)})\cdots \gamma^{-1}(h_{j+1}^{(2)})\\
&\,\ot h_{j+1}^{(3)}\cdot\bigl(\cdots h_{s-1}^{(3)}\cdot \bigl(h_s^{(3)}\cdot \ba_{1r}\bigr) \cdots \bigr)\Bigr]_H\!\ot_k\bh_{j+1,s}^{(5)}\ot_k h_0^{(2)} S(h_1^{(1)}\cdots h_s^{(1)})\ot_k \bh_{1j}^{(2)}
\end{align*}
module $F_R^s(\ov{X}_{n+1})$, where
\begin{align*}
&\bx := [a\gamma(h_0)\ot\ba_{1r}]_H\ot_k \bh_{1s},\\
& \bh_{1s}^{(1)}\ot_k \bh_{1s}^{(2)}\ot_k \bh_{j+1,s}^{(3)}\ot_k \bh_{j+1,s}^{(4)}\ot_k \bh_{j+1,s}^{(5)} := \bigr(\ide_{H^{\ot_{\!k}^j}}\ot_k \Delta^3_{H^{\ot_{\!c}^{\!s-j}}} \bigl)\xcirc \Delta_{H^{\ot_{\!c}^{\!s}}}(\bh_{1s})\\
\intertext{and}
&h\cdot \ba_{1r} := h^{(1)}\cdot a_1 \ot\cdots\ot h^{(r)}\cdot a_r,
\end{align*}
in which $h\cdot a$ denotes the weak action of $h\in H$ on $a\in A$.

\smallskip

\subsection{The spectral sequences} Let
$$
\ov{d}^0_{rs}\colon \ov{X}_{rs}\to\ov{X}_{r-1,s}\quad\text{and}\quad\ov{d}^1_{rs}\colon \ov{X}_{rs}\to \ov{X}_{r,s-1}
$$
be as above of Theorem~\ref{th5.2} and let
$$
\ov{D}^0_{rs}\colon \ov{X}_{rs}\to \ov{X}_{r,s+1}
$$
be the map defined by $\ov{D}^0(\bx) = \sum_{j=0}^{s-i} (-1)^{r+s+js} \ov{\tau}_j$.

\subsubsection{The first spectral sequence} Let
\begin{align*}
\check{d}_{rs}\colon \Ho_r\bigl(\ov{X}_{*s},\ov{d}^0_{*s}\bigr) \longrightarrow \Ho_r\bigl(\ov{X}_{*,s-1},\ov{d}^0_{*,s-1}\bigr)\\
\intertext{and}
\check{D}_{rs}\colon \Ho_r\bigl(\ov{X}_{*s},\ov{d}^0_{*s}\bigr) \longrightarrow \Ho_r \bigl(\ov{X}_{*,s+1},\ov{d}^0_{*,s+1}\bigr)
\end{align*}
be the maps induced by $\ov{d}^1$ and $\ov{D}^0$, respectively. Let
\begin{equation}
\mathpzc{F}^0\bigl(\Tot(\BC(\ov{X}_*,\ov{d}_*,\ov{D}_*))\bigr) \subseteq \mathpzc{F}^1\bigl(\Tot(\BC(\ov{X}_*,\ov{d}_*,\ov{D}_*))\bigr) \subseteq \cdots \label{eq15}
\end{equation}
be the filtration of $\Tot(\BC(\ov{X}_*,\ov{d}_*,\ov{D}_*))$, given by
$$
\mathpzc{F}^s\bigl(\Tot(\BC(\ov{X},\ov{d},\ov{D})_n)\bigr) := \bigoplus_{j\ge 0} F^{s-2j}(\ov{X}_{n-2j}) u^j,
$$
where $F^{s-2j}(\ov{X}_{n-2j})$ is the filtration introduce in Section~\ref{filtraciones en homologia caso cleft}. Since the isomorphism
$$
\theta_*\colon (\wh{X}_*,\wh{d}_*,\wh{D}_*)\longrightarrow (\ov{X}_*,\ov{d}_*,\ov{D}_*),
$$
satisfies
$$
\theta_n\Bigl(\mathpzc{F}^s\bigl(\Tot(\BC(\wh{X},\wh{d},\wh{D}))_n\bigr)\Bigr) = \mathpzc{F}^s\bigl(\Tot(\BC(\ov{X},\ov{d},\ov{D}))_n\bigr),
$$
where $\mathpzc{F}^s\bigl(\Tot(\BC(\wh{X},\wh{d},\wh{D}))_n\bigr)$ is as in the proof of Proposition~\ref{pepito}, the spectral sequence introduced in that proposition coincides with the one associated with the filtration~\eqref{eq15}. In particular
\[
\check{\Ho}_r\bigl(\ov{X}_{**},\ov{d}^0_{**}\bigr) := \Bigl(\Ho_r\bigl(\ov{X}_{**}, \ov{d}^0_{**}\bigr),\check{d}_{r*},\check{D}_{r*}\Bigr)
\]
is a mixed complex and
$$
\mathpzc{E}^2_{sr} = \HC_s\Bigl(\check{\Ho}_r\bigl(\ov{X}_{**},\ov{d}^0_{**}\bigr)\Bigr).
$$

\subsubsection{The second spectral sequence} For each $s\ge 0$, we consider the double complex
\[
\xymatrix{\\\\ \ov{\Xi}_s =}\qquad
\xymatrix{
\vdots \dto^-{\ov{d}^0} &\vdots \dto^-{\ov{d}^0}& \vdots \dto^-{\ov{d}^0}& \vdots \dto^-{\ov{d}^0}\\
\ov{X}_{3s}u^0\dto^-{\ov{d}^0} & \ov{X}_{3,s-1}u^1\lto_-{\ov{D}^0}\dto^-{\ov{d}^0} & \ov{X}_{3,s-2}u^2\lto_-{\ov{D}^0}\dto^-{\ov{d}^0} & \ov{X}_{3,s-3}u^3\lto_-{\ov{D}^0} \\
\ov{X}_{2s}u^0\dto^-{\ov{d}^0} & \ov{X}_{2,s-1}u^1\lto_-{\ov{D}^0}\dto^-{\ov{d}^0} & \ov{X}_{2,s-2}u^2\lto_-{\ov{D}^0}\\
\ov{X}_{1s}u^0\dto^-{\ov{d}^0} & \ov{X}_{1,s-1}u^1\lto_-{\ov{D}^0}\\
\ov{X}_{0s}u^0}
\]
where $\ov{X}_{0s} u^0$ is placed in the intersection of the $0$-th column and the $0$-th row. Let
\begin{equation}
F^0\bigl(\Tot(\BC(\ov{X}_*,\ov{d}_*,\ov{D}_*))\bigr) \subseteq F^1\bigl(\Tot(\BC(\ov{X}_*,\ov{d}_*,\ov{D}_*))\bigr) \subseteq \cdots \label{eq11}
\end{equation}
be the filtration of $\Tot(\BC(\ov{X}_*,\ov{d}_*,\ov{D}_*))$, given by
$$
F^s\bigl(\Tot(\BC(\ov{X},\ov{d},\ov{D}))_n\bigr) := \bigoplus_{j\ge 0} F^{s-j}(\ov{X}_{n-2j}) u^j,
$$
where $F^{s-j}(\ov{X}_{n-2j})$ is the filtration introduce in Section~\ref{filtraciones en homologia caso cleft}. Since the isomorphism
$$
\theta_*\colon (\wh{X}_*,\wh{d}_*,\wh{D}_*)\longrightarrow (\ov{X}_*,\ov{d}_*,\ov{D}_*),
$$
satisfies
$$
\theta_n\Bigl(F^s\bigl(\Tot(\BC(\wh{X},\wh{d},\wh{D}))_n\bigr)\Bigr) = F^s\bigl(\Tot(\BC(\ov{X},\ov{d},\ov{D}))_n\bigr),
$$
where $F^s\bigl(\Tot(\BC(\wh{X},\wh{d},\wh{D}))_n\bigr)$ is as in the proof of Proposition~\ref{pepe}, the spectral sequence introduced in that proposition coincides with the one associated with the filtration~\eqref{eq11}. In particular $\Tot(\wh{\Xi}_s)\simeq \Tot(\ov{\Xi}_s)$, and so $E^1_{sr} = \Ho(\Tot(\ov{\Xi}_s))$ for all $r,s\ge 0$.

\subsubsection{The third spectral sequence} Let
\begin{align*}
\check{d}_{rs}\colon \Ho_s\bigl(\ov{X}_{r*},\ov{d}^1_{r*}\bigr) \longrightarrow \Ho_s\bigl(\ov{X}_{r-1,*},\ov{d}^1_{r-1,*}\bigr)\\
\intertext{and}
\check{D}_{rs}\colon \Ho_s\bigl(\ov{X}_{r*},\ov{d}^1_{r*}\bigr) \longrightarrow \Ho_s \bigl(\ov{X}_{r,*+1},\ov{d}^1_{r,*+1}\bigr)
\end{align*}
be the maps induced by $\ov{d}^0$ and $\ov{D}^0$, respectively. Let
\begin{equation}
\mathfrak{F}^0\bigl(\Tot(\BC(\ov{X}_*,\ov{d}_*,\ov{D}_*))\bigr) \subseteq \mathfrak{F}^1\bigl(\Tot(\BC(\ov{X}_*,\ov{d}_*,\ov{D}_*))\bigr) \subseteq \cdots \label{eq16}
\end{equation}
be the filtration of $\Tot(\BC(\ov{X}_*,\ov{d}_*,\ov{D}_*))$, given by
$$
\mathfrak{F}^r\bigl(\Tot(\BC(\ov{X},\ov{d},\ov{D})_n)\bigr) := \bigoplus_{j\ge 0} \mathfrak{F}^{r-j}(\ov{X}_{n-2j}) u^j,
$$
where
$$
\mathfrak{F}^{r-j}(\ov{X}_{n-2j}):= \bigoplus_{i\le r-j} \ov{X}_{i,n-i-2j}.
$$
Since the isomorphism
$$
\theta_*\colon (\wh{X}_*,\wh{d}_*,\wh{D}_*)\longrightarrow (\ov{X}_*,\ov{d}_*,\ov{D}_*),
$$
satisfies
$$
\theta_n\Bigl(\mathfrak{F}^r\bigl(\Tot(\BC(\wh{X},\wh{d},\wh{D}))_n\bigr)\Bigr) = \mathfrak{F}^r\bigl(\Tot(\BC(\ov{X},\ov{d},\ov{D}))_n\bigr),
$$
where $\mathfrak{F}^r\bigl(\Tot(\BC(\wh{X},\wh{d},\wh{D}))_n\bigr)$ is as in the proof of Proposition~\ref{pepitos}, the spectral sequence introduced in that proposition coincides with the one associated with the filtration~\eqref{eq16}. In particular
\[
\check{\Ho}_s\bigl(\ov{X}_{**},\ov{d}^1_{**}\bigr) := \Bigl(\Ho_s\bigl(\ov{X}_{**}, \ov{d}^1_{**}\bigr),\check{d}_{*s},\check{D}_{*s}\Bigr)
\]
is a mixed complex and
$$
\mathfrak{E}^2_{rs} = \HC_r\Bigl(\check{\Ho}_s\bigl(\ov{X}_{**},\ov{d}^1_{**}\bigr)\Bigr).
$$

\subsubsection{The fourth spectral sequence} Assume that $f$ takes its values in $K$. Then the mixed complex $\bigl(\ov{X}_*,\ov{d}_*,\ov{D}_*\bigr)$ is filtrated by
\begin{equation}
\mathcal{F}^0\bigl(\ov{X}_*,\ov{d}_*,\ov{D}_*\bigr)\subseteq \mathcal{F}^1\bigl(\ov{X}_*,\ov{d}_*,\ov{D}_*\bigr)\subseteq \mathcal{F}^2\bigl(\ov{X}_*,\ov{d}_*,\ov{D}_*\bigr)\subseteq\cdots,\label{eq12}
\end{equation}
where
$$
\mathcal{F}^r(\ov{X}_n) := \bigoplus_{i\le r} \ov{X}_{i,n-i}.
$$
Hence, for each $r\ge 1$, we can consider the quotient mixed complex
$$
\ov{\mathfrak{X}}^r: =\frac{\mathcal{F}^r \bigl(\ov{X}_*,\ov{d}_*,\ov{D}_*\bigr)} {\mathcal{F}^{r-1}\bigl(\ov{X}_*,\ov{d}_*,\ov{D}_*\bigr)}.
$$
It is easy to check that the Hochschild boundary map of $\ov{\mathfrak{X}}^r$ is $\ov{d}^1_{r*}\colon\ov{X}_{r*}\to \ov{X}_{r,*-1}$, and that, by item~(1) of Theorem~\ref{morfismo D}, its Connes operator is $\ov{D}^0_{r*}\colon \ov{X}_{r*}\to \ov{X}_{r,*+1}$. Since the isomorphism
$$
\theta_*\colon (\wh{X}_*,\wh{d}_*,\wh{D}_*)\longrightarrow (\ov{X}_*,\ov{d}_*,\ov{D}_*),
$$
satisfies
$$
\theta_*\Bigl(\mathcal{F}^r\bigl(\wh{X}_*,\wh{d}_*,\wh{D}_*\bigr)\Bigr) = F^r\bigl(\ov{X}_*,\ov{d}_*,\ov{D}_*\bigr),
$$
where $\mathcal{F}^r\bigl(\wh{X}_*,\wh{d}_*,\wh{D}_*\bigr)$ is as in Section~\ref{segunda suc esp}, the spectral sequence introduced in Proposition~\ref{pepe1} coincides with the one associated with the filtration~\eqref{eq12}. In particular $\ov{\mathfrak{X}}^r\simeq \wh{\mathfrak{X}}^r$ and so $\mathcal{E}^1_{rs} = \HC_s(\ov{\mathfrak{X}}^r)$.

\section{Crossed products in Yetter-Drinfeld Categories}
The results established in this paper apply in particular to crossed products in the category ${}_L^L \mathcal{YD}$ of Yetter-Drinfeld modules over a Hopf algebra $L$. Next we consider the case where $L$ is a group algebra $k[G]$, with $k$ a field. Recall that a Yetter-Drinfeld module over $k[G]$ is a $k[G]$-module $M$, endowed with a $G$-gradation $M = \bigoplus_{\sigma\in G} M_{\sigma}$ such that ${}^{\sigma}\nesp  M_{\tau} = M_{\sigma\tau \sigma^{-1}}$ for all $\sigma,\tau\in G$. A morphism of Yetter-Drinfeld module over $k[G]$ is a $k[G]$-linear map $\varphi\colon M\to M'$ such that $\varphi (M_{\sigma})\subseteq M'_{\sigma}$ for all $\sigma\in G$. The category ${}^G_G \mathcal{YD}:={}^{k[G]}_{k[G]} \mathcal{YD}$, of Yetter-Drinfeld modules over $k[G]$, is a braided category. The unit object is the $k[G]$-trivial module $k$ concentrated in degree one; the tensor product $M\ot_k N$ of two Yetter-Drinfeld modules over $k[G]$ is the usual tensor product over $k$, endowed with the diagonal action and with the gradation defined by
$$
(M\ot_k N)_{\sigma} = \bigoplus_{\tau_1,\tau_2\in G\atop \tau_1\tau_2 = \sigma} M_{\tau_1}\ot_k M_{\tau_2};
$$
the associative and unit constraints are the usual ones; and the braided
$$
c_{MN}\colon M\ot_k N\to N\ot_k M,
$$
is given by $c_{MN}(m\ot n) = {}^\sigma\nesp  n\ot m$, for $m\in M_{\sigma}$. In this section we obtain formulas for the boundary maps $\overline{d}^0$ and $\overline{d}^1$ (see Theorem~\ref{th5.2}) and for the coboundary maps $\overline{d}_0$ and $\overline{d}_1$ (see Theorem~\ref{th7.1}), when $A\#_f H$ is a cleft crossed product in ${}^G_G \mathcal{YD}$, whose transposition $s\colon H\ot_k A \to A\ot_k H$ is $c_{HA}$. We do not give a formula for the Connes operator $\ov{D}$, because in general the computations are very involved.

\smallskip

We will use the following facts:

\begin{enumerate}

\medskip

\item An (associative and unitary) algebra in ${}^G_G \mathcal{YD}$ is a Yetter-Drinfeld module $A$ over $k[G]$, endowed with and associative and unitary multiplication such that
    $$
    \quad 1\in A_1,\quad A_{\sigma}A_{\tau} \subseteq A_{\sigma \tau},\quad {}^{\sigma}\nesp 1 = 1 \quad\text{and}\quad {}^{\sigma}\nespp (ab) = {}^{\sigma}\nespp a{}^{\sigma}\nesp b,
    $$
    for all $\sigma,\tau\in G$ and $a,b\in A$.

\smallskip

\item A (coassociative and counitary) coalgebra in ${}^G_G \mathcal{YD}$ is a Yetter-Drinfeld module $C$ over $k[G]$, endowed with and coassociative and counitary comultiplication such that
    \begin{xalignat*}{2}
    \qquad &\varepsilon(C_{\sigma}) = 0\text{\,\,if $\sigma\ne 1$}, && \Delta(C_{\sigma}) \subseteq \bigoplus_{\tau_1,\tau_2\in G\atop \tau_1\tau_2 = \sigma} C_{\tau_1}\ot_k C_{\tau_2},\\
    &\varepsilon ({}^{\sigma}\nespp c) = \varepsilon(c), && ({}^{\sigma}\nespp c)^{(1)}\ot_k ({}^{\sigma}\nespp c)^{(2)} = {}^{\sigma}\nespp (c^{(1)})\ot_k {}^{\sigma}\nespp (c^{(2)}),
    \end{xalignat*}
    for all $\sigma\in G$ and $c\in C$. Because of the compatibility between the gradation and the comultiplication of $C$, given $c\in C_{\sigma}$, we can write
    $$
    \Delta(c) = \sum_{\tau_1,\tau_2\in G\atop \tau_1\tau_2 = \sigma} c_{\tau_1}^{(1)}\ot_k c_{\tau_2}^{(2)},
    $$
    where $c_{\tau_1}^{(1)}\ot_k c_{\tau_2}^{(2)}\in C_{\tau_1}\ot_k C_{\tau_2}$ is a sum of simple tensors.

\smallskip

\item A a Yetter-Drinfeld module $H$ over $k[G]$ is a bialgebra in ${}^G_G \mathcal{YD}$ if it is an algebra and a coalgebra in ${}^G_G \mathcal{YD}$, whose counit $\varepsilon$ and comultiplication $\Delta$ satisfy
    \begin{align*}
    \quad &\varepsilon(1) = 1,\quad \varepsilon(hl) = \varepsilon(h)\varepsilon(l),\quad \Delta(1) = 1\ot 1
    \intertext{and}
    & (hl)_{\sigma\varsigma\tau^{-1}}^{(1)}\ot_k (hl)_{\tau}^{(1)} = \sum_{\upsilon,\nu\in G\atop \upsilon\nu = \tau} h^{(1)}_{\sigma \upsilon^{-1}} {}^{\upsilon}\nesp l^{(1)}_{\varsigma \nu^{-1}} \ot_k h^{(2)}_{\upsilon}l^{(2)}_{\nu},
    \end{align*}
    for all $h\in H_{\sigma}$, $l\in H_{\varsigma}$ and $\tau\in G$.

\smallskip

\item A bialgebra $H$ in ${}^G_G \mathcal{YD}$ is a Hopf algebra in ${}^G_G \mathcal{YD}$ if $\ide_H$ is invertible with respect to the convolution product in $\Hom_{{}^G_G \mathcal{YD}}(H,H)$. That is if there exists a map $S\colon H\to H$ of Yetter-Drinfeld modules, called the antipode of $H$, such that
    $$
    \quad S(h^{(1)})h^{(2)} = h^{(1)}S(h^{(2)})= \epsilon(h)1\quad\text{for all $h\in H$.}
    $$

\smallskip

\item Let $H$ be a Hopf algebra in ${}^G_G \mathcal{YD}$ and let $A$ be an algebra in ${}^G_G \mathcal{YD}$. In the sequel we let $s$ denote the braid $c_{HA}$. It is evident that $s$ is a transposition. A weak $s$-action of $H$ on $A$ is a map of Yetter-Drinfeld modules
    $$
    \quad \xymatrix @R=-2pt {H\ot_k A\rto^-{\rho} & A\\
    h\ot_k a \ar@{|->}[0,1] & h\cdot a}
    $$
    such that
    \begin{align*}
    \quad & 1\cdot a = a,\\
    & h\cdot 1 = \varepsilon(h)1,\\
    & l\cdot (ab) = \sum_{\varsigma,\tau\in G\atop \varsigma\tau = \sigma} (l^{(1)}_{\varsigma}\cdot {}^{\tau}\nespp  a)(l^{(2)}_{\tau}\cdot b),
    \end{align*}
    for $h\in H$, $l\in H_{\sigma}$ and $a,b\in A$.

\smallskip

\item Let $H$ be a Hopf algebra in ${}^G_G \mathcal{YD}$ and let $A$ be an algebra in ${}^G_G \mathcal{YD}$. Assume we have a weak $s$-action $\rho$ of $H$ on $A$. A map of Yetter-Drinfeld modules
    $$
    \quad f\colon H\ot_k H\to A
    $$
    is a normal cocycle that satisfies the twisted module condition if
    \begin{align*}
    \qquad &\,\,\, f(1\ot_k h) = f(h\ot_k 1) = \epsilon(h)1,\\[2.5pt]
    & \sum_{\sigma_1,\sigma_2,\tau_1,\tau_2,\upsilon_1,\upsilon_2\in G\atop \sigma_1\sigma_2 = \sigma, \, \tau_1\tau_2 = \tau,\,\upsilon_1\upsilon_2 = \upsilon} h^{(1)}_{\sigma_1} \cdot f\bigl({}^{\sigma_2}\nesp l^{(1)}_{\tau_1} \ot_k {}^{\sigma_2\tau_2}\nesp  m^{(1)}_{\upsilon_1}\bigr) f\bigl(h^{(2)}_{\sigma_2}\ot_k l^{(2)}_{\tau_2}m^{(2)}_{\upsilon_2}\bigr)\\
    &\phantom{\sum_{{\sigma_1\sigma_2 = \sigma \atop \tau_1\tau_2 = \tau}\atop \upsilon_1\upsilon_2 = \upsilon} h^{(1)}_{\sigma_1}} = \sum_{\sigma_1,\sigma_2, \tau_1,\tau_2\in G\atop \sigma_1\sigma_2 = \sigma, \,\tau_1\tau_2 = \tau} f\bigl(h^{(1)}_{\sigma_1}\ot_k {}^{\sigma_2}\nesp l^{(1)}_{\tau_1}\bigr) f\bigl(h^{(2)}_{\sigma_2} l^{(2)}_{\tau_2}\ot_k m \bigr)
    \intertext{and}
    &\sum_{\sigma_1,\sigma_2,\tau_1,\tau_2\in G\atop \sigma_1\sigma_2 = \sigma,\,  \tau_1\tau_2 = \tau} h^{(1)}_{\sigma_1} \cdot \bigl({}^{\sigma_2}\nesp l^{(1)}_{\tau_1} \cdot {}^{\sigma_2\tau_2}\nesp a \bigr)f\bigl(h^{(2)}_{\sigma_2} \ot_k l^{(2)}_{\tau_2}\bigr)\\
    &\phantom{\sum_{{\sigma_1\sigma_2 = \sigma \atop \tau_1\tau_2 = \tau}\atop \upsilon_1 \upsilon_2 = \upsilon} h^{(1)}_{\sigma_1}} = \sum_{\sigma_1,\sigma_2,\tau_1,\tau_2\in G \atop \sigma_1\sigma_2=\sigma,\,\tau_1\tau_2=\tau} f\bigl(h^{(1)}_{\sigma_1} \ot_k {}^{\sigma_2}\nesp l^{(1)}_{\tau_1}\bigr)\,\bigl(h^{(2)}_{\sigma_2} l^{(2)}_{\tau_2} \bigr)\cdot a,
    \end{align*}
    for $h\in H_{\sigma}$, $l\in H_{\tau}$, $m\in M_{\upsilon}$ and $a\in A$.

\smallskip

\item Let $H$ be a Hopf algebra in ${}^G_G \mathcal{YD}$ and let $A$ be an algebra in ${}^G_G \mathcal{YD}$. Assume that we have a weak $s$-action $\rho$ of $H$ on $A$ and a convolution invertible normal cocycle $f$ that satisfies the twisted module condition. By definition $E:=A\#_f H$ is the associative unitary $k$-algebra with underlying $k$-vector space $A\ot_k H$ and multiplication map
    $$
    \qquad\quad (a\# h)(b\# l) = \sum_{\sigma_1,\sigma_2,\sigma_3,\tau_1, \tau_2\in G \atop \sigma_1\sigma_2\sigma_3 = \sigma,\, \tau_1 \tau_2 = \tau} a (h^{(1)}_{\sigma_1}\cdot {}^{\sigma_2\sigma_3}\nesp b) f \bigl(h^{(2)}_{\sigma_2} \ot_k {}^{\sigma_3}\nesp l^{(1)}_{\tau_1}\bigr)\# h^{(3)}_{\sigma_3} l^{(2)}_{\tau_2},
    $$
    for $h\in H_{\sigma}$, $l\in H_{\tau}$ and $a\in A$.

\end{enumerate}

\begin{remark} Let $C$ be a coalgebra in ${}^G_G \mathcal{YD}$, $\sigma,\tau \in G$ and $c\in C_{\sigma}$. In relation with item~(2) above, note that
$$
\quad \sum_{v_1,v_2\in G\atop v_1v_2 = \sigma} {}^{\tau}\nespp (c_{v_1}^{(1)})\ot_k {}^{\tau}\nespp (c_{v_2}^{(2)}) = \sum_{v_1,v_2\in G\atop v_1v_2 = \sigma} ({}^{\tau}\nespp c)_{\tau v_1\tau^{-1}}^{(1)}\ot_k ({}^{\tau}\nespp c)_{\tau v_2\tau^{-1}}^{(2)}.
$$
To the sake of simplicity we set
$$
\quad \sum_{v_1,v_2\in G\atop v_1v_2 = \sigma} {}^{\tau}\nespp c_{v_1}^{(1)}\ot_k {}^{\tau}\nespp c_{v_2}^{(2)}:= \sum_{v_1,v_2\in G\atop v_1v_2 = \sigma} {}^{\tau}\nespp (c_{v_1}^{(1)})\ot_k {}^{\tau}\nespp (c_{v_2}^{(2)}).
$$
\end{remark}

\medskip

\noindent In order to give a convenient expression for the maps $\ov{d}_1$ and $\ov{d}^1$ in Theorems~\ref{d1 en homologia en YD} and~\ref{d1 en cohomologia en YD} respectively, we will need introduce the following notations:

\begin{itemize}

\smallskip

\item[-] Let $A$ be an algebra in ${}^G_G \mathcal{YD}$, $\tau\in G$ and $a_1,\dots,a_r\in A$. We set
$$
\quad {}^{\tau}\nespp \ba_{1r} := {}^{\tau}\nespp a_1\ot \cdots\ot {}^{\tau}\nespp a_r
$$

\smallskip

\item[-] Let $H$ be a Hopf algebra in ${}^G_G \mathcal{YD}$, $A$ be an algebra in ${}^G_G \mathcal{YD}$, $\sigma\in G$, $h\in H_{\sigma}$ and $a_1,\dots,a_r\in A$. We set
$$
\quad\qquad h \triangleright \ba_{1r} := \sum_{\tau_1,\dots, \tau_r\in G \atop \tau_1\cdots \tau_r = \sigma} h^{(1)}_{\tau_1}\cdot {}^{\tau_1^{-1}}\nespp a_1 \ot h^{(2)}_{\tau_2}\cdot {}^{\tau_2^{-1}\tau_1^{-1}}\nespp a_2\ot\cdots \ot h^{(r)}_{\tau_r}\cdot {}^{\tau_r^{-1}\cdots \tau_1^{-1}}\nespp a_r.
$$

\smallskip

\end{itemize}

\begin{theorem}\label{d1 en homologia en YD} Let $E$ be as in item~(7) above and let $M$ be an $E$-bimodule. Let $K$ be a Yetter-Drinfeld subalgebra of $A$ (that is, $K$ is a $G$-graded subalgebra of $A$, which is closed under the action of $G$ over $A$). Assume that $\rho(H\ot_k K)\subseteq K$ and let $(\ov{X}_*(M),\ov{d}_*)$ be as in Theorem~\ref{th5.2}. Then the terms $\ov{d}^0$ and $\ov{d}^1$ of $\ov{d}$ are given by:
\begin{align*}
\ov{d}^0(\bx)& = [ma_1\ot \ba_{2r}]_H\ot_k \bh_{1s}\\
& + \sum_{i=1}^{r-1} (-1)^i [m\ot  \ba_{1,i-1}\ot a_ia_{i+1} \ot \ba_{i+1,r}]_H \ot_k \bh_{1s}\\
& + (-1)^r [{}^{\sigma_s^{-1}\cdots\sigma_1^{-1}}\nespp a_r m\ot \ba_{1,r-1}]_H\ot_k \bh_{1s}
\intertext{and}
\ov{d}^1(\bx)& = (-1)^r \bigl[m\ep(h_1)\ot \ba_{1r}\bigr]_H \ot_k \bh_{2s}\\
& + \sum_{i=1}^{s-1}(-1)^{r+i} \bigl[m\ot \ba_{1r}\bigr]_H \ot_k \bh_{1,i-1}\ot_k h_ih_{i+1} \ot_k \bh_{i+2,s}\\
& + \sum_{\tau_1,\tau_2,\tau_3 \in G \atop \tau_1\tau_2\tau_3 = \sigma_s} (-1)^{r+s}\bigl[\gamma(h_{s,\tau_3}^{(3)})m \gamma^{-1}({}^{\ov{\sigma}}\nesp h_{s,\tau_1}^{(1)})\ot {}^{\ov{\sigma}}\nesp h_{s,\tau_2}^{(2)}\triangleright \ba'_{1r}\bigr]_H \ot_k \bh_{1,s-1},
\end{align*}
in which
\begin{align*}
& \bx := [m\ot\ba_{1r}]_H\ot_k \bh_{1s}\quad\text{with $h_1\in H_{\sigma_1},\dots,h_s\in H_{\sigma_s}$,}\\
& \ov{\sigma} := \sigma_1\cdots \sigma_{s-1} \qquad \ba'_{1r}:= {}^{\ov{\sigma} \tau_1^{-1} \ov{\sigma}^{-1}}\nespp \ba_{1r}
\intertext{and}
& \sum_{\tau_1,\tau_2,\tau_3 \in G \atop \tau_1\tau_2\tau_3 = \sigma_s} h_{s,\tau_1}^{(1)} \ot_k h_{s,\tau_2}^{(2)}\ot_k h_{s,\tau_3}^{(3)} := \Delta^2(h_s),
\end{align*}
with $h_{s,\tau_1}^{(1)} \ot_k h_{s,\tau_2}^{(2)}\ot_k h_{s,\tau_3}^{(3)}\in H_{\tau_1}\ot_k H_{\tau_2}\ot_k H_{\tau_3}$.
\end{theorem}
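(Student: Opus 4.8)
The statement is the concrete form taken by Theorem~\ref{th5.2} when the ambient braided category is ${}^G_G\mathcal{YD}$, $L=k[G]$, and the transposition is $s=c_{HA}$; so the plan is to start from the diagrammatic formulas for $\ov{d}^0$ and $\ov{d}^1$ furnished by Theorem~\ref{th5.2} and unwind each of the structure maps occurring in them into the explicit $G$-graded description recalled in items~(1)--(8) of this section. First I would check that the hypotheses of Theorem~\ref{th5.2} are met: $K$ is a subalgebra of $A$ with $\rho(H\ot_k K)\subseteq K$ by assumption, and, since $s=c_{HA}$ restricts to any $G$-graded subobject closed under the $G$-action, $K$ is automatically stable under $s$; moreover $f$ is, by hypothesis, a convolution invertible normal cocycle and $H$ is a Hopf algebra in ${}^G_G\mathcal{YD}$, so $E=A\#_f H$ is cleft. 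Hence $(\ov{X}_*(M),\ov{d}_*)$ computes $\Ho^K_*(E,M)$ and $\ov{d}^0,\ov{d}^1$ are given by the formulas of Theorem~\ref{th5.2}.

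For $\ov{d}^0$, the first $r$ summands are the normalized Hochschild boundary of $A$ with coefficients in $M\ot_{\!A}(E/A)^{\ot_{\!A}^s}$ and are copied verbatim; the only computation is the last summand, which depends on $\s^{-1}(a_r\ot_k\bh_{1s})$. Using the recursive definition of $\s_{s1}$ and the rule $c_{MN}(m\ot_k n)={}^{\sigma}n\ot_k m$ for $m\in M_{\sigma}$, an easy induction on $s$ gives $\s_{s1}(\bh_{1s}\ot_k a)={}^{\sigma_1\cdots\sigma_s}a\ot_k\bh_{1s}$ whenever $h_i\in H_{\sigma_i}$, whence $\s_{s1}^{-1}(a\ot_k\bh_{1s})=\bh_{1s}\ot_k{}^{\sigma_s^{-1}\cdots\sigma_1^{-1}}a$; substituting this into the last summand of $\ov{d}^0$ produces exactly the third displayed line.

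The real content is the third summand of $\ov{d}^1$. Starting from the expression in Theorem~\ref{th5.2}, one splits $\Delta(h_s)=h_s^{(1)}\ot_k h_s^{(2)}$, transports $h_s^{(1)}$ leftward past $\bh_{1,s-1}$ (by the braiding) and then past $\ba_{1r}$ (by $\s_{1r}^{-1}$), splits the result again by $\Delta$, feeds one half into $\gamma^{-1}$ and lets the other half act on the transported $a$'s by the iterated weak action $\rho_r$. The braiding rule above and its inverse $c_{MN}^{-1}(n'\ot_k m)=m\ot_k{}^{\sigma^{-1}}n'$ ($m\in M_{\sigma}$), the compatibility of $\Delta$ with the $G$-action, and the grading rule ${}^{\sigma}H_{\tau}=H_{\sigma\tau\sigma^{-1}}$ show that moving $h_s^{(1)}$ past $\bh_{1,s-1}$ replaces it by ${}^{\ov{\sigma}}h_s^{(1)}$ (with $\ov{\sigma}=\sigma_1\cdots\sigma_{s-1}$) and leaves $\bh_{1,s-1}$ untouched, while moving it past $\ba_{1r}$ leaves it untouched but twists the $a$'s by the inverse of its degree. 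Combining this with the identity
$$
\rho_r(g\ot_k\ba_{1r})=g\triangleright\bigl({}^{\deg g}a_1\ot\cdots\ot{}^{\deg g}a_r\bigr)\qquad(g\in H\text{ homogeneous}),
$$
which I would prove by induction on $r$ directly from the definitions of $\rho_r$ and of $\triangleright$, turns the transported $\rho_r$-action into $\triangleright$ applied to $\ba'_{1r}={}^{\ov{\sigma}\tau_1^{-1}\ov{\sigma}^{-1}}\ba_{1r}$; finally one reindexes the sums over $G$ and relabels $\Delta^2(h_s)=\sum h_{s,\tau_1}^{(1)}\ot_k h_{s,\tau_2}^{(2)}\ot_k h_{s,\tau_3}^{(3)}$ to reach the stated formula. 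The main obstacle is exactly this bookkeeping of the $G$-twists accumulated by the sub-comultiples of $h_s$ as they are pushed past the $h_j$'s and the $a_j$'s, and matching them against the $\triangleright$-action; the first two summands of $\ov{d}^1$ (the counit term and the inner-multiplication terms) are immediate since they only involve the multiplication and counit of $H$, which are unchanged. We do not attempt the analogous formula for the Connes operator $\ov{D}$ of Theorem~\ref{morfismo D}, because the transports appearing there involve the maps $\pc$, $\gc$ and the iterated antipode, and the corresponding $G$-twist bookkeeping becomes unwieldy.
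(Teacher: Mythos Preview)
Your proposal is correct and follows essentially the same route as the paper: both specialize Theorem~\ref{th5.2} by computing $\s_{s1}^{-1}$ explicitly for $\ov{d}^0$, and by unwinding the braiding $c_{s-1,1}\bigl(\bh_{1,s-1}\ot_k h_s^{(1)}\bigr)$ together with $\s_{1r}^{-1}$ and the iterated weak action $\rho_r$ for the last summand of $\ov{d}^1$. Your extra intermediate identity $\rho_r(g\ot_k\ba_{1r})=g\triangleright{}^{\deg g}\ba_{1r}$ is exactly the content the paper packages into its second displayed equality (involving the map $J$), and your verification of the hypotheses of Theorem~\ref{th5.2} is a useful addition that the paper leaves implicit.
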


\begin{proof} The formula for $\ov{d}^0$ follows immediately from Theorem~\ref{th5.2} and the fact that
$$
s^{-1}(a\ot_k \bh_{1s}) = \bh_{1s}\ot_k {}^{\sigma_s^{-1}\cdots\sigma_1^{-1}}\nesp a
$$
for each $h_1\in H_{\sigma_1},\dots,h_s\in H_{\sigma_s}$ and $a\in A$, while the formula for $\ov{d}^1$ follows from Theorem~\ref{th5.2} and the fact that
\begin{align*}
& c\bigl(\bh_{1,s-1}\ot_k h_s^{(1)}\bigr)\ot_k h_s^{(2)} = \sum_{\tau_1,\tau_2\in G\atop \tau_1\tau_2=\sigma_s} {}^{\ov{\sigma}}\nesp h_{s,\tau_1}^{(1)}\ot_k\bh_{1,s-1}\ot_k h_{s,\tau_2}^{(2)}
\intertext{and}
&\sum_{\tau_1,\tau_2,\tau_3 \in G \atop \tau_1\tau_2\tau_3 = \sigma_s} {}^{\ov{\sigma}}\nesp h_{s,\tau_1}^{(1)} \ot_k{}^{\ov{\sigma}}\nesp h_{s,\tau_2}^{(2)}\triangleright {}^{\ov{\sigma}(\tau_1)}\nespp \ba_{1r} \ot_k h_{s,\tau_3}^{(3)}  = \sum_{\tau_1,\tau_2 \in G \atop \tau_1\tau_2=\sigma_s} J\bigl(\ba_{1r}\ot_k {}^{\ov{\sigma}}\nesp h_{s,\tau_1}^{(1)}\bigr) \ot_k h_{s,\tau_2}^{(2)},
\end{align*}
for each $h_1\in H_{\sigma_1},\dots,h_s\in H_{\sigma_s}$ and $a_1,\dots,a_r\in A$.
\end{proof}

\begin{theorem}\label{d1 en cohomologia en YD} Let $E$, $M$ and $K$ be as in the previous theorem and let $(\ov{X}^*(M),\ov{d}^*)$ be as in Theorem~\ref{th7.1}. The terms $\ov{d}_0$ and $\ov{d}_1$ of $\ov{d}$ are given by:
\begin{align*}
\ov{d}_0(\beta)(\bx)& = a_1\beta\bigl(\ba_{2r}\ot_k \bh_{1s}\bigr)\\
& + \sum_{i=1}^{r-1} (-1)^i \beta\bigl(\ba_{1,i-1}\ot a_ia_{i+1} \ot \ba_{i+1,r}\ot_k \bh_{1s}\bigr)\\
& + (-1)^r\, {}^{\sigma_s^{-1}\cdots\sigma_1^{-1}}\nespp a_r\beta\bigl(\ba_{1,r-1}\ot_k \bh_{1s}\bigr)
\intertext{and}
\ov{d}_1(\beta)(\bx)& = (-1)^r \ep(h_1)\beta\bigl(\ba_{1r} \ot_k \bh_{2s}\bigr)\\
& + \sum_{i=1}^{s-1}(-1)^{r+i} \beta\bigl(\ba_{1r}\ot_k \bh_{1,i-1}\ot_k h_ih_{i+1} \ot_k \bh_{i+2,s}\bigr)\\
& + \sum_{\tau_1,\tau_2,\tau_3 \in G \atop \tau_1\tau_2\tau_3 = \sigma_s} (-1)^{r+s}\gamma(h_{s,\tau_3}^{(3)}) \beta\bigl({}^{\ov{\sigma}}\nesp h_{s,\tau_2}^{(2)}\triangleright \ba'_{1r}\ot_k \bh_{1,s-1}\bigr)\gamma^{-1}({}^{\ov{\sigma}}\nesp h_{s,\tau_1}^{(1)}),
\end{align*}
in which
\begin{align*}
& \bx := \ba_{1r}\ot_k \bh_{1s}\quad\text{with $h_1\in H_{\sigma_1},\dots,h_s\in H_{\sigma_s}$,}\\
& \ov{\sigma} := \sigma_1\cdots \sigma_{s-1},\qquad\ba'_{1r} = {}^{\ov{\sigma} \tau_1^{-1} \ov{\sigma}^{-1}}\nespp \ba_{1r}
\intertext{and}
& \sum_{\tau_1,\tau_2,\tau_3 \in G \atop \tau_1\tau_2\tau_3 = \sigma_s} h_{s,\tau_1}^{(1)} \ot_k h_{s,\tau_2}^{(2)}\ot_k h_{s,\tau_3}^{(3)} := \Delta^2(h_s),
\end{align*}
with $h_{s,\tau_1}^{(1)} \ot_k h_{s,\tau_2}^{(2)}\ot_k h_{s,\tau_3}^{(3)}\in H_{\tau_1}\ot_k H_{\tau_2}\ot_k H_{\tau_3}$.
\end{theorem}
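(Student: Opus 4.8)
The plan is to obtain both formulas by specializing Theorem~\ref{th7.1} to the braided category ${}^G_G\mathcal{YD}$, in exact parallel with the way the formulas in Theorem~\ref{d1 en homologia en YD} are deduced from Theorem~\ref{th5.2}. The first step is to record the shape of each structure map occurring in Theorem~\ref{th7.1} when $s = c_{HA}$ and all objects are $G$-graded. Since the braid $c$ only twists one tensor factor, as it passes another, by the action of the degree of the latter, and leaves degrees unchanged, for $h_i\in H_{\sigma_i}$ one has
$$
\s_{sr}^{-1}(\ba_{1r}\ot_k \bh_{1s}) = \bh_{1s}\ot_k {}^{\sigma_s^{-1}\cdots\sigma_1^{-1}}\nespp\ba_{1r},\qquad \cc_{s-1,1}(\bh_{1,s-1}\ot_k h) = {}^{\ov{\sigma}}\nesp h\ot_k\bh_{1,s-1}
$$
for $h$ homogeneous, where $\ov{\sigma} = \sigma_1\cdots\sigma_{s-1}$; the map $\gc_s$ just reverses the order of homogeneous factors, twisting each by the appropriate product of degrees. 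Finally, unwinding Definition~\ref{accion debil multiple} and the recursion for $\s_{1r}$ in the $G$-graded setting identifies the composite of $\sum_j s^{-1}\bigl(\ba_{1r}\ot_k h^{(j)}\bigr)$ with the iterated weak action $\rho_r$ as the operation $h\triangleright\ba_{1r}$ introduced just before the statement.

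The second step is to substitute these expressions into the three summands of $\ov{d}_0(\beta)(\bx)$ and of $\ov{d}_1(\beta)(\bx)$ provided by Theorem~\ref{th7.1}. For $\ov{d}_0$ only the last summand changes: the decomposition $\sum_i\bh_{1s}^{(i)}\ot_k a_r^{(i)} = \s^{-1}(a_r\ot_k\bh_{1s})$ collapses to the single tensor $\bh_{1s}\ot_k {}^{\sigma_s^{-1}\cdots\sigma_1^{-1}}\nespp a_r$, which yields the claimed formula. For $\ov{d}_1$ the first two summands are already in final form; in the third one the terms $\cc_{s-1,1}(\bh_{1,s-1}\ot_k h_s^{(1)})$, $\sum_j s^{-1}(\ba_{1r}\ot_k h_s^{(1)(j)})$ and $h_s^{(1)(j)(l)(2)}\cdot\ba_{1r}^{(l)}$ are rewritten using the identities above, the double sum over $j,l$ being reorganized as the sum over the factorizations $\tau_1\tau_2\tau_3 = \sigma_s$ induced by $\Delta^2(h_s)$, the $\gamma^{\pm1}$ factors being read off from the definitions of $\theta^{rs}$ and $\vartheta^{rs}$; the signs $(-1)^r$, $(-1)^{r+i}$, $(-1)^{r+s}$ are untouched by the specialization.

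What remains is to justify the auxiliary identities used in this substitution — concretely, the two displayed identities at the end of the proof of Theorem~\ref{d1 en homologia en YD} (with $s^{-1}$ in place of the flip, now contributing to the cohomological coboundary) — which are proved by induction on $s$ and $r$ from the recursive definitions of $\cc_{sr}$, $\gc_s$, $\s_{sr}$ and $\rho_r$ together with the coalgebra, algebra and Yetter--Drinfeld compatibilities listed in items~(1)--(7). These are the very computations already carried out for the homological boundary, transported verbatim to $\ov{d}_0$ and $\ov{d}_1$ through the isomorphism $\theta^*$. The only delicate point, and the main (purely bookkeeping) obstacle, is keeping careful track of which tensor factor is acted upon by which product of the $\sigma_i$ as the homogeneous components of $h_s$ are moved across $\bh_{1,s-1}$ and $\ba_{1r}$; once the homological case is in hand there is no further conceptual difficulty.
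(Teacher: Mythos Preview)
Your proposal is correct and follows exactly the paper's approach: the paper's proof is the single sentence ``Mimic the proof of the previous theorem,'' and you have spelled out precisely this---specialize the general formulas of Theorem~\ref{th7.1} to the Yetter--Drinfeld setting by substituting the explicit forms of $\s^{-1}$, $\cc_{s-1,1}$ and $\rho_r$ that were already computed in the proof of Theorem~\ref{d1 en homologia en YD}.
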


\begin{proof} Mimic the proof of the previous theorem.
\end{proof}

\appendix
\section{}
This appendix is devoted to prove Propositions~\ref{formula para hat{phi} modulo la filtracion},~\ref{propiedades de hat{psi}} and~\ref{sobre la imagen de hat{w}}. Lemmas~\ref{leA.1}, \ref{leA.2}, \ref{leA.4} and~\ref{leA.6}, and Propositions~\ref{propA.5}, \ref{propA.7}  and~\ref{propA.9} generalize the corresponding results in~\cite{C-G-G}. Except for Propositions~\ref{propA.5} and~\ref{propA.8} we do not provide proofs, because the ones given in that paper work in our setting.

\smallskip

We will use the following notations:

\begin{enumerate}

\smallskip

\item We let $L_{rs}\subseteq U_{rs}$ denote the $K$-subbimodules of $X_{rs}$ generated by the simple tensors of the form
$$
1\ot_{\!A} \gamma_{\!A}(\bv_{1s})\ot\ba_{1r}\ot 1\quad \text{and}\quad 1\ot_{\!A}\gamma_{\!A}(\bv_{1s}) \ot \ba_{1r}\ot \gamma(v),
$$
respectively. Moreover we set
$$
\qquad L_n:=\bigoplus_{r+s=n} L_{rs}\quad\text{and}\quad  U_n:=\bigoplus_{r+s=n} U_{rs}.
$$

\smallskip

\item Given a subalgebra $R$ of $A$ we set
$$
X^{R1}_n:=\bigoplus_{r+s=n} X^{R1}_{rs}
$$
where $X^{R1}_{rs}$ is as in Notation~\ref{not2.1}.

\smallskip

\item We write $F^i_R(X_n):= F^i(X_n)\cap X^{R1}_n$.

\smallskip

\item We let $W_n$ denote the $K$-subbimodule of $E\ot\ov{E}^{\ot^n}\ot E$ generated by the simple tensors $1\ot \bx_{1n}\ot 1$ such that $x_i\in \ov{A}\cup \mathcal{V}_{\!K}$ for all~$i$.

\smallskip

\item We let $W'_n$ denote the $K$-subbimodule of $E\ot\ov{E}^{\ot^n}\ot E$ generated by the simple tensors $1\ot \bx_{1n}\ot 1$ such that $\#(\{j:x_j\notin \ov{A}\cup \mathcal{V}_{\!K}\})\le 1$.

\smallskip

\item Given a subalgebra $R$ of $A$, we let $C^R_n$ denote the $E$-subbimodule of $E\ot \ov{E}^{\ot^n}\ot E$ generated by all the simple tensors $1\ot \bx_{1n}\ot 1$ with some $x_i$ in~$\ov{R}$.

\smallskip

\item Let $R_i$ denote $F^i\bigl(E\ot \ov{E}^{\ot^n}\ot E\bigr)\setminus F^{i-1}\bigl(E\ot \ov{E}^{\ot^n}\ot E\bigr)$.

\medskip

\end{enumerate}
The identification $X_{rs}\simeq \bigr(E\ot_k\ov{V}^{\ot_k^s}\bigr)\ot \ov{A}^{\ot^r}\ot E$ induces identifications
$$
L_{rs} \simeq \bigl(K\ot_k\ov{V}^{\ot_k^s}\bigr)\ot \ov{A}^{\ot^r}\ot K\quad\text{and}\quad U_{rs} \simeq \bigl(K\ot_k\ov{V}^{\ot_k^s}\bigr)\ot \ov{A}^{\ot^r}\ot K\mathcal{V},
$$
where, as at the beginning of Section~\ref{SecRes}, $\mathcal{V}$ denotes the image, $k\ot V$, of $\gamma\colon V\to E$.

\begin{lemma}\label{leA.1} We have
$$
\ov{\sigma}_{n+1} = -  \sigma_{0,n+1}^0 \xcirc \sigma_{n+1}^{-1} \xcirc \nu_n + \sum_{r=0}^n \sum_{l=0}^{n-r} \sigma_{r+l+1,n-r-l}^l.
$$
\end{lemma}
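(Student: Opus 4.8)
The plan is to start from the explicit description of the contracting homotopy in Proposition~\ref{cont nuestra},
$$
\ov{\sigma}_{n+1} = -\sum_{l=0}^{n+1}\sigma^l_{l,n-l+1}\xcirc\sigma^{-1}_{n+1}\xcirc\nu_n + \sum_{r=0}^n\sum_{l=0}^{n-r}\sigma^l_{r+l+1,n-r-l},
$$
whose second double sum is literally the second summand in the statement. Thus the lemma is equivalent to the vanishing of the terms with $l\ge 1$ in the first sum, and since $\sigma^{-1}_{n+1}\xcirc\nu_n$ takes values in $Y_{n+1}$, on which each $\sigma^l_{l,n+1-l}$ is defined, it suffices to prove the cleaner assertion: $\sigma^l_{l,s-l}\xcirc\sigma^{-1}_s = 0$ as a map $Y_{s-1}\to X_{l,s-l}$, for all $s\ge 1$ and all $1\le l\le s$.

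I would first record two facts that follow at once from the defining formulas. First, $\sigma^0\xcirc\sigma^0 = 0$, for both the map $\sigma^0_{0s}\colon Y_s\to X_{0s}$ and the maps $\sigma^0_{r+1,s}\colon X_{rs}\to X_{r+1,s}$: composing two of them inserts the class of $1$ into the $\ov{A}$-part of the tensor, which is zero because $1\in K$. Second, $\sigma^0_{0,s}$ is a section of the canonical surjection $\nu_s$, i.e.\ $\nu_s\xcirc\sigma^0_{0,s} = \ide_{Y_s}$; combined with the case $l=1$, $r=0$ of the recursive definition of $d^l$, this gives $d^1_{0,s}\xcirc\sigma^0_{0,s} = \sigma^0_{0,s-1}\xcirc\partial_s$.

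Then I would induct on $l$. For $l=1$,
$$
\sigma^1_{1,s-1}\xcirc\sigma^{-1}_s = -\sigma^0_{1,s-1}\xcirc d^1_{0,s}\xcirc\sigma^0_{0,s}\xcirc\sigma^{-1}_s = -\bigl(\sigma^0_{1,s-1}\xcirc\sigma^0_{0,s-1}\bigr)\xcirc\partial_s\xcirc\sigma^{-1}_s = 0
$$
by the first fact. For $l\ge 2$, the recursive definition gives $\sigma^l_{l,s-l} = -\sum_{i=0}^{l-1}\sigma^0\xcirc d^{l-i}\xcirc\sigma^i_{i,s-i}$; the summands with $1\le i\le l-1$ die after precomposition with $\sigma^{-1}_s$ by the inductive hypothesis, leaving $-\sigma^0_{l,s-l}\xcirc d^l_{0,s}\xcirc\sigma^0_{0,s}\xcirc\sigma^{-1}_s$; and the recursive definition of $d^l_{0,s}$ (case $1<l$, $r=0$) exhibits $d^l_{0,s}$ as a sum of terms each of the form $\sigma^0_{l-1,s-l}\xcirc(\,\cdots)$, so that $\sigma^0_{l,s-l}\xcirc d^l_{0,s}$ vanishes again by the first fact. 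This completes the induction, hence the proof.

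The argument is essentially formal once the two facts are in place; the only point requiring attention is the index bookkeeping in the two nested recursions, namely checking that in the step for $l\ge 2$ the outer homotopy produced by the recursion for $\sigma^l_{l,s-l}$ is $\sigma^0_{l,s-l}$ while the one produced by the recursion for $d^l_{0,s}$ is $\sigma^0_{l-1,s-l}$, so that they compose to $\sigma^0_{l,s-l}\xcirc\sigma^0_{l-1,s-l}=0$. I expect no conceptual obstacle beyond this.
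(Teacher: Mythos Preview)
Your approach is correct and matches what the paper has in mind (the paper defers to \cite{C-G-G}, where the same reduction and induction are carried out). One small imprecision: your Fact~2, stated as the global identity $d^1_{0,s}\xcirc\sigma^0_{0,s}=\sigma^0_{0,s-1}\xcirc\partial_s$, is not literally true, because the recursive formula $d^1_{0,s}(\bz)=\sigma^0_{0,s-1}\xcirc\partial_s\xcirc\nu_s(\bz)$ is only given on $\bz\in E\ot_{\!A}(E/A)^{\ot_{\!A}^s}\ot K$, and $\sigma^0_{0,s-1}$ is only $(E,K)$-linear, not right $E$-linear, so one cannot extend by bimodule linearity to all of $X_{0s}$. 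This does not affect your argument: you only ever use the identity after precomposing with $\sigma^{-1}_s$, and $\sigma^0_{0,s}\xcirc\sigma^{-1}_s$ lands in $E\ot_{\!A}(E/A)^{\ot_{\!A}^s}\ot K$ (the last tensor factor is $1_E\in K$), exactly the subspace where the recursion applies verbatim. The same observation is what makes the inductive step go through for $l\ge2$: on that subspace $d^l_{0,s}=-\sum_{j=1}^{l-1}\sigma^0_{l-1,s-l}\xcirc d^{l-j}\xcirc d^j$, so $\sigma^0_{l,s-l}\xcirc d^l_{0,s}$ vanishes there by $\sigma^0\xcirc\sigma^0=0$. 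If you rephrase Fact~2 as holding on the image of $\sigma^{-1}_s$ (or simply note that $\sigma^0_{0,s}\xcirc\sigma^{-1}_s$ has image in $E\ot_{\!A}(E/A)^{\ot_{\!A}^s}\ot K$), the proof is clean.
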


\begin{lemma}\label{leA.2} The contracting homotopy $\ov{\sigma}$ satisfies $\ov{\sigma}\xcirc \ov{\sigma} = 0$.
\end{lemma}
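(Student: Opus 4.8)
The plan is the following. First I would use Lemma~\ref{leA.1} to replace the defining formula by
\[
\ov{\sigma}_{n+1}=-\sigma^0_{0,n+1}\xcirc\sigma^{-1}_{n+1}\xcirc\nu_n+\sum_{r=0}^n\sum_{l=0}^{n-r}\sigma^l_{r+l+1,n-r-l},
\]
so that $\ov{\sigma}_{n+2}\xcirc\ov{\sigma}_{n+1}$ splits as a finite sum of composites of the operators $\sigma^l$ $(l\ge 0)$, $\sigma^0_{0,\bullet}$, $\sigma^{-1}_\bullet$ and $\nu_\bullet$. Concretely these composites have one of four shapes: $\sigma^{l'}\xcirc\sigma^{l}$ (both factors coming from the double sums), $\sigma^0_{0,\bullet}\xcirc\sigma^{-1}_\bullet\xcirc\nu_\bullet\xcirc\sigma^{l}$, $\sigma^{l'}\xcirc\sigma^0_{0,\bullet}\xcirc\sigma^{-1}_\bullet\xcirc\nu_\bullet$, and $\sigma^0_{0,\bullet}\xcirc\sigma^{-1}_\bullet\xcirc\nu_\bullet\xcirc\sigma^0_{0,\bullet}\xcirc\sigma^{-1}_\bullet\xcirc\nu_\bullet$.

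The whole proof then rests on a short list of elementary vanishings, all read off from the explicit formulas of Section~\ref{SecRes}. The two basic ones are $\sigma^0\xcirc\sigma^0=0$ and $\sigma^{-1}\xcirc\sigma^{-1}=0$: applying $\sigma^0$ twice forces the unit $1$, regarded as an element of $\ov{A}=A/K$, into a bar slot, and applying $\sigma^{-1}$ twice forces the class of $1_E\in A$ into an $E/A$-slot, so in either case the result is $0$. From $\sigma^0\xcirc\sigma^0=0$ together with the recursion $\sigma^l_{r+l+1,s-l}=-\sum_{i=0}^{l-1}\sigma^0\xcirc d^{l-i}\xcirc\sigma^i$ — whose leftmost operator in every summand is a $\sigma^0$ — one obtains $\sigma^0\xcirc\sigma^l=0$ for all $l\ge 0$, and then, by induction on $i$ applying the same recursion to $\sigma^i$, $\sigma^i\xcirc\sigma^l=0$ and $\sigma^i\xcirc\sigma^0_{0s}=0$ for all $i,l\ge 0$ whenever the composition makes sense. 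This annihilates every composite of the first and third shapes. A composite of the second shape vanishes because $\nu_n$ is concentrated on the summand $X_{0n}$ of $X_n$ and hence kills the image of $\sigma^l_{r+l+1,\bullet}$. Finally, the composite of the fourth shape equals $\sigma^0_{0,n+2}\xcirc\sigma^{-1}_{n+2}\xcirc(\nu_{n+1}\xcirc\sigma^0_{0,n+1})\xcirc\sigma^{-1}_{n+1}\xcirc\nu_n$, which by the homotopy relation $\nu_s\xcirc\sigma^0_{0s}=\ide$ collapses to $\sigma^0_{0,n+2}\xcirc(\sigma^{-1}_{n+2}\xcirc\sigma^{-1}_{n+1})\xcirc\nu_n=0$.

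The main obstacle is not conceptual but organisational: correctly expanding $\ov{\sigma}\xcirc\ov{\sigma}$ into the composites above, making sure each $\sigma^{l'}\xcirc\sigma^l$ really does have a $\sigma^0$ as its leftmost factor once the recursion is unwound (so that $\sigma^0\xcirc\sigma^0=0$ is applicable), and tracking the degree subscripts through the whole computation. Since none of this uses anything about $E$ beyond the formal properties of $\sigma^0$, $\sigma^{-1}$ and the $\sigma^l$, the argument is identical to the one in \cite{C-G-G} and can be transcribed verbatim, as is done elsewhere in this appendix. One could also bypass the computation entirely: $\ov{\sigma}$ is the homotopy obtained in \cite[Corollary~A2]{G-G3} by perturbing a special deformation retract — its ``specialness'' being exactly the vanishings $\sigma^0\xcirc\sigma^0=0$, $\sigma^{-1}\xcirc\sigma^{-1}=0$ and the accompanying orthogonality relations — so $\ov{\sigma}\xcirc\ov{\sigma}=0$ follows directly from the last sentence of Theorem~\ref{lema de perturbacion}.
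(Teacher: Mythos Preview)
Your proposal is correct and matches the paper's approach: the paper does not give an independent proof of this lemma but simply notes that the argument from \cite{C-G-G} carries over verbatim, which is precisely the computation you have reconstructed. Your alternative observation via Theorem~\ref{lema de perturbacion} is a valid shortcut, though not the route the paper (or \cite{C-G-G}) takes.
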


\begin{remark}\label{reA.3} The previous lemma implies that
$$
\psi(\bx_{0n}\ot 1) =(-1)^n\ov{\sigma} \xcirc\psi(\bx_{0n})
$$
for all $n\ge 1$.
\end{remark}

\begin{lemma}\label{leA.4} It always holds that $d^l(L_{rs}) \subseteq U_{r+l-1,s-l}$, for each $l\ge 2$. Moreover
\begin{align*}
d^1(L_{rs}) & \subseteq EL_{r,s-1} + U_{r,s-1}.
\end{align*}
\end{lemma}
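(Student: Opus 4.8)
The plan is to prove both inclusions simultaneously, by induction on $l$; for each fixed $l$ one runs a secondary induction on $r$, which is forced by the shape of the recursion (when $r>0$ the defining formula for $d^l$ contains the term involving $d^l\xcirc d^0$, which decreases $r$). The whole argument turns on two features of the row homotopies that one reads off directly from their explicit formulas: $\sigma^0_{0s}$ converts the final $\ot_{\!A}$ into an ordinary tensor symbol, carrying any $A$-coefficient of the last $E$-entry leftwards across the $\ot_{\!A}$'s; while for $r\ge 0$ the map $\sigma^0_{r+1,s}$ leaves the first $E$-entry and the entire $(E/A)$-string untouched and simply transfers the $A$-part of the last $E$-entry into the $\overline{A}$-string, \emph{annihilating the tensor when that $A$-part lies in $K$}. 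Thus $\sigma^0$ kills $U_{**}$, and any composite ending with an outer $\sigma^0$ lands in $U$ as soon as one knows that the $(E/A)$-string of its argument is a pure $\gamma$-string and its impurities are confined to the extreme ($E$-)entries.

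For $l=1$ and $r=0$ one evaluates $d^1_{0s}=\sigma^0\xcirc\partial\xcirc\nu$ on a generator $1\ot_{\!A}\gamma_{\!A}(\bv_{1s})\ot 1$ of $L_{0s}$, using the explicit faces of the relative bar resolution of the $A$-ring $E$. The leftmost face feeds $\gamma(v_1)$ into the first $E$-slot, landing (after $\sigma^0$) in $EL_{0,s-1}$; the rightmost face merges $\gamma(v_s)$ with the unit of $K$ — here one uses that $K$ is stable under $\chi$ — and lands in $U_{0,s-1}$; an interior face replaces $\gamma(v_i)\ot_{\!A}\gamma(v_{i+1})$ by $\overline{\mathcal F(v_i\ot_k v_{i+1})}\in A\ot_k\overline{V}$, and the point is that this impurity can be carried leftward across the $\ot_{\!A}$'s — at the cost of creating impurities $\overline{\chi(v_j\ot_k a)}$ that are in turn carried further left — until it reaches the first $E$-slot, where it is absorbed, leaving a pure $\gamma$-string; the outcome is a sum of tensors in $EL_{0,s-1}$. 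For $l=1$, $r>0$ one uses $d^1_{rs}=-\sigma^0\xcirc d^1_{r-1,s}\xcirc d^0_{rs}$: by Theorem~\ref{formula para d_1}, $d^0_{rs}$ is, up to sign, the Hochschild boundary acting on the $\overline{A}$-string, so $d^0_{rs}(L_{rs})$ lands in $L_{r-1,s}+EL_{r-1,s}$ (the $i=0$ face again produces an impurity in the last $(E/A)$-slot, pushed leftward into the first $E$-slot); one then applies the inductive hypothesis for $(1,r-1)$ and the outer $\sigma^0$.

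For $l\ge 2$ the recursion reads $d^l(\bz)=-\sum_j\sigma^0\xcirc d^{l-j}\xcirc d^j(\bz)$. For each $j\ge 1$, the inductive hypothesis together with the $l=1$ case gives that $d^j(\bz)$ has a pure $\gamma$-string with all impurities in its outermost entries; applying the $E$-bilinear map $d^{l-j}$ and then the outer $\sigma^0_{r+l-1,s-l}$ — which shifts the $A$-part of the last entry into the $\overline{A}$-string and puts $\gamma(v)$ back at the end — yields an element of $U_{r+l-1,s-l}$, whereas the contribution from the $EL$-summand of $d^1(\bz)$ (for $j=l-1$), as well as the $j=0$ term when $r>0$, are killed because there $\sigma^0$ is fed a tensor whose last entry lies in $K\mathcal{V}$. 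This completes the induction.

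The crux — and the only place where one has to be genuinely careful — is the purity bookkeeping: because neither $\chi$ nor $\mathcal F$ is assumed to take values in $K\ot_k V$, the merges $\gamma(v)\gamma(w)=\mathcal F(v\ot_k w)$ and $\gamma(v)\cdot a=\chi(v\ot_k a)$ introduce honest elements of $A\ot_k V$. One must check, each time such an impurity is created by $\partial$, by one of the faces $u'_i$, or by a right $A$-action, that it sits in the first or the last $E$-slot (possibly after being transported across the $\ot_{\!A}$'s) and is then either absorbed into the first $E$-slot (giving the $EL$ term in the $l=1$ case) or shovelled by $\sigma^0$ into the $\overline{A}$-string (giving membership in $U$ in the $l\ge 2$ case), so that a pure $\gamma$-string is always left behind. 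The signs arising in $\partial$ and in the recursion are routine once this structural picture is in place.
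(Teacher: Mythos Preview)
Your approach is the natural one and matches the argument the paper defers to (the proof in \cite{C-G-G}): double induction on $l$ and $r$, driven by the recursive definition of $d^l$ and the two key facts about $\sigma^0$ that you isolate---namely that $\sigma^0_{r+1,s}$ annihilates any tensor whose last entry lies in $K\mathcal V$, and otherwise just strips the $A$-coefficient of the last entry into the $\overline A$-string.

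There is one small bookkeeping slip. In your treatment of $d^0_{rs}$ you write $d^0_{rs}(L_{rs})\subseteq L_{r-1,s}+EL_{r-1,s}$, but the \emph{last} Hochschild face (the one multiplying $a_r$ into the right-hand $E$-slot) produces a term in $L_{r-1,s}A$, not in $L_{r-1,s}$. This propagates: for the $j=0$ summand in the $l\ge 2$ recursion (when $r>0$) you then get, after applying $d^l$ and the inductive hypothesis on $r$, a contribution in $U_{r+l-2,s-l}A$. Contrary to your claim, this is \emph{not} killed by $\sigma^0$: its last entry is $\gamma(v)\cdot a=\sum a'\gamma(v')$, so $\sigma^0$ moves $a'$ into the $\overline A$-string and leaves a tensor in $U_{r+l-1,s-l}$. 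The conclusion of the lemma still holds---the term lands in $U$, which is all you need---so this is an exposition fix rather than a real gap. The same remark applies in the $l=1$, $r>0$ step: the $L_{r-1,s}A$ piece, after $d^1_{r-1,s}$, contributes $U_{r-1,s-1}A$ and $EL_{r-1,s-1}A$; the former is sent by $\sigma^0$ into $U_{r,s-1}$ and the latter into $EL_{r,s-1}$, as required. (Also, your reference to Theorem~\ref{formula para d_1} for the description of $d^0$ should point instead to the definition of the row complexes $(X_{*s},d^0_{*s})$ in Section~\ref{SecRes}; that theorem concerns $d^1$.)
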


\begin{proposition}\label{propA.5} Let $R$ be a stable under $\chi$ subalgebra of $A$. If $\mathcal{F}$ takes its values in $R\ot_k V$, then
$$
\phi\bigl(1\ot_{\!A}\gamma_{\!A}(\bv_{1i})\ot\ba_{1,n-i}\ot 1\bigr)\equiv 1\ot \Sh(\bv_{1i}\ot_k \ba_{1,n-i})\ot 1
$$
module $F^{i-1}\bigl(E\ot\ov{E}^{\ot^n}\ot E\bigr)\cap W_n\cap C^R_n$.
\end{proposition}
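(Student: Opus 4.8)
The plan is to prove the congruence by induction on the total degree $n$, carrying along in the induction the auxiliary closure facts that $\phi_n$ sends $F^j(X_n)$ into $F^j\bigl(E\ot\ov{E}^{\ot^n}\ot E\bigr)$, sends $L_n$ and $U_n$ into $W_n$ (up to the left $E$-action), and sends $X^{R1}_n$ into $C^R_n$. These facts are needed to keep the error terms under control and are not available beforehand, since Proposition~\ref{phi, psi y omega preservan filtraciones} rests precisely on the present statement. The base case $n=0$ is immediate from $\phi_0=\ide$ and $\Sh_{00}=\ide$.

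For the inductive step put $\bz:=1\ot_{\!A}\gamma_{\!A}(\bv_{1i})\ot\ba_{1,n-i}\ot 1\in X_{n-i,i}$ and expand $\phi_n(\bz)=\xi_n\xcirc\phi_{n-1}\xcirc d_n(\bz)$ using $d_n=d^0+d^1+\sum_{l\ge 2}d^l$ (Theorems~\ref{res nuestra} and~\ref{formula para d_1}). The crucial observation is that $\xi_n$ annihilates every tensor whose right-hand $E$-entry is $1_E$, and that $\phi_{n-1}$ carries any left $E$-multiple of a generator $1\ot_{\!A}(\cdots)\ot\ba\ot 1$ again to a tensor ending in $1_E$, because $\phi_{n-1}$ is by construction of the form $\xi_{n-1}\xcirc(\cdots)$, which appends a $1$. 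Hence all summands of $d^0(\bz)$ except the last Hochschild face, and all faces $u'_j(\bz)$ of $d^1(\bz)$ (Theorem~\ref{formula para d_1}(1)) with $j<i$, contribute nothing after $\xi_n\xcirc\phi_{n-1}$. What survives is: (a) the term $(-1)^n\bigl(1\ot_{\!A}\gamma_{\!A}(\bv_{1i})\ot\ba_{1,n-i-1}\ot 1\bigr)\cdot(a_{n-i}\# 1)$ from $d^0$; (b) the term $(-1)^i\sum_l\bigl(1\ot_{\!A}\gamma_{\!A}(\bv_{1,i-1})\ot\ba_{1,n-i}^{(l)}\ot 1\bigr)\cdot\gamma(v_i^{(l)})$ from $u'_i$, where $\sum_l\ba_{1,n-i}^{(l)}\ot v_i^{(l)}:=\chi(v_i\ot\ba_{1,n-i})$; and (c) the higher faces $\sum_{l\ge 2}d^l(\bz)$. (When $n-i=0$ part (a) is absent and $1\ot\Sh_{i,0}(\bv_{1i})\ot 1=1\ot\gamma(v_1)\ot\cdots\ot\gamma(v_i)\ot 1$ comes entirely from (b); the argument below is identical.)

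To (a) and (b) I apply $\phi_{n-1}$ — legitimate, since the arguments are genuine degree-$(n-1)$ elements of the shape covered by the inductive hypothesis, with one fewer $\gamma$-factor — and then $\xi_n$. A short sign count shows that the leading part of (a) equals $1\ot\bigl(\text{the }j=0,\dots,n-i-1\text{ summands of }\Sh_{i,n-i}(\bv_{1i}\ot\ba_{1,n-i})\bigr)\ot 1$: in the recursion of Notations~\ref{not3.1}(5) these are exactly the summands in which $a_{n-i}$ is left untouched by $\ov{\chi}$, so that $\Sh_{i,n-i-1}(\bv_{1i}\ot\ba_{1,n-i-1})\ot i_{\ov{A}}(a_{n-i})$ reproduces them. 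Likewise the leading part of (b) equals $1\ot\bigl(\text{the remaining }j=n-i\text{ summand}\bigr)\ot 1$, the one where $v_i$ is shuffled past all of $\ba_{1,n-i}$ (here $\chi$ inducing $\ov{\chi}$ is used). Together these give $1\ot\Sh(\bv_{1i}\ot\ba_{1,n-i})\ot 1$. The errors supplied by the inductive hypothesis, after right multiplication by $a_{n-i}\# 1$ (resp. $\gamma(v_i^{(l)})$) and $\xi_n$, stay in $F^{i-1}\cap W_n\cap C^R_n$: right multiplication by $A$ or $E$ preserves the filtration, $\xi_n$ raises it by one only when it appends a factor outside $\ov{A}$ (and there one started one step lower), $W_n$ and $C^R_n$ are visibly preserved, and the $\ov{R}$-factor living in the error is transported through the shuffles inside $\phi$ because $R$ is stable under $\chi$. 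For (c), Theorem~\ref{formula para d_1}(2) gives $d^l(\bz)\in X^{R,l-1}_{n-i+l-1,i-l}$, which already lies in $F^{i-l}(X_{n-1})\subseteq F^{i-2}(X_{n-1})$ and carries an $\ov{R}$-factor, while Lemma~\ref{leA.4} gives in addition $d^l(\bz)\in U_{n-i+l-1,i-l}$; applying $\phi_{n-1}$ (auxiliary closure facts) and then $\xi_n$ therefore lands in $F^{i-1}\cap W_n\cap C^R_n$. Adding (a), (b) and (c) closes the induction.

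The main obstacle is the bookkeeping of the matching step: one must verify that the two surviving families — the $d^0$-contribution and the $u'_i$-contribution — reassemble, with exactly the right signs, into the full recursive definition of $\Sh$, and simultaneously that every discarded face as well as every error inherited from the inductive hypothesis really lies in $F^{i-1}\cap W_n\cap C^R_n$. The hypothesis that $\mathcal{F}$ takes its values in $R\ot_k V$ enters the inductive step only through Theorem~\ref{formula para d_1}(2), which is what places the higher faces $d^l(\bz)$, $l\ge 2$, inside $C^R_n$; for everything else the stability of $R$ under $\chi$ is all that is required, in order to carry the $\ov{R}$-factors of the inherited errors through the maps built into $\phi$.
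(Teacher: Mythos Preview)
Your proposal is correct and takes essentially the same approach as the paper's proof. Both proceed by induction on $n$, expand $\phi_n(\bz)=\xi_n\xcirc\phi_{n-1}\xcirc d_n(\bz)$, observe that only the last face of $d^0$ and the face $u'_i$ of $d^1$ survive (all other faces producing tensors with right $E$-entry $1$, which $\xi_n$ annihilates after $\phi_{n-1}$), and dispose of the higher $d^l$ for $l\ge 2$ via Lemma~\ref{leA.4} together with item~(2) of Theorem~\ref{formula para d_1}. The one cosmetic difference is that you load the filtration/$W_n$/$C^R_n$ closure properties of $\phi$ as separate auxiliary hypotheses into the induction, whereas the paper simply invokes ``the inductive hypothesis'': those closure properties are in fact direct consequences of the proposition in degree $n-1$ combined with the $E$-bimodule linearity of $\phi_{n-1}$ and the $\chi$-stability of $R$, so no strengthening of the inductive statement is actually required.
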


\begin{proof} We proceed by induction on $n$. For $n=1$ this is trivial.  Assume that it is true for $n-1$. Let $\bx := 1\ot_{\!A} \gamma_{\!A}(\bv_{1i})\ot \ba_{1,n-i}\ot 1$. By item~(2) of Theorem~\ref{formula para d_1}, the fact that $d^l(\bx) \in U_{n-i+l-1,i-l}$ (by Lemma~\ref{leA.4}), the inductive hypothesis and the definition of $\xi$,
$$
\xi\xcirc \phi \xcirc d^l(\bx)\in F^{i-l+1}\bigl(E\ot\ov{E}^{\ot^n}\ot E\bigr)\cap W_n\cap C^R_n\quad\text{for all $l>1$.}
$$
So,
$$
\phi(\bx)=\xi\xcirc \phi \xcirc d^0(\bx) + \xi\xcirc \phi \xcirc d^1(\bx)\quad \pmod{F^{i-1} \bigl(E\ot\ov{E}^{\ot^n}\ot E\bigr)\cap W_n\cap C^R_n}.
$$
Moreover, by the definitions of $d^0$, $\phi$ and $\xi$,
$$
\xi\xcirc \phi \xcirc d^0(\bx)=(-1)^n \xi\xcirc \phi \bigl(1\ot_{\!A} \gamma_{\!A}(\bv_{1i})\ot \ba_{1,n-i}\bigr),
$$
and by Theorem~\ref{formula para d_1} and the definitions of $\phi$ and $\xi$,
$$
\xi\xcirc\phi\xcirc d^1(\bx) = \sum_l (-1)^i\xi\xcirc\phi\bigl(1\ot_{\!A} \gamma_{\!A}(\bv_{1,i-1}) \ot\ba^{(l)}_{1,n-i}\ot \gamma(v^{(l)}_i)\bigr),
$$
where $\sum_l \ba^{(l)}_{1,n-i}\ot_k v^{(l)}_i := \ov{\chi}(v_i\ot_k \ba_{1,n-i})$. The proof can be now easily finished using the inductive hypothesis.
\end{proof}

\begin{lemma}\label{leA.6} Consider a stable under $\chi$ subalgebra $R$ of $A$ such that $\mathcal{F}$ takes its values in $R\ot_k V$. The following facts hold:

\begin{enumerate}

\smallskip

\item Let $\bx := 1\ot_{\!A} \gamma_{\!A}(\bv_{1i})\ot \ba_{i+1,n}$. If $i<n$, then
$$
\ov{\sigma}(\bx) = \sigma^0(\bx) = (-1)^n \ot_{\!A} \gamma_{\!A}(\bv_{1i})\ot \ba_{i+1,n}\ot 1.
$$

\smallskip

\item If $\bz = 1\ot_{\!A}\gamma_{\!A}(\bv_{1,i-1})\ot\ba_{i,n-1}\ot a_n\gamma(v_n)$, then $\sigma^l(\bz) \in U_{n-i+l+1,i-1-l}$ for $l\ge 0$ and $\sigma^l(\bz) \in X^{R1}_n$ for $l\ge 1$.

\smallskip

\item If $\bz = 1\ot_{\!A} \gamma_{\!A}(\bv_{1,i-1})\ot\ba_{i,n-1}\ot\gamma(v_n)$, then $\sigma^l(\bz) = 0$ for $l\ge 0$.

\smallskip

\item If $\bz = 1\ot_{\!A} \gamma_{\!A}(\bv_{1,i-1})\ot\ba_{i,n-1}\ot a_n\gamma(v_n)$ and $i<n$, then $\ov{\sigma}(\bz) \equiv \sigma^0(\bz)$, module $F_R^{i-2}(X_n)\cap U_n$.

\smallskip

\item If $\bz = 1\ot_{\!A} \gamma_{\!A}(\bv_{1,n-1})\ot a_n\gamma(v_n)$, then $\ov{\sigma}(\bz) \equiv - \sigma^0\xcirc\sigma^{-1}\xcirc\nu(\bz) + \sigma^0(\bz)$, module $F_R^{i-2}(X_n) \cap U_n$.

\smallskip

\item If $\bz = 1\ot_{\!A} \gamma_{\!A}(\bv_{1,n-1})\ot\gamma(v_n)$, then $\ov{\sigma}(\bz) = - \sigma^0\xcirc\sigma^{-1}\xcirc\nu(\bz)$.

\smallskip

\item If $\bz = 1\ot_{\!A} \gamma_{\!A}(\bv_{1,i-1})\ot\ba_{i,n-1}\ot \gamma(v_n)$ and $i<n$, then $\ov{\sigma}(\bz) = 0$.

\end{enumerate}

\end{lemma}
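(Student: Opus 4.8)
The plan is to verify the seven assertions directly from the recursive definitions, proceeding by induction on $l$ for the statements about the maps $\sigma^l$ and then assembling $\ov{\sigma}$ through the formula of Lemma~\ref{leA.1}, $\ov{\sigma}_{n+1} = -\sigma^0_{0,n+1}\xcirc \sigma^{-1}_{n+1}\xcirc \nu_n + \sum_{r=0}^n\sum_{l=0}^{n-r}\sigma^l_{r+l+1,n-r-l}$. The observation that drives everything is elementary: by its explicit formula, $\sigma^0_{r+1,s}$ sends $\ov{\bx}_{0s}\ot \ba_{1r}\ot a_{r+1}\gamma(v)$ to $(-1)^{r+s+1}\ov{\bx}_{0s}\ot \ba_{1,r+1}\ot \gamma(v)$, so it annihilates any simple tensor whose rightmost tensorand lies in $\mathcal V=\gamma(V)$ or equals $1_E$, because then $a_{r+1}\in K$ represents $0$ in $\ov A$; in particular $\sigma^0$ vanishes on $U_{rs}$ and on $E\, L_{rs}$, while, when the rightmost tensorand is $a_{r+1}\gamma(v)\in E$ with $a_{r+1}\in \ov A$ nonzero, $\sigma^0$ simply transfers $a_{r+1}$ into the $\ov A$-block.

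The purely vanishing statements come first. If $\bz = 1\ot_{\!A}\gamma_{\!A}(\bv_{1,i-1})\ot \ba_{i,n-1}\ot \gamma(v_n)$, then $\sigma^0(\bz)=0$ by the observation above, and the recursion $\sigma^l = -\sum_{j=0}^{l-1}\sigma^0\xcirc d^{l-j}\xcirc \sigma^j$ then gives $\sigma^l(\bz)=0$ for every $l$ by induction, which is item~(3). Item~(7) follows at once: for $i<n$ the tensor $\bz$ carries an $\ov A$-tensorand, so $\nu_n(\bz)=0$ and $\ov{\sigma}(\bz)=\sum_l\sigma^l(\bz)=0$ by item~(3).

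For items~(1), (2), (4), (5) and~(6) the leading piece $\sigma^0(\bz)$ is computed term by term from the explicit formulas for $\sigma^0_{r+1,s}$ and $\sigma^0_{0s}$; this immediately yields the asserted form of $\sigma^0(\bz)$ and, in item~(2), that it lies in $U_{n-i+1,i-1}$. It then remains to control the higher maps $\sigma^l$ with $l\ge1$. I would prove by induction on $l$ that $d^{l-j}\xcirc\sigma^j(\bz)$ lies in a sum of $U$-type and $E\,L$-type submodules: Lemma~\ref{leA.4} controls the $L$-part of $\sigma^j(\bz)$ (using $U_{rs}=L_{rs}\mathcal V$ and the $E$-bilinearity of $d^l$), while Theorem~\ref{formula para d_1}(2), $d^l(X_{rs})\subseteq X^{R,l-1}_{r+l-1,s-l}$, shows that for $l\ge 2$ an $\ov R$-tensorand is always created. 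Applying $\sigma^0$ then kills the $U$-parts and the parts of $E\,L$ ending in $1_E$, so $\sigma^l(\bz)$ either vanishes (item~(1)) or lands in $U_{n-i+l+1,i-1-l}\cap X^{R1}_n$ (item~(2)), respectively in $F^{i-2}_R(X_n)\cap U_n$ for the ``modulo'' versions~(4)--(6). Finally, when $i=n$ the term $-\sigma^0_{0,n+1}\xcirc\sigma^{-1}_{n+1}\xcirc\nu_n(\bz)$ is no longer zero; it is evaluated from the explicit formulas for $\nu_n$, $\sigma^{-1}$ and $\sigma^0_{0,n+1}$, and one checks that the $\sum_l\sigma^l$-part vanishes when the rightmost tensorand of $\bz$ is $\gamma(v_n)$ (leaving exactly $-\sigma^0_{0,n+1}\xcirc\sigma^{-1}_{n+1}\xcirc\nu_n(\bz)$, which is item~(6)) and contributes the displayed extra term otherwise (giving item~(5)).

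The main obstacle is the bookkeeping in this last step: threading each intermediate tensor through the nested recursion $\sigma^l=-\sum\sigma^0 d^{l-j}\sigma^j$ and keeping precise track of which of the submodules $L_n$, $U_n$, $X^{R1}_n$ and $F^i_R(X_n)$ it belongs to, so as to see that the only summands surviving $\sigma^0$ carry an $\ov R$-tensorand; the signs are routine once the indices are fixed. As indicated in the text, this computation runs parallel to the corresponding one in~\cite{C-G-G}.
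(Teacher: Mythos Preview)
Your plan is correct and is exactly the argument the paper defers to \cite{C-G-G}: reduce $\ov\sigma$ via Lemma~\ref{leA.1}, exploit that $\sigma^0_{r+1,s}$ annihilates any tensor whose last $E$-factor lies in $\mathcal V$ or in $K$, and feed the recursion $\sigma^l=-\sum\sigma^0 d^{l-j}\sigma^j$ with Lemma~\ref{leA.4}. One point in your inductive step for item~(2) needs sharpening. You say $d^{l-j}\sigma^j(\bz)$ decomposes into ``$U$-type and $EL$-type'' pieces which $\sigma^0$ then kills; but since $\sigma^j(\bz)\in U=L\,\mathcal V$, right $E$-linearity together with Lemma~\ref{leA.4} gives $d^{l-j}\sigma^j(\bz)\in U_*\mathcal V$ (plus $EU_*$ when $l-j=1$). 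Now $\sigma^0$ does annihilate $EU_*$, but on $U_*\mathcal V$ the last $E$-factor is $\gamma(v)\gamma(w)=\mathcal F(v\ot_k w)\in R\ot_k V$, so $\sigma^0$ does \emph{not} vanish: it moves that $R$-coefficient into the $\ov A$-block and outputs an element of $U_{*+1}$ carrying a fresh $\ov R$-tensorand. This mechanism, rather than Theorem~\ref{formula para d_1}(2), is what places $\sigma^l(\bz)$ in $U_{n-i+l+1,i-1-l}\cap X^{R1}_n$, and in particular it handles the case $l=1$, where $d^1$ alone contributes no $\ov R$-factor. With this correction the rest of your outline goes through unchanged.
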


\begin{proposition}\label{propA.7} Let $R$ be a stable under $\chi$ subalgebra of $A$ such that $\mathcal{F}$ takes its values in $R\ot_k V$. The following facts hold:

\begin{enumerate}

\smallskip

\item $\psi(1\ot\gamma(\bv_{1i})\ot\ba_{i+1,n}\ot 1) = 1\ot_{\!A} \gamma_{\!A}(\bv_{1i})\ot \ba_{i+1,n} \ot 1$.

\smallskip

\item If $\bx = 1\ot \bx_{1n}\ot 1\in R_i\cap W_n$ and there exists $1\le j\le i$ such that $x_j\in \ov{A}$, then $\psi(\bx) = 0$.

\smallskip

\item If $\bx = 1\ot \gamma(\bv_{1,i-1})\ot a_i\gamma(v_i)\ot\ba_{i+1,n} \ot 1$, then
\begin{align*}
\qquad\qquad \psi(\bx) & \equiv 1\ot_{\!A} \gamma_{\!A}(\bv_{1,i-1})\ot_{\!A} a_i\gamma(v_i)\ot \ba_{i+1,n}\ot 1\\
&+ \sum_l 1\ot_{\!A} \gamma_{\!A}(\bv_{1,i-1})\ot a_i\ot \ba^{(l)}_{i+1,n}\ot \gamma(v^{(l)}_i),
\end{align*}
module $F_R^{i-2}(X_n)\cap U_n$, where $\sum_l \ba^{(l)}_{i+1,n}\ot_k v^{(l)}_i := \ov{\chi}(v_i\ot_k\ba_{i+1,n})$.

\smallskip

\item If $\bx = 1\ot \gamma(\bv_{1,j-1})\ot a_j\gamma(v_j)\ot \gamma(\bv_{j+1,i})\ot \ba_{i+1,n}\ot 1$ with $j<i$, then
$$
\qquad\qquad \psi(\bx)\equiv 1\ot_{\!A} \gamma_{\!A}(\bv_{1,j-1})\ot_{\!A} a_j\gamma(v_j)\ot_{\!A} \gamma_{\!A}(\bv_{j+1,i}) \ot \ba_{i+1,n} \ot 1,
$$
module $F_R^{i-2}(X_n)\cap U_n$.

\smallskip

\item If $\bx = 1\ot \gamma(\bv_{1,i-1}) \ot \ba_{i,j-1}\ot a_j\gamma(v_j) \ot\ba_{j+1,n} \ot 1$ with $j>i$, then
$$
\qquad\qquad\psi(\bx)\equiv \sum_l 1\ot_{\!A} \gamma_{\!A}(\bv_{1,i-1})\ot \ba_{ij}\ot \ba^{(l)}_{j+1,n}\ot \gamma(v^{(l)}_j),
$$
module $F_R^{i-2}(X_n)\cap U_n$, where $\sum_l \ba^{(l)}_{j+1,n}\ot_k v^{(l)}_j := \ov{\chi}(v_j\ot_k\ba_{j+1,n})$.

\smallskip

\item If $\bx = 1\ot \bx_{1n}\ot 1\in R_i\cap W'_n$ and there exists $1\le j_1 < j_2\le n$ such that $x_{j_1}\in \ov{A}$ and $x_{j_2}\in \mathcal{V}_{\!K}$, then $\psi(\bx)\in  F_R^{i-2}(X_n) \cap U_n$.

\end{enumerate}

\end{proposition}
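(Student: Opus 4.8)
The plan is to prove the six assertions simultaneously by induction on $n$, following the argument of the corresponding statement in~\cite{C-G-G}. The case $n=1$ is immediate from the explicit formulas for $\psi_1$, $\ov{\sigma}_1$ and the normalized bar differential. For the inductive step the basic device is Remark~\ref{reA.3}, which rewrites $\psi$ on a normalized tensor $\bx_{0n}\ot 1$ as $(-1)^n\ov{\sigma}\xcirc\psi$ applied to the summands produced by the bar differential of $\bx_{0n}$. Thus each evaluation of $\psi$ is reduced to three moves: (i) applying the normalized bar differential $b'$, which multiplies two adjacent middle entries or absorbs the first middle entry into the left $E$-factor; (ii) applying $\psi$ to the resulting tensors, which is available by the inductive hypothesis once one checks that each such tensor still falls under one of the six cases of the statement, or has strictly smaller filtration degree; and (iii) applying the contracting homotopy $\ov{\sigma}$, whose behaviour on the relevant simple tensors of $X_n$ is recorded, in all the cases that occur, by Lemma~\ref{leA.6}.

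First I would carry out the bookkeeping for move~(i). In the tensors occurring in the statement the middle entries all lie, except for at most one ``mixed'' entry $a_j\gamma(v_j)$, in $\ov{A}\cup\mathcal{V}_{\!K}$, with the $\mathcal{V}_{\!K}$-entries grouped to the left of position $i$. Applying $b'$ produces terms of three kinds: the product of two adjacent $\ov{A}$-entries (or of the left $E$-factor with an $\ov{A}$-entry), which preserves the shape and the filtration degree; the product of an $\ov{A}$-entry with an adjacent $\mathcal{V}_{\!K}$-entry, which by~\eqref{eq1} and the stability of $R$ under $\chi$ yields a term that drops one step in the filtration $F^{\bullet}$ and acquires an entry in $\ov{R}$; and the product of two adjacent $\mathcal{V}_{\!K}$-entries, which by the hypothesis that $\mathcal{F}$ takes its values in $R\ot_k V$ does the same. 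Since $\psi$ and $\ov{\sigma}$ both carry $F_R^{j}(X_m)\cap U_m$ into $F_R^{j}(X_{m+1})\cap U_{m+1}$ --- the latter by the computations of Lemma~\ref{leA.6} together with Lemma~\ref{leA.4} --- every term of the second or third kind is absorbed into the error spaces $F_R^{i-2}(X_n)\cap U_n$ appearing in items~(3)--(6). This leaves, for items~(1) and~(2), only the ``clean'' contribution, to which Lemma~\ref{leA.6}(1) applies verbatim, while the presence of an $\ov{A}$-entry among the first $i$ positions forces $\psi(\bx)=0$ through the vanishing statements of Lemma~\ref{leA.6}(3),(7); and for items~(3)--(6) it leaves one admissible contribution per case, which is identified by combining parts~(2)--(6) of Lemma~\ref{leA.6}.

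The hard part will be move~(ii): after a single application of $b'$ the tensor fed into $\psi$ is no longer in the normal form of the statement --- the product $\gamma(v)\,a$ unfolds via $\chi$ into a sum of mixed entries, and $\gamma(v)\gamma(w)$ via $\mathcal{F}$ into entries with an $\ov{R}$-component --- so one must re-decompose each resulting tensor and verify that every piece is again covered by one of the six cases, or has strictly smaller filtration degree, and that the $\ov{\chi}$- and $\mathcal{F}$-contributions always land in the $\ov{R}$-part of the filtration; the interaction of this with the decomposition $\ov{\sigma}_{n+1}=-\sigma^0\xcirc\sigma^{-1}\xcirc\nu+\sum_{r,l}\sigma^l$ of Lemma~\ref{leA.1} and with the several vanishing cases of Lemma~\ref{leA.6} is the only genuinely delicate point. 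As observed in~\cite{C-G-G}, once Lemmas~\ref{leA.1}, \ref{leA.2}, \ref{leA.4} and~\ref{leA.6} are available in the present generality, the verification carried out there applies word for word, so I would ultimately refer the reader to that reference for the remaining routine computations.
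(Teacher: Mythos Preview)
Your overall strategy --- induction on $n$, reduction via Remark~\ref{reA.3}, and control of the error terms through Lemma~\ref{leA.6}, with items~(1)--(5) ultimately deferred to~\cite{C-G-G} --- matches the paper's. But you have misread Remark~\ref{reA.3}: it does \emph{not} say that $\psi(\bx_{0n}\ot 1)$ equals $(-1)^n\ov{\sigma}\xcirc\psi$ applied to the summands of $b'(\bx_{0n})$. It says that $\psi(\bx_{0n}\ot 1)=(-1)^n\ov{\sigma}\bigl(\psi(\bx_{0n})\bigr)$, where $\bx_{0n}$ is viewed as an element of $E\ot\ov{E}^{\ot^{n-1}}\ot E$ with $x_n$ now sitting in the rightmost $E$-slot. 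In other words, thanks to $\ov{\sigma}\xcirc\ov{\sigma}=0$, all the ``inner'' faces of $b'_n(\bx_{0n}\ot 1)$ die under $\ov{\sigma}_n$ (they end in $\ot 1$, hence $\psi$ of them already lies in $\ima\ov{\sigma}$), and only the last face survives. This short-cut is precisely what makes the induction tractable: you case on the nature of $x_n$ rather than tracking every summand of $b'$. The paper's proof of item~(6) does exactly that --- three cases according to whether $x_n\notin i_{\ov{A}}(\ov{A})\cup\mathcal{V}$, $x_n\in\ov{A}$, or $x_n\in\mathcal{V}$ --- each dispatched in one line by the inductive hypothesis (or items~(2)--(5)) together with the appropriate part of Lemma~\ref{leA.6}.

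Two further problems in your outline stem from this misreading. First, the claim that ``$\psi$ and $\ov{\sigma}$ both carry $F_R^j(X_m)\cap U_m$ into $F_R^j(X_{m+1})\cap U_{m+1}$'' does not type-check: $\psi$ maps the bar resolution to $X_*$, not $X_*$ to itself. Second, the product of an $\ov{A}$-entry with an adjacent $\mathcal{V}_K$-entry does \emph{not} acquire an entry in $\ov{R}$ (the $a_j$'s are in $\ov{A}$, not $\ov{R}$, and $\chi$ only maps $V\ot_k R$ into $R\ot_k V$). The $\ov{R}$-entries in the error space come from a different source: the higher pieces $\sigma^l$, $l\ge 1$, of $\ov{\sigma}$ involve the maps $d^l$, and those land in $X^{R,l-1}$ by Theorem~\ref{formula para d_1}(2) (this is Lemma~\ref{leA.6}(2)). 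Once you use Remark~\ref{reA.3} correctly, your plan collapses to the paper's and these issues disappear.
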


\begin{proof} For items~(1)--(5) the proofs given in~\cite{C-G-G} work. We next prove item~(6). Assume first that $x_n \notin i_{\ov{A}}(\ov{A})\cup \mathcal{V}$. Then, by Remark~\ref{reA.3} and item~(2),
$$
\psi(\bx) = (-1)^n \ov{\sigma}\xcirc \psi(1\ot\bx_{1n}) = (-1)^n \ov{\sigma}(0) = 0.
$$
Assume now that $x_n\in \ov{A}$. Then, by inductive hypothesis
$$
\psi(\bx)=(-1)^n \ov{\sigma}\xcirc \psi(1\ot\bx_{1n}) \in\ov{\sigma}\left(X^{R1}_{n-1} \cap\bigoplus_{l=0}^{i-2} U_{n-l-1,l}A\right),
$$
and the result follows from items~(1) and~(4) of Lemma~\ref{leA.6}. Finally, assume that $x_n\in \mathcal{V}$. Then, by inductive hypothesis or items~(3), (4) or~(5),
$$
\psi(\bx) = (-1)^n \ov{\sigma}\xcirc\psi(1\ot\bx_{1n}) \in\ov{\sigma}\left(AU_{n-i,i-1} \oplus \bigoplus_{l=0}^{i-2} U_{n-l-1,l}\mathcal{V}\right),
$$
and the result follows from items~(4) and~(7) of Lemma~\ref{leA.6}.
\end{proof}

\begin{proposition}\label{propA.8} The following facts hold:

\begin{enumerate}

\smallskip

\item If $\bx = 1\ot\gamma(\bv_{1i})\ot \ba_{1,n-i}\ot 1$, then
$$
\qquad\quad\phi\xcirc \psi(\bx) \equiv 1\ot \Sh(\bv_{1i}\ot_k\ba_{1,n-i})\ot 1
$$
module $F^{i-1}\bigl(E\ot \ov{E}^{\ot^n}\ot E\bigr)\cap W_n\cap C^R_n$.

\smallskip

\item If $\bx = 1\ot\bx_{1n}\ot 1\in R_i\cap W_n$ and there exists $1\le j\le i$ such that $x_j\in A$, then $\phi\xcirc \psi(\bx) = 0$.

\smallskip

\item If $\bx = 1\ot \gamma(\bv_{1,i-1})\ot a_i\gamma(v_i)\ot\ba_{i+1,n} \ot 1$, then
\begin{align*}
\qquad\qquad \phi\xcirc\psi(\bx) &\equiv \sum_l a_i^{(l)}\ot \Sh\bigl(\bv_{1,i-1}^{(l)} \ot_k v_i\ot_k \ba_{i+1,n}\bigr)\ot 1\\
& + \sum_l 1\ot \Sh\bigl(\bv_{1,i-1}\ot_k a_i\ot \ba_{i+1,n}^{(l)}\bigr)\ot \gamma(v_i^{(l)})
\end{align*}
module
$$
\qquad\qquad\Bigl(F^{i-1}\bigl(E\ot\ov{E}^{\ot^n}\ot E\bigr)\cap AW_n + F^{i-2}\bigl(E\ot \ov{E}^{\ot^n}\ot E\bigr)\cap W_n \mathcal{V}\Bigr)\cap C^R_n,
$$
where
$$
\qquad\qquad\!\!\sum_l a_i^{(l)}\ot_k \bv_{1,i-1}^{(l)}\! :=\! \chi(\bv_{1,i-1}\ot_k a_i) \quad\!\!\text{and}\quad\!\!\sum_l \ba_{i+1,n}^{(l)}\ot_k v_i^{(l)}\!:=\! \ov{\chi}(v_i\ot_k \ba_{i+1,n}).
$$

\smallskip

\item If $\bx = 1\ot \gamma(\bv_{1,j-1})\ot a_j\gamma(v_j)\ot \gamma(\bv_{j+1,i})\ot \ba_{i+1,n} \ot 1$ with $j<i$, then
$$
\qquad\qquad \phi\xcirc\psi(\bx)\equiv \sum_l a_j^{(l)}\ot \Sh\bigl(\bv_{1,j-1}^{(l)}\ot_k \bv_{ji}\ot_k \ba_{i+1,n}\bigr)\ot 1,
$$
module
$$
\qquad\qquad\Bigl(F^{i-1}\bigl(E\ot \ov{E}^{\ot^n}\ot E\bigr)\cap AW_n+ F^{i-2}\bigl(E\ot \ov{E}^{\ot^n}\ot E\bigr)\cap W_n\mathcal{V}\Bigr)\cap C^R_n,
$$
where $\sum_l a_j^{(l)}\ot_k \bv_{1,j-1}^{(l)} := \chi(\bv_{1,j-1}\ot_k a_j)$.

\smallskip

\item If $\bx = 1\ot \gamma(\bv_{1,i-1}) \ot \ba_{i,j-1}\ot a_j\gamma(v_j) \ot\ba_{j+1,n} \ot 1$ with $j>i$, then
$$
\qquad\qquad \phi\xcirc\psi(\bx)\equiv \sum_l 1\ot \Sh\bigl(\bv_{1,i-1}\ot_k \ba_{ij}\ot \ba^{(l)}_{j+1,n}\bigr)\ot \gamma(v^{(l)}_j)
$$
module $F^{i-2}\bigl(E\ot\ov{E}^{\ot^n}\ot E\bigr)\cap W_n\mathcal{V}\cap C^R_n$.

\smallskip

\item If $\bx = 1\ot \bx_{1n}\ot 1\in R_i\cap W'_n$ and there exists $1\le j_1 < j_2\le n$ such that $x_{j_1}\in \ov{A}$ and $x_{j_2}\in \mathcal{V}_{\!K}$, then
$$
\qquad\qquad \phi\xcirc\psi(\bx)\in F^{i-2}\bigl(E\ot\ov{E}^{\ot^n} \ot E\bigr) \cap W_n\mathcal{V} \cap C^R_n.
$$

\end{enumerate}

\end{proposition}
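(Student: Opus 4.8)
The plan is to deduce Proposition~\ref{propA.8} directly from Propositions~\ref{propA.5} and~\ref{propA.7}, as in~\cite{C-G-G}. For each simple tensor $\bx$ appearing in the statement I will first rewrite $\psi(\bx)$ by means of the corresponding item of Proposition~\ref{propA.7}, obtaining an explicit sum of simple tensors in $X_n$ plus an error lying in $F_R^{i-2}(X_n)\cap U_n$, and then apply $\phi$ termwise. Two shapes of summands occur: $1\ot_{\!A}\gamma_{\!A}(-)\ot(-)\ot 1$ and $1\ot_{\!A}\gamma_{\!A}(-)\ot(-)\ot\gamma(v)$. To the first I will apply Proposition~\ref{propA.5}; to the second I will use that $\phi$ is $E$-linear, so that $\phi\bigl(1\ot_{\!A}\gamma_{\!A}(\bv)\ot\ba\ot\gamma(v)\bigr) = \phi\bigl(1\ot_{\!A}\gamma_{\!A}(\bv)\ot\ba\ot 1\bigr)\gamma(v)$, which reduces again to Proposition~\ref{propA.5}, the trailing $\gamma(v)$ then placing the outcome in $W_n\mathcal{V}$.

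Items~(1) and~(2) are then immediate: for~(1), item~(1) of Proposition~\ref{propA.7} gives $\psi(\bx) = 1\ot_{\!A}\gamma_{\!A}(\bv_{1i})\ot\ba_{1,n-i}\ot 1$ and Proposition~\ref{propA.5} concludes; for~(2), item~(2) of Proposition~\ref{propA.7} gives $\psi(\bx)=0$. For items~(3)--(5) I will start from the corresponding item of Proposition~\ref{propA.7}. The delicate point here is that the ``straight'' summand of $\psi(\bx)$ produced by Proposition~\ref{propA.7} still carries the scalar $a_j$ inside a factor $\gamma_{\!A}(\bv)$; before $\phi$ can be applied one must carry $a_j$ leftward past the preceding copies of $\gamma$, which—working inside $(E/A)^{\ot_{\!A}^{\bullet}}$, where the leftmost tensorand absorbs elements of $A$ across $\ot_{\!A}$—produces exactly the iterated twisting $\chi$ (equivalently $\ov{\chi}$) and the coefficients $a_j^{(l)}$ of the statement. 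Applying left $E$-linearity of $\phi$ and then Proposition~\ref{propA.5} to the resulting tensor $1\ot_{\!A}\gamma_{\!A}(\bv^{(l)})\ot\cdots\ot 1$ yields the two displayed terms of item~(3) (respectively the single displayed term of items~(4) and~(5)), up to an element of $a_j^{(l)}\bigl(F^{i-1}(E\ot\ov{E}^{\ot^n}\ot E)\cap W_n\cap C^R_n\bigr)\subseteq F^{i-1}(E\ot\ov{E}^{\ot^n}\ot E)\cap AW_n\cap C^R_n$ coming from the ``$1\ot\cdots\ot 1$'' summand, and of $F^{i-2}(E\ot\ov{E}^{\ot^n}\ot E)\cap W_n\mathcal{V}\cap C^R_n$ coming from the ``$\cdots\ot\gamma(v^{(l)})$'' summand.

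It remains to absorb the error term of $\psi(\bx)$, which lies in $F_R^{i-2}(X_n)\cap U_n$. Since $\phi$ preserves the filtration $F^{\bullet}$ (Proposition~\ref{phi, psi y omega preservan filtraciones}), its image lies in $F^{i-2}(E\ot\ov{E}^{\ot^n}\ot E)$; writing a generator of $U_n$ as $\bigl(1\ot_{\!A}\gamma_{\!A}(\bv)\ot\ba\ot 1\bigr)\gamma(v)$ and combining $E$-linearity of $\phi$ with Proposition~\ref{propA.5} shows that this image is a sum of tensors $1\ot\bz_{1n}\ot\gamma(v)$ with every $z_j\in i_{\ov{A}}(\ov{A})$ and with some $z_j\in\ov{R}$ (the $\ov{R}$-entry of $\ba$ being carried along by both $\Sh$ and $\chi$), hence lies in $W_n\mathcal{V}\cap C^R_n$. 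Thus the error is absorbed into the modulus claimed in each of items~(3)--(5); item~(6) follows by the same argument from item~(6) of Proposition~\ref{propA.7} together with filtration-preservation and the description of $\phi$ on $U_n$ just used. The only part of the argument that requires genuine care is the identification, in items~(3)--(5), of precisely which iterate of $\chi$ (and the attendant sign) intervenes when a scalar is pushed past a string of $\gamma$'s; this is a direct but somewhat lengthy verification entirely parallel to the one in~\cite{C-G-G}.
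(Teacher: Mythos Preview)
Your approach is correct and matches the paper's: apply the corresponding item of Proposition~\ref{propA.7}, pull the scalar $a_j$ across $\ot_{\!A}$ via $\chi$, then invoke Proposition~\ref{propA.5} together with $E$-linearity of $\phi$, handling the error term from $F_R^{i-2}(X_n)\cap U_n$ by filtration preservation. One small slip: in your analysis of $\phi$ on that error term, the resulting tensors $1\ot\bz_{1n}\ot\gamma(v)$ have each $z_j\in i_{\ov{A}}(\ov{A})\cup\mathcal{V}_{\!K}$ rather than all in $i_{\ov{A}}(\ov{A})$ (since $\Sh$ interleaves both kinds of factors), but your conclusion that the image lies in $W_n\mathcal{V}\cap C^R_n$ is nevertheless correct.
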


\begin{proof} (1)\enspace This follows from item~(1) of Proposition~\ref{propA.7} and
Proposition~\ref{propA.5}.

\smallskip

\noindent (2)\enspace This follows from item~(2) of Proposition~\ref{propA.7}.

\smallskip

\noindent (3)\enspace By item~(3) of Proposition~\ref{propA.7},
\begin{align*}
\qquad\qquad \phi \xcirc \psi(\bx) & \equiv\sum_l\phi\bigl(a_i^{(l)}\ot_{\!A} \gamma_{\!A}( \bv_{1,i-1}^{(l)} \ot_k v_i) \ot \ba_{i+1,n}\ot 1\bigr)\\
&+ \sum_l \phi\bigl(1\ot_{\!A} \gamma_{\!A}(\bv_{1,i-1})\ot a_i\ot \ba^{(l)}_{i+1,n}\ot \gamma(v^{(l)}_i)\bigr),
\end{align*}
module $\phi\bigl(F_R^{i-2}(X_n)\cap U_n\bigr)$, where
$$
\qquad\!\!\sum_l a_i^{(l)}\ot_k \bv_{1,i-1}^{(l)}\! := \!\chi(\bv_{1,i-1}\ot_k a_i) \quad\!\text{and}\quad \!\sum_l \ba_{i+1,n}^{(l)}\ot_k v_i^{(l)}\! :=\! \ov{\chi}(v_i\ot_k \ba_{i+1,n}).
$$
The desired result follows now from Proposition~\ref{propA.5}.
\smallskip

\noindent (4)\enspace By item~(4) of Proposition~\ref{propA.7},
$$
\phi\xcirc \psi(\bx)\equiv \sum_l \phi\bigl(a_j^{(l)}\ot_{\!A} \gamma_{\!A}(\bv_{1,j-1}^{(l)} \ot_k \bv_{ji}) \ot \ba_{i+1,n}\ot 1\bigr)
$$
module $F_R^{i-2}(X_n)\cap U_n$, where $\sum_l a_i^{(l)}\ot_k \bv_{1,i-1}^{(l)} := \chi(\bv_{1,j-1} \ot_k a_j)$. To conclude the proof of this item it suffices to apply Proposition~\ref{propA.5}.

\smallskip

\noindent (5)\enspace By item~(5) of Proposition~\ref{propA.7},
$$
\qquad\phi\xcirc \psi(\bx)\equiv \sum_l \phi\bigl(1\ot_{\!A} \gamma_{\!A}(\bv_{1,i-1})\ot \ba_{ij}\ot \ba^{(l)}_{j+1,n} \ot \gamma(v^{(l)}_j)\bigr),
$$
module $\phi\bigl(F_R^{i-2}(X_n)\cap U_n\bigr)$, where $\sum_l\ba^{(l)}_{j+1,n}\ot_k v^{(l)}_j := \ov{\chi} (v_j\ot_k\ba_{j+1,n})$. The result follows by applying Proposition~\ref{propA.5}.

\smallskip

\noindent (6)\enspace Proceed as in the proof of item~(5) but using item~(5) of Proposition~\ref{propA.7} instead of item~(5).
\end{proof}

\begin{proposition}\label{propA.9} If $\bx = 1\ot\bx_{1n}\ot 1\in R_i\cap W'_n$, then
$$
\omega(\bx) \in F^i\bigl(E\ot \ov{E}^{\ot^{n+1}}\ot E\bigr)\cap W_{n+1}.
$$
\end{proposition}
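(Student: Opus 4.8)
The plan is to argue by induction on $n$, the case $n=0$ being trivial since $\omega_1=0$. Fix $n\ge 1$ and assume the statement for $n-1$. By left $E$-linearity of $\omega$ it suffices to treat simple tensors $\bx=1\ot\bx_{1n}\ot 1\in F^i\bigl(E\ot\ov{E}^{\ot^n}\ot E\bigr)\cap W'_n$. First I would invoke the recursion of Proposition~\ref{homotopia} in the streamlined form of Remark~\ref{re2.6}: because $\xi$ annihilates $E\ot\ov{E}^{\ot^n}\ot K$ and $\omega_n$ carries $E\ot\ov{E}^{\ot^{n-1}}\ot K$ into $E\ot\ov{E}^{\ot^n}\ot K$, the summand $-\ide$ and all the bar terms of $\omega_n\xcirc b'_n(\bx)$ except the last one are killed by the outer $\xi_{n+1}$, leaving
\[
\omega_{n+1}(\bx)=\xi_{n+1}\bigl(\phi_n\xcirc\psi_n(\bx)\bigr)-(-1)^n\,\xi_{n+1}\bigl(\omega_n(1\ot\bx_{1,n-1}\ot x_n)\bigr).
\]

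For the first term I would feed $\bx$ into Proposition~\ref{propA.8}: according to whether all the $x_j$ lie in $\ov{A}\cup\mathcal{V}_{\!K}$ or exactly one does not --- and, in the latter case, according to the position of the offending factor and the shapes of the blocks surrounding it --- precisely one of items (1)--(6) of that proposition describes $\phi_n\xcirc\psi_n(\bx)$ up to an explicit error living in spaces of the type $F^{i-1}\cap AW_n$ and $F^{i-2}\cap W_n\mathcal{V}$ (intersected with $C^R_n$). Then I would use the explicit behaviour of $\xi$: it kills every simple tensor ending in $\ot 1$ (equivalently, in $\ot K$); on a tensor ending in a factor from $\mathcal{V}$ it promotes that factor to a $\mathcal{V}_{\!K}$-entry, raising the number of $\ov{E}$-tensor factors by one and the filtration index by exactly one; and it sends $W$-type tensors to $W$-type tensors. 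Thus the $AW_n$-pieces and the ``$\ot 1$''-type leading terms die, the $W_n\mathcal{V}$-pieces go into $W_{n+1}$ with filtration raised by one, and the surviving ``$\ot\gamma(v)$''-type leading terms of items (3)--(6) land in $F^i\bigl(E\ot\ov{E}^{\ot^{n+1}}\ot E\bigr)\cap W_{n+1}$; hence $\xi_{n+1}\bigl(\phi_n\xcirc\psi_n(\bx)\bigr)\in F^i\cap W_{n+1}$. For the second term I would rewrite $1\ot\bx_{1,n-1}\ot x_n$ in terms of the canonical tensor $1\ot\bx_{1,n-1}\ot 1$ and apply the inductive hypothesis to the latter, distinguishing whether $x_n\in A$, $x_n\in\mathcal{V}$, or $x_n$ is itself the factor outside $\ov{A}\cup\mathcal{V}_{\!K}$; in each case the inductive hypothesis puts $\omega_n$ of the relevant canonical tensor in some $F^{i'}\cap W_n$, and the same properties of $\xi$ (together with the bookkeeping of how $x_n$ changes $i'$) push it into $F^i\cap W_{n+1}$. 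Adding the two contributions gives $\omega_{n+1}(\bx)\in F^i\cap W_{n+1}$, as desired; this is the argument of~\cite{C-G-G} run over the present, more general, $E$.

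The step I expect to be the main obstacle is precisely the case analysis just sketched, and within it the verification that the output actually lands in the \emph{unprimed} $W_{n+1}$ rather than merely in $W'_{n+1}$ --- that is, that the homotopy ``absorbs'' the unique factor of $\bx$ lying outside $\ov{A}\cup\mathcal{V}_{\!K}$. This requires keeping three data in step --- the number of $\ov{E}$-tensor factors, the filtration index, and the $W$ versus $W'$ shape --- through $\phi\xcirc\psi$, through the last term of $b'_n$, and through $\xi$. The delicate point is that, although $A\cdot A\subseteq A$, the products $A\cdot\mathcal{V}_{\!K}$, $\mathcal{V}_{\!K}\cdot A$ and $\mathcal{V}_{\!K}\cdot\mathcal{V}_{\!K}$ need not stay inside $\ov{A}\cup\mathcal{V}_{\!K}$ (they land in $A\ot_k V$-combinations of elements $a\gamma(v)$), so the bar differential can a priori manufacture new exceptional factors; this is kept under control by the two structural facts that only the last bar term survives composition with $\xi_{n+1}\xcirc\omega_n$ and that the leading tensor factor of $\bx$ is $1$, which is exactly why Proposition~\ref{propA.8} (and, underlying it, Propositions~\ref{propA.5} and~\ref{propA.7}) is phrased the way it is. Once those inputs are in place the remaining work is routine bookkeeping.
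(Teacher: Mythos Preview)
Your approach matches the paper's (which defers to~\cite{C-G-G}, and you are outlining that argument). The recursion from Remark~\ref{re2.6}, the appeal to Proposition~\ref{propA.8} for the first term, and the case split on $x_n$ for the second term are all correct. There is, however, a genuine gap in the third case of your second-term analysis, precisely at the point you yourself flag as the main obstacle.

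When $x_n$ is the unique factor outside $\ov{A}\cup\mathcal{V}_{\!K}$, the tensor $1\ot\bx_{1,n-1}\ot 1$ lies in $R_{i-1}\cap W_{n-1}$ (unprimed), and your inductive hypothesis gives only $\omega_n(1\ot\bx_{1,n-1}\ot 1)\in F^{i-1}\cap W_n$. But an element of $W_n$ has its last $E$-factor in $K$; right-multiplying by the exceptional $\tilde x_n$ puts the last factor in $K\tilde x_n$, and $\xi_{n+1}$ then turns this into a new middle factor which, after $K$-rebalancing across the $\ot_K$'s, is exactly $[\tilde x_n]=x_n\notin\ov{A}\cup\mathcal{V}_{\!K}$. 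So the output lands only in $W'_{n+1}$, not in $W_{n+1}$, and ``the same properties of $\xi$'' do not push it where you claim. The two structural facts you cite concern the bar differential and the leading $1$; neither addresses this reinjection of the exceptional factor.

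The missing ingredient is the stronger statement that $\omega$ actually \emph{vanishes} on $W$: for every $\by=1\ot\by_{1,n-1}\ot 1\in R_{i'}\cap W_{n-1}$ one has $\omega_n(\by)=0$. This follows by its own short induction on $n$: the term $\xi\bigl(\phi\psi(\by)\bigr)$ is zero because items~(1)--(2) of Proposition~\ref{propA.8} place $\phi\psi(\by)$ inside $W_{n-1}$ (last factor in $K$, killed by $\xi$), and the remaining term $\xi\bigl(\omega_{n-1}(1\ot\by_{1,n-2}\ot 1)\cdot\tilde y_{n-1}\bigr)$ vanishes by the inner induction, since removing $y_{n-1}\in\ov{A}\cup\mathcal{V}_{\!K}$ keeps one in $W_{n-2}$. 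With $\omega_n(1\ot\bx_{1,n-1}\ot 1)=0$ in hand, your case (c) contributes nothing to the second term and the argument closes. Either prove this as a preliminary lemma or strengthen the inductive hypothesis to carry both statements simultaneously.
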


\section{}
The purpose of this appendix is to prove Proposition~\ref{inversa de theta}, Theorem~\ref{th5.2}, Propositions~\ref{cup producto en cleft} and~\ref{cap producto en cleft}, and Theorem~\ref{morfismo D}. We will freely use the notations introduced in the previous sections, and the properties established in Definitions~\ref{transposition}, \ref{accion debil} and~\ref{comodulo algebra}, and Remarks~\ref{transposicion inversa} and~\ref{propiedad de accion debil}. We will also use the diagrams introduced in~\eqref{s0}, \eqref{s1}, \eqref{s2}, \eqref{s3}, Definition~\ref{cociclo} and Remarks~\ref{notacion para wh{s}}, \ref{nuevas notaciones} and~\ref{ultima diagramatica}. Actually, in this appendix we will use them with a wider meaning. Finally we let $\ov{\gamma}$ denote the convolution inverse of $\gamma$.

\smallskip

Let $C_1$ and $C_2$ be two coalgebras. It is easy to see that if $c\colon C_1\ot_k C_2\to C_2\ot_k C_1$ is compatible with the coalgebra structures of $C_1$ and $C_2$, then $C_1\ot_k C_2$ is a coalgebra with counit $\varepsilon_{C_1}\ot_k \varepsilon_{C_2}$, via $\Delta := (C_1\ot_k c\ot_k C_2)\xcirc \bigl(\Delta_{C_1}\ot_k \Delta_{C_2}\bigr)$. We will denote this coalgebra by $C_1\ot_c C_2$.

\begin{lemma} Let $E$ be a $k$-algebra. If $u\colon C_1\to E$ and $v\colon C_2\to E$ are convolution invertible $k$-linear maps, then the map $\mu_E\xcirc (u\ot_k v)$ is also convolution invertible and its inverse is $\mu_E \xcirc (v^{-1}\ot_k u^{-1})\xcirc c$.
\end{lemma}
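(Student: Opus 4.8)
Since $C_1\ot_c C_2$ is a coalgebra with counit $\ep_{C_1}\ot_k\ep_{C_2}$ and comultiplication $(C_1\ot_k c\ot_k C_2)\xcirc(\Delta_{C_1}\ot_k\Delta_{C_2})$, the $k$-module $\Hom_k(C_1\ot_c C_2,E)$ is an associative algebra under the convolution product, with two-sided unit $\eta_E\xcirc(\ep_{C_1}\ot_k\ep_{C_2})$. Writing $f:=\mu_E\xcirc(u\ot_k v)$ and $g:=\mu_E\xcirc(v^{-1}\ot_k u^{-1})\xcirc c$, both regarded as maps $C_1\ot_c C_2\to E$, the plan is to verify directly that $f*g=\eta_E\xcirc(\ep_{C_1}\ot_k\ep_{C_2})=g*f$; associativity of the convolution product then yields $g=f^{-1}$.

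For $f*g$ I would expand $f*g=\mu_E\xcirc(f\ot_k g)\xcirc\bigl((C_1\ot_k c\ot_k C_2)\xcirc(\Delta_{C_1}\ot_k\Delta_{C_2})\bigr)$, so that the expression carries the comultiplications of $C_1$ and $C_2$ together with two occurrences of $c$: one from the comultiplication of $C_1\ot_c C_2$ and one from the definition of $g$. In the resulting fourfold product $u\cdot v\cdot v^{-1}\cdot u^{-1}$ inside $E$, the factors built from $v$ and $v^{-1}$ are adjacent but separated by a braiding; the key step is to invoke the compatibility of $c$ with the coalgebra structure of $C_2$, namely $(\Delta_{C_2}\ot_k C_1)\xcirc c=(C_2\ot_k c)\xcirc(c\ot_k C_2)\xcirc(C_1\ot_k\Delta_{C_2})$, to merge the two braidings into a single one applied \emph{after} a comultiplication of $C_2$. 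Then $v*v^{-1}=\eta_E\xcirc\ep_{C_2}$ collapses that factor, the counit compatibility $(\ep_{C_2}\ot_k C_1)\xcirc c=C_1\ot_k\ep_{C_2}$ erases the leftover braiding, and what remains is $\bigl(\mu_E\xcirc(u\ot_k u^{-1})\xcirc\Delta_{C_1}\bigr)\ot_k\ep_{C_2}=\eta_E\xcirc(\ep_{C_1}\ot_k\ep_{C_2})$. This is most transparent in the graphic calculus of \eqref{s0}--\eqref{s2}, every move being a local rewriting of strands.

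The identity $g*f=\eta_E\xcirc(\ep_{C_1}\ot_k\ep_{C_2})$ is the mirror image: now the factors built from $u^{-1}$ and $u$ are the adjacent pair, so one first uses the compatibility of $c$ with the coalgebra structure of $C_1$, namely $(C_2\ot_k\Delta_{C_1})\xcirc c=(c\ot_k C_1)\xcirc(C_1\ot_k c)\xcirc(\Delta_{C_1}\ot_k C_2)$, to merge the two braidings; then $u^{-1}*u=\eta_E\xcirc\ep_{C_1}$, the counit compatibility $(C_2\ot_k\ep_{C_1})\xcirc c=\ep_{C_1}\ot_k C_2$, and finally $v^{-1}*v=\eta_E\xcirc\ep_{C_2}$. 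The one point that needs care — and the place where a misrouted strand could slip in — is the bookkeeping of the two braidings: one must apply the appropriate hexagon-type compatibility in the right order so that the mutually inverse maps genuinely become neighbours in the product, after which each computation is forced by the counit axioms and the defining property of $u^{-1}$ and $v^{-1}$. No ingredient beyond these is needed.
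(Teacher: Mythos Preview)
Your proof is correct and follows essentially the same route as the paper: set $f=\mu_E\xcirc(u\ot_k v)$ and $g=\mu_E\xcirc(v^{-1}\ot_k u^{-1})\xcirc c$, then use the compatibility of $c$ with the comultiplication and counit of $C_2$ (respectively $C_1$) to bring the adjacent $v,v^{-1}$ (respectively $u^{-1},u$) factors under a single comultiplication, collapse via the convolution-inverse property, and finish with the remaining pair. The paper carries this out via two diagram rewrites for $f*g$ and dismisses $g*f$ with ``similarly'', whereas you spell out both directions in words; the content is the same.
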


\begin{proof} Set $\ov{u}:=u^{-1}$, $\ov{v}:=v^{-1}$, $f:=\mu_E\xcirc (u\ot_k v)$ and $g:= \mu_E\xcirc (\ov{v}\ot_k \ov{u})\xcirc c$. We have
$$
\spreaddiagramcolumns{-1.6pc}\spreaddiagramrows{-1.6pc}
\objectmargin{0.0pc}\objectwidth{0.0pc}
\def\objectstyle{\sssize}
\def\labelstyle{\sssize}
\grow{\xymatrix@!0{
\\\\\\\\\\\\\\\\
\save\go+<0pt,0pt>\Drop{\txt{$f*g=$}}\restore }}
\grow{\xymatrix@!0{
\\\\
&\save\go+<0pt,3pt>\Drop{C_1}\restore  \ar@{-}[1,0]  &&&& \save\go+<0pt,3pt>\Drop{C_2}\restore \ar@{-}[1,0]\\
&\ar@{-}`l/4pt [1,-1] [1,-1] \ar@{-}`r [1,1] [1,1]&&&&\ar@{-}`l/4pt [1,-1] [1,-1] \ar@{-}`r [1,1] [1,1]&\\
\ar@{-}[4,0]&& \ar@{-}[1,1]+<-0.1pc,0.1pc> && \ar@{-}[2,-2]&&\ar@{-}[2,0]\\
&&&&&&\\
&&\ar@{-}[2,0]&&\ar@{-}[-1,-1]+<0.1pc,-0.1pc>\ar@{-}[1,1]+<-0.1pc,0.1pc> && \ar@{-}[2,-2]\\
&&&&&&\\
&&&&&&\ar@{-}[-1,-1]+<0.1pc,-0.1pc>\\
\save[]+<0pc,0.4pc> \Drop{} \ar@{-}[1,0]\restore &&\save[]+<0pc,0.4pc> \Drop{} \ar@{-}[1,0]\restore&&\save[]+<0pc,0.4pc> \Drop{} \ar@{-}[1,0]\restore&&\save[]+<0pc,0.4pc> \Drop{} \ar@{-}[1,0]\restore\\
*+[o]+<0.35pc>[F]{u}\ar@{-}[1,0]+<0pc,-0.2pc>&&*+[o]+<0.35pc>[F]{v}\ar@{-}[1,0]+<0pc,-0.2pc>
&&*+[o]+<0.35pc>[F]{\ov{v}}\ar@{-}[1,0]+<0pc,-0.2pc>&&*+[o]+<0.35pc>[F]{\ov{u}}\ar@{-}[1,0]+<0pc,-0.2pc>\\
\ar@{-}`d/4pt [1,1] `[0,2] [0,2]&&&&\ar@{-}`d/4pt [1,1] `[0,2] [0,2]&&\\
&\ar@{-}[1,0]&&&&\ar@{-}[1,0]&\\
&\ar@{-}`d/4pt [1,2] `[0,4] [0,4]&&&&&\\
&&&\ar@{-}[1,0]&&&\\
&&&&&&}}
\grow{\xymatrix@!0{
\\\\\\\\\\\\\\\\
\save\go+<0pt,0pt>\Drop{\txt{$=$}}\restore }}
\grow{\xymatrix@!0{
&&\save\go+<0pt,3pt>\Drop{C_1}\restore  \ar@{-}[1,0]  &&&& \save\go+<0pt,3pt>\Drop{C_2}\restore \ar@{-}[2,0]\\
&&\ar@{-}`l/4pt [1,-2] [1,-2] \ar@{-}`r [1,2] [1,2]&&&&\\
\ar@{-}[5,0]&&&&\ar@{-}[1,1]+<-0.1pc,0.1pc>&&\ar@{-}[3,-3]\\
&&&&&&\\
&&&&&&\ar@{-}[-1,-1]+<0.1pc,-0.1pc>\ar@{-}[3,0]\\
&&&\ar@{-}[1,0]&&&\\
&&&\ar@{-}`l/4pt [1,-1] [1,-1] \ar@{-}`r [1,1] [1,1]&&&\\
&&&&&&\\
\save[]+<0pc,0.4pc> \Drop{} \ar@{-}[1,0]\restore&&\save[]+<0pc,0.4pc> \Drop{} \ar@{-}[1,0]\restore&&\save[]+<0pc,0.4pc> \Drop{} \ar@{-}[1,0]\restore&&\save[]+<0pc,0.4pc> \Drop{} \ar@{-}[1,0]\restore\\
*+[o]+<0.35pc>[F]{u}\ar@{-}[1,0]+<0pc,-0.2pc>&&*+[o]+<0.35pc>[F]{v} \ar@{-}[1,0]+<0pc,-0.2pc>&&*+[o]+<0.35pc>[F]{\ov{v}}\ar@{-}[1,0]+<0pc,-0.2pc>&&*+[o]+<0.35pc>[F]{\ov{u}}\ar@{-}[1,0]+<0pc,-0.4pc>\\
\ar@{-}[2,0]&&\ar@{-}`d/4pt [1,1] `[0,2] [0,2]&&&&\\
&&&\ar@{-}[1,0]&&&\ar@{-}[3,0]+<-0.4pc,0pc>\\
\ar@{-}`d/4pt [1,1]+<0.2pc,0pc> `[0,3] [0,3]&&&&&&\\
&\save[]+<0.2pc,0pc> \Drop{}\ar@{-}[1,0]+<0.2pc,0pc>\restore&&&&&\\
&\save[]+<0.2pc,0pc> \Drop{} \ar@{-}`d/4pt [1,2] `[0,4] [0,4] \restore&&&&&\\
&&&\save[]+<0.1pc,0pc> \Drop{} \ar@{-}[1,0]+<0.1pc,0pc>\restore&&&\\
&&&\save[]+<0.1pc,0pc> \Drop{}\restore&&&
}}
\grow{\xymatrix@!0{
\\\\\\\\\\\\\\\\
\save\go+<0pt,0pt>\Drop{\txt{$= \eta_E\xcirc\epsilon_{C_1\ot_c C_2}$,}}\restore }}
$$
as desired. Similarly $g*f = \eta_E\xcirc\epsilon_{C_1\ot_c C_2}$.
\end{proof}

Let $E$ be a $k$-algebra. Recall that for all $s\in \mathds{N}$ we let $\mu_s\colon E^{\ot_k^s}\to E$ denote the map recursively defined by
$$
\mu_1 := \ide_E\quad\text{and}\quad \mu_{s+1}:= \mu_E\xcirc(\mu_s\ot_k E).
$$

\begin{lemma}\label{inversa de convolucion} Let $E$ be a $k$-algebra and let $H$ be a braided bialgebra. If $u\colon H\to E$ is a convolution invertible $k$-linear map, then for all $s\in \mathds{N}$, the map $\mu_s\xcirc u^{\ot_k^s}$, is also convolution invertible. Its inverse is $\mu_s \xcirc \ov{u}^{\ot_k^s}\xcirc \gc_s$, where $\ov{u}$ is the convolution inverse of $u$ and $\gc_s\colon H^{\ot_k^s}\to H^{\ot_k^s}$ is the map introduced at the beginning of Section~\ref{hom de inv}.
\end{lemma}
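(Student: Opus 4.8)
The plan is to induct on $s$, at each step applying the previous lemma to a suitable decomposition of $\mu_{s+1}\xcirc u^{\ot_k^{s+1}}$ as a product of two convolution‑invertible maps. When $s=1$ there is nothing to prove: $\mu_1=\ide_E$ and (with the convention $\gc_1=\ide$) the claim reduces to the definition of $\ov u$. For the inductive step, note that directly from the recursion $\mu_{s+1}=\mu_E\xcirc(\mu_s\ot_k E)$ one has
$$
\mu_{s+1}\xcirc u^{\ot_k^{s+1}} = \mu_E\xcirc\bigl((\mu_s\xcirc u^{\ot_k^s})\ot_k u\bigr),
$$
and that, because $E$ is associative, one also has $\mu_{s+1}=\mu_E\xcirc(E\ot_k\mu_s)$, a fact that will be used to reorganise the inverse at the end.

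The main point is the identification, as coalgebras, of $H^{\ot_c^{s+1}}$ with the braided tensor product $H^{\ot_c^s}\ot_c H$ whose braiding is precisely the map $\cc_{s1}\colon H^{\ot_k^s}\ot_k H\to H\ot_k H^{\ot_k^s}$ occurring in the recursion $\gc_{s+1}=(H\ot_k\gc_s)\xcirc\cc_{s1}$; this is obtained by unwinding the definitions of $\pc_{s+1}$ and of $\Delta_{H^{\ot_c^{s+1}}}$, and the same unwinding shows that $\cc_{s1}$ is a braiding compatible with the coalgebra structures of $H^{\ot_c^s}$ and $H$ (here one uses that the braid $c$ of $H$ is compatible with the coalgebra structure of $H$), so that the previous lemma applies. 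Granting this, apply that lemma with $C_1:=H^{\ot_c^s}$, $u_1:=\mu_s\xcirc u^{\ot_k^s}$, $C_2:=H$ and $v:=u$: it yields that $\mu_{s+1}\xcirc u^{\ot_k^{s+1}}$ is convolution invertible with inverse $\mu_E\xcirc(v^{-1}\ot_k u_1^{-1})\xcirc\cc_{s1}$. By the inductive hypothesis $u_1^{-1}=\mu_s\xcirc\ov u^{\ot_k^s}\xcirc\gc_s$ and $v^{-1}=\ov u$, whence the inverse equals
$$
\mu_E\xcirc\bigl(\ov u\ot_k(\mu_s\xcirc\ov u^{\ot_k^s}\xcirc\gc_s)\bigr)\xcirc\cc_{s1} = \mu_{s+1}\xcirc\ov u^{\ot_k^{s+1}}\xcirc(H\ot_k\gc_s)\xcirc\cc_{s1} = \mu_{s+1}\xcirc\ov u^{\ot_k^{s+1}}\xcirc\gc_{s+1},
$$
where the first equality uses $\mu_{s+1}=\mu_E\xcirc(E\ot_k\mu_s)$ and the second is the definition of $\gc_{s+1}$. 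This is the asserted formula, completing the induction.

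The hard part is the bookkeeping in the second paragraph: one must check that the three a priori different regroupings and braidings — the one implicit in $\Delta_{H^{\ot_c^{s+1}}}$ (that is, in $\pc_{s+1}$), the one in the recursion for $\gc_{s+1}$, and the braiding of $H^{\ot_c^s}\ot_c H$ needed to invoke the previous lemma — are literally the same map $\cc_{s1}$, and that this map satisfies the coalgebra‑braiding axioms. Both are routine consequences of the recursive definitions of $\pc_\bullet$, $\cc_{\bullet\bullet}$ and $\gc_\bullet$ together with the braided‑bialgebra axioms for $H$, and are most transparently verified with the graphical calculus; no genuinely new idea is required beyond careful handling of the indices.
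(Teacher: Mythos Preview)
Your proof is correct and follows essentially the same approach as the paper: both argue by induction on $s$, identify $H^{\ot_c^{s+1}}$ with $H^{\ot_c^s}\ot_{\cc_{s1}} H$, apply the previous lemma to the factorisation $\mu_{s+1}\xcirc u^{\ot_k^{s+1}}=\mu_E\xcirc\bigl((\mu_s\xcirc u^{\ot_k^s})\ot_k u\bigr)$, and conclude via the recursion $\gc_{s+1}=(H\ot_k\gc_s)\xcirc\cc_{s1}$. The only difference is that the paper invokes \cite[Corollary~4.21]{G-G2} for the coalgebra identification and the rewriting of the inverse, whereas you spell out the needed bookkeeping (including the use of $\mu_{s+1}=\mu_E\xcirc(E\ot_k\mu_s)$) by hand.
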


\begin{proof} We make the proof by induction on $s$. Case $s = 1$ is trivial. Assume that the result is valid for $s$. Let $C_1:= H^{\ot_c^s}$ and $C_2 = H$. By the previous lemma the $k$-linear map $\mu_E\xcirc\bigl((\mu_s \xcirc u^{\ot_k^s})\ot_k u\bigr)$ is convolution invertible and its convolution inverse is $\mu_E\xcirc\bigl((\mu_s\xcirc \ov{u}^{\ot_k^s}\xcirc \gc_s)\ot_k \ov{u}\bigr)\xcirc c_{s1}$. But, by \cite[Corollary~4.21]{G-G2}, we know that $H^{\ot_c^{s+1}} = C_1\ot_{c_{s1}} C_2$ and $\bigl((\ov{u}^{\ot_k^s}\xcirc \gc_s)\ot_k \ov{u}\bigr)\xcirc c_{s1} = \ov{u}^{\ot_k^{s+1}} \xcirc \gc_{s+1}$.
\end{proof}

\noindent\bf Proof of Proposition~\ref{inversa de theta}.\rm\enspace Let
$$
\wt{\theta}_{rs}\colon M\ot_{\!A} \ov{C} \ot_k D\to M \ot_k D \ot_k C \quad\text{and}\quad  \wt{\vartheta}_{rs}\colon M\ot_k D\ot_k C\to M\ot_{\!A} \ov{C}\ot_k D,
$$
be the $k$-linear maps diagrammatically defined by
$$
\spreaddiagramcolumns{-1.6pc}\spreaddiagramrows{-1.6pc}
\objectmargin{0.0pc}\objectwidth{0.0pc}
\def\objectstyle{\sssize}
\def\labelstyle{\sssize}
\grow{\xymatrix@!0{
\\\\\\\\
\save\go+<0pt,0pt>\Drop{\txt{$\widetilde{\theta} :=$}}\restore
}}
\grow{\xymatrix@!0{\\\\
\save\go+<0pt,3pt>\Drop{M}\restore \ar@{-}[7,0]&&\ar@{-}[2,0]+<0pc,-0.5pt> \save\go+<0pt,3pt>\Drop{\overline{C}}\restore &&&& \save\go+<0pt,3pt>\Drop{D}\restore \ar@{-}[2,0] \\
&&\ar@{-}`r/4pt [1,2] [1,2]&&&&\\
&&&&\ar@{-}[1,1]+<-0.125pc,0.0pc> \ar@{-}[1,1]+<0.0pc,0.125pc>&&\ar@{-}[2,-2]\\
&&*+[o]+<0.37pc>[F]{\ov{\mu}}\ar@{-}[2,0]+<0pc,0pc>&&&&\\
&&&&\ar@{-}[3,0]&&\ar@{-}[-1,-1]+<0.125pc,0.0pc>\ar@{-}[-1,-1]+<0.0pc,-0.125pc>\ar@{-}[3,0]\\
&&\ar@{-}`d/4pt [1,-2][1,-2]&&&&\\
&&&&&&\\
&&&&&&
}}
\quad
\grow{\xymatrix@!0{
\\\\\\\\
\save\go+<0pt,0pt>\Drop{\txt{and}}\restore
}}
\quad
\grow{\xymatrix@!0{
\\\\\\\\
\save\go+<0pt,0pt>\Drop{\txt{$\widetilde{\vartheta} :=$}}\restore
}}
\grow{\xymatrix@!0{
\save\go+<0pt,3pt>\Drop{M}\restore \ar@{-}[10,0] &&& \save\go+<0pt,3pt>\Drop{D}\restore \ar@{-}[2,2] &&  \save\go+<0pt,3pt>\Drop{C}\restore \ar@{-}[1,-1]+<0.125pc,0.0pc> \ar@{-}[1,-1]+<0.0pc,0.125pc>&\\
&&&&&&\\
&&&\ar@{-}[1,0]\ar@{-}[-1,1]+<-0.125pc,0.0pc> \ar@{-}[-1,1]+<0.0pc,-0.125pc>&&\ar@{-}[1,1]&\\
&&&\ar@{-}`l/4pt [1,-1] [1,-1] \ar@{-}`r [1,1] [1,1]&&&\ar@{-}[7,0]\\
&&\ar@{-}[1,0]+<0pc,-0.5pt>&&\ar@{-}[1,0]+<0pc,-0.5pt>&&\\
&&&&&&\\
&&*+[o]+<0.37pc>[F]{\ov{\mathbf{u}}}\ar@{-}[2,0]+<0pc,0pc>&&*+[o]+<0.37pc>[F]{\boldsymbol{\gamma}}\ar@{-}[4,0]&&\\
&&&&&&\\
&&\ar@{-}`d/4pt [1,-2][1,-2]&&&&\\
&&&&&&\\
&&&&&&
}}
\grow{\xymatrix@!0{
\\\\\\\\
\save\go+<0pt,0pt>\Drop{\txt{,}}\restore
}}
$$
where

\begin{itemize}

\smallskip

\item[-] $C := H^{\ot_c^s}$, $\ov{C} = E^{\ot_{\!A}^s}$ and $D:=A^{\ot_k^r}$,

\smallskip

\item[-] $\ov{\mu}$ is the map induced by $\mu_s\colon E^{\ot_k^s}\to E$,

\smallskip

\item[-] $\boldsymbol{\gamma}:= \gamma^{\ot_{\!A}^s}$ and $\ov{\mathbf{u}} := \mu_s\xcirc \ov{\gamma}^{\ot_k^s}\xcirc \gc_s$.

\end{itemize}
It is easy to see that $\theta_{rs}$ and $\vartheta_{rs}$ are induced by $(-1)^{rs}\wt{\theta}_{rs}$ and $(-1)^{rs}\wt{\vartheta}_{rs}$, respectively. Hence in order to finish the proof we must see that $\wt{\vartheta}_{rs}\xcirc \wt{\theta}_{rs} = \ide$ and $\wt{\theta}_{rs}\xcirc \wt{\vartheta}_{rs} = \ide$. Let
$$
L\colon \ov{C}\to E\ot_k C\quad\text{and}\quad \ov{L}\colon C\to E\ot_A \ov{C}
$$
be the $k$-linear maps defined by
$$
L:= (\ov{\mu}\ot_k D\ot_k C)\xcirc (\boldsymbol{\nu}_{\!A}\ot_k C)\quad\text{and}\quad \ov{L}:= (\mu_s \xcirc \ov{\gamma}^{\ot_k^s} \xcirc \gc_s\ot_k \gamma^{\ot_{\!A}^s})\xcirc \Delta_C,
$$
where $\boldsymbol{\nu}_{\!A}$ is the coaction introduced in Remark~\ref{acerca de nu}. Clearly
\begin{align*}
&\wt{\theta} := \bigl(M\ot_k s_{sr}\bigr)\xcirc \bigl(\wt{\rho}\ot_k C\ot_k D\bigr) \xcirc \bigl(M\ot_{\!A} L \ot_k D\bigr)
\intertext{and}
&\wt{\vartheta} :=\bigl(\wt{\rho}\ot_k C\ot_k D\bigr)\xcirc \bigl(M\ot_k\ov{L} \ot_k D\bigr)\xcirc \bigl(M\ot_k \s_{sr}^{-1}\bigr),
\end{align*}
where $\wt{\rho}$ denotes the right action of $E$ on $M$. We will prove that $\wt{\vartheta}_{rs}\xcirc \wt{\theta}_{rs} = \ide$ and we leave the task to prove that  $\wt{\theta}_{rs}\xcirc \wt{\vartheta}_{rs} = \ide$ to the reader. Let $\Gamma\colon M\ot_k C\to M\ot_A\ov{C}$ be the isomorphism given by $\Gamma(m\ot_k\bh_{1s}) = m\ot_A\gamma_A(\bh_{1s})$. Since
$$
\wt{\vartheta}_{rs}\xcirc \wt{\theta}_{rs} = (\wt{\rho}\ot_k C)\xcirc (M\ot_k\ov{L})\xcirc (\wt{\rho}\ot_k C) \xcirc (M\ot_A L)\ot_k D
$$
and
$$
(M\ot_A L)\xcirc \Gamma = (M\ot_A\mu_s\xcirc \gamma^{\ot_k^s}\ot_k C)\xcirc (M\ot_k \Delta_C),
$$
we have
$$
\spreaddiagramcolumns{-1.6pc}\spreaddiagramrows{-1.6pc}
\objectmargin{0.0pc}\objectwidth{0.0pc}
\def\objectstyle{\sssize}
\def\labelstyle{\sssize}
\grow{\xymatrix@!0{
\\\\\\\\\\
\save\go+<0pt,0pt>\Drop{\txt{$\Gamma^{-1}\xcirc(\wt{\rho}\ot_k C)\xcirc (M\ot_k\ov{L})\xcirc (\wt{\rho}\ot_k C) \xcirc (M\ot_A L)\xcirc \Gamma =$}}\restore }}
\grow{\xymatrix@!0{
\save\go+<0pt,3pt>\Drop{M}\restore\ar@{-}[10,0]&&&\save\go+<0pt,3pt>\Drop{C}\restore  \ar@{-}[1,0]  &\\
&&&\ar@{-}`l/4pt [1,-1] [1,-1] \ar@{-}`r [1,1] [1,1]&&\\
&&\save[]+<0pc,0pc> \Drop{} \ar@{-}[1,0]\restore&&\ar@{-}[3,0]\\
&&*+[o]+<0.35pc>[F]{v}\ar@{-}[1,0]+<0pc,0pc>&&\\
&&\ar@{-}`d/4pt [1,-2][1,-2]&&\\
&&&&\ar@{-}`l/4pt [1,-1] [1,-1] \ar@{-}`r [1,1] [1,1]\\
&&&\save[]+<0pc,0pc> \Drop{} \ar@{-}[1,0]\restore&&\ar@{-}[4,0]\\
&&&*+[o]+<0.35pc>[F]{\ov{v}}\ar@{-}[1,0]+<0pc,0pc>&&\\
&&&\ar@{-}`d/4pt [1,-2][1,-2]&&\\
&\ar@{-}[0,-1]&&&&\\
&&&&&}}
\grow{\xymatrix@!0{
\\\\\\\\\\
\save\go+<0pt,0pt>\Drop{\txt{$=$}}\restore }}
\grow{\xymatrix@!0{
\save\go+<0pt,3pt>\Drop{M}\restore\ar@{-}[10,0]&&&&\save\go+<0pt,3pt>\Drop{C}\restore  \ar@{-}[1,0]  &\\
&&&&\ar@{-}`l/4pt [1,-1] [1,-1] \ar@{-}`r [1,1] [1,1]&\\
&&&\ar@{-}[1,0]&&\ar@{-}[8,0]\\
&&&\ar@{-}`l/4pt [1,-1] [1,-1] \ar@{-}`r [1,1] [1,1]&&\\
&&\save[]+<0pc,0pc> \Drop{} \ar@{-}[1,0]&&\save[]+<0pc,0pc> \Drop{} \ar@{-}[1,0]\\
&&*+[o]+<0.35pc>[F]{v}\ar@{-}[1,0]+<0pc,0pc>\restore&&*+[o]+<0.35pc>[F]{\ov{v}}\ar@{-}[1,0]+<0pc,0pc>\restore\\
&&\ar@{-}`d/4pt [1,1] `[0,2] [0,2] &&\\
&&&\ar@{-}[1,0]&\\
&&& \ar@{-}`d/4pt [1,-2][1,-2]&\\
&\ar@{-}[0,-1]&&&&\\
&&&&&\\
&&&&&\\
&&&&&}}
$$
where $v := \mu_s\xcirc \gamma^{\ot_k^s}$ and $\ov{v}:= \mu_s\xcirc \ov{\gamma}^{\ot_k^s} \xcirc \gc_s$. To finish the proof it suffices to note that $\ov{v}$ is the convolution inverse of $v$, by Lemma~\ref{inversa de convolucion}.\qed

\medskip

\begin{lemma}\label{formula para chi} Let $s,r\in \mathds{N}$. For $C:= H^{\ot_c^s}$ and $D:=A^{\ot_k^r}$, the equality
$$
\spreaddiagramcolumns{-1.6pc}\spreaddiagramrows{-1.6pc}
\objectmargin{0.0pc}\objectwidth{0.0pc}
\def\objectstyle{\sssize}
\def\labelstyle{\sssize}
\grow{\xymatrix@!0{\\\\
\save\go+<0pt,3pt>\Drop{C}\restore \ar@/^0.1pc/ @{-}[2,2] \ar@/_0.1pc/ @{-}[2,2]&&\save\go+<0pt,3pt>\Drop{D}\restore \ar@/^0.1pc/ @{-}[2,-2]\ar@/_0.1pc/ @{-}[2,-2]\\
&&\\
&&
}}
\grow{\xymatrix@!0{
\\\\\\
\save\go+<0pt,0pt>\Drop{\txt{$=$}}\restore
}}
\grow{\xymatrix@!0{
&\save\go+<0pt,3pt>\Drop{C}\restore \ar@{-}[1,0]&&&\save\go+<0pt,3pt>\Drop{D}\restore \ar@{-}[2,0]\\
&\ar@{-}`l/4pt [1,-1] [1,-1] \ar@{-}`r [1,1] [1,1]&&&\\
\ar@{-}[2,0]&&\ar@{-}[1,1]+<-0.125pc,0.0pc> \ar@{-}[1,1]+<0.0pc,0.125pc>&&\ar@{-}[2,-2]\\
&&&&\\
\ar@{-}`d/4pt [1,2][1,2]&&\ar@{-}[2,0]&&\ar@{-}[-1,-1]+<0.125pc,0.0pc>\ar@{-}[-1,-1]+<0.0pc,-0.125pc> \ar@{-}[2,0]\\
&&&&\\
&&&&
}}
$$
is true.
\end{lemma}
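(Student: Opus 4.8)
The identity to be proved is the ``bulk'' analogue of the defining formula $\chi = (\rho\ot_k H)\xcirc(H\ot_k s)\xcirc(\De\ot_k A)$ for the twisting map of $E = A\#_f H$: writing $\chi_{_{sr}}\colon H^{\ot_k^s}\ot_k A^{\ot^r}\to A^{\ot^r}\ot_k H^{\ot_k^s}$ for the iterated twisting map of Notations~\ref{not2.1} (taken with $V = H$), which is well defined from $\chi$ alone, the claim is
$$
\chi_{_{sr}} = (\rho_{sr}\ot_k H^{\ot_k^s})\xcirc(H^{\ot_k^s}\ot_k \s_{sr})\xcirc(\Delta_{H^{\ot_c^s}}\ot_k A^{\ot^r}),
$$
where $\rho_{sr}$, $\s_{sr}$ and $\Delta_{H^{\ot_c^s}} = \pc_s\xcirc\De^{\ot_k^s}$ are the maps introduced at the beginning of Section~\ref{hom de inv}. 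The plan is a double induction: first on $r$ with $s=1$ fixed, and then on $s$ for arbitrary $r$, in each case unwinding the recursive definitions of $\chi_{_{sr}}$, $\s_{sr}$, $\rho_{sr}$, $\rho_r$ and $\pc_s$. It is convenient to carry out each inductive step in graphical calculus, using the pictorial forms of the structure maps from~\eqref{s2} and~\eqref{s3}.

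For the slice $s=1$ I would induct on $r$. The case $r=1$ is the definition of $\chi = \chi_{_{11}}$. For the inductive step, use the recursion $\chi_{_{1,r+1}} = (A^{\ot^r}\ot_k\chi)\xcirc(\chi_{_{1r}}\ot_k A)$ together with the inductive hypothesis and the definition of $\chi$; the three facts that make the two sides agree are the coassociativity of $\De_H$ (merging the two comultiplications produced by $\chi_{_{1r}}$ and $\chi$ into the single $\De_H$ sitting above $\rho_{r+1}$ and $\s_{1,r+1}$), the compatibility of $s$ with the coalgebra structure of $H$ (Definition~\ref{transposition}), namely $(A\ot_k\De_H)\xcirc s = (s\ot_k H)\xcirc(H\ot_k s)\xcirc(\De_H\ot_k A)$, and the recursive definitions $\s_{1,r+1} = (A^{\ot^r}\ot_k s)\xcirc(\s_{1r}\ot_k A)$ and $\rho_{r+1} = (\rho_r\ot_k\rho)\xcirc(H\ot_k\s_{1r}\ot_k A)\xcirc(\De\ot_k A^{\ot^{r+1}})$ from Definitions~\ref{accion debil} and~\ref{accion debil multiple}.

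For the induction on $s$ (with $r$ arbitrary), use the recursion $\chi_{_{s+1,r}} = (\chi_{_{1r}}\ot_k H^{\ot_k^s})\xcirc(H\ot_k\chi_{_{sr}})$, apply the inductive hypothesis to $\chi_{_{sr}}$ and the case already established to $\chi_{_{1r}}$, and reassemble. The point is to check that the braided comultiplication $\Delta_{H^{\ot_c^{s+1}}} = (H\ot_k\pc_s\ot_k H)\xcirc(H\ot_k c^{\ot_k^s}\ot_k H)\xcirc\De^{\ot_k^{s+1}}$ produces exactly the transpositions appearing on the right-hand side. The tools are the compatibility of the braid $c$ with the coalgebra structure of $H$ (the hexagon identity $(H\ot_k\De)\xcirc c = (c\ot_k H)\xcirc(H\ot_k c)\xcirc(\De\ot_k H)$), axiom~(4) of the weak $s$-action, $s\xcirc(H\ot_k\rho) = (\rho\ot_k H)\xcirc(H\ot_k s)\xcirc(c\ot_k A)$, which slides the top copy of the action past the transpositions coming from $\s_{sr}$, and the recursions $\rho_{s+1,r} = \rho_{1r}\xcirc(H\ot_k\rho_{sr})$ (Definition~\ref{accion debil doble multiple}) and $\s_{s+1,r} = (\s_{1r}\ot_k H^{\ot_k^s})\xcirc(H\ot_k\s_{sr})$.

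The main obstacle is precisely this last step: the diagrammatic bookkeeping in the $s$-induction of which $H$-strand braids past which, i.e. verifying that the correction $\pc_{s+1}$ built into $\Delta_{H^{\ot_c^{s+1}}}$ reproduces the tangle obtained by iterating $\chi$ once more. Once the tangle is drawn correctly, every move is a single application of one of the axioms listed above; the $r$-direction, the base cases, and the type-checking are routine.
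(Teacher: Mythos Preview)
Your proposal is correct and follows essentially the same route as the paper: a double induction, first on $r$ with $s=1$ fixed (using the recursion $\chi_{_{1,r+1}}=(A^{\ot^r}\ot_k\chi)\xcirc(\chi_{_{1r}}\ot_k A)$, coassociativity of $\De_H$, and compatibility of $s$ with the coalgebra structure), then on $s$ for arbitrary $r$ (using the recursion for $\chi_{_{s+1,r}}$, compatibility of $c$ with $\De_H$, and axiom~(4) of the weak $s$-action). The paper carries out exactly these two inductive steps diagrammatically, and you have correctly isolated the axioms that drive each equality.
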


\begin{proof} When $s=r=1$ the formula is true by definition. Assume that $r>1$ and that the formula is valid for $H$ and $D':= A^{\ot^{r-1}}$. Let $D:= A^{\ot^r}$. We have
$$
\spreaddiagramcolumns{-1.6pc}\spreaddiagramrows{-1.6pc}
\objectmargin{0.0pc}\objectwidth{0.0pc}
\def\objectstyle{\sssize}
\def\labelstyle{\sssize}
\grow{\xymatrix@!0{\\\\\\\\
\save\go+<0pt,3pt>\Drop{H}\restore \ar@/^0.1pc/ @{-}[2,2] \ar@/_0.1pc/ @{-}[2,2]&&\save\go+<0pt,3pt>\Drop{D}\restore \ar@/^0.1pc/ @{-}[2,-2]\ar@/_0.1pc/ @{-}[2,-2]\\
&&\\
&&
}}
\grow{\xymatrix@!0{
\\\\\\\\\\
\save\go+<0pt,0pt>\Drop{\txt{$=$}}\restore
}}
\grow{\xymatrix@!0{\\\\\\
\save\go+<0pt,3pt>\Drop{H}\restore \ar@/^0.1pc/ @{-}[2,2] \ar@/_0.1pc/ @{-}[2,2] &&\save\go+<0pt,3pt>\Drop{D'}\restore \ar@/^0.1pc/ @{-}[2,-2]\ar@/_0.1pc/ @{-}[2,-2]&&\save\go+<0pt,3pt>\Drop{A}\restore \ar@{-}[2,0]\\
&&&&\\
\ar@{-}[2,0]&&\ar@/^0.1pc/ @{-}[2,2] \ar@/_0.1pc/ @{-}[2,2]&& \ar@/^0.1pc/ @{-}[2,-2]\ar@/_0.1pc/ @{-}[2,-2]\\
&&&&\\
&&&&
}}
\grow{\xymatrix@!0{
\\\\\\\\\\
\save\go+<0pt,0pt>\Drop{\txt{$=$}}\restore
}}
\grow{\xymatrix@!0{
&\save\go+<0pt,3pt>\Drop{H}\restore \ar@{-}[1,0]&&&\save\go+<0pt,3pt>\Drop{D'}\restore \ar@{-}[2,0]&&& \save\go+<0pt,3pt>\Drop{A}\restore \ar@{-}[6,0]\\
&\ar@{-}`l/4pt [1,-1] [1,-1] \ar@{-}`r [1,1] [1,1]&&&&&&\\
\ar@{-}[2,0]&&\ar@{-}[1,1]+<-0.125pc,0.0pc> \ar@{-}[1,1]+<0.0pc,0.125pc>&&\ar@{-}[2,-2]&&&\\
&&&&&&&\\
\ar@{-}`d/4pt [1,2][1,2]&&\ar@{-}[7,0]&&\ar@{-}[-1,-1]+<0.125pc,0.0pc>\ar@{-}[-1,-1]+<0.0pc,-0.125pc> \ar@{-}[2,0]&&&\\
&&&& \ar@{-}[1,0]&&&\ar@{-}[2,0]\\
&&&&\ar@{-}`l/4pt [1,-1] [1,-1] \ar@{-}`r [1,1] [1,1]&&&\\
&&&\ar@{-}[2,0]&&\ar@{-}[1,1]+<-0.125pc,0.0pc> \ar@{-}[1,1]+<0.0pc,0.125pc>&&\ar@{-}[2,-2]\\
&&&&&&&\\
&&&\ar@{-}`d/4pt [1,2][1,2]&&\ar@{-}[2,0]&&\ar@{-}[-1,-1]+<0.125pc,0.0pc>\ar@{-}[-1,-1]+<0.0pc,-0.125pc> \ar@{-}[2,0]\\
&&&&&&&\\
&&&&&&&
}}
\grow{\xymatrix@!0{
\\\\\\\\\\
\save\go+<0pt,0pt>\Drop{\txt{$=$}}\restore
}}
\grow{\xymatrix@!0{
&\save[]+<0.2pc,3pt> \Drop{H}\ar@{-}[1,0]+<0.2pc,0pc>\restore&&&&&\save\go+<0pt,3pt>\Drop{D'}\restore \ar@{-}[4,0]&&\save\go+<0pt,3pt>\Drop{A}\restore \ar@{-}[6,0]\\
&\save[]+<0.2pc,0pc> \Drop{}\ar@{-}`l/4pt [1,-1] [1,-1] \ar@{-}`r [1,2] [1,2]\restore&&&&&&&\\
\ar@{-}[8,0]&&&\ar@{-}[1,0]&&&&&\\
&&&\ar@{-}`l/4pt [1,-1] [1,-1] \ar@{-}`r [1,1] [1,1]&&&&&\\
&&\ar@{-}[2,0]&&\ar@{-}[1,1]+<-0.125pc,0.0pc> \ar@{-}[1,1]+<0.0pc,0.125pc>&&\ar@{-}[2,-2]&&\\
&&&&&&&&\\
&&\ar@{-}[1,1]+<-0.125pc,0.0pc> \ar@{-}[1,1]+<0.0pc,0.125pc>&&\ar@{-}[2,-2]&& \ar@{-}[-1,-1]+<0.125pc,0.0pc>\ar@{-}[-1,-1]+<0.0pc,-0.125pc>\ar@{-}[1,1]+<-0.125pc,0.0pc> \ar@{-}[1,1]+<0.0pc,0.125pc>&&\ar@{-}[2,-2]\\
&&&&&&&&\\
&&\ar@{-}[2,0]\ar@{-}[2,0]&&\ar@{-}[-1,-1]+<0.125pc,0.0pc>\ar@{-}[-1,-1]+<0.0pc,-0.125pc>\ar@{-}`d/4pt [1,2][1,2]&&\ar@{-}[2,0]&& \ar@{-}[-1,-1]+<0.125pc,0.0pc>\ar@{-}[-1,-1]+<0.0pc,-0.125pc>\ar@{-}[2,0]\\
&&&&&&&&\\
&&&&&&&&
}}
\grow{\xymatrix@!0{
\\\\\\\\\\
\save\go+<0pt,0pt>\Drop{\txt{$=$}}\restore
}}
\grow{\xymatrix@!0{\\
&&\save[]+<0.2pc,3pt> \Drop{H}\ar@{-}[1,0]+<0.2pc,0pc>\restore&&&&\save\go+<0pt,3pt>\Drop{A}\restore \ar@{-}[2,0]&&\save\go+<0pt,3pt>\Drop{D'}\restore \ar@{-}[4,0]\\
&&\save[]+<0.2pc,0pc> \Drop{}\ar@{-}`l/4pt [1,-1] [1,-1] \ar@{-}`r [1,2] [1,2]\restore&&&&&&\\
&\ar@{-}[1,0]&&&\ar@{-}[1,1]+<-0.125pc,0.0pc> \ar@{-}[1,1]+<0.0pc,0.125pc>&&\ar@{-}[2,-2]&&\\
&\ar@{-}`l/4pt [1,-1] [1,-1] \ar@{-}`r [1,1] [1,1]&&&&&&&\\
\ar@{-}[2,0]&&\ar@{-}[1,1]+<-0.125pc,0.0pc> \ar@{-}[1,1]+<0.0pc,0.125pc>&&\ar@{-}[2,-2] &&\ar@{-}[-1,-1]+<0.125pc,0.0pc>\ar@{-}[-1,-1]+<0.0pc,-0.125pc>\ar@{-}[1,1]+<-0.125pc,0.0pc> \ar@{-}[1,1]+<0.0pc,0.125pc>&&\ar@{-}[2,-2]\\
&&&&&&&&\\
\ar@{-}`d/4pt [1,2][1,2]&&\ar@{-}[2,0]&&\ar@{-}[-1,-1]+<0.125pc,0.0pc>\ar@{-}[-1,-1]+<0.0pc,-0.125pc> \ar@{-}`d/4pt [1,2][1,2]&&\ar@{-}[2,0]&& \ar@{-}[-1,-1]+<0.125pc,0.0pc>\ar@{-}[-1,-1]+<0.0pc,-0.125pc> \ar@{-}[2,0]\\
&&&&&&&&\\
&&&&&&&&
}}
\grow{\xymatrix@!0{
\\\\\\\\\\
\save\go+<0pt,0pt>\Drop{\txt{$=$}}\restore
}}
\grow{\xymatrix@!0{\\\\
&\save\go+<0pt,3pt>\Drop{H}\restore \ar@{-}[1,0]&&&\save\go+<0pt,3pt>\Drop{D}\restore \ar@{-}[2,0]\\
&\ar@{-}`l/4pt [1,-1] [1,-1] \ar@{-}`r [1,1] [1,1]&&&\\
\ar@{-}[2,0]&&\ar@{-}[1,1]+<-0.125pc,0.0pc> \ar@{-}[1,1]+<0.0pc,0.125pc>&&\ar@{-}[2,-2]\\
&&&&\\
\ar@{-}`d/4pt [1,2][1,2]&&\ar@{-}[2,0]&&\ar@{-}[-1,-1]+<0.125pc,0.0pc>\ar@{-}[-1,-1]+<0.0pc,-0.125pc> \ar@{-}[2,0]\\
&&&&\\
&&&&
}}
\grow{\xymatrix@!0{
\\\\\\\\\\
\save\go+<0pt,0pt>\Drop{\txt{.}}\restore
}}
$$
Assume finally that $s>1$ and the formula is valid for $C':=H^{\ot_c^{s-1}}$ and $D:= A^{\ot^r}$. Then, we have
$$
\spreaddiagramcolumns{-1.6pc}\spreaddiagramrows{-1.6pc}
\objectmargin{0.0pc}\objectwidth{0.0pc}
\def\objectstyle{\sssize}
\def\labelstyle{\sssize}
\grow{\xymatrix@!0{\\\\\\\\
\save\go+<0pt,3pt>\Drop{C}\restore \ar@/^0.1pc/ @{-}[2,2] \ar@/_0.1pc/ @{-}[2,2]&&\save\go+<0pt,3pt>\Drop{D}\restore \ar@/^0.1pc/ @{-}[2,-2]\ar@/_0.1pc/ @{-}[2,-2]\\
&&\\
&&
}}
\grow{\xymatrix@!0{
\\\\\\\\\\
\save\go+<0pt,0pt>\Drop{\txt{$=$}}\restore
}}
\grow{\xymatrix@!0{\\\\\\
\save\go+<0pt,3pt>\Drop{H}\restore \ar@/^0.1pc/ @{-}[2,2] \ar@/_0.1pc/ @{-}[2,2] &&\save\go+<0pt,3pt>\Drop{C'}\restore \ar@/^0.1pc/ @{-}[2,-2]\ar@/_0.1pc/ @{-}[2,-2]&&\save\go+<0pt,3pt>\Drop{D}\restore \ar@{-}[2,0]\\
&&&&\\
\ar@{-}[2,0]&&\ar@/^0.1pc/ @{-}[2,2] \ar@/_0.1pc/ @{-}[2,2]&& \ar@/^0.1pc/ @{-}[2,-2]\ar@/_0.1pc/ @{-}[2,-2]\\
&&&&\\
&&&&
}}
\grow{\xymatrix@!0{
\\\\\\\\\\
\save\go+<0pt,0pt>\Drop{\txt{$=$}}\restore
}}
\grow{\xymatrix@!0{
&\save\go+<0pt,3pt>\Drop{H}\restore \ar@{-}[1,0] &&&& \save\go+<0pt,3pt>\Drop{C'}\restore\ar@{-}[1,0] &&&\save\go+<0pt,3pt>\Drop{D}\restore \ar@{-}[2,0]\\
&\ar@{-}`l/4pt [1,-1] [1,-1] \ar@{-}`r [1,1] [1,1]&&&&\ar@{-}`l/4pt [1,-1] [1,-1] \ar@{-}`r [1,1] [1,1]&&&\\
\ar@{-}[4,0]&&\ar@{-}[2,0]&&\ar@{-}[2,0]&&\ar@{-}[1,1]+<-0.125pc,0.0pc> \ar@{-}[1,1]+<0.0pc,0.125pc>&& \ar@{-}[2,-2]\\
&&&&&&&&\\
&&\ar@{-}[2,2]&&\ar@{-}`d/4pt [1,2][1,2] &&\ar@{-}[2,0]&& \ar@{-}[-1,-1]+<0.125pc,0.0pc> \ar@{-}[-1,-1]+<0.0pc,-0.125pc>\ar@{-}[6,0]\\
&&&&&&&&\\
\ar@{-}[2,2]&&&&\ar@{-}[1,1]+<-0.125pc,0.0pc> \ar@{-}[1,1]+<0.0pc,0.125pc>&&\ar@{-}[2,-2]&&\\
&&&&&&&&\\
&&\ar@{-}`d/4pt [1,2][1,2] &&\ar@{-}[2,0]&&\ar@{-}[2,0] \ar@{-}[-1,-1]+<0.125pc,0.0pc> \ar@{-}[-1,-1]+<0.0pc,-0.125pc>&&\\
&&&&&&&&\\
&&&&&&&&
}}
\grow{\xymatrix@!0{
\\\\\\\\\\
\save\go+<0pt,0pt>\Drop{\txt{$=$}}\restore
}}
\grow{\xymatrix@!0{\\
&\save\go+<0pt,3pt>\Drop{H}\restore \ar@{-}[1,0] &&&& \save\go+<0pt,3pt>\Drop{C'}\restore\ar@{-}[1,0] &&&\save\go+<0pt,3pt>\Drop{D}\restore \ar@{-}[2,0]\\
&\ar@{-}`l/4pt [1,-1] [1,-1] \ar@{-}`r [1,1] [1,1]&&&&\ar@{-}`l/4pt [1,-1] [1,-1] \ar@{-}`r [1,1] [1,1]&&&\\
\ar@{-}[5,0]&&\ar@{-}[1,1]+<-0.1pc,0.1pc> && \ar@{-}[2,-2]&& \ar@{-}[1,1]+<-0.125pc,0.0pc> \ar@{-}[1,1]+<0.0pc,0.125pc>&&\ar@{-}[2,-2]\\
&&&&&&&&\\
&&\ar@{-}[2,0]&&\ar@{-}[-1,-1]+<0.1pc,-0.1pc> \ar@{-}[1,1]+<-0.125pc,0.0pc> \ar@{-}[1,1]+<0.0pc,0.125pc> &&\ar@{-}[2,-2] &&\ar@{-}[-1,-1]+<0.125pc,0.0pc>\ar@{-}[-1,-1]+<0.0pc,-0.125pc> \ar@{-}[5,0]\\
&&&&&&&&\\
&&\ar@{-}`d/4pt [1,2][1,2]&&\ar@{-}[3,0]&&\ar@{-}[-1,-1]+<0.125pc,0.0pc>\ar@{-}[-1,-1]+<0.0pc,-0.125pc> \ar@{-}[3,0]&&\\
\ar@{-}`d/4pt [1,4][1,4]&&&&&&&&\\
&&&&&&&&\\
&&&&&&&&
}}
\grow{\xymatrix@!0{
\\\\\\\\\\
\save\go+<0pt,0pt>\Drop{\txt{$=$}}\restore
}}
\grow{\xymatrix@!0{\\\\\\º
&\save\go+<0pt,3pt>\Drop{C}\restore \ar@{-}[1,0]&&&\save\go+<0pt,3pt>\Drop{D}\restore \ar@{-}[2,0]\\
&\ar@{-}`l/4pt [1,-1] [1,-1] \ar@{-}`r [1,1] [1,1]&&&\\
\ar@{-}[2,0]&&\ar@{-}[1,1]+<-0.125pc,0.0pc> \ar@{-}[1,1]+<0.0pc,0.125pc>&&\ar@{-}[2,-2]\\
&&&&\\
\ar@{-}`d/4pt [1,2][1,2]&&\ar@{-}[2,0]&&\ar@{-}[-1,-1]+<0.125pc,0.0pc>\ar@{-}[-1,-1]+<0.0pc,-0.125pc> \ar@{-}[2,0]\\
&&&&\\
&&&&
}}
\grow{\xymatrix@!0{
\\\\\\\\\\
\save\go+<0pt,0pt>\Drop{\txt{,}}\restore
}}
$$
where $C:= H^{\ot_c^s}$.
\end{proof}

\begin{lemma}\label{para d1} Let $s,r\in \mathds{N}$. For $C:= H^{\ot_c^s}$ and $D:=A^{\ot_k^r}$, the equality
$$
\spreaddiagramcolumns{-1.6pc}\spreaddiagramrows{-1.6pc}
\objectmargin{0.0pc}\objectwidth{0.0pc}
\def\objectstyle{\sssize}
\def\labelstyle{\sssize}
\grow{\xymatrix@!0{
&\save\go+<0pt,3pt>\Drop{D}\restore \ar@{-}[2,2]&&\save\go+<0pt,3pt>\Drop{C}\restore \ar@{-}[1,-1]+<0.125pc,0.0pc> \ar@{-}[1,-1]+<0.0pc,0.125pc>&\\
&&&&\\
&\ar@{-}[-1,1]+<-0.125pc,0.0pc> \ar@{-}[-1,1]+<0.0pc,-0.125pc>\ar@{-}[1,0]&&\ar@{-}[2,1]&\\
& \ar@{-}`l/4pt [1,-1] [1,-1] \ar@{-}`r [1,1] [1,1]&&&\\
\ar@{-}[2,0]&& \ar@/^0.1pc/ @{-}[2,2] \ar@/_0.1pc/ @{-}[2,2]&& \ar@/^0.1pc/ @{-}[2,-2]\ar@/_0.1pc/ @{-}[2,-2]\\
&&&&\\
&&&&
}}
\grow{\xymatrix@!0{
\\\\\\
\save\go+<0pt,0pt>\Drop{\txt{$=$}}\restore }}
\grow{\xymatrix@!0{
& \save\go+<0pt,3pt>\Drop{D}\restore \ar@{-}[2,0]&&&\save\go+<0pt,3pt>\Drop{C}\restore \ar@{-}[1,0]&\\
&&&&\ar@{-}`l/4pt [1,-1] [1,-1] \ar@{-}`r [1,1] [1,1]&\\
&\ar@{-}[2,2]&&\ar@{-}[1,-1]+<0.125pc,0.0pc> \ar@{-}[1,-1]+<0.0pc,0.125pc>&&\ar@{-}[6,0]\\
&&&&&\\
&\ar@{-}[1,0]\ar@{-}[-1,1]+<-0.125pc,0.0pc> \ar@{-}[-1,1]+<0.0pc,-0.125pc>&&\ar@{-}[1,1]&&\\
&\ar@{-}`l/4pt [1,-1] [1,-1] \ar@{-}`r [1,1] [1,1]&&&\ar@{-}[3,0]&\\
\ar@{-}[2,0]&&\ar@{-}`d/4pt [1,2][1,2] &&&\\
&&&&&\\
&&&&&
}}
$$
is true.
\end{lemma}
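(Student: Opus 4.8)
Writing $C := H^{\ot_c^s}$ and $D := A^{\ot_k^r}$ as in the statement, the plan is to transcribe both sides of the asserted identity into composites of structure maps and reduce them to the same normal form. Reading the diagrams, the left hand side is the composite $(C\ot_k \chi_{sr})\xcirc (\Delta_{H^{\ot_c^s}}\ot_k D)\xcirc \s_{sr}^{-1}\colon D\ot_k C\to C\ot_k D\ot_k C$, where $\chi_{sr}$ is the iterated twisting map represented by the curved double line, and the right hand side is $(C\ot_k\rho_{sr}\ot_k C)\xcirc (\Delta_{H^{\ot_c^s}}\ot_k D\ot_k C)\xcirc (\s_{sr}^{-1}\ot_k C)\xcirc (D\ot_k\Delta_{H^{\ot_c^s}})$, where $\rho_{sr}$ is the iterated weak action appearing in Lemma~\ref{formula para chi}.

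The first move is to precompose both sides with the transposition $\s_{sr}\colon C\ot_k D\to D\ot_k C$; since $\s_{sr}$ is bijective this is harmless, and it triggers a cancellation on each side. On the left, $\s_{sr}^{-1}\xcirc\s_{sr}=\ide$ leaves $(C\ot_k\chi_{sr})\xcirc(\Delta_{H^{\ot_c^s}}\ot_k D)$. On the right, I would first use the compatibility of $\s_{sr}$ with the comultiplication $\Delta_{H^{\ot_c^s}}$ — the iterated version of the coalgebra compatibility of $s$ imposed in Definition~\ref{transposition} — to rewrite $(D\ot_k\Delta_{H^{\ot_c^s}})\xcirc\s_{sr}$ as $(\s_{sr}\ot_k C)\xcirc(C\ot_k\s_{sr})\xcirc(\Delta_{H^{\ot_c^s}}\ot_k D)$; then $(\s_{sr}^{-1}\ot_k C)\xcirc(\s_{sr}\ot_k C)=\ide$ cancels, and the right side becomes $(C\ot_k\rho_{sr}\ot_k C)\xcirc(\Delta_{H^{\ot_c^s}}\ot_k D\ot_k C)\xcirc(C\ot_k\s_{sr})\xcirc(\Delta_{H^{\ot_c^s}}\ot_k D)$.

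It then remains to show these two composites agree. I would expand $\chi_{sr}=(\rho_{sr}\ot_k C)\xcirc (C\ot_k\s_{sr})\xcirc(\Delta_{H^{\ot_c^s}}\ot_k D)$ via Lemma~\ref{formula para chi}, and use coassociativity of $\Delta_{H^{\ot_c^s}}$ to shift the outer comultiplication onto the first tensorand, which rewrites the left composite as $(C\ot_k\rho_{sr}\ot_k C)\xcirc(C\ot_k C\ot_k\s_{sr})\xcirc(\Delta_{H^{\ot_c^s}}\ot_k C\ot_k D)\xcirc(\Delta_{H^{\ot_c^s}}\ot_k D)$; since comultiplying the first tensorand acts on a tensor slot disjoint from the one on which $\s_{sr}$ acts, one has $(C\ot_k C\ot_k\s_{sr})\xcirc(\Delta_{H^{\ot_c^s}}\ot_k C\ot_k D)=(\Delta_{H^{\ot_c^s}}\ot_k D\ot_k C)\xcirc(C\ot_k\s_{sr})$, and this produces exactly the right composite. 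The one point needing real care is the iterated coalgebra compatibility of $\s_{sr}$ used above; I expect to obtain it by a routine induction on $s$ and $r$ from Definition~\ref{transposition} and the recursive definitions of $\s_{sr}$ and $\Delta_{H^{\ot_c^s}}$ (alternatively, the whole lemma can be proved directly by induction on $s$ and $r$, along the lines of the proof of Lemma~\ref{formula para chi}). Beyond that the main obstacle is purely bookkeeping — transcribing the diagrams into the correct composites and keeping the tensor slots aligned.
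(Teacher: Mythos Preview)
Your transcription of the two sides is correct, and your argument is sound: expanding $\chi_{sr}$ via Lemma~\ref{formula para chi}, using coassociativity of $\Delta_{H^{\ot_c^s}}$, and invoking the iterated coalgebra compatibility of $\s_{sr}$ with $\Delta_{H^{\ot_c^s}}$ is exactly what is needed, and the precomposition with $\s_{sr}$ is a clean way to organize it. This is essentially the paper's proof: the paper also begins by expanding $\chi$ via Lemma~\ref{formula para chi} and then performs the same coassociativity and transposition-compatibility manipulations, but carries them out directly on the diagrams (working with $\s_{sr}^{-1}$ throughout rather than cancelling it first). The only real difference is packaging; the iterated compatibility of $\s_{sr}$ with $\Delta_{H^{\ot_c^s}}$ that you flag as needing an inductive check is used implicitly in the paper's diagram chase as well (via Remark~\ref{transposicion inversa} for the $\s_{sr}^{-1}$ form), so you are not adding any extra burden.
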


\begin{proof} In fact, we have
$$
\spreaddiagramcolumns{-1.6pc}\spreaddiagramrows{-1.6pc}
\objectmargin{0.0pc}\objectwidth{0.0pc}
\def\objectstyle{\sssize}
\def\labelstyle{\sssize}
\grow{\xymatrix@!0{\\\\\\\\
&\save\go+<0pt,3pt>\Drop{D}\restore \ar@{-}[2,2]&&\save\go+<0pt,3pt>\Drop{C}\restore \ar@{-}[1,-1]+<0.125pc,0.0pc> \ar@{-}[1,-1]+<0.0pc,0.125pc>&\\
&&&&\\
&\ar@{-}[-1,1]+<-0.125pc,0.0pc> \ar@{-}[-1,1]+<0.0pc,-0.125pc>\ar@{-}[1,0]&&\ar@{-}[2,1]&\\
& \ar@{-}`l/4pt [1,-1] [1,-1] \ar@{-}`r [1,1] [1,1]&&&\\
\ar@{-}[2,0]&& \ar@/^0.1pc/ @{-}[2,2] \ar@/_0.1pc/ @{-}[2,2]&& \ar@/^0.1pc/ @{-}[2,-2]\ar@/_0.1pc/ @{-}[2,-2]\\
&&&&\\
&&&&\\
}}
\grow{\xymatrix@!0{
\\\\\\\\\\\\\\
\save\go+<0pt,0pt>\Drop{\txt{$=$}}\restore }}
\grow{\xymatrix@!0{\\\\
&\save\go+<0pt,3pt>\Drop{D}\restore \ar@{-}[4,4]&&\save\go+<0pt,3pt>\Drop{C}\restore \ar@{-}[1,-1]+<0.125pc,0.0pc> \ar@{-}[1,-1]+<0.0pc,0.125pc>&& \\
&&&&&\\
&\ar@{-}[-1,1]+<-0.125pc,0.0pc> \ar@{-}[-1,1]+<0.0pc,-0.125pc> \ar@{-}[1,0]&&&&\\
&\ar@{-}`l/4pt [1,-1] [1,-1] \ar@{-}`r [1,1] [1,1]&&&&\\
\ar@{-}[6,0]&&\ar@{-}[1,0]&&&\ar@{-}[2,0]\\
&&\ar@{-}`l/4pt [1,-1] [1,-1] \ar@{-}`r [1,1] [1,1]&&&\\
&\ar@{-}[2,0]&&\ar@{-}[1,1]+<-0.125pc,0.0pc> \ar@{-}[1,1]+<0.0pc,0.125pc>&&\ar@{-}[2,-2]\\
&&&&&\\
&\ar@{-}`d/4pt [1,2][1,2] &&\ar@{-}[2,0]&&\ar@{-}[-1,-1]+<0.125pc,0.0pc> \ar@{-}[-1,-1]+<0.0pc,-0.125pc>\ar@{-}[2,0]\ar@{-}[2,0]\\
&&&&&\\
&&&&&}
}
\grow{\xymatrix@!0{
\\\\\\\\\\\\\\
\save\go+<0pt,0pt>\Drop{\txt{$=$}}\restore }}
\grow{\xymatrix@!0{
\save\go+<0pt,3pt>\Drop{D}\restore \ar@{-}[2,0] &&&\save\go+<0.2pc,3pt>\Drop{C}\restore \save[]+<0.2pc,0pc> \Drop{}\ar@{-}[1,0]+<0.2pc,0pc>\restore&&& \\
&&&\save[]+<0.2pc,0pc> \Drop{}\ar@{-}`l/4pt [1,-1] [1,-1] \ar@{-}`r [1,2] [1,2]\restore &&&\\
\ar@{-}[2,2]&&\ar@{-}[1,-1]+<0.125pc,0.0pc> \ar@{-}[1,-1]+<0.0pc,0.125pc>&&&\ar@{-}[1,0]&\\
&&&&&\ar@{-}`l/4pt [1,-1] [1,-1] \ar@{-}`r [1,1] [1,1]&\\
\ar@{-}[9,0]\ar@{-}[-1,1]+<-0.125pc,0.0pc> \ar@{-}[-1,1]+<0.0pc,-0.125pc> &&\ar@{-}[2,2]&&\ar@{-}[1,-1]+<0.125pc,0.0pc> \ar@{-}[1,-1]+<0.0pc,0.125pc>&&\ar@{-}[2,0]\\
&&&&&&\\
&&\ar@{-}[5,0]\ar@{-}[-1,1]+<-0.125pc,0.0pc> \ar@{-}[-1,1]+<0.0pc,-0.125pc>&& \ar@{-}[2,2]&&\ar@{-}[1,-1]+<0.125pc,0.0pc> \ar@{-}[1,-1]+<0.0pc,0.125pc>\\
&&&&&&\\
&&&&\ar@{-}[1,0]\ar@{-}[-1,1]+<-0.125pc,0.0pc> \ar@{-}[-1,1]+<0.0pc,-0.125pc>&&\ar@{-}[1,0]\\
&&&&\ar@{-}[1,1]+<-0.125pc,0.0pc> \ar@{-}[1,1]+<0.0pc,0.125pc>&&\ar@{-}[2,-2]\\
&&&&&&\\
&&\ar@{-}`d/4pt [1,2][1,2]&&\ar@{-}[2,0]&&\ar@{-}[-1,-1]+<0.125pc,0.0pc> \ar@{-}[-1,-1]+<0.0pc,-0.125pc>\ar@{-}[2,0]\\
&&&&&&\\
&&&&&&
}}
\grow{\xymatrix@!0{
\\\\\\\\\\\\\\
\save\go+<0pt,0pt>\Drop{\txt{$=$}}\restore }}
\grow{\xymatrix@!0{\\\\
\save\go+<0pt,3pt>\Drop{D}\restore \ar@{-}[2,0] &&&\save\go+<0.2pc,3pt>\Drop{C}\restore \save[]+<0.2pc,0pc> \Drop{}\ar@{-}[1,0]+<0.2pc,0pc>\restore&&& \\
&&&\save[]+<0.2pc,0pc> \Drop{}\ar@{-}`l/4pt [1,-1] [1,-1] \ar@{-}`r [1,2] [1,2]\restore &&&\\
\ar@{-}[2,2]&&\ar@{-}[1,-1]+<0.125pc,0.0pc> \ar@{-}[1,-1]+<0.0pc,0.125pc>&&&\ar@{-}[1,0]&\\
&&&&&\ar@{-}`l/4pt [1,-1] [1,-1] \ar@{-}`r [1,1] [1,1]&\\
\ar@{-}[-1,1]+<-0.125pc,0.0pc> \ar@{-}[-1,1]+<0.0pc,-0.125pc>\ar@{-}[5,0] &&\ar@{-}[2,2]&&\ar@{-}[1,-1]+<0.125pc,0.0pc> \ar@{-}[1,-1]+<0.0pc,0.125pc>&&\ar@{-}[5,0]\\
&&&&&&\\
&&\ar@{-}[-1,1]+<-0.125pc,0.0pc> \ar@{-}[-1,1]+<0.0pc,-0.125pc>\ar@{-}[1,0]&&\ar@{-}[3,0]&&\\
&&\ar@{-}`d/4pt [1,2][1,2]  &&&&\\
&&&&&&\\
&&&&&&}}
\grow{\xymatrix@!0{
\\\\\\\\\\\\\\
\save\go+<0pt,0pt>\Drop{\txt{$=$}}\restore }}
\grow{\xymatrix@!0{\\\\\\\\
& \save\go+<0pt,3pt>\Drop{D}\restore \ar@{-}[2,0]&&&\save\go+<0pt,3pt>\Drop{C}\restore \ar@{-}[1,0]&\\
&&&&\ar@{-}`l/4pt [1,-1] [1,-1] \ar@{-}`r [1,1] [1,1]&\\
&\ar@{-}[2,2]&&\ar@{-}[1,-1]+<0.125pc,0.0pc> \ar@{-}[1,-1]+<0.0pc,0.125pc>&&\ar@{-}[6,0]\\
&&&&&\\
&\ar@{-}[1,0]\ar@{-}[-1,1]+<-0.125pc,0.0pc> \ar@{-}[-1,1]+<0.0pc,-0.125pc>&&\ar@{-}[1,1]&&\\
&\ar@{-}`l/4pt [1,-1] [1,-1] \ar@{-}`r [1,1] [1,1]&&&\ar@{-}[3,0]&\\
\ar@{-}[2,0]&&\ar@{-}`d/4pt [1,2][1,2] &&&\\
&&&&&\\
&&&&&
}}
\grow{\xymatrix@!0{
\\\\\\\\\\\\\\
\save\go+<0pt,0pt>\Drop{\txt{,}}\restore}}
$$
where the first equality follows from Lemma~\ref{formula para chi}.
\end{proof}

\noindent\bf Proof of Theorem~\ref{th5.2}.\rm\enspace By Remark~\ref{re def theta}, the map
$$
\theta_*\colon (\wh{X}_*(M),\wh{d}_*)\to (\ov{X}_*(M),\ov{d}_*)
$$
is an isomorphism of chain complexes. Hence, by the discussion at the beginning of Section~\ref{Hochschild-Brzezinski}, the homology of $(\ov{X}_*(M),\ov{d}_*)$ is the Hochschild homology of the $K$-algebra $E$ with coefficients in $M$. In order to complete the proof we must compute $\ov{d}^0$ and $\ov{d}^1$. First we consider the map $\ov{d}^0$. Let
$$
\wt{\nu}_i\colon M\ot_{\!A} E^{\ot_{\!A}^s}\ot_k A^{\ot_k^r} \to M\ot_{\!A} E^{\ot_{\!A}^s}\ot_k A^{\ot_k^{r-1}}\qquad\text{($0\le i\le r$)}
$$
be the morphisms defined by
$$
\wt{\nu}_i(m\ot_{\!A}\ov{\bx}_{1s}\ot\ba_{1r}) := \begin{cases} m\ot_{\!A}\ov{\bx}_{1,s-1} \ot_{\!A} x_sa\ot\ba_{2r}  &\text{if $i = 0$,}\\ m\ot_{\!A}\ov{\bx}_{1s}\ot\ba_{1,i-1}\ot a_ia_{i+1}\ot \ba_{i+2,r} &\text{if $0< i< r$,}\\ a_rm\ot_{\!A}\ov{\bx}_{1s}\ot\ba_{1,r-1}  &\text{if $i = r$.}\end{cases}
$$
For $0\le i\le r$, set $\ov{\nu}_i := \wt{\theta} \xcirc \wt{\nu}_i\xcirc \wt{\vartheta}$, where $\wt{\theta}$ and $\wt{\vartheta}$ are as in the proof of Proposition~\ref{inversa de theta}. By item~(1) of Theorem~\ref{formula para wh{d}_1} we know that $\wh{d}^0$ is induced by $\sum_{i=0}^r (-1)^i \wt{\nu}_i$. Hence, $\ov{d}^0$ is induced by $\sum_{i=0}^r (-1)^{s+i} \ov{\nu}_i$. So, in order to complete the computation of $\ov{d}^0$ it is enough to calculate the $\ov{\nu}_i$'s. We begin with the computation of $\ov{\nu}_0$. Let $C:= H^{\ot_c^{s-1}}$, $D:= A^{\ot_k^{r-1}}$, $\boldsymbol{\gamma}:= \gamma^{\ot_k^{s-1}}$, $\mu:=\mu_{s-1}$, $\mathbf{u} := \mu\xcirc \boldsymbol{\gamma}$ and $\ov{\mathbf{u}}:=\mu\xcirc \ov{\gamma}^{\ot_k^{s-1}}\xcirc \gc_{s-1}$. Since, by Lemma~\ref{inversa de convolucion},
\spreaddiagramcolumns{-1.6pc}\spreaddiagramrows{-1.6pc}
\objectmargin{0.0pc}\objectwidth{0.0pc}
\def\objectstyle{\sssize}
\def\labelstyle{\sssize}
\allowdisplaybreaks
\begin{align*}
\grow{\xymatrix@!0{\\\\
\save\go+<0pt,3pt>\Drop{M}\restore \ar@{-}[17,0]&&\save\go+<0pt,3pt>\Drop{C}\restore \ar@{-}[1,0]&&&& \save\go+<0pt,3pt>\Drop{H}\restore \ar@{-}[1,0]&&&& \save\go+<0pt,3pt>\Drop{A}\restore\ar@{-}[4,0]\\
&&\ar@{-}`l/4pt [1,-1] [1,-1] \ar@{-}`r [1,1] [1,1]&&&&\ar@{-}`l/4pt [1,-1] [1,-1] \ar@{-}`r [1,1] [1,1]&&&&&\\
&\ar@{-}[2,0]&&\ar@{-}[1,1]+<-0.1pc,0.1pc> && \ar@{-}[2,-2]&&\ar@{-}[1,0]&&&&\\
&&&&&&&\ar@{-}[1,1]&&&&\\
&\ar@{-}[1,1]+<-0.1pc,0.1pc> && \ar@{-}[2,-2]&& \ar@{-}[-1,-1]+<0.1pc,-0.1pc> \ar@{-}[2,0]+<0pc,4pt>&&& \ar@{-}[2,0]+<0pc,4pt>&&\ar@{-}[3,0]\\
&&&&&&&&&&&\\
&\ar@{-}[1,0]+<0pc,-0.5pt>&&\ar@{-}[1,0]+<0pc,-0.5pt>\ar@{-}[-1,-1]+<0.1pc,-0.1pc>&&*+[o]+<0.36pc>[F]{\boldsymbol{\gamma}}\ar@{-}[5,0]+<0pc,-0.5pt>&&& *+[o]+<0.37pc>[F]{\gamma}\ar@{-}[1,0]+<0pc,0pc>&&&\\
&&&&&&&&\ar@{-}`d/4pt [1,1] `[0,2] [0,2]&&&\\
&*+[o]+<0.36pc>[F]{\ov{\gamma}}\ar@{-}[1,0]+<0pc,0pc>&&*+[o]+<0.37pc>[F]{\ov{\mathbf{u}}}\ar@{-}[1,0]+<0pc,0pc>&&&&&&\ar@{-}[2,0]&&\\
&\ar@{-}`d/4pt [1,1] `[0,2] [0,2]&&&&\ar@{-}`r/4pt [1,2] [1,2]&&&&\ar@{-}`r/4pt [1,2][1,2]&&\\
&&\ar@{-}[1,0]&&&&&\ar@{-}[1,1]+<-0.125pc,0.0pc>\ar@{-}[1,1]+<0.0pc,0.125pc>&&\ar@{-}[2,-2]&&\ar@{-}[5,0]\\
&&\ar@{-}`d/4pt [1,-2][1,-2]&&&&&&&&&\\
&&&&&*+[o]+<0.36pc>[F]{\mu}\ar@{-}[1,0]+<0pc,0pc>&&\ar@{-}[1,0]&&\ar@{-}[-1,-1]+<0.125pc,0.0pc>\ar@{-}[-1,-1]+<0.0pc,-0.125pc>\ar@{-}[3,0]&\\
&&&&&\ar@{-}`d/4pt [1,1] `[0,2] [0,2]&&&&&&\\
&&&&&&\ar@{-}[1,0]&&&&&\\
&&&&&&\ar@{-}`d/4pt [1,-6][1,-6]&&&\ar@{-}[2,0]&&\ar@{-}[2,0]\\
&&&&&&&&&&&\\
&&&&&&&&&&&}}
& \grow{\xymatrix@!0{
\\\\\\\\\\\\\\\\\\\\
\save\go+<0pt,0pt>\Drop{\txt{$=$}}\restore }}
\grow{\xymatrix@!0{\\
\save\go+<0pt,3pt>\Drop{M}\restore \ar@{-}[19,0]&&\save\go+<0pt,3pt>\Drop{C}\restore \ar@{-}[1,0]&&&& \save\go+<0pt,3pt>\Drop{H}\restore \ar@{-}[1,0]&&&&&& \save\go+<0pt,3pt>\Drop{A}\restore\ar@{-}[4,0]\\
&&\ar@{-}`l/4pt [1,-1] [1,-1] \ar@{-}`r [1,1] [1,1]&&&&\ar@{-}`l/4pt [1,-1] [1,-1] \ar@{-}`r [1,1] [1,1]&&&&&\\
&\ar@{-}[2,0]&&\ar@{-}[1,1]+<-0.1pc,0.1pc> && \ar@{-}[2,-2]&&\ar@{-}[1,0]&&&&\\
&&&&&&&\ar@{-}[1,1]&&&&\\
&\ar@{-}[1,1]+<-0.1pc,0.1pc> && \ar@{-}[2,-2]&& \ar@{-}[-1,-1]+<0.1pc,-0.1pc> \ar@{-}[2,0]+<0pc,4pt>&&&\ar@{-}[2,0]+<0pc,4pt>&&&&\ar@{-}[5,0]\\
&&&&&&&&&&&&\\
&\ar@{-}[1,0]+<0pc,-0.5pt>&&\ar@{-}[1,0]+<0pc,-0.5pt>\ar@{-}[-1,-1]+<0.1pc,-0.1pc>&&
*+[o]+<0.36pc>[F]{\boldsymbol{\gamma}}\ar@{-}[6,0]+<0pc,-0.5pt>&&&*+[o]+<0.37pc>[F]{\gamma}\ar@{-}[5,0]&&&&\\
&&&&&&&&&&&&\\
&*+[o]+<0.35pc>[F]{\ov{\gamma}}\ar@{-}[1,0]+<0pc,0pc>&&*+[o]+<0.37pc>[F]{\ov{\mathbf{u}}}\ar@{-}[1,0]+<0pc,0pc>&&&&& \ar@{-}`r/4pt [1,2] [1,2]&&&&\\
&\ar@{-}`d/4pt [1,1] `[0,2] [0,2]&&&&&&&&&\ar@{-}[1,1]+<-0.125pc,0.0pc> \ar@{-}[1,1]+<0.0pc,0.125pc>&&\ar@{-}[2,-2]\\
&&\ar@{-}[1,0]&&&&&&&&&&\\
&&\ar@{-}`d/4pt [1,-2][1,-2]&&&\ar@{-}`r/4pt [1,2] [1,2]&&&\ar@{-}`d/4pt [1,1] `[0,2] [0,2]&&&& \ar@{-}[-1,-1]+<0.125pc,0.0pc> \ar@{-}[-1,-1]+<0.0pc,-0.125pc>\ar@{-}[8,0]\\
&&&&&&&\ar@{-}[1,0]&&\ar@{-}[1,0]&&&\\
&&&&&*+[o]+<0.36pc>[F]{\mu}\ar@{-}[2,0]+<0pc,0pc>&&\ar@{-}[1,1]+<-0.125pc,0.0pc> \ar@{-}[1,1]+<0.0pc,0.125pc>&&\ar@{-}[2,-2]&&&\\
&&&&&&&&&&&&\\
&&&&&\ar@{-}`d/4pt [1,1] `[0,2] [0,2]&&&&\ar@{-}[4,0]\ar@{-}[-1,-1]+<0.125pc,0.0pc> \ar@{-}[-1,-1]+<0.0pc,-0.125pc>&&&\\
&&&&&&\ar@{-}[1,0]&&&&&&\\
&&&&&&\ar@{-}`d/4pt [1,-6][1,-6]&&&&&&\\
&&&&&&&&&&&&\\
&&&&&&&&&&&&}}
\grow{\xymatrix@!0{
\\\\\\\\\\\\\\\\\\\\
\save\go+<0pt,0pt>\Drop{\txt{$=$}}\restore }}
\grow{\xymatrix@!0{
\save\go+<0pt,3pt>\Drop{M}\restore \ar@{-}[21,0]&&\save\go+<0pt,3pt>\Drop{C}\restore \ar@{-}[1,0]&&&& \save\go+<0pt,3pt>\Drop{H}\restore \ar@{-}[1,0]&&&&&&& \save\go+<0pt,3pt>\Drop{A}\restore\ar@{-}[4,0]\\
&&\ar@{-}`l/4pt [1,-1] [1,-1] \ar@{-}`r [1,1] [1,1]&&&&\ar@{-}`l/4pt [1,-1] [1,-1] \ar@{-}`r [1,1] [1,1]&&&&&&\\
&\ar@{-}[2,0]&&\ar@{-}[1,1]+<-0.1pc,0.1pc> && \ar@{-}[2,-2]&&\ar@{-}[1,0]&&&&&\\
&&&&&&&\ar@{-}[2,2]&&&&&&\\
&\ar@{-}[1,1]+<-0.1pc,0.1pc> && \ar@{-}[2,-2]&& \ar@{-}[-1,-1]+<0.1pc,-0.1pc> \ar@{-}[2,0]+<0pc,-0.5pt>&&&&&&&&\ar@{-}[6,0]\\
&&&&& &&&&\ar@{-}[2,0]+<0pc,4pt>&&&&\\
&\ar@{-}[1,0]+<0pc,-0.5pt>&&\ar@{-}[1,0]+<0pc,-0.5pt>\ar@{-}[-1,-1]+<0.1pc,-0.1pc>&&&&&&&&&&\\
&&&&&*+[o]+<0.36pc>[F]{\boldsymbol{\gamma}}\ar@{-}[5,0]+<0pc,-0.5pt>&&&& *+[o]+<0.37pc>[F]{\gamma}\ar@{-}[4,0]+<0pc,0pc>&&&&\\
&*+[o]+<0.34pc>[F]{\ov{\gamma}}\ar@{-}[1,0]+<0pc,0pc>&&*+[o]+<0.37pc>[F]{\ov{\mathbf{u}}}\ar@{-}[1,0]+<0pc,0pc>&&&&&&&&&&\\
&\ar@{-}`d/4pt [1,1] `[0,2] [0,2]&&&&&&&&\ar@{-}`r/4pt [1,2] [1,2]&&&&\\
&&\ar@{-}[6,0]&&&\ar@{-}`r/4pt [1,2] [1,2]&&&&&&\ar@{-}[1,1]+<-0.125pc,0.0pc> \ar@{-}[1,1]+<0.0pc,0.125pc>&&\ar@{-}[2,-2]\\
&&&&&&&\ar@{-}[1,1]+<-0.125pc,0.0pc> \ar@{-}[1,1]+<0.0pc,0.125pc>&&\ar@{-}[2,-2]&&&&\\
&&&&&&&&&&&\ar@{-}[1,0]&&\ar@{-}[-1,-1]+<0.125pc,0.0pc> \ar@{-}[-1,-1]+<0.0pc,-0.125pc>\ar@{-}[7,0]\\
&&&&&*+[o]+<0.36pc>[F]{\mu}\ar@{-}[1,0]+<0pc,0pc>&&\ar@{-}[1,0]&&\ar@{-}[-1,-1]+<0.125pc,0.0pc>\ar@{-}[-1,-1]+<0.0pc,-0.125pc> \ar@{-}[1,1]+<-0.125pc,0.0pc> \ar@{-}[1,1]+<0.0pc,0.125pc>&&\ar@{-}[2,-2]&&\\
&&&&&\ar@{-}`d/4pt [1,1] `[0,2] [0,2] &&&&&&&&\\
&&&&&&\ar@{-}[1,0]&&&\ar@{-}[4,0]&&\ar@{-}[-1,-1]+<0.125pc,0.0pc> \ar@{-}[-1,-1]+<0.0pc,-0.125pc>\ar@{-}[4,0]&&\\
&&\ar@{-}`d/4pt [1,2] `[0,4] [0,4]&&&&&&&&&&&\\
&&&&\ar@{-}[1,0]&&&&&&&&&\\
&&&&\ar@{-}`d/4pt [1,-4][1,-4]&&&&&&&&&\\
&&&&&&&&&\ar@{-}`d/4pt [1,-9][1,-9]&&\ar@{-}[2,0]&&\ar@{-}[2,0]\\
&&&&&&&&&&&&&\\
&&&&&&&&&&&&&}}
\grow{\xymatrix@!0{
\\\\\\\\\\\\\\\\\\\\
\save\go+<0pt,0pt>\Drop{\txt{$=$}}\restore }}
\grow{\xymatrix@!0{\\\\
\save\go+<0pt,3pt>\Drop{M}\restore \ar@{-}[17,0]&&\save\go+<0pt,3pt>\Drop{C}\restore \ar@{-}[1,0]&&&& \save\go+<0pt,3pt>\Drop{H}\restore \ar@{-}[1,0]&&&&&&& \save\go+<0pt,3pt>\Drop{A}\restore\ar@{-}[8,0]\\
&&\ar@{-}`l/4pt [1,-1] [1,-1] \ar@{-}`r [1,1] [1,1]&&&&\ar@{-}`l/4pt [1,-1] [1,-1] \ar@{-}`r [1,1] [1,1]&&&&&&\\
&\ar@{-}[2,0]&&\ar@{-}[1,1]+<-0.1pc,0.1pc> && \ar@{-}[2,-2]&&\ar@{-}[3,3]&&&&&\\
&&&&&&&&&&&&&\\
&\ar@{-}[1,1]+<-0.1pc,0.1pc> && \ar@{-}[2,-2]&& \ar@{-}[-1,-1]+<0.1pc,-0.1pc> \ar@{-}[1,1]&&&&&&&&\\
&&&&&&\ar@{-}[1,0]&&&&\ar@{-}[1,0]&&&\\
&\ar@{-}[1,0]+<0pc,-0.5pt>&&\ar@{-}[1,0]+<0pc,-0.5pt>\ar@{-}[-1,-1]+<0.1pc,-0.1pc>&&& \ar@{-}`l/4pt [1,-1] [1,-1] \ar@{-}`r [1,1] [1,1] &&&&\ar@{-}`l/4pt [1,-1] [1,-1] \ar@{-}`r [1,1] [1,1] &&&\\
&&&&&\ar@{-}[1,0]+<0pc,-0.5pt>&& \ar@{-}[1,1]+<-0.1pc,0.1pc> && \ar@{-}[2,-2] &&\ar@{-}[1,0]+<0pc,0.0pt>&&\\\
&*+[o]+<0.34pc>[F]{\ov{\gamma}}\ar@{-}[2,0]+<0pc,0pc>&&*+[o]+<0.37pc>[F]{\ov{\mathbf{u}}}\ar@{-}[2,0]+<0pc,0pc> &&&&&&&& \ar@{-}[1,1]+<-0.125pc,0.0pc> \ar@{-}[1,1]+<0.0pc,0.125pc>&&\ar@{-}[2,-2] \\
&&&&&*+[o]+<0.40pc>[F]{\mathbf{u}}\ar@{-}[1,0]+<0pc,0pc>&&*+[o]+<0.37pc>[F]{\gamma} \ar@{-}[1,0]+<0pc,0pc>&&\ar@{-}[-1,-1]+<0.1pc,-0.1pc>\ar@{-}[1,0]&&&&\\
&\ar@{-}`d/4pt [1,1] `[0,2] [0,2]&&&&\ar@{-}`d/4pt [1,1] `[0,2] [0,2]&&&&\ar@{-}[1,1]+<-0.125pc,0.0pc> \ar@{-}[1,1]+<0.0pc,0.125pc>&&\ar@{-}[2,-2]&& \ar@{-}[-1,-1]+<0.125pc,0.0pc> \ar@{-}[-1,-1]+<0.0pc,-0.125pc>\ar@{-}[7,0]\\
&&\ar@{-}[1,0]&&&&\ar@{-}[1,0]&&&&&&&\\
&&\ar@{-}`d/4pt [1,2] `[0,4] [0,4]&&&&&&&\ar@{-}[3,0]&&\ar@{-}[-1,-1]+<0.125pc,0.0pc> \ar@{-}[-1,-1]+<0.0pc,-0.125pc> \ar@{-}[5,0]&&\\
&&&&\ar@{-}[1,0]&&&&&&&&&\\
&&&&\ar@{-}`d/4pt [1,-4][1,-4]&&&&&&&&&\\
&&&&&&&&&\ar@{-}`d/4pt [1,-9][1,-9]&&&&\\
&&&&&&&&&&&&&\\
&&&&&&&&&&&&&}}\\
& \grow{\xymatrix@!0{
\\\\\\\\\\\\\\\\\\\\\\
\save\go+<0pt,0pt>\Drop{\txt{$=$}}\restore }}
\grow{\xymatrix@!0{
\save\go+<0pt,3pt>\Drop{M}\restore \ar@{-}[23,0]&&&&\save\go+<0pt,3pt>\Drop{C}\restore \ar@{-}[3,0]&&&&& \save\go+<0pt,3pt>\Drop{H}\restore \save[]+<0.2pc,0pc> \Drop{}\ar@{-}[1,0]+<0.2pc,0pc>\restore &&&& \save\go+<0pt,3pt>\Drop{A}\restore\ar@{-}[6,0]\\
&&&&&&&&&\save[]+<0.2pc,0pc> \Drop{}\ar@{-}`l/4pt [1,-1] [1,-1] \ar@{-}`r [1,2] [1,2]\restore &&&&\\
&&&&&&&&\ar@{-}[1,0]&&&\ar@{-}[4,0] &&\\
&&&&\ar@{-}`l/4pt [1,-1] [1,-1] \ar@{-}`r [1,1] [1,1] &&&&\ar@{-}`l/4pt [1,-1] [1,-1] \ar@{-}`r [1,1] [1,1] &&&&&\\
&&&\ar@{-}[2,0]&&\ar@{-}[1,1]+<-0.1pc,0.1pc> && \ar@{-}[2,-2]&&\ar@{-}[2,0]&&&&\\
&&&&&&&&&&&&&\\
&&&\ar@{-}[1,1]+<-0.1pc,0.1pc> && \ar@{-}[2,-2]&&\ar@{-}[-1,-1]+<0.1pc,-0.1pc> \ar@{-}[1,1]+<-0.1pc,0.1pc> && \ar@{-}[2,-2]&&\ar@{-}[1,1]+<-0.125pc,0.0pc> \ar@{-}[1,1]+<0.0pc,0.125pc>&&\ar@{-}[2,-2]\\
&&&&&&&&&&&&&\\
&&&\ar@{-}[1,-2]  &&\ar@{-}[1,-1] \ar@{-}[-1,-1]+<0.1pc,-0.1pc>&&\ar@{-}[1,0] && \ar@{-}[-1,-1]+<0.1pc,-0.1pc> \ar@{-}[1,1]+<-0.125pc,0.0pc> \ar@{-}[1,1]+<0.0pc,0.125pc>&&\ar@{-}[2,-2] &&\ar@{-}[-1,-1]+<0.125pc,0.0pc>\ar@{-}[-1,-1]+<0.0pc,-0.125pc>\ar@{-}[15,0]\\
&\ar@{-}[2,0]&&&\ar@{-}[1,0] &&&\ar@{-}[2,0]&&&&&&\\
&&&&\ar@{-}`l/4pt [1,-1] [1,-1] \ar@{-}`r [1,1] [1,1] &&&&&\ar@{-}[10,0] &&\ar@{-}[13,0]\ar@{-}[-1,-1]+<0.125pc,0.0pc> \ar@{-}[-1,-1]+<0.0pc,-0.125pc> &&\\
&\ar@{-}[0,0]+<0pc,-0.5pt>&&\ar@{-}[0,0]+<0pc,-0.5pt>&&\ar@{-}[0,0]+<0pc,-0.5pt>&&\ar@{-}[0,0]+<0pc,-0.5pt>&&&&&\\
&*+[o]+<0.34pc>[F]{\ov{\gamma}}\ar@{-}[3,0]+<0pc,0pc>&&*+[o]+<0.37pc>[F]{\ov{\mathbf{u}}}\ar@{-}[1,0]+<0pc,0pc> &&*+[o]+<0.40pc>[F]{\mathbf{u}}\ar@{-}[1,0]+<0pc,0pc>&&*+[o]+<0.37pc>[F]{\gamma}\ar@{-}[5,0]+<0pc,0pc>&&&&&\\
&&&\ar@{-}`d/4pt [1,1] `[0,2] [0,2]  &&&&&&&&&\\
&&&&\ar@{-}[1,0]&&&&&&&&\\
&\ar@{-}`d/4pt [1,1]+<0.2pc,0pc> `[0,3] [0,3]&&&&&&&&&&&\\
&&\save[]+<0.2pc,0pc> \Drop{}\ar@{-}[1,0]+<0.2pc,0pc>\restore &&&&&&&&&&\\
&&\save[]+<0.2pc,0pc> \Drop{} \ar@{-}`d/4pt [1,2]+<0.2pc,0pc> `[0,4]+<0.4pc,0pc> [0,4]+<0.4pc,0pc> \restore&&&&&&&&&&\\
&&&&\save[]+<0.3pc,0pc> \Drop{}\ar@{-}[1,0]+<0.3pc,0pc>\restore &&&&&&&&\\
&&&&\save[]+<0.3pc,0pc> \Drop{} \ar@{-}`d/4pt [1,-4][1,-4]\restore&&&&&&&&\\
&&&&&&&&&\ar@{-}`d/4pt [1,-9][1,-9]&&&&\\
&&&&&&&&&&&&&\\
&&&&&&&&&&&&&\\
&&&&&&&&&&&&&}}
\grow{\xymatrix@!0{
\\\\\\\\\\\\\\\\\\\\\\
\save\go+<0pt,0pt>\Drop{\txt{$=$}}\restore }}
\grow{\xymatrix@!0{\\\\\\\\\\
\save\go+<0pt,3pt>\Drop{M}\restore \ar@{-}[13,0]&&&\save\go+<0pt,3pt>\Drop{C}\restore \ar@{-}[2,0]&&& \save\go+<0pt,3pt>\Drop{H}\restore \ar@{-}[1,0]&&& \save\go+<0pt,3pt>\Drop{A}\restore\ar@{-}[4,0]\\
&&&&&&\ar@{-}`l/4pt [1,-1] [1,-1] \ar@{-}`r [1,1] [1,1]&&&\\
&&& \ar@{-}[1,1]+<-0.1pc,0.1pc> && \ar@{-}[2,-2]&&\ar@{-}[2,0]&&\\
&&&&&&&&&\\
&&&\ar@{-}[1,-1]&&\ar@{-}[-1,-1]+<0.1pc,-0.1pc>\ar@{-}[2,0] &&\ar@{-}[1,1]+<-0.125pc,0.0pc> \ar@{-}[1,1]+<0.0pc,0.125pc>&&\ar@{-}[2,-2]\\
&&\ar@{-}[1,0]&&&&&&&\\
&&\ar@{-}`l/4pt [1,-1] [1,-1] \ar@{-}`r [1,1] [1,1] &&&\ar@{-}[1,1]+<-0.125pc,0.0pc> \ar@{-}[1,1]+<0.0pc,0.125pc>&&\ar@{-}[2,-2]&&\ar@{-}[-1,-1]+<0.125pc,0.0pc>\ar@{-}[-1,-1]+<0.0pc,-0.125pc>\ar@{-}[2,0]\\
&\ar@{-}[0,0]+<0pc,-0.5pt>&&\ar@{-}[0,0]+<0pc,-0.5pt>&&&&&&\\
&*+[o]+<0.34pc>[F]{\ov{\gamma}}\ar@{-}[1,0]+<0pc,0pc>&&*+[o]+<0.37pc>[F]{\gamma} \ar@{-}[1,0]+<0pc,0pc>&&\ar@{-}[5,0]&&\ar@{-}[5,0]\ar@{-}[-1,-1]+<0.125pc,0.0pc>\ar@{-}[-1,-1]+<0.0pc,-0.125pc>&&\ar@{-}[5,0]\\
&\ar@{-}`d/4pt [1,1] `[0,2] [0,2]&&&&&&&&\\
&&\ar@{-}[1,0]&&&&&&&\\
&&\ar@{-}`d/4pt [1,-2][1,-2]&&&&&&&\\
&&&&&&&&&\\
&&&&&&&&&\\
}}
\grow{\xymatrix@!0{
\\\\\\\\\\\\\\\\\\\\\\
\save\go+<0pt,0pt>\Drop{\txt{$=$}}\restore }}
\grow{\xymatrix@!0{\\\\\\\\\\\\\\\\\\
\save\go+<0pt,3pt>\Drop{M}\restore \ar@{-}[5,0]&&\save\go+<0pt,3pt>\Drop{C}\restore \ar@{-}[3,0]&& \save\go+<0pt,3pt>\Drop{H}\restore \ar@{-}[1,0]&& \save\go+<0pt,3pt>\Drop{A}\restore\ar@{-}[1,0]\\
&&&& \ar@{-}[1,1]+<-0.125pc,0.0pc> \ar@{-}[1,1]+<0.0pc,0.125pc>&&\ar@{-}[2,-2]\\
&&&&&&\\
&& \ar@{-}[1,1]+<-0.125pc,0.0pc> \ar@{-}[1,1]+<0.0pc,0.125pc>&&\ar@{-}[2,-2] &&\ar@{-}[-1,-1]+<0.125pc,0.0pc>\ar@{-}[-1,-1]+<0.0pc,-0.125pc>\ar@{-}[2,0]\\
&&&&&&\\
&&&&\ar@{-}[-1,-1]+<0.125pc,0.0pc>\ar@{-}[-1,-1]+<0.0pc,-0.125pc>&&}}
\grow{\xymatrix@!0{
\\\\\\\\\\\\\\\\\\\\\\
\save\go+<0pt,0pt>\Drop{\txt{,}}\restore }}
\end{align*}
we have
$$
%
%
%
\grow{\xymatrix@!0{
\\\\\\\\\\\\\\\\\\\\\\\\\\\\
\save\go+<0pt,0pt>\Drop{\txt{$\ov{\nu}_0=$}}\restore }}
\grow{\xymatrix@!0{
&&\save\go+<0pt,3pt>\Drop{M}\restore \ar@{-}[4,0] &&\save\go+<0pt,3pt>\Drop{A}\restore \ar@{-}[2,0]&&\save\go+<0pt,3pt>\Drop{D}\restore \ar@{-}[2,2]&&\ar@{-}[1,-1]+<0.125pc,0.0pc> \ar@{-}[1,-1]+<0.0pc,0.125pc> \save\go+<0pt,3pt>\Drop{C}\restore&&\save\go+<0pt,3pt>\Drop{H}\restore \ar@{-}[2,0]\\
&&&&&&&&&&&&\\
&&&& \ar@{-}[2,2]&&\ar@{-}[1,-1]+<0.125pc,0.0pc> \ar@{-}[1,-1]+<0.0pc,0.125pc>\ar@{-}[-1,1]+<-0.125pc,0.0pc> \ar@{-}[-1,1]+<0.0pc,-0.125pc>&& \ar@{-}[2,2]&&\ar@{-}[1,-1]+<0.125pc,0.0pc> \ar@{-}[1,-1]+<0.0pc,0.125pc>&&\\
&&&&&&&&&&&&\\
&&\ar@{-}[2,-2]&&\ar@{-}[2,-2]\ar@{-}[-1,1]+<-0.125pc,0.0pc> \ar@{-}[-1,1]+<0.0pc,-0.125pc>&&\ar@{-}[2,2]&&\ar@{-}[1,-1]+<0.125pc,0.0pc> \ar@{-}[1,-1]+<0.0pc,0.125pc>\ar@{-}[-1,1]+<-0.125pc,0.0pc> \ar@{-}[-1,1]+<0.0pc,-0.125pc>&&\ar@{-}[3,3]&&\\
&&&&&&&&&&&&\\
\ar@{-}[20,0]&&\ar@{-}[3,0]&&&&\ar@{-}[-1,1]+<-0.125pc,0.0pc> \ar@{-}[-1,1]+<0.0pc,-0.125pc> \ar@{-}[3,0]&&\ar@{-}[2,2]&&&&\\
&&&&&&&&&&&&&\ar@{-}[15,0]\\
&&&&&&&&&&\ar@{-}[4,0]&&&\\
&&\ar@{-}`l/4pt [1,-1] [1,-1] \ar@{-}`r [1,1] [1,1]&&&&\ar@{-}`l/4pt [1,-1] [1,-1] \ar@{-}`r [1,1] [1,1]&&&&&&&\\
&\ar@{-}[2,0]&&\ar@{-}[1,1]+<-0.1pc,0.1pc> && \ar@{-}[2,-2]&&\ar@{-}[1,0]&&&&&&\\
&&&&&&&\ar@{-}[1,1]&&&&&&\\
&\ar@{-}[1,1]+<-0.1pc,0.1pc> && \ar@{-}[2,-2]&& \ar@{-}[-1,-1]+<0.1pc,-0.1pc> \ar@{-}[2,0]+<0pc,4pt>&&& \ar@{-}[2,0]+<0pc,4pt>&&\ar@{-}[3,0]&&&\\
&&&&&&&&&&&&&\\
&\ar@{-}[1,0]+<0pc,-0.5pt>&&\ar@{-}[1,0]+<0pc,-0.5pt>\ar@{-}[-1,-1]+<0.1pc,-0.1pc>& &*+[o]+<0.36pc>[F]{\boldsymbol{\gamma}}\ar@{-}[5,0]+<0pc,-0.5pt>&&& *+[o]+<0.37pc>[F]{\gamma}\ar@{-}[1,0]+<0pc,0pc>&&&&&\\
&&&&&&&&\ar@{-}`d/4pt [1,1] `[0,2] [0,2]&&&&&\\
&*+[o]+<0.35pc>[F]{\ov{\gamma}}\ar@{-}[1,0]+<0pc,0pc>&&*+[o]+<0.37pc>[F]{\ov{\mathbf{u}}}\ar@{-}[1,0]+<0pc,0pc>&&&&&&\ar@{-}[2,0]&&&&\\
&\ar@{-}`d/4pt [1,1] `[0,2] [0,2]&&&&\ar@{-}`r/4pt [1,2] [1,2]&&&&\ar@{-}`r/4pt [1,2][1,2]&&&&\\
&&\ar@{-}[1,0]&&&&&\ar@{-}[1,1]+<-0.125pc,0.0pc>\ar@{-}[1,1]+<0.0pc,0.125pc>&&\ar@{-}[2,-2]&&\ar@{-}[4,0]\\
&&\ar@{-}`d/4pt [1,-2][1,-2]&&&&&&&&&&&\\
&&&&&*+[o]+<0.37pc>[F]{\mu}\ar@{-}[1,0]+<0pc,0pc>&&\ar@{-}[1,0]&&\ar@{-}[-1,-1]+<0.125pc,0.0pc>\ar@{-}[-1,-1]+<0.0pc,-0.125pc>\ar@{-}[3,0]&&&&\\
&&&&&\ar@{-}`d/4pt [1,1] `[0,2] [0,2]&&&&&&&&\\
&&&&&&\ar@{-}[1,0]&&&&&\ar@{-}[1,1]+<-0.125pc,0.0pc> \ar@{-}[1,1]+<0.0pc,0.125pc>&&\ar@{-}[2,-2]\\
&&&&&&\ar@{-}`d/4pt [1,-6][1,-6]&&&\ar@{-}[1,0]&&&&\\
&&&&&&&&&\ar@{-}[1,1]+<-0.125pc,0.0pc> \ar@{-}[1,1]+<0.0pc,0.125pc>&&\ar@{-}[2,-2]&&\ar@{-}[-1,-1]+<0.125pc,0.0pc>\ar@{-}[-1,-1]+<0.0pc,-0.125pc>\ar@{-}[3,0]\\
\ar@{-}[2,0]&&&&&&&&&&&\\
&&&&&&&&&\ar@{-}[1,0]&&\ar@{-}[-1,-1]+<0.125pc,0.0pc>\ar@{-}[-1,-1]+<0.0pc,-0.125pc>\ar@{-}[1,0]&&\\
&&&&&&&&&&&&&}}
\grow{\xymatrix@!0{
\\\\\\\\\\\\\\\\\\\\\\\\\\\\
\save\go+<0pt,0pt>\Drop{\txt{$=$}}\restore }}
\grow{\xymatrix@!0{\\\\\\\\\\\\\\\\
\save\go+<0pt,3pt>\Drop{M}\restore \ar@{-}[13,0] &&\save\go+<0pt,3pt>\Drop{A}\restore \ar@{-}[2,0]&&\save\go+<0pt,3pt>\Drop{D}\restore \ar@{-}[2,2]&&\ar@{-}[1,-1]+<0.125pc,0.0pc> \ar@{-}[1,-1]+<0.0pc,0.125pc> \save\go+<0pt,3pt>\Drop{C}\restore&& \save\go+<0pt,3pt>\Drop{H}\restore \ar@{-}[2,0]\\
&&&&&&&&\\
&& \ar@{-}[2,2]&&\ar@{-}[1,-1]+<0.125pc,0.0pc> \ar@{-}[1,-1]+<0.0pc,0.125pc>\ar@{-}[-1,1]+<-0.125pc,0.0pc> \ar@{-}[-1,1]+<0.0pc,-0.125pc>&& \ar@{-}[2,2]&&\ar@{-}[1,-1]+<0.125pc,0.0pc> \ar@{-}[1,-1]+<0.0pc,0.125pc>\\
&&&&&&&&\\
&&\ar@{-}[5,0]\ar@{-}[-1,1]+<-0.125pc,0.0pc> \ar@{-}[-1,1]+<0.0pc,-0.125pc>&&\ar@{-}[2,2]&& \ar@{-}[1,-1]+<0.125pc,0.0pc> \ar@{-}[1,-1]+<0.0pc,0.125pc> \ar@{-}[-1,1]+<-0.125pc,0.0pc> \ar@{-}[-1,1]+<0.0pc,-0.125pc>&&\ar@{-}[5,0]\\
&&&&&&&&\\
&&&&\ar@{-}[1,0]\ar@{-}[-1,1]+<-0.125pc,0.0pc> \ar@{-}[-1,1]+<0.0pc,-0.125pc> &&\ar@{-}[1,0]&&\\
&&&&\ar@{-}[1,1]+<-0.125pc,0.0pc> \ar@{-}[1,1]+<0.0pc,0.125pc>&&\ar@{-}[2,-2]&&\\
&&&&&&&&\\
&&\ar@{-}[1,1]+<-0.125pc,0.0pc> \ar@{-}[1,1]+<0.0pc,0.125pc>&&\ar@{-}[2,-2]&& \ar@{-}[-1,-1]+<0.125pc,0.0pc>\ar@{-}[-1,-1]+<0.0pc,-0.125pc>\ar@{-}[1,1]+<-0.125pc,0.0pc> \ar@{-}[1,1]+<0.0pc,0.125pc>&&\ar@{-}[2,-2]\\
&&&&&&&&\\
&&\ar@{-}`d/4pt [1,-2][1,-2]&&\ar@{-}[-1,-1]+<0.125pc,0.0pc>\ar@{-}[-1,-1]+<0.0pc,-0.125pc> \ar@{-}[1,1]+<-0.125pc,0.0pc> \ar@{-}[1,1]+<0.0pc,0.125pc>&&\ar@{-}[2,-2] &&\ar@{-}[-1,-1]+<0.125pc,0.0pc>\ar@{-}[-1,-1]+<0.0pc,-0.125pc>\ar@{-}[2,0]\\
&&&&&&&&\\
&&&&&&\ar@{-}[-1,-1]+<0.125pc,0.0pc>\ar@{-}[-1,-1]+<0.0pc,-0.125pc>&&
}}
\grow{\xymatrix@!0{
\\\\\\\\\\\\\\\\\\\\\\\\\\\\
\save\go+<0pt,0pt>\Drop{\txt{$=$}}\restore }}
\grow{\xymatrix@!0{\\\\\\\\\\\\\\\\\\\\\\\\\\
\save\go+<0pt,3pt>\Drop{M}\restore \ar@{-}[2,0] &&\save\go+<0pt,3pt>\Drop{A}\restore \ar@{-}`d/4pt [1,-2][1,-2]&&\save\go+<0pt,3pt>\Drop{D}\restore \ar@{-}[2,0]&&\ar@{-}[2,0] \save\go+<0pt,3pt>\Drop{C}\restore&& \save\go+<0pt,3pt>\Drop{H}\restore \ar@{-}[2,0]\\
&&&&&&&&\\
&&&&&&&&
}}
\grow{\xymatrix@!0{
\\\\\\\\\\\\\\\\\\\\\\\\\\\\
\save\go+<0pt,0pt>\Drop{\txt{.}}\restore }}
$$
Now, we compute $\ov{\nu}_i$ for $0< i < r$. Let $D_1:=A^{\ot_k^{i-1}}$, $D_2:=A^{\ot_k^{r-i-1}}$, $C:=H^{\ot_c^s}$, $\boldsymbol{\gamma}:= \gamma^{\ot_k^s}$, $\mu:=\mu_s$, $\mathbf{u} := \mu\xcirc \boldsymbol{\gamma}$ and $\ov{\mathbf{u}}:=\mu\xcirc \ov{\gamma}^{\ot_k^s}\xcirc \gc_s$.
%
%
By Lemma~\ref{inversa de convolucion}
$$
%
%
\grow{\xymatrix@!0{
\\\\\\\\\\\\\\\\\\\\\\
\save\go+<0pt,0pt>\Drop{\txt{$\ov{\nu}_i=$}}\restore }}
\grow{\xymatrix@!0{
\save\go+<0pt,3pt>\Drop{M}\restore \ar@{-}[23,0] &&&&&\save\go+<0pt,3pt>\Drop{D_1}\restore \ar@{-}[6,0]&&\save\go+<0pt,3pt>\Drop{A}\restore \ar@{-}[4,0]&&\save\go+<0pt,3pt>\Drop{A}\restore \ar@{-}[2,0] && \save\go+<0pt,3pt>\Drop{D_2}\restore \ar@{-}[2,2]&&\ar@{-}[1,-1]+<0.125pc,0.0pc> \ar@{-}[1,-1]+<0.0pc,0.125pc> \save\go+<0pt,3pt>\Drop{C}\restore\\
&&&&&&&&&&&&&\\
&&&&&&&&&\ar@{-}[2,2]&&\ar@{-}[1,-1]+<0.125pc,0.0pc> \ar@{-}[1,-1]+<0.0pc,0.125pc>\ar@{-}[-1,1]+<-0.125pc,0.0pc> \ar@{-}[-1,1]+<0.0pc,-0.125pc>&&\ar@{-}[6,0]\\
&&&&&&&&&&&&&\\
&&&&&&&\ar@{-}[2,2]&&\ar@{-}[1,-1]+<0.125pc,0.0pc> \ar@{-}[1,-1]+<0.0pc,0.125pc> \ar@{-}[-1,1]+<-0.125pc,0.0pc> \ar@{-}[-1,1]+<0.0pc,-0.125pc>&&\ar@{-}[3,0]&&\\
&&&&&&&&&&&&&\\
&&&&&\ar@{-}[2,2]&&\ar@{-}[1,-1]+<0.125pc,0.0pc> \ar@{-}[1,-1]+<0.0pc,0.125pc> \ar@{-}[-1,1]+<-0.125pc,0.0pc> \ar@{-}[-1,1]+<0.0pc,-0.125pc>&&\ar@{-}[1,0]&&&&\\
&&&&&&&&&\ar@{-}`d/4pt [1,1] `[0,2] [0,2]&&&&\\
&&&&&\ar@{-}[2,-2]\ar@{-}[-1,1]+<-0.125pc,0.0pc> \ar@{-}[-1,1]+<0.0pc,-0.125pc>&&\ar@{-}[1,1] &&&\ar@{-}[11,0]&&&\ar@{-}[1,-1]\\
&&&&&&&&\ar@{-}[8,0]&&&&\ar@{-}[12,0]\\
&&&\ar@{-}[1,0]&&&&&&&&&&\\
&&&\ar@{-}`l/4pt [1,-1] [1,-1] \ar@{-}`r [1,1] [1,1]&&&&&&&&\\
&&\ar@{-}[1,0]+<0pc,-0.5pt>&&\ar@{-}[1,0]+<0pc,-0.5pt>&&&&&&&\\
&&&&&&&&&&&\\
&&*+[o]+<0.37pc>[F]{\ov{\mathbf{u}}}\ar@{-}[1,0]+<0pc,0pc>&&*+[o]+<0.37pc>[F]{\boldsymbol{\gamma}}\ar@{-}[1,0]+<0pc,0pc>&&&&&&&\\
&&\ar@{-}`d/4pt [1,-2][1,-2]&&\ar@{-}[2,0]+<0pc,-0.5pt>&&&&&&&\\
&&&&\ar@{-}`r/4pt [1,2] [1,2]&&&&&&&\\
&&&&&&\ar@{-}[1,1]+<-0.125pc,0.0pc> \ar@{-}[1,1]+<0.0pc,0.125pc>&&\ar@{-}[2,-2]&&&&&\\
&&&&*+[o]+<0.37pc>[F]{\mu}\ar@{-}[1,0]+<0pc,0pc>&&&&&&&\\
&&&&\ar@{-}`d/4pt [1,-4][1,-4]&&\ar@{-}[5,0]&& \ar@{-}[-1,-1]+<0.125pc,0.0pc> \ar@{-}[-1,-1]+<0.0pc,-0.125pc> \ar@{-}[1,1]+<-0.125pc,0.0pc> \ar@{-}[1,1]+<0.0pc,0.125pc>&&\ar@{-}[2,-2]&&&\\
&&&&&&&&&&&&&\\
\ar@{-}[3,0]&&&&&&&&\ar@{-}[3,0] &&\ar@{-}[-1,-1]+<0.125pc,0.0pc>\ar@{-}[-1,-1]+<0.0pc,-0.125pc> \ar@{-}[1,1]+<-0.125pc,0.0pc> \ar@{-}[1,1]+<0.0pc,0.125pc>&&\ar@{-}[2,-2]&\\
&&&&&&&&&&&&&\\
&&&&&&&&&&\ar@{-}[1,0]&&\ar@{-}[-1,-1]+<0.125pc,0.0pc>\ar@{-}[-1,-1]+<0.0pc,-0.125pc>\ar@{-}[1,0]&\\
&&&&&&&&&&&&&
}}
\grow{\xymatrix@!0{
\\\\\\\\\\\\\\\\\\\\\\
\save\go+<0pt,0pt>\Drop{\txt{$=$}}\restore }}
\grow{\xymatrix@!0{\\
\save\go+<0pt,3pt>\Drop{M}\restore \ar@{-}[22,0] &&&\save\go+<0pt,3pt>\Drop{D_1}\restore \ar@{-}[6,0]&&\save\go+<0pt,3pt>\Drop{A}\restore \ar@{-}[4,0]&&\save\go+<0pt,3pt>\Drop{A}\restore \ar@{-}[2,0] && \save\go+<0pt,3pt>\Drop{D_2}\restore \ar@{-}[2,2]&&\ar@{-}[1,-1]+<0.125pc,0.0pc> \ar@{-}[1,-1]+<0.0pc,0.125pc> \save\go+<0pt,3pt>\Drop{C}\restore\\
&&&&&&&&&&&\\
&&&&&&&\ar@{-}[2,2]&&\ar@{-}[1,-1]+<0.125pc,0.0pc> \ar@{-}[1,-1]+<0.0pc,0.125pc>\ar@{-}[-1,1]+<-0.125pc,0.0pc> \ar@{-}[-1,1]+<0.0pc,-0.125pc>&&\ar@{-}[2,2]\\
&&&&&&&&&&&\\
&&&&&\ar@{-}[2,2]&&\ar@{-}[1,-1]+<0.125pc,0.0pc> \ar@{-}[1,-1]+<0.0pc,0.125pc> \ar@{-}[-1,1]+<-0.125pc,0.0pc> \ar@{-}[-1,1]+<0.0pc,-0.125pc>&&\ar@{-}[2,2]&&&&\ar@{-}[16,0]\\
&&&&&&&&&&&&&\\
&&&\ar@{-}[2,2]&&\ar@{-}[1,-1]+<0.125pc,0.0pc> \ar@{-}[1,-1]+<0.0pc,0.125pc> \ar@{-}[-1,1]+<-0.125pc,0.0pc> \ar@{-}[-1,1]+<0.0pc,-0.125pc>&&\ar@{-}[2,2]&&&&\ar@{-}[12,0]&&\\
&&&&&&&&&&&&&\\
&&&\ar@{-}[1,0]\ar@{-}[-1,1]+<-0.125pc,0.0pc> \ar@{-}[-1,1]+<0.0pc,-0.125pc>&&\ar@{-}[2,2]&&&& \ar@{-}[8,0]&&&&\\
&&&\ar@{-}`l/4pt [1,-1] [1,-1] \ar@{-}`r [1,1] [1,1]&&&&&&&&&&\\
&&\ar@{-}[1,0]+<0pc,-0.5pt>&&\ar@{-}[3,0]&&&\ar@{-}[4,0]&&&&&&\\
&&&&&&&&&&&&&\\
&&*+[o]+<0.37pc>[F]{\ov{\mathbf{u}}}\ar@{-}[1,0]+<0pc,0pc>&&&&&&&&&&&\\
&&\ar@{-}`d/4pt [1,-2][1,-2]&&\ar@{-}`l/4pt [1,-1] [1,-1] \ar@{-}`r [1,1] [1,1]&&&&&&&&&\\
&&&\ar@{-}[1,0]+<0pc,-0.5pt>&&\ar@{-}[1,1]+<-0.125pc,0.0pc> \ar@{-}[1,1]+<0.0pc,0.125pc>&&\ar@{-}[2,-2]&&&&&&\\
&&&&&&&&&&&&&\\
&&&*+[o]+<0.4pc>[F]{\mathbf{u}}\ar@{-}[1,0]+<0pc,0pc>&&\ar@{-}[6,0]&&\ar@{-}[-1,-1]+<0.125pc,0.0pc>\ar@{-}[-1,-1]+<0.0pc,-0.125pc> \ar@{-}[1,1]+<-0.125pc,0.0pc> \ar@{-}[1,1]+<0.0pc,0.125pc>&&\ar@{-}[2,-2]&&&&\\
&&&\ar@{-}`d/4pt [1,-3][1,-3]&&&&&&&&&&\\
&&&&&&&\ar@{-}[4,0]&&\ar@{-}[-1,-1]+<0.125pc,0.0pc>\ar@{-}[-1,-1]+<0.0pc,-0.125pc>
\ar@{-}[1,1]+<-0.125pc,0.0pc> \ar@{-}[1,1]+<0.0pc,0.125pc>&&\ar@{-}[2,-2]&&\\
&&&&&&&&&&&&&\\
&&&&&&&&&\ar@{-}[2,0]&&\ar@{-}[-1,-1]+<0.125pc,0.0pc>\ar@{-}[-1,-1]+<0.0pc,-0.125pc>\ar@{-}[1,1]+<-0.125pc,0.0pc> \ar@{-}[1,1]+<0.0pc,0.125pc>&&\ar@{-}[2,-2]\\
&&&&&&&&&&&&&\\
&&&&&&&&&&&&&\ar@{-}[-1,-1]+<0.125pc,0.0pc>\ar@{-}[-1,-1]+<0.0pc,-0.125pc>
}}
\grow{\xymatrix@!0{
\\\\\\\\\\\\\\\\\\\\\\
\save\go+<0pt,0pt>\Drop{\txt{$=$}}\restore }}
\grow{\xymatrix@!0{\\
\save\go+<0pt,3pt>\Drop{M}\restore \ar@{-}[21,0] &&&&\save\go+<0pt,3pt>\Drop{D_1}\restore \ar@{-}[6,0]&&\save\go+<0pt,3pt>\Drop{A}\restore \ar@{-}[4,0]&&\save\go+<0pt,3pt>\Drop{A}\restore \ar@{-}[2,0] && \save\go+<0pt,3pt>\Drop{D_2}\restore \ar@{-}[2,2]&&\ar@{-}[1,-1]+<0.125pc,0.0pc> \ar@{-}[1,-1]+<0.0pc,0.125pc> \save\go+<0pt,3pt>\Drop{C}\restore\\
&&&&&&&&&&&&\\
&&&&&&&&\ar@{-}[2,2]&&\ar@{-}[1,-1]+<0.125pc,0.0pc> \ar@{-}[1,-1]+<0.0pc,0.125pc>\ar@{-}[-1,1]+<-0.125pc,0.0pc> \ar@{-}[-1,1]+<0.0pc,-0.125pc>&&\ar@{-}[1,1]\\
&&&&&&&&&&&&&\ar@{-}[15,0]\\
&&&&&&\ar@{-}[2,2]&&\ar@{-}[1,-1]+<0.125pc,0.0pc> \ar@{-}[1,-1]+<0.0pc,0.125pc> \ar@{-}[-1,1]+<-0.125pc,0.0pc> \ar@{-}[-1,1]+<0.0pc,-0.125pc>&&\ar@{-}[1,1]&&&\\
&&&&&&&&&&&\ar@{-}[11,0]&&\\
&&&&\ar@{-}[2,2]&&\ar@{-}[1,-1]+<0.125pc,0.0pc> \ar@{-}[1,-1]+<0.0pc,0.125pc> \ar@{-}[-1,1]+<-0.125pc,0.0pc> \ar@{-}[-1,1]+<0.0pc,-0.125pc>&&\ar@{-}[1,1]&&&&&\\
&&&&&&&&&\ar@{-}[7,0]&&&\\
&&&&\ar@{-}[1,0]\ar@{-}[-1,1]+<-0.125pc,0.0pc> \ar@{-}[-1,1]+<0.0pc,-0.125pc>&& \ar@{-}[1,1]&&&&&&&\\
&&&&\ar@{-}`l/4pt [1,-1] [1,-1] \ar@{-}`r [1,1] [1,1]&&&\ar@{-}[3,0]&&&&&&\\
&&&\ar@{-}[1,0]&&\ar@{-}[2,0]&&&&&&&&\\
&&&\ar@{-}`l/4pt [1,-1] [1,-1] \ar@{-}`r [1,1] [1,1] &&&&&&&&&&\\
&&\ar@{-}[0,0]+<0pc,-0.5pt>&&\ar@{-}[0,0]+<0pc,-0.5pt>&\ar@{-}[1,1]+<-0.125pc,0.0pc> \ar@{-}[1,1]+<0.0pc,0.125pc>&&\ar@{-}[2,-2]&&&&&\\
&&*+[o]+<0.37pc>[F]{\ov{\mathbf{u}}}\ar@{-}[1,0]+<0pc,0pc>&&*+[o]+<0.40pc>[F]{\mathbf{u}}\ar@{-}[1,0]+<0pc,0pc>&&&&&&&&&\\
&&\ar@{-}`d/4pt [1,1] `[0,2] [0,2]&&&\ar@{-}[5,0]&& \ar@{-}[-1,-1]+<0.125pc,0.0pc>\ar@{-}[-1,-1]+<0.0pc,-0.125pc>\ar@{-}[1,1]+<-0.125pc,0.0pc> \ar@{-}[1,1]+<0.0pc,0.125pc>&&\ar@{-}[2,-2]&&&&\\
&&&\ar@{-}[1,0]&&&&&&&&&&\\
&&&\ar@{-}`d/4pt [1,-3][1,-3]&&&&\ar@{-}[3,0]&& \ar@{-}[-1,-1]+<0.125pc,0.0pc>\ar@{-}[-1,-1]+<0.0pc,-0.125pc> \ar@{-}[1,1]+<-0.125pc,0.0pc> \ar@{-}[1,1]+<0.0pc,0.125pc>&&\ar@{-}[2,-2]&&\\
&&&&&&&&&&&&&\\
&&&&&&&&&\ar@{-}[1,0]&&\ar@{-}[-1,-1]+<0.125pc,0.0pc>\ar@{-}[-1,-1]+<0.0pc,-0.125pc> \ar@{-}[1,1]+<-0.125pc,0.0pc> \ar@{-}[1,1]+<0.0pc,0.125pc>&&\ar@{-}[2,-2]\\
&&&&&\ar@{-}[2,0]&&\ar@{-}`d/4pt [1,1] `[0,2] [0,2]&&&&&&\\
&&&&&&&&\ar@{-}[1,0]&&&\ar@{-}[1,0]&&\ar@{-}[1,0]\ar@{-}[-1,-1]+<0.125pc,0.0pc>\ar@{-}[-1,-1]+<0.0pc,-0.125pc>\\
&&&&&&&&&&&&&
}}
\grow{\xymatrix@!0{
\\\\\\\\\\\\\\\\\\\\\\
\save\go+<0pt,0pt>\Drop{\txt{$=$}}\restore }}
\grow{\xymatrix@!0{\\\\\\\\\\\\\\\\\\\\
\save\go+<0pt,3pt>\Drop{M}\restore \ar@{-}[2,0]&&\save\go+<0pt,3pt>\Drop{D_1}\restore \ar@{-}[2,0]&&\save\go+<0pt,3pt>\Drop{A}\restore \ar@{-}`d/4pt [1,1] `[0,2] [0,2]&&\save\go+<0pt,3pt>\Drop{A}\restore  && \save\go+<0pt,3pt>\Drop{D_2}\restore \ar@{-}[2,0]&& \save\go+<0pt,3pt>\Drop{C}\restore\ar@{-}[2,0]\\
&&&&&\ar@{-}[1,0]&&&&&\\
&&&&&&&&&&
}}
\grow{\xymatrix@!0{
\\\\\\\\\\\\\\\\\\\\\\
\save\go+<0pt,0pt>\Drop{\txt{.}}\restore }}
$$
It remains to compute $\ov{\nu}_r$. Let $D:=A^{\ot_k^{r-1}}$, $C:=H^{\ot_c^s}$, $\boldsymbol{\gamma} := \gamma^{\ot_k^s}$, $\mu:=\mu_s$, $\mathbf{u} := \mu\xcirc \boldsymbol{\gamma}$ and $\ov{\mathbf{u}}:=\mu\xcirc \ov{\gamma}^{\ot_k^s}\xcirc \gc_s$. By Lemma~\ref{inversa de convolucion},
$$
%
%
\grow{\xymatrix@!0{
\\\\\\\\\\\\\\\\\\\\
\save\go+<0pt,0pt>\Drop{\txt{$\ov{\nu}_r=$}}\restore }}
\grow{\xymatrix@!0{\\\\
\save\go+<0pt,3pt>\Drop{M}\restore \ar@{-}[11,0]&&&&\save\go+<0pt,3pt>\Drop{D}\restore \ar@{-}[2,0]&&\save\go+<0pt,3pt>\Drop{A}\restore \ar@{-}[2,2]&&\ar@{-}[1,-1]+<0.125pc,0.0pc> \ar@{-}[1,-1]+<0.0pc,0.125pc> \save\go+<0pt,3pt>\Drop{C}\restore\\
&&&&&&&&\\
&&&& \ar@{-}[2,2]&&\ar@{-}[1,-1]+<0.125pc,0.0pc> \ar@{-}[1,-1]+<0.0pc,0.125pc>\ar@{-}[-1,1]+<-0.125pc,0.0pc> \ar@{-}[-1,1]+<0.0pc,-0.125pc>&&\ar@{-}[6,0]\\
&&&&&&&&\\
&&&&\ar@{-}[1,-1]\ar@{-}[-1,1]+<-0.125pc,0.0pc> \ar@{-}[-1,1]+<0.0pc,-0.125pc>&&\ar@{-}[4,0]&&\\
&&&\ar@{-}[1,0]&&&&&\\
&&&\ar@{-}`l/4pt [1,-1] [1,-1] \ar@{-}`r [1,1] [1,1]&&&&&\\
&&\ar@{-}[0,0]+<0pc,-0.5pt>&&\ar@{-}[0,0]+<0pc,-0.5pt>&&&&\\
&&*+[o]+<0.37pc>[F]{\ov{\mathbf{u}}}\ar@{-}[1,0]+<0pc,0pc>&&*+[o]+<0.37pc>[F]{\boldsymbol{\gamma}}\ar@{-}[2,0]+<0pc,0pc>&& \ar@{-}[2,2]&&\ar@{-}[2,-2]\\
&&\ar@{-}`d/4pt [1,-2][1,-2]&&&&&&\\
&&&&\ar@{-}[2,2]&&\ar@{-}[2,-2]&&\ar@{-}[2,2]\\
\ar@{-}[1,2]&&&&&&&&\\
&&\ar@{-}[2,2]&&\ar@{-}[2,-2]&&\ar@{-}[2,0]+<0pc,-0.5pt>&&&&\ar@{-}[2,0]\\
&&&&&&\ar@{-}`r/4pt [1,2] [1,2]&&&&\\
&&\ar@{-}[1,0]&&\ar@{-}[5,0]&&&&\ar@{-}[2,2]&&\ar@{-}[2,-2]\\
&&\ar@{-}`d/4pt [1,2][1,2]&&&&*+[o]+<0.37pc>[F]{\mu}\ar@{-}[1,0]+<0pc,0pc>&&&&\\
&&&&&&\ar@{-}[1,0]&&\ar@{-}[3,0]&&\ar@{-}[3,0]\\
&&&&&&\ar@{-}`d/4pt [1,-2][1,-2]&&&&\\
&&&&&&&&&&\\
&&&&&&&&&&
}}
\grow{\xymatrix@!0{
\\\\\\\\\\\\\\\\\\\\
\save\go+<0pt,0pt>\Drop{\txt{$=$}}\restore }}
\grow{\xymatrix@!0{\\\\\\
\save\go+<0pt,3pt>\Drop{M}\restore \ar@{-}[11,0]&&&&\save\go+<0pt,3pt>\Drop{D}\restore \ar@{-}[2,0]&&\save\go+<0pt,3pt>\Drop{A}\restore \ar@{-}[2,2]&&\ar@{-}[1,-1]+<0.125pc,0.0pc> \ar@{-}[1,-1]+<0.0pc,0.125pc> \save\go+<0pt,3pt>\Drop{C}\restore\\
&&&&&&&&\\
&&&& \ar@{-}[2,2]&&\ar@{-}[1,-1]+<0.125pc,0.0pc> \ar@{-}[1,-1]+<0.0pc,0.125pc>\ar@{-}[-1,1]+<-0.125pc,0.0pc> \ar@{-}[-1,1]+<0.0pc,-0.125pc>&&\ar@{-}[3,0]\\
&&&&&&&&\\
&&&&\ar@{-}[1,-1]\ar@{-}[-1,1]+<-0.125pc,0.0pc> \ar@{-}[-1,1]+<0.0pc,-0.125pc>&&\ar@{-}[1,0]&&\\
&&&\ar@{-}[1,0]&&&\ar@{-}[2,2]&&\ar@{-}[2,-2]\\
&&&\ar@{-}`l/4pt [1,-1] [1,-1] \ar@{-}`r [1,1] [1,1]&&&&&\\
&&\ar@{-}[0,0]+<0pc,-0.5pt>&&\ar@{-}[2,2]&&\ar@{-}[2,-2]&&\ar@{-}[1,1]\\
&&*+[o]+<0.37pc>[F]{\ov{\mathbf{u}}}\ar@{-}[1,0]+<0pc,0pc>&&&&&&&\ar@{-}[3,0]\\
&&\ar@{-}`d/4pt [1,-2][1,-2]&&\ar@{-}[1,-1]&&\ar@{-}[1,0]&&&\\
&&&\ar@{-}[3,-3]&&&\ar@{-}`l/4pt [1,-1] [1,-1] \ar@{-}`r [1,1] [1,1]&&&\\
\ar@{-}[2,2]&&&&&\ar@{-}[0,0]+<0pc,-0.5pt>&&\ar@{-}[2,2]&&\ar@{-}[2,-2]\\
&&&&&*+[o]+<0.40pc>[F]{\mathbf{u}}\ar@{-}[1,0]+<0pc,0pc>&&&&\\
\ar@{-}[1,0]&&\ar@{-}[3,0]&&&\ar@{-}`d/4pt [1,-3][1,-3]&&\ar@{-}[3,0]&&\ar@{-}[3,0]\\
\ar@{-}`d/4pt [1,2][1,2]&&&&&&&&&\\
&&&&&&&&&\\
&&&&&&&&&
}}
\grow{\xymatrix@!0{
\\\\\\\\\\\\\\\\\\\\
\save\go+<0pt,0pt>\Drop{\txt{$=$}}\restore }}
\grow{\xymatrix@!0{
\save\go+<0pt,3pt>\Drop{M}\restore \ar@{-}[15,0]&&&&&\save\go+<0pt,3pt>\Drop{D}\restore \ar@{-}[2,0]&&\save\go+<0pt,3pt>\Drop{A}\restore \ar@{-}[2,2]&&\ar@{-}[1,-1]+<0.125pc,0.0pc> \ar@{-}[1,-1]+<0.0pc,0.125pc> \save\go+<0pt,3pt>\Drop{C}\restore\\
&&&&&&&&&\\
&&&&& \ar@{-}[2,2]&&\ar@{-}[1,-1]+<0.125pc,0.0pc> \ar@{-}[1,-1]+<0.0pc,0.125pc>\ar@{-}[-1,1]+<-0.125pc,0.0pc> \ar@{-}[-1,1]+<0.0pc,-0.125pc>&&\ar@{-}[11,0]\\
&&&&&&&&&\\
&&&&&\ar@{-}[1,-1]\ar@{-}[-1,1]+<-0.125pc,0.0pc> \ar@{-}[-1,1]+<0.0pc,-0.125pc>&&\ar@{-}[5,0]&&\\
&&&&\ar@{-}[1,0]&&&&&\\
&&&&\ar@{-}`l/4pt [1,-1] [1,-1] \ar@{-}`r [1,1] [1,1]&&&&&\\
&&&\ar@{-}[1,0]&&\ar@{-}[2,0]&&&&\\
&&&\ar@{-}`l/4pt [1,-1] [1,-1] \ar@{-}`r [1,1] [1,1]&&&&&&\\
&&\ar@{-}[0,0]+<0pc,-0.5pt>&&\ar@{-}[0,0]+<0pc,-0.5pt>&\ar@{-}[1,1]+<-0.125pc,0.0pc> \ar@{-}[1,1]+<0.0pc,0.125pc>&&\ar@{-}[2,-2]&\\
&&*+[o]+<0.37pc>[F]{\ov{\mathbf{u}}}\ar@{-}[1,0]+<0pc,0pc>&&*+[o]+<0.40pc>[F]{\mathbf{u}}\ar@{-}[1,0]+<0pc,0pc>&&&&\\
&&\ar@{-}`d/4pt [1,1] `[0,2] [0,2] &&&\ar@{-}[4,0]&&\ar@{-}[2,0] \ar@{-}[-1,-1]+<0.125pc,0.0pc>\ar@{-}[-1,-1]+<0.0pc,-0.125pc> &&\\
&&&\ar@{-}[1,0]&&&&&\\
&&&\ar@{-}`d/4pt [1,-3][1,-3]&&&&\ar@{-}[2,2]&&\ar@{-}[2,-2]\\
&&&&&&&&&\\
\ar@{-}[2,3]&&&&&\ar@{-}[2,2]&&\ar@{-}[2,-2]&&\ar@{-}[8,0]\\
&&&&&&&&&\\
&&&\ar@{-}[2,2]&&\ar@{-}[2,-2]&&\ar@{-}[6,0]&&\\
&&&&&&&&&\\
&&&\ar@{-}[1,0]&&\ar@{-}[4,0]&&&&\\
&&&\ar@{-}`d/4pt [1,2][1,2]&&&&&&\\
&&&&&&&&&\\
&&&&&&&&&\\
&&&&&&&&&
}}
\grow{\xymatrix@!0{
\\\\\\\\\\\\\\\\\\\\
\save\go+<0pt,0pt>\Drop{\txt{$=$}}\restore }}
\grow{\xymatrix@!0{\\\\
\save\go+<0pt,3pt>\Drop{M}\restore \ar@{-}[12,0]&&&\save\go+<0pt,3pt>\Drop{D}\restore \ar@{-}[2,0]&&\save\go+<0pt,3pt>\Drop{A}\restore \ar@{-}[2,2]&&\ar@{-}[1,-1]+<0.125pc,0.0pc> \ar@{-}[1,-1]+<0.0pc,0.125pc> \save\go+<0pt,3pt>\Drop{C}\restore\\
&&&&&&&\\
&&& \ar@{-}[2,2]&&\ar@{-}[1,-1]+<0.125pc,0.0pc> \ar@{-}[1,-1]+<0.0pc,0.125pc>\ar@{-}[-1,1]+<-0.125pc,0.0pc> \ar@{-}[-1,1]+<0.0pc,-0.125pc>&&\ar@{-}[7,0]\\
&&&&&&&\\
&&&\ar@{-}[1,-1]\ar@{-}[-1,1]+<-0.125pc,0.0pc> \ar@{-}[-1,1]+<0.0pc,-0.125pc>&&\ar@{-}[3,0]&&\\
&&\ar@{-}[1,0]&&&&&\\
&&\ar@{-}`l/4pt [1,-1] [1,-1] \ar@{-}`r [1,1] [1,1]&&&&&\\
&\ar@{-}[1,0]&&\ar@{-}[1,1]+<-0.125pc,0.0pc> \ar@{-}[1,1]+<0.0pc,0.125pc>&&\ar@{-}[2,-2]&&\\
&\ar@{-}[1,0]+<0pt,2.5pt>&&&&&&\\
&\save\go+<0pt,1.6pt>\Drop{\circ}\restore&&\ar@{-}[2,0]&& \ar@{-}[-1,-1]+<0.125pc,0.0pc> \ar@{-}[-1,-1]+<0.0pc,-0.125pc> \ar@{-}[2,2]&&\ar@{-}[2,-2]\\
&&&&&&&\\
&&&\ar@{-}[2,2]&&\ar@{-}[2,-2]&&\ar@{-}[7,0]\\
\ar@{-}[1,1]&&&&&&&\\
&\ar@{-}[2,2]&&\ar@{-}[2,-2]&&\ar@{-}[5,0]&&\\
&&&&&&&\\
&\ar@{-}[1,0]&&\ar@{-}[3,0]&&&&\\
&\ar@{-}`d/4pt [1,2][1,2] &&&&&&\\
&&&&&&&\\
&&&&&&&
}}
\grow{\xymatrix@!0{
\\\\\\\\\\\\\\\\\\\\
\save\go+<0pt,0pt>\Drop{\txt{$=$}}\restore }}
\grow{\xymatrix@!0{\\\\\\\\\\
\save\go+<0pt,3pt>\Drop{M}\restore \ar@{-}[7,0]&&\save\go+<0pt,3pt>\Drop{D}\restore \ar@{-}[5,0]&&\save\go+<0pt,3pt>\Drop{A}\restore \ar@{-}[2,2]&&\ar@{-}[1,-1]+<0.125pc,0.0pc> \ar@{-}[1,-1]+<0.0pc,0.125pc> \save\go+<0pt,3pt>\Drop{C}\restore\\
&&&&&&\\
&&&&\ar@{-}[1,0]\ar@{-}[-1,1]+<-0.125pc,0.0pc> \ar@{-}[-1,1]+<0.0pc,-0.125pc>&&\ar@{-}[1,0]\\
&&&&\ar@{-}[2,2]&&\ar@{-}[2,-2]\\
&&&&&&\\
&&\ar@{-}[2,2]&&\ar@{-}[2,-2]&&\ar@{-}[7,0]\\
&&&&&&\\
\ar@{-}[2,2]&&\ar@{-}[2,-2]&&\ar@{-}[5,0]&&\\
&&&&&&\\
\ar@{-}[1,0]&&\ar@{-}[3,0]&&&&\\
\ar@{-}`d/4pt [1,2][1,2]&&&&&&\\
&&&&&&\\
&&&&&&
}}
\grow{\xymatrix@!0{
\\\\\\\\\\\\\\\\\\\\
\save\go+<0pt,0pt>\Drop{\txt{.}}\restore }}
$$
We next compute $\ov{d}^1$. Let
$$
\wt{u}_i\colon M\ot_{\!A} E^{\ot_{\!A}^s}\ot_k A^{\ot_k^r}\to M\ot_{\!A} E^{\ot_{\!A}^{s-1}} \ot_k A^{\ot_k^r}\qquad\text{($0\le i\le s$)}
$$
be as above of Notation~\ref{not3.1}. We set
$$
\ov{u}_i := \wt{\theta} \xcirc \wt{u}_i\xcirc \wt{\vartheta}\qquad\text{for $0\le i\le s$}.
$$
By item~(2) of Theorem~\ref{formula para wh{d}_1} we know that $\wh{d}^1$ is induced by $\sum_{i=0}^s (-1)^i \wt{u}_i$. Hence, $\ov{d}^1$ is induced by $\sum_{i=0}^s (-1)^{r+i} \ov{u}_i$. So, in order to complete the computation of $\ov{d}^1$ we must calculate the $\ov{u}_i$'s. We begin with $\ov{u}_0$. Let $D:= A^{\ot_k^r}$, $C:= H^{\ot_c^{s-1}}$, $\mu:=\mu_{s-1}$, $\mathbf{u}:= \mu\xcirc \gamma^{\ot_k^{s-1}}$ and $\ov{\mathbf{u}}:=\mu\xcirc \ov{\gamma}^{\ot_k^{s-1}}\xcirc\gc_{s-1}$. Again by Lemma~\ref{inversa de convolucion},
$$
%
%
\grow{\xymatrix@!0{
\\\\\\\\\\\\\\\\\\
\save\go+<0pt,0pt>\Drop{\txt{$\ov{u}_0=$}}\restore }}
\grow{\xymatrix@!0{
\save\go+<0pt,3pt>\Drop{M}\restore \ar@{-}[19,0]&&&&&\save\go+<0pt,3pt>\Drop{D}\restore \ar@{-}[2,2]&&\ar@{-}[1,-1]+<0.125pc,0.0pc> \ar@{-}[1,-1]+<0.0pc,0.125pc> \save\go+<0pt,3pt>\Drop{H}\restore &&\ar@{-}[2,0]\save\go+<0pt,3pt>\Drop{C}\restore\\
&&&&&&&&&\\
&&&&&\ar@{-}[-1,1]+<-0.125pc,0.0pc> \ar@{-}[-1,1]+<0.0pc,-0.125pc>\ar@{-}[2,-2]&&\ar@{-}[3,3] &&\ar@{-}[1,-1]+<0.125pc,0.0pc> \ar@{-}[1,-1]+<0.0pc,0.125pc>\\
&&&&&&&&&\\
&&&\ar@{-}[1,0]&&&&\ar@{-}[1,0]\ar@{-}[-1,1]+<-0.125pc,0.0pc> \ar@{-}[-1,1]+<0.0pc,-0.125pc>&&&\\
&&& \ar@{-}`l/4pt [1,-1] [1,-1] \ar@{-}`r [1,1] [1,1]&&&& \ar@{-}`l/4pt [1,-1] [1,-1] \ar@{-}`r [1,1] [1,1]&&&\ar@{-}[2,2]\\
&&\ar@{-}[2,0]&&\ar@{-}[1,1]+<-0.1pc,0.1pc> && \ar@{-}[2,-2]&&\ar@{-}[2,0]&&\\
&&&&&&&&&&&&\ar@{-}[6,0]\\
&&\ar@{-}[1,1]+<-0.1pc,0.1pc> && \ar@{-}[2,-2]&&\ar@{-}[-1,-1]+<0.1pc,-0.1pc>\ar@{-}[1,0] &&\ar@{-}[1,0]&&&&\\
&&&&&&\ar@{-}[0,0]+<0pc,-0.5pt>&&\ar@{-}[0,0]+<0pc,-0.5pt>&&\\
&&\ar@{-}[1,0]+<0pc,-0.5pt>&&\ar@{-}[1,0]+<0pc,-0.5pt>\ar@{-}[-1,-1]+<0.1pc,-0.1pc>&&*+[o]+<0.40pc>[F]{\gamma}\ar@{-}[6,0]+<0pc,0pc>&& *+[o]+<0.40pc>[F]{\boldsymbol{\gamma}}\ar@{-}[3,0]+<0pc,0pc>&&\\
&&&&&&&&&&\\
&&*+[o]+<0.37pc>[F]{\ov{\mathbf{u}}}\ar@{-}[1,0]+<0pc,0pc>&&*+[o]+<0.37pc>[F]{\ov{\gamma}}\ar@{-}[1,0]+<0pc,0pc>&&&&
\ar@{-}`r/4pt [1,2] [1,2]&&&&\\
&&\ar@{-}`d/4pt [1,1] `[0,2] [0,2]&&&&&&\ar@{-}[0,0]+<0pc,-0.5pt>&& \ar@{-}[1,1]+<-0.125pc,0.0pc> \ar@{-}[1,1]+<0.0pc,0.125pc>&&\ar@{-}[2,-2]\\
&&&\ar@{-}[1,0]&&&&& *+[o]+<0.37pc>[F]{\mu}\ar@{-}[3,0]+<0pc,0pc>&&&&\\
&&&\ar@{-}`d/4pt [1,-3][1,-3]&&&&&&&\ar@{-}[4,0]&&\ar@{-}[-1,-1]+<0.125pc,0.0pc> \ar@{-}[-1,-1]+<0.0pc,-0.125pc>\ar@{-}[4,0]\\
&&&&&&\ar@{-}`d/4pt [1,-6][1,-6]&&&&&&\\
&&&&&&&&\ar@{-}`d/4pt [1,-8][1,-8]&&&&\\
&&&&&&&&&&&&\\
&&&&&&&&&&&&}}
\grow{\xymatrix@!0{
\\\\\\\\\\\\\\\\\\
\save\go+<0pt,0pt>\Drop{\txt{$=$}}\restore }}
\grow{\xymatrix@!0{\\
\save\go+<0pt,3pt>\Drop{M}\restore \ar@{-}[17,0]&&&&&\save\go+<0pt,3pt>\Drop{D}\restore \ar@{-}[2,2]&&\ar@{-}[1,-1]+<0.125pc,0.0pc> \ar@{-}[1,-1]+<0.0pc,0.125pc> \save\go+<0pt,3pt>\Drop{H}\restore &&\ar@{-}[2,0]\save\go+<0pt,3pt>\Drop{C}\restore\\
&&&&&&&&&\\
&&&&&\ar@{-}[-1,1]+<-0.125pc,0.0pc> \ar@{-}[-1,1]+<0.0pc,-0.125pc>\ar@{-}[2,-2]&&\ar@{-}[3,3] &&\ar@{-}[1,-1]+<0.125pc,0.0pc> \ar@{-}[1,-1]+<0.0pc,0.125pc>\\
&&&&&&&&&\\
&&&\ar@{-}[2,0]&&&&\ar@{-}[1,0]\ar@{-}[-1,1]+<-0.125pc,0.0pc> \ar@{-}[-1,1]+<0.0pc,-0.125pc>&&&\\
&&&&&&&\ar@{-}`l/4pt [1,-2] [1,-2] \ar@{-}`r [1,2] [1,2]&&&\ar@{-}[2,2]\\
&&&\ar@{-}[1,1]+<-0.1pc,0.1pc> && \ar@{-}[3,-3]&&&&\ar@{-}[1,0]&\\
&&&&&&&&&\ar@{-}`l/4pt [1,-1] [1,-1] \ar@{-}`r [1,1] [1,1]&&&\ar@{-}[1,0]\\
&&&&&\ar@{-}[1,0]\ar@{-}[-1,-1]+<0.1pc,-0.1pc>&&&\ar@{-}[1,0]+<0pc,-0.5pt>&&\ar@{-}[1,1]+<-0.125pc,0.0pc> \ar@{-}[1,1]+<0.0pc,0.125pc>&&\ar@{-}[2,-2]\\
&&\ar@{-}[1,0]&&&\ar@{-}`l/4pt [1,-1] [1,-1] \ar@{-}`r [1,1] [1,1]&&&&&&&\\
&&\ar@{-}[0,0]+<0pc,-0.5pt>&&\ar@{-}[0,0]+<0pc,-0.5pt>&&\ar@{-}[0,0]+<0pc,-0.5pt>&&*+[o]+<0.40pc>[F]{\mathbf{u}}\ar@{-}[5,0]+<0pc,0pc>&& \ar@{-}[7,0]&&\ar@{-}[-1,-1]+<0.125pc,0.0pc>\ar@{-}[-1,-1]+<0.0pc,-0.125pc>\ar@{-}[7,0]\\
&&*+[o]+<0.37pc>[F]{\ov{\mathbf{u}}}\ar@{-}[1,0]+<0pc,0pc>&&*+[o]+<0.37pc>[F]{\ov{\gamma}}\ar@{-}[1,0]+<0pc,0pc> &&*+[o]+<0.40pc>[F]{\gamma}\ar@{-}[1,0]+<0pc,0pc>&&&&\\
&&\ar@{-}`d/4pt [1,-2][1,-2]&&\ar@{-}`d/4pt [1,1] `[0,2] [0,2]&&&&&&\\
&&&&&\ar@{-}[1,0]&&&&&\\
&&&&&\ar@{-}`d/4pt [1,-5][1,-5]&&&&&\\
&&&&&&&&\ar@{-}`d/4pt [1,-8][1,-8]&&&&\\
&&&&&&&&&&&&\\
&&&&&&&&&&&&}}
\grow{\xymatrix@!0{
\\\\\\\\\\\\\\\\\\
\save\go+<0pt,0pt>\Drop{\txt{$=$}}\restore }}
\grow{\xymatrix@!0{\\\\\\
\save\go+<0pt,3pt>\Drop{M}\restore \ar@{-}[14,0]&&\save\go+<0pt,3pt>\Drop{D}\restore \ar@{-}[2,2]&&\ar@{-}[1,-1]+<0.125pc,0.0pc> \ar@{-}[1,-1]+<0.0pc,0.125pc> \save\go+<0pt,3pt>\Drop{H}\restore &&\ar@{-}[2,0]\save\go+<0pt,3pt>\Drop{C}\restore\\
&&&&&&\\
&&\ar@{-}[-1,1]+<-0.125pc,0.0pc> \ar@{-}[-1,1]+<0.0pc,-0.125pc>\ar@{-}[1,0]&&\ar@{-}[2,2] &&\ar@{-}[1,-1]+<0.125pc,0.0pc> \ar@{-}[1,-1]+<0.0pc,0.125pc>\\
&&\ar@{-}[1,0]+<0pt,2.5pt>&&&&\\
&&\save\go+<0pt,1.6pt>\Drop{\circ}\restore &&\ar@{-}[1,0]\ar@{-}[-1,1]+<-0.125pc,0.0pc> \ar@{-}[-1,1]+<0.0pc,-0.125pc>&&\ar@{-}[1,1]&\\
&&&&\ar@{-}`l/4pt [1,-1] [1,-1] \ar@{-}`r [1,1] [1,1]&&&\ar@{-}[3,0]\\
&&&\ar@{-}[1,0]&&\ar@{-}[2,0]&&\\
&&&\ar@{-}`l/4pt [1,-1] [1,-1] \ar@{-}`r [1,1] [1,1]&&&&\\
&&\ar@{-}[0,0]+<0pc,-0.5pt>&&\ar@{-}[0,0]+<0pc,-0.5pt>&\ar@{-}[1,1]+<-0.125pc,0.0pc> \ar@{-}[1,1]+<0.0pc,0.125pc>&&\ar@{-}[2,-2]\\
&&*+[o]+<0.37pc>[F]{\ov{\mathbf{u}}}\ar@{-}[1,0]+<0pc,0pc>&&*+[o]+<0.40pc>[F]{\mathbf{u}}\ar@{-}[1,0]+<0pc,0pc>&&&\\
&&\ar@{-}`d/4pt [1,1] `[0,2] [0,2]&&&\ar@{-}[4,0]&&\ar@{-}[-1,-1]+<0.125pc,0.0pc> \ar@{-}[-1,-1]+<0.0pc,-0.125pc>\ar@{-}[4,0]\\
&&&\ar@{-}[1,0]&&&&\\
&&&\ar@{-}`d/4pt [1,-3][1,-3]&&&&\\
&&&&&&&\\
&&&&&&&}}
\grow{\xymatrix@!0{
\\\\\\\\\\\\\\\\\\
\save\go+<0pt,0pt>\Drop{\txt{$=$}}\restore }}
\grow{\xymatrix@!0{\\\\\\\\\\\\\\
\save\go+<0pt,3pt>\Drop{M}\restore \ar@{-}[4,0]&&\save\go+<0pt,3pt>\Drop{D}\restore \ar@{-}[2,2]&&\ar@{-}[1,-1]+<0.125pc,0.0pc> \ar@{-}[1,-1]+<0.0pc,0.125pc> \save\go+<0pt,3pt>\Drop{H}\restore &&\ar@{-}[4,0]\save\go+<0pt,3pt>\Drop{C}\restore\\
&&&&&&\\
&&\ar@{-}[-1,1]+<-0.125pc,0.0pc> \ar@{-}[-1,1]+<0.0pc,-0.125pc>\ar@{-}[1,0]&&\ar@{-}[2,0]&&\\
&&\ar@{-}[1,0]+<0pt,2.5pt>&&&&\\
&&\save\go+<0pt,1.6pt>\Drop{\circ}\restore &&&&}}
\grow{\xymatrix@!0{
\\\\\\\\\\\\\\\\\\
\save\go+<0pt,0pt>\Drop{\txt{.}}\restore }}
$$
Now, we compute $\ov{u}_i$ for $0<i<s$. Let $C_1:=H^{\ot_c^{i-1}}$ and $C_2:=H^{\ot_c^{s-i-1}}$. Consider the map
$$
\Phi\colon H^{\ot_c^s}\longrightarrow A\ot_k H^{\ot_c^{s-1}},
$$
diagrammatically defined by
$$
%
%
\grow{\xymatrix@!0{
\\\\\\\\\\\\\\
\save\go+<0pt,0pt>\Drop{\txt{$\Phi:=$}}\restore }}
\grow{\xymatrix@!0{
\save\go+<0pt,3pt>\Drop{C_1}\restore \ar@{-}[1,0]+<0pc,-0.5pt> &&& \save\go+<0pt,3pt>\Drop{H}\restore \ar@{-}[1,0]+<0pc,-0.5pt> && \save\go+<0pt,3pt>\Drop{H}\restore \ar@{-}[1,0]+<0pc,-0.5pt> &&&  \save\go+<0pt,3pt>\Drop{C_2}\restore \ar@{-}[1,0]+<0pc,-0.5pt>\\
&&&&&&&&&&\\
*+[o]+<0.37pc>[F]{\boldsymbol{\gamma}}\ar@{-}[4,0]+<0pc,0pc>&&&*+[o]+<0.37pc>[F]{\gamma}\ar@{-}[1,0]+<0pc,0pc> &&*+[o]+<0.37pc>[F]{\gamma}\ar@{-}[1,0]+<0pc,0pc>&&&*+[o]+<0.37pc>[F]{\boldsymbol{\gamma}}\ar@{-}[4,0]+<0pc,0pc>&&\\
&&&\ar@{-}`d/4pt [1,1] `[0,2] [0,2]&&&&&&&\\
&&&&\ar@{-}[2,0]&&&&&&\\
\ar@{-}`r/4pt [1,2] [1,2]&&&&\ar@{-}`r/4pt [1,2] [1,2]&&&&\ar@{-}`r/4pt [1,2] [1,2]&&\\
\ar@{-}[0,0]+<0pc,-0.5pt>&&\ar@{-}[1,1]+<-0.125pc,0.0pc> \ar@{-}[1,1]+<0.0pc,0.125pc> &&\ar@{-}[2,-2]&&\ar@{-}[1,1]+<-0.125pc,0.0pc> \ar@{-}[1,1]+<0.0pc,0.125pc> &&\ar@{-}[2,-2]&&\ar@{-}[9,0]\\
*+[o]+<0.40pc>[F]{\mu}\ar@{-}[1,0]+<0pc,0pc>&&&&&&&&&&\\
\ar@{-}`d/4pt [1,1]`[0,2][0,2]&&&&\ar@{-}[-1,-1]+<0.125pc,0.0pc> \ar@{-}[-1,-1]+<0.0pc,-0.125pc>\ar@{-}[1,1]+<-0.125pc,0.0pc> \ar@{-}[1,1]+<0.0pc,0.125pc>&& \ar@{-}[2,-2]&&\ar@{-}[-1,-1]+<0.125pc,0.0pc>\ar@{-}[-1,-1]+<0.0pc,-0.125pc>\ar@{-}[7,0]&&\\
&\ar@{-}[4,0]&&&&&&&&&\\
&&&&\ar@{-}[1,0]+<0pc,-0.5pt>&&\ar@{-}[-1,-1]+<0.125pc,0.0pc>\ar@{-}[-1,-1]+<0.0pc,-0.125pc> \ar@{-}[5,0]&&&&\\
&&&&&&&&&&\\
&&&&*+[o]+<0.40pc>[F]{\mu}\ar@{-}[1,0]+<0pc,0pc>&&&&&&\\
&\ar@{-}`d/4pt [1,1]+<0.2pc,0pc> `[0,3] [0,3] &&&&& &&&&\\
&&\save[]+<0.2pc,0pc> \Drop{}\ar@{-}[1,0]+<0.2pc,0pc>\restore&&&&&&&&\\
&&\save[]+<0.2pc,0pc> \Drop{}\restore&&&&&&&&
}}
\grow{\xymatrix@!0{
\\\\\\\\\\\\\\
\save\go+<0pt,0pt>\Drop{\txt{,}}\restore }}
$$
where $\boldsymbol{\gamma}$ denotes both $\gamma^{\ot_k^{i-1}}$ and $\gamma^{\ot_k^{s-i-1}}$, and $\mu$ denotes both $\mu_{i-1}$ and $\mu_{s-i-1}$. Since
$$
%
%
\grow{\xymatrix@!0{
\\\\\\\\\\\\\\\\\\\\
\save\go+<0pt,0pt>\Drop{\txt{$\Phi=$}}\restore }}
\grow{\xymatrix@!0{
\save\go+<0pt,3pt>\Drop{C_1}\restore \ar@{-}[1,0]+<0pc,-0.5pt> &&&
\save\go+<0pt,3pt>\Drop{H}\restore \ar@{-}[1,0]+<0pc,-0.5pt> &&&&
\save\go+<0pt,3pt>\Drop{H}\restore \ar@{-}[1,0]+<0pc,-0.5pt> &&&
\save\go+<0pt,3pt>\Drop{C_2}\restore \ar@{-}[1,0]+<0pc,-0.5pt>\\
&&&&&&&&&&\\
*+[o]+<0.37pc>[F]{\boldsymbol{\gamma}}\ar@{-}[6,0]+<0pc,0pc>&&&*+[o]+<0.37pc>[F]{\gamma}\ar@{-}[5,0]+<0pc,0pc>
&&&&*+[o]+<0.37pc>[F]{\gamma}\ar@{-}[3,0]+<0pc,0pc>&&&*+[o]+<0.37pc>[F]{\boldsymbol{\gamma}}\ar@{-}[6,0]+<0pc,0pc>&&\\
&&&&&&&&&&&&\\
&&&\ar@{-}`r/4pt [1,2] [1,2]&&&&\ar@{-}`r/4pt [1,2] [1,2]&&&&&\\
&&&&&\ar@{-}[1,1]+<-0.125pc,0.0pc> \ar@{-}[1,1]+<0.0pc,0.125pc>&&\ar@{-}[2,-2]
&&\ar@{-}[2,0]&&&\\
&&&&&&&&&&&&\\
&&&\ar@{-}`d/4pt [1,1] `[0,2] [0,2]&&&&\ar@{-}[-1,-1]+<0.125pc,0.0pc>
\ar@{-}[-1,-1]+<0.0pc,-0.125pc>\ar@{-}`d/4pt [1,1] `[0,2] [0,2]&&&&&\\
\ar@{-}[1,1]&&&&\ar@{-}[1,0]&&&&\ar@{-}[1,0]&&\ar@{-}[1,-1]&&\\
&\ar@{-}[2,0]&&&\ar@{-}[1,1]&&&&\ar@{-}[1,-1]&\ar@{-}[2,0]&&&\\
&\ar@{-}`r/4pt [1,2] [1,2]&&&&\ar@{-}[1,0]&&\ar@{-}[1,0]&&\ar@{-}`r/4pt [1,2]
[1,2]&&&\\
&\ar@{-}[0,0]+<0pc,-0.5pt>&&\ar@{-}[1,1]+<-0.125pc,0.0pc>
\ar@{-}[1,1]+<0.0pc,0.125pc> &&\ar@{-}[2,-2]&&\ar@{-}[1,1]+<-0.125pc,0.0pc>
\ar@{-}[1,1]+<0.0pc,0.125pc> &&\ar@{-}[2,-2]&&\ar@{-}[9,0]\\
&*+[o]+<0.40pc>[F]{\mu}\ar@{-}[1,0]+<0pc,0pc>&&&&&&&&&&\\
&\ar@{-}`d/4pt [1,1]`[0,2][0,2]&&&&\ar@{-}[-1,-1]+<0.125pc,0.0pc>
\ar@{-}[-1,-1]+<0.0pc,-0.125pc>\ar@{-}[1,1]+<-0.125pc,0.0pc>
\ar@{-}[1,1]+<0.0pc,0.125pc>&&
\ar@{-}[2,-2]&&\ar@{-}[-1,-1]+<0.125pc,0.0pc>\ar@{-}[-1,-1]+<0.0pc,-0.125pc>\ar@{-}[7,0]&&\\
&&\ar@{-}[4,0]&&&&&&&&&\\
&&&&&\ar@{-}[1,0]+<0pc,-0.5pt>&&\ar@{-}[-1,-1]+<0.125pc,0.0pc>\ar@{-}[-1,-1]+<0.0pc,-0.125pc>
\ar@{-}[5,0]&&&&\\
&&&&&&&&&&&\\
&&&&&*+[o]+<0.40pc>[F]{\mu}\ar@{-}[1,0]+<0pc,0pc>&&&&&&\\
&&\ar@{-}`d/4pt [1,1]+<0.2pc,0pc> `[0,3] [0,3] &&&&& &&&&\\
&&&\save[]+<0.2pc,0pc> \Drop{}\ar@{-}[1,0]+<0.2pc,0pc>\restore&&&&&&&&\\
&&&\save[]+<0.2pc,0pc> \Drop{}\restore&&&&&&&&
}}
\grow{\xymatrix@!0{
\\\\\\\\\\\\\\\\\\\\
\save\go+<0pt,0pt>\Drop{\txt{$=$}}\restore }}
\grow{\xymatrix@!0{\\\\
&\save\go+<0pt,3pt>\Drop{C_1}\restore \ar@{-}[1,0] &&&&
\save\go+<0pt,3pt>\Drop{H}\restore \ar@{-}[1,0] &&&&
\save\go+<0pt,3pt>\Drop{H}\restore \ar@{-}[1,0] &&&&
\save\go+<0pt,3pt>\Drop{C_2}\restore \ar@{-}[1,0]\\
&\ar@{-}`l/4pt [1,-1] [1,-1] \ar@{-}`r [1,1] [1,1]&&&&\ar@{-}`l/4pt [1,-1] [1,-1]
\ar@{-}`r [1,1] [1,1]&&&&\ar@{-}`l/4pt [1,-1] [1,-1] \ar@{-}`r [1,1]
[1,1]&&&&\ar@{-}`l/4pt [1,-1] [1,-1] \ar@{-}`r [1,1] [1,1]&\\
\ar@{-}[0,0]+<0pc,-0.5pt>&&\ar@{-}[2,0]&&\ar@{-}[0,0]+<0pc,-0.5pt>&&\ar@{-}[2,0]&&
\ar@{-}[0,0]+<0pc,-0.5pt>&&\ar@{-}[2,0]&&\ar@{-}[0,0]+<0pc,-0.5pt>&&\ar@{-}[2,0]\\
*+[o]+<0.40pc>[F]{\mathbf{u}}\ar@{-}[1,0]+<0pc,0pc>&&&&*+[o]+<0.40pc>[F]{\gamma}\ar@{-}[1,0]+<0pc,0pc>
&&&&*+[o]+<0.40pc>[F]{\gamma}\ar@{-}[1,0]+<0pc,0pc>&&&&*+[o]+<0.40pc>[F]{\mathbf{u}}\ar@{-}[1,0]+<0pc,0pc>&&\\
\ar@{-}[8,0]&&\ar@{-}[1,1]+<-0.125pc,0.0pc> \ar@{-}[1,1]+<0.0pc,0.125pc>&&
\ar@{-}[2,-2]&&\ar@{-}[1,1]+<-0.125pc,0.0pc> \ar@{-}[1,1]+<0.0pc,0.125pc> &&
\ar@{-}[2,-2]&&\ar@{-}[1,1]+<-0.125pc,0.0pc> \ar@{-}[1,1]+<0.0pc,0.125pc>
&&\ar@{-}[2,-2]&&\ar@{-}[12,0]\\
&&&&&&&&&&&&&&\\
&&\ar@{-}[4,0]&&\ar@{-}[-1,-1]+<0.125pc,0.0pc>\ar@{-}[-1,-1]+<0.0pc,-0.125pc>\ar@{-}[1,1]+<-0.125pc,0.0pc>
\ar@{-}[1,1]+<0.0pc,0.125pc>&&\ar@{-}[2,-2]&&\ar@{-}[-1,-1]+<0.125pc,0.0pc>
\ar@{-}[-1,-1]+<0.0pc,-0.125pc>\ar@{-}[1,1]+<-0.125pc,0.0pc>
\ar@{-}[1,1]+<0.0pc,0.125pc>&&\ar@{-}[2,-2]&&\ar@{-}[-1,-1]+<0.125pc,0.0pc>\ar@{-}[-1,-1]+<0.0pc,-0.125pc>\ar@{-}[4,0]&&\\
&&&&&&&&&&&&&&\\
&&&&\ar@{-}[2,0]&&\ar@{-}[-1,-1]+<0.125pc,0.0pc>\ar@{-}[-1,-1]+<0.0pc,-0.125pc>\ar@{-}[1,1]+<-0.125pc,0.0pc>
\ar@{-}[1,1]+<0.0pc,0.125pc>&&\ar@{-}[2,-2]&&\ar@{-}[2,0]\ar@{-}[-1,-1]+<0.125pc,0.0pc>\ar@{-}[-1,-1]+<0.0pc,-0.125pc>&&&&\\
&&&&&&&&&&&&&&\\
&&\ar@{-}`d/4pt [1,1] `[0,2][0,2]&&&&\ar@{-}[4,0]+<-0.4pc,0pc>
&&\ar@{-}[-1,-1]+<0.125pc,0.0pc> \ar@{-}[-1,-1]+<0.0pc,-0.125pc> \ar@{-}[6,0]
&&\ar@{-}`d/4pt [1,1] `[0,2] [0,2]&&&&\\
&&&\ar@{-}[1,0]&&&&&&&&\ar@{-}[5,0]&&&\\
\ar@{-}`d/4pt [1,1]+<0.2pc,0pc> `[0,3] [0,3] &&&&&&&&&&&&&&\\
&\save[]+<0.2pc,0pc> \Drop{}\ar@{-}[1,0]+<0.2pc,0pc>\restore&&&&&&&&&&&&&\\
&\save[]+<0.2pc,0pc> \Drop{} \ar@{-}`d/4pt [1,2] `[0,4] [0,4]\restore&&&&&&&&&&&&&\\
&&&\save[]+<0.2pc,0pc> \Drop{}\ar@{-}[1,0]+<0.2pc,0pc>\restore&&&&&&&&&&&\\
&&&\save[]+<0.2pc,0pc> \Drop{}\restore&&&&&&&&&&&}}\\
\grow{\xymatrix@!0{
\\\\\\\\\\\\\\\\\\\\
\save\go+<0pt,0pt>\Drop{\txt{$=$}}\restore }}
\grow{\xymatrix@!0{\\\\
&\save\go+<0pt,3pt>\Drop{C_1}\restore \ar@{-}[1,0] &&&&
\save\go+<0pt,3pt>\Drop{H}\restore \ar@{-}[1,0] &&&&
\save\go+<0pt,3pt>\Drop{H}\restore \ar@{-}[1,0] &&&&
\save\go+<0pt,3pt>\Drop{C_2}\restore \ar@{-}[1,0]\\
&\ar@{-}`l/4pt [1,-1] [1,-1] \ar@{-}`r [1,1] [1,1]&&&&\ar@{-}`l/4pt [1,-1] [1,-1]
\ar@{-}`r [1,1] [1,1]&&&&\ar@{-}`l/4pt [1,-1] [1,-1] \ar@{-}`r [1,1]
[1,1]&&&&\ar@{-}`l/4pt [1,-1] [1,-1] \ar@{-}`r [1,1] [1,1]&\\
\ar@{-}[6,0]&&\ar@{-}[1,1]+<-0.1pc,0.1pc> &&
\ar@{-}[2,-2]&&\ar@{-}[1,1]+<-0.1pc,0.1pc> &&
\ar@{-}[2,-2]&&\ar@{-}[1,1]+<-0.1pc,0.1pc> && \ar@{-}[2,-2]&&\ar@{-}[15,0]\\
&&&&&&&&&&&&&&\\
&&\ar@{-}[4,0]&&\ar@{-}[-1,-1]+<0.1pc,-0.1pc>\ar@{-}[1,1]+<-0.1pc,0.1pc> &&
\ar@{-}[2,-2]&&\ar@{-}[-1,-1]+<0.1pc,-0.1pc>\ar@{-}[1,1]+<-0.1pc,0.1pc> &&
\ar@{-}[2,-2]&&\ar@{-}[-1,-1]+<0.1pc,-0.1pc>\ar@{-}[13,0]&&\\
&&&&&&&&&&&&&&\\
&&&&\ar@{-}[2,0]&&\ar@{-}[-1,-1]+<0.1pc,-0.1pc>\ar@{-}[1,1]+<-0.1pc,0.1pc> &&
\ar@{-}[2,-2]&&\ar@{-}[-1,-1]+<0.1pc,-0.1pc>\ar@{-}[6,0]&&&&\\
&&&&&&&&&&&&&&\\
\ar@{-}[1,0]+<0pc,-0.5pt>&&\ar@{-}[1,0]+<0pc,-0.5pt>&&\ar@{-}[1,0]+<0pc,-0.5pt>&&
\ar@{-}[1,0]+<0pc,-0.5pt>&&\ar@{-}[-1,-1]+<0.1pc,-0.1pc>\ar@{-}[4,0]&&&&&&\\
&&&&&&&&&&&&&&\\
*+[o]+<0.40pc>[F]{\mathbf{u}}\ar@{-}[1,0]+<0pc,0pc>&&
*+[o]+<0.40pc>[F]{\gamma}\ar@{-}[1,0]+<0pc,0pc>&&
*+[o]+<0.40pc>[F]{\gamma}\ar@{-}[1,0]+<0pc,0pc>&&
*+[o]+<0.40pc>[F]{\mathbf{u}}\ar@{-}[1,0]+<0pc,0pc>&&&&&&&&\\
\ar@{-}[2,0]&&\ar@{-}`d/4pt [1,1] `[0,2] [0,2]&&&&\ar@{-}[4,-1] &&&&&&&&\\
&&&\ar@{-}[1,0]&&&&&\ar@{-}`d/4pt [1,1] `[0,2] [0,2]&&&&&&\\
\ar@{-}`d/4pt [1,1]+<0.2pc,0pc> `[0,3] [0,3] &&&&&&&&&\ar@{-}[4,0]&&&&&\\
&\save[]+<0.2pc,0pc> \Drop{}\ar@{-}[1,0]+<0.2pc,0pc>\restore&&&&&&&&&&&&&\\
&\save[]+<0.2pc,0pc> \Drop{} \ar@{-}`d/4pt [1,2] `[0,4] [0,4]\restore&&&&&&&&&&&&&\\
&&&\save[]+<0.2pc,0pc> \Drop{}\ar@{-}[1,0]+<0.2pc,0pc>\restore&&&&&&&&&&&\\
&&&\save[]+<0.2pc,0pc> \Drop{}\restore&&&&&&&&&&&}}
 \grow{\xymatrix@!0{
\\\\\\\\\\\\\\\\\\\\
\save\go+<0pt,0pt>\Drop{\txt{$=$}}\restore }}
\grow{\xymatrix@!0{\\\\\\\\\\\\\\
&\save\go+<0pt,3pt>\Drop{C}\restore \ar@{-}[1,0]\\
&\ar@{-}`l/4pt [1,-1] [1,-1] \ar@{-}`r [1,1] [1,1]&\\
\ar@{-}[1,0]+<0pc,-0.5pt>&&\ar@{-}[1,0]+<0pc,-0.5pt>\\
&&\\
*+[o]+<0.40pc>[F]{\mathbf{u}}\ar@{-}[2,0]+<0pc,0pc>&&*+[o]+<0.31pc>[F]{\mathfrak{u}_{\hspace{-0.5pt}
i}} \ar@{-}[2,0]+<0pc,0pc>\\
&&\\
&&}}
\grow{\xymatrix@!0{
\\\\\\\\\\\\\\\\\\\\
\save\go+<0pt,0pt>\Drop{\txt{,}}\restore }}
$$
where $C:=H^{\ot_c^s}$, $\mathfrak{u}_i\colon H^{\ot_c^s}\longrightarrow H^{\ot_c^{s-1}}$ is the map given by
$$
\mathfrak{u}_i(\bh_{1s}):=\bh_{1,i-1}\ot_k h_ih_{i+1}\ot_k\bh_{i+2,s}
$$
and $\mathbf{u}$ denotes $\mu_{i-1}\xcirc\gamma^{\ot_k^{i-1}}$, $\mu_{s-i-1}\xcirc \gamma^{\ot_k^{s-i-1}}$ and $\mu_s \xcirc\gamma^{\ot_k^s}$, we have
$$
%
%
\grow{\xymatrix@!0{
\\\\\\\\\\\\
\save\go+<0pt,0pt>\Drop{\txt{$\ov{u}_i=$}}\restore }}
\grow{\xymatrix@!0{\\
\save\go+<0pt,3pt>\Drop{M}\restore \ar@{-}[10,0] &&&&\save\go+<0pt,3pt>\Drop{D}\restore \ar@{-}[5,5]&&\ar@{-}[1,-1]+<0.125pc,0.0pc> \ar@{-}[1,-1]+<0.0pc,0.125pc> \save\go+<0pt,3pt>\Drop{C}\restore \\
&&&&&&&&&\\
&&&&\ar@{-}[1,0]\ar@{-}[-1,1]+<-0.125pc,0.0pc> \ar@{-}[-1,1]+<0.0pc,-0.125pc>&&\\
&&&&\ar@{-}`l/4pt [1,-2] [1,-2] \ar@{-}`r [1,2] [1,2]&&\\
&&\ar@{-}[1,0]+<0pc,-0.5pt>&&&&\ar@{-}[2,0]&&\\
&&&&&&&&&\ar@{-}[2,0]\\
&&*+[o]+<0.37pc>[F]{\ov{\mathbf{u}}}\ar@{-}[1,0]+<0pc,0pc> &&&&*+<0.1pc>[F]{\,\,\,\Phi\,\,\,} \ar@{-}@<0.40pc>[1,0] \ar@{-}@<-0.40pc>[2,0] \\
&&\ar@{-}`d/4pt [1,-2][1,-2]&&&&&\ar@{-}[1,1]+<-0.125pc,0.0pc> \ar@{-}[1,1]+<0.0pc,0.125pc>&&\ar@{-}[2,-2]\\
&&&&&\ar@{-}`d/4pt [1,-5][1,-5]&&&&\\
&&&&&&&\ar@{-}[1,0]&&\ar@{-}[-1,-1]+<0.125pc,0.0pc> \ar@{-}[-1,-1]+<0.0pc,-0.125pc>\ar@{-}[1,0]\\
&&&&&&&&&}}
\grow{\xymatrix@!0{
\\\\\\\\\\\\
\save\go+<0pt,0pt>\Drop{\txt{$=$}}\restore }}
\grow{\xymatrix@!0{\\
\save\go+<0pt,3pt>\Drop{M}\restore \ar@{-}[11,0] &&&\save\go+<0pt,3pt>\Drop{D}\restore \ar@{-}[4,4]&&\ar@{-}[1,-1]+<0.125pc,0.0pc> \ar@{-}[1,-1]+<0.0pc,0.125pc> \save\go+<0pt,3pt>\Drop{C}\restore \\
&&&&&&&\\
&&&\ar@{-}[1,0]\ar@{-}[-1,1]+<-0.125pc,0.0pc> \ar@{-}[-1,1]+<0.0pc,-0.125pc>&&\\
&&&\ar@{-}`l/4pt [1,-1] [1,-1] \ar@{-}`r [1,1] [1,1]&&&&\\
&&\ar@{-}[0,0]+<0pc,-0.5pt>&&\ar@{-}[1,0]&&&\ar@{-}[5,0]\\
&&*+[o]+<0.37pc>[F]{\ov{\mathbf{u}}}\ar@{-}[1,0]+<0pc,0pc>&&\ar@{-}`l/4pt [1,-1] [1,-1] \ar@{-}`r [1,1] [1,1] &&&\\
&&\ar@{-}`d/4pt [1,-2][1,-2]&\ar@{-}[0,0]+<0pc,-0.5pt> &&\ar@{-}[0,0]+<0pc,-0.5pt>&&\\
&&&*+[o]+<0.40pc>[F]{\mathbf{u}}\ar@{-}[1,0]+<0pc,0pc>&&*+[o]+<0.31pc>[F]{\mathfrak{u}_{\hspace{-0.5pt} i}}\ar@{-}[2,0]+<0pc,0pc>&&\\
&&&\ar@{-}`d/4pt [1,-3][1,-3]&&&&\\
&&&&&\ar@{-}[1,1]+<-0.125pc,0.0pc> \ar@{-}[1,1]+<0.0pc,0.125pc>&&\ar@{-}[2,-2]\\
&&&&&&&\\
&&&&&&&\ar@{-}[-1,-1]+<0.125pc,0.0pc>\ar@{-}[-1,-1]+<0.0pc,-0.125pc>}}
\grow{\xymatrix@!0{
\\\\\\\\\\\\
\save\go+<0pt,0pt>\Drop{\txt{$=$}}\restore }}
\grow{\xymatrix@!0{
\save\go+<0pt,3pt>\Drop{M}\restore \ar@{-}[12,0] &&&&\save\go+<0pt,3pt>\Drop{D}\restore \ar@{-}[3,3]&&\ar@{-}[1,-1]+<0.125pc,0.0pc> \ar@{-}[1,-1]+<0.0pc,0.125pc> \save\go+<0pt,3pt>\Drop{C}\restore \\
&&&&&&&\\
&&&&\ar@{-}[1,0]\ar@{-}[-1,1]+<-0.125pc,0.0pc> \ar@{-}[-1,1]+<0.0pc,-0.125pc>&&&\\
&&&&\ar@{-}`l/4pt [1,-1] [1,-1] \ar@{-}`r [1,1] [1,1]&&&\ar@{-}[3,0]\\
&&&\ar@{-}[1,0]&&\ar@{-}[2,0]&&\\
&&&\ar@{-}`l/4pt [1,-1] [1,-1] \ar@{-}`r [1,1] [1,1]&&&&\\
&&\ar@{-}[0,0]+<0pc,-0.5pt>&&\ar@{-}[0,0]+<0pc,-0.5pt>&\ar@{-}[1,1]+<-0.125pc,0.0pc> \ar@{-}[1,1]+<0.0pc,0.125pc>&&\ar@{-}[2,-2]\\
&&*+[o]+<0.37pc>[F]{\ov{\mathbf{u}}}\ar@{-}[1,0]+<0pc,0pc>&&*+[o]+<0.40pc>[F]{\mathbf{u}}\ar@{-}[1,0]+<0pc,0pc>&&&\\
&&\ar@{-}`d/4pt [1,1]`[0,2][0,2]&&&\ar@{-}[4,0]&&\ar@{-}[-1,-1]+<0.125pc,0.0pc> \ar@{-}[-1,-1]+<0.0pc,-0.125pc>\ar@{-}[1,0]\\
&&&\ar@{-}[3,0]&&&&\ar@{-}[0,0]+<0pc,-0.5pt>\\
&&&&&&&*+[o]+<0.31pc>[F]{\mathfrak{u}_{\hspace{-0.5pt} i}}\ar@{-}[2,0]+<0pc,0pc>\\
&&&&&&&\\
&&&&&&&}}
 \grow{\xymatrix@!0{
\\\\\\\\\\\\
\save\go+<0pt,0pt>\Drop{\txt{$=$}}\restore }}
\grow{\xymatrix@!0{\\
\save\go+<0pt,3pt>\Drop{M}\restore \ar@{-}[10,0] &&\save\go+<0pt,3pt>\Drop{D}\restore \ar@{-}[3,3]&&\ar@{-}[1,-1]+<0.125pc,0.0pc> \ar@{-}[1,-1]+<0.0pc,0.125pc> \save\go+<0pt,3pt>\Drop{C}\restore \\
&&&&&\\
&&\ar@{-}[1,0]\ar@{-}[-1,1]+<-0.125pc,0.0pc> \ar@{-}[-1,1]+<0.0pc,-0.125pc>&&&\\
&&\ar@{-}`l/4pt [1,-1] [1,-1] \ar@{-}`r [1,1] [1,1]&&&\ar@{-}[1,0]\\
&\ar@{-}[1,0]+<0pt,2.5pt>&& \ar@{-}[1,1]+<-0.125pc,0.0pc> \ar@{-}[1,1]+<0.0pc,0.125pc>&&\ar@{-}[2,-2]\\
&\save\go+<0pt,1.6pt>\Drop{\circ}\restore&&&&\\
&&&\ar@{-}[4,0]&&\ar@{-}[-1,-1]+<0.125pc,0.0pc>\ar@{-}[-1,-1]+<0.0pc,-0.125pc>\ar@{-}[1,0]+<0pc,-0.5pt>\\
&&&&&\\
&&&&&*+[o]+<0.31pc>[F]{\mathfrak{u}_{\hspace{-0.5pt} i}}\ar@{-}[2,0]+<0pc,0pc>\\
&&&&&\\
&&&&&}}
\grow{\xymatrix@!0{
\\\\\\\\\\\\
\save\go+<0pt,0pt>\Drop{\txt{$=$}}\restore }}
\grow{\xymatrix@!0{\\\\\\\\
\save\go+<0pt,3pt>\Drop{M}\restore \ar@{-}[4,0] &&\save\go+<0pt,3pt>\Drop{D}\restore \ar@{-}[4,0]&&\save\go+<0pt,3pt>\Drop{C}\restore \ar@{-}[1,0]+<0pc,-0.5pt>\\
&&&&\\
&&&&*+[o]+<0.31pc>[F]{\mathfrak{u}_{\hspace{-0.5pt} i}}\ar@{-}[2,0]+<0pc,0pc>\\
&&&&\\
&&&&}}
\grow{\xymatrix@!0{
\\\\\\\\\\\\
\save\go+<0pt,0pt>\Drop{\txt{,}}\restore }}
$$
where $D := A^{\ot_k^r}$ and $\ov{\mathbf{u}}:=\mu_s\xcirc \ov{\gamma}^{\ot_k^s}\xcirc\gc_s$. Again by Lemma~\ref{inversa de convolucion}. Finally, we compute~$\ov{u}_s$. Let $C:= H^{\ot_k^{s-1}}$, $D:=A^{\ot_k^r}$, $\mathbf{u}:= \mu_{s-1}\xcirc\gamma^{\ot_k^{s-1}}$ and $\ov{\mathbf{u}}:=\mu_{s-1}\xcirc \ov{\gamma}^{\ot_k^{s-1}}\xcirc\gc_{s-1}$. Again by Lemmas~\ref{inversa de convolucion} and~\ref{para d1},
\begin{align*}
%
%
\grow{\xymatrix@!0{
\\\\\\\\\\\\\\\\\\\\\\\\\\
\save\go+<0pt,0pt>\Drop{\txt{$\ov{u}_s$}}\restore
}}
& \grow{\xymatrix@!0{
\\\\\\\\\\\\\\\\\\\\\\\\\\
\save\go+<0pt,0pt>\Drop{\txt{$=$}}\restore
}}
\grow{\xymatrix@!0{\\
\save\go+<0pt,3pt>\Drop{M}\restore \ar@{-}[18,0] &&&& \save\go+<0pt,3pt>\Drop{D}\restore \ar@{-}[2,2]&&\ar@{-}[1,-1]+<0.125pc,0.0pc> \ar@{-}[1,-1]+<0.0pc,0.125pc> \save\go+<0pt,3pt>\Drop{C}\restore && \save\go+<0pt,3pt>\Drop{H}\restore\ar@{-}[2,0]\\
&&&&&&&&&\\
&&&&\ar@{-}[2,-2]\ar@{-}[-1,1]+<-0.125pc,0.0pc> \ar@{-}[-1,1]+<0.0pc,-0.125pc> &&\ar@{-}[2,2]&&\ar@{-}[1,-1]+<0.125pc,0.0pc> \ar@{-}[1,-1]+<0.0pc,0.125pc>&\\
&&&&&&&&\\
&&\ar@{-}[1,0]&&&&\ar@{-}[1,0]\ar@{-}[-1,1]+<-0.125pc,0.0pc> \ar@{-}[-1,1]+<0.0pc,-0.125pc>&&\ar@{-}[1,1]\\
&&\ar@{-}`l/4pt [1,-1] [1,-1] \ar@{-}`r [1,1] [1,1]&&&&\ar@{-}`l/4pt [1,-1] [1,-1] \ar@{-}`r [1,1] [1,1]&&&\ar@{-}[5,0]\\
&\ar@{-}[2,0]&&\ar@{-}[1,1]+<-0.1pc,0.1pc> && \ar@{-}[2,-2]&&\ar@{-}[4,0]&&\\
&&&&&&&&&\\
&\ar@{-}[1,1]+<-0.1pc,0.1pc> && \ar@{-}[2,-2]&&\ar@{-}[-1,-1]+<0.1pc,-0.1pc> \ar@{-}[1,0]+<0pc,-0.5pt>&&&&\\
&&&&&&&&&\\
&\ar@{-}[1,0]+<0pc,-0.5pt> &&\ar@{-}[1,0]+<0pc,-0.5pt>\ar@{-}[-1,-1]+<0.1pc,-0.1pc>&& *+[o]+<0.37pc>[F]{\boldsymbol{\gamma}}\ar@{-}[5,0]+<0pc,0pc>&& \ar@/^0.1pc/ @{-}[2,2] \ar@/_0.1pc/ @{-}[2,2]&& \ar@/^0.1pc/ @{-}[2,-2]\ar@/_0.1pc/ @{-}[2,-2]\\
&&&&&&&&&\\
&*+[o]+<0.37pc>[F]{\ov{\gamma}}\ar@{-}[2,0]+<0pc,0pc>&&*+[o]+<0.37pc>[F]{\overline{\mathbf{u}}} \ar@{-}[2,0]+<0pc,0pc>&&&&\ar@{-}[1,0]&&\ar@{-}[1,0]\\
&&&&&&&\ar@{-}[2,2]&&\ar@{-}[2,-2]\\
&\ar@{-}`d/4pt [1,1] `[0,2] [0,2]&&&&&&&&\\
&&\ar@{-}[1,0]&&&\ar@{-}[2,2]&&\ar@{-}[2,-2]&&\ar@{-}[2,2]\\
&&\ar@{-}`d/4pt [1,-2][1,-2]&&&&&&&\\
&&&&&\ar@{-}[1,-3]&&\ar@{-}[2,0]&&&&\ar@{-}[2,0]\\
\ar@{-}[2,2]&&\ar@{-}[2,-2]&&&&&\ar@{-}`r/4pt [1,2] [1,2]&&&&\\
&&&&&&&\ar@{-}[1,0]+<0pc,-0.5pt>&&\ar@{-}[1,1]+<-0.125pc,0.0pc> \ar@{-}[1,1]+<0.0pc,0.125pc>&&\ar@{-}[2,-2]\\
\ar@{-}[1,0]+<0pc,-0.5pt>&&\ar@{-}[6,0]&&&&&&&&&\\
&&&&&&&*+[o]+<0.40pc>[F]{\mu}\ar@{-}[3,0]+<0pc,0pc>&&\ar@{-}[5,0] && \ar@{-}[-1,-1]+<0.125pc,0.0pc>\ar@{-}[-1,-1]+<0.0pc,-0.125pc>\ar@{-}[5,0]\\
*+[o]+<0.40pc>[F]{\gamma}\ar@{-}[1,0]+<0pc,0pc>&&&&&&&&&&&\\
\ar@{-}`d/4pt [1,2][1,2] &&&&&&&&&&&\\
&&&&&&&\ar@{-}`d/4pt [1,-5][1,-5]&&&&\\
&&&&&&&&&&&\\
&&&&&&&&&&&}}
\grow{\xymatrix@!0{
\\\\\\\\\\\\\\\\\\\\\\\\\\\\
\save\go+<0pt,0pt>\Drop{\txt{$=$}}\restore
}}
\grow{\xymatrix@!0{\\
\save\go+<0pt,3pt>\Drop{M}\restore \ar@{-}[20,0] &&&& \save\go+<0pt,3pt>\Drop{D}\restore \ar@{-}[2,2]&&\ar@{-}[1,-1]+<0.125pc,0.0pc> \ar@{-}[1,-1]+<0.0pc,0.125pc> \save\go+<0pt,3pt>\Drop{C}\restore && \save\go+<0pt,3pt>\Drop{H}\restore\ar@{-}[2,0]\\
&&&&&&&&&\\
&&&&\ar@{-}[2,-2]\ar@{-}[-1,1]+<-0.125pc,0.0pc> \ar@{-}[-1,1]+<0.0pc,-0.125pc> &&\ar@{-}[2,2]&&\ar@{-}[1,-1]+<0.125pc,0.0pc> \ar@{-}[1,-1]+<0.0pc,0.125pc>&\\
&&&&&&&&\\
&&\ar@{-}[1,0]&&&&\ar@{-}[1,0]\ar@{-}[-1,1]+<-0.125pc,0.0pc> \ar@{-}[-1,1]+<0.0pc,-0.125pc>&&\ar@{-}[1,1]\\
&&\ar@{-}`l/4pt [1,-1] [1,-1] \ar@{-}`r [1,1] [1,1]&&&&\ar@{-}`l/4pt [1,-1] [1,-1] \ar@{-}`r [1,1] [1,1]&&&\ar@{-}[2,2]\\
&\ar@{-}[2,0]&&\ar@{-}[1,1]+<-0.1pc,0.1pc> && \ar@{-}[2,-2]&&\ar@{-}[2,2]&&\\
&&&&&&&&&&&\ar@{-}[3,0]\\
&\ar@{-}[1,1]+<-0.1pc,0.1pc> && \ar@{-}[2,-2] && \ar@{-}[-1,-1]+<0.1pc,-0.1pc>\ar@{-}[1,1]&&&&\ar@{-}[2,0]&&\\
&&&&&&\ar@{-}[1,0]&&&&&\\
&\ar@{-}[1,0]+<0pc,-0.5pt>&&\ar@{-}[-1,-1]+<0.1pc,-0.1pc>\ar@{-}[1,0]+<0pc,-0.5pt> &&&\ar@{-}`l/4pt [1,-1] [1,-1] \ar@{-}`r [1,1] [1,1] &&&\ar@/^0.1pc/ @{-}[2,2] \ar@/_0.1pc/ @{-}[2,2]&& \ar@/^0.1pc/ @{-}[2,-2]\ar@/_0.1pc/ @{-}[2,-2]\\
&&&&&\ar@{-}[1,0]+<0pc,-0.5pt>&&\ar@{-}[4,0]&&&&\\
&*+[o]+<0.37pc>[F]{\ov{\gamma}}\ar@{-}[2,0]+<0pc,0pc>&&*+[o]+<0.37pc>[F]{\overline{\mathbf{u}}} \ar@{-}[2,0]+<0pc,0pc> &&&&&&\ar@{-}[1,0]&&\ar@{-}[1,0]\\
&&&&&*+[o]+<0.4pc>[F]{\mathbf{u}} \ar@{-}[4,0]+<0pc,0pc>&&&&\ar@{-}[2,2]&&\ar@{-}[2,-2]\\
&\ar@{-}`d/4pt [1,1] `[0,2] [0,2] &&&&&&&&&&\\
&&\ar@{-}[1,0]&&&&&\ar@{-}[2,2]&&\ar@{-}[4,-4] &&\ar@{-}[3,0]\\
&& \ar@{-}`d/4pt [1,-2][1,-2]&&&&&&&&&\\
&&&&&\ar@{-}`d/4pt [1,-5][1,-5]&&&&\ar@{-}[1,0]&&\\
&&&&&&&&&\ar@{-}[1,1]+<-0.125pc,0.0pc> \ar@{-}[1,1]+<0.0pc,0.125pc>&&\ar@{-}[2,-2]\\
&&&&&\ar@{-}[1,-3]&&&&&&\\
\ar@{-}[2,2]&&\ar@{-}[2,-2]&&&&&&&\ar@{-}[7,0] &&\ar@{-}[-1,-1]+<0.125pc,0.0pc>\ar@{-}[-1,-1]+<0.0pc,-0.125pc>\ar@{-}[7,0]\\
&&&&&&&&&&&\\
\ar@{-}[1,0]+<0pc,-0.5pt>&&\ar@{-}[5,0]&&&&&&&&&\\
&&&&&&&&&&&\\
*+[o]+<0.40pc>[F]{\gamma}\ar@{-}[1,0]+<0pc,0pc>&&&&&&&&&&&\\
\ar@{-}`d/4pt [1,2][1,2]&&&&&&&&&&&\\
&&&&&&&&&&&\\
&&&&&&&&&&&
}}
\grow{\xymatrix@!0{
\\\\\\\\\\\\\\\\\\\\\\\\\\\\
\save\go+<0pt,0pt>\Drop{\txt{$=$}}\restore
}}
\grow{\xymatrix@!0{
\save\go+<0pt,3pt>\Drop{M}\restore \ar@{-}[21,0] &&&& \save\go+<0pt,3pt>\Drop{D}\restore \ar@{-}[2,2]&&\ar@{-}[1,-1]+<0.125pc,0.0pc> \ar@{-}[1,-1]+<0.0pc,0.125pc> \save\go+<0pt,3pt>\Drop{C}\restore && \save\go+<0pt,3pt>\Drop{H}\restore\ar@{-}[2,0]\\
&&&&&&&&&\\
&&&&\ar@{-}[2,-2]\ar@{-}[-1,1]+<-0.125pc,0.0pc> \ar@{-}[-1,1]+<0.0pc,-0.125pc> &&\ar@{-}[2,2]&&\ar@{-}[1,-1]+<0.125pc,0.0pc> \ar@{-}[1,-1]+<0.0pc,0.125pc>&\\
&&&&&&&&\\
&&\ar@{-}[2,0]&&&&\ar@{-}[1,0]\ar@{-}[-1,1]+<-0.125pc,0.0pc> \ar@{-}[-1,1]+<0.0pc,-0.125pc>&&\ar@{-}[4,4]\\
&&&&&&\ar@{-}`l/4pt [1,-2] [1,-2] \ar@{-}`r [1,2] [1,2]&&&\\
&&\ar@{-}[1,1]+<-0.1pc,0.1pc> && \ar@{-}[2,-2]&&&&\ar@{-}[1,0]&\\
&&&&&&&&\ar@{-}[2,2]&\\
&&\ar@{-}[1,0]+<0pc,-0.5pt> &&\ar@{-}[-1,-1]+<0.1pc,-0.1pc>\ar@{-}[1,2]+<0.2pc,0pc>&&&&&&&&\ar@{-}[2,0]\\
&&&&&&\save[]+<0.2pc,0pc> \Drop{}\ar@{-}[1,0]+<0.2pc,0pc>\restore&&&&\ar@{-}[1,0]&&\\
&&*+[o]+<0.37pc>[F]{\ov{\gamma}}\ar@{-}[1,0]+<0pc,0pc>&&&&\save[]+<0.2pc,0pc> \Drop{}\ar@{-}`l/4pt [1,-1] [1,-1] \ar@{-}`r [1,2] [1,2]\restore &&&& \ar@/^0.1pc/ @{-}[2,2] \ar@/_0.1pc/ @{-}[2,2]&& \ar@/^0.1pc/ @{-}[2,-2]\ar@/_0.1pc/ @{-}[2,-2]\\
&&\ar@{-}`d/4pt [1,-2][1,-2]&&&\ar@{-}[1,0]&&&\ar@{-}[4,0]&\\
&&&&&\ar@{-}`l/4pt [1,-1] [1,-1] \ar@{-}`r [1,1] [1,1]&&&&&\ar@{-}[1,0]&&\ar@{-}[1,0]\\
&&&&\ar@{-}[1,0]+<0pc,-0.5pt> &&\ar@{-}[1,0]+<0pc,-0.5pt> &&&&\ar@{-}[2,2]&&\ar@{-}[2,-2]\\
&&&&&&&&&\\
&&&&*+[o]+<0.37pc>[F]{\overline{\mathbf{u}}} \ar@{-}[2,0]+<0pc,0pc>&& *+[o]+<0.4pc>[F]{\mathbf{u}} \ar@{-}[2,0]+<0pc,0pc>&&\ar@{-}[2,2] &&\ar@{-}[2,-2]&&\ar@{-}[2,0]\\
&&&&&&&&&\\
&&&&\ar@{-}`d/4pt [1,1] `[0,2] [0,2] &&&&\ar@{-}[3,-2]&&
\ar@{-}[1,1]+<-0.125pc,0.0pc> \ar@{-}[1,1]+<0.0pc,0.125pc>&&\ar@{-}[2,-2]\\
&&&&&\ar@{-}[1,0]&&&&&&&\\
&&&&&\ar@{-}`d/4pt [1,-5][1,-5]&&&&&\ar@{-}[9,0]&& \ar@{-}[-1,-1]+<0.125pc,0.0pc>\ar@{-}[-1,-1]+<0.0pc,-0.125pc>\ar@{-}[9,0]\\
&&&&&&\ar@{-}[1,-4] &&&\\
\ar@{-}[2,2]&&\ar@{-}[2,-2]&&&&&&&\\
&&&&&&&&&\\
\ar@{-}[1,0]+<0pc,-0.5pt>&&\ar@{-}[5,0]&&&&&&&\\
&&&&&&&&&\\
*+[o]+<0.40pc>[F]{\gamma}\ar@{-}[1,0]+<0pc,0pc>&&&&&&&&&\\
\ar@{-}`d/4pt [1,2][1,2]&&&&&&&&&\\
&&&&&&&&&\\
&&&&&&&&&&&&
}}
\grow{\xymatrix@!0{
\\\\\\\\\\\\\\\\\\\\\\\\\\\\
\save\go+<0pt,0pt>\Drop{\txt{$=$}}\restore
}}
\grow{\xymatrix@!0{
\save\go+<0pt,3pt>\Drop{M}\restore \ar@{-}[13,0]&&&\save\go+<0pt,3pt>\Drop{D}\restore \ar@{-}[2,2] &&\save\go+<0pt,3pt>\Drop{C}\restore \ar@{-}[1,-1]+<0.125pc,0.0pc> \ar@{-}[1,-1]+<0.0pc,0.125pc> &&& \save\go+<0.2pc,3pt>\Drop{H}\restore \save\go+<0.2pc,0pt>\Drop{}\ar@{-}[1,0]+<0.2pc,0pc>\restore &&\\
&&&&&&&&\save[]+<0.2pc,0pc> \Drop{}\ar@{-}`l/4pt [1,-1] [1,-1] \ar@{-}`r [1,2] [1,2]\restore&&\\
&&&\ar@{-}[1,-1]\ar@{-}[-1,1]+<-0.125pc,0.0pc> \ar@{-}[-1,1]+<0.0pc,-0.125pc>&&\ar@{-}[2,2]&& \ar@{-}[1,-1]+<0.125pc,0.0pc> \ar@{-}[1,-1]+<0.0pc,0.125pc>&&&\ar@{-}[6,0]\\
&&\ar@{-}[3,0]&&&&&&&&\\
&&&&&\ar@{-}[1,0]\ar@{-}[-1,1]+<-0.125pc,0.0pc> \ar@{-}[-1,1]+<0.0pc,-0.125pc>&&\ar@{-}[1,1] &&&\\
&&&&&\ar@{-}`l/4pt [1,-1] [1,-1] \ar@{-}`r [1,1] [1,1]&&&\ar@{-}[3,0]&&\\
&&\ar@{-}[1,1]+<-0.1pc,0.1pc> && \ar@{-}[2,-2]&&\ar@{-}`d/4pt [1,2][1,2] &&&&\\
&&&&&&&&&&\\
&&\ar@{-}[1,0]+<0pc,-0.5pt>&&\ar@{-}[-1,-1]+<0.1pc,-0.1pc>\ar@{-}[4,4]&&&&\ar@{-}[2,2]&&\ar@{-}[2,-2]\\
&&&&&&&&&&\\
&&*+[o]+<0.37pc>[F]{\bar{\boldsymbol{\gamma}}}\ar@{-}[1,0]&&&&&&\ar@{-}[2,-2]&&\ar@{-}[2,0]\\
&&\ar@{-}`d/4pt [1,-2][1,-2]&&&&&&&&\\
&&&&&&\ar@{-}[1,-4]&&\ar@{-}[1,1]+<-0.125pc,0.0pc> \ar@{-}[1,1]+<0.0pc,0.125pc>&&\ar@{-}[2,-2]\\
\ar@{-}[2,2]&&\ar@{-}[2,-2]&&&&&&&&\\
&&&&&&&&\ar@{-}[6,0]&&\ar@{-}[-1,-1]+<0.125pc,0.0pc> \ar@{-}[-1,-1]+<0.0pc,-0.125pc> \ar@{-}[6,0]\\
\ar@{-}[1,0]+<0pc,-0.5pt>&&\ar@{-}[5,0]&&&&&&&&\\
&&&&&&&&&&\\
*+[o]+<0.37pc>[F]{\boldsymbol{\gamma}}\ar@{-}[1,0]&&&&&&&&&&\\
\ar@{-}`d/4pt [1,2][1,2]&&&&&&&&&&\\
&&&&&&&&&&\\
&&&&&&&&&&
}}\\
&
\grow{\xymatrix@!0{
\\\\\\\\\\\\\\\\\\\\
\save\go+<0pt,0pt>\Drop{\txt{$=$}}\restore
}}
\grow{\xymatrix@!0{
\save\go+<0pt,3pt>\Drop{M}\restore \ar@{-}[13,0]&&&\save\go+<0pt,3pt>\Drop{D}\restore \ar@{-}[2,2] &&\save\go+<0pt,3pt>\Drop{C}\restore \ar@{-}[1,-1]+<0.125pc,0.0pc> \ar@{-}[1,-1]+<0.0pc,0.125pc> &&& \save\go+<0.2pc,3pt>\Drop{H}\restore \save\go+<0.2pc,0pt>\Drop{}\ar@{-}[1,0]+<0.2pc,0pc>\restore &&\\
&&&&&&&&\save[]+<0.2pc,0pc> \Drop{}\ar@{-}`l/4pt [1,-1] [1,-1] \ar@{-}`r [1,2] [1,2]\restore&&\\
&&&\ar@{-}[1,-1]\ar@{-}[-1,1]+<-0.125pc,0.0pc> \ar@{-}[-1,1]+<0.0pc,-0.125pc>&&\ar@{-}[2,2]&& \ar@{-}[1,-1]+<0.125pc,0.0pc> \ar@{-}[1,-1]+<0.0pc,0.125pc>&&&\ar@{-}[10,0]\\
&&\ar@{-}[3,0]&&&&&&&&\\
&&&&&\ar@{-}[1,0]\ar@{-}[-1,1]+<-0.125pc,0.0pc> \ar@{-}[-1,1]+<0.0pc,-0.125pc>&&\ar@{-}[1,1]&&&\\
&&&&&\ar@{-}`l/4pt [1,-1] [1,-1] \ar@{-}`r [1,1] [1,1]&&&\ar@{-}[5,0]&&\\
&&\ar@{-}[1,1]+<-0.1pc,0.1pc> && \ar@{-}[2,-2]&&\ar@{-}[2,0]&&&&\\
&&&&&&&&&&\\
&&\ar@{-}[1,0]+<0pc,-0.5pt>&&\ar@{-}[-1,-1]+<0.1pc,-0.1pc>\ar@{-}[1,1]+<-0.1pc,0.1pc> && \ar@{-}[2,-2]&&&&\\
&&&&&&&&&&\\
&&*+[o]+<0.37pc>[F]{\bar{\boldsymbol{\gamma}}} \ar@{-}[1,0]&&\ar@{-}[2,0]&&\ar@{-}[-1,-1]+<0.1pc,-0.1pc> \ar@{-}[1,1]+<-0.125pc,0.0pc> \ar@{-}[1,1]+<0.0pc,0.125pc>&&\ar@{-}[2,-2] &&\\
&&\ar@{-}`d/4pt [1,-2][1,-2]&&&&&&&&\\
&&&&\ar@{-}`d/4pt [1,2][1,2]&&\ar@{-}[2,0]&&\ar@{-}[-1,-1]+<0.125pc,0.0pc>\ar@{-}[-1,-1]+<0.0pc,-0.125pc> \ar@{-}[2,2]&&\ar@{-}[2,-2]\\
\ar@{-}[3,4]&&&&&&&&&&\\
&&&&&&\ar@{-}[2,2]&&\ar@{-}[2,-2]&&\ar@{-}[9,0]\\
&&&&&&&&&&\\
&&&&\ar@{-}[2,2]&&\ar@{-}[2,-2]&&\ar@{-}[7,0]&&\\
&&&&&&&&&&\\
&&&&\ar@{-}[1,0]+<0pc,-0.5pt>&&\ar@{-}[5,0]&&&&\\
&&&&&&&&&&\\
&&&&*+[o]+<0.37pc>[F]{\boldsymbol{\gamma}}\ar@{-}[1,0]&&&&&&\\
&&&&\ar@{-}`d/4pt [1,2][1,2]\ar@{-}`d/4pt [1,2][1,2]&&&&&&\\
&&&&&&&&&&\\
&&&&&&&&&&
}}
\grow{\xymatrix@!0{
\\\\\\\\\\\\\\\\\\\\
\save\go+<0pt,0pt>\Drop{\txt{$=$}}\restore
}}
\grow{\xymatrix@!0{\\
\save\go+<0pt,3pt>\Drop{M}\restore \ar@{-}[14,0]&&&&\save\go+<0pt,3pt>\Drop{D}\restore  \ar@{-}[2,2]&&\save\go+<0pt,3pt>\Drop{C}\restore \ar@{-}[1,-1]+<0.125pc,0.0pc> \ar@{-}[1,-1]+<0.0pc,0.125pc> &&&\save\go+<0pt,3pt>\Drop{H}\restore \ar@{-}[1,0]&\\
&&&&&&&&&\ar@{-}`l/4pt [1,-1] [1,-1] \ar@{-}`r [1,1] [1,1]&\\
&&&&\ar@{-}[2,0]\ar@{-}[-1,1]+<-0.125pc,0.0pc> \ar@{-}[-1,1]+<0.0pc,-0.125pc>&&\ar@{-}[2,2]&& \ar@{-}[1,-1]+<0.125pc,0.0pc> \ar@{-}[1,-1]+<0.0pc,0.125pc>&&\ar@{-}[9,0]\\
&&&&&&&&&&\\
&&&&\ar@{-}[1,1]+<-0.1pc,0.1pc> && \ar@{-}[2,-2]\ar@{-}[-1,1]+<-0.125pc,0.0pc> \ar@{-}[-1,1]+<0.0pc,-0.125pc>&&\ar@{-}[2,0]&&\\
&&&&&&&&&&\\
&&&&\ar@{-}[1,-1]&&\ar@{-}[-1,-1]+<0.1pc,-0.1pc>\ar@{-}[1,1]+<-0.125pc,0.0pc> \ar@{-}[1,1]+<0.0pc,0.125pc>&&\ar@{-}[2,-2]&&\\
&&&\ar@{-}[1,0]&&&&&&&\\
&&&\ar@{-}`l/4pt [1,-1] [1,-1] \ar@{-}`r [1,1] [1,1]&&&\ar@{-}[5,0]&&\ar@{-}[3,0] \ar@{-}[-1,-1]+<0.125pc,0.0pc>\ar@{-}[-1,-1]+<0.0pc,-0.125pc>&&\\
&&\ar@{-}[1,0]+<0pc,-0.5pt>&&\ar@{-}`d/4pt [1,2][1,2]&&&&&&\\
&&&&&&&&&&\\
&&*+[o]+<0.37pc>[F]{\bar{\boldsymbol{\gamma}}} \ar@{-}[1,0]&&&&&&\ar@{-}[2,2]&&\ar@{-}[6,-6]\\
&&\ar@{-}`d/4pt [1,-2][1,-2]&&&&&&&&\\
&&&&&&\ar@{-}[2,2]&&&&\ar@{-}[9,0]\\
\ar@{-}[1,4]&&&&&&&&&&\\
&&&&\ar@{-}[2,2]&&&&\ar@{-}[7,0]&&\\
&&&&&&&&&&\\
&&&&\ar@{-}[1,0]+<0pc,-0.5pt>&&\ar@{-}[5,0]&&&&\\
&&&&&&&&&&\\
&&&&*+[o]+<0.37pc>[F]{\boldsymbol{\gamma}}\ar@{-}[1,0]&&&&&&\\
&&&&\ar@{-}`d/4pt [1,2][1,2]&&&&&&\\
&&&&&&&&&&\\
&&&&&&&&&&
}}
\grow{\xymatrix@!0{
\\\\\\\\\\\\\\\\\\\\
\save\go+<0pt,0pt>\Drop{\txt{$=$}}\restore
}}
\grow{\xymatrix@!0{\\
\save\go+<0pt,3pt>\Drop{M}\restore \ar@{-}[14,0]&&&&\save\go+<0pt,3pt>\Drop{D}\restore  \ar@{-}[4,0]&&\save\go+<0pt,3pt>\Drop{C}\restore \ar@{-}[2,0]&&&\save\go+<0pt,3pt>\Drop{H}\restore \ar@{-}[1,0]&\\
&&&&&&&&&\ar@{-}`l/4pt [1,-1] [1,-1] \ar@{-}`r [1,1] [1,1]&\\
&&&&&&\ar@{-}[1,1]+<-0.1pc,0.1pc> && \ar@{-}[2,-2]&&\ar@{-}[7,0]\\
&&&&&&&&&&\\
&&&&\ar@{-}[2,2]&&\ar@{-}[1,-1]+<0.125pc,0.0pc> \ar@{-}[1,-1]+<0.0pc,0.125pc> &&\ar@{-}[-1,-1]+<0.1pc,-0.1pc>\ar@{-}[5,0]&&\\
&&&&&&&&&&\\
&&&&\ar@{-}[1,-1]\ar@{-}[-1,1]+<-0.125pc,0.0pc> \ar@{-}[-1,1]+<0.0pc,-0.125pc>&&\ar@{-}[5,0]&&&&\\
&&&\ar@{-}[1,0]&&&&&&&\\
&&&\ar@{-}`l/4pt [1,-1] [1,-1] \ar@{-}`r [1,1] [1,1]&&&&&&&\\
&&\ar@{-}[1,0]+<0pc,-0.5pt>&&\ar@{-}`d/4pt [1,2][1,2]&&&&\ar@{-}[2,2]&&\ar@{-}[4,-4]\\
&&&&&&&&&&\\
&&*+[o]+<0.37pc>[F]{\bar{\boldsymbol{\gamma}}} \ar@{-}[1,0]&&&&\ar@{-}[2,2]&&&&\ar@{-}[10,0]\\
&&\ar@{-}`d/4pt [1,-2][1,-2]&&&&&&&&\\
&&&&&&\ar@{-}[1,-4]&&\ar@{-}[8,0]&&\\
\ar@{-}[2,2]&&\ar@{-}[2,-2]&&&&&&&&\\
&&&&&&&&&&\\
\ar@{-}[1,0]+<0pc,-0.5pt>&&\ar@{-}[5,0]&&&&&&&&\\
&&&&&&&&&&\\
*+[o]+<0.37pc>[F]{\boldsymbol{\gamma}}\ar@{-}[1,0]&&&&&&&&&&\\
\ar@{-}`d/4pt [1,2][1,2]&&&&&&&&&&\\
&&&&&&&&&&\\
&&&&&&&&&&
}}
\grow{\xymatrix@!0{
\\\\\\\\\\\\\\\\\\\\
\save\go+<0pt,0pt>\Drop{\txt{,}}\restore
}}
\end{align*}
which finish the proof.\qed

\begin{lemma}\label{mengano1} We have
$$
%
%
\grow{\xymatrix@!0{\\
\save\go+<0pt,3pt>\Drop{C}\restore \ar@{-}[2,2] && \save\go+<0pt,3pt>\Drop{C'}\restore \ar@{-}`d/4pt [1,2][1,2]  && \save\go+<0pt,3pt>\Drop{D}\restore \ar@{-}[2,0]\\
&&&&\\
&&\ar@{-}[1,1]+<-0.125pc,0.0pc> \ar@{-}[1,1]+<0.0pc,0.125pc>&&\ar@{-}[2,-2]\\
&&&&\\
&&&&\ar@{-}[-1,-1]+<0.125pc,0.0pc>\ar@{-}[-1,-1]+<0.0pc,-0.125pc>
}}
\grow{\xymatrix@!0{
\\\\
\save\go+<0pt,0pt>\Drop{\txt{$=$}}\restore
}}
\grow{\xymatrix@!0{
\save\go+<0pt,3pt>\Drop{C}\restore  \ar@{-}[1,1]+<-0.1pc,0.1pc> && \save\go+<0pt,3pt>\Drop{C'}\restore \ar@{-}[2,-2]  && \save\go+<0pt,3pt>\Drop{D}\restore \ar@{-}[2,0]\\
&&&&\\
\ar@{-}[2,0] &&\ar@{-}[-1,-1]+<0.1pc,-0.1pc> \ar@{-}[1,1]+<-0.125pc,0.0pc> \ar@{-}[1,1]+<0.0pc,0.125pc>&&\ar@{-}[2,-2]\\
&&&&\\
\ar@{-}`d/4pt [1,2][1,2] &&\ar@{-}[2,0]&&\ar@{-}[-1,-1]+<0.125pc,0.0pc>\ar@{-}[-1,-1]+<0.0pc,-0.125pc>\ar@{-}[2,0]\\
&&&&\\
&&&&
}}
\grow{\xymatrix@!0{
\\\\
\save\go+<0pt,0pt>\Drop{\txt{,}}\restore
}}
$$
where $C:= H^{\ot_c^s}$, $C':= H^{\ot_c^{s'}}$ and $D:=A^{\ot^r}$.
\end{lemma}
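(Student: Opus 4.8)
The plan is to prove this diagrammatic identity by unwinding the recursive definitions of $\s_{sr}$, $\cc_{sr}$ and $\cc_{ss'}$ given in items~(4) and~(5) of Section~\ref{hom de inv}, reducing by a nested induction on $r$, $s$ and $s'$ to the case where each of the three tensor powers has a single factor. Both sides are $k$-linear maps out of $H^{\ot_k^s}\ot_k H^{\ot_k^{s'}}\ot_k A^{\ot^r}$ assembled exclusively from the braiding $c$ of $H$ and the transposition $s$ (together with their iterates $\cc_{**}$ and $\s_{**}$), so the identity carries no content in the classical situation, where $s$ is a flip and all crossings commute; its whole substance is braiding coherence.

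For the innermost case $s=s'=r=1$ the assertion is essentially the hexagon relation already built into the notion of transposition, namely the equation $(s\ot_k H)\xcirc(H\ot_k s)\xcirc(c\ot_k A)=(A\ot_k c)\xcirc(s\ot_k H)\xcirc(H\ot_k s)$ of Definition~\ref{transposition}, combined with the braid relation for $c$ and the twisting-map axioms for $s$ and $c$ (and, where inverses occur, with Remark~\ref{transposicion inversa}). The induction on $r$ is handled by splitting off the last $A$-factor through $\s_{s,r+1}=(A^{\ot^r}\ot_k s)\xcirc(\s_{sr}\ot_k A)$ and the analogous recursion for $\cc$, so that the first $r$ strands are dealt with by the inductive hypothesis and the last one by the case $r=1$. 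The inductions on $s$ and $s'$ are handled symmetrically, peeling the first $H$-factor off $C$ via $\s_{s+1,r}=(\s_{1r}\ot_k H^{\ot_k^s})\xcirc(H\ot_k\s_{sr})$ and the outermost factor off $C'$ via the recursion for $\cc_{ss'}$, and then invoking the inductive hypothesis together with the single-crossing hexagon. A slicker alternative, which I would also explore, is to feed Lemma~\ref{formula para chi} — which already records how the composite transposition on $H^{\ot_c^s}\ot_k A^{\ot^r}$ factors through a comultiplication, a braiding and $\s_{sr}$ — together with Lemma~\ref{para d1} directly into the computation, collapsing the problem to a coherence statement about braidings of tensor powers of $H$ alone, which is then immediate from the braid relation for $c$.

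The main obstacle is not conceptual but bookkeeping: one must track scrupulously, through every move, which strand is the ``thick'' one (a tensor power of $H$) and which the ``thin'' one (a tensor power of $A$), as well as the order of the output factors, so that each crossing slid past another is a legitimate instance of $c$, $s$, $\cc_{**}$ or $\s_{**}$ and each move is justified by an axiom in Definitions~\ref{transposition} and~\ref{accion debil} or by the braid relation. The graphical calculus for braided monoidal categories makes these manipulations transparent, and writing the proof amounts to displaying the chain of isotopies; no new idea beyond the hexagon and naturality is required.
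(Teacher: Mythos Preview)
Your inductive scaffolding matches the paper's exactly: a nested induction on $s'$, then $s$, then $r$, each step peeling off a single tensor factor and invoking the previously established case. But you have misread the diagram. The curved arc from $C'$ into $D$ is not a crossing or a mere juxtaposition of strands; it is the weak-action symbol from~\eqref{s3}, standing here (via Remark~\ref{ultima diagramatica}) for $\rho_{s'r}\colon H^{\ot_c^{s'}}\ot_k A^{\ot^r}\to A^{\ot^r}$. The identity actually being asserted is
\[
\s_{sr}\xcirc\bigl(C\ot_k\rho_{s'r}\bigr)=\bigl(\rho_{s'r}\ot_k C\bigr)\xcirc\bigl(C'\ot_k\s_{sr}\bigr)\xcirc\bigl(\cc_{ss'}\ot_k D\bigr),
\]
so the base case $s=s'=r=1$ is precisely item~(4) of Definition~\ref{accion debil}, namely $s\xcirc(H\ot_k\rho)=(\rho\ot_k H)\xcirc(H\ot_k s)\xcirc(c\ot_k A)$, and not the compatibility of $s$ with $c$ from Definition~\ref{transposition}. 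Your claim that both sides are ``assembled exclusively from the braiding $c$ of $H$ and the transposition $s$'' is therefore false, and the hexagon you quote cannot establish the base case, since $\rho$ never appears in it.

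With that correction the rest of your plan goes through and coincides with the paper's proof: the induction on $s'$ uses item~(4) of Definition~\ref{accion debil} together with the recursion defining $\rho_{s'r}$; the inductions on $s$ and on $r$ then use the recursions for $\cc_{ss'}$ and $\s_{sr}$ exactly as you describe. Your proposed alternative through Lemmas~\ref{formula para chi} and~\ref{para d1} is not helpful here: those lemmas concern the twisting map $\chi$, which packages $\rho$ together with $\Delta$ and $s$, and would introduce comultiplications absent from the present statement rather than collapse it to a pure braiding-coherence fact.
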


\begin{proof} For $s=s'=r=1$ the result is valid by definition. An inductive argument using
$$
\grow{\xymatrix@!0{\\
\save\go+<0pt,3pt>\Drop{H}\restore \ar@{-}[1,0] && \save\go+<0pt,3pt>\Drop{C'}\restore \ar@{-}[1,0] && \save\go+<0pt,3pt>\Drop{H}\restore \ar@{-}`d/4pt [1,2][1,2] &&\save\go+<0pt,3pt>\Drop{A}\restore  \ar@{-}[4,0]\\
\ar@{-}[3,4]&&\ar@{-}`d/4pt [1,4][1,4]&&&&\\
&&&&&&\\
&&&&&&\\
&&&&\ar@{-}[1,1]+<-0.125pc,0.0pc> \ar@{-}[1,1]+<0.0pc,0.125pc>&&\ar@{-}[2,-2]\\
&&&&&&\\
&&&&&&\ar@{-}[-1,-1]+<0.125pc,0.0pc>\ar@{-}[-1,-1]+<0.0pc,-0.125pc>
}}
\grow{\xymatrix@!0{
\\\\\\\\
\save\go+<0pt,0pt>\Drop{\txt{=}}\restore
}}
\grow{\xymatrix@!0{\\
\save\go+<0pt,3pt>\Drop{H}\restore \ar@{-}[1,1]+<-0.1pc,0.1pc>&& \save\go+<0pt,3pt>\Drop{C'}\restore \ar@{-}[2,-2]&& \save\go+<0pt,3pt>\Drop{H}\restore \ar@{-}`d/4pt [1,2][1,2] &&\save\go+<0pt,3pt>\Drop{A}\restore  \ar@{-}[2,0]\\
&&&&&&\\
\ar@{-}[3,0]&&\ar@{-}[-1,-1]+<0.1pc,-0.1pc>\ar@{-}[1,1]&&&&\ar@{-}[1,-1]\\
&&&\ar@{-}[1,1]+<-0.125pc,0.0pc> \ar@{-}[1,1]+<0.0pc,0.125pc>&&\ar@{-}[2,-2]&\\
&&&&&&\\
\ar@{-}`d/4pt [1,3][1,3]&&&\ar@{-}[2,0]&&\ar@{-}[-1,-1]+<0.125pc,0.0pc>\ar@{-}[-1,-1]+<0.0pc,-0.125pc>\ar@{-}[2,0]&\\
&&&&&&\\
&&&&&&
}}
\grow{\xymatrix@!0{
\\\\\\\\
\save\go+<0pt,0pt>\Drop{\txt{=}}\restore
}}
\grow{\xymatrix@!0{
\save\go+<0pt,3pt>\Drop{H}\restore \ar@{-}[1,1]+<-0.1pc,0.1pc>&& \save\go+<0pt,3pt>\Drop{C'}\restore \ar@{-}[2,-2]&& \save\go+<0pt,3pt>\Drop{H}\restore \ar@{-}[2,0] &&\save\go+<0pt,3pt>\Drop{A}\restore  \ar@{-}[4,0]\\
&&&&&&\\
\ar@{-}[5,0]&&\ar@{-}[-1,-1]+<0.1pc,-0.1pc> \ar@{-}[1,1]+<-0.1pc,0.1pc> && \ar@{-}[2,-2]&&\\
&&&&&&\\
&&\ar@{-}[2,0]&&\ar@{-}[-1,-1]+<0.1pc,-0.1pc>\ar@{-}[1,1]+<-0.125pc,0.0pc> \ar@{-}[1,1]+<0.0pc,0.125pc>&& \ar@{-}[2,-2]\\
&&&&&&\\
&&\ar@{-}`d/4pt [1,2][1,2]&&\ar@{-}[3,0]&&\ar@{-}[-1,-1]+<0.125pc,0.0pc>\ar@{-}[-1,-1]+<0.0pc,-0.125pc>\ar@{-}[3,0]\\
\ar@{-}`d/4pt [1,4][1,4]&&&&&&\\
&&&&&&\\
&&&&&&
}}
\grow{\xymatrix@!0{
\\\\\\\\
\save\go+<0pt,0pt>\Drop{\txt{,}}\restore
}}
$$
shows that the result is valid when $s=r=1$ and $s'\in \mathds{N}$. A similar argument using the equality
$$
\grow{\xymatrix@!0{\\
\save\go+<0pt,3pt>\Drop{H}\restore \ar@{-}[2,0] && \save\go+<0pt,3pt>\Drop{C}\restore \ar@{-}[1,0] && \save\go+<0pt,3pt>\Drop{C'}\restore \ar@{-}`d/4pt [1,2][1,2] &&\save\go+<0pt,3pt>\Drop{A}\restore  \ar@{-}[2,0]\\
&&\ar@{-}[1,2]&&&&\\
\ar@{-}[2,2]&&&&\ar@{-}[1,1]+<-0.125pc,0.0pc> \ar@{-}[1,1]+<0.0pc,0.125pc>&&\ar@{-}[2,-2]\\
&&&&&&\\
&&\ar@{-}[1,1]+<-0.125pc,0.0pc> \ar@{-}[1,1]+<0.0pc,0.125pc>&&\ar@{-}[2,-2]&&\ar@{-}[-1,-1]+<0.125pc,0.0pc>\ar@{-}[-1,-1]+<0.0pc,-0.125pc>\ar@{-}[2,0]\\
&&&&&&\\
&&&&\ar@{-}[-1,-1]+<0.125pc,0.0pc>\ar@{-}[-1,-1]+<0.0pc,-0.125pc>&&
}}
\grow{\xymatrix@!0{
\\\\\\
\save\go+<0pt,0pt>\Drop{\txt{$=$}}\restore
}}
\grow{\xymatrix@!0{
\save\go+<0pt,3pt>\Drop{H}\restore \ar@{-}[2,0] && \save\go+<0pt,3pt>\Drop{C}\restore \ar@{-}[1,1]+<-0.1pc,0.1pc> &&  \save\go+<0pt,3pt>\Drop{C'}\restore  \ar@{-}[2,-2] &&\save\go+<0pt,3pt>\Drop{A}\restore  \ar@{-}[2,0]\\
&&&&&&\\
\ar@{-}[1,1]+<-0.1pc,0.1pc> && \ar@{-}[2,-2]&&\ar@{-}[-1,-1]+<0.1pc,-0.1pc>\ar@{-}[1,1]+<-0.125pc,0.0pc> \ar@{-}[1,1]+<0.0pc,0.125pc>&&\ar@{-}[2,-2]\\
&&&&&&\\
\ar@{-}[2,0]&&\ar@{-}[-1,-1]+<0.1pc,-0.1pc>\ar@{-}[1,1]+<-0.125pc,0.0pc> \ar@{-}[1,1]+<0.0pc,0.125pc>&&\ar@{-}[2,-2] &&\ar@{-}[-1,-1]+<0.125pc,0.0pc>\ar@{-}[-1,-1]+<0.0pc,-0.125pc>\ar@{-}[4,0]\\
&&&&&&\\
\ar@{-}`d/4pt [1,2][1,2]&&\ar@{-}[2,0]&&\ar@{-}[-1,-1]+<0.125pc,0.0pc>\ar@{-}[-1,-1]+<0.0pc,-0.125pc>\ar@{-}[2,0]&&\\
&&&&&&\\
&&&&&&
}}
\grow{\xymatrix@!0{
\\\\\\
\save\go+<0pt,0pt>\Drop{\txt{,}}\restore
}}
$$
shows that the result is valid when $r=1$ and $s,s'\in \mathds{N}$. Finally, again an inductive argument using
$$
\grow{\xymatrix@!0{\\
\save\go+<0pt,3pt>\Drop{C}\restore \ar@{-}[5,0] && \save\go+<0pt,3pt>\Drop{C'}\restore \ar@{-}[1,0] &&&  \save\go+<0pt,3pt>\Drop{D}\restore  \ar@{-}[2,0] &&\save\go+<0pt,3pt>\Drop{A}\restore  \ar@{-}[6,0]\\
&&\ar@{-}`l/4pt [1,-1] [1,-1] \ar@{-}`r [1,1] [1,1]&&&&&\\
&\ar@{-}[2,0]&&\ar@{-}[1,1]+<-0.125pc,0.0pc> \ar@{-}[1,1]+<0.0pc,0.125pc>&&\ar@{-}[2,-2]&&\\
&&&&&&&\\
&\ar@{-}`d/4pt [1,2][1,2]&&\ar@{-}[2,0]&&\ar@{-}[-1,-1]+<0.125pc,0.0pc>\ar@{-}[-1,-1]+<0.0pc,-0.125pc> \ar@{-}`d/4pt [1,2][1,2]&&\\
\ar@{-}[1,1]&&&&&&&\\
&\ar@{-}[1,1]+<-0.125pc,0.0pc> \ar@{-}[1,1]+<0.0pc,0.125pc>&&\ar@{-}[2,-2]&&&&\ar@{-}[4,-4]\\
&&&&&&&\\
&\ar@{-}[2,0]&&\ar@{-}[-1,-1]+<0.125pc,0.0pc>\ar@{-}[-1,-1]+<0.0pc,-0.125pc>\ar@{-}[1,1]+<-0.125pc,0.0pc> \ar@{-}[1,1]+<0.0pc,0.125pc>&&&&\\
&&&&&&&\\
&&&&&\ar@{-}[-1,-1]+<0.125pc,0.0pc>\ar@{-}[-1,-1]+<0.0pc,-0.125pc>&&
}}
\grow{\xymatrix@!0{
\\\\\\\\\\
\save\go+<0pt,0pt>\Drop{\txt{$=$}}\restore
}}
\grow{\xymatrix@!0{\\
\save\go+<0pt,3pt>\Drop{C}\restore \ar@{-}[2,0] &&& \save\go+<0pt,3pt>\Drop{C'}\restore \ar@{-}[1,0] &&&  \save\go+<0pt,3pt>\Drop{D}\restore  \ar@{-}[2,0] &&\save\go+<0pt,3pt>\Drop{A}\restore  \ar@{-}[6,0]\\
&&&\ar@{-}`l/4pt [1,-1] [1,-1] \ar@{-}`r [1,1] [1,1]&&&&&\\
\ar@{-}[1,1]+<-0.1pc,0.1pc> && \ar@{-}[2,-2]&&\ar@{-}[1,1]+<-0.125pc,0.0pc> \ar@{-}[1,1]+<0.0pc,0.125pc>&&\ar@{-}[2,-2]&&\\
&&&&&&&&\\
\ar@{-}[2,0]&&\ar@{-}[-1,-1]+<0.1pc,-0.1pc>\ar@{-}[1,1]+<-0.125pc,0.0pc> \ar@{-}[1,1]+<0.0pc,0.125pc>&&\ar@{-}[2,-2] &&\ar@{-}[-1,-1]+<0.125pc,0.0pc>\ar@{-}[-1,-1]+<0.0pc,-0.125pc>\ar@{-}`d/4pt [1,2][1,2]&&\\
&&&&&&&&\\
\ar@{-}`d/4pt [1,2][1,2]&&\ar@{-}[3,0]&&\ar@{-}[-1,-1]+<0.125pc,0.0pc>\ar@{-}[-1,-1]+<0.0pc,-0.125pc> \ar@{-}[1,1]&&&&\ar@{-}[1,-1]\\
&&&&&\ar@{-}[1,1]+<-0.125pc,0.0pc> \ar@{-}[1,1]+<0.0pc,0.125pc>&&\ar@{-}[2,-2]&\\
&&&&&&&&\\
&&&&&&&\ar@{-}[-1,-1]+<0.125pc,0.0pc>\ar@{-}[-1,-1]+<0.0pc,-0.125pc>&
}}
\grow{\xymatrix@!0{
\\\\\\\\\\
\save\go+<0pt,0pt>\Drop{\txt{$=$}}\restore
}}
\grow{\xymatrix@!0{
\save\go+<0pt,3pt>\Drop{C}\restore \ar@{-}[2,0] &&& \save\go+<0pt,3pt>\Drop{C'}\restore \ar@{-}[1,0] &&&  \save\go+<0pt,3pt>\Drop{D}\restore  \ar@{-}[2,0] &&\save\go+<0pt,3pt>\Drop{A}\restore  \ar@{-}[8,0]\\
&&&\ar@{-}`l/4pt [1,-1] [1,-1] \ar@{-}`r [1,1] [1,1]&&&&&\\
\ar@{-}[1,1]+<-0.1pc,0.1pc> && \ar@{-}[2,-2]&&\ar@{-}[1,1]+<-0.125pc,0.0pc> \ar@{-}[1,1]+<0.0pc,0.125pc>&&\ar@{-}[2,-2]&&\\
&&&&&&&&\\
\ar@{-}[2,0]&&\ar@{-}[-1,-1]+<0.1pc,-0.1pc>\ar@{-}[1,1]+<-0.125pc,0.0pc> \ar@{-}[1,1]+<0.0pc,0.125pc>&&\ar@{-}[2,-2] &&\ar@{-}[-1,-1]+<0.125pc,0.0pc>\ar@{-}[-1,-1]+<0.0pc,-0.125pc>\ar@{-}[2,0]&&\\
&&&&&&&&\\
\ar@{-}`d/4pt [1,2][1,2]&&\ar@{-}[6,0]&&\ar@{-}[-1,-1]+<0.125pc,0.0pc>\ar@{-}[-1,-1]+<0.0pc,-0.125pc> \ar@{-}[1,1]+<-0.1pc,0.1pc> && \ar@{-}[2,-2]&&\\
&&&&&&&&\\
&&&&\ar@{-}[2,0]&&\ar@{-}[-1,-1]+<0.1pc,-0.1pc> \ar@{-}[1,1]+<-0.125pc,0.0pc> \ar@{-}[1,1]+<0.0pc,0.125pc>&&\ar@{-}[2,-2]\\
&&&&&&&&\\
&&&&\ar@{-}`d/4pt [1,2][1,2]&&\ar@{-}[2,0]&&\ar@{-}[-1,-1]+<0.125pc,0.0pc>\ar@{-}[-1,-1]+<0.0pc,-0.125pc> \ar@{-}[2,0]\\
&&&&&&&&\\
&&&&&&&&
}}
\grow{\xymatrix@!0{
\\\\\\\\\\
\save\go+<0pt,0pt>\Drop{\txt{$=$}}\restore
}}
\grow{\xymatrix@!0{
\save\go+<0pt,3pt>\Drop{C}\restore \ar@{-}[2,0] &&& \save\go+<0pt,3pt>\Drop{C'}\restore \ar@{-}[1,0] &&&  \save\go+<0pt,3pt>\Drop{D}\restore  \ar@{-}[6,0] &&\save\go+<0pt,3pt>\Drop{A}\restore  \ar@{-}[8,0]\\
&&&\ar@{-}`l/4pt [1,-1] [1,-1] \ar@{-}`r [1,1] [1,1]&&&&&\\
\ar@{-}[1,1]+<-0.1pc,0.1pc> && \ar@{-}[2,-2]&&\ar@{-}[2,0]&&&&\\
&&&&&&&&\\
\ar@{-}[6,0]&&\ar@{-}[-1,-1]+<0.1pc,-0.1pc>\ar@{-}[1,1]+<-0.1pc,0.1pc> && \ar@{-}[2,-2]&&&&\\
&&&&&&&&\\
&&\ar@{-}[2,0]&&\ar@{-}[-1,-1]+<0.1pc,-0.1pc>\ar@{-}[1,1]+<-0.125pc,0.0pc> \ar@{-}[1,1]+<0.0pc,0.125pc>&&\ar@{-}[2,-2]&&\\
&&&&&&&&\\
&&\ar@{-}[1,1]+<-0.125pc,0.0pc> \ar@{-}[1,1]+<0.0pc,0.125pc>&&\ar@{-}[2,-2]&& \ar@{-}[-1,-1]+<0.125pc,0.0pc>\ar@{-}[-1,-1]+<0.0pc,-0.125pc> \ar@{-}[1,1]+<-0.125pc,0.0pc> \ar@{-}[1,1]+<0.0pc,0.125pc>&&\ar@{-}[2,-2]\\
&&&&&&&&\\
\ar@{-}`d/4pt [1,2][1,2]&&\ar@{-}[2,0]&&\ar@{-}[-1,-1]+<0.125pc,0.0pc>\ar@{-}[-1,-1]+<0.0pc,-0.125pc>\ar@{-}`d/4pt [1,2][1,2]&&\ar@{-}[2,0]&& \ar@{-}[-1,-1]+<0.125pc,0.0pc>\ar@{-}[-1,-1]+<0.0pc,-0.125pc>\ar@{-}[2,0]\\
&&&&&&&&\\
&&&&&&&&
}}
\grow{\xymatrix@!0{
\\\\\\\\\\
\save\go+<0pt,0pt>\Drop{\txt{$=$}}\restore
}}
\grow{\xymatrix@!0{\\
&&\save\go+<0pt,3pt>\Drop{C}\restore \ar@{-}[1,1]+<-0.1pc,0.1pc> && \save\go+<0pt,3pt>\Drop{C'}\restore  \ar@{-}[2,-2]&&  \save\go+<0pt,3pt>\Drop{D}\restore \ar@{-}[2,0]&&\save\go+<0pt,3pt>\Drop{A}\restore \ar@{-}[4,0]\\
&&&&&&&&\\
&&\ar@{-}[1,-1]&&\ar@{-}[-1,-1]+<0.1pc,-0.1pc>\ar@{-}[1,1]+<-0.125pc,0.0pc> \ar@{-}[1,1]+<0.0pc,0.125pc>&&\ar@{-}[2,-2]&&\\
&\ar@{-}[1,0]&&&&&&&\\
&\ar@{-}`l/4pt [1,-1] [1,-1] \ar@{-}`r [1,1] [1,1]&&&\ar@{-}[1,0]&& \ar@{-}[-1,-1]+<0.125pc,0.0pc> \ar@{-}[-1,-1]+<0.0pc,-0.125pc> \ar@{-}[1,1]+<-0.125pc,0.0pc> \ar@{-}[1,1]+<0.0pc,0.125pc>&&\ar@{-}[2,-2]\\
\ar@{-}[2,0]&&\ar@{-}[1,1]+<-0.125pc,0.0pc> \ar@{-}[1,1]+<0.0pc,0.125pc>&&\ar@{-}[2,-2]&&&&\\
&&&&&&\ar@{-}[3,0]&&\ar@{-}[-1,-1]+<0.125pc,0.0pc>\ar@{-}[-1,-1]+<0.0pc,-0.125pc>\ar@{-}[3,0]\\
\ar@{-}`d/4pt [1,2][1,2]&&\ar@{-}[2,0]&&\ar@{-}[-1,-1]+<0.125pc,0.0pc>\ar@{-}[-1,-1]+<0.0pc,-0.125pc>\ar@{-}`d/4pt [1,2][1,2]&&&&\\
&&&&&&&&\\
&&&&&&&&
}}
\grow{\xymatrix@!0{
\\\\\\\\\\
\save\go+<0pt,0pt>\Drop{\txt{,}}\restore
}}
$$
completes the proof.
\end{proof}

\begin{lemma}\label{mengano3} The following equality holds:
$$
%
%
\grow{\xymatrix@!0{\\
\save\go+<0pt,3pt>\Drop{C}\restore \ar@/^0.1pc/ @{-}[2,2] \ar@/_0.1pc/ @{-}[2,2] && \save\go+<0pt,3pt>\Drop{D}\restore \ar@/^0.1pc/ @{-}[2,-2]\ar@/_0.1pc/ @{-}[2,-2]\\
&&&\\
\ar@{-}[2,0]&&\ar@{-}[1,0]&\\
&&\ar@{-}`l/4pt [1,-1] [1,-1] \ar@{-}`r [1,1] [1,1]&\\
&&&
}}
\grow{\xymatrix@!0{
\\\\\\
\save\go+<0pt,0pt>\Drop{\txt{$=$}}\restore
}}
\grow{\xymatrix@!0{
&\save\go+<0pt,3pt>\Drop{C}\restore \ar@{-}[1,0] &&& \save\go+<0pt,3pt>\Drop{D}\restore \ar@{-}[2,0]\\
&\ar@{-}`l/4pt [1,-1] [1,-1] \ar@{-}`r [1,1] [1,1]&&&\\
\ar@{-}[2,0]&&\ar@{-}[1,1]+<-0.125pc,0.0pc> \ar@{-}[1,1]+<0.0pc,0.125pc>&&\ar@{-}[2,-2]\\
&&&&\\
\ar@/^0.1pc/ @{-}[2,2] \ar@/_0.1pc/ @{-}[2,2]&& \ar@/^0.1pc/ @{-}[2,-2]\ar@/_0.1pc/ @{-}[2,-2]&&\ar@{-}[-1,-1]+<0.125pc,0.0pc>\ar@{-}[-1,-1]+<0.0pc,-0.125pc>\ar@{-}[2,0]\\
&&&&\\
&&&&
}}
\grow{\xymatrix@!0{
\\\\\\
\save\go+<0pt,0pt>\Drop{\txt{,}}\restore
}}
$$
where $C:=H^{\ot_c^n}$ and $D:=A^{\ot_k^r}$.
\end{lemma}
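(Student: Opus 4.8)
The plan is to reduce the identity to Lemma~\ref{formula para chi}. Write $C:=H^{\ot_c^n}$ and $D:=A^{\ot_k^r}$, and let $\Xi_{nr}\colon C\ot_k D\to D\ot_k C$ denote the ``doubled-strand'' map appearing on the left of that lemma. Lemma~\ref{formula para chi} rewrites it as
$$
\Xi_{nr}=(\rho_{nr}\ot_k C)\xcirc(C\ot_k\s_{nr})\xcirc(\Delta_{H^{\ot_c^n}}\ot_k D),
$$
where $\rho_{nr}$ is the iterated weak action of Definition~\ref{accion debil doble multiple} and $\s_{nr}$ is the iterated transposition of $H^{\ot_c^n}$ past $A^{\ot_k^r}$. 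Substituting this expression into both sides of the asserted equality reduces it to a purely formal manipulation of $\Delta_{H^{\ot_c^n}}$, $\s_{nr}$ and $\rho_{nr}$.

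On the left-hand side one slides the trailing comultiplication past $\rho_{nr}\ot_k C$ (the weak action does not touch the factor being comultiplied) and then past $C\ot_k\s_{nr}$, the latter by means of the compatibility of the transposition $\s_{nr}$ with the comultiplication of $H^{\ot_c^n}$,
$$
(D\ot_k\Delta_{H^{\ot_c^n}})\xcirc\s_{nr}=(\s_{nr}\ot_k C)\xcirc(C\ot_k\s_{nr})\xcirc(\Delta_{H^{\ot_c^n}}\ot_k D).
$$
On the right-hand side one substitutes the same formula for $\Xi_{nr}$ and slides the two resulting copies of $\Delta_{H^{\ot_c^n}}$ past the transpositions, which is immediate since they act on disjoint tensor legs. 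After a single use of coassociativity of $\Delta_{H^{\ot_c^n}}$ on each side, both expressions become
$$
(\rho_{nr}\ot_k C\ot_k C)\xcirc(C\ot_k\s_{nr}\ot_k C)\xcirc(C\ot_k C\ot_k\s_{nr})\xcirc\bigl(((\Delta_{H^{\ot_c^n}}\ot_k C)\xcirc\Delta_{H^{\ot_c^n}})\ot_k D\bigr),
$$
which gives the lemma.

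It remains to prove the compatibility of the pure transposition $\s_{nr}$ with $\Delta_{H^{\ot_c^n}}$ used above. I would do this by double induction on $(n,r)$: for $n=r=1$ it is exactly the requirement in Definition~\ref{transposition} that $s$ be compatible with the coalgebra structure of $H$, together with coassociativity of $\Delta_H$ used to identify the two iterated coproducts; the inductive step follows from the recursive definitions of $\s_{nr}$ and of $\Delta_{H^{\ot_c^n}}=\pc_n\xcirc\Delta^{\ot_k^n}$, invoking in addition the compatibility of the braid $c$ with the comultiplication of $H$ (a defining axiom of a braided bialgebra), which is what makes the unshuffling map $\pc_n$ transparent to $\s_{nr}$.

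The difficulty here is bookkeeping, not a missing idea: $\s_{nr}$, $\rho_{nr}$ and $\Delta_{H^{\ot_c^n}}$ are all assembled by iterating the basic maps $s$, $c$, $\rho$ and $\Delta$, so one must keep careful track of which strand crosses which as a coproduct is pushed through the composite. Every single such move is an instance of a compatibility axiom recalled in Section~1 or of the hexagon relation for $c$, and the cleanest way to run the argument is in the graphical calculus already in place in the paper.
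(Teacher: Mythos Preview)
Your approach is essentially the same as the paper's. The paper also applies Lemma~\ref{formula para chi} to rewrite $\chi_{nr}$, then uses the compatibility of the iterated transposition $\s_{nr}$ with the comultiplication $\Delta_{H^{\ot_c^n}}$ together with coassociativity as the middle step, and finally applies Lemma~\ref{formula para chi} once more to collapse to the other side; the paper simply presents this as a three-step chain of diagrams without naming the intermediate move, whereas you make the use of the compatibility $(D\ot_k\Delta_{H^{\ot_c^n}})\xcirc\s_{nr}=(\s_{nr}\ot_k C)\xcirc(C\ot_k\s_{nr})\xcirc(\Delta_{H^{\ot_c^n}}\ot_k D)$ explicit and note that it requires its own inductive verification.
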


\begin{proof} In fact,
$$
%
%
\grow{\xymatrix@!0{\\\\
\save\go+<0pt,3pt>\Drop{C}\restore \ar@/^0.1pc/ @{-}[2,2] \ar@/_0.1pc/ @{-}[2,2] && \save\go+<0pt,3pt>\Drop{D}\restore \ar@/^0.1pc/ @{-}[2,-2]\ar@/_0.1pc/ @{-}[2,-2]\\
&&&\\
\ar@{-}[2,0]&&\ar@{-}[1,0]&\\
&&\ar@{-}`l/4pt [1,-1] [1,-1] \ar@{-}`r [1,1] [1,1]&\\
&&&
}}
\grow{\xymatrix@!0{
\\\\\\\\
\save\go+<0pt,0pt>\Drop{\txt{$=$}}\restore
}}
\grow{\xymatrix@!0{\\
&\save\go+<0pt,3pt>\Drop{C}\restore \ar@{-}[1,0] &&& \save\go+<0pt,3pt>\Drop{D}\restore \ar@{-}[2,0]\\
&\ar@{-}`l/4pt [1,-1] [1,-1] \ar@{-}`r [1,1] [1,1]&&&\\
\ar@{-}[2,0] &&\ar@{-}[1,1]+<-0.125pc,0.0pc> \ar@{-}[1,1]+<0.0pc,0.125pc>&&\ar@{-}[2,-2]\\
&&&&\\
\ar@{-}`d/4pt [1,2][1,2]&&\ar@{-}[2,0] &&\ar@{-}[-1,-1]+<0.125pc,0.0pc>\ar@{-}[-1,-1]+<0.0pc,-0.125pc>\ar@{-}[1,0]\\
&&&&\ar@{-}`l/4pt [1,-1] [1,-1] \ar@{-}`r [1,1] [1,1]\\
&&&&&
}}
\grow{\xymatrix@!0{
\\\\\\\\
\save\go+<0pt,0pt>\Drop{\txt{$=$}}\restore
}}
\grow{\xymatrix@!0{
&&\save\go+<0.2pc,3pt>\Drop{C}\restore \save[]+<0.2pc,0pc> \Drop{}\ar@{-}[1,0]+<0.2pc,0pc>\restore &&&& \save\go+<0pt,3pt>\Drop{D}\restore \ar@{-}[2,0]\\
&&\save[]+<0.2pc,0pc> \Drop{}\ar@{-}`l/4pt [1,-1] [1,-1] \ar@{-}`r [1,2] [1,2]\restore&&&&&\\
&\ar@{-}[1,0]&&&\ar@{-}[1,1]+<-0.125pc,0.0pc> \ar@{-}[1,1]+<0.0pc,0.125pc>&&\ar@{-}[2,-2]\\
&\ar@{-}`l/4pt [1,-1] [1,-1] \ar@{-}`r [1,1] [1,1]&&&&&\\
\ar@{-}[2,0]&&\ar@{-}[1,1]+<-0.125pc,0.0pc> \ar@{-}[1,1]+<0.0pc,0.125pc>&&\ar@{-}[2,-2]&&\ar@{-}[-1,-1]+<0.125pc,0.0pc>\ar@{-}[-1,-1]+<0.0pc,-0.125pc>\ar@{-}[4,0]\\
&&&&&&\\
\ar@{-}`d/4pt [1,2][1,2]&&\ar@{-}[2,0]&&\ar@{-}[-1,-1]+<0.125pc,0.0pc>\ar@{-}[-1,-1]+<0.0pc,-0.125pc>\ar@{-}[2,0]&&\\
&&&&&&\\
&&&&&&
}}
\grow{\xymatrix@!0{
\\\\\\\\
\save\go+<0pt,0pt>\Drop{\txt{$=$}}\restore
}}
\grow{\xymatrix@!0{\\
&\save\go+<0pt,3pt>\Drop{C}\restore \ar@{-}[1,0] &&& \save\go+<0pt,3pt>\Drop{D}\restore \ar@{-}[2,0]\\
&\ar@{-}`l/4pt [1,-1] [1,-1] \ar@{-}`r [1,1] [1,1]&&&\\
\ar@{-}[2,0]&&\ar@{-}[1,1]+<-0.125pc,0.0pc> \ar@{-}[1,1]+<0.0pc,0.125pc>&&\ar@{-}[2,-2]\\
&&&&\\
\ar@/^0.1pc/ @{-}[2,2] \ar@/_0.1pc/ @{-}[2,2]&& \ar@/^0.1pc/ @{-}[2,-2]\ar@/_0.1pc/ @{-}[2,-2]&&\ar@{-}[-1,-1]+<0.125pc,0.0pc>\ar@{-}[-1,-1]+<0.0pc,-0.125pc>\ar@{-}[2,0]\\
&&&&\\
&&&&
}}
\grow{\xymatrix@!0{
\\\\\\\\
\save\go+<0pt,0pt>\Drop{\txt{,}}\restore
}}
$$
where the first and last equalities follows from Lemma~\ref{formula para chi}.
\end{proof}

\begin{lemma}\label{mengano4} We have
$$
\grow{\xymatrix@!0{
&\save\go+<0pt,3pt>\Drop{D}\restore \ar@{-}[2,2]&& \save\go+<0pt,3pt>\Drop{C}\restore \ar@{-}[1,-1]+<0.125pc,0.0pc> \ar@{-}[1,-1]+<0.0pc,0.125pc>&& \save\go+<0pt,3pt>\Drop{C'}\restore \ar@{-}[2,0]\\
&&&&&&\\
& \ar@{-}[-1,1]+<-0.125pc,0.0pc> \ar@{-}[-1,1]+<0.0pc,-0.125pc>\ar@ {-}[2,-1]&&\ar@{-}[2,2] &&\ar@{-}[1,-1]+<0.125pc,0.0pc> \ar@{-}[1,-1]+<0.0pc,0.125pc>&\\
&&&&&&\\
\ar@{-}[2,0]&&&\ar@{-}[1,0]\ar@{-}[-1,1]+<-0.125pc,0.0pc> \ar@{-}[-1,1]+<0.0pc,-0.125pc>&&\ar@{-}[2,1]&\\
&&&\ar@{-}`l/4pt [1,-1] [1,-1] \ar@{-}`r [1,1] [1,1]&&&\\
\ar@{-}[1,1]+<-0.1pc,0.1pc> && \ar@{-}[2,-2]&&\ar@/^0.1pc/ @{-}[2,2] \ar@/_0.1pc/ @{-}[2,2]&& \ar@/^0.1pc/ @{-}[2,-2]\ar@/_0.1pc/ @{-}[2,-2]\\
&&&&&&\\
\ar@{-}[2,0]&&\ar@{-}[1,1]+<-0.125pc,0.0pc> \ar@{-}[1,1]+<0.0pc,0.125pc>\ar@{-}[-1,-1]+<0.1pc,-0.1pc>&& \ar@{-}[2,-2]&&\ar@{-}[2,0]\\
&&&&&&\\
&&&&\ar@{-}[-1,-1]+<0.125pc,0.0pc>\ar@{-}[-1,-1]+<0.0pc,-0.125pc>&&
}}
\grow{\xymatrix@!0{
\\\\\\\\\\
\save\go+<0pt,0pt>\Drop{\txt{$=$}}\restore
}}
\grow{\xymatrix@!0{
&\ar@{-}[4,0]\save\go+<0pt,3pt>\Drop{D}\restore &&\save\go+<0pt,3pt>\Drop{C}\restore \ar@{-}[2,0] &&&\save\go+<0pt,3pt>\Drop{C'}\restore \ar@{-}[1,0]\\
&&&&&&\ar@{-}`l/4pt [1,-1] [1,-1] \ar@{-}`r [1,1] [1,1]&\\
&&&\ar@{-}[1,1]+<-0.1pc,0.1pc> && \ar@{-}[2,-2]&&\ar@{-}[1,1]\\
&&&&&&&&\ar@{-}[7,0]\\
&\ar@{-}[2,2]&&\ar@{-}[1,-1]+<0.125pc,0.0pc> \ar@{-}[1,-1]+<0.0pc,0.125pc> &&\ar@{-}[-1,-1]+<0.1pc,-0.1pc>\ar@{-}[1,1]&&&\\
&&&&&&\ar@{-}[5,0]&&\\
&\ar@{-}[1,0]\ar@{-}[-1,1]+<-0.125pc,0.0pc> \ar@{-}[-1,1]+<0.0pc,-0.125pc>&&\ar@{-}[1,1]&&&&&\\
&\ar@{-}`l/4pt [1,-1] [1,-1] \ar@{-}`r [1,1] [1,1]&&&\ar@{-}[3,0]&&&&\\
\ar@{-}[2,0]&&\ar@{-}`d/4pt [1,2][1,2] &&&&&&\\
&&&&&&&&\\
&&&&&&&&
}}
\grow{\xymatrix@!0{
\\\\\\\\\\
\save\go+<0pt,0pt>\Drop{\txt{,}}\restore
}}
$$
where $C:=H^{\ot_c^s}$, $C':=H^{\ot_c^{s'}}$ and $D:=A^{\ot_k^r}$.
\end{lemma}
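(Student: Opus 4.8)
```latex
\begin{proof}[Proof plan]
The plan is to reduce the claimed identity in Lemma~\ref{mengano4} to the two
diagrammatic facts already established, namely Lemma~\ref{mengano1} and
Lemma~\ref{mengano3}, by reading the left hand side from the top and pushing the
$D$-strand down through the tangle of braidings, comultiplications and actions in
two stages: first past the pair of comultiplications $\Delta_{H^{\ot_{\!c}^{\!s}}}$
and $\Delta_{H^{\ot_{\!c}^{\!s'}}}$ together with the braiding that interchanges the
two coalgebra factors, and then past the weak action $\rho_{sr}$ (applied after the
upper multiplication). Since the excerpt gives us these building blocks, I would not
reprove any naturality of $c$ or compatibility of $\s_{sr}$ with comultiplication; I
would just invoke them.

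First I would isolate the upper half of the left hand diagram. There the $C$ and
$C'$ inputs are each comultiplied, their ``second halves'' are braided past each
other, and then two multiplications are performed; meanwhile the $D$-strand is
braided down past the first halves of $C$ and of $C'$. By Lemma~\ref{mengano1}, with
the roles of $C$, $C'$, $D$ as in that statement, carrying $D$ past the tensor
product $C\ot_c C'$ of coalgebra strands equals carrying $D$ successively past $C$
and then past $C'$ using $\s_{sr}$ twice; this lets me rewrite the top portion so
that the two occurrences of $\s$ on $D$ are separated and placed against the
respective comultiplications. Next, for the lower half, the remaining configuration
is exactly that of Lemma~\ref{mengano3}: the output of the braiding/comultiplication
block feeds a multiplication followed by the weak action on the $D$-strand, and
Lemma~\ref{mengano3} says precisely that $\rho$ (the $\rho_{sr}$ denoted by the same
diagram, per Remark~\ref{ultima diagramatica}) commutes past the multiplication and
comultiplication block in the stated way. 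Applying Lemma~\ref{mengano3} (with $n=s+s'$
after the first reduction, or in two steps with $n=s$ and $n=s'$) then brings the
diagram into the shape on the right hand side.

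After these two rewrites the only remaining discrepancy between the two sides is
bookkeeping: the order in which the two $\s$-braidings appear relative to the two
halves of $\Delta_{H^{\ot_{\!c}^{\!s}}}$ and $\Delta_{H^{\ot_{\!c}^{\!s'}}}$, and the
placement of the final comultiplication on $C'$. I would settle this by a short
induction on $s'$ (and then on $r$), using the recursive definitions of $\s_{sr}$ and
of $\Delta_{H^{\ot_{\!c}^{\!s}}}$ through $\pc_s$, exactly as in the inductive proofs
of Lemmas~\ref{formula para chi}, \ref{mengano1} and~\ref{mengano3}; each inductive
step is a single application of the braid relation and of the compatibility of $\s$
with $c$ from Definition~\ref{transposition}. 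The main obstacle I anticipate is not
any conceptual difficulty but keeping the graphical manipulation honest: the left hand
tangle has several strands crossing in a non-planar-looking way, and one must be
careful that the braidings used are genuinely $c$, $\s_{sr}$ or $\cc_{sr}$ (and not
their inverses) and that the two multiplications on the $H$-strands are grouped as
$\mu_s$ and $\mu_{s'}$ in the correct association. Once the correspondence of strands
is fixed, the reductions via Lemmas~\ref{mengano1} and~\ref{mengano3} are essentially
forced, and the lemma follows.
\end{proof}
```
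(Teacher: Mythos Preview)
Your plan diverges from the paper's actual proof and, more importantly, your descriptions of the two lemmas you intend to invoke do not match what those lemmas state.

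The paper proves Lemma~\ref{mengano4} by a chain of five diagrams: the first equality expands the $\chi$-crossing using Lemma~\ref{formula para chi} (which rewrites $\chi_{sr}$ as a comultiplication followed by $\s_{sr}$ and the weak action $\rho_{sr}$), and the third equality is an application of Lemma~\ref{mengano1}; the remaining two equalities are routine rearrangements of braidings and comultiplications. Lemma~\ref{mengano3} is \emph{not} used at all.

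Your characterization of Lemma~\ref{mengano1} as ``carrying $D$ past the tensor product $C\ot_c C'$ \dots\ equals carrying $D$ successively past $C$ and then past $C'$ using $\s_{sr}$ twice'' is not what that lemma says: Lemma~\ref{mengano1} is about the interaction of the weak action $\rho$ with the braiding and transposition (its left-hand side has an action followed by an $\s$-crossing; its right-hand side has a $c$-crossing, then $\s$, then $\rho$, then $\s$). The decomposition of $\s^{-1}$ against a tensor product that you describe is simply the recursive definition of $\s_{sr}$, not Lemma~\ref{mengano1}. Similarly, Lemma~\ref{mengano3} relates $\chi$ followed by $\Delta_C$ to $\Delta_C$ followed by $\s$ and $\chi$; it is not a statement about $\rho$ commuting past a multiplication/comultiplication block, and in the left-hand side of Lemma~\ref{mengano4} the $\Delta$ occurs \emph{before} the $\chi$, not after, so the pattern does not match directly.

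A route through Lemma~\ref{mengano3} may be workable (since that lemma is itself proved via Lemma~\ref{formula para chi}), but you would first need to reshape the diagram so that a $\chi$-then-$\Delta$ configuration actually appears, and your plan does not indicate how. The safer path, and the one the paper takes, is to kill the $\chi$ immediately with Lemma~\ref{formula para chi} and then invoke Lemma~\ref{mengano1} once the weak action is exposed.
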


\begin{proof} In fact, we have
$$
\grow{\xymatrix@!0{\\\\
&\save\go+<0pt,3pt>\Drop{D}\restore \ar@{-}[2,2]&& \save\go+<0pt,3pt>\Drop{C}\restore \ar@{-}[1,-1]+<0.125pc,0.0pc> \ar@{-}[1,-1]+<0.0pc,0.125pc>&& \save\go+<0pt,3pt>\Drop{C'}\restore \ar@{-}[2,0]\\
&&&&&&\\
& \ar@{-}[-1,1]+<-0.125pc,0.0pc> \ar@{-}[-1,1]+<0.0pc,-0.125pc>\ar@ {-}[2,-1]&&\ar@{-}[2,2] &&\ar@{-}[1,-1]+<0.125pc,0.0pc> \ar@{-}[1,-1]+<0.0pc,0.125pc>&\\
&&&&&&\\
\ar@{-}[2,0]&&&\ar@{-}[1,0]\ar@{-}[-1,1]+<-0.125pc,0.0pc> \ar@{-}[-1,1]+<0.0pc,-0.125pc>&&\ar@{-}[2,1]&\\
&&&\ar@{-}`l/4pt [1,-1] [1,-1] \ar@{-}`r [1,1] [1,1]&&&\\
\ar@{-}[1,1]+<-0.1pc,0.1pc> && \ar@{-}[2,-2]&&\ar@/^0.1pc/ @{-}[2,2] \ar@/_0.1pc/ @{-}[2,2]&& \ar@/^0.1pc/ @{-}[2,-2]\ar@/_0.1pc/ @{-}[2,-2]\\
&&&&&&\\
\ar@{-}[2,0]&&\ar@{-}[1,1]+<-0.125pc,0.0pc> \ar@{-}[1,1]+<0.0pc,0.125pc>\ar@{-}[-1,-1]+<0.1pc,-0.1pc>&& \ar@{-}[2,-2]&&\ar@{-}[2,0]\\
&&&&&&\\
&&&&\ar@{-}[-1,-1]+<0.125pc,0.0pc>\ar@{-}[-1,-1]+<0.0pc,-0.125pc>&&
}}
\grow{\xymatrix@!0{
\\\\\\\\\\\\\\
\save\go+<0pt,0pt>\Drop{\txt{$=$}}\restore
}}
\grow{\xymatrix@!0{
&\save\go+<0pt,3pt>\Drop{D}\restore \ar@{-}[2,2]&&\save\go+<0pt,3pt>\Drop{C}\restore \ar@{-}[1,-1]+<0.125pc,0.0pc> \ar@{-}[1,-1]+<0.0pc,0.125pc> &&\save\go+<0pt,3pt>\Drop{C'}\restore \ar@{-}[2,0]&\\
&&&&&&\\
&\ar@{-}[4,-1]\ar@{-}[-1,1]+<-0.125pc,0.0pc> \ar@{-}[-1,1]+<0.0pc,-0.125pc>&&\ar@{-}[2,2]&& \ar@{-}[1,-1]+<0.125pc,0.0pc> \ar@{-}[1,-1]+<0.0pc,0.125pc>&\\
&&&&&&\\
&&&\ar@{-}[1,0]\ar@{-}[-1,1]+<-0.125pc,0.0pc> \ar@{-}[-1,1]+<0.0pc,-0.125pc>&&\ar@{-}[2,2]&&\\
&&&\ar@{-}`l/4pt [1,-1] [1,-1] \ar@{-}`r [1,1] [1,1]&&&&\\
\ar@{-}[1,1]+<-0.1pc,0.1pc> && \ar@{-}[2,-2]&&\ar@{-}[1,0]&&&\ar@{-}[2,0]\\
&&&&\ar@{-}`l/4pt [1,-1] [1,-1] \ar@{-}`r [1,1] [1,1]&&&\\
\ar@{-}[6,0]&&\ar@{-}[-1,-1]+<0.1pc,-0.1pc>\ar@{-}[3,0]&\ar@{-}[2,0]&&\ar@{-}[1,1]+<-0.125pc,0.0pc> \ar@{-}[1,1]+<0.0pc,0.125pc>&&\ar@{-}[2,-2]\\
&&&&&&&\\
&&&\ar@{-}`d/4pt [1,2][1,2]&&\ar@{-}[2,0]&&\ar@{-}[-1,-1]+<0.125pc,0.0pc> \ar@{-}[-1,-1]+<0.0pc,-0.125pc>\ar@{-}[4,0]\\
&&\ar@{-}[1,1]&&&&&\\
&&&\ar@{-}[1,1]+<-0.125pc,0.0pc> \ar@{-}[1,1]+<0.0pc,0.125pc>&&\ar@{-}[2,-2]&&\\
&&&&&&&\\
&&&&&\ar@{-}[-1,-1]+<0.125pc,0.0pc>\ar@{-}[-1,-1]+<0.0pc,-0.125pc>&&
}}
\grow{\xymatrix@!0{
\\\\\\\\\\\\\\
\save\go+<0pt,0pt>\Drop{\txt{$=$}}\restore
}}
\grow{\xymatrix@!0{\\
&\save\go+<0pt,3pt>\Drop{D}\restore \ar@{-}[2,2]&& \save\go+<0pt,3pt>\Drop{C}\restore \ar@{-}[1,-1]+<0.125pc,0.0pc> \ar@{-}[1,-1]+<0.0pc,0.125pc> &&&\save\go+<0pt,3pt>\Drop{C'}\restore\ar@{-}[1,0] \\
&&&&&&\ar@{-}`l/4pt [1,-1] [1,-1] \ar@{-}`r [1,1] [1,1]&\\
&\ar@{-}[4,-1]\ar@{-}[-1,1]+<-0.125pc,0.0pc> \ar@{-}[-1,1]+<0.0pc,-0.125pc>&& \ar@{-}[3,3]&&\ar@{-}[1,-1]+<0.125pc,0.0pc> \ar@{-}[1,-1]+<0.0pc,0.125pc>&&\ar@{-}[10,0]\\
&&&&&&&\\
&&&\ar@{-}[1,0]\ar@{-}[-1,1]+<-0.125pc,0.0pc> \ar@{-}[-1,1]+<0.0pc,-0.125pc>&&&&\\
&&&\ar@{-}`l/4pt [1,-1] [1,-1] \ar@{-}`r [1,1] [1,1]&&&\ar@{-}[5,0]&\\
\ar@{-}[1,1]+<-0.1pc,0.1pc> && \ar@{-}[2,-2]&&\ar@{-}[1,0]&&&\\
&&&&\ar@{-}`d/4pt [1,2][1,2]&&&\\
\ar@{-}[4,0]&&\ar@{-}[-1,-1]+<0.1pc,-0.1pc>\ar@{-}[2,2]&&&&&\\
&&&&&&&\\
&&&&\ar@{-}[1,1]+<-0.125pc,0.0pc> \ar@{-}[1,1]+<0.0pc,0.125pc>&&\ar@{-}[2,-2]&\\
&&&&&&&\\
&&&&&&\ar@{-}[-1,-1]+<0.125pc,0.0pc>\ar@{-}[-1,-1]+<0.0pc,-0.125pc>&
}}
\grow{\xymatrix@!0{
\\\\\\\\\\\\\\
\save\go+<0pt,0pt>\Drop{\txt{$=$}}\restore
}}
\grow{\xymatrix@!0{\\
& \save\go+<0pt,3pt>\Drop{D}\restore \ar@{-}[2,2]&&\save\go+<0pt,3pt>\Drop{C}\restore \ar@{-}[1,-1]+<0.125pc,0.0pc> \ar@{-}[1,-1]+<0.0pc,0.125pc>&&&\save\go+<0pt,3pt>\Drop{C'}\restore \ar@{-}[1,0]&\\
&&&&&&\ar@{-}`l/4pt [1,-1] [1,-1] \ar@{-}`r [1,1] [1,1]&\\
&\ar@{-}[2,0]\ar@{-}[-1,1]+<-0.125pc,0.0pc> \ar@{-}[-1,1]+<0.0pc,-0.125pc>&& \ar@{-}[3,3]&&\ar@{-}[1,-1]+<0.125pc,0.0pc> \ar@{-}[1,-1]+<0.0pc,0.125pc>&&\ar@{-}[9,0]\\
&&&&&&&\\
&\ar@{-}[1,1]+<-0.1pc,0.1pc> && \ar@{-}[2,-2]\ar@{-}[-1,1]+<-0.125pc,0.0pc> \ar@{-}[-1,1]+<0.0pc,-0.125pc> &&&&\\
&&&&&&\ar@{-}[2,0]&\\
&\ar@{-}[1,0]&&\ar@{-}[-1,-1]+<0.1pc,-0.1pc>\ar@{-}[1,1]&&&&\\
&\ar@{-}`l/4pt [1,-1] [1,-1] \ar@{-}`r [1,1] [1,1]&&&\ar@{-}[1,1]+<-0.125pc,0.0pc> \ar@{-}[1,1]+<0.0pc,0.125pc>&&\ar@{-}[2,-2]&\\
\ar@{-}[3,0]&&\ar@{-}[1,0]&&&&&\\
&&\ar@{-}`d/4pt [1,2][1,2]&&\ar@{-}[2,0]&&\ar@{-}[2,0]\ar@{-}[-1,-1]+<0.125pc,0.0pc>\ar@{-}[-1,-1]+<0.0pc,-0.125pc>&\\
&&&&&&&\\
&&&&&&&
}}
\grow{\xymatrix@!0{
\\\\\\\\\\\\\\
\save\go+<0pt,0pt>\Drop{\txt{$=$}}\restore
}}
\grow{\xymatrix@!0{
& \save\go+<0pt,3pt>\Drop{D}\restore \ar@{-}[4,0]&& \save\go+<0pt,3pt>\Drop{C}\restore \ar@{-}[2,0]&&& \save\go+<0pt,3pt>\Drop{C'}\restore \ar@{-}[1,0]\\
&&&&&&\ar@{-}`l/4pt [1,-1] [1,-1] \ar@{-}`r [1,1] [1,1]&\\
&&&\ar@{-}[1,1]+<-0.1pc,0.1pc> && \ar@{-}[2,-2]&&\ar@{-}[12,0]\\
&&&&&&&\\
&\ar@{-}[2,2]&&\ar@{-}[1,-1]+<0.125pc,0.0pc> \ar@{-}[1,-1]+<0.0pc,0.125pc>&&\ar@{-}[-1,-1]+<0.1pc,-0.1pc> \ar@{-}[1,1]&&\\
&&&&&&\ar@{-}[2,0]&\\
&\ar@{-}[4,0]\ar@{-}[-1,1]+<-0.125pc,0.0pc> \ar@{-}[-1,1]+<0.0pc,-0.125pc>&&\ar@{-}[1,1]&&&&\\
&&&&\ar@{-}[2,2]&&\ar@{-}[1,-1]+<0.125pc,0.0pc> \ar@{-}[1,-1]+<0.0pc,0.125pc>&\\
&&&&&&&\\
&&&&\ar@{-}[1,0]\ar@{-}[-1,1]+<-0.125pc,0.0pc> \ar@{-}[-1,1]+<0.0pc,-0.125pc>&&\ar@{-}[1,0]&\\
&\ar@{-}`l/4pt [1,-1] [1,-1] \ar@{-}`r [1,1] [1,1]&&&\ar@{-}[1,1]+<-0.125pc,0.0pc> \ar@{-}[1,1]+<0.0pc,0.125pc>&&\ar@{-}[2,-2]&\\
\ar@{-}[3,0]&&\ar@{-}[1,0]&&&&&\\
&&\ar@{-}`d/4pt [1,2][1,2]&&\ar@{-}[2,0]&&\ar@{-}[-1,-1]+<0.125pc,0.0pc>\ar@{-}[-1,-1]+<0.0pc,-0.125pc>\ar@{-}[2,0]&\\
&&&&&&&\\
&&&&&&&
}}
\grow{\xymatrix@!0{
\\\\\\\\\\\\\\
\save\go+<0pt,0pt>\Drop{\txt{$=$}}\restore
}}
\grow{\xymatrix@!0{\\\\
&\ar@{-}[4,0]\save\go+<0pt,3pt>\Drop{D}\restore &&\save\go+<0pt,3pt>\Drop{C}\restore \ar@{-}[2,0] &&&\save\go+<0pt,3pt>\Drop{C'}\restore \ar@{-}[1,0]\\
&&&&&&\ar@{-}`l/4pt [1,-1] [1,-1] \ar@{-}`r [1,1] [1,1]&\\
&&&\ar@{-}[1,1]+<-0.1pc,0.1pc> && \ar@{-}[2,-2]&&\ar@{-}[1,1]\\
&&&&&&&&\ar@{-}[7,0]\\
&\ar@{-}[2,2]&&\ar@{-}[1,-1]+<0.125pc,0.0pc> \ar@{-}[1,-1]+<0.0pc,0.125pc> &&\ar@{-}[-1,-1]+<0.1pc,-0.1pc>\ar@{-}[1,1]&&&\\
&&&&&&\ar@{-}[5,0]&&\\
&\ar@{-}[1,0]\ar@{-}[-1,1]+<-0.125pc,0.0pc> \ar@{-}[-1,1]+<0.0pc,-0.125pc>&&\ar@{-}[1,1]&&&&&\\
&\ar@{-}`l/4pt [1,-1] [1,-1] \ar@{-}`r [1,1] [1,1]&&&\ar@{-}[3,0]&&&&\\
\ar@{-}[2,0]&&\ar@{-}`d/4pt [1,2][1,2] &&&&&&\\
&&&&&&&&\\
&&&&&&&&
}}
\grow{\xymatrix@!0{
\\\\\\\\\\\\\\
\save\go+<0pt,0pt>\Drop{\txt{,}}\restore
}}
$$
where the first equality follows from Lemma~\ref{formula para chi}, and the third one follows from Lemma~\ref{mengano1}.
\end{proof}

\noindent Let
\begin{align*}
& \wt{\vartheta}^{rs}\colon  \Hom_{(A,E)}\bigl(E^{\ot_{\!A}^s}\ot A^{\ot^r},E\bigr)\to \Hom_{K^e}\bigl(A^{\ot^r}\ot_k H^{\ot_k^s},E\bigr)
\intertext{and}
& \wt{\theta}^{rs}\colon \Hom_{K^e}\bigl(A^{\ot^r}\ot_k H^{\ot_k^s},E\bigr)\to \Hom_{(A,E)}\bigl(E^{\ot_{\!A}^s}\ot A^{\ot^r},E\bigr)
\end{align*}
be the $k$-linear maps diagrammatically defined by
$$
%
\grow{\xymatrix@!0{
\\\\\\\\
\save\go+<0pt,0pt>\Drop{\txt{$\widetilde{\theta}(\beta) :=$}}\restore
}}
\grow{\xymatrix@!0{\\
\save\go+<0pt,3pt>\Drop{\overline{C}}\restore \ar@{-}[3,0]+<0pc,-0.5pt> &&&& \save\go+<0pt,3pt>\Drop{D}\restore \ar@{-}[2,0]\\
\ar@{-}`r/4pt [1,2] [1,2]&&&&\\
&&\ar@{-}[1,1]+<-0.125pc,0.0pc> \ar@{-}[1,1]+<0.0pc,0.125pc>&&\ar@{-}[2,-2]\\
&&&&\\
*+[o]+<0.37pc>[F]{\ov{\mu}}\ar@{-}[3,0]+<0pc,0pc>&&\ar@{-}[1,0]+<0pc,-1.5pt>&&\ar@{-}[-1,-1]+<0.125pc,0.0pc>\ar@{-}[-1,-1]+<0.0pc,-0.125pc> \ar@{-}[1,0]+<0pc,-1.5pt>\\
&&&&\\
&&&*+<0.1pc>[F]{\,\,\,\beta\,\,\,}\ar@{-}[1,0]&\\
\ar@{-}`d/4pt [1,1]+<0.2pc,0pc> `[0,3] [0,3]&&&&\\
&\save[]+<0.2pc,0pc> \Drop{}\ar@{-}[1,0]+<0.2pc,0pc>\restore&&&\\
&\save[]+<0.2pc,0pc> \Drop{}\restore&&&
}}
\quad
\grow{\xymatrix@!0{
\\\\\\\\
\save\go+<0pt,0pt>\Drop{\txt{and}}\restore
}}
\quad
\grow{\xymatrix@!0{
\\\\\\\\
\save\go+<0pt,0pt>\Drop{\txt{$\widetilde{\vartheta}(\alpha) :=$}}\restore
}}
\grow{\xymatrix@!0{
&\save\go+<0pt,3pt>\Drop{\overline{D}}\restore \ar@{-}[2,2] && \save\go+<0pt,3pt>\Drop{C}\restore \ar@{-}[1,-1]+<0.125pc,0.0pc> \ar@{-}[1,-1]+<0.0pc,0.125pc> &\\
&&&&\\
&\ar@{-}[1,0]\ar@{-}[-1,1]+<-0.125pc,0.0pc> \ar@{-}[-1,1]+<0.0pc,-0.125pc>&&\ar@{-}[1,1]&\\
&\ar@{-}`l/4pt [1,-1] [1,-1] \ar@{-}`r [1,1] [1,1]&&&\ar@{-}[4,0]+<0pc,-1.5pt>\\
\ar@{-}[1,0]+<0pc,-0.5pt>&&\ar@{-}[1,0]+<0pc,-0.5pt>&&\\
&&&&\\
*+[o]+<0.37pc>[F]{\ov{\mathbf{u}}}\ar@{-}[3,0]+<0pc,0pc>&&*+[o]+<0.37pc>[F]{\boldsymbol{\gamma}} \ar@{-}[1,0]+<0pc,-1.5pt>&&\\
&&&&\\
&&&*+<0.3pc>[F]{\,\,\,\alpha\,\,\,}\ar@{-}[1,0]&\\
\ar@{-}`d/4pt [1,1]+<0.2pc,0pc> `[0,3] [0,3]&&&&\\
&\save[]+<0.2pc,0pc> \Drop{}\ar@{-}[1,0]+<0.2pc,0pc>\restore&&&\\
&\save[]+<0.2pc,0pc> \Drop{}\restore&&&
}}
\grow{\xymatrix@!0{
\\\\\\\\
\save\go+<0pt,0pt>\Drop{\txt{,}}\restore
}}
$$
where

\begin{itemize}

\smallskip

\item[-] $C := \ov{H}^{\ot_c^s}$, $\ov{C} = (E/A)^{\ot_{\!A}^s}$ and $D:=\ov{A}^{\ot_k^r}$,

\smallskip

\item[-] $\ov{\mu}$ is the map induced by $\mu_s\colon E^{\ot_k^s}\to E$,

\smallskip

\item[-] $\boldsymbol{\gamma}:= \gamma^{\ot_{\!A}^s}$ and $\ov{\mathbf{u}} := \mu_s\xcirc \ov{\gamma}^{\ot_k^s}\xcirc \gc_s$.

\end{itemize}
It is easy to see that $\theta^{rs}$ and $\vartheta^{rs}$ are induced by $(-1)^{rs}\wt{\theta}^{rs}$ and $(-1)^{rs}\wt{\vartheta}^{rs}$, respectively.

\begin{definition} For
$$
\alpha\in \Hom_{(A,E)}\bigl(E^{\ot_{\!A}^s}\ot A^{\ot^r},E\bigr)\quad\text{and}\quad \alpha'\in \Hom_{(A,E)}\bigl(E^{\ot_{\!A}^{s'}}\ot A^{\ot^{r'}},E\bigr)
$$
we define
$$
\alpha\wt{\bullet} \alpha'\in \Hom_{(A,E)}\bigl(E^{\ot_{\!A}^{s''}}\ot A^{\ot^{r''}},E\bigr)
$$
by
$$
(\alpha\wt{\bullet}\alpha')\bigl(\gamma_A(\bv_{1s''})\ot \ba_{1r''}\bigr) := \sum_i  \alpha \bigl(\gamma_A(\bv_{1s})\ot \ba_{1r}^{(i)}\bigr) \alpha'\bigl(\gamma_A(\bv_{s+1,s''}^{(i)}) \ot \ba_{r+1,r''}\bigr),
$$
where $r'' := r+r'$, $s'':=s+s'$ and $\sum_i \ba_{1r}^{(i)}\ot_k \bv_{s+1,s''}^{(i)} := \ov{\chi}\bigl(\bv_{s+1,s''}\ot \ba_{1r}\bigr)$.
\end{definition}

\begin{lemma}\label{mengano5} Let $C:=H^{\ot_c^s}$, $C':=H^{\ot_c^{s'}}$, $D:=A^{\ot_k^r}$ and $D':=A^{\ot_k^{r'}}$. We have
$$
\grow{\xymatrix@!0{\\\\
&\save\go+<0pt,3pt>\Drop{C}\restore \ar@{-}[1,0]&&&& \save\go+<0pt,3pt>\Drop{C'}\restore \ar@{-}[1,0]&&& \save\go+<0pt,3pt>\Drop{D}\restore \ar@{-}[8,0] && \save\go+<0pt,3pt>\Drop{D'}\restore \ar@{-}[8,0]\\
&\ar@{-}`l/4pt [1,-1] [1,-1] \ar@{-}`r [1,1] [1,1]&&&&\ar@{-}`l/4pt [1,-1] [1,-1] \ar@{-}`r [1,1] [1,1]&&&&&\\
\ar@{-}[2,0]&&\ar@{-}[1,1]+<-0.1pc,0.1pc> && \ar@{-}[2,-2]&&\ar@{-}[2,0]&&&&\\
&&&&&&&&&&\\
\ar@{-}[1,1]+<-0.1pc,0.1pc> && \ar@{-}[2,-2]&&\ar@{-}[-1,-1]+<0.1pc,-0.1pc>\ar@{-}[1,0]&&\ar@{-}[1,0]&&&&\\
&&&&&&&&&&\\
\ar@{-}[1,0]+<0pc,-0.5pt>&&\ar@{-}[1,0]+<0pc,-0.5pt>\ar@{-}[-1,-1]+<0.1pc,-0.1pc>&& *+[o]+<0.37pc>[F]{\boldsymbol{\gamma}}\ar@{-}[2,0]+<0pc,0pc>&& *+[o]+<0.37pc>[F]{\boldsymbol{\gamma}}\ar@{-}[2,0]+<0pc,0pc>&&&&\\
&&&&&&&&&&\\
*+[o]+<0.37pc>[F]{\ov{\mathbf{u}}}\ar@{-}[2,0]+<0pc,0pc>&&*+[o]+<0.37pc>[F]{\ov{\mathbf{u}}}\ar@{-}[2,0]+<0pc,0pc>&&&&&&&&\\
&&&&&&&*+<0.1pc>[F]{\wt{\theta}(\wt{\beta})\wt{\bullet} \wt{\theta}(\wt{\beta}')}\ar@{-}[2,0]&&&\\
\ar@{-}`d/4pt [1,1] `[0,2] [0,2]&&&&&&&&&&\\
&\ar@{-}[1,0]&&&&&&\ar@{-}[1,-2]&&&\\
&\ar@{-}`d/4pt [1,2] `[0,4] [0,4]&&&&&&&&&\\
&&&\ar@{-}[1,0]&&&&&&&\\
&&&&&&&&&&
}}
\grow{\xymatrix@!0{
\\\\\\\\\\\\\\\\\\
\save\go+<0pt,0pt>\Drop{\txt{$=$}}\restore
}}
\grow{\xymatrix@!0{
\save\go+<0pt,3pt>\Drop{C}\restore \ar@{-}[2,0] &&&\save[]+<0.2pc,3pt> \Drop{C'}\ar@{-}[1,0]+<0.2pc,0pc>\restore&&&&& \save\go+<0pt,3pt>\Drop{D}\restore \ar@{-}[4,0] && \save\go+<0pt,3pt>\Drop{D'}\restore \ar@{-}[6,0]\\
&&&\save[]+<0.2pc,0pc> \Drop{}\ar@{-}`l/4pt [1,-1] [1,-1] \ar@{-}`r [1,2] [1,2]\restore&&&&&&&\\
\ar@{-}[1,1]+<-0.1pc,0.1pc> &&\ar@{-}[2,-2]&&&\ar@{-}[1,0]&&&&&&\\
&&&&&\ar@{-}`l/4pt [1,-1] [1,-1] \ar@{-}`r [1,1] [1,1]&&&&&&\\
\ar@{-}[4,0]+<0pc,-0.5pt>&&\ar@{-}[-1,-1]+<0.1pc,-0.1pc>\ar@{-}[4,0]&&\ar@{-}[2,0]&&\ar@{-}[1,1]+<-0.125pc,0.0pc> \ar@{-}[1,1]+<0.0pc,0.125pc>&&\ar@{-}[2,-2]&&\\
&&&&&&&&&&\\
&&&&\ar@/^0.1pc/ @{-}[2,2] \ar@/_0.1pc/ @{-}[2,2]&& \ar@/^0.1pc/ @{-}[2,-2]\ar@/_0.1pc/ @{-}[2,-2] &&\ar@{-}[-1,-1]+<0.125pc,0.0pc> \ar@{-}[-1,-1]+<0.0pc,-0.125pc> \ar@{-}[1,1]+<-0.125pc,0.0pc> \ar@{-}[1,1]+<0.0pc,0.125pc>&&\ar@{-}[2,-2]\\
&&&&&&&&&&\\
&&\ar@{-}[1,1]+<-0.125pc,0.0pc> \ar@{-}[1,1]+<0.0pc,0.125pc>&&\ar@{-}[2,-2]&&\ar@{-}[1,0]+<0pc,-0.5pt> &&\ar@{-}[1,0]+<0pc,-1pt>&&\ar@{-}[1,0]+<0pc,-1pt> \ar@{-}[-1,-1]+<0.125pc,0.0pc> \ar@{-}[-1,-1]+<0.0pc,-0.125pc>\\
*+[o]+<0.37pc>[F]{\ov{\mathbf{u}}}\ar@{-}[4,0]+<0pc,0pc>&&&&&&&&&&\\
&&\ar@{-}[1,0]+<0pc,-2pt>&&\ar@{-}[1,0]+<0pc,-2pt>\ar@{-}[-1,-1]+<0.125pc,0.0pc>\ar@{-}[-1,-1]+<0.0pc,-0.125pc>&& *+[o]+<0.40pc>[F]{\mathbf{u}}\ar@{-}[2,0]+<0pc,0pc> &&&*+<0.1pc>[F]{\,\,\,\wt{\beta}'\,\,\,}\ar@{-}[1,0]&\\
&&&&&&&&&\ar@{-}[1,-1]&\\
&&&*+<0.1pc>[F]{\,\,\,\wt{\beta}\,\,\,}\ar@{-}[2,0]&&&\ar@{-}`d/4pt [1,1] `[0,2] [0,2]&&&&\\
\ar@{-}[3,1]&&&&&&&\ar@{-}[1,0]&&&\\
&&&\ar@{-}`d/4pt [1,2] `[0,4] [0,4]&&&&&&&\\
&&&&&\ar@{-}[1,0]&&&&&\\
&\ar@{-}`d/4pt [1,2] `[0,4] [0,4]&&&&&&&&&\\
&&&\ar@{-}[1,0]&&&&&&&\\
&&&&&&&&&&
}}
\grow{\xymatrix@!0{
\\\\\\\\\\\\\\\\\\\\
\save\go+<0pt,0pt>\Drop{\txt{,}}\restore
}}
$$
where

\begin{itemize}

\smallskip

\item[-] $\boldsymbol{\gamma}$ denotes the maps $\gamma^{\ot_k^s}$, $\gamma^{\ot_k^{s'}}$, $\gamma^{\ot_{\!A}^s}$ and $\gamma^{\ot_{\!A}^{s'}}$,

\smallskip

\item[-] $\mu$ denotes the maps $\mu_s$ and $\mu_{s'}$,

\smallskip

\item[-] $\mathbf{u}:=\mu_{s'}\xcirc \gamma^{\ot_k^{s'}}$ and $\ov{\mathbf{u}}$ denotes both the maps $\mu_s\xcirc \ov{\gamma}^{\ot_k^s}\xcirc \gc_s$ and $\mu_{s'}\xcirc \ov{\gamma}^{\ot_k^{s'}}\xcirc \gc_{s'}$.

\smallskip

\end{itemize}

\end{lemma}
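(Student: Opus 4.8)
The plan is to prove Lemma~\ref{mengano5} by graphical calculus, reducing the left-hand diagram to the right-hand one through a chain of moves, each justified either by a structural axiom or by one of the auxiliary results already available in this appendix. The first step is purely formal: unfold the box labelled $\wt{\theta}(\wt{\beta})\wt{\bullet}\wt{\theta}(\wt{\beta}')$ using the definition of $\wt{\bullet}$ together with the description of $\wt{\theta}^{rs}$ recorded just above. This produces a diagram in which $\wt{\beta}$ acts on a legs-reshuffled copy of $D$ and a $\gamma^{\ot_{\!A}^\bullet}$-image of part of $C$, $\wt{\beta}'$ acts on $D'$ and part of $C'$, the maps $\ov{\mathbf{u}}=\mu_\bullet\circ\ov{\gamma}^{\ot_k^\bullet}\circ\gc_\bullet$ sit on the remaining coproduct legs of $C$ and $C'$, and a transposition node (an instance of $\s_{s'r}$, i.e.\ $\ov{\chi}$) coming from the definition of $\wt{\bullet}$ is inserted between the $D$-strand and the appropriate $C'$-strand.

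Next I would carry out the simplification. On the $C$- and $C'$-inputs the left-hand side now has, after the initial comultiplications and braidings, the pattern ``one bunch of legs is fed into $\gamma^{\ot_k^\bullet}$ and then into a multiplication $\mu_\bullet$, while the adjacent bunch is fed into $\ov{\mathbf{u}}=\mu_\bullet\circ\ov{\gamma}^{\ot_k^\bullet}\circ\gc_\bullet$''. Using coassociativity of $\Delta_{H^{\ot_{\!c}^{\!\bullet}}}=\pc_\bullet\circ\Delta^{\ot_k^\bullet}$ together with the compatibility of $c$ with the coalgebra structure of $H$, I can move the intervening braidings so that the two adjacent bunches of legs issue from a single coproduct; Lemma~\ref{mengano1} (and, for the $D$-strands crossing the comultiplied $C,C'$, Lemmas~\ref{mengano3} and~\ref{mengano4}) are exactly the moves that make this legal. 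At that point the relevant subdiagram is a composition $\mu_E\circ\bigl((\mu_s\circ\gamma^{\ot_k^s})\ot_k(\mu_s\circ\ov{\gamma}^{\ot_k^s}\circ\gc_s)\bigr)$ precomposed with a suitable coproduct of $H^{\ot_{\!c}^{\!s}}$, which by Lemma~\ref{inversa de convolucion} (the convolution inverse of $\mu_s\circ\gamma^{\ot_k^s}$ is $\mu_s\circ\ov{\gamma}^{\ot_k^s}\circ\gc_s$) collapses to $\varepsilon^{\ot_k^s}$; likewise for $C'$. Collapsing these $\gamma$--$\ov{\gamma}$ pairs kills the extra $C$- and $C'$-legs, and a final application of Lemma~\ref{formula para chi} to slide the remaining $\s_{sr}$/$\ov{\chi}$ nodes past the coproducts turns the left-hand side into precisely the right-hand diagram, in which $\wt{\beta}$ is evaluated on $C,D$ and $\wt{\beta}'$ on $C',D'$, the $D$-strand is acted on by the appropriate weak action and twisting, and the two outputs are multiplied. (This is the statement that feeds directly into the proof of Proposition~\ref{cup producto en cleft}.)

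The main obstacle is the bookkeeping of the braids rather than any conceptual difficulty. Because the coalgebras involved are $H^{\ot_{\!c}^{\!s}}$ with comultiplication built from $\pc_s$, and the braidings that appear are assembled from $c$ and $s$ via $\cc_{sr}$ and $\gc_s$, one must check that the $\gamma$--$\ov{\gamma}$ cancellation of the previous paragraph becomes available only because the reversal $\gc_\bullet$ has been inserted on exactly the correct leg in exactly the correct order --- this is the whole reason $\ov{\mathbf{u}}$ carries the $\gc_\bullet$ factor, and matching the braids on the two sides requires the full compatibility of $s$ and $c$ with both the algebra and the coalgebra structure of $H$ (Definitions~\ref{transposition} and~\ref{accion debil}, together with Remarks~\ref{transposicion inversa} and~\ref{propiedad de accion debil}) and the fact, recorded in Example~\ref{ex 1.14} and Remark~\ref{acerca de nu}, that $\gamma^{\ot_{\!A}^\bullet}$ intertwines $\Delta_{H^{\ot_{\!c}^{\!\bullet}}}$ with the comodule coaction $\boldsymbol{\nu}_{\!A}$. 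Once the braid combinatorics is organized in the style of the proofs of Lemmas~\ref{mengano1}, \ref{mengano3} and~\ref{mengano4}, the remaining equalities are the routine move-by-move diagram chases typical of this appendix, which I would then write out in full.
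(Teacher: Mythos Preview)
Your plan is correct in outline and matches the paper's approach: unfold the box $\wt{\theta}(\wt{\beta})\wt{\bullet}\wt{\theta}(\wt{\beta}')$ by the definitions of $\wt{\bullet}$ and $\wt{\theta}$, then simplify using the compatibility of $\chi$ with comultiplication and a convolution cancellation. However, your sketch is noticeably more elaborate than what is actually required.

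The paper's proof is a three-step chain. The first equality is just the definition of $\wt{\theta}(\wt{\beta})\wt{\bullet}\wt{\theta}(\wt{\beta}')$; the second equality is a single application of Lemma~\ref{mengano3} (which rewrites $\chi$ followed by a comultiplication as a comultiplication followed by $s$ and $\chi$); the third equality is Lemma~\ref{inversa de convolucion}, which collapses exactly one $\ov{\mathbf{u}}$--$\mathbf{u}$ pair (the one coming from $C'$). Lemmas~\ref{mengano1}, \ref{mengano4} and~\ref{formula para chi} are not needed here, and your description of the collapse is slightly off: only the $C'$ pair cancels to a counit, while the $\ov{\mathbf{u}}$ on the $C$-side survives in the right-hand diagram. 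If you prune your argument to these three moves the proof is complete.
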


\begin{proof} In fact,
$$
\grow{\xymatrix@!0{\\\\\\\\\\
&\save\go+<0pt,3pt>\Drop{C}\restore \ar@{-}[1,0]&&&& \save\go+<0pt,3pt>\Drop{C'}\restore \ar@{-}[1,0]&&& \save\go+<0pt,3pt>\Drop{D}\restore \ar@{-}[8,0] && \save\go+<0pt,3pt>\Drop{D'}\restore \ar@{-}[8,0]\\
&\ar@{-}`l/4pt [1,-1] [1,-1] \ar@{-}`r [1,1] [1,1]&&&&\ar@{-}`l/4pt [1,-1] [1,-1] \ar@{-}`r [1,1] [1,1]&&&&&\\
\ar@{-}[2,0]&&\ar@{-}[1,1]+<-0.1pc,0.1pc> && \ar@{-}[2,-2]&&\ar@{-}[2,0]&&&&\\
&&&&&&&&&&\\
\ar@{-}[1,1]+<-0.1pc,0.1pc> && \ar@{-}[2,-2]&&\ar@{-}[-1,-1]+<0.1pc,-0.1pc>\ar@{-}[1,0]&&\ar@{-}[1,0]&&&&\\
&&&&&&&&&&\\
\ar@{-}[1,0]+<0pc,-0.5pt>&&\ar@{-}[1,0]+<0pc,-0.5pt>\ar@{-}[-1,-1]+<0.1pc,-0.1pc>&& *+[o]+<0.37pc>[F]{\boldsymbol{\gamma}}\ar@{-}[2,0]+<0pc,0pc>&& *+[o]+<0.37pc>[F]{\boldsymbol{\gamma}}\ar@{-}[2,0]+<0pc,0pc>&&&&\\
&&&&&&&&&&\\
*+[o]+<0.37pc>[F]{\ov{\mathbf{u}}}\ar@{-}[2,0]+<0pc,0pc>&&*+[o]+<0.37pc>[F]{\ov{\mathbf{u}}}\ar@{-}[2,0]+<0pc,0pc>&&&&&&&&\\
&&&&&&&*+<0.1pc>[F]{\wt{\theta}(\wt{\beta})\wt{\bullet}\wt{\theta}(\beta')}\ar@{-}[2,0]&&&\\
\ar@{-}`d/4pt [1,1] `[0,2] [0,2]&&&&&&&&&&\\
&\ar@{-}[1,0]&&&&&&\ar@{-}[1,-2]&&&\\
&\ar@{-}`d/4pt [1,2] `[0,4] [0,4]&&&&&&&&&\\
&&&\ar@{-}[1,0]&&&&&&&\\
&&&&&&&&&&
}}
\grow{\xymatrix@!0{
\\\\\\\\\\\\\\\\\\\\\\
\save\go+<0pt,0pt>\Drop{\txt{$=$}}\restore
}}
\grow{\xymatrix@!0{
&\save\go+<0pt,3pt>\Drop{C}\restore \ar@{-}[2,0]&&&\save[]+<0.2pc,3pt> \Drop{C'} \ar@{-}[1,0]+<0.2pc,0pc>\restore&&&&&&\ar@{-}[4,0]\save\go+<0pt,3pt>\Drop{D}\restore &&&& \save\go+<0pt,3pt>\Drop{D'}\restore \ar@{-}[11,0]\\
&&&&\save[]+<0.2pc,0pc> \Drop{}\ar@{-}`l/4pt [1,-1] [1,-1] \ar@{-}`r [1,2] [1,2]\restore&&&&&&&&&\\
&\ar@{-}[1,1]+<-0.1pc,0.1pc> && \ar@{-}[2,-2]&&&\ar@{-}[2,2]&&&&&&&&\\
&&&&&&&&&&&&&&\\
&\ar@{-}[1,-1]&&\ar@{-}[-1,-1]+<0.1pc,-0.1pc>\ar@{-}[1,0]&&&&&\ar@/^0.1pc/ @{-}[2,2] \ar@/_0.1pc/ @{-}[2,2]&& \ar@/^0.1pc/ @{-}[2,-2]\ar@/_0.1pc/ @{-}[2,-2]&&&&\\
\ar@{-}[5,0]+<0pc,-0.5pt>&&&\ar@{-}`l/4pt [1,-1] [1,-1] \ar@{-}`r [1,1] [1,1]&&&&&&&&&&&\\
&&\ar@{-}[4,0]+<0pc,-0.5pt>&&\ar@{-}[1,0]+<0pc,-0.5pt>&&&&\ar@{-}[5,0]&&\ar@{-}[1,0]+<0pc,-0.5pt>&&&&\\
&&&&&&&&&&&&&&\\
&&&& *+[o]+<0.37pc>[F]{\boldsymbol{\gamma}}\ar@{-}[3,0]+<0pc,-0.5pt>&&&&&& *+[o]+<0.37pc>[F]{\boldsymbol{\gamma}}\ar@{-}[3,0]+<0pc,-0.5pt>&&&&\\
&&&&&&&&&&&&&&\\
&&&&\ar@{-}[1,0]  \ar@{-}`r/4pt [1,2] [1,2]&&&&&&\ar@{-}[1,0]  \ar@{-}`r/4pt [1,2] [1,2]&&&&\\
*+[o]+<0.37pc>[F]{\ov{\mathbf{u}}}\ar@{-}[4,0]+<0pc,0pc>&& *+[o]+<0.37pc>[F]{\ov{\mathbf{u}}}\ar@{-}[4,0]+<0pc,0pc>&&&&\ar@{-}[1,1]+<-0.125pc,0.0pc> \ar@{-}[1,1]+<0.0pc,0.125pc>&&\ar@{-}[2,-2]&&&&\ar@{-}[1,1]+<-0.125pc,0.0pc> \ar@{-}[1,1]+<0.0pc,0.125pc>&&\ar@{-}[2,-2]\\
&&&&*+[o]+<0.40pc>[F]{\mu}\ar@{-}[4,0]&&&&&&*+[o]+<0.40pc>[F]{\mu}\ar@{-}[5,0]&&&&\\
&&&&&&\ar@{-}[1,0]+<0pc,-2pt>&&\ar@{-}[-1,-1]+<0.125pc,0.0pc> \ar@{-}[-1,-1]+<0.0pc,-0.125pc> \ar@{-}[1,0]+<0pc,-2pt>&&&&\ar@{-}[1,0]+<0pc,-1pt> && \ar@{-}[-1,-1]+<0.125pc,0.0pc> \ar@{-}[-1,-1]+<0.0pc,-0.125pc>\ar@{-}[1,0]+<0pc,-1pt>\\
&&&&&&&&
&&&&&&\\
\ar@{-}`d/4pt [1,1] `[0,2] [0,2]&&&&&&&*+<0.1pc>[F]{\,\,\,\wt{\beta}\,\,\,}\ar@{-}[2,0] &&&&&&*+<0.1pc>[F]{\,\,\,\wt{\beta}'\,\,\,}\ar@{-}[1,0]&\\
&\ar@{-}[1,0]&&&\ar@{-}[1,1]&&&&&&&&&\ar@{-}[1,-1]&\\
&\ar@{-}[5,3]&&&&\ar@{-}`d/4pt [1,1] `[0,2] [0,2]&&&&&\ar@{-}`d/4pt [1,1] `[0,2] [0,2]&&&&\\
&&&&&&\ar@{-}[2,0]&&&&&\ar@{-}[1,0]&&&\\
&&&&&&&&&&&\ar@{-}[1,-1]&&&\\
&&&&&&\ar@{-}`d/4pt [1,2] `[0,4] [0,4]&&&&&&&&\\
&&&&&&&&\ar@{-}[1,0]&&&&&&\\
&&&&\ar@{-}`d/4pt [1,2] `[0,4] [0,4]&&&&&&&&&&\\
&&&&&&\ar@{-}[1,0]&&&&&&&&\\
&&&&&&&&&&&&&&
}}
\grow{\xymatrix@!0{
\\\\\\\\\\\\\\\\\\\\\\
\save\go+<0pt,0pt>\Drop{\txt{$=$}}\restore
}}
\grow{\xymatrix@!0{
&&\save\go+<0pt,3pt>\Drop{C}\restore \ar@{-}[2,0] &&&&\save[]+<0.2pc,3pt> \Drop{C'}\restore \ar@{-}[1,0]&&&&\save\go+<0pt,3pt>\Drop{D}\restore \ar@{-}[2,0] &&&& \save\go+<0pt,3pt>\Drop{D'}\restore  \ar@{-}[7,0]\\
&&&&&&\ar@{-}`l/4pt [1,-2] [1,-2] \ar@{-}`r [1,2] [1,2]&&&&&&&&\\
&&\ar@{-}[1,1]+<-0.1pc,0.1pc> && \ar@{-}[2,-2]&&&&\ar@/^0.1pc/ @{-}[2,2] \ar@/_0.1pc/ @{-}[2,2]&& \ar@/^0.1pc/ @{-}[2,-2]\ar@/_0.1pc/ @{-}[2,-2]&&&&\\
&&&&&&&&&&&&&&\\
&&\ar@{-}[2,-2]&&\ar@{-}[-1,-1]+<0.1pc,-0.1pc>\ar@{-}[1,0]+<0.2pc,-1pt>&&&&\ar@{-}[3,0]&&\ar@{-}[1,1]&&\\
&&&&\save[]+<0.2pc,0pc> \Drop{}\ar@{-}[1,0]+<0.2pc,0pc>\restore&&&&&&&\ar@{-}[1,0]&&&\\
\ar@{-}[3,0]&&&&\save[]+<0.2pc,0pc> \Drop{}\ar@{-}`l/4pt [1,-1] [1,-1] \ar@{-}`r [1,2] [1,2]\restore&&&&&&&\ar@{-}`l/4pt [1,-1] [1,-1] \ar@{-}`r [1,1] [1,1]&&&\\
&&&\ar@{-}[1,0]&&&\ar@{-}[1,1]+<-0.125pc,0.0pc> \ar@{-}[1,1]+<0.0pc,0.125pc>&&\ar@{-}[2,-2]&& \ar@{-}[2,0]+<0pc,-0.5pt>&&\ar@{-}[1,1]+<-0.125pc,0.0pc> \ar@{-}[1,1]+<0.0pc,0.125pc>&&\ar@{-}[2,-2]\\
&&&\ar@{-}`l/4pt [1,-1] [1,-1] \ar@{-}`r [1,1] [1,1]&&&&&&&&&&&\\
&&&&&&\ar@{-}[1,0]+<0pc,-2pt>&&\ar@{-}[-1,-1]+<0.125pc,0.0pc>
\ar@{-}[-1,-1]+<0.0pc,-0.125pc>\ar@{-}[1,0]+<0pc,-2pt>&&&&\ar@{-}[1,0]+<0pc,-1pt>&&\ar@{-}[1,0]+<0pc,-1pt> \ar@{-}[-1,-1]+<0.125pc,0.0pc>\ar@{-}[-1,-1]+<0.0pc,-0.125pc>\\
*+[o]+<0.37pc>[F]{\ov{\mathbf{u}}}\ar@{-}[3,0]+<0pc,0pc>&&*+[o]+<0.37pc>[F]{\ov{\mathbf{u}}}\ar@{-}[1,0]+<0pc,0pc>&& *+[o]+<0.40pc>[F]{\mathbf{u}}\ar@{-}[1,0]+<0pc,0pc>&&&&&&*+[o]+<0.40pc>[F]{\mathbf{u}}\ar@{-}[3,0]+<0pc,0pc>&&&&\\
&&\ar@{-}`d/4pt [1,1] `[0,2] [0,2] &&&&&*+<0.1pc>[F]{\,\,\,\wt{\beta}\,\,\,}\ar@{-}[2,0]&&&&&&*+<0.1pc>[F]{\,\,\,\wt{\beta}'\,\,\,}\ar@{-}[2,0]&\\
&&&\ar@{-}[1,0]&&&&&&&&&&&\\
\ar@{-}`d/4pt [1,1]+<0.2pc,0pc> `[0,3] [0,3]&&&&&&&\ar@{-}[3,0]  &&&\ar@{-}`d/4pt [1,1]+<0.2pc,0pc> `[0,3] [0,3]&&&&\\
&\save[]+<0.2pc,0pc> \Drop{}\ar@{-}[6,0]+<0.2pc,0pc>\restore&&&&&&&&&&\save[]+<0.2pc,0pc> \Drop{}\ar@{-}[1,0]+<0.2pc,0pc>\restore&&&\\
&&&&&&&&&&&\save[]+<0.2pc,0pc> \Drop{}\ar@{-}[1,0]\restore&&&\\
&&&&&&&\ar@{-}`d/4pt [1,2] `[0,4] [0,4]&&&&&&&\\
&&&&&&&&&\ar@{-}[1,0]&&&&&\\
&&&&&&&&&\ar@{-}[2,-3]+<-0.4pc,0pc>&&&&&\\
&&&&&&&&&&&&&&\\
&\save[]+<0.2pc,0pc> \Drop{}\ar@{-}`d/4pt [1,2]+<0.1pc,0pc> `[0,4] [0,4]+<0.2pc,0pc>\restore&&&&&&&&&&&&&\\
&&&\save[]+<0.1pc,0pc> \Drop{}\ar@{-}[1,0]+<0.1pc,0pc>\restore&&&&&&&&&&&\\
&&&&&&&&&&&&&&\\
&&&\save[]+<0.1pc,0pc> \Drop{}\restore &&&&&&&&&&&
}}
\grow{\xymatrix@!0{
\\\\\\\\\\\\\\\\\\\\\\
\save\go+<0pt,0pt>\Drop{\txt{$=$}}\restore
}}
\grow{\xymatrix@!0{\\\\
\save\go+<0pt,3pt>\Drop{C}\restore \ar@{-}[2,0] &&&\save[]+<0.2pc,3pt> \Drop{C'}\ar@{-}[1,0]+<0.2pc,0pc>\restore&&&&& \save\go+<0pt,3pt>\Drop{D}\restore \ar@{-}[4,0] && \save\go+<0pt,3pt>\Drop{D'}\restore \ar@{-}[6,0]\\
&&&\save[]+<0.2pc,0pc> \Drop{}\ar@{-}`l/4pt [1,-1] [1,-1] \ar@{-}`r [1,2] [1,2]\restore&&&&&&&\\
\ar@{-}[1,1]+<-0.1pc,0.1pc> &&\ar@{-}[2,-2]&&&\ar@{-}[1,0]&&&&&&\\
&&&&&\ar@{-}`l/4pt [1,-1] [1,-1] \ar@{-}`r [1,1] [1,1]&&&&&&\\
\ar@{-}[4,0]+<0pc,-0.5pt>&&\ar@{-}[-1,-1]+<0.1pc,-0.1pc>\ar@{-}[4,0]&&\ar@{-}[2,0]&&\ar@{-}[1,1]+<-0.125pc,0.0pc> \ar@{-}[1,1]+<0.0pc,0.125pc>&&\ar@{-}[2,-2]&&\\
&&&&&&&&&&\\
&&&&\ar@/^0.1pc/ @{-}[2,2] \ar@/_0.1pc/ @{-}[2,2]&& \ar@/^0.1pc/ @{-}[2,-2]\ar@/_0.1pc/ @{-}[2,-2] &&\ar@{-}[-1,-1]+<0.125pc,0.0pc> \ar@{-}[-1,-1]+<0.0pc,-0.125pc> \ar@{-}[1,1]+<-0.125pc,0.0pc> \ar@{-}[1,1]+<0.0pc,0.125pc>&&\ar@{-}[2,-2]\\
&&&&&&&&&&\\
&&\ar@{-}[1,1]+<-0.125pc,0.0pc> \ar@{-}[1,1]+<0.0pc,0.125pc>&&\ar@{-}[2,-2]&&\ar@{-}[1,0]+<0pc,-0.5pt> &&\ar@{-}[1,0]+<0pc,-1pt>&&\ar@{-}[1,0]+<0pc,-1pt> \ar@{-}[-1,-1]+<0.125pc,0.0pc> \ar@{-}[-1,-1]+<0.0pc,-0.125pc>\\
*+[o]+<0.37pc>[F]{\ov{\mathbf{u}}}\ar@{-}[4,0]+<0pc,0pc>&&&&&&&&&&\\
&&\ar@{-}[1,0]+<0pc,-2pt>&&\ar@{-}[1,0]+<0pc,-2pt>\ar@{-}[-1,-1]+<0.125pc,0.0pc>\ar@{-}[-1,-1]+<0.0pc,-0.125pc>&& *+[o]+<0.40pc>[F]{\mathbf{u}}\ar@{-}[2,0]+<0pc,0pc> &&&*+<0.1pc>[F]{\,\,\,\wt{\beta}'\,\,\,}\ar@{-}[1,0]&\\
&&&&&&&&&\ar@{-}[1,-1]&\\
&&&*+<0.1pc>[F]{\,\,\,\wt{\beta}\,\,\,}\ar@{-}[2,0]&&&\ar@{-}`d/4pt [1,1] `[0,2] [0,2]&&&&\\
\ar@{-}[3,1]&&&&&&&\ar@{-}[1,0]&&&\\
&&&\ar@{-}`d/4pt [1,2] `[0,4] [0,4]&&&&&&&\\
&&&&&\ar@{-}[1,0]&&&&&\\
&\ar@{-}`d/4pt [1,2] `[0,4] [0,4]&&&&&&&&&\\
&&&\ar@{-}[1,0]&&&&&&&\\
&&&&&&&&&&
}}
\grow{\xymatrix@!0{
\\\\\\\\\\\\\\\\\\\\\\
\save\go+<0pt,0pt>\Drop{\txt{,}}\restore
}}
$$
where the first equality follows from the definition of $\wt{\theta}(\wt{\beta})\wt{\bullet}\wt{\theta}(\wt{\beta}')$, the second one, from Lemma~\ref{mengano3}, and the third one, from Lemma~\ref{inversa de convolucion}.
\end{proof}

\noindent\bf Proof of Proposition~\ref{cup producto en cleft}.\rm\enspace Let $C:=H^{\ot_c^s}$, $C':=H^{\ot_c^{s'}}$, $D:=A^{\ot_k^r}$ and $D':=A^{\ot_k^{r'}}$ and let
$$
\wt{\beta}\colon D\ot_k C\to E \quad\text{and}\quad \wt{\beta}'\colon D'\ot_k C'\to E
$$
be the maps induced by $\beta$ and $\beta'$, respectively. Let

\begin{itemize}

\smallskip

\item[-] $\boldsymbol{\gamma}$ denote both the maps $\gamma^{\ot_{\!A}^s}$ and $\gamma^{\ot_{\!A}^{s'}}$,

\smallskip

\item[-] $\mu$ denote both the maps $\mu_s$ and $\mu_{s'}$,

\smallskip

\item[-] $\mathbf{u}$ denote the map $\mu_{s'}\xcirc \gamma^{\ot_k^{s'}}$,

\smallskip

\item[-] $\ov{\mathbf{u}}$ denote both the maps $\mu_s\xcirc \ov{\gamma}^{\ot_k^s}\xcirc \gc_s$ and $\mu_{s'}\xcirc \ov{\gamma}^{\ot_k^{s'}}\xcirc \gc_{s'}$.

\smallskip

\end{itemize}
We have
$$
\grow{\xymatrix@!0{\\\\
&&&\save\go+<0pt,3pt>\Drop{D}\restore \ar@{-}[2,0]&&\save\go+<0pt,3pt>\Drop{D'}\restore \ar@{-}[2,2] &&\save\go+<0pt,3pt>\Drop{C}\restore \ar@{-}[1,-1]+<0.125pc,0.0pc> \ar@{-}[1,-1]+<0.0pc,0.125pc> &&\save\go+<0pt,3pt>\Drop{C'}\restore \ar@{-}[2,0]&\\
&&&&&&&&&&\\
&&&\ar@{-}[2,2]&&\ar@{-}[1,-1]+<0.125pc,0.0pc> \ar@{-}[1,-1]+<0.0pc,0.125pc> \ar@{-}[-1,1]+<-0.125pc,0.0pc> \ar@{-}[-1,1]+<0.0pc,-0.125pc>&&\ar@{-}[2,2]&&\ar@{-}[1,-1]+<0.125pc,0.0pc> \ar@{-}[1,-1]+<0.0pc,0.125pc>&\\
&&&&&&&&&&\\
&&&\ar@{-}[3,-2]\ar@{-}[-1,1]+<-0.125pc,0.0pc> \ar@{-}[-1,1]+<0.0pc,-0.125pc>&& \ar@{-}[2,2] &&\ar@{-}[1,-1]+<0.125pc,0.0pc> \ar@{-}[1,-1]+<0.0pc,0.125pc> \ar@{-}[-1,1]+<-0.125pc,0.0pc> \ar@{-}[-1,1]+<0.0pc,-0.125pc>&&\ar@{-}[3,1]&\\
&&&&&&&&&&\\
&&&&&\ar@{-}[1,0]\ar@{-}[-1,1]+<-0.125pc,0.0pc> \ar@{-}[-1,1]+<0.0pc,-0.125pc>&&\ar@{-}[1,1]&&&\\
&\ar@{-}[1,0]&&&& \ar@{-}[1,0]&&& \ar@{-}[8,0] && \ar@{-}[8,0]\\
&\ar@{-}`l/4pt [1,-1] [1,-1] \ar@{-}`r [1,1] [1,1]&&&&\ar@{-}`l/4pt [1,-1] [1,-1] \ar@{-}`r [1,1] [1,1]&&&&&\\
\ar@{-}[2,0]&&\ar@{-}[1,1]+<-0.1pc,0.1pc> && \ar@{-}[2,-2]&&\ar@{-}[2,0]&&&&\\
&&&&&&&&&&\\
\ar@{-}[1,1]+<-0.1pc,0.1pc> && \ar@{-}[2,-2]&&\ar@{-}[-1,-1]+<0.1pc,-0.1pc>\ar@{-}[1,0]&&\ar@{-}[1,0]&&&&\\
&&&&&&&&&&\\
\ar@{-}[1,0]+<0pc,-0.5pt>&&\ar@{-}[1,0]+<0pc,-0.5pt>\ar@{-}[-1,-1]+<0.1pc,-0.1pc>&& *+[o]+<0.37pc>[F]{\boldsymbol{\gamma}}\ar@{-}[2,0]+<0pc,0pc>&& *+[o]+<0.37pc>[F]{\boldsymbol{\gamma}}\ar@{-}[2,0]+<0pc,0pc>&&&&\\
&&&&&&&&&&\\
*+[o]+<0.37pc>[F]{\ov{\mathbf{u}}}\ar@{-}[2,0]+<0pc,0pc>&&*+[o]+<0.37pc>[F]{\ov{\mathbf{u}}}\ar@{-}[2,0]+<0pc,0pc>&&&&&&&&\\
&&&&&&&*+<0.1pc>[F]{\wt{\theta}(\wt{\beta})\wt{\bullet}\wt{\theta}(\wt{\beta}')}\ar@{-}[2,0]&&&\\
\ar@{-}`d/4pt [1,1] `[0,2] [0,2]&&&&&&&&&&\\
&\ar@{-}[1,0]&&&&&&\ar@{-}[1,-2]&&&\\
&\ar@{-}`d/4pt [1,2] `[0,4] [0,4]&&&&&&&&&\\
&&&\ar@{-}[1,0]&&&&&&&\\
&&&&&&&&&&
}}
\grow{\xymatrix@!0{
\\\\\\\\\\\\\\\\\\\\\\\\
\save\go+<0pt,0pt>\Drop{\txt{$=$}}\restore
}}
\grow{\xymatrix@!0{
&&\save\go+<0pt,3pt>\Drop{D}\restore \ar@{-}[2,0]&&\save\go+<0pt,3pt>\Drop{D'}\restore \ar@{-}[2,2] &&\save\go+<0pt,3pt>\Drop{C}\restore \ar@{-}[1,-1]+<0.125pc,0.0pc> \ar@{-}[1,-1]+<0.0pc,0.125pc> &&\save\go+<0pt,3pt>\Drop{C'}\restore \ar@{-}[2,0]&\\
&&&&&&&&&\\
&&\ar@{-}[2,2]&&\ar@{-}[1,-1]+<0.125pc,0.0pc> \ar@{-}[1,-1]+<0.0pc,0.125pc> \ar@{-}[-1,1]+<-0.125pc,0.0pc> \ar@{-}[-1,1]+<0.0pc,-0.125pc>&&\ar@{-}[2,2]&&\ar@{-}[1,-1]+<0.125pc,0.0pc> \ar@{-}[1,-1]+<0.0pc,0.125pc>&\\
&&&&&&&&&\\
&&\ar@{-}[3,-2]\ar@{-}[-1,1]+<-0.125pc,0.0pc> \ar@{-}[-1,1]+<0.0pc,-0.125pc>&& \ar@{-}[2,2] &&\ar@{-}[1,-1]+<0.125pc,0.0pc> \ar@{-}[1,-1]+<0.0pc,0.125pc> \ar@{-}[-1,1]+<-0.125pc,0.0pc> \ar@{-}[-1,1]+<0.0pc,-0.125pc>&&\ar@{-}[3,2]&\\
&&&&&&&&&\\
&&&&\ar@{-}[1,-1]+<0.2pc,0pt> \ar@{-}[-1,1]+<-0.125pc,0.0pc> \ar@{-}[-1,1]+<0.0pc,-0.125pc>&&\ar@{-}[1,2]&&&\\
\ar@{-}[2,0] &&&\save[]+<0.2pc,0pt> \Drop{}\ar@{-}[1,0]+<0.2pc,0pc>\restore&&&&&\ar@{-}[4,0] &&\ar@{-}[6,0]\\
&&&\save[]+<0.2pc,0pc> \Drop{}\ar@{-}`l/4pt [1,-1] [1,-1] \ar@{-}`r [1,2] [1,2]\restore&&&&&&&\\
\ar@{-}[1,1]+<-0.1pc,0.1pc> &&\ar@{-}[2,-2]&&&\ar@{-}[1,0]&&&&&&\\
&&&&&\ar@{-}`l/4pt [1,-1] [1,-1] \ar@{-}`r [1,1] [1,1]&&&&&&\\
\ar@{-}[1,0]+<0pc,-0.5pt>&&\ar@{-}[-1,-1]+<0.1pc,-0.1pc>\ar@{-}[4,0]&&\ar@{-}[2,0]&&\ar@{-}[1,1]+<-0.125pc,0.0pc> \ar@{-}[1,1]+<0.0pc,0.125pc>&&\ar@{-}[2,-2]&&\\
&&&&&&&&&&\\
*+[o]+<0.37pc>[F]{\ov{\mathbf{u}}}\ar@{-}[2,0]+<0pc,0pc>&&&&\ar@/^0.1pc/ @{-}[2,2] \ar@/_0.1pc/ @{-}[2,2]&& \ar@/^0.1pc/ @{-}[2,-2]\ar@/_0.1pc/ @{-}[2,-2] &&\ar@{-}[-1,-1]+<0.125pc,0.0pc> \ar@{-}[-1,-1]+<0.0pc,-0.125pc> \ar@{-}[1,1]+<-0.125pc,0.0pc> \ar@{-}[1,1]+<0.0pc,0.125pc>&&\ar@{-}[2,-2]\\
&&&&&&&&&&\\
\ar@{-}[8,1]&&\ar@{-}[1,1]+<-0.125pc,0.0pc> \ar@{-}[1,1]+<0.0pc,0.125pc>&&\ar@{-}[2,-2]&&\ar@{-}[1,0]+<0pc,-0.5pt> &&\ar@{-}[1,0]+<0pc,-1pt>&&\ar@{-}[1,0]+<0pc,-1pt> \ar@{-}[-1,-1]+<0.125pc,0.0pc> \ar@{-}[-1,-1]+<0.0pc,-0.125pc>\\
&&&&&&&&&&\\
&&\ar@{-}[1,0]+<0pc,-2pt>&&\ar@{-}[1,0]+<0pc,-2pt>\ar@{-}[-1,-1]+<0.125pc,0.0pc>\ar@{-}[-1,-1]+<0.0pc,-0.125pc>& &*+[o]+<0.40pc>[F]{\mathbf{u}}\ar@{-}[2,0]+<0pc,0pc> &&&*+<0.1pc>[F]{\,\,\,\wt{\beta}'\,\,\,}\ar@{-}[1,0]&\\
&&&&&&&&&\ar@{-}[1,-1]&\\
&&&*+<0.1pc>[F]{\,\,\,\wt{\beta}\,\,\,}\ar@{-}[2,0]&&&\ar@{-}`d/4pt [1,1] `[0,2] [0,2]&&&&\\
&&&&&&&\ar@{-}[1,0]&&&\\
&&&\ar@{-}`d/4pt [1,2] `[0,4] [0,4]&&&&&&&\\
&&&&&\ar@{-}[1,0]&&&&&\\
&\ar@{-}`d/4pt [1,2] `[0,4] [0,4]&&&&&&&&&\\
&&&\ar@{-}[1,0]&&&&&&&\\
&&&&&&&&&&
}}
\grow{\xymatrix@!0{
\\\\\\\\\\\\\\\\\\\\\\\\
\save\go+<0pt,0pt>\Drop{\txt{$=$}}\restore
}}
\grow{\xymatrix@!0{
&&\save\go+<0pt,3pt>\Drop{D}\restore \ar@{-}[2,0]&&\save\go+<0pt,3pt>\Drop{D'}\restore \ar@{-}[2,2] &&\save\go+<0pt,3pt>\Drop{C}\restore \ar@{-}[1,-1]+<0.125pc,0.0pc> \ar@{-}[1,-1]+<0.0pc,0.125pc> &&&&\save\go+<0pt,3pt>\Drop{C'}\restore \ar@{-}[3,0]\\
&&&&&&&&&&\\
&&\ar@{-}[2,2]&&\ar@{-}[1,-1]+<0.125pc,0.0pc> \ar@{-}[1,-1]+<0.0pc,0.125pc>\ar@{-}[-1,1]+<-0.125pc,0.0pc> \ar@{-}[-1,1]+<0.0pc,-0.125pc>&&\ar@{-}[1,2]&&&&\\
&&&&&&&&\ar@{-}[2,2]&&\ar@{-}[1,-1]+<0.125pc,0.0pc> \ar@{-}[1,-1]+<0.0pc,0.125pc>\\
&&\ar@{-}[2,-2]\ar@{-}[-1,1]+<-0.125pc,0.0pc> \ar@{-}[-1,1]+<0.0pc,-0.125pc>&&\ar@{-}[4,0] &&&&&&\\
&&&&&&&&\ar@{-}[1,-1]\ar@{-}[-1,1]+<-0.125pc,0.0pc> \ar@{-}[-1,1]+<0.0pc,-0.125pc>&&\ar@{-}[4,0]\\
\ar@{-}[7,0]&&&&&&&\ar@{-}[1,0]&&&\\
&&&&&&&\ar@{-}`l/4pt [1,-1] [1,-1] \ar@{-}`r [1,1] [1,1]&&&\\
&&&&\ar@{-}[2,2]&&\ar@{-}[1,-1]+<0.125pc,0.0pc> \ar@{-}[1,-1]+<0.0pc,0.125pc>&&\ar@{-}[1,0]&&\\
&&&&&&&&\ar@{-}[1,1]+<-0.125pc,0.0pc> \ar@{-}[1,1]+<0.0pc,0.125pc>&&\ar@{-}[2,-2]\\
&&&&\ar@{-}[1,-1]\ar@{-}[-1,1]+<-0.125pc,0.0pc> \ar@{-}[-1,1]+<0.0pc,-0.125pc>&&\ar@{-}[3,0]&&&&\\
&&&\ar@{-}[1,0]&&&&&\ar@{-}[1,0]+<0pc,-1pt>&&\ar@{-}[1,0]+<0pc,-1pt> \ar@{-}[-1,-1]+<0.125pc,0.0pc>\ar@{-}[-1,-1]+<0.0pc,-0.125pc>\\
&&&\ar@{-}`l/4pt [1,-1] [1,-1] \ar@{-}`r [1,1] [1,1]&&&&&&&\\
\ar@{-}[1,1]+<-0.1pc,0.1pc> && \ar@{-}[2,-2]&&\ar@/^0.1pc/ @{-}[2,2] \ar@/_0.1pc/ @{-}[2,2]&& \ar@/^0.1pc/ @{-}[2,-2]\ar@/_0.1pc/ @{-}[2,-2]&&&*+<0.1pc>[F]{\,\,\,\wt{\beta}'\,\,\,}\ar@{-}[1,0]&\\
&&&&&&&&&\ar@{-}[5,-1]&\\
\ar@{-}[1,0]+<0pc,-0.5pt>&&\ar@{-}[-1,-1]+<0.1pc,-0.1pc> \ar@{-}[1,1]+<-0.125pc,0.0pc> \ar@{-}[1,1]+<0.0pc,0.125pc>&&\ar@{-}[2,-2]&&\ar@{-}[1,0]+<0pc,-0.5pt>&&&&\\
&&&&&&&&&&\\
*+[o]+<0.37pc>[F]{\ov{\mathbf{u}}}\ar@{-}[2,0]+<0pc,0pc>&&\ar@{-}[1,0]+<0pc,-2pt>&& \ar@{-}[1,0]+<0pc,-2pt> \ar@{-}[-1,-1]+<0.125pc,0.0pc>\ar@{-}[-1,-1]+<0.0pc,-0.125pc>&&*+[o]+<0.40pc>[F]{\mathbf{u}}\ar@{-}[2,0]+<0pc,0pc>&&&&\\
&&&&&&&&&&\\
\ar@{-}[4,1]&&&*+<0.1pc>[F]{\,\,\,\wt{\beta}\,\,\,}\ar@{-}[2,0]&&&\ar@{-}`d/4pt [1,1] `[0,2] [0,2]&&&&\\
&&&&&&&\ar@{-}[1,0]&&&\\
&&&\ar@{-}`d/4pt [1,2] `[0,4] [0,4]&&&&&&&\\
&&&&&\ar@{-}[1,0]&&&&&\\
&\ar@{-}`d/4pt [1,2] `[0,4] [0,4]&&&&&&&&&\\
&&&\ar@{-}[1,0]&&&&&&&\\
&&&&&&&&&&
}}
\grow{\xymatrix@!0{
\\\\\\\\\\\\\\\\\\\\\\\\
\save\go+<0pt,0pt>\Drop{\txt{$=$}}\restore
}}
\grow{\xymatrix@!0{\\\\
&&\save\go+<0pt,3pt>\Drop{D}\restore \ar@{-}[2,0] && \save\go+<0pt,3pt>\Drop{D'}\restore \ar@{-}[2,2] &&\save\go+<0pt,3pt>\Drop{C}\restore \ar@{-}[1,-1]+<0.125pc,0.0pc> \ar@{-}[1,-1]+<0.0pc,0.125pc>&&& \save\go+<0pt,3pt>\Drop{C'}\restore \ar@{-}[1,0]&\\
&&&&&&&&&\ar@{-}`l/4pt [1,-1] [1,-1] \ar@{-}`r [1,1] [1,1]&\\
&&\ar@{-}[2,2]&&\ar@{-}[1,-1]+<0.125pc,0.0pc> \ar@{-}[1,-1]+<0.0pc,0.125pc>\ar@{-}[-1,1]+<-0.125pc,0.0pc> \ar@{-}[-1,1]+<0.0pc,-0.125pc>&&\ar@{-}[2,2]&&\ar@{-}[1,-1]+<0.125pc,0.0pc> \ar@{-}[1,-1]+<0.0pc,0.125pc>&&\ar@{-}[3,0]+<0pc,-1pt>\\
&&&&&&&&&&\\
&&\ar@{-}[2,-2]\ar@{-}[-1,1]+<-0.125pc,0.0pc> \ar@{-}[-1,1]+<0.0pc,-0.125pc>&&\ar@{-}[2,2]&&\ar@{-}[1,-1]+<0.125pc,0.0pc> \ar@{-}[1,-1]+<0.0pc,0.125pc>\ar@{-}[-1,1]+<-0.125pc,0.0pc> \ar@{-}[-1,1]+<0.0pc,-0.125pc>&&\ar@{-}[1,0]+<0pc,-1pt>&&\\
&&&&&&&&&&\\
\ar@{-}[3,0]&&&&\ar@{-}[1,-1]\ar@{-}[-1,1]+<-0.125pc,0.0pc> \ar@{-}[-1,1]+<0.0pc,-0.125pc>&&\ar@{-}[3,0]&&&*+<0.1pc>[F]{\,\,\,\wt{\beta}'\,\,\,}\ar@{-}[2,0]&\\
&&&\ar@{-}[1,0]&&&&&&&\\
&&&\ar@{-}`l/4pt [1,-1] [1,-1] \ar@{-}`r [1,1] [1,1]&&&&&&\ar@{-}[7,-1]&\\
\ar@{-}[1,1]+<-0.1pc,0.1pc> && \ar@{-}[2,-2]&&\ar@/^0.1pc/ @{-}[2,2] \ar@/_0.1pc/ @{-}[2,2]&& \ar@/^0.1pc/ @{-}[2,-2]\ar@/_0.1pc/ @{-}[2,-2]&&&&\\
&&&&&&&&&&\\
\ar@{-}[1,0]+<0pc,-0.5pt>&&\ar@{-}[-1,-1]+<0.1pc,-0.1pc>\ar@{-}[1,1]+<-0.125pc,0.0pc> \ar@{-}[1,1]+<0.0pc,0.125pc>&&\ar@{-}[2,-2]&&\ar@{-}[1,0]+<0pc,-0.5pt>&&&&\\
&&&&&&&&&&\\
*+[o]+<0.40pc>[F]{\mathbf{u}}\ar@{-}[2,0]+<0pc,0pc>&&\ar@{-}[1,0]+<0pc,-2pt>&&\ar@{-}[-1,-1]+<0.125pc,0.0pc> \ar@{-}[-1,-1]+<0.0pc,-0.125pc>\ar@{-}[1,0]+<0pc,-2pt>&&*+[o]+<0.37pc>[F]{\boldsymbol{\gamma}}\ar@{-}[2,0]+<0pc,-0.5pt>&&&&\\
&&&&&&&&&&\\
\ar@{-}[4,1]&&&*+<0.1pc>[F]{\,\,\,\wt{\beta}\,\,\,}\ar@{-}[2,0]&&&\ar@{-}`d/4pt [1,1] `[0,2] [0,2]&&&&\\
&&&&&&&\ar@{-}[1,0]&&&\\
&&&\ar@{-}`d/4pt [1,2] `[0,4] [0,4]&&&&&&&\\
&&&&&\ar@{-}[1,0]&&&&&\\
&\ar@{-}`d/4pt [1,2] `[0,4] [0,4]&&&&&&&&&\\
&&&\ar@{-}[1,0]&&&&&&&\\
&&&&&&&&&&
}}
\grow{\xymatrix@!0{
\\\\\\\\\\\\\\\\\\\\\\\\
\save\go+<0pt,0pt>\Drop{\txt{$=$}}\restore
}}
\grow{\xymatrix@!0{
&&\save\go+<0pt,3pt>\Drop{D}\restore \ar@{-}[1,0] &&&\save\go+<0pt,3pt>\Drop{D'}\restore \ar@{-}[2,2]&& \save\go+<0pt,3pt>\Drop{C}\restore \ar@{-}[1,-1]+<0.125pc,0.0pc> \ar@{-}[1,-1]+<0.0pc,0.125pc>&&&\save\go+<0pt,3pt>\Drop{C'}\restore  \ar@{-}[1,0]&\\
&&\ar@{-}[1,-1]&&&&&&&&\ar@{-}`l/4pt [1,-1] [1,-1] \ar@{-}`r [1,1] [1,1]&\\
&\ar@{-}[7,0]&&&&\ar@{-}[2,-2]\ar@{-}[-1,1]+<-0.125pc,0.0pc> \ar@{-}[-1,1]+<0.0pc,-0.125pc>&&\ar@{-}[2,2]&& \ar@{-}[1,-1]+<0.125pc,0.0pc> \ar@{-}[1,-1]+<0.0pc,0.125pc>&&\ar@{-}[3,0]+<0pc,-1pt>\\
&&&&&&&&&&&\\
&&&\ar@{-}[3,0]&&&&\ar@{-}[1,-1]\ar@{-}[-1,1]+<-0.125pc,0.0pc> \ar@{-}[-1,1]+<0.0pc,-0.125pc>&& \ar@{-}[1,0]+<0pc,-1pt>&&\\
&&&&&&\ar@{-}[1,0]&&&&&\\
&&&&&&\ar@{-}`l/4pt [1,-1] [1,-1] \ar@{-}`r [1,1] [1,1]&&&&*+<0.1pc>[F]{\,\,\,\wt{\beta}'\,\,\,}\ar@{-}[8,0]&\\
&&&\ar@{-}[1,1]+<-0.1pc,0.1pc> && \ar@{-}[2,-2]&&\ar@{-}[1,1]&&&&\\
&&&&&&&&\ar@{-}[2,0]+<0pc,-0.5pt>&&&\\
&\ar@{-}[2,2]&&\ar@{-}[1,-1]+<0.125pc,0.0pc> \ar@{-}[1,-1]+<0.0pc,0.125pc> &&\ar@{-}[-1,-1]+<0.1pc,-0.1pc>\ar@{-}[1,1]&&&&&&\\
&&&&&&\ar@{-}[5,0]+<0pc,-2pt>&&&&&\\
&\ar@{-}[1,0]\ar@{-}[-1,1]+<-0.125pc,0.0pc> \ar@{-}[-1,1]+<0.0pc,-0.125pc>&&\ar@{-}[1,1]&&&&&*+[o]+<0.40pc>[F]{\mathbf{u}}\ar@{-}[3,0]+<0pc,0pc>&&&\\
&\ar@{-}`l/4pt [1,-1] [1,-1] \ar@{-}`r [1,1] [1,1]&&&\ar@{-}[3,0]+<0pc,-2pt>&&&&&&&\\
\ar@{-}[1,0]+<0pc,-0.5pt>&&\ar@{-}`d/4pt [1,2][1,2]&&&&&&&&&\\
&&&&&&&&\ar@{-}`d/4pt [1,1] `[0,2] [0,2]&&&\\
*+[o]+<0.40pc>[F]{\mathbf{u}}\ar@{-}[7,0]+<0pc,0pc>&&&&&&&&&\ar@{-}[3,0]&&\\
&&&&&*+<0.1pc>[F]{\,\,\,\wt{\beta}\,\,\,}\ar@{-}[2,0]&&&&&&\\
&&&&&&&&&&&\\
&&&&&\ar@{-}`d/4pt [1,2] `[0,4] [0,4]&&&&&&\\
&&&&&&&\ar@{-}[1,0]&&&&\\
&&&&&&&\ar@{-}[2,-3]&&&&\\
&&&&&&&&&&&\\
\ar@{-}`d/4pt [1,2] `[0,4] [0,4]&&&&&&&&&&&\\
&&\ar@{-}[1,0]&&&&&&&&&\\
&&&&&&&&&&&
}}
\grow{\xymatrix@!0{
\\\\\\\\\\\\\\\\\\\\\\\\
\save\go+<0pt,0pt>\Drop{\txt{,}}\restore
}}
$$
where the first equality follows from Lemma~\ref{mengano5}, the second and third ones are easy to check (and left to the reader), and the last one follows from Lemma~\ref{mengano4}. So, in order to finish the proof it suffices to note that the first diagram represents $\wt{\vartheta}\bigl(\wt{\theta}(\wt{\beta})\wt{\bullet} \wt{\theta}(\wt{\beta}')\bigr)$ and that this map induces $(-1)^{rs'}\vartheta \bigl(\theta(\beta)\bullet \theta(\beta') \bigr)$.\qed

\begin{lemma}\label{mengano6} Let $C:=H^{\ot_c^s}$, $C':=H^{\ot_c^{s'}}$ and $D:=A^{\ot_k^r}$. We have:
$$
%
\grow{\xymatrix@!0{
&\save\go+<0pt,3pt>\Drop{C'}\restore \ar@{-}[1,0]&&&& \save\go+<0pt,3pt>\Drop{C}\restore \ar@{-}[1,0] &&& \save\go+<0pt,3pt>\Drop{D'}\restore \ar@{-}[2,0]\\
&\ar@{-}`l/4pt [1,-1] [1,-1] \ar@{-}`r [1,1] [1,1]&&&&\ar@{-}`l/4pt [1,-1] [1,-1] \ar@{-}`r [1,1] [1,1]&&&\\
\ar@{-}[2,0]&&\ar@{-}[1,1]+<-0.1pc,0.1pc> && \ar@{-}[2,-2]&&\ar@/^0.1pc/ @{-}[2,2] \ar@/_0.1pc/ @{-}[2,2]&& \ar@/^0.1pc/ @{-}[2,-2]\ar@/_0.1pc/ @{-}[2,-2]\\
&&&&&&&&\\
\ar@{-}[1,1]+<-0.1pc,0.1pc> && \ar@{-}[2,-2]&&\ar@{-}[-1,-1]+<0.1pc,-0.1pc>\ar@{-}[2,0]&&\ar@{-}[5,0]+<0pc,-1pt> && \ar@{-}[9,0]\\
&&&&&&&&\\
\ar@{-}[1,0]+<0pc,-0.5pt>&&\ar@{-}[-1,-1]+<0.1pc,-0.1pc>\ar@{-}[1,0]+<0pc,-0.5pt>&&\ar@{-}[1,0]+<0pc,-0.5pt>&&&&\\
&&&&&&&&\\
*+[o]+<0.37pc>[F]{\ov{\mathbf{u}}}\ar@{-}[1,0]+<0pc,-1pt>&&*+[o]+<0.37pc>[F]{\ov{\mathbf{u}}}\ar@{-}[1,0] +<0pc,-1pt> &&*+[o]+<0.37pc>[F]{\boldsymbol{\gamma}} \ar@{-}[1,0]+<0pc,-1pt>&&&&\\
\ar@{-}`d/4pt [1,1] `[0,2] [0,2]&&&&&&&&\\
&\ar@{-}[1,0]&&&&*+<0.1pc>[F]{\,\widetilde{\theta}(\widetilde{\beta})\,}\ar@{-}[1,0]&&&\\
&\ar@{-}`d/4pt [1,2] `[0,4] [0,4]&&&&&&&\\
&&&\ar@{-}[1,0]&&&&&\\
&&&&&&&&
}}
\grow{\xymatrix@!0{
\\\\\\\\\\\\\\
\save\go+<0pt,0pt>\Drop{\txt{$=$}}\restore
}}
\grow{\xymatrix@!0{\\\\
\save\go+<0pt,3pt>\Drop{C'}\restore \ar@{-}[2,0]&&&\save\go+<0pt,3pt>\Drop{C}\restore \ar@{-}[1,0] &&& \save\go+<0pt,3pt>\Drop{D'}\restore \ar@{-}[2,0]\\
&&&\ar@{-}`l/4pt [1,-1] [1,-1] \ar@{-}`r [1,1] [1,1]&&&\\
\ar@{-}[1,1]+<-0.1pc,0.1pc> && \ar@{-}[2,-2]&&\ar@/^0.1pc/ @{-}[2,2] \ar@/_0.1pc/ @{-}[2,2]&& \ar@/^0.1pc/ @{-}[2,-2]\ar@/_0.1pc/ @{-}[2,-2]\\
&&&&&&\\
\ar@{-}[1,0]+<0pc,-0.5pt>&&\ar@{-}[-1,-1]+<0.1pc,-0.1pc>\ar@{-}[1,1]+<-0.125pc,0.0pc> \ar@{-}[1,1]+<0.0pc,0.125pc>&&\ar@{-}[2,-2]&&\ar@{-}[7,0]\\
&&&&&&\\
*+[o]+<0.37pc>[F]{\ov{\mathbf{u}}}\ar@{-}[3,0]&&\ar@{-}[1,0]+<0pc,-1pt>&& \ar@{-}[-1,-1]+<0.125pc,0.0pc>\ar@{-}[-1,-1]+<0.0pc,-0.125pc>\ar@{-}[1,0]+<0pc,-1pt>&&\\
&&&&&&\\
&&&*+<0.1pc>[F]{\,\,\,\widetilde{\beta}\,\,\,}\ar@{-}[1,0]&&&\\
\ar@{-}`d/4pt [1,1]+<0.2pc,0pc> `[0,3] [0,3]&&&&&&\\
&\save[]+<0.2pc,0pc> \Drop{}\ar@{-}[1,0]+<0.2pc,0pc>\restore&&&&&\\
&\save[]+<0.2pc,0pc> \Drop{}\restore&&&&&
}}
\grow{\xymatrix@!0{
\\\\\\\\\\\\\\
\save\go+<0pt,0pt>\Drop{\txt{,}}\restore
}}
$$
where

\begin{itemize}

\smallskip

\item[-] $\boldsymbol{\gamma}$ denotes the maps $\gamma^{\ot_k^{s'}}$ and $\gamma^{\ot_{\!A}^{s'}}$,

\smallskip

\item[-] $\mu$ denotes the map $\mu_{s'}$,

\smallskip

\item[-] $\mathbf{u}:=\mu_{s'}\xcirc \gamma^{\ot_k^{s'}}$ and $\ov{\mathbf{u}}$ denotes both the maps $\mu_s\xcirc \ov{\gamma}^{\ot_k^s}\xcirc \gc_s$ and $\mu_{s'}\xcirc \ov{\gamma}^{\ot_k^{s'}}\xcirc \gc_{s'}$.

\smallskip

\end{itemize}

\end{lemma}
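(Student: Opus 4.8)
\textbf{Proof plan for Lemma~\ref{mengano6}.} The statement is a diagrammatic identity comparing two ways of composing the convolution-inverse bundle $\ov{\mathbf{u}}$, the braiding $\s_{s',r}$ of $C'$ past $D'$, and the map $\wt{\theta}(\wt{\beta})$ against the raw comultiplications. The left-hand side feeds $C'$, $C$, $D'$ through $\s$, then applies $\ov{\mathbf{u}}$ twice, then the coproduct on $C$ and $\wt{\theta}(\wt{\beta})$; the right-hand side first splits $C'$ by its comultiplication, pushes one leg through $\s$ past $D'$, applies $\ov{\mathbf{u}}$, and only then applies $\wt{\beta}$. The plan is to unfold the definition of $\wt{\theta}(\wt{\beta})$ on the left (it contributes exactly a $\boldsymbol{\gamma}$-cap $\mathbf{u}$ followed by the coproduct of $C$ and $\wt{\beta}$, by the displayed formula for $\wt{\theta}$ just before Definition~\ref{producto bullet}--wait, before this lemma), so that both sides become statements purely about how $\ov{\mathbf{u}}$, $\mathbf{u}$, $\s$ and the comultiplications of $C$, $C'$ interact, with $\wt{\beta}$ sitting inertly at the bottom.

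\textbf{Key steps.} First I would rewrite the left side by inserting the graphical definition of $\wt{\theta}(\wt{\beta})$, turning the ``$\boldsymbol{\gamma}$ then $\wt{\theta}(\wt{\beta})$'' tail into ``$\mathbf{u}$ then $\De_C$ then $\wt{\beta}$''; after this the only nontrivial content is the interaction of the two copies of $\ov{\mathbf{u}}$ (on $C$ and on $C'$) with the braiding that moves $C'$ past $D'$. Second, I would invoke Lemma~\ref{inversa de convolucion}, which identifies $\ov{\mathbf{u}} = \mu_{s}\xcirc\ov{\gamma}^{\ot_k^s}\xcirc\gc_s$ as the genuine convolution inverse of $\mathbf{u}=\mu_s\xcirc\gamma^{\ot_k^s}$ on the coalgebra $H^{\ot_c^s}$; this is what lets a $\mathbf{u}$-cap on $C$ collapse against the copy of $C$ that $\ov{\mathbf{u}}$ already acts on, reducing the $C$-part of the diagram to the identity on $C$ composed with $\wt{\beta}$. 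Third, for the $C'$-part, I would use the compatibility of $\s_{s',r}$ (respectively $\cc$) with comultiplication --- this is exactly Lemma~\ref{formula para chi}, and its variant Lemma~\ref{mengano3} --- to slide the braiding through the coproduct of $C'$, matching the way the right side first comultiplies $C'$ and then braids. The remaining reconciliation is bookkeeping of which leg of $\De_{C'}$ goes through $\ov{\mathbf{u}}$ versus down to $\wt{\beta}$, which follows the same pattern already established in Lemmas~\ref{mengano1}, \ref{mengano4} and~\ref{mengano5}; in particular the coassociativity of $\De_{C'}$ together with $\ov{\mathbf{u}}$ being a convolution inverse makes the two groupings agree.

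\textbf{Main obstacle.} The hard part will be the $C'$ bookkeeping: on the left, $C'$ is braided past $D'$ \emph{before} the $\ov{\mathbf{u}}$ cancellations happen, while on the right it is comultiplied first, so the two $\ov{\mathbf{u}}$'s on the left must be reorganized --- one of them cancels against a hidden $\mathbf{u}$ coming from $\wt{\theta}(\wt{\beta})$, and what survives must be recognized as the single $\ov{\mathbf{u}}$ on the right composed with the coproduct of $C'$. Making this precise requires careful use of Lemma~\ref{formula para chi} to commute braiding and comultiplication, and of Lemma~\ref{inversa de convolucion} (via the counit axiom $\mathbf{u}*\ov{\mathbf{u}}=\eta\xcirc\varepsilon$) to perform the collapse; once those two tools are lined up the identity is forced, as in the proof of Lemma~\ref{mengano5}. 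I would therefore present the proof as a short chain of graphical equalities, citing Lemma~\ref{formula para chi}, Lemma~\ref{mengano3} and Lemma~\ref{inversa de convolucion} at the appropriate steps, and leaving the purely mechanical rebracketing to the reader as is done elsewhere in this appendix.
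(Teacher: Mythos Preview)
Your core approach is correct and matches the paper: unfold the definition of $\wt{\theta}(\wt{\beta})$, then use Lemma~\ref{inversa de convolucion} to collapse the resulting $\ov{\mathbf{u}}$ against the $\mathbf{u}$ that appears. The paper's proof cites exactly these two ingredients and nothing else.

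However, you are substantially overcomplicating the argument. The paper does \emph{not} invoke Lemma~\ref{formula para chi}, Lemma~\ref{mengano3}, or any of Lemmas~\ref{mengano1}, \ref{mengano4}, \ref{mengano5}; the ``main obstacle'' you describe is not present. Once $\wt{\theta}(\wt{\beta})$ is expanded (recall that $\boldsymbol{\nu}_{\!A}\xcirc\boldsymbol{\gamma} = (\boldsymbol{\gamma}\ot_k C)\xcirc\Delta_C$, so precomposing with $\boldsymbol{\gamma}$ turns the $\ov{\mu}$ in the definition into $\mathbf{u}$), a single application of coassociativity on $C$ places $\ov{\mathbf{u}}$ and $\mathbf{u}$ on adjacent legs of the same comultiplication, and Lemma~\ref{inversa de convolucion} kills them. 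The $C'$-strand and the $\chi$-crossing with $D'$ are passive throughout: they sit on strands that are not touched by the unfolding or the cancellation, so no compatibility lemma between braiding and comultiplication is required. The entire proof in the paper is a three-diagram chain: unfold $\wt{\theta}$, regroup by coassociativity, cancel via convolution inverse.
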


\begin{proof} By the definition of $\wt{\theta}$ and Lemma~\ref{inversa de convolucion},
$$
%
\grow{\xymatrix@!0{\\\\\\\\
&\save\go+<0pt,3pt>\Drop{C'}\restore \ar@{-}[1,0]&&&& \save\go+<0pt,3pt>\Drop{C}\restore \ar@{-}[1,0] &&& \save\go+<0pt,3pt>\Drop{D'}\restore \ar@{-}[2,0]\\
&\ar@{-}`l/4pt [1,-1] [1,-1] \ar@{-}`r [1,1] [1,1]&&&&\ar@{-}`l/4pt [1,-1] [1,-1] \ar@{-}`r [1,1] [1,1]&&&\\
\ar@{-}[2,0]&&\ar@{-}[1,1]+<-0.1pc,0.1pc> && \ar@{-}[2,-2]&&\ar@/^0.1pc/ @{-}[2,2] \ar@/_0.1pc/ @{-}[2,2]&& \ar@/^0.1pc/ @{-}[2,-2]\ar@/_0.1pc/ @{-}[2,-2]\\
&&&&&&&&\\
\ar@{-}[1,1]+<-0.1pc,0.1pc> && \ar@{-}[2,-2]&&\ar@{-}[-1,-1]+<0.1pc,-0.1pc>\ar@{-}[2,0]&&\ar@{-}[5,0]+<0pc,-1pt> && \ar@{-}[9,0]\\
&&&&&&&&\\
\ar@{-}[1,0]+<0pc,-0.5pt>&&\ar@{-}[-1,-1]+<0.1pc,-0.1pc>\ar@{-}[1,0]+<0pc,-0.5pt>&&\ar@{-}[1,0]+<0pc,-0.5pt>&&&&\\
&&&&&&&&\\
*+[o]+<0.37pc>[F]{\ov{\mathbf{u}}}\ar@{-}[1,0]+<0pc,-1pt>&&*+[o]+<0.37pc>[F]{\ov{\mathbf{u}}}\ar@{-}[1,0] +<0pc,-1pt> &&*+[o]+<0.37pc>[F]{\boldsymbol{\gamma}} \ar@{-}[1,0]+<0pc,-1pt>&&&&\\
\ar@{-}`d/4pt [1,1] `[0,2] [0,2]&&&&&&&&\\
&\ar@{-}[1,0]&&&&*+<0.1pc>[F]{\,\widetilde{\theta}(\widetilde{\beta})\,}\ar@{-}[1,0]&&&\\
&\ar@{-}`d/4pt [1,2] `[0,4] [0,4]&&&&&&&\\
&&&\ar@{-}[1,0]&&&&&\\
&&&&&&&&
}}
\grow{\xymatrix@!0{
\\\\\\\\\\\\\\\\\\\\\\
\save\go+<0pt,0pt>\Drop{\txt{$=$}}\restore
}}
\grow{\xymatrix@!0{
\save\go+<0pt,3pt>\Drop{C'}\restore \ar@{-}[2,0]&&&&\ar@{-}[1,0]\save\go+<0pt,3pt>\Drop{C}\restore \ar@{-}[1,0] &&&& \save\go+<0pt,3pt>\Drop{D'}\restore \ar@{-}[2,0]\\
&&&&\ar@{-}`l/4pt [1,-2] [1,-2] \ar@{-}`r [1,2] [1,2]&&&&\\
\ar@{-}[1,1]+<-0.1pc,0.1pc> && \ar@{-}[2,-2]&&&&\ar@/^0.1pc/ @{-}[2,2] \ar@/_0.1pc/ @{-}[2,2]&& \ar@/^0.1pc/ @{-}[2,-2]\ar@/_0.1pc/ @{-}[2,-2]\\
&&&&&&&&\\
\ar@{-}[3,0]+<0pc,-0.5pt>&&\ar@{-}[-1,-1]+<0.1pc,-0.1pc>\ar@{-}[1,1]&&&&\ar@{-}[1,0]&&\ar@{-}[2,2]\\
&&&\ar@{-}[1,0]&&&\ar@{-}[2,2]&&&&\\
&&&\ar@{-}`l/4pt [1,-1] [1,-1] \ar@{-}`r [1,1] [1,1]&&&&&&&\ar@{-}[15,0]\\
\ar@{-}[0,0]+<0pc,-0.5pt>&&\ar@{-}[0,0]+<0pc,-0.5pt>&&\ar@{-}[0,0]+<0pc,-0.5pt>&&&&\ar@{-}[4,0]&&\\
*+[o]+<0.37pc>[F]{\ov{\mathbf{u}}}\ar@{-}[1,0]&&*+[o]+<0.37pc>[F]{\ov{\mathbf{u}}}\ar@{-}[1,0]&& *+[o]+<0.37pc>[F]{\boldsymbol{\gamma}} \ar@{-}[3,0]+<0pc,-0.5pt>&&&&&&\\
\ar@{-}`d/4pt [1,1] `[0,2] [0,2]&&&&&&&&&&\\
&\ar@{-}[9,0]&&&\ar@{-}`r/4pt [1,2] [1,2]&&&&&&\\
&&&&&& \ar@{-}[1,1]+<-0.125pc,0.0pc> \ar@{-}[1,1]+<0.0pc,0.125pc>&&\ar@{-}[2,-2]&&\\
&&&&*+[o]+<0.40pc>[F]{\mu}\ar@{-}[4,0]+<0pc,0pc>&&&&&&\\
&&&&&&\ar@{-}[1,0]+<0pc,-1pt>&&\ar@{-}[-1,-1]+<0.125pc,0.0pc>\ar@{-}[-1,-1]+<0.0pc,-0.125pc> \ar@{-}[1,0]+<0pc,-1pt>&&\\
&&&&&&&&&&\\
&&&&&&&*+<0.1pc>[F]{\,\,\,\widetilde{\beta}\,\,\,}\ar@{-}[1,0]&&&\\
&&&&\ar@{-}`d/4pt [1,1]+<0.2pc,0pc> `[0,3] [0,3]&&&&&&\\
&&&&&\save[]+<0.2pc,0pc> \Drop{}\ar@{-}[1,0]+<0.2pc,0pc>\restore&&&&&\\
&&&&&\save[]+<0.2pc,0pc> \Drop{}\ar@{-}[1,0]\restore&&&&&\\
&\ar@{-}`d/4pt [1,2] `[0,4] [0,4]&&&&&&&&&\\
&&&\ar@{-}[1,0]&&&&&&&\\
&&&&&&&&&&
}}
\grow{\xymatrix@!0{
\\\\\\\\\\\\\\\\\\\\\\
\save\go+<0pt,0pt>\Drop{\txt{$=$}}\restore
}}
\grow{\xymatrix@!0{\\\\
&\save\go+<0pt,3pt>\Drop{C'}\restore \ar@{-}[2,0]&&&&\save\go+<0pt,3pt>\Drop{C}\restore \ar@{-}[1,0] &&&& \save\go+<0pt,3pt>\Drop{D'}\restore \ar@{-}[2,0]\\
&&&&&\ar@{-}`l/4pt [1,-2] [1,-2] \ar@{-}`r [1,2] [1,2]&&&&\\
&\ar@{-}[1,1]+<-0.1pc,0.1pc> && \ar@{-}[3,-3]&&&&\ar@/^0.1pc/ @{-}[2,2] \ar@/_0.1pc/ @{-}[2,2]&& \ar@/^0.1pc/ @{-}[2,-2]\ar@/_0.1pc/ @{-}[2,-2]\\
&&&&&&&&&\\
&&&\ar@{-}[-1,-1]+<0.1pc,-0.1pc>\ar@{-}[1,1]&&&&\ar@{-}[5,0]&&\ar@{-}[14,0]\\
\ar@{-}[1,0]+<0pc,-0.5pt>&&&&\ar@{-}[1,0]&&&&&\\
&&&&\ar@{-}`l/4pt [1,-1] [1,-1] \ar@{-}`r [1,1] [1,1]&&&&&\\
*+[o]+<0.37pc>[F]{\ov{\mathbf{u}}}\ar@{-}[6,0]&&&\ar@{-}[1,0] &&\ar@{-}[2,0]+<0pc,-0.5pt>&&&&\\
&&&\ar@{-}`l/4pt [1,-1] [1,-1] \ar@{-}`r [1,1] [1,1]&&&&&&\\
&&\ar@{-}[0,0]+<0pc,-0.5pt>&&\ar@{-}[0,0]+<0pc,-0.5pt>&\ar@{-}[1,1]+<-0.125pc,0.0pc> \ar@{-}[1,1]+<0.0pc,0.125pc>&&\ar@{-}[2,-2]&&\\
&&*+[o]+<0.37pc>[F]{\ov{\mathbf{u}}}\ar@{-}[1,0]&&*+[o]+<0.4pc>[F]{\mathbf{u}}\ar@{-}[1,0]&&&&&\\
&&\ar@{-}`d/4pt [1,1] `[0,2] [0,2]&&&\ar@{-}[1,0]+<0pc,-1pt> &&\ar@{-}[-1,-1]+<0.125pc,0.0pc>\ar@{-}[-1,-1]+<0.0pc,-0.125pc>\ar@{-}[1,0]+<0pc,-1pt>&&\\
&&&\ar@{-}[1,0]&&&&&&\\
\ar@{-}`d/4pt [1,1]+<0.2pc,0pc> `[0,3] [0,3]&&&&&&*+<0.1pc>[F]{\,\,\,\widetilde{\beta}\,\,\,}\ar@{-}[3,0]&&&\\
&\save[]+<0.2pc,0pc> \Drop{}\ar@{-}[1,0]+<0.2pc,0pc>\restore&&&&&&&&\\
&\save[]+<0.2pc,0pc> \Drop{}\ar@{-}[1,1]\restore&&&&&&&&\\
&&\ar@{-}`d/4pt [1,2] `[0,4] [0,4]&&&&&&&\\
&&&&\ar@{-}[1,0]&&&&&\\
&&&&&&&&&
}}
\grow{\xymatrix@!0{
\\\\\\\\\\\\\\\\\\\\\\
\save\go+<0pt,0pt>\Drop{\txt{$=$}}\restore
}}
\grow{\xymatrix@!0{\\\\\\\\\\\\
\save\go+<0pt,3pt>\Drop{C'}\restore \ar@{-}[2,0]&&&\save\go+<0pt,3pt>\Drop{C}\restore \ar@{-}[1,0] &&& \save\go+<0pt,3pt>\Drop{D'}\restore \ar@{-}[2,0]\\
&&&\ar@{-}`l/4pt [1,-1] [1,-1] \ar@{-}`r [1,1] [1,1]&&&\\
\ar@{-}[1,1]+<-0.1pc,0.1pc> && \ar@{-}[2,-2]&&\ar@/^0.1pc/ @{-}[2,2] \ar@/_0.1pc/ @{-}[2,2]&& \ar@/^0.1pc/ @{-}[2,-2]\ar@/_0.1pc/ @{-}[2,-2]\\
&&&&&&\\
\ar@{-}[1,0]+<0pc,-0.5pt>&&\ar@{-}[-1,-1]+<0.1pc,-0.1pc>\ar@{-}[1,1]+<-0.125pc,0.0pc> \ar@{-}[1,1]+<0.0pc,0.125pc>&&\ar@{-}[2,-2]&&\ar@{-}[7,0]\\
&&&&&&\\
*+[o]+<0.37pc>[F]{\ov{\mathbf{u}}}\ar@{-}[3,0]&&\ar@{-}[1,0]+<0pc,-1pt>&& \ar@{-}[-1,-1]+<0.125pc,0.0pc>\ar@{-}[-1,-1]+<0.0pc,-0.125pc>\ar@{-}[1,0]+<0pc,-1pt>&&\\
&&&&&&\\
&&&*+<0.1pc>[F]{\,\,\,\widetilde{\beta}\,\,\,}\ar@{-}[1,0]&&&\\
\ar@{-}`d/4pt [1,1]+<0.2pc,0pc> `[0,3] [0,3]&&&&&&\\
&\save[]+<0.2pc,0pc> \Drop{}\ar@{-}[1,0]+<0.2pc,0pc>\restore&&&&&\\
&\save[]+<0.2pc,0pc> \Drop{}\restore&&&&&
}}
\grow{\xymatrix@!0{
\\\\\\\\\\\\\\\\\\\\\\
\save\go+<0pt,0pt>\Drop{\txt{,}}\restore
}}
$$
as desired
\end{proof}

\begin{definition} Let $r'\le r$ and $s'\le s$. For
$$
m\ot_A \ov{\bx}_{1s}\ot \ba_{1r} \in M\ot_A E^{\ot_{\!A}^s}\ot A^{\ot^r}\quad\text{and}\quad \alpha\in \Hom_{(A,E)}\bigl(E^{\ot_{\!A}^{s'}}\ot A^{\ot^{r'}},E\bigr),
$$
we define
$$
\bigl(m_{\!A}\gamma_A(\bv_{1s})\ot \ba_{1r}\bigr) \wt{\bullet} \alpha \in M\ot_A E^{\ot_{\!A}^{s-s'}}\ot A^{\ot^{r-r'}}
$$
by
$$
(m\ot_{\!A}\gamma_A(\bv_{1s})\ot \ba_{1r})\wt{\bullet} \alpha\! :=\! \sum_i m \alpha\bigl(\gamma_A(\bv_{1s'})\ot \ba_{1r'}^{(i)}\bigr)\ot_{\!A}\gamma_A(\bv_{s'+1,s}^{(i)}) \ot \ba_{r'+1,r},
$$
where $\sum_i \ba_{1r'}^{(i)}\ot_k \bv_{s'+1,s}^{(i)} := \ov{\chi}\bigl(\bv_{s'+1,s}\ot \ba_{1r'}\bigr)$.
\end{definition}

\noindent\bf Proof of Proposition~\ref{cap producto en cleft}.\rm\enspace The case $s<s'$ or $r<r'$ is trivial. Assume that $s'\le s$ and $r'\le r$. Let $C:=H^{\ot_c^s}$, $C':=H^{\ot_c^{s'}}$, $D:=A^{\ot_k^r}$ and $D':=A^{\ot_k^{r'}}$ and let
$$
\wt{\beta}\colon D\ot_k C\to E
$$
be the map induced by $\beta$. Let

\begin{itemize}

\smallskip

\item[-] $\boldsymbol{\gamma}$ denote both the maps $\gamma^{\ot_k^s}$ and $\gamma^{\ot_{\!A}^{s'}}$,

\smallskip

\item[-] $\mu$ denote the map $\mu_s$,

\smallskip

\item[-] $\mathbf{u}$ denote the map $\mu_s\xcirc \gamma^{\ot_k^s}$,

\smallskip

\item[-] $\ov{\mathbf{u}}$ denote both the maps $\mu_s\xcirc \ov{\gamma}^{\ot_k^s}\xcirc \gc_s$ and $\mu_{s'}\xcirc \ov{\gamma}^{\ot_k^{s'}}\xcirc \gc_{s'}$.

\smallskip

\end{itemize}
A direct computation shows that
\begin{align*}
\grow{\xymatrix@!0{\\\\\\
\save\go+<0pt,3pt>\Drop{M}\restore \ar@{-}[21,0]&&&&\save\go+<0pt,3pt>\Drop{D'}\restore \ar@{-}[2,0]&& \save\go+<0pt,3pt>\Drop{D}\restore \ar@{-}[2,2] && \save\go+<0pt,3pt>\Drop{C'}\restore \ar@{-}[1,-1]+<0.125pc,0.0pc> \ar@{-}[1,-1]+<0.0pc,0.125pc> && \save\go+<0pt,3pt>\Drop{C}\restore \ar@{-}[2,0]\\
&&&&&&&&&&&&&\\
&&&&\ar@{-}[2,2]&& \ar@{-}[1,-1]+<0.125pc,0.0pc> \ar@{-}[1,-1]+<0.0pc,0.125pc>\ar@{-}[-1,1]+<-0.125pc,0.0pc> \ar@{-}[-1,1]+<0.0pc,-0.125pc>&&\ar@{-}[2,2]&& \ar@{-}[1,-1]+<0.125pc,0.0pc> \ar@{-}[1,-1]+<0.0pc,0.125pc>&&&\\
&&&&&&&&&&&&&\\
&&&&\ar@{-}[2,-2]\ar@{-}[-1,1]+<-0.125pc,0.0pc> \ar@{-}[-1,1]+<0.0pc,-0.125pc>&&\ar@{-}[2,2]&& \ar@{-}[1,-1]+<0.125pc,0.0pc> \ar@{-}[1,-1]+<0.0pc,0.125pc>\ar@{-}[-1,1]+<-0.125pc,0.0pc> \ar@{-}[-1,1]+<0.0pc,-0.125pc>&&\ar@{-}[3,3]&&&\\
&&&&&&&&&&&&&\\
&&\ar@{-}[1,0] &&&&\ar@{-}[1,0] \ar@{-}[-1,1]+<-0.125pc,0.0pc> \ar@{-}[-1,1]+<0.0pc,-0.125pc>&&\ar@{-}[1,1]&&&&&\\
&&\ar@{-}`l/4pt [1,-1] [1,-1] \ar@{-}`r [1,1] [1,1]&&&&\ar@{-}`l/4pt [1,-1] [1,-1] \ar@{-}`r [1,1] [1,1]&&& \ar@{-}[1,0]&&&&\ar@{-}[9,0]\\
&\ar@{-}[2,0]&&\ar@{-}[1,1]+<-0.1pc,0.1pc> && \ar@{-}[2,-2]&&\ar@/^0.1pc/ @{-}[2,2] \ar@/_0.1pc/ @{-}[2,2]&& \ar@/^0.1pc/ @{-}[2,-2]\ar@/_0.1pc/ @{-}[2,-2]\\
&&&&&&&&&&&&&\\
&\ar@{-}[1,1]+<-0.1pc,0.1pc> && \ar@{-}[2,-2]&&\ar@{-}[-1,-1]+<0.1pc,-0.1pc>\ar@{-}[2,0]&& \ar@{-}[5,0]+<0pc,-1pt> && \ar@{-}[2,0]+<0pc,-0.5pt>&&&&\\
&&&&&&&&&&&&&\\
&\ar@{-}[1,0]+<0pc,-0.5pt>&&\ar@{-}[-1,-1]+<0.1pc,-0.1pc>\ar@{-}[1,0]+<0pc,-0.5pt>&& \ar@{-}[1,0]+<0pc,-0.5pt>&&&&&&&&\\
&&&&&&&&&*+[o]+<0.37pc>[F]{\boldsymbol{\gamma}} \ar@{-}[4,0]+<0pc,-0.5pt>&&&&\\
&*+[o]+<0.37pc>[F]{\ov{\mathbf{u}}}\ar@{-}[1,0]+<0pc,-1pt>&&*+[o]+<0.37pc>[F]{\ov{\mathbf{u}}}\ar@{-}[1,0] +<0pc,-1pt> &&*+[o]+<0.37pc>[F]{\boldsymbol{\gamma}} \ar@{-}[1,0]+<0pc,-1pt>&&&& &&&&\\
&\ar@{-}`d/4pt [1,1] `[0,2] [0,2]&&&&&&&&\ar@{-}`r/4pt [1,2] [1,2]&&&&\\
&&\ar@{-}[1,0]&&&&*+<0.1pc>[F]{\,\widetilde{\theta}(\widetilde{\beta})\,}\ar@{-}[2,0]&&&&& \ar@{-}[1,1]+<-0.125pc,0.0pc> \ar@{-}[1,1]+<0.0pc,0.125pc>&&\ar@{-}[2,-2]\\
&&\ar@{-}`d/4pt [1,-2][1,-2]&&&&&&&&&&&\\
&&&&&&\ar@{-}`d/4pt [1,-6][1,-6]&&&*+[o]+<0.40pc>[F]{\mu}\ar@{-}[1,0]+<0pc,-1pt>&&\ar@{-}[3,0]&& \ar@{-}[-1,-1]+<0.125pc,0.0pc>\ar@{-}[-1,-1]+<0.0pc,-0.125pc>\ar@{-}[3,0]\\
&&&&&&&&&\ar@{-}`d/4pt [1,-9][1,-9]&&&&\\
&&&&&&&&&&&&&\\
&&&&&&&&&&&&&
}}
& \grow{\xymatrix@!0{
\\\\\\\\\\\\\\\\\\\\\\\\
\save\go+<0pt,0pt>\Drop{\txt{$=$}}\restore
}}
\grow{\xymatrix@!0{\\\\\\
\save\go+<0pt,3pt>\Drop{M}\restore \ar@{-}[20,0]&&\save\go+<0pt,3pt>\Drop{D'}\restore \ar@{-}[2,0]&& \save\go+<0pt,3pt>\Drop{D}\restore \ar@{-}[2,2] && \save\go+<0pt,3pt>\Drop{C'}\restore \ar@{-}[1,-1]+<0.125pc,0.0pc> \ar@{-}[1,-1]+<0.0pc,0.125pc> && \save\go+<0pt,3pt>\Drop{C}\restore \ar@{-}[2,0]\\
&&&&&&&&&&\\
&&\ar@{-}[2,2]&& \ar@{-}[1,-1]+<0.125pc,0.0pc> \ar@{-}[1,-1]+<0.0pc,0.125pc>\ar@{-}[-1,1]+<-0.125pc,0.0pc> \ar@{-}[-1,1]+<0.0pc,-0.125pc>&&\ar@{-}[2,2]&&\ar@{-}[1,-1]+<0.125pc,0.0pc>\ar@{-}[1,-1]+<0.0pc,0.125pc>&&\\
&&&&&&&&&&\\
&&\ar@{-}[2,-1]\ar@{-}[-1,1]+<-0.125pc,0.0pc> \ar@{-}[-1,1]+<0.0pc,-0.125pc>&&\ar@{-}[2,2]&& \ar@{-}[1,-1]+<0.125pc,0.0pc> \ar@{-}[1,-1]+<0.0pc,0.125pc>\ar@{-}[-1,1]+<-0.125pc,0.0pc> \ar@{-}[-1,1]+<0.0pc,-0.125pc>&&\ar@{-}[2,2]&&\\
&&&&&&&&&&\\
&\ar@{-}[2,0]&&&\ar@{-}[1,0]\ar@{-}[-1,1]+<-0.125pc,0.0pc> \ar@{-}[-1,1]+<0.0pc,-0.125pc>&&\ar@{-}[1,1] &&&&\ar@{-}[10,0]\\
&&&&\ar@{-}`l/4pt [1,-1] [1,-1] \ar@{-}`r [1,1] [1,1]&&&\ar@{-}[1,0]\\
&\ar@{-}[1,1]+<-0.1pc,0.1pc> && \ar@{-}[2,-2]&&\ar@/^0.1pc/ @{-}[2,2] \ar@/_0.1pc/ @{-}[2,2]&& \ar@/^0.1pc/ @{-}[2,-2]\ar@/_0.1pc/ @{-}[2,-2]\\
&&&&&&&&&&\\
&\ar@{-}[1,0]+<0pc,-0.5pt>&&\ar@{-}[-1,-1]+<0.1pc,-0.1pc>\ar@{-}[1,1]+<-0.125pc,0.0pc> \ar@{-}[1,1]+<0.0pc,0.125pc>&&\ar@{-}[2,-2]&&\ar@{-}[5,0]\\
&&&&&&&&&&\\
&*+[o]+<0.37pc>[F]{\ov{\mathbf{u}}}\ar@{-}[3,0]&&\ar@{-}[1,0]+<0pc,-1pt>&& \ar@{-}[-1,-1]+<0.125pc,0.0pc>\ar@{-}[-1,-1]+<0.0pc,-0.125pc>\ar@{-}[1,0]+<0pc,-1pt>&&&&&\\
&&&&&&&&&&\\
&&&&*+<0.1pc>[F]{\,\,\,\widetilde{\beta}\,\,\,}\ar@{-}[1,0]&&&&&&\\
&\ar@{-}`d/4pt [1,1]+<0.2pc,0pc> `[0,3] [0,3]&&&&&&\ar@{-}`l/4pt [1,-1] [1,-1] \ar@{-}`r [1,1] [1,1]&&&\\
&&\save[]+<0.2pc,0pc> \Drop{}\ar@{-}[1,0]+<0.2pc,0pc>\restore&&&& \ar@{-}[0,0]+<0pc,-0.5pt>&&\ar@{-}[1,1]+<-0.125pc,0.0pc> \ar@{-}[1,1]+<0.0pc,0.125pc>&&\ar@{-}[2,-2]\\
&&\save[]+<0.2pc,0pc> \Drop{} \ar@{-}`d/4pt [1,-2][1,-2]\restore&&&& *+[o]+<0.4pc>[F]{\mathbf{u}}\ar@{-}[1,0]&&&&\\
&&&&&& \ar@{-}`d/4pt [1,-6][1,-6]&&\ar@{-}[2,0]&&\ar@{-}[-1,-1]+<0.125pc,0.0pc> \ar@{-}[-1,-1]+<0.0pc,-0.125pc>\ar@{-}[2,0]\\
&&&&&&&&&&\\
&&&&&&&&&&
}}
\grow{\xymatrix@!0{
\\\\\\\\\\\\\\\\\\\\\\\\
\save\go+<0pt,0pt>\Drop{\txt{$=$}}\restore
}}
\grow{\xymatrix@!0{
\save\go+<0pt,3pt>\Drop{M}\restore \ar@{-}[26,0]&&&&\save\go+<0pt,3pt>\Drop{D'}\restore \ar@{-}[2,0]&& \save\go+<0pt,3pt>\Drop{D}\restore \ar@{-}[2,2] && \save\go+<0pt,3pt>\Drop{C'}\restore \ar@{-}[1,-1]+<0.125pc,0.0pc> \ar@{-}[1,-1]+<0.0pc,0.125pc> && \save\go+<0pt,3pt>\Drop{C}\restore \ar@{-}[2,0]\\
&&&&&&&&&&&\\
&&&&\ar@{-}[2,2]&& \ar@{-}[1,-1]+<0.125pc,0.0pc> \ar@{-}[1,-1]+<0.0pc,0.125pc>\ar@{-}[-1,1]+<-0.125pc,0.0pc> \ar@{-}[-1,1]+<0.0pc,-0.125pc>&&\ar@{-}[2,2]&& \ar@{-}[1,-1]+<0.125pc,0.0pc> \ar@{-}[1,-1]+<0.0pc,0.125pc>&\\
&&&&&&&&&&&&\\
&&&&\ar@{-}[3,-3]\ar@{-}[-1,1]+<-0.125pc,0.0pc> \ar@{-}[-1,1]+<0.0pc,-0.125pc>&&\ar@{-}[2,2]&& \ar@{-}[1,-1]+<0.125pc,0.0pc> \ar@{-}[1,-1]+<0.0pc,0.125pc>\ar@{-}[-1,1]+<-0.125pc,0.0pc> \ar@{-}[-1,1]+<0.0pc,-0.125pc>&&\ar@{-}[1,1]&\\
&&&&&&&&&&&\ar@{-}[11,0]\\
&&&&&&\ar@{-}[1,-2]+<0.2pc,0pc> \ar@{-}[-1,1]+<-0.125pc,0.0pc> \ar@{-}[-1,1]+<0.0pc,-0.125pc>&&\ar@{-}[1,1]&&&\\
&\ar@{-}[2,0]&&&\save[]+<0.2pc,0pc> \Drop{}\ar@{-}[1,0]+<0.2pc,0pc>\restore&&&&&\ar@{-}[4,0]&&\\
&&&&\save[]+<0.2pc,0pc> \Drop{}\ar@{-}`l/4pt [1,-1] [1,-1] \ar@{-}`r [1,2] [1,2]\restore&&&&&&&\\
&\ar@{-}[1,1]+<-0.1pc,0.1pc> && \ar@{-}[2,-2]&&&\ar@{-}[1,0]&&&&&\\
&&&&&&\ar@{-}`l/4pt [1,-1] [1,-1] \ar@{-}`r [1,1] [1,1]&&&&&\\
&\ar@{-}[3,0]+<0pc,-0.5pt>&&\ar@{-}[4,0] \ar@{-}[-1,-1]+<0.1pc,-0.1pc>&&\ar@{-}[2,0] &&\ar@{-}[1,1]+<-0.125pc,0.0pc> \ar@{-}[1,1]+<0.0pc,0.125pc>&&\ar@{-}[2,-2]&&\\
&&&&&&&&&&&\\
&&&&&\ar@/^0.1pc/ @{-}[2,2] \ar@/_0.1pc/ @{-}[2,2]&& \ar@/^0.1pc/ @{-}[2,-2]\ar@/_0.1pc/ @{-}[2,-2]&&\ar@{-}[-1,-1]+<0.125pc,0.0pc>\ar@{-}[-1,-1]+<0.0pc,-0.125pc>\ar@{-}[3,0]&&\\
&&&&&&&&&&&\\
&*+[o]+<0.37pc>[F]{\ov{\mathbf{u}}}\ar@{-}[5,0]&&\ar@{-}[1,1]+<-0.125pc,0.0pc> \ar@{-}[1,1]+<0.0pc,0.125pc>&&\ar@{-}[2,-2]&&\ar@{-}[1,0]+<0pc,-0.5pt>&&&&\\
&&&&&&&&&\ar@{-}[1,1]+<-0.125pc,0.0pc> \ar@{-}[1,1]+<0.0pc,0.125pc>&&\ar@{-}[2,-2]\\
&&&\ar@{-}[1,0]+<0pc,-1pt>&&\ar@{-}[-1,-1]+<0.125pc,0.0pc>\ar@{-}[-1,-1]+<0.0pc,-0.125pc>\ar@{-}[1,0]+<0pc,-1pt> &&*+[o]+<0.4pc>[F]{\mathbf{u}}\ar@{-}[1,0] &&&&\\
&&&&&&&\ar@{-}[4,-1]+<0.2pc,0pc>&&\ar@{-}[8,0]&&\ar@{-}[-1,-1]+<0.125pc,0.0pc> \ar@{-}[-1,-1]+<0.0pc,-0.125pc>\ar@{-}[8,0]\\
&&&&*+<0.1pc>[F]{\,\,\,\widetilde{\beta}\,\,\,}\ar@{-}[1,0]&&&&&&&\\
&\ar@{-}`d/4pt [1,1]+<0.2pc,0pc> `[0,3] [0,3]&&&&&&&&&&\\
&&\save[]+<0.2pc,0pc> \Drop{}\ar@{-}[1,0]+<0.2pc,0pc>\restore&&&&&&&&&\\
&&\save[]+<0.2pc,0pc> \Drop{}\ar@{-}`d/4pt [1,2]+<0.1pc,0pc>  `[0,4]+<0.2pc,0pc>  [0,4]+<0.2pc,0pc> \restore &&&&&&&&&\\
&&&&\save[]+<0.1pc,0pc> \Drop{}\ar@{-}[1,0]+<0.1pc,0pc> \restore&&&&&&&\\
&&&&\save[]+<0.1pc,0pc> \Drop{}  \ar@{-}`d/4pt [1,-4][1,-4] \restore&&&&&&&\\
&&&&&&&&&&&\\
&&&&&&&&&&&
}}\\
& \grow{\xymatrix@!0{
\\\\\\\\\\\\\\\\\\\\\\
\save\go+<0pt,0pt>\Drop{\txt{$=$}}\restore
}}
\grow{\xymatrix@!0{\\
\save\go+<0pt,3pt>\Drop{M}\restore \ar@{-}[22,0]&&&\save\go+<0pt,3pt>\Drop{D'}\restore \ar@{-}[2,0]&& \save\go+<0pt,3pt>\Drop{D}\restore \ar@{-}[2,2] && \save\go+<0pt,3pt>\Drop{C'}\restore \ar@{-}[1,-1]+<0.125pc,0.0pc> \ar@{-}[1,-1]+<0.0pc,0.125pc> &&& \save\go+<0pt,3pt>\Drop{C}\restore \ar@{-}[1,0]&\\
&&&&&&&&&&\ar@{-}`l/4pt [1,-1] [1,-1] \ar@{-}`r [1,1] [1,1]&\\
&&&\ar@{-}[2,2]&& \ar@{-}[1,-1]+<0.125pc,0.0pc> \ar@{-}[1,-1]+<0.0pc,0.125pc>\ar@{-}[-1,1]+<-0.125pc,0.0pc> \ar@{-}[-1,1]+<0.0pc,-0.125pc>&&\ar@{-}[2,2]&& \ar@{-}[1,-1]+<0.125pc,0.0pc> \ar@{-}[1,-1]+<0.0pc,0.125pc>&&
\ar@{-}[20,0]\\
&&&&&&&&&&&\\
&&&\ar@{-}[2,-2]\ar@{-}[-1,1]+<-0.125pc,0.0pc> \ar@{-}[-1,1]+<0.0pc,-0.125pc>&&\ar@{-}[2,2]&& \ar@{-}[1,-1]+<0.125pc,0.0pc> \ar@{-}[1,-1]+<0.0pc,0.125pc>\ar@{-}[-1,1]+<-0.125pc,0.0pc> \ar@{-}[-1,1]+<0.0pc,-0.125pc>&&\ar@{-}[18,0]&&\\
&&&&&&&&&&&\\
&\ar@{-}[3,0]&&&&\ar@{-}[1,-1]\ar@{-}[-1,1]+<-0.125pc,0.0pc>\ar@{-}[-1,1]+<0.0pc,-0.125pc>&&\ar@{-}[3,0]&&&&\\
&&&&\ar@{-}[1,0]&&&&&&&\\
&&&&\ar@{-}`l/4pt [1,-1] [1,-1] \ar@{-}`r [1,1] [1,1]&&&&&&&\\
&\ar@{-}[1,1]+<-0.1pc,0.1pc> && \ar@{-}[2,-2]&&\ar@/^0.1pc/ @{-}[2,2] \ar@/_0.1pc/ @{-}[2,2]&& \ar@/^0.1pc/ @{-}[2,-2]\ar@/_0.1pc/ @{-}[2,-2]&&&&\\
&&&&&&&&&&&\\
&\ar@{-}[2,0]+<0pc,-0.5pt>&&\ar@{-}[-1,-1]+<0.1pc,-0.1pc>\ar@{-}[1,1]+<-0.125pc,0.0pc> \ar@{-}[1,1]+<0.0pc,0.125pc>&&\ar@{-}[2,-2]&&\ar@{-}[2,0]+<0pc,-0.5pt>&&&&\\
&&&&&&&&&&&\\
&&&\ar@{-}[1,0]+<0pc,-1pt>&&\ar@{-}[1,0]+<0pc,-1pt>\ar@{-}[-1,-1]+<0.125pc,0.0pc> \ar@{-}[-1,-1]+<0.0pc,-0.125pc>&&&&&&\\
&*+[o]+<0.37pc>[F]{\ov{\mathbf{u}}}\ar@{-}[2,0]&&&&&&*+[o]+<0.4pc>[F]{\mathbf{u}}\ar@{-}[1,0]&&&&\\
&&&&*+<0.1pc>[F]{\,\,\,\widetilde{\beta}\,\,\,}\ar@{-}[1,0]&&&\ar@{-}[3,-1]+<0.2pc,0pc>&&&&\\
&\ar@{-}`d/4pt [1,1]+<0.2pc,0pc> `[0,3] [0,3]&&&&&&&&&&\\
&&\save[]+<0.2pc,0pc> \Drop{}\ar@{-}[1,0]+<0.2pc,0pc>\restore&&&&&&&&&\\
&&\save[]+<0.2pc,0pc>\Drop{}\ar@{-}`d/4pt[1,2]+<0.1pc,0pc>`[0,4]+<0.2pc,0pc>[0,4]+<0.2pc,0pc>\restore &&&&&&&&&\\
&&&&\save[]+<0.1pc,0pc> \Drop{}\ar@{-}[1,0]+<0.1pc,0pc> \restore&&&&&&&\\
&&&&\save[]+<0.1pc,0pc> \Drop{}  \ar@{-}`d/4pt [1,-4][1,-4] \restore&&&&&&&\\
&&&&&&&&&&&\\
&&&&&&&&&&&
}}
\grow{\xymatrix@!0{
\\\\\\\\\\\\\\\\\\\\\\
\save\go+<0pt,0pt>\Drop{\txt{$=$}}\restore
}}
\grow{\xymatrix@!0{
\save\go+<0pt,3pt>\Drop{M}\restore \ar@{-}[24,0]&&&\save\go+<0pt,3pt>\Drop{D'}\restore \ar@{-}[8,0]&&& \save\go+<0pt,3pt>\Drop{D}\restore \ar@{-}[2,2] && \save\go+<0pt,3pt>\Drop{C'}\restore \ar@{-}[1,-1]+<0.125pc,0.0pc> \ar@{-}[1,-1]+<0.0pc,0.125pc> &&& \save\go+<0pt,3pt>\Drop{C}\restore \ar@{-}[1,0]&\\
&&&&&&&&&&&\ar@{-}`l/4pt [1,-1] [1,-1] \ar@{-}`r [1,1] [1,1]&\\
&&&&&&\ar@{-}[2,-1]\ar@{-}[-1,1]+<-0.125pc,0.0pc> \ar@{-}[-1,1]+<0.0pc,-0.125pc>&&\ar@{-}[2,2]&& \ar@{-}[1,-1]+<0.125pc,0.0pc> \ar@{-}[1,-1]+<0.0pc,0.125pc>&&\ar@{-}[22,0]\\
&&&&&&&&&&&&\\
&&&&&\ar@{-}[2,0]&&&\ar@{-}[1,0]\ar@{-}[-1,1]+<-0.125pc,0.0pc> \ar@{-}[-1,1]+<0.0pc,-0.125pc>&&\ar@{-}[1,1]&&\\
&&&&&&&&\ar@{-}`l/4pt [1,-1] [1,-1] \ar@{-}`r [1,1] [1,1]&&&\ar@{-}[19,0]&\\
&&&&&\ar@{-}[1,1]+<-0.1pc,0.1pc> && \ar@{-}[2,-2]&&\ar@{-}[6,0]+<0pc,-0.5pt>&&&\\
&&&&&&&&&&&&\\
&&&\ar@{-}[2,2]&& \ar@{-}[1,-1]+<0.125pc,0.0pc> \ar@{-}[1,-1]+<0.0pc,0.125pc>&&\ar@{-}[7,0]+<0pc,-1pt> \ar@{-}[-1,-1]+<0.1pc,-0.1pc>&&&&&\\
&&&&&&&&&&&&\\
&&&\ar@{-}[1,-1]\ar@{-}[-1,1]+<-0.125pc,0.0pc> \ar@{-}[-1,1]+<0.0pc,-0.125pc>&&\ar@{-}[5,0]+<0pc,-1pt> &&&&&&&\\
&&\ar@{-}[1,0]&&&&&&&&&&\\
&&\ar@{-}`l/4pt [1,-1] [1,-1] \ar@{-}`r [1,1] [1,1]&&&&&&&&&&\\
&\ar@{-}[1,0]+<0pc,-0.5pt>&&\ar@{-}`d/4pt [1,2][1,2]&&&&&&*+[o]+<0.4pc>[F]{\mathbf{u}}\ar@{-}[7,0]&&&\\
&&&&&&&&&&&&\\
&*+[o]+<0.37pc>[F]{\ov{\mathbf{u}}}\ar@{-}[1,0]&&&&&&&&&&&\\
&\ar@{-}[2,3]&&&&&*+<0.1pc>[F]{\,\,\,\widetilde{\beta}\,\,\,}\ar@{-}[2,0]&&&&&&\\
&&&&&&&&&&&&\\
&&&&\ar@{-}`d/4pt [1,1] `[0,2] [0,2]&&&&&&&&\\
&&&&&\ar@{-}[1,0]&&&&&&&\\
&&&&&\ar@{-}`d/4pt [1,2] `[0,4] [0,4]&&&&&&&\\
&&&&&&&\ar@{-}[1,0]&&&&&\\
&&&&&&&\ar@{-}`d/4pt [1,-7][1,-7]\ar@{-}`d/4pt [1,-2][1,-2]&&&&&\\
&&&&&&&&&&&&\\
&&&&&&&&&&&&&
}}
\grow{\xymatrix@!0{
\\\\\\\\\\\\\\\\\\\\\\
\save\go+<0pt,0pt>\Drop{\txt{,}}\restore
}}
\end{align*}
where the first equality follows from Lemma~\ref{mengano6}, the second one from Lemma~\ref{mengano3}, the third and fourth ones are easy to check (and left to the reader), and the last one follows from Lemma~\ref{mengano4}. Since the first diagram represents the map
$$
\objectmargin{0.2pc}\objectwidth{0.2pc}
\def\objectstyle{\textstyle}
\def\labelstyle{\\textstyle}
\xymatrix @R=-2pt @C=3pc {\ov{X}_{rs}(M)\rto & \ov{X}_{r-r',s-s'}(M)\\
[m\ot\ba_{1r}]_H\ot_k \bh_{1s} \ar@{|->}[0,1] & \wt{\theta}\bigl(\wt{\vartheta}([m\ot\ba_{1r}]_H\ot_k \bh_{1s})\wt{\bullet}\wt{\theta}(\wt{\beta})\bigr)},
$$
and this map induces $(-1)^{r'(s-s')}$ times the morphism
$$
\objectmargin{0.2pc}\objectwidth{0.2pc}
\def\objectstyle{\textstyle}
\def\labelstyle{\\textstyle}
\xymatrix @R=-2pt @C=3pc {\ov{X}_{rs}(M)\rto & \ov{X}_{r-r',s-s'}(M)\\
[m\ot\ba_{1r}]_H\ot_k \bh_{1s} \ar@{|->}[0,1] & ([m\ot\ba_{1r}]_H\ot_k \bh_{1s})\star \beta},
$$
this finish the proof.\qed

\begin{lemma}\label{pirulo} Let $C:= H^{\ot_c^j}$. We have
$$
%
%
\grow{\xymatrix@!0{\\
\save\go+<0pt,3pt>\Drop{E}\restore \ar@{-}[4,0]&&&\save\go+<0.2pc,3pt>\Drop{C}\restore\save[]+<0.2pc,0pc> \Drop{}\ar@{-}[1,0]+<0.2pc,0pc>\restore&&\\
&&&\save[]+<0.2pc,0pc> \Drop{}\ar@{-}`l/4pt [1,-1] [1,-1] \ar@{-}`r [1,2] [1,2]\restore&&\\
&&&&&\\
&&*+[o]+<0.37pc>[F]{\overline{\mathbf{u}}} \ar@{-}[1,0]+<0pc,0pc>&&& *+[o]+<0.40pc>[F]{\mathbf{u}}\ar@{-}[4,0]+<0pc,0pc>\\
\ar@{-}`d/4pt [1,1] `[0,2] [0,2]&&&&&\\
&\ar@{-}[4,0]&&&&\\
& \ar@{-}`r/4pt [1,2] [1,2]&&&&\\
&&&\ar@{-}[1,1]+<-0.125pc,0.0pc> \ar@{-}[1,1]+<0.0pc,0.125pc>&&\ar@{-}[2,-2]\\
&&&&&\\
&\ar@{-}`d/4pt [1,1] `[0,2] [0,2] &&&&\ar@{-}[-1,-1]+<0.125pc,0.0pc> \ar@{-}[-1,-1]+<0.0pc,-0.125pc>\ar@{-}[2,0]\\
&&\ar@{-}[1,0]&&&\\
&&&&&
}}
\grow{\xymatrix@!0{
\\\\\\\\\\\\
\save\go+<0pt,0pt>\Drop{\txt{$=$}}\restore
}}
\grow{\xymatrix@!0{\\\\
\save\go+<0pt,3pt>\Drop{E}\restore \ar@{-}[9,0]&&&&\save\go+<0.2pc,3pt>\Drop{C}\restore \ar@{-}[1,0]\\
&&&&\\
&&&&*+[o]+<0.37pc>[F]{\mu}\ar@{-}[2,0]+<0pc,0pc>\\
&&&&\\
&&&&\\
&&&&*+[o]+<0.34pc>[F]{S}\ar@{-}[2,0]+<0pc,0pc>\\
\ar@{-}`r/4pt [1,2] [1,2]&&&&\\
&&\ar@{-}`d/4pt [1,1] `[0,2] [0,2]&&\\
&&&\ar@{-}[1,0]&\\
&&&&}}
\grow{\xymatrix@!0{
\\\\\\\\\\\\
\save\go+<0pt,0pt>\Drop{\txt{,}}\restore
}}
$$
where $\mu:=\mu_j$, $\mathbf{u}:=\mu\xcirc \gamma^{\ot_k^j}$ and $\ov{\mathbf{u}}:=\mu\xcirc \ov{\gamma}^{\ot_k^j}\xcirc\gc_j$.
\end{lemma}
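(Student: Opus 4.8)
The plan is to establish the identity by an explicit diagrammatic computation, in the same style as Lemmas~\ref{mengano1}--\ref{mengano6}, by unfolding both diagrams into composites of the structure maps $\mu_H$, $\Delta_H$, $S$, $c$, $s$, $\gamma$, $\ov{\gamma}$ and the coaction $\boldsymbol{\nu}$ of $E$ (here the $s=1$ instance of Remark~\ref{acerca de nu}), and then transporting the left-hand side into the right-hand one.

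First I would unfold the abbreviations: $\mathbf{u}=\mu_j\xcirc\gamma^{\ot_k^j}$ and $\ov{\mathbf{u}}=\mu_j\xcirc\ov{\gamma}^{\ot_k^j}\xcirc\gc_j$, so that, since $E$ is cleft and hence $\gamma$ is convolution invertible, Lemma~\ref{inversa de convolucion} identifies $\ov{\mathbf{u}}$ with the convolution inverse of $\mathbf{u}\colon H^{\ot_c^j}\to E$; and $\wh{s}=(A\ot_k c)\xcirc(s\ot_k H)$ together with the comodule structure of Example~\ref{ex 1.14}. Using that $\gamma$ is a morphism of $H$-comodules — so that $\boldsymbol{\nu}\xcirc\gamma=(\gamma\ot_k H)\xcirc\Delta_H$ — and that $\gamma$ intertwines $c$ with $\wh{s}$, I would rewrite $\boldsymbol{\nu}$ applied to the product $e\,\ov{\mathbf{u}}(\bh^{(1)})$, which is legitimate because the comodule structure of $E$ is multiplicative (Definition~\ref{comodulo algebra}(2)), as the product of $\boldsymbol{\nu}(e)$ and the coaction applied to $\ov{\mathbf{u}}(\bh^{(1)})$, and then express the latter, again via Lemma~\ref{inversa de convolucion}, in terms of $\ov{\gamma}$, $\gc$ and the braided comultiplication $\Delta_{H^{\ot_c^j}}$. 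Lemma~\ref{formula para chi} together with Lemmas~\ref{mengano1}, \ref{mengano3} and \ref{mengano4} are exactly what is needed to normalise the positions of the strands after these rewritings.

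Next I would push the transposition $\wh{s}$ past the coaction using the comodule-algebra axiom $(\boldsymbol{\nu}\ot_k H)\xcirc\wh{s}=(E\ot_k c)\xcirc(\wh{s}\ot_k H)\xcirc(H\ot_k\boldsymbol{\nu})$, so that the factor $\mathbf{u}(\bh^{(2)})$ is reorganised into a product of $\gamma$'s sitting immediately next to the $\ov{\gamma}$'s coming from $\ov{\mathbf{u}}(\bh^{(1)})$. At that point the convolution-inverse identity $\ov{\mathbf{u}}*\mathbf{u}=\eta_E\xcirc\epsilon_{H^{\ot_c^j}}$, combined with the counit axiom, collapses all but one application of the antipode, and what survives is precisely $S$ applied to the total product $\mu_j(\bh)$ multiplied against the $H$-component of $\boldsymbol{\nu}(e)$; this is the right-hand diagram.

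The hard part will be the bookkeeping of the braidings and transpositions: the reversal $\gc_j$ hidden in $\ov{\mathbf{u}}$ and the iterated transpositions $\ps_j$ inside $\wh{s}$ must be matched precisely so that, after moving $\wh{s}$ through the coaction, the $\gamma$'s and $\ov{\gamma}$'s end up adjacent and in the order required for a \emph{single} use of Lemma~\ref{inversa de convolucion}, with no stray crossing remaining. Keeping the cancellation literal — rather than resorting to an induction on $j$ — will hinge on first bringing the diagram to the normal form produced by Lemmas~\ref{formula para chi}, \ref{mengano1}, \ref{mengano3} and \ref{mengano4}, exactly as in the proofs of Propositions~\ref{cup producto en cleft} and~\ref{cap producto en cleft}.
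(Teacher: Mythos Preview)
Your plan has the right ingredients (Lemma~\ref{inversa de convolucion}, the comodule--algebra structure of $(E,\wh s)$, and the fact that $\gamma$ is a comodule map), but there is a genuine gap in the crucial step: you do not explain where the antipode $S$ on the right-hand side comes from. The left-hand diagram contains no antipode at all --- $\mathbf u$ and $\ov{\mathbf u}$ are products of $\gamma$'s and $\ov\gamma$'s --- and the convolution identity $\ov{\mathbf u}*\mathbf u=\eta_E\xcirc\epsilon_{H^{\ot_c^j}}$ you invoke produces a \emph{counit}, not an antipode. So the sentence ``collapses all but one application of the antipode, and what survives is precisely $S$ applied to $\mu_j(\mathbf h)$'' is unjustified: nothing in the manipulations you describe introduces $S$.

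The paper's argument runs in the opposite direction. One first \emph{inserts} the antipode on the output $H$-strand via the Hopf-algebra axiom (schematically, replacing the identity on $H$ by $\mu_H\xcirc(H\ot_k S)\xcirc\Delta_H$ composed with the counit axiom), and only then rearranges the diagram so that $\ov{\mathbf u}$ and $\mathbf u$ become convolution-adjacent; Lemma~\ref{inversa de convolucion} then cancels them to $\eta_E\xcirc\epsilon$, and the inserted $S\xcirc\mu_j$ is what remains. The whole computation uses only Lemma~\ref{inversa de convolucion} and the coalgebra/comodule axioms. Note also that Lemmas~\ref{formula para chi}, \ref{mengano1}, \ref{mengano3} and~\ref{mengano4} all concern a factor $D=A^{\ot_k^r}$ which is simply absent from the present statement, so invoking them here is spurious.
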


\begin{proof} In fact, by Lemma~\ref{inversa de convolucion},
$$
%
%
\grow{\xymatrix@!0{\\
\save\go+<0pt,3pt>\Drop{E}\restore \ar@{-}[4,0]&&&\save\go+<0.2pc,3pt>\Drop{C}\restore\save[]+<0.2pc,0pc> \Drop{}\ar@{-}[1,0]+<0.2pc,0pc>\restore&&\\
&&&\save[]+<0.2pc,0pc> \Drop{}\ar@{-}`l/4pt [1,-1] [1,-1] \ar@{-}`r [1,2] [1,2]\restore&&\\
&&&&&\\
&&*+[o]+<0.37pc>[F]{\overline{\mathbf{u}}} \ar@{-}[1,0]+<0pc,0pc>&&& *+[o]+<0.40pc>[F]{\mathbf{u}}\ar@{-}[4,0]+<0pc,0pc>\\
\ar@{-}`d/4pt [1,1] `[0,2] [0,2]&&&&&\\
&\ar@{-}[4,0]&&&&\\
& \ar@{-}`r/4pt [1,2] [1,2]&&&&\\
&&&\ar@{-}[1,1]+<-0.125pc,0.0pc> \ar@{-}[1,1]+<0.0pc,0.125pc>&&\ar@{-}[2,-2]\\
&&&&&\\
&\ar@{-}`d/4pt [1,1] `[0,2] [0,2] &&&&\ar@{-}[-1,-1]+<0.125pc,0.0pc> \ar@{-}[-1,-1]+<0.0pc,-0.125pc>\ar@{-}[2,0]\\
&&\ar@{-}[1,0]&&&\\
&&&&&
}}
\grow{\xymatrix@!0{
\\\\\\\\\\\\
\save\go+<0pt,0pt>\Drop{\txt{$=$}}\restore
}}
\grow{\xymatrix@!0{
\save\go+<0pt,3pt>\Drop{E}\restore \ar@{-}[4,0]&&&\save\go+<0.2pc,3pt>\Drop{C}\restore\save[]+<0.2pc,0pc> \Drop{}\ar@{-}[1,0]+<0.2pc,0pc>\restore&&\\
&&&\save[]+<0.2pc,0pc> \Drop{}\ar@{-}`l/4pt [1,-1] [1,-1] \ar@{-}`r [1,2] [1,2]\restore&&\\
&&&&&\\
&&*+[o]+<0.37pc>[F]{\overline{\mathbf{u}}} \ar@{-}[1,0]+<0pc,0pc>&&& *+[o]+<0.40pc>[F]{\mathbf{u}}\ar@{-}[4,0]+<0pc,0pc>\\
\ar@{-}`d/4pt [1,1] `[0,2] [0,2]&&&&&\\
&\ar@{-}[4,0]&&&& \ar@{-}`r/4pt [1,2] [1,2] &&&\\
& \ar@{-}`r/4pt [1,2] [1,2]&&&&&& \ar@{-}`l/4pt [1,-1] [1,-1] \ar@{-}`r [1,1] [1,1]&\\
&&&\ar@{-}[1,1]+<-0.125pc,0.0pc> \ar@{-}[1,1]+<0.0pc,0.125pc>&&\ar@{-}[2,-2]&\ar@{-}[2,0]&&\\
&&&&&&&&*+[o]+<0.34pc>[F]{S}\ar@{-}[1,0]+<0pc,0pc>\\
&\ar@{-}`d/4pt [1,1] `[0,2] [0,2] &&&&\ar@{-}[-1,-1]+<0.125pc,0.0pc> \ar@{-}[-1,-1]+<0.0pc,-0.125pc>\ar@{-}[2,0]&\ar@{-}`d/4pt [1,1] `[0,2] [0,2]&&\\
&&\ar@{-}[3,0]&&&&&\ar@{-}[1,0]\\
&&&&&\ar@{-}`d/4pt [1,1] `[0,2] [0,2]&&&\\
&&&&&&\ar@{-}[1,0]&&\\
&&&&&&&&
}}
\grow{\xymatrix@!0{
\\\\\\\\\\\\
\save\go+<0pt,0pt>\Drop{\txt{$=$}}\restore
}}
\grow{\xymatrix@!0{
\save\go+<0pt,3pt>\Drop{E}\restore \ar@{-}[4,0]&&&\save\go+<0.2pc,3pt>\Drop{C}\restore\save[]+<0.2pc,0pc> \Drop{}\ar@{-}[1,0]+<0.2pc,0pc>\restore&&\\
&&&\save[]+<0.2pc,0pc> \Drop{}\ar@{-}`l/4pt [1,-1] [1,-1] \ar@{-}`r [1,2] [1,2]\restore&&\\
&&&&&\\
&&*+[o]+<0.37pc>[F]{\overline{\mathbf{u}}} \ar@{-}[1,0]+<0pc,0pc>&&& *+[o]+<0.40pc>[F]{\mathbf{u}}\ar@{-}[4,0]+<0pc,0pc>\\
\ar@{-}`d/4pt [1,1] `[0,2] [0,2]&&&&&\\
&\ar@{-}[4,0]&&&&\ar@{-}`r/4pt [2,4] [2,4]\\
& \ar@{-}`r/4pt [1,2] [1,2]&&&&\ar@{-}`r/4pt [1,2] [1,2]&&&&\\
&&&\ar@{-}[1,1]+<-0.125pc,0.0pc> \ar@{-}[1,1]+<0.0pc,0.125pc>&&\ar@{-}[2,-2]&&\ar@{-}[2,0]&&\ar@{-}[1,0]\\
&&&&&&&&&*+[o]+<0.34pc>[F]{S}\ar@{-}[3,0]+<0pc,0pc>\\
&\ar@{-}`d/4pt [1,1] `[0,2] [0,2] &&&&\ar@{-}[-1,-1]+<0.125pc,0.0pc> \ar@{-}[-1,-1]+<0.0pc,-0.125pc> \ar@{-}`d/4pt [1,1] `[0,2] [0,2]&&&&\\
&&\ar@{-}[3,0]&&&&\ar@{-}[1,0]&&&\\
&&&&&&\ar@{-}`d/4pt [1,1]+<0.2pc,0pc> `[0,3] [0,3]&&&\\
&&&&&&&\save[]+<0.2pc,0pc> \Drop{}\ar@{-}[1,0]+<0.2pc,0pc>\restore&&\\
&&&&&&&\save[]+<0.2pc,0pc> \Drop{}\restore&&
}}
\grow{\xymatrix@!0{
\\\\\\\\\\\\
\save\go+<0pt,0pt>\Drop{\txt{$=$}}\restore
}}
\grow{\xymatrix@!0{\\
\save\go+<0pt,3pt>\Drop{E}\restore \ar@{-}[4,0]&&&\save\go+<0.2pc,3pt>\Drop{C}\restore \save[]+<0.2pc,0pc> \Drop{}\ar@{-}[1,0]+<0.2pc,0pc>\restore&&\\
&&&\save[]+<0.2pc,0pc> \Drop{}\ar@{-}`l/4pt [1,-1] [1,-1] \ar@{-}`r [1,2] [1,2]\restore&&\\
&&&&&\\
&&*+[o]+<0.37pc>[F]{\overline{\mathbf{u}}} \ar@{-}[1,0]+<0pc,0pc>&&& *+[o]+<0.40pc>[F]{\mathbf{u}}\ar@{-}[3,0]+<0pc,0pc>\\
\ar@{-}`d/4pt [1,1] `[0,2] [0,2]&&&&&\\
&\ar@{-}[1,0]&&&&\ar@{-}`r/4pt [1,2] [1,2]\\
&\ar@{-}`d/4pt [1,2] `[0,4] [0,4] &&&&&&\\
&&&\ar@{-}[4,0]&&&&*+[o]+<0.34pc>[F]{S}\ar@{-}[2,0]+<0pc,0pc>\\
&&&\ar@{-}`r/4pt [1,2] [1,2]&&&&\\
&&&&&\ar@{-}`d/4pt [1,1] `[0,2] [0,2]&&\\
&&&&&&\ar@{-}[1,0]&\\
&&&&&&&
}}
\grow{\xymatrix@!0{
\\\\\\\\\\\\
\save\go+<0pt,0pt>\Drop{\txt{$=$}}\restore
}}
\grow{\xymatrix@!0{
\save\go+<0pt,3pt>\Drop{E}\restore \ar@{-}[8,0]&&&&\save\go+<0.2pc,3pt>\Drop{C}\restore \save[]+<0.2pc,0pc> \Drop{}\ar@{-}[1,0]+<0.2pc,0pc>\restore&&\\
&&&&\save[]+<0.2pc,0pc> \Drop{}\ar@{-}`l/4pt [1,-1] [1,-1] \ar@{-}`r [1,2] [1,2]\restore&&\\
&&&\ar@{-}[1,0] &&&\ar@{-}[1,0]\\
&&& \ar@{-}`l/4pt [1,-1] [1,-1] \ar@{-}`r [1,1] [1,1]&&&\ar@{-}[1,0]\\
&&&&&&*+[o]+<0.37pc>[F]{\mu}\ar@{-}[2,0]+<0pc,0pc>\\
&&*+[o]+<0.37pc>[F]{\overline{\mathbf{u}}} \ar@{-}[1,0]+<0pc,0pc>&& *+[o]+<0.40pc>[F]{\mathbf{u}}\ar@{-}[1,0]+<0pc,0pc>&&\\
&&\ar@{-}`d/4pt [1,1] `[0,2] [0,2]&&&&\\
&&&\ar@{-}[1,0]&&&*+[o]+<0.34pc>[F]{S}\ar@{-}[4,0]+<0pc,0pc>\\
\ar@{-}`d/4pt [1,1]+<0.2pc,0pc> `[0,3] [0,3] &&&&&&\\
&\save[]+<0.2pc,0pc> \Drop{}\ar@{-}[4,0]+<0.2pc,0pc>\restore&&&&&\\
&\save[]+<0.2pc,0pc> \Drop{} \ar@{-}`r/4pt [1,3] [1,3]\restore&&&&&\\
&&&&\ar@{-}`d/4pt [1,1] `[0,2] [0,2]&&\\
&&&&&\ar@{-}[1,0]&\\
&\save[]+<0.2pc,0pc> \Drop{}\restore&&&&&}}
\grow{\xymatrix@!0{
\\\\\\\\\\\\
\save\go+<0pt,0pt>\Drop{\txt{$=$}}\restore
}}
\grow{\xymatrix@!0{\\\\
\save\go+<0pt,3pt>\Drop{E}\restore \ar@{-}[9,0]&&&&\save\go+<0.2pc,3pt>\Drop{C}\restore \ar@{-}[1,0]\\
&&&&\\
&&&&*+[o]+<0.37pc>[F]{\mu}\ar@{-}[2,0]+<0pc,0pc>\\
&&&&\\
&&&&\\
&&&&*+[o]+<0.34pc>[F]{S}\ar@{-}[2,0]+<0pc,0pc>\\
\ar@{-}`r/4pt [1,2] [1,2]&&&&\\
&&\ar@{-}`d/4pt [1,1] `[0,2] [0,2]&&\\
&&&\ar@{-}[1,0]&\\
&&&&}}
\grow{\xymatrix@!0{
\\\\\\\\\\\\
\save\go+<0pt,0pt>\Drop{\txt{,}}\restore
}}
$$
as desired.
\end{proof}

\begin{lemma}\label{mengano} Let $C := H^{\ot_c^n}$. We have
$$
%
%
\grow{\xymatrix@!0{\\\\\\
\save\go+<0pt,3pt>\Drop{C}\restore \ar@{-}[2,0]\\
&&\\
*+[o]+<0.37pc>[F]{\ov{\mathbf{u}}}\ar@{-}[3,0]+<0pc,0pc>&&\\
&&\\
\ar@{-}`r/4pt [1,2] [1,2]&& \\
&&
}}
\grow{\xymatrix@!0{
\\\\\\\\\\
\save\go+<0pt,0pt>\Drop{\txt{$=$}}\restore
}}
\grow{\xymatrix@!0{
&\save\go+<0pt,3pt>\Drop{C}\restore \ar@{-}[1,0]&\\
&\ar@{-}`l/4pt [1,-1] [1,-1] \ar@{-}`r [1,1] [1,1]&\\
\ar@{-}[0,0]+<0pt,-2.5pt>&&\ar@{-}[0,0]+<0pt,-2.5pt>\\
&*+<0.1pc>[F]{\,\,\,\,g\,\,\,\,} \ar@{-}@<0.40pc>[2,0]+<0pt,-0.5pt> \ar@{-}@<-0.40pc>[2,0]+<0pt,-0.5pt>&\\
&&\\
&&\\
*+[o]+<0.37pc>[F]{\ov{\boldsymbol{\gamma}}} \ar@{-}[2,0]&&*+[o]+<0.37pc>[F]{\mathbf{S}}\ar@{-}[2,0]\\
&&\\
&&\\
*+[o]+<0.40pc>[F]{\mu}\ar@{-}[2,0]&&*+[o]+<0.40pc>[F]{\mu}\ar@{-}[2,0]\\
&&\\
&&
}}
\grow{\xymatrix@!0{
\\\\\\\\\\
\save\go+<0pt,0pt>\Drop{\txt{,}}\restore
}}
$$
where $g$ is the map $\gc_{2n}$, $\mathbf{S}:= S^{\ot_k^n}$, $\mu:=\mu_n$, $\ov{\boldsymbol{\gamma}} := \ov{\gamma}^{\ot_k^n}$ and $\ov{\mathbf{u}}:=\mu\xcirc \ov{\boldsymbol{\gamma}}\xcirc \gc_n$.
\end{lemma}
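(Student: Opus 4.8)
The plan is to identify both sides of the asserted identity with the unique two–sided convolution inverse of $\nu\xcirc\mathbf{u}$ in $\Hom_k\bigl(H^{\ot_c^n},E\ot_k H\bigr)$, where $\nu=A\ot_k\Delta_H$ is the comodule–algebra coaction of $(E,\wh{s})$ from Example~\ref{ex 1.14} and $E\ot_k H$ is given the algebra structure $\mu_{E\ot_k H}:=(\mu_E\ot_k\mu_H)\xcirc(E\ot_k\wh{s}\ot_k H)$, with respect to which $\nu$ is multiplicative and unit–preserving (conditions~(2) and~(3) of Definition~\ref{comodulo algebra}). First I would observe that $\nu\xcirc\ov{\mathbf{u}}$ is such a convolution inverse: since $\ov{\mathbf{u}}$ is the convolution inverse of $\mathbf{u}$ by Lemma~\ref{inversa de convolucion}, applying the algebra map $\nu$ to $\mathbf{u}*\ov{\mathbf{u}}=\eta_E\epsilon=\ov{\mathbf{u}}*\mathbf{u}$ yields $(\nu\xcirc\mathbf{u})*(\nu\xcirc\ov{\mathbf{u}})=\eta_{E\ot_k H}\epsilon=(\nu\xcirc\ov{\mathbf{u}})*(\nu\xcirc\mathbf{u})$. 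On the other hand, because $\nu$ is multiplicative one has $\nu\xcirc\mathbf{u}=\nu\xcirc\mu_n\xcirc\gamma^{\ot_k^n}=\mu_n^{E\ot_k H}\xcirc(\nu\xcirc\gamma)^{\ot_k^n}$, so Lemma~\ref{inversa de convolucion}, this time applied to the algebra $E\ot_k H$ and the convolution–invertible map $\nu\xcirc\gamma\colon H\to E\ot_k H$ (whose convolution inverse is $\nu\xcirc\ov{\gamma}$), computes the convolution inverse of $\nu\xcirc\mathbf{u}$ as $\mu_n^{E\ot_k H}\xcirc(\nu\xcirc\ov{\gamma})^{\ot_k^n}\xcirc\gc_n$. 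By uniqueness of convolution inverses,
$$
\nu\xcirc\ov{\mathbf{u}}=\mu_n^{E\ot_k H}\xcirc(\nu\xcirc\ov{\gamma})^{\ot_k^n}\xcirc\gc_n .
$$

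The second and main step is to show that the right–hand side of this last display equals the right–hand side of the lemma. For $n=1$ this amounts to $\nu\xcirc\ov{\gamma}=(\ov{\gamma}\ot_k S)\xcirc c\xcirc\Delta_H$; I would obtain it either from the explicit formula for $\gamma^{-1}=\ov{\gamma}$ recalled in the subsection on braided Hopf crossed products, using the antipode identity and the compatibility of $\wh{s}$, $s$ and $c$ together with Remarks~\ref{transposicion inversa} and~\ref{propiedad de accion debil}, or, more economically, by checking that $(\ov{\gamma}\ot_k S)\xcirc c\xcirc\Delta_H$ is a two–sided convolution inverse of $(\gamma\ot_k H)\xcirc\Delta_H=\nu\xcirc\gamma$ and invoking uniqueness once more. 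For general $n$, I would substitute this $n=1$ formula into the displayed identity and unfold $\mu_n^{E\ot_k H}$ by its recursion $\mu_{m+1}^{E\ot_k H}=\mu_{E\ot_k H}\xcirc\bigl(\mu_m^{E\ot_k H}\ot_k(E\ot_k H)\bigr)$ together with the definitions of $\mu_{E\ot_k H}$ and of $\wh{s}=(A\ot_k c)\xcirc(s\ot_k H)$; the $E$–slots then collect into $\mu_n\xcirc\ov{\gamma}^{\ot_k^n}$ and the $H$–slots into $\mu_n\xcirc S^{\ot_k^n}$, acting on the appropriate $n$–element bundles of the $2n$ strands produced by $\Delta_{H^{\ot_c^n}}=\pc_n\xcirc\Delta^{\ot_k^n}$.

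The hard part will be the braid bookkeeping in this last unfolding: one must verify that the braidings contributed by the $\wh{s}$'s inside $\mu_n^{E\ot_k H}$ (which thread the $E$–components past the $H$–components), those contributed by $\gc_n$, and those inside $\pc_n$ amalgamate into exactly the full reversal $\gc_{2n}$ of the $2n$ strands, and not into some other reordering. I expect to carry this out, as throughout this appendix, by a sequence of diagram moves justified by the hexagon identities for $c$, the compatibility conditions for $s$ and $\wh{s}$ (Definition~\ref{transposition}, Example~\ref{ex 1.14}), and Lemma~\ref{pirulo}, which provides precisely the move commuting $\ov{\mathbf{u}}$ past a single $\mathbf{u}$–strand and a braiding; no genuinely new identity beyond those already proved should be required, the obstacle being purely combinatorial.
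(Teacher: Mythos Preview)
Your convolution-inverse framework is sound and is a genuinely different starting point from the paper's. The paper argues by a direct induction on $n$: it writes $\ov{\mathbf{u}}_n=\mu_E\xcirc(\ov\gamma\ot\ov{\mathbf{u}}_{n-1})\xcirc c_{n-1,1}$, applies $\nu$ (using that $\nu$ is multiplicative), invokes the $n=1$ case from \cite[Lemma~10.7(2)]{G-G2} together with the inductive hypothesis, and then rearranges the braids using \cite[Proposition~4.3]{G-G2} and \cite[Corollary~4.21]{G-G2}. Your route instead identifies $\nu\xcirc\ov{\mathbf{u}}$ with $\mu_n^{E\ot_k H}\xcirc(\nu\ov\gamma)^{\ot_k^n}\xcirc\gc_n$ via uniqueness of convolution inverses and Lemma~\ref{inversa de convolucion}; that step is clean and correct, and it is a nice conceptual reformulation not made explicit in the paper.

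However, your ``hard part'' is exactly where the real content lies, and it is not bypassed by the convolution argument. Once you substitute $\nu\ov\gamma=(\ov\gamma\ot_k S)\xcirc c\xcirc\Delta_H$ and expand $\mu_n^{E\ot_k H}=(\mu_n^E\ot_k\mu_n^H)\xcirc\ps_n$, you must commute the $\wh{s}$'s in $\ps_n$ past the $\ov\gamma$'s and the $S$'s; this requires identities such as $\wh{s}\xcirc(H\ot_k\ov\gamma)=(\ov\gamma\ot_k H)\xcirc c$ and the compatibility of $S$ with $c$, which are precisely the ingredients from \cite{G-G2} that the paper invokes. Carrying this out ``by unfolding $\mu_n^{E\ot_k H}$ via its recursion'' is, in effect, the same induction the paper performs---you have reorganised the argument but not shortened it. Also, your appeal to Lemma~\ref{pirulo} is misplaced: that lemma simplifies a specific composite involving $\ov{\mathbf{u}}$, $\mathbf{u}$, the coaction, and a multiplication into an expression with $S\xcirc\mu$, and it is used later in the proof of Theorem~\ref{morfismo D}, not here. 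If you pursue your line, the honest way to finish is the same induction on $n$ the paper uses; what your framing buys is a clearer explanation of \emph{why} the identity should hold, not a shorter verification.
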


\begin{proof} We proceed by induction on $n$. The case $n = 1$ follows from~\cite[Lemma 10.7(2)]{G-G2}. Assume $n>1$ and let $C' := H^{\ot_c^{n-1}}$. Then,
$$
%
%
\grow{\xymatrix@!0{\\\\\\\\\\\\\\
\save\go+<0pt,3pt>\Drop{C}\restore \ar@{-}[2,0]\\
&&\\
*+[o]+<0.37pc>[F]{\ov{\mathbf{u}}}\ar@{-}[3,0]+<0pc,0pc>&&\\
&&\\
\ar@{-}`r/4pt [1,2] [1,2]&& \\
&&
}}
\grow{\xymatrix@!0{
\\\\\\\\\\\\\\\\\\
\save\go+<0pt,0pt>\Drop{\txt{$=$}}\restore
}}
\grow{\xymatrix@!0{\\\\\\\\\\\\
\save\go+<0pt,3pt>\Drop{C'}\restore \ar@{-}[1,1]+<-0.1pc,0.1pc>  &&\save\go+<0pt,3pt>\Drop{H}\restore \ar@{-}[2,-2]\\
&&\\
\ar@{-}[2,0]&&\ar@{-}[-1,-1]+<0.1pc,-0.1pc>\ar@{-}[2,0]\\
&&\\
*+[o]+<0.37pc>[F]{\ov{\gamma}}\ar@{-}[1,0]+<0pc,0pc> &&*+[o]+<0.37pc>[F]{\ov{\mathbf{u}}}\ar@{-}[1,0]+<0pc,0pc>\\
\ar@{-}`d/4pt [1,1] `[0,2] [0,2]&&\\
&\ar@{-}[2,0]&\\
&\ar@{-}`r/4pt [1,2] [1,2]& \\
&&&
}}
\grow{\xymatrix@!0{
\\\\\\\\\\\\\\\\\\
\save\go+<0pt,0pt>\Drop{\txt{$=$}}\restore
}}
\grow{\xymatrix@!0{\\\\\\\\
\save\go+<0pt,3pt>\Drop{C'}\restore \ar@{-}[2,2]+<-0.1pc,0.1pc> &&&&\save\go+<0pt,3pt>\Drop{H}\restore \ar@{-}[4,-4]\\
&&&&\\
&&&&\\
&&&&\\
\ar@{-}[1,0]&&&&\ar@{-}[-2,-2]+<0.1pc,-0.1pc>\ar@{-}[1,0]\\
&&&&\\
*+[o]+<0.37pc>[F]{\ov{\gamma}}\ar@{-}[5,0]+<0pc,0pc> &&&&*+[o]+<0.37pc>[F]{\ov{\mathbf{u}}}\ar@{-}[3,0]+<0pc,0pc>\\
&&&&\\
\ar@{-}`r/4pt [1,2] [1,2]&&&&\ar@{-}`r/4pt [1,2] [1,2] \\
&&\ar@{-}[1,1]+<-0.125pc,0.0pc> \ar@{-}[1,1]+<0.0pc,0.125pc>&&\ar@{-}[2,-2]&&\ar@{-}[2,0]\\
&&&&&&\\
\ar@{-}`d/4pt [1,1] `[0,2] [0,2] &&&&\ar@{-}[-1,-1]+<0.125pc,0.0pc>\ar@{-}[-1,-1]+<0.0pc,-0.125pc> \ar@{-}`d/4pt [1,1] `[0,2] [0,2] &&\\
&\ar@{-}[1,0]&&&&\ar@{-}[1,0]&\\
&&&&&&
}}
\grow{\xymatrix@!0{
\\\\\\\\\\\\\\\\\\
\save\go+<0pt,0pt>\Drop{\txt{$=$}}\restore
}}
\grow{\xymatrix@!0{\\
&\save\go+<0pt,3pt>\Drop{C'}\restore \ar@{-}[2,2]+<-0.1pc,0.1pc> &&&&\save\go+<0pt,3pt>\Drop{H}\restore \ar@{-}[4,-4]&\\
&&&&&&\\
&&&&&&\\
&&&&&&\\
&\ar@{-}[1,0]&&&&\ar@{-}[-2,-2]+<0.1pc,-0.1pc>\ar@{-}[1,0]&\\
&\ar@{-}`l/4pt [1,-1] [1,-1] \ar@{-}`r [1,1] [1,1]&&&&\ar@{-}`l/4pt [1,-1] [1,-1] \ar@{-}`r [1,1] [1,1]&\\
\ar@{-}[1,1]+<-0.1pc,0.1pc> && \ar@{-}[2,-2]&& \ar@{-}[0,0]+<0pt,-2.5pt> &&\ar@{-}[0,0]+<0pt,-2.5pt>\\
&&&&&*+<0.1pc>[F]{\,\,\,\,g\,\,\,\,} \ar@{-}@<0.40pc>[2,0]+<0pt,-0.5pt> \ar@{-}@<-0.40pc>[2,0]+<0pt,-0.5pt>&\\
\ar@{-}[1,0]+<0pt,-1pt>&&\ar@{-}[-1,-1]+<0.1pc,-0.1pc>\ar@{-}[1,0]+<0pt,-1pt>&&&&\\
&&&&&&\\
*+[o]+<0.37pc>[F]{\ov{\gamma}}\ar@{-}[7,0]&& *+[o]+<0.37pc>[F]{S}\ar@{-}[5,0]+<0pc,0pt>&& *+[o]+<0.37pc>[F]{\ov{\boldsymbol{\gamma}}} \ar@{-}[2,0]&&*+[o]+<0.37pc>[F]{\mathbf{S}}\ar@{-}[2,0]+<0pc,0pt>\\
&&&&&&\\
&&&&&&\\
&&&&*+[o]+<0.40pc>[F]{\mu}\ar@{-}[2,0]+<0pc,0pc>&&*+[o]+<0.40pc>[F]{\mu}\ar@{-}[4,0]+<0pc,0pc>\\
&&&&&&\\
&&\ar@{-}[1,1]+<-0.125pc,0.0pc> \ar@{-}[1,1]+<0.0pc,0.125pc>&&\ar@{-}[2,-2]&&\\
&&&&&&\\
\ar@{-}`d/4pt [1,1] `[0,2] [0,2]&&&&\ar@{-}[-1,-1]+<0.125pc,0.0pc>\ar@{-}[-1,-1]+<0.0pc,-0.125pc> \ar@{-}`d/4pt [1,1] `[0,2] [0,2]&&\\
&\ar@{-}[1,0]&&&&\ar@{-}[1,0]&\\
&&&&&&
}}
\grow{\xymatrix@!0{
\\\\\\\\\\\\\\\\\\
\save\go+<0pt,0pt>\Drop{\txt{$=$}}\restore
}}
\grow{\xymatrix@!0{
&\save\go+<0pt,3pt>\Drop{C'}\restore \ar@{-}[1,0]&&&&\save\go+<0pt,3pt>\Drop{H}\restore \ar@{-}[1,0]&\\
&\ar@{-}`l/4pt [1,-1] [1,-1] \ar@{-}`r [1,1] [1,1]&&&&\ar@{-}`l/4pt [1,-1] [1,-1] \ar@{-}`r [1,1] [1,1]&\\
\ar@{-}[2,0]&&\ar@{-}[1,1]+<-0.1pc,0.1pc> && \ar@{-}[2,-2]&&\ar@{-}[2,0]\\
&&&&&&\\
\ar@{-}[1,1]+<-0.1pc,0.1pc> && \ar@{-}[2,-2]&&\ar@{-}[-1,-1]+<0.1pc,-0.1pc>\ar@{-}[1,1]+<-0.1pc,0.1pc> && \ar@{-}[2,-2]\\
&&&&&&\\
\ar@{-}[2,0]&&\ar@{-}[-1,-1]+<0.1pc,-0.1pc>\ar@{-}[1,1]+<-0.1pc,0.1pc> && \ar@{-}[2,-2]&&\ar@{-}[-1,-1]+<0.1pc,-0.1pc>\ar@{-}[2,0]+<0pt,-2.5pt>\\
&&&&&&\\
\ar@{-}[1,1]+<-0.1pc,0.1pc> && \ar@{-}[2,-2]&&\ar@{-}[-1,-1]+<0.1pc,-0.1pc>\ar@{-}[0,0]+<0pt,-2.5pt>&&\\
&&&&&*+<0.1pc>[F]{\,\,\,\,g\,\,\,\,} \ar@{-}@<0.40pc>[4,0]+<0pt,-1pt> \ar@{-}@<-0.40pc>[1,0]&\\
\ar@{-}[3,0]+<0pt,-1pt>&&\ar@{-}[-1,-1]+<0.1pc,-0.1pc>\ar@{-}[1,1]+<-0.1pc,0.1pc> && \ar@{-}[2,-2]&&\\
&&&&&&\\
&&\ar@{-}[1,0]+<0pt,-1pt>&&\ar@{-}[1,0]+<0pt,-1pt>\ar@{-}[-1,-1]+<0.1pc,-0.1pc>&&\\
&&&&&&\\
*+[o]+<0.37pc>[F]{\ov{\gamma}}\ar@{-}[4,0]&& *+[o]+<0.37pc>[F]{\ov{\boldsymbol{\gamma}}} \ar@{-}[3,0]&&*+[o]+<0.37pc>[F]{S}\ar@{-}[4,0]&& *+[o]+<0.37pc>[F]{\mathbf{S}}\ar@{-}[3,0]\\
&&&&&&\\
&&&&&&\\
&&*+[o]+<0.40pc>[F]{\mu}\ar@{-}[1,0]&&&&*+[o]+<0.40pc>[F]{\mu}\ar@{-}[1,0]\\
%
\ar@{-}`d/4pt [1,1] `[0,2] [0,2]&&&&\ar@{-}`d/4pt [1,1] `[0,2] [0,2]&&\\
&\ar@{-}[1,0]&&&&\ar@{-}[1,0]&\\
&&&&&&
}}
\grow{\xymatrix@!0{
\\\\\\\\\\\\\\\\\\
\save\go+<0pt,0pt>\Drop{\txt{$=$}}\restore
}}
\grow{\xymatrix@!0{\\\\\\\\
&\save\go+<0pt,3pt>\Drop{C}\restore \ar@{-}[1,0]&\\
&\ar@{-}`l/4pt [1,-1] [1,-1] \ar@{-}`r [1,1] [1,1]&\\
\ar@{-}[0,0]+<0pt,-2.5pt>&&\ar@{-}[0,0]+<0pt,-2.5pt>\\
&*+<0.1pc>[F]{\,\,\,\,g\,\,\,\,} \ar@{-}@<0.40pc>[2,0]+<0pt,-0.5pt> \ar@{-}@<-0.40pc>[2,0]+<0pt,-0.5pt>&\\
&&\\
&&\\
*+[o]+<0.37pc>[F]{\ov{\boldsymbol{\gamma}}} \ar@{-}[2,0]&&*+[o]+<0.37pc>[F]{\mathbf{S}}\ar@{-}[2,0]\\
&&\\
&&\\
*+[o]+<0.40pc>[F]{\mu}\ar@{-}[2,0]&&*+[o]+<0.40pc>[F]{\mu}\ar@{-}[2,0]\\
&&\\
&&
}}
\grow{\xymatrix@!0{
\\\\\\\\\\\\\\\\\\
\save\go+<0pt,0pt>\Drop{\txt{,}}\restore
}}
$$
where the third equality follows from inductive hypothesis and~\cite[Lemma 10.7(2)]{G-G2}, the fourth one follows from~\cite[Proposition 4.3]{G-G2} and~\cite[Lemma 10.7(2)]{G-G2}, and the fifth one, from the definition of $\Delta_C$ and~\cite[Corollary 4.21]{G-G2}.
\end{proof}

\noindent\bf Proof of Theorem~\ref{morfismo D}.\rm\enspace For $0\le j\le s$, let
$$
\tau_j\colon E\ot_{\!A} E^{\ot_{\!A}^s}\ot_k \ov{A}^{\ot_k^r}\to E\ot_{\!A} E^{\ot_{\!A}^{s+1}}\ot_k \ov{A}^{\ot_k^r}
$$
be the map defined by
$$
\tau_j(a_0\gamma_{\!A}(\bv_{0s})\ot \ba_{1r}) := \sum_l 1\ot_{\!A} \gamma_{\!A} (\bv_{j+1,s}^{(l)})\ot_{\!A} a_0\gamma_{\!A}(\bv_{0j})\ot \ba_{1r}^{(l)},
$$
where $\sum \ba_{1r}^{(l)}\ot_k \bv_{j+1,s}^{(l)} := \ov{\chi}(\bv_{j+1,s} \ot_k \ba_{1r})$, and let $\wh{\tau}_j\colon \wh{X}_{rs}\longrightarrow \wh{X}_{r,s+1}$ be the map induced by $\tau_j$. By Proposition~\ref{Connes operator} we know that
$$
\wh{D}\bigl([a_0\gamma_{\!A}(\bv_{0s})\ot \ba_{1r}]\bigr) = \sum_{j=0}^s (-1)^{s+js} \wh{\tau}_j\bigl([a_0\gamma_{\!A}(\bv_{0s})\ot \ba_{1r}]\bigr)
$$
module $F^s_R(\wh{X}_{r+s})$. Hence
$$
\ov{D}\bigl([a_0\gamma(h_0)\ot\ba_{1r}]_H\ot_k\bh_{1s} \bigr) = \sum_{j=0}^s (-1)^{s+js} \theta\xcirc \wh{\tau}_j\xcirc \vartheta\bigl([a_0\gamma(h_0)\ot\ba_{1r}]_H\ot_k\bh_{1s}\bigr)
$$
module $F_R^s(\ov{X}_{n+1})$. Now, since $\theta_{r,s+1}\xcirc \wh{\tau}_j\xcirc \vartheta_{rs}$ is induced by $(-1)^r\wt{\theta}_{r,s+1}\xcirc \tau_j\xcirc \wt{\vartheta}_{rs}$, in order to finish the proof we must show that $\wt{\tau}_j = \wt{\theta}_{r,s+1}\xcirc \tau_j\xcirc \wt{\vartheta}_{rs}$. In the sequel

\begin{itemize}

\smallskip

\item[-] $\boldsymbol{\gamma}$ denotes the maps $\gamma^{\ot_k^j}$, $\gamma^{\ot_k^{s-j}}$, $\gamma^{\ot_{\!A}^j}$ and $\gamma^{\ot_{\!A}^{s-j}}$, and $\ov{\boldsymbol{\gamma}}$ denotes $\ov{\gamma}^{\ot_k^{s-j}}$,

\smallskip

\item[-] $\mu$ denotes both the maps $\mu_j$ and $\mu_{s-j}$,

\smallskip

\item[-] $\mathbf{u}$ denotes both the maps $\mu_j\xcirc\gamma^{\ot_k^j}$ and $\mu_{s-j} \xcirc \gamma^{\ot_k^{s-j}}$,

\smallskip

\item[-] $\ov{\mathbf{u}}$ denotes both the maps $\mu_j\xcirc\ov{\gamma}^{\ot_k^j}\xcirc \gc_j$ and $\mu_{s-j}\xcirc \ov{\gamma}^{\ot_k^{s-j}}\xcirc \gc_{s-j}$,

\smallskip

\item[-] $g$ denotes the map $\gc_{2s-2j}$ introduced in item~(5) of Section~\ref{hom de inv},

\smallskip

\item[-] $\mathbf{S}$ denotes the map $S^{\ot_k^{s-j}}$.

\smallskip

\end{itemize}
Let $D:=A^{\ot_k^r}$, $C_1 := H^{\ot_c^j}$ and $C_2 := H^{\ot_c^{s-j}}$. By Lemma~\ref{para d1}
$$
%
%
\grow{\xymatrix@!0{
\\\\\\\\\\\\\\\\\\\\\\\\\\\\\\
\save\go+<0pt,0pt>\Drop{\txt{$\tau_j\circ \wt{\vartheta}_{rs}= $}}\restore
}}
\grow{\xymatrix@!0{\\\\\\
\save\go+<0pt,2pt>\Drop{\circ}\ar@{-}[24,0]\restore & \save\go+<0pt,3pt>\Drop{E}\restore \ar@{-}[16,0] &&&& \save\go+<0pt,3pt>\Drop{D}\restore \ar@{-}[2,2] && \save\go+<0pt,3pt>\Drop{C_1}\restore \ar@{-}[1,-1]+<0.125pc,0.0pc> \ar@{-}[1,-1]+<0.0pc,0.125pc> && \save\go+<0pt,3pt>\Drop{C_2}\restore \ar@{-}[2,0]\\
&&&&&&&&&\\
&&&&&\ar@{-}[2,-2]\ar@{-}[-1,1]+<-0.125pc,0.0pc> \ar@{-}[-1,1]+<0.0pc,-0.125pc>&&\ar@{-}[2,2]&&\ar@{-}[1,-1]+<0.125pc,0.0pc> \ar@{-}[1,-1]+<0.0pc,0.125pc>\\
&&&&&&&&&\\
&&&\ar@{-}[1,0]&&&&\ar@{-}[1,0]\ar@{-}[-1,1]+<-0.125pc,0.0pc> \ar@{-}[-1,1]+<0.0pc,-0.125pc>&&\ar@{-}[1,1]\\
&&&\ar@{-}`l/4pt [1,-1] [1,-1] \ar@{-}`r [1,1] [1,1]&&&&\ar@{-}`l/4pt [1,-1] [1,-1] \ar@{-}`r [1,1] [1,1]&&&\ar@{-}[1,0]\\
&&\ar@{-}[2,0]&&\ar@{-}[1,1]+<-0.1pc,0.1pc> && \ar@{-}[2,-2]&&\ar@/^0.1pc/ @{-}[2,2] \ar@/_0.1pc/ @{-}[2,2]&& \ar@/^0.1pc/ @{-}[2,-2]\ar@/_0.1pc/ @{-}[2,-2]\\
&&&&&&&&&&\\
&&\ar@{-}[1,1]+<-0.1pc,0.1pc>&&\ar@{-}[2,-2]&&\ar@{-}[-1,-1]+<0.1pc,-0.1pc>\ar@{-}[3,0] &&\ar@{-}[1,0] &&\ar@{-}[1,0] \\
&&&&&&&&\ar@{-}[2,2]&&\ar@{-}[2,-2]\\
&&\ar@{-}[1,0]+<0pc,-0.5pt>&&\ar@{-}[-1,-1]+<0.1pc,-0.1pc>\ar@{-}[1,0]+<0pc,-0.5pt>&&&&&&\\
&&&&&&\ar@{-}[2,2]&&\ar@{-}[2,-2]&&\ar@{-}[13,0]\\
&&*+[o]+<0.37pc>[F]{\ov{\mathbf{u}}}\ar@{-}[2,0]&&*+[o]+<0.37pc>[F]{\ov{\mathbf{u}}}\ar@{-}[2,0] &&&&&&\\
&&&&&&\ar@{-}[3,0]&&\ar@{-}[2,0]&&\\
&&\ar@{-}`d/4pt [1,1] `[0,2] [0,2]&&&&&&&&\\
&&&\ar@{-}[1,0]&&&&&\ar@{-}[0,0]+<0pc,-0.5pt>&&\\
&\ar@{-}`d/4pt [1,1] `[0,2] [0,2]&&&&&\ar@{-}[4,-4]&& *+[o]+<0.37pc>[F]{\boldsymbol{\gamma}} \ar@{-}[8,0]+<0pc,0pc>&&\\
&&\ar@{-}[1,0]&&&&&&&&\\
&&\ar@{-}[2,2]&&&&&&&&\\
&&&&&&&&&&\\
&&\ar@{-}[1,0]+<0pc,-0.5pt>&&\ar@{-}[4,0]&&&&&&\\
&&&&&&&&&&\\
&&*+[o]+<0.37pc>[F]{\boldsymbol{\gamma}}\ar@{-}[2,0]&&&&&&&&\\
&&&&&&&&&&\\
&&&&&&&&&&
}}
\grow{\xymatrix@!0{
\\\\\\\\\\\\\\\\\\\\\\\\\\\\\\
\save\go+<0pt,0pt>\Drop{\txt{$=$}}\restore
}}
\grow{\xymatrix@!0{
\save\go+<0pt,2pt>\Drop{\circ}\ar@{-}[29,0]\restore & \save\go+<0pt,3pt>\Drop{E}\restore \ar@{-}[20,0] &&& \save\go+<0pt,3pt>\Drop{D}\restore \ar@{-}[2,2] && \save\go+<0pt,3pt>\Drop{C_1}\restore \ar@{-}[1,-1]+<0.125pc,0.0pc> \ar@{-}[1,-1]+<0.0pc,0.125pc> &&& \save\go+<0pt,3pt>\Drop{C_2}\restore \ar@{-}[1,0]&\\
&&&&&&&&&\ar@{-}`l/4pt [1,-1] [1,-1] \ar@{-}`r [1,1] [1,1]&\\
&&&&\ar@{-}[1,-1]\ar@{-}[-1,1]+<-0.125pc,0.0pc> \ar@{-}[-1,1]+<0.0pc,-0.125pc>&&\ar@{-}[2,2] &&\ar@{-}[1,-1]+<0.125pc,0.0pc> \ar@{-}[1,-1]+<0.0pc,0.125pc>&&\ar@{-}[10,0]\\
&&&\ar@{-}[6,0]&&&&&&&\\
&&&&&&\ar@{-}[1,-1]\ar@{-}[-1,1]+<-0.125pc,0.0pc> \ar@{-}[-1,1]+<0.0pc,-0.125pc>&&\ar@{-}[8,0]&&\\
&&&&&\ar@{-}[1,0]&&&&&\\
&&&&&\ar@{-}`l/4pt [1,-1] [1,-1] \ar@{-}`r [1,1] [1,1]&&&&&\\
&&&&\ar@{-}[3,2]&&\ar@{-}`d/4pt [1,2][1,2]&&&&\\
&&&&&&&&&&\\
&&&\ar@{-}`l/4pt [1,-1] [1,-1] \ar@{-}`r [1,1] [1,1]&&&&&&&\\
&&\ar@{-}[2,0]  &&\ar@{-}[1,1]+<-0.1pc,0.1pc> && \ar@{-}[2,-2]&&&&\\
&&&&&&&&&&\\
&&\ar@{-}[1,1]+<-0.1pc,0.1pc> && \ar@{-}[2,-2]&& \ar@{-}[-1,-1]+<0.1pc,-0.1pc>\ar@{-}[2,0]&&\ar@{-}[2,2]&&\ar@{-}[2,-2]\\
&&&&&&&&&&\\
&&\ar@{-}[1,0]+<0pc,-0.5pt>&&\ar@{-}[1,0]+<0pc,-0.5pt>\ar@{-}[-1,-1]+<0.1pc,-0.1pc>&&\ar@{-}[2,2]&&\ar@{-}[2,-2]&&\ar@{-}[15,0]\\
&&&&&&&&&&\\
&&*+[o]+<0.37pc>[F]{\ov{\mathbf{u}}}\ar@{-}[2,0]&&*+[o]+<0.37pc>[F]{\ov{\mathbf{u}}}\ar@{-}[2,0]&&\ar@{-}[4,0]&&\ar@{-}[2,0]&&\\
&&&&&&&&&&\\
&&\ar@{-}`d/4pt [1,1] `[0,2] [0,2]&&&&&&\ar@{-}[0,0]+<0pc,-0.5pt>&&\\
&&&\ar@{-}[1,0]&&&&&*+[o]+<0.37pc>[F]{\boldsymbol{\gamma}}\ar@{-}[10,0]+<0pc,0pc>&&\\
&\ar@{-}`d/4pt [1,1] `[0,2] [0,2]&&&&&\ar@{-}[4,-4]&&&&\\
&&\ar@{-}[1,0]&&&&&&&&\\
&&\ar@{-}[2,2]&&&&&&&&\\
&&&&&&&&&&\\
&&\ar@{-}[1,0]+<0pc,-0.5pt>&&\ar@{-}[5,0]&&&&&&\\
&&&&&&&&&&\\
&&*+[o]+<0.37pc>[F]{\boldsymbol{\gamma}}\ar@{-}[3,0]&&&&&&&&\\
&&&&&&&&&&\\
&&&&&&&&&&\\
&&&&&&&&&&
}}
\grow{\xymatrix@!0{
\\\\\\\\\\\\\\\\\\\\\\\\\\\\\\
\save\go+<0pt,0pt>\Drop{\txt{$=$}}\restore
}}
\grow{\xymatrix@!0{\\\\
\save\go+<0pt,2pt>\Drop{\circ}\ar@{-}[24,0]\restore & \save\go+<0pt,3pt>\Drop{E}\restore \ar@{-}[16,0] &&& \save\go+<0pt,3pt>\Drop{D}\restore \ar@{-}[2,2] && \save\go+<0pt,3pt>\Drop{C_1}\restore \ar@{-}[1,-1]+<0.125pc,0.0pc> \ar@{-}[1,-1]+<0.0pc,0.125pc> &&& \save\go+<0pt,3pt>\Drop{C_2}\restore \ar@{-}[1,0]&\\
&&&&&&&&&\ar@{-}`l/4pt [1,-1] [1,-1] \ar@{-}`r [1,1] [1,1]&\\
&&&&\ar@{-}[1,-1]\ar@{-}[-1,1]+<-0.125pc,0.0pc> \ar@{-}[-1,1]+<0.0pc,-0.125pc>&&\ar@{-}[2,2] &&\ar@{-}[1,-1]+<0.125pc,0.0pc> \ar@{-}[1,-1]+<0.0pc,0.125pc>&&\ar@{-}[7,0]\\
&&&\ar@{-}[7,0]&&&&&&&\\
&&&&&&\ar@{-}[1,-1]\ar@{-}[-1,1]+<-0.125pc,0.0pc> \ar@{-}[-1,1]+<0.0pc,-0.125pc>&&\ar@{-}[5,0]&&\\
&&&&&\ar@{-}[1,0]&&&&&\\
&&&&&\ar@{-}`l/4pt [1,-1] [1,-1] \ar@{-}`r [1,1] [1,1]&&&&&\\
&&&&\ar@{-}[3,1]&&\ar@{-}`d/4pt [1,2][1,2]&&&&\\
&&&&&&&&&&\\
&&&&&&&&\ar@{-}[2,2]&&\ar@{-}[3,-3]\\
&&&\ar@{-}[1,1]+<-0.1pc,0.1pc> && \ar@{-}[2,-2]&&&&&\\
&&&&&&&&&&\ar@{-}[13,0]\\
&&&\ar@{-}[2,0]&&\ar@{-}[-1,-1]+<0.1pc,-0.1pc>\ar@{-}[3,3]&&\ar@{-}[2,-2]&&&\\
&&&&&&&&&&\\
&&&\ar@{-}[2,2]&&\ar@{-}[2,-2]&&&&&\\
&&&&&&&&\ar@{-}[1,0]&&\\
&\ar@{-}[2,2]&&\ar@{-}[2,-2]&&\ar@{-}[1,0]&&&\ar@{-}`l/4pt [1,-1] [1,-1] \ar@{-}`r [1,1] [1,1]&&\\
&&&&&&&&&&\\
&\ar@{-}[1,0] &&\ar@{-}[2,0]&&*+[o]+<0.37pc>[F]{\ov{\mathbf{u}}}\ar@{-}[2,0]+<0pc,0pc>&& *+[o]+<0.37pc>[F]{\ov{\mathbf{u}}}\ar@{-}[4,0]+<0pc,0pc>&&*+[o]+<0.37pc>[F]{\boldsymbol{\gamma}} \ar@{-}[6,0]+<0pc,0pc>&\\
&&&&&&&&&&\\
&*+[o]+<0.37pc>[F]{\boldsymbol{\gamma}} \ar@{-}[4,0]+<0pc,0pc>&&\ar@{-}`d/4pt [1,1] `[0,2] [0,2]&&&&&&&\\
&&&&\ar@{-}[1,0]&&&&&&\\
&&&&\ar@{-}`d/4pt [1,1]+<0.2pc,0pc> `[0,3] [0,3] &&&&&&\\
&&&&&\save[]+<0.2pc,0pc> \Drop{}\ar@{-}[1,0]+<0.2pc,0pc>\restore&&&&&\\
&&&&&\save[]+<0.2pc,0pc> \Drop{}\restore&&&&&
}}
\grow{\xymatrix@!0{
\\\\\\\\\\\\\\\\\\\\\\\\\\\\\\
\save\go+<0pt,0pt>\Drop{\txt{.}}\restore
}}
$$
Consequently, by Lemmas~\ref{pirulo} and~\ref{mengano},
%
\allowdisplaybreaks
\begin{align*}
\grow{\xymatrix@!0{
\\\\\\\\\\\\\\\\\\\\\\\\\\\\\\\\\\\\\\
\save\go+<0pt,0pt>\Drop{\txt{$\wt{\theta}_{r,s+1}\xcirc \tau_j\xcirc \wt{\vartheta}_{rs}=$}}\restore
}} &
\grow{\xymatrix@!0{
\save\go+<0pt,2pt>\Drop{\circ}\ar@{-}[29,0]\restore && \save\go+<0pt,3pt>\Drop{E}\restore \ar@{-}[16,0] &&& \save\go+<0pt,3pt>\Drop{D}\restore \ar@{-}[2,2] && \save\go+<0pt,3pt>\Drop{C_1}\restore \ar@{-}[1,-1]+<0.125pc,0.0pc> \ar@{-}[1,-1]+<0.0pc,0.125pc> &&& \save\go+<0pt,3pt>\Drop{C_2}\restore \ar@{-}[1,0]&\\
&&&&&&&&&&\ar@{-}`l/4pt [1,-1] [1,-1] \ar@{-}`r [1,1] [1,1]&\\
&&&&&\ar@{-}[1,-1]\ar@{-}[-1,1]+<-0.125pc,0.0pc> \ar@{-}[-1,1]+<0.0pc,-0.125pc>&&\ar@{-}[2,2] &&\ar@{-}[1,-1]+<0.125pc,0.0pc> \ar@{-}[1,-1]+<0.0pc,0.125pc>&&\ar@{-}[7,0]\\
&&&&\ar@{-}[7,0]&&&&&&&\\
&&&&&&&\ar@{-}[1,-1]\ar@{-}[-1,1]+<-0.125pc,0.0pc> \ar@{-}[-1,1]+<0.0pc,-0.125pc>&&\ar@{-}[5,0]&&\\
&&&&&&\ar@{-}[1,0]&&&&&\\
&&&&&&\ar@{-}`l/4pt [1,-1] [1,-1] \ar@{-}`r [1,1] [1,1]&&&&&\\
&&&&&\ar@{-}[3,1]&&\ar@{-}`d/4pt [1,2][1,2]&&&&\\
&&&&&&&&&&&&&\\
&&&&&&&&&\ar@{-}[2,2]&&\ar@{-}[2,-2]&&\\
&&&&\ar@{-}[1,1]+<-0.1pc,0.1pc> && \ar@{-}[2,-2]&&&&\ar@{-}[2,-2]\\
&&&&&&&&&&&\ar@{-}[5,3]&&\\
&&&&\ar@{-}[2,0]&&\ar@{-}[-1,-1]+<0.1pc,-0.1pc>\ar@{-}[3,3]&&\ar@{-}[2,-2]&&\\
&&&&&&&&&&&\\
&&&&\ar@{-}[2,2]&&\ar@{-}[2,-2]&&&&&&&\\
&&&&&&&&&\ar@{-}[1,0]&&&&\\
&&\ar@{-}[2,2]&&\ar@{-}[2,-2]&&\ar@{-}[1,0]&&&\ar@{-}`l/4pt [1,-1] [1,-1] \ar@{-}`r [1,1] [1,1]&&&&&\ar@{-}[10,0]\\
&&&&&&&&&&&\\
&&\ar@{-}[1,0] &&\ar@{-}[2,0]&&*+[o]+<0.37pc>[F]{\ov{\mathbf{u}}}\ar@{-}[2,0]+<0pc,0pc>&& *+[o]+<0.37pc>[F]{\ov{\mathbf{u}}}\ar@{-}[4,0]+<0pc,0pc>&&*+[o]+<0.37pc>[F]{\boldsymbol{\gamma}} \ar@{-}[8,0]+<0pc,0pc>&\\
&&&&&&&&&&&\\
&&*+[o]+<0.37pc>[F]{\boldsymbol{\gamma}} \ar@{-}[6,0]+<0pc,0pc>&&\ar@{-}`d/4pt [1,1] `[0,2] [0,2]&&&&&&&\\
&&&&&\ar@{-}[1,0]&&&&&&\\
&&&&&\ar@{-}`d/4pt [1,1]+<0.2pc,0pc> `[0,3] [0,3] &&&&&&\\
&&&&&&\save[]+<0.2pc,0pc> \Drop{}\ar@{-}[1,0]+<0.2pc,0pc>\restore&&&&&\\
&&&&&&\save[]+<0.2pc,0pc> \Drop{}\ar@{-}[1,0]\restore  &&&&&&&&\\
&&\ar@{-}`r/4pt [1,2] [1,2]&&&&\ar@{-}[1,0]\ar@{-}`r/4pt [1,2] [1,2]&&&&\ar@{-}`r/4pt [1,2] [1,2]&&&&\\
&&&&\ar@{-}[1,1]+<-0.125pc,0.0pc> \ar@{-}[1,1]+<0.0pc,0.125pc>&&\ar@{-}[2,-2]&& \ar@{-}[1,1]+<-0.125pc,0.0pc> \ar@{-}[1,1]+<0.0pc,0.125pc>&&\ar@{-}[2,-2]&&\ar@{-}[1,1]+<-0.125pc,0.0pc> \ar@{-}[1,1]+<0.0pc,0.125pc>&&\ar@{-}[2,-2]\\
&&*+[o]+<0.37pc>[F]{\mu}\ar@{-}[2,0]+<0pc,0pc>&&&&&&&&&&&&\\
&&&&\ar@{-}[6,0]&&\ar@{-}[-1,-1]+<0.125pc,0.0pc>\ar@{-}[-1,-1]+<0.0pc,-0.125pc> \ar@{-}[1,1]+<-0.125pc,0.0pc> \ar@{-}[1,1]+<0.0pc,0.125pc>&&\ar@{-}[2,-2]&& \ar@{-}[-1,-1]+<0.125pc,0.0pc>\ar@{-}[-1,-1]+<0.0pc,-0.125pc>
\ar@{-}[1,1]+<-0.125pc,0.0pc> \ar@{-}[1,1]+<0.0pc,0.125pc>&&\ar@{-}[2,-2] &&\ar@{-}[-1,-1]+<0.125pc,0.0pc>\ar@{-}[-1,-1]+<0.0pc,-0.125pc>\ar@{-}[10,0]\\
\ar@{-}`d/4pt [1,1] `[0,2] [0,2]&&&&&&&&&&&&&&\\
&\ar@{-}[6,0]&&&&&\ar@{-}[1,0]&&\ar@{-}[-1,-1]+<0.125pc,0.0pc>\ar@{-}[-1,-1]+<0.0pc,-0.125pc> \ar@{-}[1,1]+<-0.125pc,0.0pc> \ar@{-}[1,1]+<0.0pc,0.125pc>&&\ar@{-}[2,-2] &&\ar@{-}[-1,-1]+<0.125pc,0.0pc>\ar@{-}[-1,-1]+<0.0pc,-0.125pc>\ar@{-}[8,0]&&\\
&&&&&&&&&&&&&&\\
&&&&&&*+[o]+<0.37pc>[F]{\mu}\ar@{-}[2,0]+<0pc,0pc>&&\ar@{-}[6,0] &&\ar@{-}[-1,-1]+<0.125pc,0.0pc> \ar@{-}[-1,-1]+<0.0pc,-0.125pc> \ar@{-}[6,0]&&&&\\
&&&&&&&&&&&&&&\\
&&&&\ar@{-}`d/4pt [1,1] `[0,2] [0,2]&&&&&&&&&&\\
&&&&&\ar@{-}[1,0]&&&&&&&&&\\
&\ar@{-}`d/4pt [1,2] `[0,4] [0,4]&&&&&&&&&&&&&\\
&&&\ar@{-}[1,0]&&&&&&&&&&&\\
&&&&&&&&&&&&&&
}}
\grow{\xymatrix@!0{
\\\\\\\\\\\\\\\\\\\\\\\\\\\\\\\\\\\\\\
\save\go+<0pt,0pt>\Drop{\txt{$=$}}\restore
}}
\grow{\xymatrix@!0{\\\\
\save\go+<0pt,2pt>\Drop{\circ}\ar@{-}[30,0]\restore && \save\go+<0pt,3pt>\Drop{E}\restore \ar@{-}[16,0] &&& \save\go+<0pt,3pt>\Drop{D}\restore \ar@{-}[2,2] && \save\go+<0pt,3pt>\Drop{C_1}\restore \ar@{-}[1,-1]+<0.125pc,0.0pc> \ar@{-}[1,-1]+<0.0pc,0.125pc> &&& \save\go+<0pt,3pt>\Drop{C_2}\restore \ar@{-}[1,0]&\\
&&&&&&&&&&\ar@{-}`l/4pt [1,-1] [1,-1] \ar@{-}`r [1,1] [1,1]&\\
&&&&&\ar@{-}[1,-1]\ar@{-}[-1,1]+<-0.125pc,0.0pc> \ar@{-}[-1,1]+<0.0pc,-0.125pc>&&\ar@{-}[2,2] &&\ar@{-}[1,-1]+<0.125pc,0.0pc> \ar@{-}[1,-1]+<0.0pc,0.125pc>&&\ar@{-}[7,0]\\
&&&&\ar@{-}[7,0]&&&&&&&\\
&&&&&&&\ar@{-}[1,-1]\ar@{-}[-1,1]+<-0.125pc,0.0pc> \ar@{-}[-1,1]+<0.0pc,-0.125pc>&&\ar@{-}[5,0]&&\\
&&&&&&\ar@{-}[1,0]&&&&&\\
&&&&&&\ar@{-}`l/4pt [1,-1] [1,-1] \ar@{-}`r [1,1] [1,1]&&&&&\\
&&&&&\ar@{-}[3,1]&&\ar@{-}`d/4pt [1,2][1,2]&&&&\\
&&&&&&&&&&&\\
&&&&&&&&&\ar@{-}[2,2]&&\ar@{-}[2,-2]\\
&&&&\ar@{-}[1,1]+<-0.1pc,0.1pc> && \ar@{-}[2,-2]&&&&\ar@{-}[2,-2]\\
&&&&&&&&&&&\ar@{-}[5,3] &&&&\\
&&&&\ar@{-}[2,0]&&\ar@{-}[-1,-1]+<0.1pc,-0.1pc>\ar@{-}[2,2]&&\ar@{-}[2,-2]&&\\
&&&&&&&&&&\\
&&&&\ar@{-}[2,2]&&\ar@{-}[2,-2]&&\ar@{-}[1,1]+<0.2pc,0pc>&&\\
&&&&&&&&&\save[]+<0.2pc,0pc> \Drop{}\ar@{-}[1,0]+<0.2pc,0pc>\restore&&&\\
&&\ar@{-}[2,2]&&\ar@{-}[2,-2]&&\ar@{-}[1,0]&&&\save[]+<0.2pc,0pc> \Drop{}\ar@{-}`l/4pt [1,-1] [1,-1] \ar@{-}`r [1,2] [1,2]\restore&&&&&\ar@{-}[10,0]\\
&&&&&&&&\ar@{-}[1,0]&&&\ar@{-}[1,0]&&&\\
&&\ar@{-}[2,0]&&\ar@{-}[1,0]&&*+[o]+<0.37pc>[F]{\ov{\mathbf{u}}}\ar@{-}[1,0]+<0pc,0pc>&&\ar@{-}`l/4pt [1,-1] [1,-1] \ar@{-}`r [1,1] [1,1]&&&\ar@{-}[1,1]&&&\\
&&&&\ar@{-}`d/4pt [1,1] `[0,2] [0,2]&&&&&&&&\ar@{-}[7,0]&&\\
&&*+[o]+<0.37pc>[F]{\boldsymbol{\gamma}} \ar@{-}[7,0]+<0pc,0pc>  &&&\ar@{-}[1,0]&& *+[o]+<0.37pc>[F]{\ov{\mathbf{u}}}\ar@{-}[1,0]+<0pc,0pc>&& *+[o]+<0.40pc>[F]{\mathbf{u}}\ar@{-}[1,0]+<0pc,0pc>&&&&&\\
&&&&&\ar@{-}`d/4pt [1,1] `[0,2] [0,2] &&&&\ar@{-}[1,1]&&&&&\\
&&&&&&\ar@{-}[4,0]&&&&\ar@{-}[2,0]&&&&\\
&&&&&&\ar@{-}`r/4pt [1,2] [1,2]&&&&&&&&\\
&&&&&&&&\ar@{-}[1,1]+<-0.125pc,0.0pc> \ar@{-}[1,1]+<0.0pc,0.125pc>&&\ar@{-}[2,-2]&&&&\\
&&&&&&&&&&&&&&\\
&&\ar@{-}`r/4pt [1,2] [1,2]&&&&\ar@{-}`d/4pt [1,1] `[0,2] [0,2]&&&&\ar@{-}[-1,-1]+<0.125pc,0.0pc>\ar@{-}[-1,-1]+<0.0pc,-0.125pc> \ar@{-}[2,0]&&\ar@{-}[1,1]+<-0.125pc,0.0pc> \ar@{-}[1,1]+<0.0pc,0.125pc>&&\ar@{-}[2,-2]\\
&&&&\ar@{-}[1,1]&&&\ar@{-}[1,0]&&&&&&&\\
&&*+[o]+<0.37pc>[F]{\mu}\ar@{-}[2,0]+<0pc,0pc>&&&\ar@{-}[1,1]+<-0.125pc,0.0pc> \ar@{-}[1,1]+<0.0pc,0.125pc>&&\ar@{-}[2,-2]&&&\ar@{-}[1,1]+<-0.125pc,0.0pc> \ar@{-}[1,1]+<0.0pc,0.125pc>&&\ar@{-}[2,-2] &&\ar@{-}[-1,-1]+<0.125pc,0.0pc> \ar@{-}[-1,-1]+<0.0pc,-0.125pc>\ar@{-}[6,0]\\
&&&&&&&&&&&&&&\\
\ar@{-}`d/4pt [1,1] `[0,2] [0,2] &&&&&\ar@{-}[2,0]&&\ar@{-}[-1,-1]+<0.125pc,0.0pc> \ar@{-}[-1,-1]+<0.0pc,-0.125pc> \ar@{-}[1,1]&&&\ar@{-}[1,0]&& \ar@{-}[-1,-1]+<0.125pc,0.0pc> \ar@{-}[-1,-1]+<0.0pc,-0.125pc>\ar@{-}[4,0] &&\\
&\ar@{-}[1,0]&&&&&&&\ar@{-}[1,1]+<-0.125pc,0.0pc> \ar@{-}[1,1]+<0.0pc,0.125pc>&&\ar@{-}[2,-2]&&&&\\
&\ar@{-}`d/4pt [1,2] `[0,4] [0,4]&&&&&&&&&&&&&\\
&&&\ar@{-}[1,0]&&&&&\ar@{-}[1,0]&&\ar@{-}[1,0]\ar@{-}[-1,-1]+<0.125pc,0.0pc>\ar@{-}[-1,-1]+<0.0pc,-0.125pc>&&&&\\
&&&&&&&&&&&&&&
}}
\grow{\xymatrix@!0{
\\\\\\\\\\\\\\\\\\\\\\\\\\\\\\\\\\\\\\
\save\go+<0pt,0pt>\Drop{\txt{$=$}}\restore
}}
\grow{\xymatrix@!0{\\\\\\\\
\save\go+<0pt,2pt>\Drop{\circ}\ar@{-}[26,0]\restore && \save\go+<0pt,3pt>\Drop{E}\restore \ar@{-}[14,0] &&& \save\go+<0pt,3pt>\Drop{D}\restore \ar@{-}[2,2] && \save\go+<0pt,3pt>\Drop{C_1}\restore \ar@{-}[1,-1]+<0.125pc,0.0pc> \ar@{-}[1,-1]+<0.0pc,0.125pc> &&& \save\go+<0pt,3pt>\Drop{C_2}\restore \ar@{-}[1,0]&\\
&&&&&&&&&&\ar@{-}`l/4pt [1,-1] [1,-1] \ar@{-}`r [1,1] [1,1]&\\
&&&&&\ar@{-}[1,-1]\ar@{-}[-1,1]+<-0.125pc,0.0pc> \ar@{-}[-1,1]+<0.0pc,-0.125pc>&&\ar@{-}[2,2] &&\ar@{-}[1,-1]+<0.125pc,0.0pc> \ar@{-}[1,-1]+<0.0pc,0.125pc>&&\ar@{-}[7,0]\\
&&&&\ar@{-}[5,0]&&&&&&&\\
&&&&&&&\ar@{-}[1,-1]\ar@{-}[-1,1]+<-0.125pc,0.0pc> \ar@{-}[-1,1]+<0.0pc,-0.125pc>&&\ar@{-}[5,0]&&\\
&&&&&&\ar@{-}[1,0]&&&&&\\
&&&&&&\ar@{-}`l/4pt [1,-1] [1,-1] \ar@{-}`r [1,1] [1,1]&&&&&\\
&&&&&\ar@{-}[2,2]&&\ar@{-}`d/4pt [1,2][1,2]&&&&\\
&&&&\ar@{-}[1,1]&&&&&&&&\\
&&&&&\ar@{-}[1,1]+<-0.1pc,0.1pc> && \ar@{-}[2,-2]&&\ar@{-}[5,5]&&\ar@{-}[2,-2]\\
&&&&&&&&&&&\\
&&&&&\ar@{-}[2,0]&&\ar@{-}[-1,-1]+<0.1pc,-0.1pc>\ar@{-}[2,2]&&\ar@{-}[2,-2]&&\\
&&&&&&&&&&&&\\
&&&&&\ar@{-}[2,2]&&\ar@{-}[2,-2]&&\ar@{-}[2,2]&&&&\\
&&\ar@{-}[3,3]&&&&&&&&&&&&\ar@{-}[9,0]\\
&&&&&\ar@{-}[2,-2]&&\ar@{-}[1,0]&&&&\ar@{-}[1,0]&&&\\
&&&&&&&&&&& \ar@{-}`l/4pt [1,-1] [1,-1] \ar@{-}`r [1,1] [1,1]&&&\\
&&&\ar@{-}[1,-1]&&\ar@{-}[1,0]&&*+[o]+<0.37pc>[F]{\ov{\mathbf{u}}}\ar@{-}[1,0]+<0pc,0pc>&&&&&\ar@{-}[6,0]&&\\
&&\ar@{-}[1,0]&&&\ar@{-}`d/4pt [1,1] `[0,2] [0,2]&&&&&*+[o]+<0.40pc>[F]{\mu}\ar@{-}[2,0]+<0pc,0pc> &&&&\\
&&&&&&\ar@{-}[4,0]&&&&&&&&\\
&&*+[o]+<0.37pc>[F]{\boldsymbol{\gamma}}\ar@{-}[3,0]+<0pc,0pc>&&&&&&&&&&&&\\
&&&&&& \ar@{-}`r/4pt [1,2] [1,2]&&&&*+[o]+<0.34pc>[F]{S}\ar@{-}[1,0]+<0pc,0pc>&&&&\\
&& \ar@{-}`r/4pt [1,2] [1,2]&&&&&&\ar@{-}`d/4pt [1,1] `[0,2] [0,2]&&&&&&\\
&&&&\ar@{-}[1,1]+<-0.125pc,0.0pc> \ar@{-}[1,1]+<0.0pc,0.125pc>&&\ar@{-}[2,-2]&&&\ar@{-}[1,0]&&& \ar@{-}[1,1]+<-0.125pc,0.0pc> \ar@{-}[1,1]+<0.0pc,0.125pc>&&\ar@{-}[2,-2]\\
&&*+[o]+<0.40pc>[F]{\mu}\ar@{-}[2,0]+<0pc,0pc>&&&&&&&\ar@{-}[1,1]&&&&&\\
&&&&\ar@{-}[3,0]&&\ar@{-}[-1,-1]+<0.125pc,0.0pc>\ar@{-}[-1,-1]+<0.0pc,-0.125pc>\ar@{-}[2,2]&&&& \ar@{-}[1,1]+<-0.125pc,0.0pc> \ar@{-}[1,1]+<0.0pc,0.125pc>&&\ar@{-}[2,-2]&& \ar@{-}[-1,-1]+<0.125pc,0.0pc>\ar@{-}[-1,-1]+<0.0pc,-0.125pc>\ar@{-}[5,0]\\
\ar@{-}`d/4pt [1,1] `[0,2] [0,2]&&&&&&&&&&&&&&\\
&\ar@{-}[1,0]&&&&&&&\ar@{-}[1,1]+<-0.125pc,0.0pc> \ar@{-}[1,1]+<0.0pc,0.125pc>&&\ar@{-}[2,-2]&& \ar@{-}[-1,-1]+<0.125pc,0.0pc>\ar@{-}[-1,-1]+<0.0pc,-0.125pc>\ar@{-}[3,0]&&\\
&\ar@{-}`d/4pt [1,1]+<0.2pc,0pc> `[0,3] [0,3]&&&&&&&&&&&&&\\
&&\save[]+<0.2pc,0pc> \Drop{}\ar@{-}[1,0]+<0.2pc,0pc>\restore&&&&&&\ar@{-}[1,0]&& \ar@{-}[-1,-1]+<0.125pc,0.0pc>\ar@{-}[-1,-1]+<0.0pc,-0.125pc>\ar@{-}[1,0]&&&&\\
&&\save[]+<0.2pc,0pc> \Drop{}\restore&&&&&&&&&&&&
}}\\
\grow{\xymatrix@!0{
\\\\\\\\\\\\\\\\\\\\\\\\\\\\\\\\\\\\\\
\save\go+<0pt,0pt>\Drop{\txt{$=$}}\restore
}} &
\grow{\xymatrix@!0{\\\\\\\\
\save\go+<0pt,2pt>\Drop{\circ}\ar@{-}[25,0]\restore && \save\go+<0pt,3pt>\Drop{E}\restore \ar@{-}[14,0] &&& \save\go+<0pt,3pt>\Drop{D}\restore \ar@{-}[2,2] && \save\go+<0pt,3pt>\Drop{C_1}\restore \ar@{-}[1,-1]+<0.125pc,0.0pc> \ar@{-}[1,-1]+<0.0pc,0.125pc> &&& \save\go+<0pt,3pt>\Drop{C_2}\restore \ar@{-}[1,0]&\\
&&&&&&&&&&\ar@{-}`l/4pt [1,-1] [1,-1] \ar@{-}`r [1,1] [1,1]&\\
&&&&&\ar@{-}[1,-1]\ar@{-}[-1,1]+<-0.125pc,0.0pc> \ar@{-}[-1,1]+<0.0pc,-0.125pc>&&\ar@{-}[2,2] &&\ar@{-}[1,-1]+<0.125pc,0.0pc> \ar@{-}[1,-1]+<0.0pc,0.125pc>&&\ar@{-}[7,0]\\
&&&&\ar@{-}[5,0]&&&&&&&\\
&&&&&&&\ar@{-}[1,-1]\ar@{-}[-1,1]+<-0.125pc,0.0pc> \ar@{-}[-1,1]+<0.0pc,-0.125pc>&&\ar@{-}[5,0]&&\\
&&&&&&\ar@{-}[1,0]&&&&&\\
&&&&&&\ar@{-}`l/4pt [1,-1] [1,-1] \ar@{-}`r [1,1] [1,1]&&&&&\\
&&&&&\ar@{-}[2,2]&&\ar@{-}`d/4pt [1,2][1,2]&&&&\\
&&&&\ar@{-}[1,1]&&&&&&&\\
&&&&&\ar@{-}[1,1]+<-0.1pc,0.1pc> && \ar@{-}[2,-2]&&\ar@{-}[5,5]&&\ar@{-}[2,-2]\\
&&&&&&&&&&&\\
&&&&&\ar@{-}[2,0]&&\ar@{-}[-1,-1]+<0.1pc,-0.1pc>\ar@{-}[2,2]&&\ar@{-}[2,-2]&&\\
&&&&&&&&&&&&\\
&&&&&\ar@{-}[2,2]&&\ar@{-}[2,-2]&&\ar@{-}[2,2]&&&&\\
&&\ar@{-}[3,3]&&&&&&&&&&&&\ar@{-}[12,0]\\
&&&&&\ar@{-}[2,-2]&&\ar@{-}[2,0]&&&&\ar@{-}[2,0]&&&\\
&&&&&&&&&&&&&&\\
&&&\ar@{-}[5,0]&&\ar@{-}[5,0]&&&&&& \ar@{-}`l/4pt [1,-1] [1,-1] \ar@{-}`r [1,1] [1,1]&&&\\
&&&&&&&*+[o]+<0.40pc>[F]{\ov{\mathbf{u}}}\ar@{-}[4,0]+<0pc,0pc>&&&&&\ar@{-}[8,0]&&\\
&&&&&&&&&&*+[o]+<0.40pc>[F]{\mu}\ar@{-}[2,0]+<0pc,0pc>&&&&\\
&&&&&&&&&&&&&&\\
&&&&&&&&&&&&&&\\
&&&\ar@{-}`l/4pt [1,-1] [1,-1] \ar@{-}`r [1,1] [1,1]&&\ar@{-}`d/4pt [1,1] `[0,2] [0,2]&&&&&*+[o]+<0.34pc>[F]{S}\ar@{-}[3,0]+<0pc,0pc>&&&&\\
&&&&\ar@{-}[3,0]&&\ar@{-}[3,0]&&&&&&&&\\
&&*+[o]+<0.4pc>[F]{\mathbf{u}}\ar@{-}[1,0]+<0pc,0pc>&&&&\ar@{-}`r/4pt [1,2] [1,2]&&&&&&&&\\
\ar@{-}`d/4pt [1,1] `[0,2] [0,2]&&&&&&&&\ar@{-}`d/4pt [1,1] `[0,2] [0,2]&&&&&&\\
&\ar@{-}[2,0]&&&\ar@{-}[1,1]+<-0.125pc,0.0pc> \ar@{-}[1,1]+<0.0pc,0.125pc> &&\ar@{-}[2,-2]&&&\ar@{-}[1,0]&&&\ar@{-}[1,1]+<-0.125pc,0.0pc> \ar@{-}[1,1]+<0.0pc,0.125pc>&&\ar@{-}[2,-2]\\
&&&&&&&&&\ar@{-}[1,1]&&&&&\\
&\ar@{-}`d/4pt [1,1]+<0.2pc,0pc> `[0,3] [0,3]&&&&&\ar@{-}[-1,-1]+<0.125pc,0.0pc> \ar@{-}[-1,-1]+<0.0pc,-0.125pc>\ar@{-}[2,2] &&&&\ar@{-}[1,1]+<-0.125pc,0.0pc> \ar@{-}[1,1]+<0.0pc,0.125pc>&&\ar@{-}[2,-2]&& \ar@{-}[-1,-1]+<0.125pc,0.0pc>\ar@{-}[-1,-1]+<0.0pc,-0.125pc>\ar@{-}[4,0]\\
&&\save[]+<0.2pc,0pc> \Drop{}\ar@{-}[3,0]+<0.2pc,0pc>\restore&&&&&&&&&&&&\\
&&&&&&&&\ar@{-}[1,1]+<-0.125pc,0.0pc> \ar@{-}[1,1]+<0.0pc,0.125pc>&&\ar@{-}[2,-2] &&\ar@{-}[-1,-1]+<0.125pc,0.0pc>\ar@{-}[-1,-1]+<0.0pc,-0.125pc>\ar@{-}[2,0]&&\\
&&&&&&&&&&&&&&\\
&&\save[]+<0.2pc,0pc> \Drop{}\restore&&&&&&&&\ar@{-}[-1,-1]+<0.125pc,0.0pc> \ar@{-}[-1,-1]+<0.0pc,-0.125pc>&&&&
}}
\grow{\xymatrix@!0{
\\\\\\\\\\\\\\\\\\\\\\\\\\\\\\\\\\\\\\
\save\go+<0pt,0pt>\Drop{\txt{$=$}}\restore
}}
\grow{\xymatrix@!0{\\\\
&\save\go+<0pt,3pt>\Drop{E}\restore \ar@{-}[16,0]&&&& \save\go+<0pt,3pt>\Drop{D}\restore \ar@{-}[2,2] && \save\go+<0pt,3pt>\Drop{C_1}\restore \ar@{-}[1,-1]+<0.125pc,0.0pc> \ar@{-}[1,-1]+<0.0pc,0.125pc> &&& \save\go+<0pt,3pt>\Drop{C_2}\restore \ar@{-}[1,0]&\\
&&&&&&&&&&\ar@{-}`l/4pt [1,-1] [1,-1] \ar@{-}`r [1,1] [1,1]&\\
&&&&&\ar@{-}[1,-1]\ar@{-}[-1,1]+<-0.125pc,0.0pc> \ar@{-}[-1,1]+<0.0pc,-0.125pc>&&\ar@{-}[2,2] &&\ar@{-}[1,-1]+<0.125pc,0.0pc> \ar@{-}[1,-1]+<0.0pc,0.125pc>&&\ar@{-}[7,0]\\
&&&&\ar@{-}[5,0]&&&&&&&\\
&&&&&&&\ar@{-}[1,-1]\ar@{-}[-1,1]+<-0.125pc,0.0pc> \ar@{-}[-1,1]+<0.0pc,-0.125pc>&&\ar@{-}[5,0]&&\\
&&&&&&\ar@{-}[1,0]&&&&&\\
&&&&&&\ar@{-}`l/4pt [1,-1] [1,-1] \ar@{-}`r [1,1] [1,1]&&&&&\\
&&&&&\ar@{-}[2,2]&&\ar@{-}`d/4pt [1,2][1,2]&&&&\\
&&&&\ar@{-}[1,1]&&&&&&&\\
&&&&&\ar@{-}[1,1]+<-0.1pc,0.1pc> && \ar@{-}[2,-2]&&\ar@{-}[5,5]&&\ar@{-}[2,-2]\\
&&&&&&&&&&&&&&\\
&&&&&\ar@{-}[2,0]&&\ar@{-}[-1,-1]+<0.1pc,-0.1pc> \ar@{-}[4,4]&&\ar@{-}[2,-2]&&&&&\\
&&&&&&&&&&&&&&\\
&&&&&\ar@{-}[2,2]&&\ar@{-}[2,-2]&&&\\
&&&&&&&&&&&&&&\ar@{-}[14,0]  \\
&&&&&\ar@{-}[1,-2]&&\ar@{-}[2,0]&&&&\ar@{-}[1,0] &&&\\
&\ar@{-}[2,2]&&\ar@{-}[2,-2]&&&&&&&&\ar@{-}`l/4pt [1,-1] [1,-1] \ar@{-}`r [1,1] [1,1]&&&\\
&&&&&&&&&&\ar@{-}[1,0] && \ar@{-}[11,0]&&\\
&\ar@{-}[3,0]&&\ar@{-}[7,0]&&&&*+[o]+<0.40pc>[F]{\ov{\mathbf{u}}}\ar@{-}[5,0]+<0pc,0pc>&&&&&&&\\
&&&&&&&&&&*+[o]+<0.40pc>[F]{\mu}\ar@{-}[2,0]+<0pc,0pc>&&&&\\
&&&&&&&&&&&&&&\\
&\ar@{-}`l/4pt [1,-1] [1,-1] \ar@{-}`r [1,1] [1,1]&&&&&&&&&&&&&\\
\ar@{-}[2,0]&&\ar@{-}[5,0]&\ar@{-}`r/4pt [1,2] [1,2]&&&&\ar@{-}`r/4pt [1,2] [1,2]&&&*+[o]+<0.34pc>[F]{S}\ar@{-}[5,0]+<0pc,0pc> &&&&\\
&&&&&\ar@{-}[1,1]+<-0.125pc,0.0pc> \ar@{-}[1,1]+<0.0pc,0.125pc>&&\ar@{-}[2,-2]&&\ar@{-}[2,0]&&&&&\\
*+[o]+<0.4pc>[F]{\mathbf{u}}\ar@{-}[5,0]+<0pc,0pc>&&&&&&&&&&&&&&\\
&&&\ar@{-}`d/4pt [1,1] `[0,2] [0,2]&&&&\ar@{-}[-1,-1]+<0.125pc,0.0pc>\ar@{-}[-1,-1]+<0.0pc,-0.125pc> \ar@{-}`d/4pt [1,1] `[0,2] [0,2]&&&&&&&\\
&&&&\ar@{-}[1,0]&&&&\ar@{-}[1,0]&&&&&&\\
&&\ar@{-}[1,1]+<-0.125pc,0.0pc> \ar@{-}[1,1]+<0.0pc,0.125pc>&&\ar@{-}[2,-2]&&&&\ar@{-}`d/4pt [1,1] `[0,2] [0,2]&&&&&&\\
&&&&&&&&&\ar@{-}[1,0]&&&\ar@{-}[1,1]+<-0.125pc,0.0pc> \ar@{-}[1,1]+<0.0pc,0.125pc>&&\ar@{-}[2,-2]\\
\ar@{-}`d/4pt [1,1] `[0,2] [0,2]&&&&\ar@{-}[-1,-1]+<0.125pc,0.0pc>\ar@{-}[-1,-1]+<0.0pc,-0.125pc>  \ar@{-}[3,4]&&&&&\ar@{-}[1,1]&&&&&\\
&\ar@{-}[4,0]&&&&&&&&&    \ar@{-}[1,1]+<-0.125pc,0.0pc> \ar@{-}[1,1]+<0.0pc,0.125pc>&&\ar@{-}[2,-2]&& \ar@{-}[-1,-1]+<0.125pc,0.0pc>\ar@{-}[-1,-1]+<0.0pc,-0.125pc>\ar@{-}[4,0]\\
&&&&&&&&&&&&&&\\
&&&&&&&&\ar@{-}[1,1]+<-0.125pc,0.0pc> \ar@{-}[1,1]+<0.0pc,0.125pc>&&\ar@{-}[2,-2] &&\ar@{-}[-1,-1]+<0.125pc,0.0pc>\ar@{-}[-1,-1]+<0.0pc,-0.125pc>\ar@{-}[2,0]&&\\
&&&&&&&&&&&&&&\\
&&&&&&&&&&\ar@{-}[-1,-1]+<0.125pc,0.0pc>\ar@{-}[-1,-1]+<0.0pc,-0.125pc>&&&&
}}
\grow{\xymatrix@!0{
\\\\\\\\\\\\\\\\\\\\\\\\\\\\\\\\\\\\\\
\save\go+<0pt,0pt>\Drop{\txt{$=$}}\restore
}}
\grow{\xymatrix@!0{
&\save\go+<0pt,3pt>\Drop{E}\restore \ar@{-}[16,0] &&& \save\go+<0pt,3pt>\Drop{D}\restore \ar@{-}[2,2] &&  \save\go+<0pt,3pt>\Drop{C}\restore \ar@{-}[1,-1]+<0.125pc,0.0pc> \ar@{-}[1,-1]+<0.0pc,0.125pc> &&&\save\go+<0.2pc,3pt>\Drop{C'}\restore  \save[]+<0.2pc,0pc> \Drop{}\ar@{-}[1,0]+<0.2pc,0pc>\restore&&\\
&&&&&&&&& \save[]+<0.2pc,0pc> \Drop{}\ar@{-}`l/4pt [1,-1] [1,-1] \ar@{-}`r [1,2] [1,2]\restore&&\\
&&&&\ar@{-}[1,-1]\ar@{-}[-1,1]+<-0.125pc,0.0pc> \ar@{-}[-1,1]+<0.0pc,-0.125pc>&&\ar@{-}[2,2]&& \ar@{-}[1,-1]+<0.125pc,0.0pc> \ar@{-}[1,-1]+<0.0pc,0.125pc>&&& \ar@{-}[6,0] \\
&&&\ar@{-}[3,0]&&&&&&&&\\
&&&&&&\ar@{-}[-1,1]+<-0.125pc,0.0pc> \ar@{-}[-1,1]+<0.0pc,-0.125pc>\ar@{-}[1,0]&&\ar@{-}[1,1]&&&\\
&&&&&&\ar@{-}`l/4pt [1,-1] [1,-1] \ar@{-}`r [1,1] [1,1]&&&\ar@{-}[3,0]&&\\
&&&\ar@{-}[1,1]+<-0.1pc,0.1pc> && \ar@{-}[2,-2]&&\ar@{-}`d/4pt [1,2][1,2]&&&&\\
&&&&&&&&&&&\\
&&&\ar@{-}[2,0]&&\ar@{-}[-1,-1]+<0.1pc,-0.1pc>\ar@{-}[7,7]&&&&\ar@{-}[6,6]&&\ar@{-}[10,-10]\\
&&&&&&&&&&&&&&&\\
&&&\ar@{-}[5,5]&&&&&&&&&&&&\\
&&&&&&&&&&&&&&&\\
&&&&&&&&&&&&&&&\\
&&&&&&&&&&&&&&&\\
&&&&&&&&&&&&&&&\ar@{-}[17,0]\\
&&&&&&&&\ar@{-}[1,0]&&&&\ar@{-}[1,0]&&&\\
&\ar@{-}[2,2]&&&&&&&\ar@{-}`l/4pt [1,-1] [1,-1] \ar@{-}`r [1,1] [1,1]&&&&\ar@{-}`l/4pt [1,-1] [1,-1] \ar@{-}`r [1,1] [1,1]&&&\\
&&&&&&&\ar@{-}[1,0]+<0pc,-3pt>&&\ar@{-}[1,0]+<0pc,-3pt>&&\ar@{-}[1,0]+<0pc,-0.5pt>&&\ar@{-}[14,0]&&\\
&\ar@{-}[1,0]&&\ar@{-}[11,0]&&&&&&&&&&&&\\
&\ar@{-}`l/4pt [1,-1] [1,-1] \ar@{-}`r [1,1] [1,1]&&&&&&&*+<0.1pc>[F]{\,\,\,g\,\,\,} \ar@{-}@<0.40pc>[2,0]+<0pc,-0.5pt> \ar@{-}@<-0.40pc>[2,0]+<0pc,-0.5pt>&&& *+[o]+<0.40pc>[F]{\mu}\ar@{-}[2,0]+<0pc,-0.5pt> &&&&\\
\ar@{-}[1,0]+<0pc,0pc>&&\ar@{-}[11,0]&&&&&&&&&&&&&\\
&&&&&&&&&&&&&&&\\
*+[o]+<0.4pc>[F]{\mathbf{u}}\ar@{-}[11,0]+<0pc,0pc>&&&&&&&*+[o]+<0.37pc>[F]{\ov{\boldsymbol{\gamma}}} \ar@{-}[2,0]+<0pc,-0.5pt>&&*+[o]+<0.37pc>[F]{\mathbf{S}}\ar@{-}[2,0]+<0pc,-0.5pt>&& *+[o]+<0.37pc>[F]{S}\ar@{-}[9,0]&&&&\\
&&&&&&&&&&&&&&&\\
&&&&&&&&&&&&&&&\\
&&&&&&&*+[o]+<0.40pc>[F]{\mu}\ar@{-}[2,0]+<0pc,0pc>&&*+[o]+<0.40pc>[F]{\mu}\ar@{-}[4,0]+<0pc,0pc>&&&&&&\\
&&&\ar@{-}`r/4pt [1,2] [1,2]&&&&&&&&&&&&\\
&&&&&\ar@{-}[1,1]+<-0.125pc,0.0pc> \ar@{-}[1,1]+<0.0pc,0.125pc>&&\ar@{-}[2,-2]&&&&&&&&\\
&&&&&&&&&&&&&&&\\
&&&\ar@{-}`d/4pt [1,1] `[0,2] [0,2]&&&&\ar@{-}[-1,-1]+<0.125pc,0.0pc>\ar@{-}[-1,-1]+<0.0pc,-0.125pc> \ar@{-}`d/4pt [1,1] `[0,2] [0,2]&&&&&&&&\\
&&&&\ar@{-}[1,0]&&&&\ar@{-}[1,0]&&&&&&&\\
&&\ar@{-}[1,1]+<-0.125pc,0.0pc> \ar@{-}[1,1]+<0.0pc,0.125pc>&&\ar@{-}[2,-2]&&&&\ar@{-}`d/4pt [1,1]+<0.2pc,0pc> `[0,3] [0,3]&&&&&\ar@{-}[1,1]+<-0.125pc,0.0pc> \ar@{-}[1,1]+<0.0pc,0.125pc>&&\ar@{-}[2,-2]\\
&&&&&&&&&\save[]+<0.2pc,0pc> \Drop{}\ar@{-}[1,0]+<0.2pc,0pc>\restore &&&&&&\\
\ar@{-}`d/4pt [1,1] `[0,2] [0,2]&&&&\ar@{-}[3,4]\ar@{-}[-1,-1]+<0.125pc,0.0pc>\ar@{-}[-1,-1]+<0.0pc,-0.125pc> &&&&& \save[]+<0.2pc,0pc> \Drop{}\ar@{-}[1,1]\restore&&&&\ar@{-}[1,-1]&& \ar@{-}[-1,-1]+<0.125pc,0.0pc> \ar@{-}[-1,-1]+<0.0pc,-0.125pc>\ar@{-}[6,0]\\
&\ar@{-}[5,0]&&&&&&&&&\ar@{-}[1,1]+<-0.125pc,0.0pc> \ar@{-}[1,1]+<0.0pc,0.125pc>&&\ar@{-}[2,-2]&&&\\
&&&&&&&&&&&&&&&\\
&&&&&&&&\ar@{-}[1,1]+<-0.125pc,0.0pc> \ar@{-}[1,1]+<0.0pc,0.125pc>&&\ar@{-}[2,-2]&& \ar@{-}[-1,-1]+<0.125pc,0.0pc>\ar@{-}[-1,-1]+<0.0pc,-0.125pc>\ar@{-}[3,0]&&&\\
&&&&&&&&&&&&&&&\\
&&&&&&&&\ar@{-}[1,0]&&\ar@{-}[-1,-1]+<0.125pc,0.0pc>\ar@{-}[-1,-1]+<0.0pc,-0.125pc>\ar@{-}[1,0]&&&&&\\
&&&&&&&&&&&&&&&}}
\grow{\xymatrix@!0{
\\\\\\\\\\\\\\\\\\\\\\\\\\\\\\\\\\\\\\
\save\go+<0pt,0pt>\Drop{\txt{,}}\restore
}}
\end{align*}
as desired.\qed

\end{document}